\newcommand{\mathscripty}{\mathscr}
\newcommand{\cstar}[1]{\mathcal{#1}}
\newcommand{\rs}{\mathord{\upharpoonright}}
\newcommand{\sm}{\setminus}
\newcommand{\e}{\epsilon}
\newcommand{\NN}{\mathbb{N}}
\newcommand{\ZZ}{\mathbb{Z}}
\newcommand{\QQ}{\mathbb{Q}}
\newcommand{\er}{\mathbb{R}}
\newcommand{\ce}{\mathbb{C}}
\newcommand{\PP}{\mathbb{P}}
\newcommand{\SA}{\mathscripty{A}}
\newcommand{\SB}{\mathscripty{B}}
\newcommand{\SC}{\mathscripty{C}}
\newcommand{\SD}{\mathscripty{D}}
\newcommand{\SE}{\mathscripty{E}}
\newcommand{\SF}{\mathscripty{F}}
\newcommand{\SG}{\mathscripty{G}}
\newcommand{\SH}{\mathscripty{H}}
\newcommand{\SI}{\mathscripty{I}}
\newcommand{\SJ}{\mathscripty{J}}
\newcommand{\SL}{\mathscripty{L}}
\newcommand{\SM}{\mathscripty{M}}
\newcommand{\SN}{\mathscripty{N}}
\newcommand{\SP}{\mathscripty{P}}
\newcommand{\SQ}{\mathscripty{Q}}
\newcommand{\SR}{\mathscripty{R}}
\newcommand{\SU}{\mathscripty{U}}
\newcommand{\SX}{\mathscripty{X}}
\newcommand{\SY}{\mathscripty{Y}}
\newcommand{\SZ}{\mathscripty{Z}}
\newcommand{\Cmeas}{\mathrm{C}}
\newcommand{\ZFC}{\mathrm{ZFC}}
\newcommand{\CH}{\mathrm{CH}}
\newcommand{\MA}{\mathrm{MA}}
\newcommand{\Cstar}{\mathrm{C}^*}
\newcommand{\OCA}{\mathrm{OCA}}
\newcommand{\B}{\cstar{B}}
\newcommand{\M}{\SM}
\renewcommand{\P}{\SP}
\newcommand{\U}{\SU}
\newcommand{\en}{\mathbb N}
\newtheorem{theorem}{Theorem}[section]
\newtheorem*{theorem*}{Theorem}
\newtheorem{proposition}[theorem]{Proposition}
\newtheorem*{proposition*}{Proposition}
\newtheorem{lemma}[theorem]{Lemma}
\newtheorem*{lemma*}{Lemma}
\newtheorem{corollary}[theorem]{Corollary}
\newtheorem*{corollary*}{Corollary}
\newtheorem{fact}[theorem]{Fact}
\newtheorem*{fact*}{Fact}
\theoremstyle{definition}
\newtheorem{defin}[theorem]{Definition}
\newtheorem*{defin*}{Definition}
\newtheorem{claim}[theorem]{Claim}
\newtheorem*{claim*}{Claim}
\newtheorem{conjecture}[theorem]{Conjecture}
\newtheorem*{conjecture*}{Conjecture}
\newtheorem{theoremi}{Theorem}
\newtheorem{question}{Question}
\newtheorem{conjecturei}[theoremi]{Conjecture}
\newtheorem{questioni}[theoremi]{Question}
\newtheorem*{question*}{Question}
\theoremstyle{remark}
\newtheorem*{example*}{Example}
\newtheorem{remark}[theorem]{Remark}
\newtheorem*{remark*}{Remark}
\newtheorem*{notation}{Notation}
\newtheorem*{note*}{Note}
\newcommand{\set}[2]{\left\{#1\mathrel{}\middle|\mathrel{}#2\right\}}
\newcommand{\seq}[2]{\left\langle #1\mathrel{}\middle|\mathrel{}#2\right\rangle}
\newcommand{\norm}[1]{\left\lVert #1 \right\rVert}
\newcommand{\mc}[1]{\mathcal{#1}}
\newcommand{\abs}[1]{\left\vert#1\right\vert}
\newcommand{\ball}[1]{#1_{\le 1}}
\DeclareMathOperator{\II}{II}
\DeclareMathOperator{\Triv}{Triv}
\DeclareMathOperator{\Th}{Th}
\DeclareMathOperator{\Id}{Id}
\DeclareMathOperator{\bd}{bd}
\DeclareMathOperator{\Homeo}{Homeo}
\DeclareMathOperator{\ran}{ran}
\DeclareMathOperator{\img}{img}
\DeclareMathOperator{\supp}{supp}
\DeclareMathOperator{\Fin}{Fin}
\DeclareMathOperator{\Aut}{Aut}
\DeclareMathOperator{\Map}{Map}
\DeclareMathOperator{\Skel}{Skel}
\DeclareMathOperator{\Fn}{Fn}
\DeclareMathOperator{\GL}{GL}
\newcommand{\iso}{\Lambda}
\date{April 2017}%
\author{Alessandro Vignati}
\title{Logic and $\Cstar$-algebras: Set theoretical dichotomies in the theory of continuous quotients}%
\begin{document}

\thispagestyle{empty}
\maketitlepage
\pagenumbering{roman}

\begin{abstract}
Given a nonunital $\Cstar$-algebra $A$ one constructs its corona algebra $\mathcal M(A)/A$. This is the noncommutative analog of the \v{C}ech-Stone remainder of a topological space. We analyze the two faces of these algebras: the first one is given assuming $\CH$, and the other one arises when Forcing Axioms are assumed. In their first face, corona $\Cstar$-algebras have a large group of automorphisms that includes nondefinable ones. The second face is the Forcing Axiom one; here the automorphism group of a corona $\Cstar$-algebra is as rigid as possible, including only definable elements. 
 \end{abstract}

 \tableofcontents

\chapter{Introduction}\label{ch:intro}
\pagenumbering{arabic}

This thesis focuses on the interactions between logic (set theory and model theory) and operator algebras (in particular $\Cstar$-algebras). These are established high-profile areas of pure mathematics in their own right, with connections across mathematics. 

$\Cstar$-algebras are Banach  self-adjoint subalgebras of $\mathcal B(H)$, the algebra of bounded operators on a complex Hilbert space $H$. Via the Gelfand transform, abelian $\Cstar$-algebras arise as algebras of continuous functions on locally compact Hausdorff spaces, leading to the guiding philosophy that $\Cstar$-algebras provide non-commutative analogues of topological spaces.  Connections have been established with many important branches of mathematics, including dynamical systems, topology, algebra, geometry, geometric group theory, and number theory. Central notions in mathematics, such as amenability, can be described through operator algebras. The Elliott classification programme - a major research goal of the last 25 years - aims to completely understand simple amenable $\Cstar$-algebras analogously to Connes' Fields Medal-winning work on amenable von Neumann factors.

Dealing with forms of reasoning, logic provides the foundation of mathematics and links it to philosophy and computer science. 

At the core of this dissertation are two key branches of logic: model theory, which studies intrinsic properties that can be expressed with first order statements, and set theory, which focuses on the study of higher cardinalities and the axiomatisation of mathematics.

Connections between logic and operator algebras have a long history with recently renewed impetus in the last two decades. The set of connections between these two areas can be seen as the convex combination of four points: 
\begin{itemize}
\item the application of combinatorial set theoretical principles to construct pathological nonseparable objects;
\item the application of descriptive set theory to classification; 
\item the study on how set theoretical axioms influence the structure of automorphisms groups of corona $\Cstar$-algebras;
\item the development of continuous model theory for operator algebras. 
\end{itemize} This thesis focuses on the last two points.

The applications of set theory to automorphisms of corona $\Cstar$-algebras originated from two sources: 
\begin{enumerate}[label=(\roman*)]
\item\label{intro.enum1} Brown-Douglas-Fillmore's search for a K-theory reversing automorphism of the Calkin algebra $\mathcal C(H)$\footnote{Given a separable Hilbert space $H$, the Calkin algebra is the quotient of $\mathcal B(H)$ by the ideal of compact operators $\mathcal K(H)$.};
\item\label{intro.enum2} the study of homeomorphisms of \v{C}ech-Stone remainders of topological spaces.
\end{enumerate}

\ref{intro.enum1}
Aiming to classify the normal elements of the Calkin algebra, the foundational paper \cite{BDF.Ext} developed extension theory and defined analytic K-homology for algebras. Given an essentially normal $T\in\mathcal B(H)$ ($T$ is essentially normal if $\dot T\in\mathcal C(H)$, which is the image of $T$ under the quotient map, is normal) $T$ can be classified by its essential spectrum $\sigma_e(T)$ and a free group indexed by $\sigma(T)\setminus\sigma_e(T)$, where $\sigma(T)$ is the spectrum of $T$. If $T$ and $S$ are normal operators in $\mathcal B(H)$, then $\dot T$ and $\dot S$ are unitarily equivalent if and only if there is an automorphism of $\mathcal C(H)$ mapping $\dot T$ to $\dot S$,  and this happens if and only $\sigma_e(T)=\sigma_e(S)$. Brown Douglas and Fillmore asked whether, for essentially normal $S$ and $T$, being unitarily equivalent in the Calkin algebra is equivalent to the existence of an automorphism of $\mathcal C(H)$ mapping $\dot T$ to $\dot S$. A positive answer to this question is equivalent to the existence of a $K$-theory reversing automorphism of $\mathcal C(H)$. As every such automorphism would be outer, it was then asked whether it is possible to have outer automorphisms at all. Set theoretic axioms entered play: results of Phillips and Weaver (see \cite{Phillips-Weaver}, or \cite[Theorem 1.1]{Farah.C} for a simpler proof) and Farah (\cite{Farah.C}) show that the question cannot be answered in the usual axiomatization of mathematics ($\ZFC$). 

\ref{intro.enum2}
Given a locally compact space $X$ it is possible to construct a universal compact space $\beta X$ in which $X$ is densely embedded. This space is known as the \v{C}ech-Stone compactification of $X$ and it is, in many senses, the largest compactification of $X$. The study of homeomorphisms of the remainder space $\beta X\setminus X$ has a long history, that began with the following question: is every homeomorphisms of $\beta\NN\setminus\NN$ induced by a function $f\colon \NN\to\NN$ which is (up to finite sets) a bijection? The work of W. Rudin (\cite{Rudin}) and Shelah (\cite{Shelah.PF}) shows that the answer to this question is independent from $\ZFC$. On one hand, Rudin proved that under the Continuum Hypothesis (henceforth $\CH$) it is possible to construct a nondefinable homeomorphism of $\beta\NN\setminus\NN$. On the other hand, Shelah used forcing to exhibit a very rigid model of set theory, in the sense that in this model every homeomorphism of $\beta\NN\setminus\NN$ is trivial (i.e., induced by an ``almost permutation'' as above). Shelah's argument was refined in \cite{Shelah-Steprans.PFAA} and \cite{Velickovic.OCAA}, where it was proved that all homeomorphisms are trivial  if one assumes Forcing Axioms, which are generalizations of the Baire Category Theorem negating $\CH$. The eye-opening monograph \cite{Farah.AQ} follows the guiding philosophy that the structure of the  automorphisms group of discrete quotients depends on which set theoretical axioms are in play. It extended the results obtained for $\beta\NN\setminus\NN$ to quotients of more general Boolean algebras. The work of Dow, Hart, and Yu, attempts to solve similar questions when $X=\er$.

The goal of this thesis is to generalize the results in \ref{intro.enum1} and \ref{intro.enum2} to a more general setting: the one of corona $\Cstar$-algebras. In the same way the compact operators are related to $\mathcal B(H)$ and $\mathcal C(H)$, one can associate to a nonunital $\Cstar$-algebra $A$ its multiplier algebra $\mathcal M(A)$ and its corona $\mathcal M(A)/A$. $\mathcal M(A)$ and $\mathcal M(A)/A$ are the noncommutative analogues of the \v{C}ech-Stone compactification and the \v{C}ech-Stone remainder of a locally compact space. In an attempt to generalize the independence results obtained for the Calkin algebra, one has to generalize the notion of inner automorphism, as there are noncommutative algebras for which  it is possible to construct outer automorphisms of the corona in $\ZFC$ (e.g., if $A=c_0(\mathcal O_2)$). The strict topology on $\mathcal M(A)$, which is Polish on bounded sets if $A$ is $\sigma$-unital, justifies the following definition of triviality: an automorphisms $\Lambda$ of $\mathcal M(A)/A$ is said trivial if its graph
\[
\{(a,b)\mid \Lambda(\pi(a))=\pi(b)\}
\]
is Borel in the strict topology. One can see that inner automorphisms are trivial, and that every trivial automorphism is absolute between models of set theory.

In the abelian case, where $A=C_0(X)$ for a locally compact $X$, automorphisms of $\mathcal M(A)/A$ correspond bijectively to homeomorphisms of $\beta X\setminus X$, linking the study of automorphisms of corona algebras to topology. If $X\neq\NN$, the notion of  ``permutation up to finite sets'' is replaced by ``permutation up to compact subsets of $X$''. 

The following was stated in \cite{Coskey-Farah}:
\begin{conjecturei}[Coskey-Farah]\label{conj:CFconj}
Let $A$ be a $\sigma$-unital nonunital $\Cstar$-algebra. The existence of nontrivial automorphisms of $\mathcal M(A)/A$ is independent from $\ZFC$.
\end{conjecturei}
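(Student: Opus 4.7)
The plan is to attack the two halves of the independence statement separately, following the template set by the abelian case in \ref{intro.enum2} and by the Calkin algebra case in \ref{intro.enum1}, but adapting each argument to the noncommutative setting of a general $\sigma$-unital $A$. On the $\CH$ side one must construct a nontrivial automorphism; on the Forcing Axioms side one must show every automorphism is trivial.

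For the $\CH$ direction, I would build a nontrivial automorphism of $\mathcal{M}(A)/A$ by transfinite back-and-forth of length $\omega_1$. Since $\CH$ forces the density character of $\mathcal{M}(A)/A$ to be $\aleph_1$, fix a continuous $\subseteq$-cofinal tower $\seq{B_\alpha}{\alpha<\omega_1}$ of separable $\Cstar$-subalgebras. At stage $\alpha$ extend a partial $*$-isomorphism $\iso_\alpha\colon B_\alpha\to B_\alpha'$ to $B_{\alpha+1}$, using the countable degree of saturation of $\mathcal{M}(A)/A$ (which holds for coronas of $\sigma$-unital algebras, being quotients of structures that are Polish on bounded sets in the strict topology) to realise the needed types over $B_\alpha$. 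Nontriviality follows from a diagonalisation against the only $\cc=\aleph_1$ many candidate Borel graphs: at stage $\alpha$, arrange $\iso_{\alpha+1}$ to disagree with the $\alpha$-th candidate trivialisation, which is possible because the saturation argument leaves at least two genuinely distinct choices of extension at each stage.

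For the Forcing Axioms direction, assume $\OCA+\MA_{\aleph_1}$. Given an automorphism $\iso$ of $\mathcal{M}(A)/A$, the strategy is threefold. First, produce a set-theoretic lift $\tilde\iso\colon\mathcal{M}(A)\to\mathcal{M}(A)$ with $\pi\tilde\iso=\iso\pi$, and argue that on each strict ball $\tilde\iso$ may be chosen $\Cmeas$-measurable; this uses Polishness of the strict topology together with a uniformisation-type selection. Second, on a $\subseteq$-cofinal family of separable hereditary subalgebras $A_n\subseteq A$ picked out by a quasicentral approximate identity, apply $\OCA$ to an open colouring on pairs of near-orthogonal contractions to upgrade $\tilde\iso\rs A_n$, modulo $A$, to a genuine asymptotic $*$-homomorphism. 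Third, use $\MA_{\aleph_1}$ to glue the coherent family of local lifts into a single global Borel lift, which witnesses that $\iso$ is trivial in the sense of the excerpt.

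The main obstacle is the open-colouring step in the Forcing Axioms direction. In $\beta\NN\setminus\NN$ this amounts to showing that a $\Cmeas$-measurable almost-permutation is already an almost-permutation, and in the Calkin algebra case Farah exploits the wealth of projections and a colouring built from near-commutation. For a general $\sigma$-unital $A$ one does not have such an abundance of projections, so the challenge is to isolate the structural features of $A$ that make a colouring argument go through: some form of ``enough approximate projections'' coming from an approximate identity, together with controlled behaviour of $\tilde\iso$ on the Pedersen ideal. Identifying and axiomatising the broadest class of $\sigma$-unital $A$ for which this can be carried out is, in my view, the core technical content of the thesis, with Conjecture \ref{conj:CFconj} being fully established for this class.
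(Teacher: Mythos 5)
The statement you are proving is labelled a \emph{conjecture} in the paper, and the paper does not prove it: it only establishes instances (Theorem~\ref{thm:Borel} for separable nuclear algebras with an increasing approximate identity of projections, Theorem~\ref{thm:CHMani} for manifolds, the reduced-product cases). Your proposal does not close the gap, and the two halves fail for different reasons.

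The $\CH$ half breaks at the extension step. A back-and-forth of length $\omega_1$ that realises types over countable subalgebras requires full countable saturation of $\mathcal M(A)/A$, but coronas of $\sigma$-unital algebras are only known to be countably \emph{degree-1} saturated (Theorem~\ref{thm:cd1sFH}), and the Calkin algebra --- the motivating example --- is provably not even countably quantifier-free saturated (the Fredholm-index argument reproduced in \S\ref{ss:sat}). So at a typical stage $\alpha$ you cannot realise the quantifier-containing type needed to extend $\iso_\alpha$ to $B_{\alpha+1}$, nor can you certify that ``at least two genuinely distinct extensions'' exist. Whether degree-1 saturation alone yields many automorphisms under $\CH$ is exactly Question~\ref{q:intro2} of the paper and is open; the Phillips--Weaver construction for $\mathcal C(H)$ circumvents the lack of saturation by exploiting the specific structure of $\mathcal B(H)$, and no analogue is known for general $A$.

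The Forcing Axioms half is a correct description of the \emph{shape} of the known arguments, but the step you defer --- designing an open colouring whose homogeneous sets upgrade a $\Cmeas$-measurable lift to an asymptotically additive one --- is the entire content, and the paper only carries it out under strong hypotheses: $\cstar{A}$ separable, nuclear, and admitting an increasing approximate identity of \emph{projections} (Theorems~\ref{thm:FA.liftingtheorem} and~\ref{thm:Borel}). Your closing paragraph, which proposes to ``axiomatise the broadest class'' for which the colouring works, is an admission that no proof is being offered for arbitrary $\sigma$-unital $A$. As written, the proposal is a research programme restating the conjecture, not a proof of it.
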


Various instances of the conjecture have been established. A common factor is the assumption of some low dimensionality hypothesis on $A$. The first result not relying on these assumptions in any way is the following corollary of Theorem~\ref{thm:CHMani}:
\begin{theoremi}
Let $X$ be a locally compact noncompact metrizable manifold. The existence of a  nontrivial homeomorphism of $\beta X\setminus X$ is consistent with $\ZFC$.
\end{theoremi}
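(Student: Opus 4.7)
The strategy is to reduce to the Continuum Hypothesis and build a nontrivial homeomorphism by a transfinite back-and-forth of length $\omega_1$, following the Rudin template for $\beta\NN\setminus\NN$. Since $\CH$ is consistent with $\ZFC$, it suffices to produce such a homeomorphism under the assumption of $\CH$, and then invoke relative consistency.

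Pass to the Gelfand dual: $\beta X\setminus X$ is the spectrum of the commutative corona $\Cstar$-algebra $C_b(X)/C_0(X)$, and a homeomorphism of $\beta X\setminus X$ corresponds to a $^*$-automorphism $\Lambda$ of this quotient. Since $X$ is locally compact, noncompact, and metrizable (hence paracompact), fix a compact exhaustion $X = \bigcup_n K_n$ with $K_n \subseteq \mathrm{int}(K_{n+1})$, decomposing $X$ into annular pieces $A_n = K_n \setminus \mathrm{int}(K_{n-1})$. The manifold hypothesis guarantees that each $A_n$ is a compact manifold with boundary and hence has a very rich self-homeomorphism group.

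Under $\CH$ I would enumerate a dense subset of $C_b(X)/C_0(X)$ as $\langle a_\alpha : \alpha < \omega_1\rangle$ and all strict-Borel maps $F \colon C_b(X) \to C_b(X)$ (of which there are only $2^{\aleph_0} = \aleph_1$) as $\langle F_\alpha : \alpha < \omega_1\rangle$, and build an increasing chain of countable partial $^*$-isomorphisms $\Lambda_\alpha$ between subalgebras of $C_b(X)/C_0(X)$. At each successor stage I arrange (i) that $a_\alpha$ enters both the domain and range of $\Lambda_{\alpha+1}$ (back-and-forth density), and (ii) that some element $b_\alpha \in \mathrm{dom}(\Lambda_{\alpha+1})$ satisfies $\Lambda_{\alpha+1}(b_\alpha) \neq \pi(F_\alpha(\widetilde{b}_\alpha))$ for every strict-Borel lift $\widetilde{b}_\alpha$ of $b_\alpha$, thereby killing $F_\alpha$ as a potential trivialization of the limit. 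The union $\Lambda = \bigcup_\alpha \Lambda_\alpha$ extends by density to an automorphism of $C_b(X)/C_0(X)$ whose graph is not strict-Borel, yielding the desired nontrivial homeomorphism of $\beta X\setminus X$.

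The main obstacle is step (ii), the diagonalization. This is where the manifold hypothesis is essential: given $F_\alpha$ and the countable constraints already imposed by $\Lambda_\alpha$, I would choose $n$ large enough that $A_n$ lies past the ``support'' of $\Lambda_\alpha$, and use the abundance of self-homeomorphisms of $A_n$ to produce a local perturbation whose corona image disagrees with $F_\alpha$'s prediction modulo $C_0(X)$. Heuristically, any single strict-Borel $F_\alpha$ can supply only ``few'' possible outputs for a given input, while the homeomorphism group of a compact manifold piece $A_n$ supplies continuum-many inequivalent ones, so the diagonalization succeeds. Managing the bookkeeping at each countable stage, reconciling (i) with (ii), and taking unions at limit stages without losing the partial $^*$-isomorphism property is the technical heart of the construction; in particular one needs a careful analysis of how strict-Borel maps on $C_b(X)$ interact with localization on the tail annuli $A_n$.
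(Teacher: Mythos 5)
Your reduction to $\CH$ and the final counting step (only $\mathfrak c$-many trivial homeomorphisms, so it suffices to produce more automorphisms than that, or a single one escaping every Borel graph) are fine, but the core of your construction has a genuine gap: the back-and-forth step (i). Extending a countable partial $^*$-isomorphism between subalgebras of $C_b(X)/C_0(X)$ so that a prescribed element enters its domain and range is exactly what countable saturation (or at least a strong homogeneity property) of the corona is for. For $X=\NN$ this works because $\mathcal P(\NN)/\mathrm{Fin}$ is a countably saturated atomless Boolean algebra; for $X=[0,1)$ or more generally for $X$ an increasing union of compacta with uniformly bounded topological boundaries, Farah--Shelah proved $C(\beta X\setminus X)$ is countably saturated and the saturation-plus-$\CH$ machine (Theorem~\ref{thm:CHandSat}) applies. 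But for a manifold of dimension $\ge 2$ no such saturation is known --- coronas of $\sigma$-unital algebras are only known to be countably \emph{degree-$1$} saturated, and whether that suffices to run a back-and-forth is precisely the open Question~\ref{q:intro2}. This missing extension step is the reason the problem was open even for $\er^n$, $n\ge 2$; your proposal silently assumes it away. The diagonalization step (ii) is also underspecified (a single Borel lift candidate must be defeated against all $\mathfrak c$-many lifts of $b_\alpha$ simultaneously), but it is secondary to the failure of (i).

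The paper's actual proof avoids partial isomorphisms between abstract subalgebras entirely. It filters $Q(X,\ce)=C_b(X)/C_0(X)$ by subalgebras $C_{f_\alpha}$ of functions with modulus of continuity controlled by $f_\alpha$ on a fixed sequence of disjoint compact pieces $Y_n$, where $\{f_\alpha\}_{\alpha<\omega_1}$ is a fast $\leq^*$-cofinal family (using $\mathfrak d=\omega_1$, a consequence of $\CH$). The manifold structure enters only through ``flexibility'': each $Y_n$ admits boundary-fixing self-homeomorphisms of arbitrarily small positive radius. For each $p\in 2^{\omega_1}$ one builds a \emph{coherent} $\omega_1$-sequence of spatially induced automorphisms $\tilde\psi_\alpha$ --- each one honestly induced by a homeomorphism of $X$ --- which stabilize on each $C_{f_\alpha}$ and hence cohere to a single automorphism $\tilde\Psi_p$ of the corona; distinct $p$ give distinct $\tilde\Psi_p$, yielding $2^{\aleph_1}=2^{\mathfrak c}>\mathfrak c$ automorphisms and hence nontrivial ones. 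The limit automorphism need not be induced by any homeomorphism of $X$, which is where nontriviality comes from. If you want to salvage your approach, you would first have to prove countable saturation of $C(\beta X\setminus X)$ for manifolds --- which is open --- so the coherent-family route is not just a stylistic alternative but currently the only available one.
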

The consistency of the existence of a nontrivial homeomorphism of $\beta X\setminus X$ was an open problem even in the most natural cases, e.g., when $X=\er^n$ for $n\geq 2$.

Focusing on non necessarily commutative coronas we prove new instances of the conjecture, using Theorem~\ref{thm:FA.liftingtheorem}. An appealing consequence of Theorem~\ref{thm:Borel} is the following:
\begin{theoremi}\label{thmi:1}
Let $A$ be a unital nuclear separable $\Cstar$-algebra and $B$ its stabilization $A\otimes\mathcal K(H)$. Then the existence of nontrivial automorphisms of $\mathcal M(B)/B$ is independent from $\ZFC$.
\end{theoremi}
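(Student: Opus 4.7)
The plan is to prove the two halves of independence separately, invoking the structural theorems already announced.

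For the consistency of the existence of a nontrivial automorphism, I would assume $\CH$ and perform a back-and-forth construction of length $\omega_1$. Because $A$ is unital, if $(e_n)$ is an approximate unit of $\mathcal{K}(H)$ consisting of increasing finite-rank projections, then $p_n = 1_A\otimes e_n$ is an approximate unit of $B$ made of projections with $p_{n+1}p_n = p_n$. This yields a ``block-diagonal'' decomposition $B \cong \bigoplus_n (p_{n+1}-p_n)B(p_{n+1}-p_n)$ in the strict topology, which plays the role for $\mathcal{M}(B)/B$ that $\mathcal{P}(\NN)/\Fin$ plays for $\beta\NN\setminus\NN$ and that the block-diagonal unitization plays for the Calkin algebra in the Phillips--Weaver construction. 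Enumerating the $\aleph_1$-many strictly continuous Borel codes for graphs, one builds an automorphism $\Lambda$ of $\mathcal{M}(B)/B$ along a chain of separable subalgebras, at each step of the recursion guaranteeing that $\Lambda$ disagrees with the $\alpha$-th potential trivial lift, using enough freedom from nuclearity and the matrix structure of $A\otimes\mathcal K(H)$ to make the disagreement consistent with the partial automorphism built so far.

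For the consistency of nonexistence, I would work under a sufficiently strong forcing axiom (such as $\OCA+\MA$, or $\PFA$), and apply Theorem~\ref{thm:FA.liftingtheorem} to an arbitrary automorphism $\Lambda$ of $\mathcal{M}(B)/B$. The lifting theorem yields that $\Lambda$ admits a lift from $\mathcal{M}(B)$ which is sufficiently definable (strictly Borel-measurable). Then Theorem~\ref{thm:Borel}, applied to this definable lift together with nuclearity and separability of $A$, forces the graph of $\Lambda$ to be Borel in the strict topology, so $\Lambda$ is trivial in the sense defined in the introduction.

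The main obstacle is the $\CH$ direction. The Forcing Axiom direction is essentially bookkeeping given Theorems~\ref{thm:FA.liftingtheorem} and~\ref{thm:Borel}. In contrast, the $\CH$ construction must produce an automorphism whose graph is genuinely non-Borel rather than merely outer: one has to diagonalize against a cofinal family of Borel codes while maintaining multiplicativity, $\ast$-preservation, and norm continuity across the transfinite stages of a back-and-forth. This is where the unital nuclear hypothesis on $A$ plays its principal role, as it guarantees enough internal approximation and matrix structure in each corner $(p_{n+1}-p_n)B(p_{n+1}-p_n) \cong A\otimes M_{k_n}(\ce)$ to perform the block-by-block twisting that underlies the classical Phillips--Weaver argument, now transported to the stabilized setting.
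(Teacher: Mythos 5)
The overall shape of your proposal is right: one half of the independence comes from $\CH$ and the other from Forcing Axioms. But both halves, as written, diverge from what the paper actually needs and, in the $\CH$ direction, leave a genuine gap.

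For the Forcing Axiom direction, you describe applying Theorem~\ref{thm:FA.liftingtheorem} to an arbitrary automorphism $\Lambda$ of $\mathcal{M}(B)/B$ and then feeding the resulting lift into Theorem~\ref{thm:Borel}. This misstates the logical relationship between those two theorems. Theorem~\ref{thm:FA.liftingtheorem} applies to maps whose domain is a reduced product $\prod M_{k(n)}/\bigoplus M_{k(n)}$, not to automorphisms of $\mathcal{M}(B)/B$, and it is invoked \emph{inside} the proof of Theorem~\ref{thm:Borel} through the cpc maps $\Psi_{f,E}$ of Lemma~\ref{lem:summing}; it is not a preprocessing step you run on $\Lambda$. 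The correct statement is much shorter: since $A$ is unital, nuclear and separable, $B=A\otimes\mathcal{K}(H)$ is separable and nuclear, and $1_A\otimes e_n$ is an increasing approximate identity of projections for $B$, so under $\OCA_\infty + \MA_{\aleph_1}$ Theorem~\ref{thm:Borel} applies verbatim and every automorphism of $\mathcal{M}(B)/B$ is trivial.

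The $\CH$ direction is where the real gap lies. The paper does not reprove this; it cites \cite{Coskey-Farah}, which handles stable algebras (in fact without any nuclearity hypothesis). You instead sketch a back-and-forth construction, and the sketch is not a proof. The specific claim $B\cong\bigoplus_n (p_{n+1}-p_n)B(p_{n+1}-p_n)$ is false even after passing to strict completions: that direct sum is only the block-diagonal subalgebra of $B$, and elements of $B$ (let alone of $\mathcal{M}(B)$) are not supported on the diagonal. What \emph{is} true, and what the Coskey--Farah argument exploits, is a decomposition modulo $B$ of elements of $\mathcal{M}(B)$ into finitely many block-tridiagonal pieces, analogous to Lemma~\ref{lem:summing} in this paper. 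More substantively, the heart of your sketch --- ``at each step of the recursion guaranteeing that $\Lambda$ disagrees with the $\alpha$-th potential trivial lift'' while simultaneously preserving multiplicativity and extendibility --- is exactly the difficult content; you acknowledge this, but you do not indicate the mechanism that makes the diagonalization possible. In the concrete setting here the mechanism is the freedom provided by the corner algebras $(p_{n+1}-p_n)B(p_{n+1}-p_n)\cong A\otimes M_{r_n}$ together with countable degree-$1$ saturation of the corona (Theorem~\ref{thm:cd1sFH}), and spelling this out essentially reproduces the argument of \cite{Coskey-Farah}. As stated, your proposal does not close the $\CH$ direction; it should either cite \cite{Coskey-Farah} as the paper does, or supply the saturation-based diagonalization in detail.
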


The consequences of Theorem~\ref{thm:FA.liftingtheorem} go beyond the study of automorphisms of coronas. As already noted for discrete structures (see \cite{Farah.AQ}), the existence or nonexistence of embeddings between different quotient structures may be independent from $\ZFC$. However, for some coronas the existence of certain embeddings is absolute between models of set theory. For instance, it can be proved that the algebra $B=\prod M_n/\bigoplus M_n$ embeds in the Calkin algebra without any set theoretical assumption. It is natural to ask whether modified versions of $B$ can be embedded into the Calkin algebra in $\ZFC$. If $\SI\subseteq\mathcal P(\NN)$ is any ideal the algebra $\prod M_n/\bigoplus_{\SI} M_n$ under $\CH$ embeds into $B$, and consequently into the Calkin algebra. The following corollary of Theorem~\ref{thm:noinjection} shows that this not always the case.
\begin{theoremi}\label{thmi:2}
Let $\SI\subseteq\mathcal P(\NN)$ be a meager dense ideal. It is consistent with $\ZFC$ that $\prod A_n/\bigoplus _{\SI}A_n$ does not embed in the Calkin algebra for any choice of $A_n$ unital nonzero $\Cstar$-algebras.
\end{theoremi}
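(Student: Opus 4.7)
The plan is to deduce this consistency result directly from Theorem~\ref{thm:noinjection} by showing that the hypotheses of that theorem are satisfied, under a suitable Forcing Axiom, when the target corona is the Calkin algebra. The Calkin algebra $\mathcal C(H)$ is, by definition, the corona $\mathcal M(\mathcal K(H))/\mathcal K(H)$ of the $\sigma$-unital separable $\Cstar$-algebra $\mathcal K(H)$, so it fits the general framework of Theorem~\ref{thm:noinjection}. The strategy is therefore to choose the forcing axiom under which Theorem~\ref{thm:noinjection} is proved (I anticipate this will be $\OCA$ together with $\MA_{\aleph_1}$, since this is the axiomatic backdrop indicated by Theorem~\ref{thm:FA.liftingtheorem}) and verify the structural input the theorem requires of $\SI$, namely meagerness and density.

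The first step is to recall that inside $\prod A_n/\bigoplus_{\SI} A_n$ the elements of the form $\pi\bigl((1_{A_n}\chi_S(n))_n\bigr)$, for $S\subseteq\NN$, form a copy of the Boolean algebra $\mathcal P(\NN)/\SI$ in the center of the quotient, and any embedding $\Phi\colon \prod A_n/\bigoplus_{\SI}A_n\hookrightarrow \mathcal C(H)$ restricts to a Boolean algebra embedding of $\mathcal P(\NN)/\SI$ into the projection lattice of $\mathcal C(H)$. Thus the \emph{choice of the $A_n$ is irrelevant}: the obstruction lives entirely at the level of the Boolean skeleton, and any nonzero unital $A_n$ yields a unit which activates this Boolean substructure.

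The second step is to apply Theorem~\ref{thm:FA.liftingtheorem} under the chosen forcing axiom to conclude that $\Phi$ is trivial, i.e.\ admits a strictly Borel lifting $\tilde\Phi\colon \prod A_n\to\mathcal M(\mathcal K(H))$. Such a lifting induces, modulo $\mathcal K(H)$, a Borel-liftable morphism from the Boolean algebra $\mathcal P(\NN)/\SI$ into $\mathcal P(\NN)/\Fin$; this is where Theorem~\ref{thm:noinjection} is designed to bite, producing a contradiction for meager dense $\SI$. The input needed from $\SI$ is supplied by Talagrand's characterization of meagerness: there is a partition of $\NN$ into finite intervals $(I_k)_k$ such that no $A\in\SI$ contains $I_k$ for all but finitely many $k$. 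Density of $\SI$ ensures that the quotient $\mathcal P(\NN)/\SI$ is rich enough for the diagonalization in Theorem~\ref{thm:noinjection} to be executed.

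The main obstacle will be the third step, which is the combinatorial/diagonal argument packaged inside Theorem~\ref{thm:noinjection}: given the Borel lifting, one must use the Talagrand partition together with density of $\SI$ to build an $\aleph_1$-sized almost-disjoint family in $\mathcal P(\NN)/\SI$ whose image cannot fit coherently as projections modulo the compacts. I expect this to run parallel to the Farah--Shelah obstructions to embeddings of reduced products in \cite{Farah.AQ}, with the meagerness replacing the usual ``$P$-ideal'' hypothesis used in the opposite direction. Once this obstruction is in place, the contradiction with the existence of $\Phi$ is immediate, and the consistency statement follows because the chosen forcing axiom is itself consistent with $\ZFC$.
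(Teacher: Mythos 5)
Your high-level plan---reduce to Theorem~\ref{thm:noinjection}, which is proved under $\OCA_\infty + \MA_{\aleph_1}$, and note that the obstruction lives on the central Boolean skeleton $\{\pi(\tilde p_S) : S\subseteq\NN\}$ so the choice of $A_n$ is irrelevant---matches the paper. (The paper also handles the non-unital case via the observation that $q\mathcal C(H)q\cong\mathcal C(H)$, Corollary~\ref{cor.FA.EmbCalkin}, which you don't mention, but that is a minor omission.)

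The description of what happens once the lifting machinery is applied is where your account diverges from (and misdescribes) the actual argument. First, Theorem~\ref{thm:FA.liftingtheorem} does not produce a strictly Borel lift of the full embedding $\Phi\colon\prod A_n/\bigoplus_\SI A_n\to\mathcal C(H)$; its domain must be a reduced product $\prod M_{k(n)}/\bigoplus M_{k(n)}$ modulo $\Fin$. The paper therefore restricts to $\ell_\infty/c_\SI$ and precomposes with the quotient map $\pi\colon\ell_\infty/c_0\to\ell_\infty/c_\SI$ to obtain $\psi=\phi\circ\pi\colon\ell_\infty/c_0\to\mathcal C(H)$, and the lifting theorem yields only an asymptotically additive lift $\alpha$ of $\psi$ on a ccc/Fin (hence nonmeager) ideal $\SJ$, upgraded to a genuine $^*$-homomorphism by Theorem~\ref{thm:FA.ApproxStruct}. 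Second, there is no $\aleph_1$-sized almost-disjoint family or Farah--Shelah-style coherence obstruction at this stage; once $\alpha$ is in hand, the contradiction is a short single-set argument. Because $\SJ$ is nonmeager and $\SI$ is meager, $\SJ\not\subseteq\SI$, so one picks an infinite $X\in\SJ\setminus\SI$. Since $\alpha$ is a $^*$-homomorphic lift on $\SJ$, the ranges $\alpha_n(1)$ for $n\in X$ are pairwise orthogonal and eventually of norm $1$. Now density of $\SI$ gives an infinite $Y\subseteq X$ with $Y\in\SI$; for such $Y$ one has $\psi(\tilde p_Y)=\phi(\pi(\tilde p_Y))=0$, yet $\|\pi_{\cstar{A}}(\alpha(p_Y))\|=1$ because the $\alpha_n(1)$, $n\in Y$, are eventually norm-one and orthogonal. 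That is the contradiction. Meagerness is thus used only to separate $\SJ$ from $\SI$ (not via an explicit Talagrand partition), and density is used only to drop an infinite subset of $X$ back into $\SI$; neither feeds a diagonalization over an uncountable family.
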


It is not known whether it is consistent with $\ZFC$ that algebras of the form $\prod A_n/\bigoplus_{\SI} A_n$ embed into the Calkin algebra (independently from the choice of $A_n$). A positive answer to the following question would imply it.
\begin{questioni}\label{q:intro1}
Is it consistent with $\ZFC$ that every $\Cstar$-algebra of density character $\aleph_1$ embeds into the Calkin algebra?
\end{questioni}
The difficulties of answering this question are of model theoretical nature.

The recent formalization of continuous model theory relies on the work in \cite{BYBHU}. This approach was applied  to operator algebras  in \cite{FHS.I}, \cite{FHS.II}, and \cite{FHS.III}, where model theory for $\Cstar$-algebras and tracial von Neumann algebras was developed. A major recent progress  in this direction is the large scale 142 page monograph \cite{bourbaki} (in which I was a major contributor), which explores the model theory of amenable $\Cstar$-algebras. This work puts amenable $\Cstar$-algebras under a model theory lens. It shows that many properties related to the Elliott classification programme (such as amenability itself, Toms and Winter's strong self absorption (\cite{TW.SSA}), and several notions of dimension) can be approached in a model theoretical way. The main and most ambitious goal of the applications of model theory to $\Cstar$-algebras is to construct novel and exotic examples of nuclear $\Cstar$-algebras. The existence of these new objects is crucial to many important open problems in $\Cstar$-algebras (such as the Toms-Winter conjecture, or the existence of a nuclear stably finite $\Cstar$-algebra which is not quasidiagonal). The potential consequences of these original ways of thinking are profound. For instance, recent work of Goldbring and Sinclair, provides insights on longstanding problems related to quasidiagonality and the UCT (for an overview on these concepts, see \cite{WinterAbel}).

In this setting, we study the concept of  countable saturation, a model theoretical property shared by ultrapowers and reduced products. This notion is tied to $\CH$: if $C$ is a $\Cstar$-algebra which is countably saturated and $B$ is an algebra of character density $\aleph_1$ whose separable subalgebras embed into $C$, then under $\CH$ it is possible to show that $B$ itself embeds into $C$. The core difficulty in answering Question~\ref{q:intro1} is that the Calkin algebra fails to be countably saturated. We analyze weaker versions of the concept of countable saturation which are shared by coronas of $\sigma$-unital algebras. The weaker of these layers of saturation, known as countable degree-$1$ saturation, was  considered in a different setting by Kirchberg under the name ``$\epsilon$-test" (\cite{Kirch.CentralSeq}). Such weakenings provide a uniform setting for properties shared by coronas of $\sigma$-unital algebra, such as the following (see \cite{farah2011countable}): being AA-CRISP, sub-$\sigma$-Stonean, satisfying the conclusion of Kasparov's Technical Theorem, and so on.

We expand the class of algebras with this property (see Corollary~\ref{cor:vnAsigmafinite}):
\begin{theoremi}
Let $M$ be a von Neumann algebra with an infinite $\sigma$-finite trace $\tau$ and $\mathcal K_\tau$ be the ideal of finite trace elements. Then $M/\mathcal K_\tau$ is countably degree-1 saturated.
\end{theoremi}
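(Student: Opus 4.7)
The plan is to establish countable degree-$1$ saturation of $M/\mathcal K_\tau$ by reducing to Farah's argument for coronas of $\sigma$-unital $\Cstar$-algebras from \cite{farah2011countable}. The von Neumann algebra $M$ plays the role of the multiplier algebra $\mathcal M(A)$, while $\mathcal K_\tau$, a norm-closed two-sided self-adjoint ideal of $M$, plays the role of the $\sigma$-unital ideal $A$. The key structural ingredient that makes the reduction go through is that $\sigma$-finiteness of $\tau$ endows $\mathcal K_\tau$ with a countable approximate identity inside $M$, turning it into the von Neumann analogue of a $\sigma$-unital ideal in a multiplier algebra.

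Concretely, $\sigma$-finiteness yields an increasing sequence of projections $(p_n)_n \subseteq M$ with $\tau(p_n) < \infty$ and $\sup_n p_n = 1_M$, so the element $h = \sum_n 2^{-n} p_n$ is a norm-convergent sum of finite-trace elements and hence lies in $\mathcal K_\tau$. The inequality $h \geq 2^{-k} p_k$ for every $k$ allows one to show, via a functional calculus argument, that $(h^{1/n})_n$ is a countable approximate unit for $\mathcal K_\tau$, in particular making $\mathcal K_\tau$ a $\sigma$-unital $\Cstar$-algebra. Arveson's quasicentrality theorem applied to the ideal $\mathcal K_\tau \triangleleft M$ then produces a countable approximate unit $(e_n) \subseteq \mathcal K_\tau$ that is quasicentral along any prescribed countable subset of $M$.

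Given a countable collection of degree-$1$ $*$-polynomial conditions in one variable with parameters in $M/\mathcal K_\tau$ satisfying finite satisfiability, I would lift all parameters to $M$ and select contractions $b_n \in M$ that approximately solve the first $n$ conditions within $2^{-n}$. Then define
\[
b \;=\; \sum_n (e_{k_{n+1}} - e_{k_n})^{1/2}\, b_n\, (e_{k_{n+1}} - e_{k_n})^{1/2}
\]
for a sufficiently sparse subsequence $(k_n)$, chosen so that $\|[(e_{k_{n+1}} - e_{k_n})^{1/2}, c]\| < 2^{-n}$ for each parameter $c$ of the $n$-th polynomial condition. Since the differences $(e_{k_{n+1}} - e_{k_n})$ sum to $1$ in the ultrastrong topology, the series converges to a contraction $b \in M$. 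A standard computation, essentially the one in \cite{farah2011countable}, then uses quasicentrality to commute the square-root factors past the parameters modulo a summable tail of norm errors, and shows that $\pi(b)$ satisfies every condition in the system.

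The main obstacle is transferring the argument from the strict topology of $\mathcal M(A)$ to the ultrastrong topology of $M$. One must check that the $\sigma$-unitality argument for $\mathcal K_\tau$ is robust (in particular, that the functional calculus of $h$ genuinely approximates arbitrary $\tau$-compact elements in norm), that Arveson's quasicentrality procedure carries over uniformly for norm-closed ideals inside a von Neumann algebra, and that the defining sum for $b$ converges within $M$ rather than only inside some larger completion. Once these compatibility points are verified, the reduction to the $\sigma$-unital $\Cstar$-algebra case of \cite{farah2011countable} is essentially mechanical.
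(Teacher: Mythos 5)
Your strategy coincides with the paper's: the corollary is deduced there from a general theorem (Theorem~\ref{thm:FactorCtbleSat}) asserting that $M/A$ is countably degree-$1$ saturated whenever $A$ is an essential ideal of a unital $\Cstar$-algebra $M$ admitting an increasing sequence of positive elements with supremum $1_M$, and bounded increasing sequences converge in $M$ --- exactly the two hypotheses you extract from $\sigma$-finiteness of $\tau$ and von Neumann monotone completeness. The realizing element is likewise a ``diagonal'' sum $\sum_n f_n b_n f_n$ with $f_n=(e_{n+1}-e_n)^{1/2}$ for a carefully chosen quasicentral approximate identity, so the architecture is the same.

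There is, however, one substantive gap: you defer the verification that $\pi(b)$ realizes the type to ``a standard computation, essentially the one in \cite{farah2011countable}.'' That computation is precisely where the cited argument goes wrong (the thesis points out that equation (10) on p.~14 of that paper is incorrect), and repairing it is where the real work lies. The easy half of the verification is the upper bound $\norm{P_n(\bar z)}\le r_n$, which follows from complete positivity and contractivity of $x\mapsto\sum_j f_j x_j f_j$. The hard half is the matching lower bound $r_n\le\norm{P_n(\bar z)}$, which requires
\[
\limsup_{j\to\infty}\norm{f_j a_j f_j}\ \le\ \norm{\pi\Bigl(\sum_{j} f_j a_j f_j\Bigr)},
\]
and this does not follow from quasicentrality plus summable commutator errors alone; in the paper it is secured by building extra conditions into the approximate identity in advance (items \ref{cond0}, \ref{cond4} and \ref{cond8} of Lemma~\ref{lem:approxid1EV}), the last of which is proved by an almost-orthogonality argument with unit vectors in a faithful representation in which the approximate identity converges strongly to $1$. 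Without this your argument only yields that the conditions are satisfied up to a one-sided inequality. A minor further point: countable degree-$1$ saturation concerns types in countably many variables, so you should work with polynomials $P_n(x_0,\dots,x_n)$ and realize the whole sequence rather than a single variable; this is routine but needs to be set up.
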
 

Another connection between countable saturation and $\CH$ is given by the fact that, under $\CH$, coronas of $\sigma$-algebras which are countably saturated have nontrivial automorphisms. One may ask whether the full power of countable saturation is needed for this result. A negative answer to the following question would provide solutions to one side of Conjecture~\ref{conj:CFconj}:
\begin{questioni}\label{q:intro2}
Under $\CH$, is there an infinite-dimensional countably degree-1 saturated algebra of density character $\aleph_1$ with only $\mathfrak c$-many automorphisms?
\end{questioni}

For a survey on the different layers of saturation see \cite{farah2011countable}. For a detailed description of the applications of continuous model theory to $\Cstar$-algebras see \cite{bourbaki}.

\section{Structure of the thesis}
 
In Chapter~\ref{ch:prel} we introduce notation and  preliminary notions.

Chapter~\ref{ch:CH} is focused on model theory and the consequences of $\CH$. In \S\ref{s:CH.MTforcstar} we introduce the concept of saturation and its different layers. We prove that quotients of certain von Neumann algebras (Theorem~\ref{thm:FactorCtbleSat})  and some abelian algebras (Theorems \ref{thm:CHSat.Abelian.Conds} and \ref{thm:CHSat.Abelian.Conds2}) have certain degrees of saturation. The results of this section come from joint work with Eagle contained in \cite{EV.Sat}. In \S\ref{ss:CH.consofCH} we analyze how the existence of a plethora of embedding between corona $\Cstar$-algebras can be proved, combining $\CH$ and the saturation of certain $\Cstar$-algebras. In \S\ref{s:CH.SCmani}, and specifically in Theorem~\ref{thm:CHMani}, we prove that $\CH$ implies the existence of many nondefinable homeomorphisms for the \v{C}ech-Stone remainder of a manifold, and consequently of automorphisms of the associated corona $\Cstar$-algebra. The results of this section are contained in \cite{V.Nontrivial}.

 In Chapter~\ref{ch:Ulam} we introduce and develop a strong concept of stability for maps between $\Cstar$-algebras known as Ulam stability. The main results of this chapter are Theorem~\ref{thm:US.USFinDim} (showing that approximate maps from a finite-dimensional $\Cstar$-algebra to any $\Cstar$-algebra are close to $^*$-homorphisms by a factor independent from the domain and the codomain) and Corollary~\ref{cor:US.AF2}, that proves the same stability result holds if  the domain is a unital AF algebra and the codomain is a von Neumann algebra. The results of this chapter are contained in \cite{MKAV.UC}, a joint work with McKenney.
 
 Chapter~\ref{ch:FA} focuses on the consequences of Forcing Axioms on the structure of automorphisms of coronas. First, we prove a powerful lifting result - Theorem~\ref{thm:FA.liftingtheorem}. Stating and proving this takes all of \S\ref{s:FA.Lift1} and \S\ref{s:FA.Lift2}. Then, in \S\ref{s:FA.consofLT} we prove  Theorem~\ref{thm:Borel}. This shows that Forcing Axioms imply that all automorphisms of the corona of a separable nuclear $\Cstar$-algebra carrying an approximate identity of projections are trivial. In \S\ref{s:FA.Emb} we use again Theorem~\ref{thm:FA.liftingtheorem} to show that many of the embeddings constructed from $\CH$ in \S\ref{ss:CH.consofCH} cannot exist in the presence of Forcing Axioms. Part of these results are obtained in joint work, yet unpublished, with McKenney.

Lastly, Chapter~\ref{ch:QandA} contains a list of open questions and hints for future developments of the research presented.
\section{Acknowledgments}

I would like to thank the Fields Institute for letting me use their common spaces in the last 5 years. Also, I would also like to thank Ilijas Farah for funding most of my travels to conferences, meetings and workshops with his grant. Similarly, I want to thank the Association of Symbolic Logic, and the organizers of conferences and meetings for the support. I particularly want to thank Institut Mittag-Leffler for having me as a Postdoctoral Fellow for the programme on Classification of Operator Algebras: Complexity, Rigidity, and Dynamics in Winter 2016, and the organizers of the Focusing Programme on Classification of $\Cstar$-algebras in M\"unster for having me in Summer 2015. Finally, I have been supported by the Susan Mann Scholarship at York University, and I would like to thank the donors for funding the scholarship.

I would like to thank the organizers and participants of the Toronto Set Theory seminar and the Operator Algebras Seminar for  many interesting and fruitful conversations.

I wish to thank Prof. George Elliott for the many, sometimes long,  conversations. When I came to Toronto, I didn't know what a $\Cstar$-algebra was. It is his great merit if I managed to scratch the surface of this interesting area of mathematics.

Finally, I would like to thank my advisor, Prof. Ilijas Farah. Whenever I had a wrong, bad, or terrible idea (a high probability event) he never told me so, but always made me realize it by myself. For this reason and many others he was the best mentor I could've asked for.

\chapter{Preliminaries and Notation}\label{ch:prel}

The notation  in  use is quite standard. $\NN$ (or $\omega$) represents the set of natural numbers (including $0$), $\ZZ$ the integers, $\QQ$ the rationals, $\er$ the reals and $\ce$ the complex numbers. $\omega_1$ is the first uncountable ordinals, with $\mathfrak c$ being the cardinality of the Continuum (i.e., $\mathfrak c=|\er|$). Cardinals are denoted by the $\aleph_0,\aleph_1$, and so on. If $X$ is a set, $\mathcal P(X)$ is its power set. If $X$ is a locally compact topological space, $\beta X$ denotes its \v{C}ech-Stone compactification.

We use the variables $A$, $B$, .. to denote $\Cstar$-algebras, and the variables $f$, $g$, ... for functions, while $\phi,\psi$.. usually denote maps. This notation is used everywhere but for Chapter~\ref{ch:FA}, where the amount of notation needed forces us to use $\cstar{A}$, $\cstar{B}$, ... for $\Cstar$-algebras and leave the variables $A$, $B$, .. to denote subsets of $\NN$.

If $f\colon X\to Y$ is a function and $Z\subseteq X$, $f[Z]$ is the pointwise image of $Z$. Finally, the symbol $\exists^\infty n$ reads ``there are infinitely many $n$'', while $\forall^\infty n$ is for $\exists n_0\forall n\geq n_0$.

The usual approach to set theory is carried over in \cite{kunen:settheory}, where Chapter VII represents the standard introduction to forcing. A good introductory approach to Forcing Axioms can be found in \cite{Moore.PFA}, while specific results are contained in \cite{Farah.AQ} and \cite{Todorcevic.PPIT}. A standard reference for descriptive set-theoretic results is \cite{Kechris.CDST}.

For results in $\Cstar$-algebras and von Neumann algebras we will often refer to \cite{Blackadar.OA} or \cite{Pedersen.CAAG}. \cite{Runde.LA} represents the standard text for whoever is interested in amenability and related topics. Another good approach to $\Cstar$-algebras can be found in \cite{David:CstarEx}.

\section{Set Theory}\label{s:Prel.ST}

Set Theory is the study of the infinite and of the axiomatization of mathematics. The interest in modern set theory can be traced back to the work of Cantor, Russell, Peano, Dedekind, Zermelo, Fraenkel, Hilbert, von Neumann and G\"odel among others. The usual axiomatization of mathematics nowadays used is known as the Zermelo-Fraenkel system of axioms, together with the Axiom of Choice AC. This scheme of axioms is known as $\ZFC$. By G\"odel incompleteness theorem, if $\ZFC$ is consistent, it cannot prove it. Nevertheless, $\ZFC$ is the setting in which modern mathematics is developed.

\subsection{Descriptive set theory}\label{ss:descr}
A topological space is said to be \emph{Polish} if it is separable and completely metrizable. As all compact metrizable spaces are Polish, so is $\mathcal P(\NN)$ when identified with $2^{\NN}$ endowed with the product topology. If $X$ is Polish and $Y\subset X$, we say that $Y$ is \emph{meager} if it is a countable union of closed and nowhere dense sets. $Y$ is said \emph{Baire} if it has meager symmetric difference with an open set and \emph{analytic} if it is the continuous image of  a Borel subset of a Polish space. If $X$ and $Y$ are Polish and $f\colon X\to Y$ we say that $f$ is \emph{Baire-measurable} if the inverse image of every open set is Baire, and $\Cmeas$-\emph{measurable} if it is measurable with respect to the $\sigma$-algebra generated by analytic sets. $\Cmeas$-measurable functions are, in particular, Baire-measurable (see~\cite[Theorem~21.6]{Kechris.CDST}).
The proof of the following can be found in \cite[Lemma 1.3.17]{BJ.SetTheory}.

\begin{theorem} \label{thm:STPrel.talagrand}
  
Let $Y_n$ be finite sets.  A set $G\subseteq\prod Y_n$ is comeager if and only if there is a partition $\seq{E_i}{i\in\NN}$ of $\NN$ into intervals, and a sequence $t_i\in \prod_{n\in E_i} Y_n=Z_i$, such that 
\[
\exists^{\infty}i(y\restriction Z_i=t_i)\Rightarrow y\in G.
\]
\end{theorem}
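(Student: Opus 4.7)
The plan is to prove both implications separately; the reverse direction is a direct Baire category calculation, while the forward direction requires an inductive construction that crucially uses finiteness of each $Y_n$.

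For the reverse direction, given the partition $\langle E_i \mid i \in \NN\rangle$ and the sequence $\langle t_i \rangle$, I would consider the set $H = \{y : \exists^{\infty} i\,(y \restriction Z_i = t_i)\}$ and write it as $H = \bigcap_{k} \bigcup_{i \geq k} C_i$, where $C_i = \{y : y \restriction Z_i = t_i\}$ is clopen in $\prod Y_n$ (it depends only on the finitely many coordinates in $E_i$). Each union $\bigcup_{i \geq k} C_i$ is open and dense: any basic open set is determined by a finite set of coordinates, so for $i$ large enough $E_i$ is disjoint from that set and one can extend to agree with $t_i$ on $E_i$. Hence $H$ is comeager, and by hypothesis $H \subseteq G$, so $G$ is comeager.

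For the forward direction, write $G \supseteq \bigcap_n U_n$ with each $U_n \subseteq \prod Y_n$ open and dense. I would build intervals $E_i = (m_{i-1}, m_i]$ and patterns $t_i \in \prod_{n \in E_i} Y_n$ inductively so that the cylinder $[t_i] := \{y : y \restriction E_i = t_i\}$ is contained in $V_i := U_0 \cap \cdots \cap U_i$. Once this is done, if $y \restriction Z_i = t_i$ for infinitely many $i$, then for every $n$ there exists $i \geq n$ with $y \in [t_i] \subseteq V_i \subseteq U_n$, so $y \in \bigcap_n U_n \subseteq G$, which is exactly the required implication.

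The crux is the following one-step lemma: given an open dense $V \subseteq \prod Y_n$ and an index $m$, there exist $m' > m$ and $t \in \prod_{m < n \leq m'} Y_n$ with $[t] \subseteq V$. This is the main obstacle, since density of $V$ only provides, for each fixed initial segment $s \in \prod_{n \leq m} Y_n$ separately, some extension of $s$ lying in $V$, whereas we need a single $t$ that works uniformly over all such $s$. Finiteness of each $Y_n$ is what rescues the argument: the set of pasts is finite, so I would enumerate them as $s_1, \ldots, s_k$ and refine $t$ in $k$ stages. At stage $j$, starting from the current $t$, use density of $V$ to find a longer extension $t'$ with $[s_j \cdot t'] \subseteq V$; since extensions only shrink cylinders, the inclusions $[s_\ell \cdot t'] \subseteq V$ achieved at earlier stages $\ell < j$ automatically survive. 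After $k$ stages one obtains $t$ with $[t] = \bigcup_{j} [s_j \cdot t] \subseteq V$, as desired. Applying this lemma at each stage $i$ with $V = V_i$ and $m = m_{i-1}$ produces the required partition and sequence.
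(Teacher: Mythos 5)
Your proof is correct. The paper does not give its own argument for this statement—it simply cites \cite[Lemma~1.3.17]{BJ.SetTheory}—but what you have written is exactly the standard proof found there: the easy direction is the observation that $\bigcap_k \bigcup_{i\ge k} C_i$ is a dense $G_\delta$ set, and the forward direction rests precisely on the ``one-step'' lemma that, because the set of pasts $\prod_{n\le m} Y_n$ is finite, one can choose a single block $t$ on an interval $(m,m']$ whose fat cylinder lies inside a given open dense set by handling each past in turn and noting that further extensions only shrink cylinders. One small point worth making explicit: when you invoke density and openness to refine, you get an arbitrary basic clopen set inside $V\cap[s_j\cdot t]$, which need not a priori be of the cylinder form $[s_j\cdot t']$; but since every basic clopen set contains a smaller one determined by an initial segment of coordinates (using finiteness of the $Y_n$ again to fill in the missing coordinates with arbitrary values), this causes no difficulty. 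Similarly, your density claim in the reverse direction tacitly uses that only finitely many $E_i$ meet any given finite set of coordinates, which holds because the $E_i$ are pairwise disjoint.
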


If $X,Y$ are sets and $Z\subseteq X\times Y$ a function $f\colon X\to Y$ is said to \emph{uniformize} $Z$ if for every $x\in X$
\[
\exists y(x,y)\in Z\Rightarrow (x,f(x))\in Z.
\]
By the Axiom of Choice, it is always possible to find uniformizing functions. The goal of what follows is to find well-behaved uniformizing functions. This is known as the Jankov-von Neumann Theorem (see \cite[Theorem~18.1]{Kechris.CDST}).

\begin{theorem}\label{thm:STPrel.JVN}
Let $X,Y$ be Polish and $Z\subseteq X\times Y$ be analytic. Then $Z$ has a  $\Cmeas$-measurable uniformization.
\end{theorem}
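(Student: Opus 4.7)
The plan is to select, for each $x$ in the projection of $Z$, the lexicographically least branch of a canonical closed section in Baire space, then show that this selector is $\Cmeas$-measurable.

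First I reduce to the case where $Y=\NN^\NN$ and the sections of $Z$ are closed. Since $Z$ is analytic in $X\times Y$, fix a continuous $h\colon\NN^\NN\to X\times Y$ with $h(\NN^\NN)=Z$. Define
\[
\widetilde Z=\{(x,z)\in X\times\NN^\NN : \pi_X(h(z))=x\},
\]
which is closed in $X\times\NN^\NN$ (the preimage of the diagonal of $X$ under the continuous map $(x,z)\mapsto (x,\pi_X(h(z)))$) and satisfies $\pi_X(\widetilde Z)=\pi_X(Z)$. For $x\in\pi_X(Z)$ the section $\widetilde Z_x=(\pi_X\circ h)^{-1}(\{x\})$ is a non-empty closed subset of $\NN^\NN$. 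If $\widetilde f\colon X\to\NN^\NN$ is a $\Cmeas$-measurable uniformization of $\widetilde Z$, then $f=\pi_Y\circ h\circ\widetilde f$ is a $\Cmeas$-measurable uniformization of $Z$, since the class of $\Cmeas$-measurable maps is closed under postcomposition with continuous maps.

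Second, for $x\in\pi_X(\widetilde Z)$ I define $\widetilde f(x)$ inductively as the lexicographically least branch of $\widetilde Z_x$: set $\widetilde f(x)(i)$ to be the least $k\in\NN$ such that some $z\in\widetilde Z_x$ satisfies $z\rs(i+1)=(\widetilde f(x)(0),\dots,\widetilde f(x)(i-1),k)$. This is well-defined because $\widetilde Z_x$ is a non-empty closed set. For $x\notin\pi_X(\widetilde Z)$, fix $\widetilde f(x)$ to be any constant value; the analytic set $\pi_X(\widetilde Z)$ and its complement both lie in the $\sigma$-algebra generated by analytic sets, so this extension does not harm measurability.

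Third, I verify $\Cmeas$-measurability by checking each $U_s=\{x: s\subset \widetilde f(x)\}$ for $s\in\NN^{<\NN}$, as these sets generate the Borel $\sigma$-algebra of $\NN^\NN$. Writing out the recursive definition, $s\subset\widetilde f(x)$ is equivalent to the finite conjunction, over $i<|s|$, of the conditions ``$[s\rs(i+1)]\cap\widetilde Z_x\neq\emptyset$'' together with ``$[s\rs i\cat(j)]\cap\widetilde Z_x=\emptyset$'' for each $j<s(i)$. Each condition of the form $[t]\cap\widetilde Z_x\neq\emptyset$ is equivalent to $x\in\pi_X(h([t]))$, which is analytic in $x$ as the continuous image of the Polish space $[t]$; its negation is coanalytic. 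Thus $U_s$ is a finite Boolean combination of analytic sets, hence lies in the $\sigma$-algebra generated by analytic sets, proving that $\widetilde f$ is $\Cmeas$-measurable.

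The main obstacle is the reduction step: one needs to replace the arbitrary analytic set $Z$ by one with closed sections so that a canonical ``leftmost'' selector exists. Once this is in place, the verification of $\Cmeas$-measurability reduces to the observation that the condition picking the lex-least branch is expressible as a \emph{finite} Boolean combination of analytic conditions at each initial segment, which is immediate from closure of $\sigma(\Sigma^1_1)$ under finite intersections and complements.
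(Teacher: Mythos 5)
The paper does not prove this theorem; it cites it as \cite[Theorem~18.1]{Kechris.CDST}. Your argument is correct and reproduces the standard Jankov--von Neumann proof found there: parametrize $Z$ continuously by $\NN^\NN$ to obtain a closed set in $X\times\NN^\NN$ with the same $X$-projection and closed sections, select the lexicographically least branch of each nonempty section, and observe that for each finite string $s$ the condition $s\subset\widetilde f(x)$ is a finite Boolean combination of analytic statements $x\in\pi_X(h([t]))$ and their negations, hence lies in $\sigma(\Sigma^1_1)$.
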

In general is not possible to uniformize Borel sets with a Borel function, but this is the case when the vertical sections of $Z$ are well behaved. For a proof of the following, see \cite[Theorem~8.6]{Kechris.CDST}.

\begin{theorem}\label{thm:STPrel.BorelUnif}
  Let $X,Y$ be Polish and $Z\subseteq X\times Y$ be Borel. Suppose further that for all $x\in X$ we have $\{y\mid (x,y)\in Z\}$ is either  empty or nonmeager. Then $Z$ has a  Borel function uniformization.
\end{theorem}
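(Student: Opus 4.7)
The plan is to combine the Kuratowski--Ulam theorem (via Vaught transforms) with a recursive selection procedure. The Vaught transforms provide the Borel definability needed to carry out a choice of a shrinking sequence of basic open sets, whose intersection picks out the value $f(x)\in Z_x$ in a Borel way.

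First I would establish the following Vaught transform lemma, which is standard (and proved by induction on the Borel hierarchy, using that the base case of open sets is trivial, and that the $\sigma$-algebra operations behave well by means of a countable base for $Y$ and the identity $(X\times Y\setminus B)^{\triangle U}=X\setminus B^{*U}$): for any Borel $B\subseteq X\times Y$ and any open $U\subseteq Y$, the sets
\[
B^{\triangle U}=\{x\mid B_x\cap U\text{ is comeager in }U\},\qquad B^{*U}=\{x\mid B_x\cap U\text{ is nonmeager}\}
\]
are Borel. Together with Theorem~\ref{thm:STPrel.talagrand}-style Kuratowski--Ulam reasoning, this makes the predicates appearing below Borel in $x$.

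Next, fix a compatible complete metric on $Y$ and a countable base $\{V_n\}$ of nonempty open subsets of $Y$. For each $x$ with $Z_x\neq\emptyset$, by hypothesis $Z_x$ is a nonmeager Borel set, so by the Baire property $Z_x$ is comeager in some nonempty open set, and consequently comeager in a cofinal family of basic opens of arbitrarily small diameter. I would then recursively define indices $n_k(x)$ by taking, at stage $k$, the least $m$ such that $\overline{V_m}\subseteq V_{n_{k-1}(x)}$, $\mathrm{diam}(V_m)<2^{-k}$, and $x\in Z^{\triangle V_m}$. Each $n_k$ is Borel on $\{x\mid Z_x\neq\emptyset\}$ (itself Borel, being $Z^{*Y}$), and on the complement we set $f(x)=y_0$ for some fixed $y_0$. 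Define $f(x)$ as the unique point of $\bigcap_k\overline{V_{n_k(x)}}$; this is Borel by construction.

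The main obstacle is arranging $f(x)\in Z_x$ rather than just in its closure. Comeagerness of the traces in each $V_{n_k(x)}$ is not by itself enough to force the single limit point into $Z_x$. To handle this, I would refine the recursion by using a Borel representation of $Z$: working from a Borel code of $Z$, one can produce a Borel-uniform family of open sets $W_{k,\ell}^x\subseteq V_{n_k(x)}$, dense in $V_{n_k(x)}$, such that $\bigcap_\ell W_{k,\ell}^x\subseteq Z_x$. Demanding in the recursion at stage $k+1$ that in addition $V_{n_{k+1}(x)}\subseteq\bigcap_{\ell\leq k}W_{k,\ell}^x$ (which is again a Borel condition by the Vaught transform machinery and the Borelness of the $W_{k,\ell}^x$ as functions of $x$) then forces $f(x)$ to lie inside every $W_{k,\ell}^x$ and hence inside $Z_x$. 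This finishes the Borel uniformization.
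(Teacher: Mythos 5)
Your overall plan is in the right spirit, and you have correctly identified the main obstacle in the naive shrinking-open-sets argument: since $Z_x$ is merely Borel, the single point determined by $\bigcap_k \overline{V_{n_k(x)}}$ need not lie in $Z_x$, only in its closure. The proposed remedy --- running the shrinking sequence inside a Borel-uniform dense $G_\delta$ subset of $Z_x$ --- is also the right kind of idea, and the Vaught-transform observation you invoke (that $B^{\triangle U}$ and $B^{*U}$ are Borel) is a standard and correct ingredient. The paper itself gives no proof, only a citation to Kechris, so the question is simply whether your argument closes.

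It does not quite close, for two reasons. First, the central claim --- that ``working from a Borel code of $Z$, one can produce a Borel-uniform family of open sets $W_{k,\ell}^x$ \ldots such that $\bigcap_\ell W_{k,\ell}^x \subseteq Z_x$'' --- is asserted as if it followed from the Vaught-transform machinery, but it does not. The Vaught transforms tell you that the \emph{predicates} ``$Z_x$ is comeager/nonmeager in $U$'' are Borel in $x$; they do not produce a Borel assignment $x \mapsto (W^x_\ell)_\ell$ of dense open sets witnessing the Baire property of $Z_x$, i.e.\ a meager $F_\sigma$ covering $V\setminus Z_x$ chosen measurably in $x$. This ``uniform Baire property'' is in fact the technical heart of the theorem (it is precisely the statement that a Borel set with comeager sections contains a Borel set with dense $G_\delta$ sections), and it requires either a nontrivial transfinite induction on Borel codes --- where the countable-union case needs real care, localizing to the open sets on which the summands are comeager --- or effective descriptive set theory. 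As written, the proposal reduces the theorem to a lemma of comparable difficulty and announces it as routine. Second, even granting that lemma, the recursion as stated does not deliver the conclusion you draw from it: requiring $V_{n_{k+1}(x)} \subseteq \bigcap_{\ell\le k}W_{k,\ell}^x$ only guarantees $f(x)\in W_{k,\ell}^x$ for $\ell\le k$, whereas membership in $Z_x$ needs $f(x)\in W_{k,\ell}^x$ for \emph{all} $\ell$ at some fixed $k$. This is fixable (fix the family at the initial stage and demand $V_{n_m(x)}\subseteq W^x_{0,\ell}$ for $\ell<m$, or diagonalize over pairs $(k,\ell)$), but the inference ``then forces $f(x)$ to lie inside every $W_{k,\ell}^x$'' does not follow from what you wrote.
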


\subsection{Ideals in $\mathcal P(\NN)$}\label{ss:ST.idealsonomega}

 A subset $\SI\subseteq\mathcal P(\NN)$ is \emph{hereditary} if $X\in\SI$ and $Y\subseteq X$ implies $Y\in\SI$, and it is an \emph{ideal} if it is hereditary and closed under finite unions. The easiest example of an ideal is the one of finite sets, $\Fin$. If for every infinite $X\subseteq\NN$ there is an infinite $Y\subseteq X$ with $Y\in\SI$, the ideal is said \emph{dense}.

A family $\SF\subseteq\mathcal P(\NN)$ of infinite sets is \emph{almost disjoint} (a.d.) if for every distinct $X,Y\in\SF$ we have that $X\cap Y$ is finite.  An a.d. family is \emph{treelike} if there is a bijection $f \colon \NN\to 2^{<\omega}$ such that for every $X\in\SF$, $f[X]$ is a branch through $2^\NN$, i.e., a pairwise comparable subset of $2^{<\omega}$.  An ideal $\SI\subseteq\mathcal P(\NN)$ is \emph{ccc /}$\Fin$ if $\SI$ meets every uncountable, a.d. family $\SF\subseteq\mathcal P(\NN)$. (Note that this is slightly stronger then asking for the quotient $\mathbb P(\NN)/\SI$ to be ccc as a poset, in the terminology of \S\ref{ss:FA}, see \cite{Farah.AQ}).

The following are applications of of Theorem \ref{thm:STPrel.talagrand}.

\begin{proposition}\label{prop:JT2}
  If $\SJ$ is a meager ideal on $\mathcal P(\NN)$, if and only if there is a partition $\NN = \bigcup\set{E_n}{n\in\NN}$ into finite  intervals such that for any infinite set $L$, $\bigcup\set{E_n}{n\in L} \not\in\SJ$.
\end{proposition}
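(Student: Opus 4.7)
The plan is to prove the two directions separately, with the reverse direction being a direct application of Theorem~\ref{thm:STPrel.talagrand} to the comeager complement of $\SJ$ in $\mathcal P(\NN) = 2^\NN$. The forward direction will be a straightforward density/hereditariness argument.

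For the easier direction ($\Leftarrow$), suppose such a partition $\{E_n\}$ exists. I would prove the contrapositive of the conclusion: if $X \in \SJ$, then only finitely many of the $E_n$ can be contained in $X$. Indeed, if $L = \{n : E_n \subseteq X\}$ were infinite, then $\bigcup_{n\in L} E_n \subseteq X$, so by hereditariness $\bigcup_{n\in L} E_n \in \SJ$, contradicting the hypothesis. Therefore $\SJ \subseteq H := \{X : \forall^\infty n\, E_n \not\subseteq X\}$, and it suffices to note that $H$ is meager. This follows because its complement $\{X : \exists^\infty n\, E_n \subseteq X\}$ is a dense $G_\delta$: each set $U_n = \{X : E_n \subseteq X\}$ is clopen in $2^\NN$, and $\bigcup_{n \geq N} U_n$ is dense since any basic open condition specifies only finitely many coordinates, which can be made disjoint from $E_n$ for sufficiently large $n$.

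For the nontrivial direction ($\Rightarrow$), suppose $\SJ$ is meager, so $G := \mathcal P(\NN)\setminus \SJ$ is comeager. Applying Theorem~\ref{thm:STPrel.talagrand} with $Y_n = \{0,1\}$, I obtain a partition $\langle E_i \mid i \in \NN\rangle$ of $\NN$ into finite intervals and sequences $t_i \in 2^{E_i}$ such that whenever $y \in 2^\NN$ satisfies $y \rs E_i = t_i$ for infinitely many $i$, then $y \in G$, i.e., $y \notin \SJ$. I claim this partition $\{E_i\}$ witnesses the conclusion. For each $i$, let $F_i = t_i^{-1}(1) \subseteq E_i$. Given an infinite $L \subseteq \NN$, set $A = \bigcup_{i \in L} F_i$. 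Then for every $i \in L$, the restriction $A \rs E_i$ is exactly the characteristic function $t_i$ of $F_i$. Since $L$ is infinite, Talagrand's conclusion yields $A \notin \SJ$. But $A \subseteq \bigcup_{i \in L} E_i$, and $\SJ$ is hereditary, so $\bigcup_{i \in L} E_i \notin \SJ$ as required.

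I expect no substantial obstacles. The only subtle point is that Talagrand's theorem produces a sequence of \emph{test patterns} $t_i$ rather than delivering the partition directly; the key observation is that taking $A = \bigcup_{i\in L} F_i$ guarantees $A$ matches \emph{all} of the test patterns along $L$ simultaneously, and then hereditariness of the ideal promotes this from $A$ to the superset $\bigcup_{i\in L} E_i$, thereby recovering a statement about the partition pieces themselves rather than about arbitrary subsets of them.
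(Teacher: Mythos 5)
Your proof is correct, and it follows exactly the approach the paper indicates (the text immediately preceding the proposition states it is an application of Theorem~\ref{thm:STPrel.talagrand}, but omits the details). The step you flag as "the only subtle point" — building $A = \bigcup_{i\in L} F_i$ so that $A$ realizes every test pattern $t_i$ along $L$, then passing from $A$ to its superset $\bigcup_{i\in L} E_i$ via hereditariness — is indeed the crux, and your treatment of both directions is complete.
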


\begin{proposition}\label{prop:intersectionofnonmeager}
Let $\SI,\SJ\subseteq\mathcal P(\NN)$ be hereditary and nonmeager. Then so is $\SI\cap\SJ$. Moreover, if $\SI_n$ is a sequence of hereditary nonmeager sets such that $\Fin\subseteq\SI_n$ for each $n$, then $\bigcap\SI_n$ is hereditary and nonmeager.
\end{proposition}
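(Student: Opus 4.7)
The plan is to reduce nonmeagerness to a combinatorial matching condition via Theorem~\ref{thm:STPrel.talagrand}. Explicitly, for a hereditary $H\subseteq\mathcal P(\NN)$, $H$ is nonmeager if and only if for every interval partition $\langle E_i\rangle$ of $\NN$ and every choice of targets $s_i\subseteq E_i$ there exists an infinite $S\subseteq\NN$ with $\bigcup_{i\in S}s_i\in H$. The ``if'' direction is because $\bigcup_{i\in S}s_i$ matches $s_i$ on $E_i$ for all $i\in S$, contradicting Talagrand's characterization of the (supposed) comeager $H^c$. For the ``only if'', any Talagrand witness $X\in H$ matching $s_i$ on an infinite $S$ may be replaced by $X\cap\bigcup_{i\in S}E_i=\bigcup_{i\in S}s_i\in H$, using heredity. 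Heredity of $\SI\cap\SJ$ and of $\bigcap_n\SI_n$ is immediate.

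For the first claim, fix $\langle E_i\rangle$ and $\langle s_i\rangle$. By the characterization applied to $\SI$, there is an infinite $S_1$ with $\bigcup_{i\in S_1}s_i\in\SI$. Enumerate $S_1=\{a_0<a_1<\cdots\}$ and coarsen into the interval partition $\tilde E_j=\bigcup_{a_{j-1}<k\le a_j}E_k$ with targets $\tilde s_j=s_{a_j}\subseteq E_{a_j}\subseteq\tilde E_j$ (set $a_{-1}=-1$). Applying the characterization to $\SJ$ with this data yields an infinite $T$ with $\bigcup_{j\in T}\tilde s_j\in\SJ$. Setting $S=\{a_j:j\in T\}\subseteq S_1$, the set $\bigcup_{i\in S}s_i=\bigcup_{j\in T}\tilde s_j$ lies in $\SJ$ by construction and is a subset of $\bigcup_{i\in S_1}s_i\in\SI$, so it lies in $\SI$ by heredity. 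Thus $\bigcup_{i\in S}s_i\in\SI\cap\SJ$, witnessing nonmeagerness.

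For the countable intersection I would iterate the two-intersection argument diagonally. Recursively construct infinite $S_0\supseteq S_1\supseteq\cdots$ and indices $i_0<i_1<\cdots$ with $i_n\in S_n$, satisfying $\bigcup_{i\in S_n}s_i\in\SI_0\cap\cdots\cap\SI_n$ together with the maintained invariant $\{i_0,\ldots,i_{n-1}\}\subseteq S_n$; the inductive step coarsens the partition along $S_n$, applies the two-set case to $(\SI_0\cap\cdots\cap\SI_n)$ and $\SI_{n+1}$, and then enlarges the output by the committed indices $\{i_0,\ldots,i_n\}$. The diagonal $y:=\bigcup_n s_{i_n}$ then matches $s_{i_n}$ on $E_{i_n}$ for every $n$, and by the invariant $\{i_k\}_{k\in\NN}\subseteq S_m$ for each $m$, whence $y\subseteq\bigcup_{i\in S_m}s_i\in\bigcap_{k\le m}\SI_k$ and therefore $y\in\SI_m$ by heredity; so $y\in\bigcap_n\SI_n$ matches on the infinite set $\{i_n\}$.

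The main obstacle is the membership clause of the invariant after enlarging $S_{n+1}$ by the committed finite set $\{i_0,\ldots,i_n\}$: the set $\bigcup_{i\in S_{n+1}}s_i$ decomposes as the union of the Talagrand output, which lies in $\SI_0\cap\cdots\cap\SI_{n+1}$, and a finite set $\bigcup_{k\le n}s_{i_k}$. The hypothesis $\Fin\subseteq\SI_n$ is used precisely here, since the committed finite piece lies in $\Fin\subseteq\SI_{n+1}$ and, thanks to the previously maintained invariant, is already contained in the ambient $\SJ_n$-witness; absorbing it into the Talagrand output via heredity is the delicate bookkeeping step in the construction.
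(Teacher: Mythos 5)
Your Talagrand-based characterization of nonmeagerness for hereditary families is correct, and your proof of the two-set case is a complete and correct argument; this is exactly the intended route, since the paper states the proposition as an application of Theorem~\ref{thm:STPrel.talagrand} and supplies no proof of its own. The trouble is the countable case, and it sits precisely at the step you flag as ``delicate bookkeeping.'' After enlarging the Talagrand output $S_{n+1}'$ by the committed indices, you must place $\bigcup_{i\in S_{n+1}}s_i=A\cup F$ into $\SI_{n+1}$, where $A=\bigcup_{i\in S_{n+1}'}s_i\in\SI_{n+1}$ and $F=\bigcup_{k\le n}s_{i_k}$ is finite, hence in $\Fin\subseteq\SI_{n+1}$. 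But heredity only passes membership \emph{downward}: knowing that $A$ and $F$ each belong to a hereditary family gives no information about $A\cup F$, and $\Fin\subseteq\SI_{n+1}$ does not ``absorb'' anything. The same obstruction recurs at the final step, where $y=\bigcup_n s_{i_n}$ splits at each $m$ into a finite head plus a tail lying below $\bigcup_{i\in S_m}s_i$.

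This is not a reparable gap: the ``moreover'' clause is false as literally stated. Take $\SI_n=\Fin\cup\{X\subseteq\NN\mid n\notin X\}$. Each $\SI_n$ is hereditary, contains $\Fin$, and is nonmeager (it contains a nonempty clopen set), yet $\bigcap_n\SI_n=\Fin$, which is meager. The statement becomes true --- and your diagonal scheme then works verbatim --- if the $\SI_n$ are assumed to be \emph{ideals}: the committed finite head lies in $\SI_{n+1}$ because $\Fin\subseteq\SI_{n+1}$, and closure under finite unions lets you combine it with the Talagrand output (in fact, with ideals you can even drop the invariant $\{i_0,\dots,i_{n-1}\}\subseteq S_n$ and just split $y$ into a finite head and a tail at the end). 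This is also the only form in which countable intersections are invoked later in the text, e.g.\ ``the intersection of two nonmeager ideals is still a nonmeager ideal,'' so the correct reading of the proposition is with ``ideal'' in place of ``hereditary set'' in the second clause; your write-up should say so explicitly rather than attempt the absorption for merely hereditary families.
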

Note that no nontrivial analytic ideal containing the finite sets can be nonmeager (\cite[Lemma~3.3.2]{Farah.AQ}), so there are only countably many analytic nonmeager ideals (one for each finite set).

\subsection{The Continuum Hypothesis}\label{ss:CH}
The Continuum Hypothesis ($\CH$) is the statement that every uncountable subset of the real line $\er$ has the same cardinality of the real line itself. This statement can be rephrased by asking that $|\mathcal P(\NN)|=\omega_1$, or that there is a well-order of the reals whose initial segments are countable.

The problem of whether $\CH$ is true or false was listed as the first of the famous list Hilbert presented to the mathematical community in 1900, even before a formal axiomatization of mathematics was completed. In 1940 G\"odel showed that $\CH$ holds in the constructible universe $L$, therefore proving that $\CH$ cannot be disproved in $\ZFC$.  Later in the early `60 Cohen introduced the groundbreaking technique of forcing and used it to prove the independence of $\CH$ from $\ZFC$ (see \cite{Cohen.CH1} and \cite{Cohen.CH2}). This shows that Hilbert's first problem cannot be solved inside $\ZFC$. 


Some important consequences of $\CH$ are the following:
\begin{itemize}
\item there are nontrivial automorphisms of $\ell_\infty/c_0$ (\cite{Rudin});
\item  if $X$ is a $0$-dimensional locally compact noncompact Polish space then $\beta X\setminus X$ is homeomorphic to $\beta\NN\setminus \NN$ (Parovi\v{c}enko's Theorem);
\item every compact Hausdorff space of density $\mathfrak c$ is a surjective image of $\beta\NN\setminus\NN$ (\cite{Paro.Universal}, or see \cite{KP.STCC});
\item  every connected compact Hausdorff space of density $\mathfrak c$ is a surjective image of $\beta[0,1)\setminus[0,1)$ (\cite{DowHart.Universal});
\item the Calkin algebra $\mathcal C(H)$ has outer automorphisms (\cite{Phillips-Weaver}).
\end{itemize}
 Each of these statements, but one, needs $\CH$, as it was proved to be independent from $\ZFC$ \footnote{That $\beta[0,1)\setminus [0,1)$ surjects onto every continuum of density $\mathfrak c$ is not known to be independent from $\ZFC$.}.

The assumption of $\CH$ has an impact on the cardinality and the structure of the automorphisms group of quotient structures. In both the discrete case (such as certain quotients of Boolean algebras, see~\cite{Farah.AQ}) and the continuous one (such as corona $\Cstar$-algebras, see~\ref{s:multicorona}), it is either proved or conjectured that $\CH$ gives a huge amount of automorphisms, inferring consequently the existence of nondefinable ones.

\subsection{Forcing and Forcing Axioms}\label{ss:FA}

The method of forcing was introduced by Cohen to prove the independence of $\CH$ from $\ZFC$. The general idea of forcing consists of starting with a model of $\ZFC$ to build a second model, constructed from a generic object. The technique of forcing is  capable of modifying the truth value of certain high-complexity statements from the first model (the ground model) to the second one (the forcing extension). The initial goal of the development of forcing was to generate a counterexample to $\CH$, and more sophisticated forcings have been constructed to generate (or create obstruction to the existence of) morphisms between different mathematical structures.
Although forcing is not capable of modifying the truth value of statements of relatively low complexity (see \cite[Chapter 25]{jech:settheory}), its development led to the proof of many celebrated consistency results. For an introductory approach to forcing see \cite{kunen:settheory} or \cite{Shelah.PF}.
  
Forcing Axioms were introduced as an alternative to $\CH$ and as generalizations of the Baire Category Theorem. They assert that the universe of sets has a strong degree of closure when generic objects are formed by sufficiently non pathological forcings. Different Forcing Axioms arise once the exact definition of non pathological is given. The first Forcing Axiom to be stated was Martin's Axiom and the last, and provably the strongest, is Martin's Maximum MM, (\cite{FMS.MM1}). For a great overview on Forcing Axioms see \cite{Moore.PFA}.

In this thesis we will always use consequences of MM: Martin's Axiom at level $\aleph_1$, $\MA_{\aleph_1}$, and $\OCA_\infty$, a strengthening of  Todor{\v{c}}evi{\'c}'s Open Coloring Axiom $\OCA$. It should be noted that both these axioms follow from Shelah's Proper Forcing Axiom PFA, itself a consequence of MM. Also, while both MM and PFA need a supercompact cardinal to be proven consistent, both $\OCA_\infty$ and $\MA_{\aleph_1}$ are provable to be consistent from the consistency of $\ZFC$ without any additional cardinal axioms. Notably, $\OCA$ holds in Woodin's canonical model for the failure of $\CH$ (\cite{Larson.OCA}).

\subsubsection{Martin's Axiom}

Martin's Axiom is a generalization of the Baire Category Theorem isolated by Martin from the work of Solovay and Tennenbaum on Souslin's Hypothesis.

Let $\PP$ be a partially ordered set (poset, or sometimes, forcing) with a largest element.  Two elements of $\PP$ are called \emph{incompatible} if there is no element of $\PP$ below both of them. A set of pairwise incompatible elements is an \emph{antichain}. If all antichains of $\PP$ are countable, $\PP$ is said to have the \emph{countable chain condition} (ccc). A set $D\subseteq \PP$ is said \emph{dense} if $\forall p\in\PP\exists q\in D$ with $q\leq p$. A \emph{filter} $G\subseteq\PP$ is an upward closed downward directed set. 

Martin's Axiom at the cardinal $\kappa$ (written $\MA_\kappa$) asserts that if $\PP$ is a ccc poset, given a family of dense $D_\alpha\subseteq\PP$ ($\alpha < \kappa$), there is a filter $G\subseteq\PP$ such that $G\cap D_\alpha\neq\emptyset$ for every $\alpha < \kappa$.

$\MA_{\aleph_0}$ is a restatement of the Baire Category Theorem, which is a theorem of $\ZFC$. Also, the negation of $\MA_{\mathfrak c}$ follows from $\ZFC$,  therefore $\MA_{\aleph_1}$ contradicts $\CH$. 

\subsubsection{The Open Coloring Axiom}

The axiom $\OCA$, sometimes denoted TA, was introduced by  Todor{\v{c}}evi{\'c} in \cite{Todorcevic.PPIT}. It is a modification of a coloring axiom introduced by Abraham, Rubin and Shelah in \cite{ARS} and generalizes Baumgartner's Axiom BA. We now introduce $\OCA_\infty$, an infinitary version of $\OCA$ introduced by Farah in \cite{Farah.CNOC}, and itself a consequence of PFA.

If $X$ is a set, $[X]^2$ denotes the set of unordered pairs of elements of $X$.  $\OCA_\infty$ is the following statement.  For every separable metrizable space $X$ and every sequence of partitions $[X]^2=K_0^n\cup K_1^n$, if every $K_0^n$ is open in the product topology on $[X]^2$ and $K_0^{n+1}\subseteq K_0^n$ for every $n$, then either
\begin{enumerate}
  \item there are $X_n$ ($n\in \NN$) such that $X=\bigcup_n X_n$ and $[X_n]^2\subseteq K_1^n$ for every $n$, or
  \item there is an uncountable $Z\subseteq 2^\NN$ and a continuous injection $f\colon Z\to X$ such that for all $x\neq y\in Z$ we have
  \[
    \{f(x),f(y)\}\in K_0^{\Delta(x,y)}
  \]
  where $\Delta(x,y)=\min\set{n}{x(n)\neq y(n)}$.
\end{enumerate}
$\OCA$ is the restriction of $\OCA_{\infty}$ to the case where $K_0^n=K_{0}^{n+1}$ for every $n$. It is not known whether the two are equivalent, but $\OCA$ is sufficient to contradict $\CH$. Whether $\OCA$ implies $\mathfrak c=\omega_2$ is an open question but, if one assumes $\OCA$ and its initial formulation as given in \cite{ARS}, then $\mathfrak c=\omega_2$ (\cite{Moore.OCA}).

\subsubsection{The $\Delta$-system Lemma}\label{sss:cccDeltasystem}

We now state a very useful result, known as the $\Delta$-system Lemma. The $\Delta$-system Lemma is usually used to prove that some forcing is ccc, by arguing on the possible properties that an uncountable antichain must have  (see for example \cite[Lemma VII.5.4 or Lemma VII.6.10]{kunen:settheory}). We will do so in the proof of Lemma~\ref{lemma:oca->alternative}, see Claim~\ref{claim:provingisccc}.
 
 \begin{defin}
 A family of sets $\SA$ is a $\Delta$-system if there is $r$ such that $a\neq b\in\SA\Rightarrow a\cap b=r$.
 \end{defin}

The existence of $\Delta$-systems in large collections of sets follows from the following, known as the $\Delta$-system~Lemma (see \cite[Theorem~II.1.5]{kunen:settheory}).
\begin{lemma}\label{lem:DeltaSystem}
If $\SA$ is an uncountable family of finite sets then there is an uncountable subfamily $\SB\subseteq\SA$ which is a $\Delta$-system.
\end{lemma}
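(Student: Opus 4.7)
The plan is to reduce to the case of a uniform size and then proceed by induction, using a dichotomy that either allows us to peel off a common element or to construct an uncountable pairwise disjoint subfamily.

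First I would apply the pigeonhole principle: since $\SA$ is uncountable and $\NN=\bigcup_n\{A\in\SA\mid |A|=n\}$, there is some $n\in\NN$ such that uncountably many members of $\SA$ have cardinality $n$. Replacing $\SA$ by this subfamily, I may assume every member of $\SA$ has size exactly $n$. I then proceed by induction on $n$. When $n=0$ (or $n=1$), every family is trivially a $\Delta$-system, with root $\emptyset$ in either case when the sets are taken to be distinct singletons.

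For the inductive step I would split into two cases. \textbf{Case 1:} Some element $x$ belongs to uncountably many sets of $\SA$. Let $\SA'=\{A\setminus\{x\}\mid x\in A\in\SA\}$, an uncountable family of $(n-1)$-element sets. By the induction hypothesis there is an uncountable $\Delta$-subsystem $\SB'\subseteq\SA'$ with some root $r'$. Adjoining $x$ back to each member of $\SB'$ gives an uncountable $\Delta$-system $\SB\subseteq\SA$ with root $r'\cup\{x\}$. \textbf{Case 2:} Every element of $\bigcup\SA$ is contained in only countably many members of $\SA$. In this case I would construct an uncountable pairwise disjoint subfamily (automatically a $\Delta$-system with root $\emptyset$) by transfinite recursion. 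Enumerate $\SA$ as $\{A_\alpha\mid\alpha<\omega_1\}$ and, at stage $\alpha<\omega_1$, suppose pairwise disjoint $B_\beta\in\SA$ have been chosen for $\beta<\alpha$. The union $S=\bigcup_{\beta<\alpha}B_\beta$ is a countable union of finite sets, hence countable. Each element of $S$ lies in only countably many members of $\SA$, so the set of $\gamma<\omega_1$ with $A_\gamma\cap S\neq\emptyset$ is countable, leaving some $\gamma$ with $A_\gamma\cap S=\emptyset$; set $B_\alpha=A_\gamma$.

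The argument is elementary and contains no real obstacle; the only substantive ingredient is the observation that $\omega_1$ is regular (used implicitly in the recursion in Case 2, since at each countable stage we exhaust only countably many indices and hence do not run out before reaching $\omega_1$). The dichotomy into Case 1 / Case 2 is exhaustive and drives the induction, which terminates in at most $n$ steps with a root of size at most $n$.
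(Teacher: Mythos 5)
Your proof is correct. The paper itself does not prove this lemma but simply cites Kunen's text for it, so there is no in-paper argument to compare against. What you give is the standard inductive proof of the $\Delta$-system lemma in the $\omega_1$ case: reduce to uniform size $n$ by pigeonhole, then induct, with the dichotomy of whether some point lies in uncountably many sets driving either the reduction of $n$ or the transfinite construction of an uncountable pairwise disjoint subfamily. Two small presentational points, neither a gap: in Case~1 you should observe that the map $A\mapsto A\setminus\{x\}$ is injective on the sets containing $x$, so that $\SA'$ really is uncountable; and the base case is effectively $n=1$, since a family of finite sets all of size $0$ has at most one member and cannot be uncountable. Kunen's Theorem~II.1.5 is stated in greater generality (for regular $\theta>\kappa$ with $|\alpha|^{<\kappa}<\theta$ for $\alpha<\theta$) and proved by a single counting/regressivity argument rather than induction on $n$; your induction is more elementary and perfectly adequate for the only case the paper uses, namely uncountable families of finite subsets of a countable set.
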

\subsubsection{Cardinal invariants}\label{sss:cardinv}

In case $\CH$ fails, one can characterize the properties of the different cardinals between $\aleph_0$ and $\mathfrak c$. In general cardinal invariants characterize the minimal cardinality of  sets satisfying certain conditions. 
There are many cardinal invariants that can be defined, and the theory of cardinal invariants is wide and complex (see \cite{BJS.Cichon}, or \cite{Monk.CI}). Recently, the use of cardinal invariants in the Calkin algebra has been carried over, most notably in \cite{Zamora}. 

We will use only two cardinal invariants: the bounding number $\mathfrak b$ and the dominating number $\mathfrak d$. They relate to subsets of $\NN^\NN$ when considered with the order
\[
f_1\leq^* f_2 \iff\forall^\infty n (f_1(n)\leq f_2(n)).
\]
 They are defined as follows:
\[
\mathfrak b=\min\{ |X|\colon X \text{ is unbounded in } (\NN^\NN,\leq^*)\},
\]
\[
\mathfrak d=\min \{|X|\colon X \text{ is cofinal in } (\NN^\NN,\leq^*)\}.
\]
It is clear that $\omega_1\leq\mathfrak b\leq\mathfrak d\leq\mathfrak c=|\NN^\NN|$. We will use that $\CH$ implies $\mathfrak d=\omega_1$ in \S\ref{s:CH.SCmani}, and that $\OCA$ pushes $\mathfrak b$ above $\omega_1$ in Chapter~\ref{ch:FA}.

\section{$\Cstar$-algebras}\label{s:Cstar}

An abstract $\Cstar$-algebra $A$ is a complex Banach algebra together with an isometric involution $^*\colon A\to A$ with the property that $(a^*)^*=a$, $(ab)^*=b^*a^*$, $(\lambda a)^*=\bar \lambda a^*$  and $\norm{a}^2=\norm{a^*a}$ for all $\lambda\in\ce$ and $a\in A$ ($\norm{a}^2=\norm{aa^*}$ is known as the $\Cstar$-equality). $\Cstar$-algebras were introduced as $B^*$-algebras by Rickart in 1946. Later, Segal, referred to $\Cstar$-algebras as $^*$-closed Banach subalgebras of $\mathcal B(H)$, the algebra of bounded linear operators on a complex Hilbert space $H$. Such objects are known as concrete $\Cstar$-algebras. Given an abstract $\Cstar$-algebra $A$, the Gelfand-Naimark-Segal construction (\cite[II.6.4]{Blackadar.OA}) shows that to a positive linear functional of norm $1$ on $A$ one can canonically associate a representation of $A$ into $\mathcal B(H)$. By considering the direct sum of every possible such representation,  the Gelfand-Naimark Theorem establishes that every abstract $\Cstar$-algebra is isomorphic to a concrete one. We will therefore not distinguish between abstract and concrete $\Cstar$-algebras.

An interesting class of $\Cstar$-algebras is the one of abelian algebras. The Gelfand transform (\cite[II.2.2]{Blackadar.OA}) shows that every abelian $\Cstar$-algebra $A$ is isomorphic to $C_0(X)$, the algebra of complex valued continuous functions on a locally compact space $X$ vanishing at $\infty$. Operations are performed pointwise, the adjoint is given by the conjugate function, and the norm is supremum norm. $A$ has a unit if and only if $X$ is compact. For this reason $\Cstar$-algebras are often seen as noncommutative topological spaces.

In a $\Cstar$-algebra $A$ one can isolate certain sets of elements described by their algebraic properties: the \emph{self-adjoints} (for which $a=a^*$), the \emph{positives} (if there is $b$ such that $a=bb^*$),  the \emph{projections} ($a=a^2=a^*$) and the \emph{unitaries} ($aa^*=1=a^*a$). The self-adjoints carry an order given by $a\leq b$ if and only if $b-a$ is positive. If $A$ is  a $\Cstar$-algebra, $A_{\leq 1}$, $A_1$, $A^+$, and $\U(A)$ denote the unit ball of $A$, its boundary, the set of positive elements, and of unitaries in $A$ respectively. An element of $A$ commuting with every other element of $A$ is said \emph{central}. The set of all central elements is the center of $A$, denotes by $Z(A)$.

As in the category of $\Cstar$-algebras morphisms are $^*$-homomorphisms ($^*$-preserving Banach algebra homomorphisms), for a  subalgebra we will always mean a $\Cstar$-subalgebra.  An injective $^*$-homomorphism is said an \emph{embedding}. If $A$ is unital, a $^*$-homomorphism $\phi\colon A\to B$ does not have to be unital, even if $B$ is. On the other hand, the image of the unit is always a projection. In case $\phi(1_A)=1_B$, we will talk of unital $^*$-homomorphisms and unital embeddings.

\subsection{Examples of $\Cstar$-algebras}

The easiest example of a $\Cstar$-algebra is $\mathcal B(H)$. If $H$ is finite dimensional, then $H\cong\ce^n$ for some $n$, and in this case $\mathcal B(H)=M_n(\ce)$ is a $\Cstar$-algebra with the usual operations, the $\ell^2$-norm, and the involution given by the transpose conjugation. If a $\Cstar$-algebras is finite-dimensional (as a vector space), then it is isomorphic to a finite direct sum of matrix algebras (see \cite[II.8.3.2.(iv)]{Blackadar.OA}). 

Other examples of $\Cstar$-algebras, as already noted, arise from a locally compact space $X$ by considering $C_0(X)$.  An example of a noncommutative nonunital $\Cstar$-algebra arises  by considering an infinite dimensional Hilbert space $H$ and constructing $\mathcal K(H)$, the algebra of all compact operators on $H$, i.e., the norm closure of the algebra of operators with finite-dimensional range.

There are several ways to build interesting $\Cstar$-algebras from the ones we have already described.
\begin{itemize}
\item  the unitization: if $A$ is a nonunital $\Cstar$-algebra we can construct the smallest unital $\Cstar$-algebra containing $A$, denoted by $\tilde A$.  It is isomorphic, as a Banach space, to $A\oplus\ce$. It corresponds to the one-point compactification of a locally compact space.
\item the direct sum: if $A$ and $B$ are $\Cstar$-algebras so is $A\oplus B$, with pointwise operations and the norm given by $\norm{a\oplus b}=\max\norm{a},\norm{b}$. If $I$ is a net and $A_i$, $i\in I$ are $\Cstar$-algebras so is the algebra $\bigoplus_{i\in I} A_i$, the algebra of all sequences $(a_i)_{i\in I}$ with $a_i\in A_i$ and $\lim_{i\in I}\norm{a_i}=0$. This can be seen as the closure of the algebra of ``eventually'' zero sequences.
\item the direct product: if $A_i$ are $\Cstar$-algebras, for $i\in I$, so is $\prod_{i\in I} A_i$, the algebra of all bounded sequences $(a_i)_{i\in I}$, with coordinatewise operations. The norm is the supremum norm.
\item $C(X,A)$: if $X$ is a locally compact space and $A$ is a $\Cstar$-algebra, $C_0(X,A)$ is the $\Cstar$-algebra of all continuous $f\colon X\to A$ vanishing at $\infty$, with pointwise operations and the supremum norm. $C_0(X,A)$ is unital if and only if $X$ is compact and $A$ is unital.
\item inductive limits: if $A_n$ are $\Cstar$-algebras and $\phi_n\colon A_n\to A_{n+1}$ are $^*$-homomorphisms, it is possible to define the limit object $A=\lim (A_n,\phi_n)$, with the obvious operations and norm. If each $\phi_n$ is the inclusion map, then $\bigcup A_n $ is dense in $A$. A similar definition can be made if one allows the index set to be any net. Important limit algebras we will use are the  unital UHF algebras (limit of full matrix algebras, where each embedding is unital) and AF algebras (limits of finite-dimensional algebras). 
\end{itemize}
Other interesting constructions of new $\Cstar$-algebras from old ones are tensor products and quotients.

\subsection{Ideals}\label{ss:idealscstar}
 If a subalgebra $I\subseteq A$ has the additional property that for all $b\in I, a\in A$ we have $ab,ba\in I$ then $I$ is said an \emph{ideal}. If $I\subseteq A$ is an ideal, the quotient $A/I$ is a $\Cstar$-algebra and the quotient map $\pi\colon A\to A/I$ is a surjective $^*$-homomorphism. An algebra with no nontrivial ideals is said \emph{simple}.
 
 Easy examples of ideals are obtained if $C=A\bigoplus B$. In this case, both $A\oplus 0$ are $0\oplus B$ are ideals in $C$. Another, more interesting, example of ideal arises if one considers a locally compact $X$ and a closed set $Y\subseteq X$. The set of functions which are equal to $0$ on $Y$ is an ideal of $C_0(X)$.
 
Particular, very important for our work, cases of ideals are the so called essential ones. An ideal $I\subseteq A$ is \emph{essential} if whenever $J $ is a nonzero ideal in $A$ then $I\cap J\neq\{0\}$. An example of an essential ideal is $\mathcal K(H)$ when seen as an ideal of $\mathcal B(H)$ (in fact, $\mathcal K(H)$ is the only nontrivial ideal of $\mathcal B(H)$). We will call the quotient  $\mathcal B(H)/\mathcal K(H)$ the \emph{Calkin} algebra, denoted as $\mathcal C(H)$. Another example is obtained  from a locally compact noncompact space $X$. In this case the algebra $C_0(X)$ is an essential ideal of $C_b(X)$, the algebra of all bounded continuous function from $X$ to $\ce$. Equally, if $A$ is a $\Cstar$-algebra, then $C_0(X,A)$ is an essential ideal of $C_b(X,A)$.
 
Fix now a sequence of $\Cstar$-algebras $A_1,\ldots,A_n,\ldots$ and an ideal $\SI\subseteq\mathcal P(\NN)$. Defining
 \[
 \bigoplus_\SI A_n=\{(a_n)\in\prod A_n\mid \lim\sup_{n\in\SI}\norm{a_n}=0\},
 \]
 where 
 \[
 \lim\sup_{n\in\SI}\norm{a_n}=\inf_{X\in\SI}\sup_{n\notin X}\norm{a_n},
 \]
we have that $\bigoplus_\SI A_n$ is an essential ideal of $\prod A_n$.

We now list few properties of essential ideals. For their proofs, see~\cite{Skou:NotesMult}.
\begin{proposition}
Let $A$ be a $\Cstar$-algebra. Then
\begin{itemize}
\item if $A$ is nonunital, then $A$ is an essential ideal of its unitization.
\item $I\subseteq A$ is an essential ideal if and only if  it is an ideal and whenever $b\in A$ and $bI=Ib=0$ then $b=0$.
\item if $I\subseteq A$ is an essential ideal and both $I$ and $A$ are unital then $1_I=1_A$ and so $I=A$.
\end{itemize}
\end{proposition}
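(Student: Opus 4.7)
My plan is to handle each of the three items in turn, using the second as the workhorse for items one and three.

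For the first item I would argue by contradiction. Suppose $J$ is a nonzero ideal of the unitization $\tilde A = A \oplus \ce \cdot 1$ with $J \cap A = 0$. The canonical quotient $\tilde A / A \cong \ce$ forces $J$ to embed (and hence surject) onto $\ce$, so $J$ contains some element of the form $a + \lambda$ with $\lambda \neq 0$, which after rescaling may be written as $a + 1$ with $a \in A$. For any $b \in A$ the product $b(a+1) = ba + b$ lies in $J \cap A = 0$, giving $b = -ba$; symmetrically $b = -ab$. Hence $-a$ is a two-sided unit for $A$, contradicting the assumption that $A$ is nonunital.

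For the forward direction of item two, assuming $I$ is essential, I would define the annihilator $J = \{a \in A : aI = Ia = 0\}$, check that it is an ideal, and note that any $x \in I \cap J$ satisfies $xx^* = 0$, hence $\norm{x}^2 = \norm{xx^*} = 0$ by the $\Cstar$-equality, so $x = 0$. Essentiality then forces $J = 0$, exactly the stated condition. For the converse, assume the annihilator condition and let $J \subseteq A$ be any nonzero ideal; since both $IJ$ and $JI$ are contained in $I \cap J$, if $I \cap J = 0$ then any nonzero $b \in J$ satisfies $bI = Ib = 0$, contradicting the hypothesis. So $I$ is essential.

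For item three I would apply the characterisation just established to the element $1_A - 1_I \in A$. For any $a \in I$ one has $1_A a = a = 1_I a$ (the latter because $1_I$ is the identity of $I$ and $a \in I$), so $(1_A - 1_I)a = 0$, and symmetrically $a(1_A - 1_I) = 0$. Item two then forces $1_A - 1_I = 0$, i.e.\ $1_A = 1_I$; since $1_I \in I$ and $I$ is an ideal, $A = A \cdot 1_A \subseteq I$, so $A = I$.

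I expect no genuine obstacles. The only mildly delicate step is item one, where the decomposition $\tilde A = A \oplus \ce$ is only as vector spaces (not as algebras), so one cannot just project an ideal coordinatewise; this is why one has to extract the explicit element $a + 1 \in J$ and exploit the ideal property to force $A$ to have a unit. Items two and three then reduce to the $\Cstar$-equality and routine bookkeeping with the identity elements.
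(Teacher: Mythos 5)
Your proof is correct. The paper itself gives no argument for this proposition --- it simply defers to the reference \cite{Skou:NotesMult} --- so there is nothing to compare against, but your three arguments are exactly the standard ones: the rescaled element $a+1$ in a putative ideal trivially meeting $A$, the annihilator ideal together with the $\Cstar$-equality $\norm{x}^2=\norm{xx^*}$ (note this step quietly uses that the closed ideal $I$ is self-adjoint, so $x\in I\cap J$ gives $x^*\in I$ and hence $xx^*\in xI=0$; worth a half-sentence), and applying the second item to $1_A-1_I$.
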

\subsection{Approximate identities}\label{ss:CstarPrel.approxid}
 
As we saw, not every $\Cstar$-algebra is unital. On the other hand it is always possible to find a net which behaves like a unit. To be more precise, let $A$ be a $\Cstar$-algebra and suppose that $\Lambda$ is a net. A set $\{a_\lambda\}_{\lambda\in\Lambda}$ is said an \emph{approximate identity} for $A$ if
 \begin{itemize}
 \item $a_\lambda\in A_{\leq 1}^+$ for all $\lambda$,
 \item if $\lambda<\mu$ then $a_\lambda\leq a_\mu$,
 \item for all $a\in A$ we have $\lim_\Lambda \norm{a_\lambda a-a}+\norm{a-aa_\lambda}=0$.
 \end{itemize}
 
Note that in the definition of approximate identity we require $\{a_\lambda\}_{\lambda\in\Lambda}$ to be bounded and increasing. In Banach algebras terminology, such an approximate identity is both a left and a right approximate identity. As hinted, every $\Cstar$-algebra has an approximate identity of this form. This is not the case for certain Banach algebras which fail to have a bounded approximate identity with is both left and right (\cite{Runde.LA}).

If $\Lambda$ can be chosen to be countable, $A$ is said to be $\sigma$-unital. Every separable $\Cstar$-algebra is $\sigma$-unital. We will be interested in particular approximate identities: the ones made of projections, where each $a_\lambda$ is itself a projection, and the \emph{quasicentral} ones. Suppose that $I\subseteq A$ is an ideal and let $\{a_\lambda\}$ be an approximate identity for $I$. Then $\{a_\lambda\}$ is said \emph{quasicentral with respect to }$A$ if whenever $a\in A$ then $\lim_\Lambda\norm{a_\lambda a-aa_\lambda}=0$.
 
 Quasicentral approximate identities always exist:

\begin{theorem}[{\cite[Theorem~2.1]{pedersencorona}}]
Let $A$ be a $\Cstar$-algebra and $I$ be an ideal of $A$. Then $I$ has an approximate identity $\{a_\lambda\}_{\lambda\in\Lambda}$ which is quasicentral w.r.t. $A$. Moreover for every $\{b_\lambda\}_{\lambda\in\Lambda'}$ which is an approximate identity for $I$, $\{a_\lambda\}$ can be found in the convex hull of $\{b_\lambda\}_{\lambda\in\Lambda'}$.
\end{theorem}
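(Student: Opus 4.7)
The plan is to reduce the whole statement to the following key Hahn-Banach style observation: given any finite $F\subseteq A$ and $\e>0$, there exists a convex combination $c$ of $\{b_\lambda\}_{\lambda\in\Lambda'}$ such that $\norm{ca-ac}<\e$ for every $a\in F$ (and, simultaneously, $\norm{ca-a}+\norm{ac-a}<\e$ for finitely many prescribed $a\in I$). From this the theorem follows by indexing over $\Lambda=\set{(F,\e)}{F\subseteq A \text{ finite},\e>0}$ directed by $(F,\e)\le(F',\e')$ iff $F\subseteq F'$ and $\e'\le\e$, and choosing for each such pair a convex combination $c_{F,\e}$ that enjoys the above properties for $F\cup F'$ where $F'\subseteq I$ is a larger and larger prescribed finite subset used to enforce the approximate-identity property.

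To prove the key observation, I would first show that $b_\lambda a-ab_\lambda\to 0$ in the weak topology of $A$ for every $a\in A$. For this I would pass to the enveloping von Neumann algebra $A^{**}$: the net $\{b_\lambda\}$ converges in the strong operator topology to a projection $p\in A^{**}$, namely the unit of $I^{**}$. Because $I$ is a closed two-sided ideal of $A$, its ultraweak closure $I^{**}\subseteq A^{**}$ is a weakly closed two-sided ideal, and such an ideal in a von Neumann algebra is cut off by a \emph{central} projection; this central projection is precisely $p$. Consequently $b_\lambda a\to pa$ and $ab_\lambda\to ap$ ultraweakly, and since $pa=ap$, the commutators $b_\lambda a-ab_\lambda$ converge to $0$ ultraweakly, hence weakly in $A$.

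Now, for $F=\{a_1,\dots,a_n\}$, consider the Banach space $X=A^n$ and the net $v_\lambda=(b_\lambda a_i-a_ib_\lambda)_{i=1}^n\in X$. By the previous paragraph $v_\lambda\to 0$ in the weak topology of $X$, so $0$ lies in the weak closure of the convex set $\mathrm{conv}\{v_\lambda\}$, and therefore, by Mazur's theorem, in its norm closure. This produces a convex combination $c=\sum t_i b_{\lambda_i}$ (with $t_i\ge 0$, $\sum t_i=1$) for which $\norm{ca_i-a_ic}<\e$ simultaneously for $i=1,\dots,n$. Augmenting $F$ with finitely many elements of $I$ and enlarging the family over which we take convex combinations, we may also arrange that $c$ acts as an $\e$-unit on a prescribed finite subset of $I$; here we use that any convex combination of an approximate identity is still dominated in behavior by tails of the original net.

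Finally, to obtain the monotonicity clause $\lambda<\mu\Rightarrow a_\lambda\le a_\mu$ required by the definition, I would pass from the net $\{c_{F,\e}\}$ constructed above to an increasing reindexing: the standard device is to look at finite sums/means of the $c_{F,\e}$'s or, equivalently, to extract an increasing cofinal subnet inside the positive unit ball by exploiting functional calculus (replacing $c$ by $f_n(c)$ for suitable monotone $f_n\in C_0(0,1]$). The main obstacle, and the one worth genuine care, is the weak-to-strong central-projection step (the fact that the support projection of an ideal is central in $A^{**}$); once this is in place, Mazur/Hahn-Banach and the cofinal reindexing are standard, and the ``moreover'' clause is immediate from the construction because every $c_{F,\e}$ was chosen inside the convex hull of $\{b_\lambda\}_{\lambda\in\Lambda'}$.
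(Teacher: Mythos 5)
The thesis itself gives no proof of this statement---it is quoted directly from the cited paper of Pedersen---so there is no internal argument to compare against; what you have written is the standard Arveson/Akemann--Pedersen proof, and its core is correct. The passage to $A^{**}$, the identification of the limit of $\{b_\lambda\}$ with the central projection $z$ supporting the ultraweakly closed ideal $I^{**}=zA^{**}$, the resulting ultraweak (hence $\sigma(A,A^*)$) convergence $b_\lambda a-ab_\lambda\to 0$, the simultaneous Mazur/Hahn--Banach upgrade in $A^n$ to norm-small commutators after passing to convex combinations, and the reindexing by pairs $(F,\e)$ are all sound; the ``moreover'' clause is then automatic because every element you produce lies in $\mathrm{conv}\{b_\lambda\}$.

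The only place where the write-up falls short of a proof is the final paragraph on monotonicity, which matters here because the thesis's definition of approximate identity requires $\lambda<\mu\Rightarrow a_\lambda\le a_\mu$. Neither device you propose works: averaging the $c_{F,\e}$'s does not make distinct elements comparable in the operator order, and replacing $c$ by $f_n(c)$ modifies each element separately without relating different ones (and in general leaves the convex hull). The correct fix is easier and uses the hypothesis you have not yet exploited, namely that $\{b_\lambda\}$ is itself increasing: if $v=\sum_j t_j b_{\lambda_j}$ is a convex combination and $\mu\ge\lambda_j$ for all $j$, then $b_\mu=\sum_j t_j b_\mu\ge\sum_j t_j b_{\lambda_j}=v$, so \emph{every} element of $\mathrm{conv}\{b_\lambda:\lambda\ge\mu\}$ dominates $v$. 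Since the Mazur step and the approximate-identity step are both stable under restricting to a remote tail of $\{b_\lambda\}$, you can always draw the next convex combination from a tail beyond all indices used previously; in the sequential ($\sigma$-unital) case this immediately yields an increasing approximate identity, and in the general net case one arranges the index set so that each index has only finitely many predecessors. With that substitution your argument is complete.
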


\subsubsection{The strict topology and two useful lemmas}\label{ss:CstarPrel.strict}

Suppose that $A\subseteq M$ is a subalgebra. Let $l_a, r_a$ be the following seminorms on $M$ 
\[
l_a(x)=\norm{ax}, \,\, r_a(x)=\norm{xa}.
\]
The topology generated by $l_a,r_a$ is called the $A$-strict topology on $M$. $M$ is said $A$-strictly complete if every bounded $A$-strictly convergent sequence converges in $M$. It is easy to see that if $A\subseteq M$ is an essential ideal, and $M$ is unital, then an approximate identity for $A$ converges $A$-strictly to $1$.

The following is a generalization of some facts contained in \cite{pedersencorona} and of the construction of particularly well-behaved approximate identities as in \cite{Higson.Kasparov}. A similar argument for quotients of $\sigma$-unital algebras was used in \cite{farah2011countable}. We will make heavy use of this lemma for the proof of Theorem~\ref{thm:FactorCtbleSat}.

\begin{proposition}[{\cite[Corollary 6.3]{pedersencorona}}]\label{lem:approxid0EV}
Let $A$ be a $\Cstar$-algebra, $S\in A_1$ and $T\in A_{\leq1}^+$. Then 
\[\norm{[S,T]}=\epsilon\leq \frac{1}{4} \Rightarrow \norm{[S,T^{1/2}]}\leq \frac{5}{4}\sqrt\epsilon.\]
\end{proposition}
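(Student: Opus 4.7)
The plan is to reduce to the unitization $\widetilde A$ (or embed $A$ into its enveloping von Neumann algebra) so that the resolvent $(T+\lambda)^{-1}$ exists for every $\lambda > 0$, and exploit the classical integral representation of the square root:
\[
T^{1/2} \;=\; \frac{1}{\pi}\int_0^\infty \lambda^{-1/2}\,\frac{T}{T+\lambda}\, d\lambda,
\]
valid via the continuous functional calculus for $T \in A_{\le 1}^+$. Writing $T/(T+\lambda) = 1 - \lambda(T+\lambda)^{-1}$ and using the identity $[S,(T+\lambda)^{-1}] = -(T+\lambda)^{-1}[S,T](T+\lambda)^{-1}$, one moves the commutator under the integral to obtain
\[
[S,T^{1/2}] \;=\; \frac{1}{\pi}\int_0^\infty \lambda^{1/2}\,(T+\lambda)^{-1}[S,T](T+\lambda)^{-1}\, d\lambda.
\]

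Next I would estimate the integrand in two complementary ways. Using $\|(T+\lambda)^{-1}\| \le \lambda^{-1}$ and the hypothesis $\norm{[S,T]}=\epsilon$ gives the ``large-$\lambda$'' bound
\[
\bigl\|\lambda^{1/2}(T+\lambda)^{-1}[S,T](T+\lambda)^{-1}\bigr\| \;\le\; \epsilon\,\lambda^{-3/2},
\]
which is integrable near $\infty$ but not near $0$. To repair the singularity at $0$, I would rewrite the integrand as $-\lambda^{1/2}[S,(T+\lambda)^{-1}]$, expand $(T+\lambda)^{-1} = \lambda^{-1}(I - T(T+\lambda)^{-1})$, and use $\norm{S}\le 1$ together with $\norm{T(T+\lambda)^{-1}} \le 1$ to get the ``small-$\lambda$'' bound
\[
\bigl\|\lambda^{1/2}(T+\lambda)^{-1}[S,T](T+\lambda)^{-1}\bigr\| \;\le\; 2\lambda^{-1/2}.
\]

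Splitting the integral at a threshold $\lambda_0$ proportional to $\epsilon$ and optimizing over $\lambda_0$, both contributions are of order $\sqrt{\epsilon}$, yielding $\norm{[S,T^{1/2}]} \le C\sqrt{\epsilon}$ for an absolute constant $C$. The main obstacle is sharpening $C$ down to $5/4$: the naive optimization above produces only $C = 4\sqrt 2/\pi \approx 1.80$. To improve the constant, I would either refine the small-$\lambda$ estimate by using the spectral projection $e_\delta$ of $T$ onto $[\delta,1]$ to split $T^{1/2} = T^{1/2}e_\delta + T^{1/2}(I-e_\delta)$, handling the low-spectrum piece via $\|T^{1/2}(I-e_\delta)\| \le \sqrt{\delta}$ and the high-spectrum piece where $T$ is bounded below (so the Lipschitz constant of $\sqrt{\cdot}$ is controlled), or alternatively exploit the extra slack given by $\epsilon \le 1/4$ to balance the two pieces of the integral more finely, matching Pedersen's sharp estimate in \cite[Corollary 6.3]{pedersencorona}. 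The latter constant-chasing, rather than the structure of the argument, is where the work lies.
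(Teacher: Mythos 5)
The paper does not actually prove this proposition --- it is imported verbatim as \cite[Corollary 6.3]{pedersencorona} --- so the only question is whether your argument establishes the stated bound. Your skeleton is sound and standard: the integral representation of $T^{1/2}$, the resolvent commutator identity, and both pointwise bounds on the integrand are correct, and splitting at $\lambda_0=\epsilon/2$ really does give $\norm{[S,T^{1/2}]}\le \frac{4\sqrt2}{\pi}\sqrt\epsilon\approx 1.80\sqrt\epsilon$. (Pedersen's own route is different: he expands $T^{1/2}=(1-(1-T))^{1/2}$ as a binomial series in the positive contraction $1-T$, bounds $\norm{[S,(1-T)^n]}$ by $\min(n\epsilon,1)$, and truncates at $n\approx 1/\epsilon$; that method lands near $\tfrac{2}{\sqrt\pi}\sqrt\epsilon$ and comfortably under $5/4$.)

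The genuine gap is the constant, and it is not the routine ``constant-chasing'' you describe. The best cheap improvement available to you is the observation that $\norm{[S,X]}\le\norm{X}$ for \emph{positive} $X$ and $\norm{S}\le1$ (replace $X$ by $X-\tfrac12\norm{X}\cdot 1$ in the unitization), which sharpens the small-$\lambda$ integrand to $\lambda^{-1/2}(1+\lambda)^{-1}$. But integrating the minimum of that with $\epsilon\lambda^{-3/2}$ gives $\frac1\pi\bigl(2\arctan\sqrt{\lambda_0}+2\epsilon\lambda_0^{-1/2}\bigr)$, whose optimum tends to $\frac4\pi\sqrt\epsilon\approx1.273\sqrt\epsilon$ as $\epsilon\to0$ --- still strictly above $\frac54\sqrt\epsilon$. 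So pointwise norm estimates on this integrand cannot reach $5/4$ for small $\epsilon$; one needs a genuinely different expansion (such as Pedersen's series) rather than a finer balancing of your two regimes. Your alternative fix via the spectral projection $e_\delta$ is also not straightforward: $e_\delta$ lives only in the enveloping von Neumann algebra, it does not commute with $S$ (so the splitting produces cross terms), and $\sqrt{\cdot}$ restricted to $[\delta,1]$ is not operator Lipschitz with its scalar Lipschitz constant, so ``$T$ bounded below'' does not immediately control $\norm{[S,T^{1/2}e_\delta]}$. To be fair, the only place the thesis uses this proposition is Lemma~\ref{lem:approxid1EV}, where any absolute constant in front of $\sqrt\epsilon$ would do (the tolerances there are $10^{-100}\epsilon_n^2$), so your weaker bound suffices for every downstream application; but as a proof of the proposition as stated, it does not deliver $5/4$.
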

\begin{lemma}\label{lem:approxid1EV}
Let $M$ be a unital $\Cstar$-algebra, $A \subseteq M$ an essential ideal, and $\pi \colon M \to M/A$ the quotient map.  
Suppose that there is an increasing sequence $(g_n)_{n\in\en} \subset A$ which $A$-strictly converges to $1$, and that $M$ is $A$-strictly complete.

Let $(F_n)_{n\in\en}$ be an increasing sequence of finite subsets of the unit ball of $M$ and $(\epsilon_n)_{n \in \en}$ be a decreasing sequence converging to $0$, with $\epsilon_0<1/4$.  Then there is an increasing sequence $(e_n)_{n \in \en} \subset A_{\leq 1}^+$ such that, for all $n\in\en$ and $a\in F_n$, the following conditions hold, with $f_n=(e_{n+1}-e_n)^{1/2}$:
\begin{enumerate}[label=(\roman*)]
\item\label{cond0} $\abs{\norm{(1-e_{n-2})a(1-e_{n-2})}-\norm{\pi(a)}}<\epsilon_n$ for all $n\geq2$,
\item\label{cond1} $\norm{[f_n,a]}<\epsilon_n$ for all $n$,
\item\label{cond2a} $\norm{f_n(1-e_{n-2}) - f_n} < \epsilon_n$ for all $n \geq 2$,
\item\label{cond2} $\norm{f_nf_m}<\epsilon_m$ for all $m\geq n+2$,
\item\label{cond3} $\norm{[f_n,f_{n+1}]}<\epsilon_{n+1}$ for all $n$,
\item\label{cond4}  $\norm{f_naf_n}\geq \norm{\pi(a)}-\epsilon_n$ for all $n$,
\item\label{cond5}$\sum_{n \in \en} f_n^2=1$.

Further, whenever $(x_n)_{n \in \en}$ is a bounded sequence from $M$, the following conditions also hold:
\item\label{cond6}the series $\sum_{n \in \en} f_nx_nf_n$ converges to an element of $M$,
\item\label{cond7}
\[\norm{\sum_{n \in \en} f_nx_nf_n}\leq \sup_{n \in \en}\norm{x_n},\]
\item\label{cond8} whenever $\limsup_{n \to \infty}\norm{x_n}=\limsup_{n\to\infty}\norm{x_nf_n^2}$ we have
\[\limsup_{n \to \infty}\norm{x_nf_n^2}\leq\norm{\pi\left(\sum_{n \in \en} x_nf_n^2\right)}.\]
\end{enumerate}
\end{lemma}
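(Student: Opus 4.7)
The plan is to build the sequence $(e_n)$ by induction, using a sequential quasi-central approximate identity for $A$ with respect to $M$ (obtained by combining the hypothesis $g_n\to 1$ $A$-strictly with Pedersen's theorem on quasi-central approximate identities, already stated in the preceding subsection). At stage $n$, having chosen $e_0\leq e_1\leq\cdots\leq e_{n-1}$ and hence $f_0,\dots,f_{n-2}$, I will pick $e_n\in A_{\leq 1}^+$ in the convex hull of the quasi-central approximate identity, far enough along that it simultaneously satisfies: (a) $e_n\geq e_{n-1}$; (b) $\norm{e_n e_k-e_k}<\delta_n$ for each $k<n$; (c) $\norm{[e_n,x]}<\delta_n$ for every $x$ in a finite test set $G_n$ consisting of $F_n$ together with all previously constructed $e_k$ and $f_k$; and (d) $\norm{(1-e_n)a(1-e_n)}>\norm{\pi(a)}-\delta_n$ for every $a\in F_{n+2}$. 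Items (a), (b) hold by taking $e_n$ beyond $e_{n-1}$ in the approximate identity, (c) is quasicentrality, and (d) is the standard fact that $\norm{(1-u)a(1-u)}\to\norm{\pi(a)}$ along an approximate identity for an essential ideal. The auxiliary scalars $\delta_n$ are chosen small enough that the quantitative losses incurred by Proposition~\ref{lem:approxid0EV} stay below $\epsilon_n$.

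Once $(e_n)$ is in hand, conditions (i)--(vii) follow routinely. Condition (i) is exactly (d) applied at stage $n-2$, which is available because $F_n\subseteq F_{n+2}$ in the lookahead. Condition (iii) reduces to $\norm{f_n e_{n-2}}^2=\norm{e_{n-2}(e_{n+1}-e_n)e_{n-2}}$, which is tiny because both $e_{n+1}$ and $e_n$ absorb $e_{n-2}$ by (b). Condition (iv) follows from the $C^*$-identity bound $\norm{f_n f_m}^4\leq\norm{f_n^2 f_m^2}$ together with the expansion of $(e_{n+1}-e_n)(e_{m+1}-e_m)$ into four terms which cancel modulo absorption defects whenever $m\geq n+2$. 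Condition (ii) comes from writing $[e_{n+1}-e_n,a]=[e_{n+1},a]-[e_n,a]$, bounding each commutator by (c), and applying Proposition~\ref{lem:approxid0EV} to $S=a$, $T=e_{n+1}-e_n$. Condition (v) is analogous, applied to $f_n$ and $e_{n+2}-e_{n+1}$, with the auxiliary bound $\norm{[e_{n+1},f_n]}$ obtained via a further invocation of Proposition~\ref{lem:approxid0EV} starting from the small commutator $[e_{n+1},e_{n+1}-e_n]=-[e_{n+1},e_n]$. Condition (vii) is immediate from $\sum_{n\leq N}f_n^2=e_{N+1}$ converging $A$-strictly to $1$.

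The three series statements then follow from (iv) and (vii). Almost-orthogonality from (iv) makes the partial sums $\sum_{n\leq N}f_n x_n f_n$ $A$-strictly Cauchy, hence convergent in $M$ by $A$-strict completeness; this gives the convergence assertion, and the norm bound follows from the positive decomposition $\sum f_n^2=1$ together with the fact that all but two $f_m^2$'s essentially annihilate the support of each $f_n x_n f_n$. For the last series statement, approximate $\sum f_n x_n f_n$ modulo $A$ by its tails $\sum_{n\geq N} f_n x_n f_n$ and use that $\norm{(1-e_{N-1})\cdot y}\to\norm{\pi(y)}$ as $N\to\infty$ for any $y\in M$.

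The main obstacle is condition (vi): $\norm{f_n a f_n}\geq\norm{\pi(a)}-\epsilon_n$ for $a\in F_n$. Since $f_n\in A$ and $f_n^2=e_{n+1}-e_n$ represents only a thin ``slab'' of the ideal, this lower bound is not automatic, and must be enforced by an additional demand at stage $n$. The idea is: for each $a\in F_n$ use (i) (at the previous stage) to find a norming state essentially supported in the range of $1-e_{n-2}$ witnessing $\norm{\pi(a)}$, then push $e_{n+1}$ far enough along the quasi-central approximate identity that this state is essentially concentrated in the range of $f_n=(e_{n+1}-e_n)^{1/2}$, transferring the norm witness. This is the Higson--Kasparov-style refinement and constitutes the technical heart of the argument.
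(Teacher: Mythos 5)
Your overall strategy matches the paper's: build $(e_n)$ inductively from a quasicentral sequential approximate identity, enforce absorption and quasicentrality with auxiliary tolerances $\delta_n\ll\epsilon_n^2$, and pass from the positive element $e_{n+1}-e_n$ to its square root $f_n$ via Proposition~\ref{lem:approxid0EV}. The derivations you sketch for conditions (ii), (iii), (iv), (v), (vii), and the three series statements are in line with the paper.

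The one step that, as literally written, would not go through is exactly the one you flag as the technical heart — condition (vi). You propose to take a norming state supported in the range of $1-e_{n-2}$ (from condition (i) ``at the previous stage'') and then push $e_{n+1}$ so that this state concentrates in the slab $e_{n+1}-e_n$. But $e_n$ is already frozen when $e_{n+1}$ is being chosen, and a state known only to be concentrated below $1-e_{n-2}$ may carry substantial mass in the range of $e_n-e_{n-2}$; no choice of $e_{n+1}$ makes $\phi(e_{n+1}-e_n)$ close to $1$ in that case, so the norm witness does not transfer. The correct move is to make the norming demand a lookahead requirement phrased at the slab level when $\lambda_{n+1}$ is chosen: demand $\norm{(\mu-e_n)a}\geq\norm{\pi(a)}-\delta_n$ for all $\mu>\lambda_{n+1}$ and $a\in F_n$. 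This is obtainable by working in a faithful representation $\beta$ in which the approximate identity of $A$ converges SOT to $1_H$: pick a unit vector $\eta$ with $\norm{\beta((1-e_n)a)\eta}$ nearly $\norm{(1-e_n)a}\geq\norm{\pi(a)}$, then pick $\lambda_{n+1}$ far enough that $\norm{\beta((1-\nu)a)\eta}$ is negligible for $\nu>\lambda_{n+1}$; the triangle inequality applied to $(\nu-e_n)a=(1-e_n)a-(1-\nu)a$ gives the slab bound. This is the paper's condition (5), from which (vi) follows by one more application of Proposition~\ref{lem:approxid0EV} to replace $f_n^2 a$ by $f_n a f_n$. Two smaller corrections: your (d) should be an \emph{upper} bound on $\norm{(1-e_n)a(1-e_n)}$ (the lower bound by $\norm{\pi(a)}$ holds automatically), and you should record $e_{n+1}\geq g_{n+1}$ as an explicit requirement, since conditions (a)--(d) alone do not force $e_n\to 1$ $A$-strictly, which is what gives $\sum_n f_n^2=1$.
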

\begin{proof}
For each $n \in \en$ let $\delta_n=10^{-100}\epsilon_n^2$, and let $(g_n)_{n \in \en}$ be an increasing sequence in $A$ whose $A$-strict limit is $1$.  We will build a sequence $(e_n)_{n \in \en}$ satisfying the following conditions:
\begin{enumerate}[label=(\arabic*)]
\item\label{conda} $\abs{\norm{(1-e_{n-2})a(1-e_{n-2})}-\norm{\pi(a)}}<\epsilon_n$ for all $n\geq2$ and $a\in F_n$,
\item\label{condb} $0\leq e_0\leq\ldots\leq e_n\leq e_{n+1}\leq\ldots\leq 1$,  and for all $n$ we have $e_n\in A$,
\item\label{condc} $\norm{e_{n}e_k-e_k}<\delta_{n+1}$ for all $n>k$,
\item \label{condd} $\norm{[e_n,a]}<\delta_n$ for all $n \in \en$ and $a\in F_{n+1}$,
\item\label{conde} $\norm{(e_{n+1}-e_n)a}\geq \norm{\pi(a)}-\delta_{n}$ for all $n \in \en$ and $a\in F_{n}$
\item\label{condf} $\norm{(e_{m+1}-e_m)^{1/2}e_n(e_{m+1}-e_m)^{1/2}-(e_{m+1}-e_m)}<\delta_{n+1}$ for all $n>m+1$,
\item\label{condg} $e_{n+1}\geq  g_{n+1}$ for all $n \in \en$.
\end{enumerate}

We claim that such a sequence will satisfy \ref{cond0}--\ref{cond5}, in light of Lemma \ref{lem:approxid0EV}.  Conditions \ref{cond0} and \ref{conda} are identical.  Condition \ref{condd} implies condition \ref{cond1}.  Condition \ref{condc} and the $\Cstar$-identity imply condition \ref{cond2a}, which in turn implies conditions \ref{cond2} and \ref{cond3}.  We have also that conditions \ref{conde} and \ref{condg} imply respectively conditions \ref{cond4} and \ref{cond5}, so the claim is proved.  After the construction we will show that \ref{cond6}--\ref{cond8} also hold.

Take $\Lambda=\{\lambda\in A^+\colon \lambda\leq 1\}$ to be the approximate identity of positive contractions (indexed by itself) and let $\Lambda'$ be a subnet of $\Lambda$ that is quasicentral w.r.t. $M$.

Since $A$ is an essential ideal of $M$, by  \cite[II.6.1.6]{Blackadar.OA} there is a faithful representation $\beta$ on an Hilbert space $H$ such that \[1_{H}=\text{SOT}-\lim_{\lambda\in\Lambda'}\{\beta(\lambda)\}, \] 
Consequently, for every finite $F\subset M$, $\epsilon>0$ and $\lambda\in\Lambda'$ there is $\mu>\lambda$ such that for all $a \in F$,
\[\nu\geq \mu\Rightarrow \norm{(\nu-\lambda)a}\geq \norm{\pi(a)}-\epsilon.\]

We will proceed by induction. Let $e_{-1}=0$ and $\lambda_0\in\Lambda'$ be such that for all $\mu>\lambda_0$ and $a\in F_1$ we have $\norm{[\mu,a]}<\delta_0$. By cofinality of $\Lambda'$ in $\Lambda$ we can find a $e_0\in\Lambda'$ such that $e_0>\lambda_0, g_0$. Find now $\lambda_1>e_0$ such that for all $\mu>\lambda_1$ and $a\in F_2$ we have \[\norm{[\mu,a]}<\delta_1, \, \norm{(\mu-e_0)a}\geq \norm{\pi(a)}-\delta_1.\] Since we have that

\begin{equation}\label{limofnets}\norm{\pi(a)}=\lim_{\lambda\in\Lambda'}\norm{(1-\lambda)a(1-\lambda)}
\end{equation} we can also ensure that for all $a\in F_{3}$ and all $\mu>\lambda_{1}$, condition \ref{cond0} is satisfied.

Picking $e_1\in\Lambda'$ such that $e_1>\lambda_1,g_1$ we have that the base step is completed.

Suppose now that $e_0,\ldots,e_{n}, f_0,\ldots,f_{n-1}$ are constructed. 

We can choose $\lambda_{n+1}$ so that for all $\mu>\lambda_{n+1}$, with $\mu\in \Lambda'$, we have $\norm{[\mu,a]}<\delta_{n+1}/4$ and $\norm{(\mu-e_n)a}\geq \norm{\pi(a)}-\delta_n$  for $a\in F_{n+2}$. Moreover, by the fact that $\Lambda'$ is an approximate identity for $A$ we can have that $\norm{f_m\mu f_m-f_m^2}<\delta_{n+2}$ for every $m<n$ and that $\norm{\mu e_k-e_k}<\delta_{n+2}$ for all $k\leq n$. By equation (\ref{limofnets}) we can also ensure that for all $a\in F_{n+2}$ and all $\mu>\lambda_{n+1}$, condition \ref{cond0} is satisfied.

Once this $\lambda_{n+1}$ is picked we may choose \[e_{n+1}\in \Lambda',\,\,\,e_{n+1}> \lambda_{n+1}, \, g_{n+1},\] to end the induction.  

It is immediate from the construction that the sequence $(e_n)_{n \in \en}$ chosen in this way satisfies conditions \ref{conda} - \ref{condg}.  To complete the proof of the lemma we need to show that conditions \ref{cond6}, \ref{cond7} and \ref{cond8} are satisfied by the sequence $\{f_n\}$.

To prove \ref{cond6}, we may assume without loss of generality that each $x_n$ is a contraction.  Recall that every contraction in $M$ is a linear combination (with complex coefficients of norm $1$) of four positive elements of norm less than $1$, and addition and multiplication by scalar are $A$-strict continuous functions.  It is therefore sufficient to consider a sequence $(x_n)$ of positive contractions.
By positivity of $x_n$, we have that $(\sum_{i\leq n} f_ix_if_i)_{n \in \en}$ is an increasing uniformly bounded sequence, since for every $n$ we have 
\[\sum_{i\leq n} f_ix_if_i\leq\sum_{i\leq n}f_i^2\qquad\text{ and }\qquad f_nx_nf_n\geq 0.\] 
Hence $(\sum_{i\leq n} f_ix_if_i)_{n \in \en}$ converges in $A$-strict topology to an element of $M$  of bounded norm, namely the supremum of the sequence, which is $\sum_{n\in\en} f_nx_nf_n$.

For \ref{cond7}, consider the algebra $\prod_{k\in\en} M$ with the sup norm and the map $\phi_n\colon \prod_{k\in\en} M\to M$ such that $\phi_n((x_i))=f_nx_nf_n$. Each $\phi_n$ is completely positive, and since $f_n^2\leq\sum_{i\in\en} f_i^2=1$, also contractive.  For the same reason the maps $\psi_n\colon\prod_{k\in\en} M\to M$ defined as $\psi_n((x_i))=\sum_{j\leq n}f_jx_jf_j$ are completely positive and contractive. Take $\Psi$ to be the supremum of the maps $\psi_n$.  Then $\Psi((x_n))=\sum_{i\in\en} f_ix_if_i$. This map is a completely positive map of norm $1$, because $\norm{\Psi}=\norm{\Psi(1)}$, and from this condition \ref{cond7} follows.

For \ref{cond8}, we can suppose $\limsup_{i\to\infty}\norm{x_i}=\limsup_{i\to\infty}\norm{x_if_i^2} = 1$. Then for all $\epsilon > 0$ there is a sufficiently large $m \in \en$ and a unit vector $\xi_m \in H$ such that 
\[\norm{x_mf_m^2(\xi_m)}\geq 1-\epsilon.\]
Since $\norm{x_i}\leq 1$ for all $i$, we have that $\norm{f_m(\xi_m)}\geq 1-\epsilon$, that is, $\abs{(f_m^2\xi_m\mid\xi_m)}\geq 1-\epsilon$. In particular we have that $\norm{\xi_m-f_m^2(\xi_m)}\leq\epsilon$. 

Since $\sum f_i^2=1$ we have that $\xi_m$ and $\xi_n$ constructed in this way are almost orthogonal for all $n,m$. In particular, choosing $\epsilon$ small enough at every step, we are able to construct a sequence of unit vectors $\{\xi_m\}$ such that $\abs{(\xi_m\mid\xi_n)}\leq 1/2^m$ for $m>n$. But this means that for any finite projection $P\in M$ only finitely many $\xi_m$ are in the range of $P$ up to $\epsilon$ for every $\epsilon>0$. In particular, if $I$ is the set of all convex combinations of finite projections, we have that that 
\[\lim_{\lambda\in I}\norm{\sum_{i\in\en} x_if_i^2-\lambda\left( \sum_{i\in\en} x_if_i^2\right)}\geq 1.\] 
Since $I$ is an approximate identity for $A$ we have that 
\[\norm{\pi\left(\sum_{i\in\en} x_if_i^2\right)}=\lim_{\lambda\in I}\norm{\sum_{i\in\en} x_if_i^2-\lambda\left(\sum_{i\in\en} x_if_i^2\right)},\] 
as desired.
\end{proof}
We now analyze approximate identities of projections:

\begin{proposition}\label{prop:Prel.Noncentralproj}
Let $A$ be a $\Cstar$-algebra and $p\in A$ be a projection. If $p$ is not central, then there is a positive $a\in A$ with $\norm{ap-pa}\geq\frac{1}{8}$.
\end{proposition}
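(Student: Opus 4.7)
The plan is to extract from the non-centrality of $p$ a nonzero ``off-diagonal corner'' element and then use its nilpotence to manufacture a positive witness. Concretely, since $p$ is not central there is some $b \in A$ with $[p,b]\neq 0$. Writing (formally, in $\tilde A$) $b = pbp + pb(1-p) + (1-p)bp + (1-p)b(1-p)$, the middle two terms must not both be zero, else $b$ would commute with $p$. Replacing $b$ by $b^{*}$ if necessary, we may therefore assume that $x := pb - pbp \in A$ is nonzero, and after rescaling $b$ we may assume $\|x\|=1$. The element $x$ then satisfies the crucial relations $px = x$ and $xp=0$ (and dually $x^{*}p = x^{*}$ and $px^{*}=0$), from which $x^{2} = x(px) = (xp)x = 0$ and similarly $x^{*2}=0$.

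Next I would set $y := x + x^{*}$ (self-adjoint, in $A$) and take $a := y_{+} = \tfrac12(y + |y|)$, which is a positive element of $A$. The point of the construction is that $|y|$ commutes with $p$. Indeed, using $x^{2} = x^{*2} = 0$,
\[
y^{2} = xx^{*} + x^{*}x,
\]
and a direct check with the relations above gives $p \cdot xx^{*} = xx^{*} = xx^{*} \cdot p$ while $p \cdot x^{*}x = 0 = x^{*}x \cdot p$; in particular $y^{2}$ commutes with $p$. Since $|y| = (y^{2})^{1/2}$ lies in $C^{*}(y^{2})$ via continuous functional calculus (the function $t \mapsto \sqrt{t}$ vanishes at $0$, so no unit is needed), $|y|$ commutes with $p$ as well. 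Consequently
\[
[p,a] \;=\; \tfrac12[p,y] \;=\; \tfrac12(x - x^{*}).
\]

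Finally I would compute the norm. Using $x^{2} = x^{*2} = 0$ one gets $(x-x^{*})^{*}(x-x^{*}) = xx^{*} + x^{*}x$, and the two summands are orthogonal positive elements (their product $xx^{*}\cdot x^{*}x$ contains the factor $x^{*2}=0$), one supported under $p$ and the other under $1-p$. Hence $\|xx^{*} + x^{*}x\| = \max(\|xx^{*}\|,\|x^{*}x\|) = \|x\|^{2} = 1$, giving $\|x - x^{*}\| = 1$ and therefore $\|[p,a]\| = \tfrac12 \geq \tfrac18$, as required. The only step that requires any care is the nilpotence/orthogonality bookkeeping that makes $y^{2}$ commute with $p$; once that is in place, everything else is mechanical, and in fact the argument yields the sharper constant $\tfrac12$ rather than the $\tfrac18$ stated, which is presumably set at $\tfrac18$ only because that is the form in which it will be invoked later.
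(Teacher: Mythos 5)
Your proof is correct, and it takes a genuinely different route from the one in the paper. The paper's argument is representation-theoretic: it passes to an irreducible representation in which $\rho_1(p)\notin\{0,1\}$, applies the Kadison Transitivity Theorem to produce a contraction $a$ with $\norm{ap-pa}\geq\tfrac12$, and then decomposes $a$ into four positive elements, which is exactly where the constant degrades from $\tfrac12$ to $\tfrac18$. Your argument is purely algebraic: you extract the off-diagonal corner $x=pb(1-p)\in A$ (correctly noting that $pb-pbp$ lies in $A$ even when $A$ is nonunital), exploit $px=x$, $xp=0$ and the resulting nilpotence to show that $\abs{y}=(xx^*+x^*x)^{1/2}$ commutes with $p$, and hence that the positive part $y_+$ of $y=x+x^*$ has $[p,y_+]=\tfrac12(x-x^*)$ of norm exactly $\tfrac12$. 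All the individual steps check out: the orthogonality $(xx^*)(x^*x)=0$ justifies the max-norm computation, and $y_+\in A$ via functional calculus. What your approach buys is the avoidance of GNS theory and Kadison transitivity altogether, together with the sharper constant $\tfrac12$ in place of $\tfrac18$; what the paper's approach buys is essentially nothing extra here, so yours is arguably the cleaner proof of the statement as used later (in Lemma~\ref{lem:apoxcommuting} only the existence of some uniform positive lower bound matters).
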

\begin{proof}
Suppose that $p$ is not central. Let $\rho\colon A\to\mathcal B(H)$ be the GNS representation of $A$. Since $p$ is not central, and $\rho$ is the sum of irreducible representations, there is an irreducible representation $\rho_1\colon A\to\mathcal B(H)$ such that $\rho_1(p)\notin\{0,1\}$. Take $\xi,\eta$ in the Hilbert space generated by $\rho_1(A)$ such that $\langle \rho_1(p)\xi\mid\xi\rangle=0$ and $\langle \rho_1(p)\eta\mid\eta\rangle=1$. By the Kadison Transitivity Theorem (\cite[II.6.1.12]{Blackadar.OA}) there is a contraction $a\in A$ with $\norm{\rho_1(a)\xi-\eta}<1/10$. As $\rho_1$ is contractive, $\norm{ap-pa}\geq\norm{\rho_1(ap-pa)}\geq\frac{1}{2}$. Since every element of $A$ is a combination of 4 positive elements, the thesis follows.
\end{proof}

\begin{lemma}\label{lem:apoxcommuting}
Suppose that $M$ is a unital $\Cstar$-algebra, $A\subseteq M$ is an ideal with an approximate identity of projections $\{p_n\}$ and that $M$ is $A$-strictly complete. Let $\pi\colon M\to M/A$ be the quotient map. Suppose further that, with $q_n=p_{n+1}-p_n$, for every $X\subseteq\NN$ the element 
\[
q_X=\sum_{n\in X}q_n=\lim_m \sum_{n\in X\cap m}q_n
\] is such that $\pi(q_X)\in  Z(M/A)$. Then there is $n_0$ such that for all $n\geq n_0$ we have $q_n\in Z((1-p_{n_0})A(1-p_{n_0}))$.
\end{lemma}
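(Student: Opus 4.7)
The plan is a proof by contradiction: assume no such $n_0$ exists, and construct $X\subseteq\NN$ together with $b\in M$ such that $[q_X,b]\notin A$. Since the hypothesis $\pi(q_X)\in Z(M/A)$ forces $[q_X,b]\in A$, this is the desired contradiction.

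Under the assumption that the conclusion fails, for every $n_0$ there is $n\geq n_0$ with $q_n\notin Z((1-p_{n_0})A(1-p_{n_0}))$, and Proposition~\ref{prop:Prel.Noncentralproj} applied inside this corner produces a positive contraction $a\in(1-p_{n_0})A(1-p_{n_0})$ with $\|[a,q_n]\|\geq 1/8$. We recursively choose integers $0=m_0<n_1<m_1<n_2<m_2<\cdots$ and positive contractions $a_k$ by: picking $n_k\geq m_{k-1}$ so that $q_{n_k}$ is not central in $(1-p_{m_{k-1}})A(1-p_{m_{k-1}})$, taking $a_k'$ to be the witness from the proposition, and using that $\{p_n\}$ is an approximate identity to choose $m_k>n_k$ large enough that $\|a_k'-p_{m_k}a_k'p_{m_k}\|<1/100$; finally set $a_k=p_{m_k}a_k'p_{m_k}$. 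Then $a_k$ is a positive contraction with $a_k=(p_{m_k}-p_{m_{k-1}})a_k(p_{m_k}-p_{m_{k-1}})$ (since $a_k'p_{m_{k-1}}=0$) and $\|[a_k,q_{n_k}]\|>1/10$. The inequalities $m_{j-1}\leq n_j$ and $n_j+1\leq m_j$ also place $q_{n_j}$ inside the band $(p_{m_j}-p_{m_{j-1}})M(p_{m_j}-p_{m_{j-1}})$.

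Setting $X=\{n_k:k\geq 1\}$, the partial sums $s_K=\sum_{k\leq K}a_k$ are $A$-strictly Cauchy: for $y\in A$ and $\epsilon>0$ pick $m$ with $\|y-yp_m\|<\epsilon$; once $m_K\geq m$ one has $p_m a_k=0$ for every $k>K$ (because the support of $a_k$ lies under $1-p_{m_{k-1}}\leq 1-p_m$), and consequently $\|y(s_{K'}-s_K)\|=\|(y-yp_m)(s_{K'}-s_K)\|\leq\epsilon$; a symmetric estimate handles the right seminorm. By $A$-strict completeness, $b:=\sum_k a_k\in M$, and similarly $q_X=\sum_k q_{n_k}\in M$. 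The band orthogonality $(p_{m_j}-p_{m_{j-1}})(p_{m_k}-p_{m_{k-1}})=0$ for $j\neq k$ forces $q_{n_j}a_k=a_kq_{n_j}=0$ whenever $j\neq k$, so $[q_X,b]=\sum_k[q_{n_k},a_k]$, a sum whose terms sit in mutually orthogonal corners and each have norm at least $1/10$.

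Because each summand $[q_{n_k},a_k]$ is supported in $(p_{m_k}-p_{m_{k-1}})M(p_{m_k}-p_{m_{k-1}})$, compression against $p_m$ annihilates every summand with $m_{k-1}\geq m$, and orthogonality of the corners yields
\[
\bigl\|[q_X,b]-p_m[q_X,b]p_m\bigr\|\geq\sup_{k:\,m_{k-1}\geq m}\|[q_{n_k},a_k]\|\geq\tfrac{1}{10}
\]
for every $m\in\NN$. Since every element of $A$ is norm-approximated by its compressions $p_m(\cdot)p_m$, this forces $[q_X,b]\notin A$, contradicting the centrality of $\pi(q_X)$ modulo $A$. The main obstacle is the bookkeeping in the recursion: one must simultaneously preserve the commutator lower bound after cutting $a_k'$ down to a finite band, ensure $A$-strict convergence of the $a_k$'s to an element of $M$, and maintain enough orthogonality of supports to localize and lower-bound $[q_X,b]$ away from $A$. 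All three goals are secured by choosing $m_k$ sufficiently large after $n_k$ is fixed.
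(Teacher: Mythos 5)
Your proof is correct and follows essentially the same strategy as the paper's: argue by contradiction, use Proposition~\ref{prop:Prel.Noncentralproj} to extract positive contractions with commutator bounded below, cut them into disjoint bands $(p_{m_k}-p_{m_{k-1}})A(p_{m_k}-p_{m_{k-1}})$ using the approximate identity, sum to obtain $b\in M$ by $A$-strict completeness, and observe that $[q_X,b]$ cannot be approximated by $p_m(\cdot)p_m$-compressions and hence lies outside $A$. Your write-up is somewhat more explicit than the paper's, in particular in justifying the $A$-strict Cauchy condition for the partial sums and in showing quantitatively why $[q_X,b]\notin A$ (the paper simply asserts $\pi(b)$ and $\pi(q_X)$ do not commute), but the underlying argument is identical.
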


\begin{proof}
Let $B_n=(1-p_n)A(1-p_n)$. By contradiction and Proposition~\ref{prop:Prel.Noncentralproj}, for every $n$ there is an $m>n$ and a positive contraction $a_n\in B_n$ with the property that $\norm{a_nq_m-q_ma_n}\geq\frac{1}{8}$.  Since $p_n$ is an approximate identity of projections for $A$, we can find $n_1>m$ such that $\norm{p_{n_1}a_{n}p_{n_1}-a_n}<\frac{1}{100}$. Let $b_n=p_{n_1}a_np_{n_1}$ and Note that $b_n\in (p_{n_1}-p_n)A(p_{n_1}-p_n)$ and $\norm{b_nq_m-q_mb_n}\geq \frac{1}{16}$.

Construct two sequences of natural numbers $\{n_i\}, \{k_i\}$ such that $1=n_1<k_1<n_2<\cdots$ and there are a contractions $b_i\in (p_{n_{i+1}}-p_{n_i})A(p_{n_{i+1}}-p_{n_i})$ with $\norm{b_iq_{k_i}-q_{k_i}b_i}\geq\frac{1}{16}$. Let $X=\{k_i\}$, and $b=\sum b_i=\lim_m\sum_{n\leq m}b_n$. Since $M$ is $A$-strictly complete, $b\in M\setminus A$. On the other hand, we have that $\pi(b)$ and $\pi(q_X)$ do not commute in $M/A$, a contradiction to $\pi(q_X)\in Z(M/A)$.
\end{proof}
\subsection{The multiplier and the corona}\label{s:multicorona}

Given a nonunital $\Cstar$-algebra $A$ we want to study how $A$ can be embedded in a unital $\Cstar$-algebra $B$. If $A=C_0(X)$, we can embed $A$ as an essential ideal in $C_b(X)\cong C(\beta X)$, where $\beta X$ is the \v{C}ech-Stone compactification of $X$. The space $\beta X$ is, in some sense, the maximal compactification of $X$ and it has the universal property that every continuous map from $X$ to a compact Hausdorff space $Y$ factors uniquely through $\beta X$.
In this spirit, given $A$, we construct a unital $\Cstar$-algebra $\mathcal M(A)$ containing $A$ in a universal way as an essential ideal.
\begin{defin}
Let $A$ be a $\Cstar$-algebra. The algebra $\mathcal M(A)$ is the universal $\Cstar$-algebra containing $A$ as an essential ideal and with the property that whenever $A$ sits inside a $\Cstar$-algebra $B$ as an essential ideal, then there is a unique $^*$-homomorphism $B\to \mathcal M(A)$ which is the identity on $A$.
\end{defin}

From the definition, it is not even clear that $\mathcal M(A)$ exists. On the other hand, if $\mathcal M(A)$ exists, it is unique up to isomorphism. That whenever $A$ is a $\Cstar$-algebra its multiplier algebra $\mathcal M(A)$ exists is nontrivial. The construction of the multiplier algebra can be performed in, at least, three different but equivalent ways: through double centralizers (this was the original way of constructing $\mathcal M(A)$, due to Busby in \cite{Busby}) , through representation theory, or through bimodules. For specific constructions of the multiplier algebra we refer to \cite[II.7.3]{Blackadar.OA}, \cite{Lance.HM}, or the excellent \cite{Skou:NotesMult}. 

\begin{proposition}\label{prop:CStarPrel.examplesofmult}
Let $A$ be a $\Cstar$-algebra.
\begin{itemize}
\item If $A$ is unital, then $A=\mathcal M(A)$;
\item if $A$ is nonunital and separable, $\mathcal M(A)$ is nonseparable;
\item if $A=\mathcal K(H)$, then $\mathcal M(A)=\mathcal B(H)$;
\item $\mathcal M(C_0(X))=C(\beta X)$. Also, if $A$ is unital, $\mathcal M(C_0(X,A))=C(\beta X,A)$;
\item if $A_1,\ldots,A_n,\ldots$ are unital, then $\mathcal M(\bigoplus A_n)=\prod A_n$.
\end{itemize}
\end{proposition}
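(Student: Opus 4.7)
The plan is to prove the five items in turn, handling (1), (3), (4) and (5) first as quick applications of the universal property, and then (2) as the main point.

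For (1), the third bullet of the preceding proposition already states that if $A$ is a unital essential ideal of a unital algebra $B$, then $A=B$; so $A$ itself satisfies the defining universal property and $A=\mathcal M(A)$. For (5), essentiality of $\bigoplus A_n$ in $\prod A_n$ is visible on the standard basis sequences $(0,\dots,1_{A_m},0,\dots)\in\bigoplus A_n$, which pick off individual coordinates. Given any essential extension $\bigoplus A_n\subseteq B$ with $B$ unital, I would show the projections $p_m:=1_{A_m}\in B$ are central: a direct computation shows $[p_m,b]$ annihilates $\bigoplus A_n$ from both sides, so essentiality forces $[p_m,b]=0$. Then $p_mBp_m$ contains $A_m$ as a unital essential ideal, hence equals $A_m$ by (1), and $b\mapsto (p_nb)_n$ gives the unique $*$-homomorphism $B\to\prod A_n$ (uniqueness follows from the same essentiality trick). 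For (3), $\mathcal K(H)$ is the unique nontrivial closed ideal of $\mathcal B(H)$, hence essential, and any essential extension $\mathcal K(H)\subseteq B$ induces a unique representation of $B$ on $H$ via $\pi(b)(k\xi):=(bk)\xi$, which is well defined and bounded because $\mathcal K(H)$ is an ideal of $B$. For (4), essentiality of $C_0(X)$ in $C(\beta X)$ follows from density of $X$ in $\beta X$, and the universal property is a direct translation of the universal property of $\beta X$; the $A$-valued case for unital $A$ follows by the same argument, tensored with $A$.

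The main obstacle is (2). I would apply Lemma~\ref{lem:approxid1EV} inside $M=\mathcal M(A)$ with $F_n=\{1_M\}$ and $\epsilon_n=2^{-n-10}$, which is legal since $\mathcal M(A)$ is $A$-strictly complete and the sequential approximate identity for $A$ (available because $A$ is separable, hence $\sigma$-unital) converges strictly to $1_M$. This yields a sequence $(f_n)\subset A_{\leq 1}^+$ with $\sum f_n^2=1$ strictly, $\norm{f_nf_m}<\epsilon_{\min(m,n)}$ whenever $|m-n|\geq 2$, and (taking $a=1_M$ in condition \ref{cond4}, using $\norm{\pi(1_M)}=1$ since $A$ is nonunital) the crucial lower bound $\norm{f_n^2}\geq 1-\epsilon_n$. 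For each $X\subseteq 2\NN$, conditions \ref{cond6}--\ref{cond7} guarantee that
\[
q_X:=\sum_{n\in X}f_n^2
\]
is a well-defined element of $\mathcal M(A)_{\leq 1}$. If $X\neq Y$ are subsets of $2\NN$ and $n\in X\sd Y$, every other index in $X\cup Y$ is at index-distance at least $2$ from $n$, so
\[
\bigl\| f_n^2(q_X-q_Y)f_n^2 \mp f_n^6 \bigr\|
\leq \sum_{\substack{m\in X\sd Y\\ m\neq n}}\norm{f_nf_m}^2
\leq \sum_{m\geq n+2}\epsilon_m^2 + (n-1)\epsilon_n^2,
\]
which is $o(1)$ as $n\to\infty$. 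Hence $\norm{q_X-q_Y}\geq\norm{f_n^6}-o(1)\geq(1-\epsilon_n)^3-o(1)>1/2$ for all sufficiently large $n$. This produces $2^{\aleph_0}$ elements of $\mathcal M(A)$ at pairwise distance bounded away from $0$, contradicting separability.

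The only technically sensitive point is securing the uniform lower bound on $\norm{f_n^2}$, and the reason I include $1_M$ in the test sets $F_n$ when invoking Lemma~\ref{lem:approxid1EV} is precisely to force condition \ref{cond4} to deliver this bound; everything else is near-orthogonality bookkeeping made routine by the other conclusions of that lemma.
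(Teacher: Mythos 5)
The paper states this proposition without proof --- it is a list of standard facts deferred to the section's references --- so there is no in-paper argument to compare against; I can only assess your proposal on its own terms, and it is essentially sound. The most substantive part, item (2), is handled by a genuinely nice route: feeding $F_n=\{1_M\}$ into Lemma~\ref{lem:approxid1EV} and reading the uniform lower bound $\norm{f_n^2}\geq 1-\epsilon_n$ off condition~\ref{cond4} is exactly where nonunitality enters (via $\norm{\pi(1)}=1$), and the rest is the near-orthogonality bookkeeping that the lemma supplies. The verifications for (1), (3) and (5) are correct as sketched (in (5), the identity you really invoke is the third bullet of the preceding proposition --- a unital essential ideal of a unital algebra is everything --- rather than item (1) itself, but that is what your proof of (1) rests on anyway).

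Two points need tightening. First, in (2) you conclude $\norm{q_X-q_Y}>1/2$ ``for all sufficiently large $n$'', but $n$ must be chosen in $X\sd Y$, which can be finite (e.g.\ $X\sd Y=\{0\}$); the argument survives only because with $\epsilon_n=2^{-n-10}$ your error term $\sum_{m\geq n+2}\epsilon_m^2+(n-1)\epsilon_n^2$ is uniformly small over \emph{all} $n$, so that any $n\in X\sd Y$ already separates $q_X$ from $q_Y$ --- state it that way (or work with an almost disjoint family of subsets of $2\NN$). Second, the last clause of (4) cannot be dispatched by ``the same argument, tensored with $A$'': for unital $A$ one gets $\mathcal M(C_0(X,A))=C_b(X,A)$, and for infinite-dimensional $A$ this is strictly larger than $C(\beta X,A)$ (compare item (5): $C_b(\NN,A)=\prod_n A$, whereas a norm-continuous function on $\beta\NN$ has relatively compact range). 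The identity as printed is really an imprecision of the proposition itself, which your gloss inherits rather than resolves. Finally, in the scalar case of (4) you should record why an essential extension of $C_0(X)$ is automatically commutative (each $ab-ba$ with $a\in C_0(X)$ lies in $C_0(X)$ and annihilates it, then essentiality kills all commutators of $B$) before translating to the universal property of $\beta X$.
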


In case the nonunital algebra $A$ fails to be separable the multiplier doesn't necessarily have to be larger (in terms of density character) than $A$. For example, $\mathcal M(C_0(\omega_1))=C(\omega_1+1)$, as every continuous function $f\colon \omega_1\to \ce$ is eventually constant. An interesting example of simple nonunital algebra for which the multiplier consists with the unitization was constructed by Sakai in \cite{Sakai.Der}. Other interesting, and far from abelian, examples of algebras carrying these type of properties can be found in \cite{GK.NoMult}.

Given a nonunital $\Cstar$-algebra $A$, having constructed its multiplier $\mathcal M(A)$, it is natural to consider the quotient.

\begin{defin}
 The quotient $\mathcal M(A)/A$ is said the \emph{corona algebra} of $A$.
\end{defin}

The corona algebra is the noncommutative analog of the \v{C}ech-Stone remainder of a topological space. If $A$ is $\sigma$-unital, the multiplier algebra is nonseparable, and so the corona is never separable. The most important example of a corona algebra is the Calkin algebra $\mathcal C(H)$ (For a deep analysis of $\mathcal C(H)$, see \cite{BDF.Ext}.) 

In case $A=\bigoplus A_n$, where each $A_n$ is unital, the corona of $A$ is isomorphic to $\prod A_n/\bigoplus A_n$. This algebra is called the \emph{reduced product} of the $A_n$'s. If $B=C_0(X,A)$, for some unital $\Cstar$-algebra $A$, the corona of $B$ is isomorphic to $C_b(X,A)/C_0(X,A)$. 

If $A$ is nonunital and $X$ is a locally compact space, it  is not true anymore that the corona of $C_0(X,A)$ is isomorphic to $C_b(X,\mathcal M(A))/C_0(X,A)$. In fact, the multiplier of $C_0(X,A)$ consists of the set of all norm bounded function from $X$ to $\mathcal M(A)$ which are $A$-strictly continuous. As not every such function extends to an $A$-strictly continuous function from $\beta X$ to $\mathcal M(A)$, we only have that $\mathcal M(C_0(X,A))\supseteq C(\beta X,\mathcal M(A))$. For more information on multipliers, see \cite{APT.Mult}, while to analyze part of the incredible amount of work carried over in trying to understand coronas, see \cite{LinNG.CoronaZ}, \cite{KNgP.CoronaPIS} or \cite{NG.CFP}.

By Proposition~\ref{prop:CStarPrel.examplesofmult}, if $A$ is not unital but separable, $\mathcal M(A)$ is not separable in the norm topology. On the other hand $\mathcal M(A)$ carries a second natural topology: the $A$-\emph{strict topology} as introduced in \ref{ss:CstarPrel.strict}. If $x\in\mathcal M(A)$, a basic strictly open set in $\mathcal M(A)$ is of the form $U_{a,\epsilon}=\{y\mid\norm{a(x-y)}+\norm{(x-y)a}<\epsilon\}$, for some $a\in A$ and $\epsilon>0$.

In case of the multiplier algebra, we will refer as the $A$-strict topology on $\mathcal M(A)$ as, simply, the \emph{strict topology}.

If $A$ is $\sigma$-unital, $\mathcal M(A)$ is separable in the strict topology. In this case, the strict topology is Polish when restricted to any norm bounded subset of $\mathcal M(A)$. If $A$ is unital, the strict topology coincides with the norm topology on $A$. In general, $A$ is strictly dense in $\mathcal M(A)$. In case $A=\mathcal K(H)$ the strict topology coincides with the $\sigma$-strong topology on $\mathcal B(H)$. Another easy description of the strict topology is given when $A=C_0(X)$. In this case, on bounded sets, the strict topology on $C(\beta X)$ coincides with the topology of uniform convergence on compact subsets of $X$.

\subsection{The unique ideal of a $\II_\infty$-factor}\label{sss:CstarPrel.breuer}

A von Neumann algebra is a $\Cstar$-algebra which is weakly-closed as a subalgebra of $\mathcal B(H)$. Alternatively, a $\Cstar$-algebra $M\subseteq \mathcal B(H)$ is a von Neumann algebra if $M=M''$, where $M'$ is the commutant of $M$ in $\mathcal B(H)$. In particular von Neumann algebras are always unital and carry a pletora of projections.  These objects were introduced by the seminal work of Murray and von Neumann as rings of operators (\cite{MVN.I}, \cite{MVN.II} and \cite{MVN.IV}). Although the primary interest of this thesis is on $\Cstar$-algebras, in \S\ref{ss:Breuerideal} we will extend a result of Farah and Hart (see~\cite{farah2011countable}) to quotients of semifinite von Neumann algebras. The basic notions we now introduce can be found in \cite[III.1]{Blackadar.OA}.

If $M$ is a von Neumann algebra and $p\in M$ is a projection then $p$ is said abelian if $pMp$ is, and $p$ is said \emph{finite} if it is not Murray von Neumann equivalent to any of its proper subprojections ($p$ and $q$ are Murray von Neumann equivalent if there is $v$ with $vv^*=p$ and $v^*v=q$). $p$ is said semifinite if every (nonzero) subprojection of $p$ has a (nonzero) finite subprojection.  $M$ is said finite, or semifinite, if $1$ is.

 Particular cases of von Neumann algebras are \emph{factors}, which are von Neumann algebra whose center is trivial. A factor $M$ is said of type $\II$ if $1_M$ is semifinite and $M$ has no abelian projection. If $1_M$ is finite, $M$ is said of type $\II_1$, else $M$ is said of type $\II_\infty$. Factors, and in particular $\II_1$-factors, are crucial objects for the study of von Neumann algebras and their interactions with $\Cstar$-algebras. First, every von Neumann algebra can be seen as a direct integral of factors, which therefore form the building blocks of every von Neumann algebra. To classify, or at least understand, von Neumann algebras is therefore crucial to study factors. Secondly, one should note that if $A$ is a simple separable $\Cstar$-algebra having a (faithful) trace $\tau$, the strong operator closure of the irreducible representation relative to $\tau$ is a $\II_1$-factor. For this particular reason $\II_1$-factors where recognized as key objects in the Elliott classification programme for $\Cstar$-algebras. 

Every von Neumann algebra has a unique predual, see \cite[III.2.4.1]{Blackadar.OA}. A key result is that every $\II_\infty$-factor with separable predual is of the form $M\bar {\otimes}\mathcal B(H)$ for some $\II_1$-factor $M$. (Being in the setting of von Neumann algebras, we need to take the weak closure of the algebraic tensor product to obtain the right tensor product in this category. From this the notation $\bar\otimes$). In particular $\II_\infty$-factors with a separable predual have a unique ($\Cstar$-algebraic) ideal, the one generated by finite projections (\cite[III.1.7.1]{Blackadar.OA}). In case $M=\mathcal R$, the hyperfinite $\II_1$-factor, such ideal is known as the Breuer ideal. Whenever a $\II_\infty$-factor $M$ (with separable predual) is given, and $J$ is its unique ideal, then $\mathcal M(J)=M$. We will study quotients of $\II_\infty$-factors by their unique ideal, and we will see how they resemble properties of coronas of $\sigma$-unital algebras, even though such ideal is never $\sigma$-unital (for this, see, for example \cite{Breu269}, \cite{Breu168}, or \cite{Phil90}).

\subsection{Nuclear $\Cstar$-algebras and the CPAP}\label{ss:nuclear}

In the category of $\Cstar$-algebras an equivalent definition of amenable objects is the one of nuclear $\Cstar$-algebras (see \cite{Runde.LA} for a proof that amenable $\Cstar$-algebras are nuclear, and viceversa). Nuclear $\Cstar$-algebras are fundamental objects for the classification programme of $\Cstar$-algebras. The original definition  is that a $\Cstar$-algebra is nuclear if whenever $B$ is another $\Cstar$-algebra there is a unique way in which the algebraic tensor product $A\odot B$ can be completed to a $\Cstar$-algebra. This definition, even though unnatural, is equivalent to the fact that $A$ is amenable as a Banach algebra. Another equivalent definition of nuclearity is given by the CPAP.

If $A$ and $B$ are $\Cstar$ algebras, a linear $^*$-preserving map $\phi\colon A\to B$ is said \emph{positive} if $\phi(a)\in B^+$ whenever $a\in A^+$. $\phi$ is said \emph{completely positive} if all of its amplifications 
\begin{eqnarray*}
f^{(n)}\colon M_n(A)&\to &M_n(B)\\
(x_{i,j})&\mapsto &(f(x_{i,j}))
\end{eqnarray*}
are positive.

A $\Cstar$-algebra $A$ has the completely positive approximation property (CPAP) if for all finite $G\subseteq A$ and $\e>0$ there are a  matrix algebra $M_n(\ce)$ and completely positive contractions (cpc) $\psi\colon A\to M_n(\ce)$ and $\psi\colon M_n(\ce)\to A$ such that $\norm{\psi(\phi(x))-x}<\e$ for all $x\in G$. The following was proved in \cite{ChoiEff.CPAP} (see also \cite{Kirch.CPAP} for the forward direction).

\begin{theorem}
A $\Cstar$-algebra is nuclear if and only if has the CPAP.
\end{theorem}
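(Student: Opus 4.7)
The plan is to prove the two implications separately, with the easier direction (CPAP $\Rightarrow$ nuclearity) relying on the fact that completely positive maps are well-behaved under tensor products, and the harder direction (nuclearity $\Rightarrow$ CPAP) requiring a duality argument between tensor norms and states, essentially Kirchberg's idea.

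For the backward direction, I would fix a $\Cstar$-algebra $B$ and an arbitrary $\Cstar$-norm $\|\cdot\|_\alpha$ on the algebraic tensor product $A\odot B$, and show that $\|\cdot\|_\alpha$ coincides with $\|\cdot\|_{\min}$ on $A \odot B$; since the $\max$ norm dominates any $\Cstar$-norm and dominates the $\min$ norm, this will force uniqueness. Fix $x = \sum_{i\le k} a_i \otimes b_i \in A\odot B$ and $\epsilon>0$. Applying the CPAP to the finite set $\{a_i\}$, pick cpc maps $\phi\colon A \to M_n$ and $\psi\colon M_n \to A$ with $\norm{\psi(\phi(a_i)) - a_i}<\epsilon$. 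The key facts are: (a) $M_n$ is nuclear, so $M_n \odot B$ admits a unique $\Cstar$-norm, forcing $\|\phi\otimes \id_B(x)\|_\alpha = \|\phi\otimes\id_B(x)\|_{\min}$; (b) by Stinespring's theorem and standard facts about completely positive maps (Arveson--Choi--Effros), $\phi\otimes\id_B$ and $\psi\otimes\id_B$ extend to contractive maps on any $\Cstar$-completion, in particular on the $\alpha$- and the $\min$-completions. Approximating $x$ by $(\psi\otimes\id_B)(\phi\otimes\id_B)(x)$ in both norms and using a triangle inequality together with $\|(\phi\otimes\id_B)(x)\|_\alpha = \|(\phi\otimes\id_B)(x)\|_{\min}$ will yield $\|x\|_\alpha = \|x\|_{\min}$ after letting $\epsilon \to 0$.

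For the forward direction, the idea is to dualize. Consider the cone of finite-rank completely positive contractions $A \to A$ factoring through some matrix algebra; call it $\mathcal F$. One wants to show that $\id_A$ lies in the point-norm closure of $\mathcal F$. By a Hahn--Banach argument applied in the dual of the space of bounded maps $A \to A$, failure of this approximation would produce a continuous linear functional separating $\id_A$ from $\mathcal F$, and unpacking this functional via the identification of completely positive maps $A \to M_n$ with states on $M_n(A)$ would yield a state on the algebraic tensor product $A \odot A^{op}$ (or equivalently, a bilinear form on $A\times A$) that is positive with respect to the $\min$ norm but not with respect to the $\max$ norm. This contradicts nuclearity, which asserts exactly that these two norms (and the corresponding state spaces) coincide.

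The main obstacle is the forward direction: turning the single-norm uniqueness statement into the positive factorization data needed for the CPAP requires the correspondence between completely positive maps $\phi\colon A\to M_n$ and positive functionals on $M_n(A)\cong M_n\otimes A$, followed by carefully extracting cpc maps (rather than merely bounded finite-rank maps) from the separation argument. Once that correspondence is in place, the compactness of the state space in the weak-$*$ topology and the Hahn--Banach theorem provide the required approximations; the construction of the matrix algebra factorization $\psi\circ\phi$ comes directly from the Stinespring dilation of the associated state. I would then quote the original results \cite{ChoiEff.CPAP} and \cite{Kirch.CPAP} for the details of these classical arguments rather than reproducing them in full.
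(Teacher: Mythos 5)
The paper does not actually prove this theorem; it states it and cites \cite{ChoiEff.CPAP} (and \cite{Kirch.CPAP} for the forward direction), so there is no in-paper argument to compare against. Your outline is the standard one and is essentially correct: the CPAP $\Rightarrow$ nuclearity direction via factoring through matrix algebras and extending $\phi\otimes\id_B$, $\psi\otimes\id_B$ to the completions is the classical "sandwich" argument, and the nuclearity $\Rightarrow$ CPAP direction is correctly identified as the deep Choi--Effros/Kirchberg theorem and appropriately deferred to those sources. One small point of care in the easy direction: to run the approximation for an \emph{arbitrary} $\Cstar$-norm $\alpha$ you need to know that $\phi\otimes\id_B$ is contractive from the $\alpha$-completion into $M_n\otimes B$, which is not automatic; the cleaner route is to prove $\norm{x}_{\max}\le\norm{x}_{\min}$ directly (using that $\psi\otimes\id_B$ is contractive into the max completion and $\phi\otimes\id_B$ is contractive on the min completion) and then invoke Takesaki's theorem that every $\Cstar$-norm on $A\odot B$ lies between $\min$ and $\max$.
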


In Chapter~\ref{ch:FA}, and in particular in \S\ref{ss:FA.BorelLift} we will use that $A$ has the CPAP as our definition of nuclear $\Cstar$-algebra.
Every abelian $\Cstar$-algebra is nuclear (\cite{Tak.CrNuc}), as well as every finite-dimensional one. It is worth noticing that nuclearity is preserved by extensions (i.e., is $I$ and $A/I$ are nuclear, so is $A$), inductive limits and quotients (this is not an easy result! See ~\cite{ChoiEff.Nuc}). Example of nonnuclear algebras are $\mathcal B(H)$ (unless $H$ is finite-dimensional) and the Calkin algebra. An example of a nonnuclear separable $\Cstar$-algebra comes from the nonamenable group $\mathbb F_2$, by the construction of its reduced group $\Cstar$-algebra (see~\cite[II.10]{Blackadar.OA}).

\subsection{Approximate maps}\label{ss:CstarPrel.Approx}

 Dealing with approximate maps means dealing with maps which ``almost'' carry some regularity property (such a linearity, multiplicativity, positivity, and so on), in a uniform way across the unit ball of a $\Cstar$-algebra.  
\begin{defin}\label{defin:CstarPrel.Approxmaps}
Let $A$ and $B$ be $\Cstar$-algebras and $\epsilon>0$. A map $\phi\colon A\to B$ is said
\begin{enumerate}
\item\label{ss:CstarPrel.Approx1}
 $\epsilon$-\emph{linear} if $\norm{\phi(\lambda x+\mu y)-\lambda\phi(x)-\mu\phi(y)}<\epsilon$ where $x,y\in A_{\leq 1}$, $\lambda,\mu\in\ce_{\leq 1}$;
\item\label{ss:CstarPrel.Approx2} $\epsilon$-$^*$\emph{preserving} if $\norm{\phi(x^*)-\phi(x)^*}<\epsilon$ for all $x\in A_{\leq 1}$;
\item\label{ss:CstarPrel.Approx3} $\epsilon$-\emph{multiplicative} if $\norm{\phi(xy)-\phi(x)\phi(y)}<\epsilon$ whenever $x,y\in A_{\leq 1}$;
\item\label{ss:CstarPrel.Approx4} $\epsilon$-\emph{contractive} if $\sup_{x\in A_{\leq 1}}\norm{\phi(x)}\leq1+\epsilon$;
\item\label{ss:CstarPrel.Approx5} $\epsilon$-\emph{injective} if whenever $x\in A$ with $\norm{x}=1$ then $\norm{\phi(x)}\geq 1-\epsilon$.
\end{enumerate}
\end{defin}
We define an $\epsilon$-$^*$-homomorphism to be a map satisfying \eqref{ss:CstarPrel.Approx1}--\eqref{ss:CstarPrel.Approx4}.

\begin{remark}\label{rmk:CstarPrel.unital-contractive}
To aid our calculations later on, we will often assume that $\norm{\phi} \le 1$.  To obtain stability results as in Chapter~\ref{ch:Ulam} this gives no loss of generality, since if $\phi$ is an $\e$-$^*$-homomorphism as defined above, and $\norm{\phi} > 1$, then $\psi =\frac{1}{\norm{\phi}}\phi$ satisfies $\norm{\phi - \psi} \le \e$.  Similarly, if $A$ is unital and $\e$ is small enough, then we may assume without loss of generality that $\phi(1)$ is a projection.  To see this, note that $\phi(1)$ is an almost-projection and hence (by standard spectral theory tricks) is close to an actual projection $p\in B$.  Then by replacing $\phi(1)$ with $p$, we get a unital $\delta$-$^*$-homomorphism, where $\delta$ is polynomial in $\e$ not depending on either $A$ or $B$.
\end{remark}

 When $A$ is a finite-dimensional $\Cstar$-algebra, and $\e$ is sufficiently small, approximate injectivity is automatic.
\begin{proposition}
Suppose $\e < (\sqrt{10} - 3)^2$, $\ell\in\NN$, $B$ is a $\Cstar$-algebra, and $\phi \colon M_\ell\to B$ is an $\e$-$^*$-homomorphism with $1-\sqrt{\e}\leq\norm{\phi} \le 1$.  Then $\phi$ is $2\sqrt{\e}$-injective.
\end{proposition}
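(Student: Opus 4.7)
The plan is to reduce approximate injectivity to a lower bound on $\|\phi(p)\|$ for a rank-one projection $p \in M_\ell$ adapted to $x$, then to extract that lower bound from the hypothesis $\|\phi\| \geq 1-\sqrt{\e}$. Using the $\Cstar$-identity together with $\e$-$^*$-preserving and $\e$-multiplicativity, I first estimate
\[
\|\phi(x)^*\phi(x) - \phi(x^*x)\| \leq 2\e,
\]
so that $\|\phi(x)\|^2 \geq \|\phi(x^*x)\| - 2\e$. The problem therefore reduces to bounding $\|\phi(a)\|$ below by $(1 - 2\sqrt{\e})^2 + 2\e$ for the positive contraction $a = x^*x$ of norm $1$.

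For such an $a$, I would take $p$ to be a rank-one spectral projection of $a$ at the top eigenvalue, which exists by finite-dimensionality of $M_\ell$. Since $ap = p$, $\e$-multiplicativity yields $\|\phi(a)\phi(p) - \phi(p)\| \leq \e$, whence $\|\phi(p)\|(1 - \|\phi(a)\|) \leq \e$, so
\[
\|\phi(a)\| \geq 1 - \e/\|\phi(p)\|
\]
provided $\|\phi(p)\| > 0$. If one can show $\|\phi(p)\| \geq c$ for some $c = 1 - O(\sqrt{\e})$, then $\|\phi(a)\| \geq 1 - \e/c$; feeding this back into the previous step gives $\|\phi(x)\| \geq 1 - 2\sqrt{\e}$, and the numerical threshold $\e + 6\sqrt{\e} < 1$ (equivalent to $\e < (\sqrt{10}-3)^2$) is exactly the room needed to absorb the constants in this chain of estimates.

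The crux is thus lower-bounding $\|\phi(p)\|$. Because $p$ is a self-adjoint idempotent and $\phi$ is an $\e$-$^*$-homomorphism, $\phi(p)$ is an approximate self-adjoint idempotent, and standard spectral perturbation applied to $\|\phi(p)^2 - \phi(p)\| \leq \e$ forces $\|\phi(p)\|(1-\|\phi(p)\|) = O(\e)$, producing a dichotomy $\|\phi(p)\| \leq O(\e)$ or $\|\phi(p)\| \geq 1 - O(\e)$. Since any two rank-one projections in $M_\ell$ are unitarily conjugate, say $p' = upu^*$, a double application of $\e$-multiplicativity together with $\|\phi(u)\|, \|\phi(u^*)\| \leq 1$ gives $|\,\|\phi(p)\| - \|\phi(p')\|\,| \leq 2\e$, which is smaller than the dichotomy gap: hence either all rank-one projections lie on the small branch or all lie on the large one. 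I would exclude the small branch by picking $y \in (M_\ell)_{\leq 1}$ with $\|\phi(y)\| \geq 1 - \sqrt{\e}$; then $\|\phi(y^*y)\| \geq (1-\sqrt{\e})^2 - 2\e$ is close to $1$, and a careful analysis of the spectral decomposition of $y^*y$ (exploiting that its rank-one spectral projections map under $\phi$ to approximately orthogonal approximate projections of a common norm) shows this value of $\|\phi(y^*y)\|$ is incompatible with all those projections sitting on the small branch. Making this exclusion uniform in $\ell$ is the main technical obstacle, and is precisely what the numerical threshold on $\e$ is arranged to support.
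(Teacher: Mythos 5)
Your proposal gets several of the right pieces in place: the $\Cstar$-identity estimate $\bigl|\norm{\phi(x^*x)} - \norm{\phi(x)}^2\bigr|\le 2\e$, the reduction to positive elements, the extraction of a rank-one spectral projection $p$ with $ap=p$ and the resulting bound $\norm{\phi(a)}\ge 1-\e/\norm{\phi(p)}$, the dichotomy for $\norm{\phi(p)}$ coming from $\phi(p)$ being an approximate self-adjoint idempotent, and the observation that unitary conjugation forces all rank-one projections onto the same branch. All of this is sound and partly simpler than the paper's route (the paper instead iterates $s(a)=a^*a$ to obtain a dichotomy for arbitrary norm-one $x$, not just projections).

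The gap is precisely the step you flag as the ``main technical obstacle,'' and you do not close it. Excluding the small branch by examining the spectral decomposition $y^*y=\sum_i\lambda_i p_i$ runs into the problem that approximate linearity contributes an $O(\ell\e)$ error when you try to relate $\phi(y^*y)$ to $\sum_i\lambda_i\phi(p_i)$, and nothing in your sketch removes the $\ell$-dependence. The paper resolves this with a different and quite specific device: it shows that if $p_1,p_2$ are orthogonal \emph{small} projections (meaning $\norm{\phi(p_i)}<2\sqrt\e$), then $\norm{\phi(p_1+p_2)}\le 4\sqrt\e+\e < 1-2\sqrt\e$, so by the dichotomy the sum is again small. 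Iterating this over rank-one building blocks shows every projection, and in particular $1$, is small; then $\norm{\phi(a)}\le\norm{\phi(1)}\,\norm{\phi(a)}+\e\le 2\sqrt\e+\e < 1-2\sqrt\e$ for all $\norm{a}\le 1$, contradicting $\norm{\phi}\ge 1-\sqrt\e$. The inequality $4\sqrt\e+\e<1-2\sqrt\e$, i.e.\ $\e+6\sqrt\e<1$, is exactly where the hypothesis $\e<(\sqrt{10}-3)^2$ is spent. So the numerical threshold is not there to control some decomposition of $y^*y$; it is there to make ``small $+$ small $=$ small'' close an induction over the rank. Without that step, or something equivalent, the argument does not reach a contradiction uniformly in $\ell$, and the proposal remains incomplete.
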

\begin{proof}
The condition $1-2\sqrt{\e}\leq\phi$ ensures that there an element $a$ with $\norm{a}=1$ and $1-2\sqrt{\e}\leq\norm{\phi(a)}$. This condition is needed to be close to a nonzero $^*$-homomorphism.

Note that for any $a\in M_\ell$ with norm at most $1$, we have
\[
  \left|\norm{\phi(a^*a)} - \norm{\phi(a)}^2\right| \le 2\e.
\]
(Here we are using the fact that $\norm{\phi} \le 1$.)  Let $s \colon M_\ell\to M_\ell$ be the map $s(a) = a^* a$.
\begin{claim}
  \label{claimy}
  There is an $n\in\NN$ such that for each $x\in M_\ell$ satisfying $\norm{x} =
  1$ and $\norm{\phi(x)} < 1 - 2\sqrt{\e}$, we have $\norm{\phi(s^{(n)}(x))} <
  2\sqrt{\e}.$
\end{claim}
\begin{proof}
Let $k\in\NN$.  Observe that
\begin{equation}
  \label{eq-induction} (1 - k\sqrt{\e})^2 + 2\e \le 1 - (k+1)\sqrt{\e}
\end{equation}
if and only if
\[
  k^2 - \frac{1}{\sqrt{\e}} k + \left(2 + \frac{1}{\sqrt{\e}}\right) \le 0
\]
if and only if
\[
  \frac{1}{2\sqrt{\e}}(1 - \tau) \le k \le \frac{1}{2\sqrt{\e}}(1 + \tau)
\]
where
\[
  \tau = \sqrt{1 - 4\left(2\e + \sqrt{\e}\right)}.
\]
Note that, since $\e \le 1 / 36$, we have
\[
  \tau^2 = 1 - 4(2\e + \sqrt{\e}) = 1 - 8\e - 4\sqrt{\e} \ge 1 - 8\sqrt{\e} + 16\e = (1 - 4\sqrt{\e})^2
\]
and hence $\tau \ge 1 - 4\sqrt{\e}$.  It follows that the
inequality~\eqref{eq-induction} holds for all positive integers $k$ in the range
\[
  2 \le k \le \frac{1}{\sqrt{\e}} - 2.
\]
Now suppose $x\in M_\ell$ has norm $1$ and satisfies $\norm{\phi(x)} < 1 -
2\sqrt{\e}$.  We claim that for all positive integers $k\le
\frac{1}{\sqrt{\e}} - 2$, we have
\[
  \norm{\phi(s^{(k-1)}(x))} \le 1 - (k+1)\sqrt{\e}
\]
The proof goes by induction on $k \le \frac{1}{\sqrt{\e}} - 2$.  The base case, $k = 1$, is simply our
assumption on $x$; for the induction step we have, for $k+1 \le
\frac{1}{\sqrt{\e}} - 2$,
\[
  \norm{\phi(s^{(k)}(x))} \le \norm{\phi(s^{(k-1)}(x))}^2 + 2\e \le (1 -
  (k+1)\sqrt{\e})^2 + 2\e \le 1 - (k+2)\sqrt{\e}
\]
where we have used inequality~\eqref{eq-induction}.
Finally, let $n$ be the maximal integer less than or equal to $\frac{1}{\sqrt{\e}} - 2$.
Then $n+1 > \frac{1}{\sqrt{\e}}-2$, and so
\[
  \norm{\phi(s^{(n-1)}(x))} \le 1-(n+1)\sqrt\e < 1 - \left(\frac{1}{\sqrt{\e}} -
  2\right)\sqrt{\e} = 2\sqrt{\e},
\]
as required.
\end{proof}
Claim~\eqref{claimy} shows in particular that, if $p\in M_\ell$ is a projection,
then either $\norm{\phi(p)} < 2\sqrt{\e}$, or $\norm{\phi(p)} \ge 1 -
2\sqrt{\e}$.  We will call the projections satisfying the former inequality small.
We will show that, if $\phi$ is not $2\sqrt{\e}$-injective, then every
projection in $M_\ell$ is small.

So suppose that $\phi$ is not $2\sqrt{\e}$-injective.  Since $\norm{\phi} \le
1$, there must be some $x\in M_\ell$ with norm $1$ such that $\norm{\phi(x)} < 1
- 2\sqrt{\e}$.  Replacing $x$ with $s^{(n)}(x)$ as given by Claim~\ref{claimy}, we
may assume that $x$ is a positive element of norm $1$ with $\norm{\phi(x)} <
2\sqrt{\e}$.  Since $1$ belongs to the spectrum of $x$, there is a projection
$p\in M_\ell$ of rank $1$ such that $pxp=p$.  Then, since $\e < 1 / 36$,
\[
  \norm{\phi(p)} = \norm{\phi(pxp)}\le \norm{\phi(p)}^2\norm{\phi(x)}+2\e\le
  2\sqrt{\e} + 2\e < 1 - 2\sqrt{\e}.
\]
and hence $p$ is small.  Now if $q$ is another rank $1$ projection in $M_\ell$, then
there is a unitary $u\in M_\ell$ such that $q = upu^*$, so
\[
  \norm{\phi(q)} \le \norm{\phi(u)}\norm{\phi(p)}\norm{\phi(u^*)} + 2\e <
  2\sqrt{\e} + 2\e,
\]
and so $q$ is small as well.  Finally, if $p_1$ and $p_2$ are small, orthogonal
projections, then
\[
  \norm{\phi(p_1 + p_2)} \le \norm{\phi(p_1) + \phi(p_2)} + \e \le
  \norm{\phi(p_1)} + \norm{\phi(p_2)} + \e < 4\sqrt{\e} + \e  \le 1 - 2\sqrt{\e}
\]
where in the last inequality we have used the fact that $\e < (\sqrt{10} -
3)^2$.  It follows that every projection in $M_\ell$ is small; in
particular $1$ is small.  But then for every $a\in M_\ell$ with norm $1$ we have
\[
  \norm{\phi(a)} \le \norm{\phi(1)}\norm{\phi(a)} + \e \le \norm{\phi(1)} + \e
  \le 2\sqrt{\e} + \e < 1 - 2\sqrt{\e},
\]
a contradiction.
\end{proof}

\subsection{Automorphisms of corona $\Cstar$-algebras: Liftings and trivial automorphisms}\label{ss:autosofcstar}

One of the main concerns of this thesis is to study isomorphisms and embeddings of the form $\phi\colon\mathcal M(A)/A\to\mathcal M(B)/B$, where $A$ and $B$ are nonunital separable $\Cstar$-algebras. 
If $\phi$ is any map $\phi\colon\mathcal M(A)/A\to\mathcal M(B)/B$, a map $\Phi\colon\mathcal M(A)\to\mathcal M(B)$ is a \emph{lift} of $\phi$ if the following diagram commutes
\begin{center}
\begin{tikzpicture}
 \matrix[row sep=1cm,column sep=2cm] {
&\node  (A1) {$\mathcal M(A)$};
& \node (A2) {$\mathcal M(B)$};
\\
&\node  (B1) {$\mathcal M(A)/A$};
& \node (B2) {$\mathcal M(B)/B$};
\\
}; 
\draw (A1) edge[->] node [above] {$\Phi$} (A2) ;
\draw (A1) edge[->]  node[left]{$\pi_1$} (B1) ;
\draw (A2) edge[->]   node[right] {$\pi_2$} (B2) ;
\draw (B1) edge[->] node [above] {$\phi$} (B2) ;
\end{tikzpicture}
\end{center}
where the vertical maps $\pi_1,\pi_2$ are the canonical quotient maps. The existence of a lift (usually not carrying interesting topological properties) is ensured by the Axiom of Choice. If $X\subseteq\mathcal M(A)$ and $\Phi$ is such that for all $x\in X$ we have $\pi_2(\Phi(x))=\phi(\pi_1(x))$ we say that $\Phi$ is a lift of $\phi$ on $X$. A particular case is given when $A=\bigoplus A_n$ for some unital $A_n$'s and $\SI\subseteq\mathcal P(\NN)$. In this case, if $\Phi$ is a lift of $\phi$ on 
\[
\{(x_n)\in\prod A_n\mid \{n\mid x_n\neq 0\}\in\SI\},
\] 
 we abuse of notation and say that $\Phi$ is a lift of $\phi$ on $\SI$.

 \begin{defin}\label{defin:TrivialNonAbel}
Let $A$ and $B$ be $\sigma$-unital $\Cstar$-algebras and $\phi\colon \mathcal M(A)/A\to\mathcal M(B)/B$ be a  $^*$-homomorphism. $\phi$ is said \emph{trivial} if its graph
\[
\Gamma_\phi=\{(a,b)\in\mathcal M(A)_{\leq 1}\times\mathcal M(B)_{\leq 1}\mid \phi(\pi_A(a))=\pi_B(b)\}
\]
 is Borel in the strict topology, $\pi_A, \pi_B$ being the canonical quotient maps from the multiplier onto the corona.
 \end{defin}
 
 If $A$ and $B$ are $\sigma$-unital,  $\mathcal M(A)_{\leq1}$ and $\mathcal M(B)_{\leq1}$ are Polish in the strict topology, so there can be only $\mathfrak c$-many Borel sets. In this case, there are at most $\mathfrak c$-many trivial $^*$-homomorphisms.

The definition of trivial provided above may appear, from a $\Cstar$-algebraic point of view, quite unnatural. An analyst, in fact, may ask for stronger versions of liftings, having not only a nice topological behavior (i.e., being Borel), but respecting also in some sense the algebraic operations.

The most naive generalization may be looking for a $^*$-homomorphism $\Phi\colon\mathcal M(A)\to\mathcal M(B)$ which lifts $\phi$.  Unfortunately, in some cases it is impossible to find such liftings (for example, let $X$ be space consisting of the real line with a circle attached to $-1$ and an interval attached to $-1$, and the automorphism of $C(\beta X\setminus X)$ induced by $t\to -t$). Even in less pathological cases, this is difficult to achieve  (see \S\ref{s:FA.Emb})

A stronger notion of trivial may be stated if one consider only abelian algebras, where automorphisms of coronas correspond to homeomorphisms of \v{C}ech-Stone remainders.

\begin{defin}\label{defin:TrivialAbel}
Let $X$ be  a locally compact noncompact and $\phi\in \Homeo(\beta X \setminus X)$. $\phi$ is said \emph{trivial} if there are compact $K_1,K_2\subseteq X$ and an homeomorphism $f\colon X\setminus K_1\to X\setminus K_2$ such that $\phi= f^*\restriction (\beta X\setminus X)$, where $f^*$ is the unique function extending $f$ to $\beta (X\setminus K_1)=\beta X\setminus K_1$.
\end{defin}

We denote by $\Triv(\beta X\setminus X)$ the set of trivial homeomorphisms. Like for Definition~\ref{defin:TrivialNonAbel}, if $X$ is Polish,  $|\Triv(\beta X\setminus X)|$ has size at most $\mathfrak c$.
Trivial homeomorphisms of \v{C}ech-Stone remainders of topological spaces induce trivial automorphisms of corona algebras, but the converse is not yet proven to be true (for a partial result, see~\cite[Theorem 5.3]{Farah-Shelah.RCQ}). Whenever we will work in the abelian setting we will use this as our definition of trivial.

\subsubsection{The conjectures}\label{sss:CFconj}

We now have the necessary tools to state the conjectures we will work on. They were stated in the current form in~\cite{Coskey-Farah}.

\begin{conjecture}\label{conj:CH}
$\CH$ implies that every corona of a separable nonunital $\Cstar$-algebra has nontrivial automorphisms.
\end{conjecture}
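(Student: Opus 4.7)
The plan is to follow the blueprint used for the Calkin algebra and abelian coronas: under $\CH$, build a nontrivial automorphism of $\mathcal M(A)/A$ by a transfinite back-and-forth of length $\omega_1$, using saturation of the corona to keep extending and $\CH$ to diagonalise against all possible triviality witnesses. The ingredients already in place are (a) the layers of countable saturation of $\mathcal M(A)/A$ established in \S\ref{s:CH.MTforcstar} for large classes of $\sigma$-unital $A$, and (b) the fact that, under $\CH$, the density character of $\mathcal M(A)/A$ equals $\mathfrak c = \aleph_1$, and there are exactly $\aleph_1$ Borel subsets of $\mathcal M(A)_{\leq 1}\times \mathcal M(A)_{\leq 1}$ in the strict topology, hence at most $\aleph_1$ trivial $^*$-homomorphisms to list.

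More concretely, first I would fix, using a lemma of the type of Lemma~\ref{lem:approxid1EV}, a quasicentral approximate identity $(e_n)$ for $A$ and the associated ``orthogonal'' decomposition $(f_n)$ with $\sum f_n^2 = 1$. This produces, for each sequence of contractions $(x_n)$ in $\mathcal M(A)$, a corona element $\pi(\sum f_n x_n f_n)$, and gives a handle on a rich family of commuting subalgebras and projections inside $\mathcal M(A)/A$ on which to define partial maps. Then, enumerating by $\CH$ all pairs $(\mathcal C_\alpha,\Gamma_\alpha)$ with $\mathcal C_\alpha$ a separable subalgebra of $\mathcal M(A)/A$ and $\Gamma_\alpha$ a Borel subset of $\mathcal M(A)_{\leq 1}^2$ (a candidate graph of a trivial automorphism), I would recursively build an increasing continuous chain of separable partial $^*$-isomorphisms $\Phi_\alpha \colon \mathcal A_\alpha \to \mathcal B_\alpha$ inside $\mathcal M(A)/A$ such that $\mathcal A_\alpha \supseteq \mathcal C_\alpha$, $\mathcal B_\alpha \supseteq \mathcal C_\alpha$, and $\Phi_\alpha$ is witnessed \emph{not} to be compatible with $\Gamma_\alpha$: for this I would use a diagonal element, built via the decomposition $(f_n)$, that can be sent in two different ways depending on whether one stays inside $\Gamma_\alpha$ or escapes it. Countable saturation of $\mathcal M(A)/A$ (or the weaker countable degree-$1$ saturation discussed in \S\ref{s:CH.MTforcstar}) is what allows each successor step to be solved: each extension amounts to realising a countable type over a separable set of parameters, which is exactly the strength that $\mathcal M(A)/A$ provides.

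The diagonalisation against triviality is the delicate part, and has two distinct faces. For the algebras covered by Chapter~\ref{ch:CH}, saturation is strong enough that the argument essentially parallels Rudin's and Phillips--Weaver's constructions; the resulting automorphism disagrees with every prescribed Borel lift because at stage $\alpha$ one can always find, inside an appropriate commuting subalgebra generated by the $f_n$'s, an element on which any Borel graph extending $\Phi_\alpha$ would force a value incompatible with the one freely chosen by the back-and-forth. The concrete implementation of this step, in the form of Theorem~\ref{thm:CHMani} for manifolds, is what \S\ref{s:CH.SCmani} carries out.

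The main obstacle to the full conjecture is precisely that ``every'' is very demanding: one needs the corona to carry enough saturation \emph{and} enough internal symmetry for the recursion to not collapse into the identity. Without a hypothesis like nuclearity, an approximate identity of projections, or an abelian/manifold structure, one does not yet know that $\mathcal M(A)/A$ is countably saturated at the level needed, and Question~\ref{q:intro2} makes this precise: even countable degree-$1$ saturation at density $\aleph_1$ may not suffice to produce $>\mathfrak c$ automorphisms, hence a nontrivial one. So the real work would be either to upgrade the saturation results of \S\ref{s:CH.MTforcstar} to cover every $\sigma$-unital $A$, or to find a substitute for saturation that still allows the back-and-forth to close up at limits while diagonalising against the $\aleph_1$ Borel graphs. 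I expect the latter to be the true hard point: identifying a structural invariant of arbitrary coronas, replacing the classical ``almost permutations,'' that both supports the recursion and is rich enough to be diagonalised against in $\aleph_1$ steps.
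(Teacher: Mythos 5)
This statement is labelled in the paper as a \emph{conjecture} and it remains open: the paper does not contain (and does not claim to contain) a proof of Conjecture~\ref{conj:CH}. What it proves are instances --- for countably saturated coronas via Theorem~\ref{thm:CHandSat}, and for manifolds via Theorem~\ref{thm:CHMani} --- and your final paragraph is right that the missing ingredient is precisely the absence of enough saturation for a general $\sigma$-unital $A$ (Theorem~\ref{thm:cd1sFH} only gives countable degree-$1$ saturation, and Question~\ref{q:intro2} asks whether that suffices).

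One difference of framing worth flagging: in the cases the paper does handle, the argument is not an $\omega_1$-step diagonalisation against an enumeration of Borel graphs $\Gamma_\alpha$. Instead it is a counting argument. The back-and-forth (or the coherent-sequence construction in \S\ref{s:CH.SCmani}) is set up so that at each of $\omega_1$ stages a binary choice can be made without collapsing the recursion, yielding $2^{\aleph_1}=2^{\mathfrak c}$ pairwise distinct automorphisms. Since a $\sigma$-unital corona has at most $\mathfrak c$ trivial automorphisms (there are only $\mathfrak c$ Borel sets), most of the constructed automorphisms are automatically nontrivial. Your diagonalisation would also suffice where it can be executed, but the counting approach is what the paper uses, and it avoids having to \emph{locally} refute each candidate Borel graph: one never has to produce, at stage $\alpha$, an explicit element on which $\Gamma_\alpha$ ``forces a wrong value,'' which is a nontrivial analytic step you would otherwise owe. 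Both routes, however, hit the same wall at the level of generality of the conjecture, and you identified that wall correctly.
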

\begin{conjecture}\label{conj:PFA}
Forcing Axioms imply that every corona of a separable $\Cstar$-algebra has only trivial automorphisms.
\end{conjecture}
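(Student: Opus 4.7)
The plan is to take an arbitrary automorphism $\Lambda$ of $\mathcal M(A)/A$ for separable nonunital $A$ and, assuming $\OCA_\infty$ and $\MA_{\aleph_1}$, show that the graph $\Gamma_\Lambda$ is Borel in the strict topology, placing $\Lambda$ in $\Triv(\mathcal M(A)/A)$. First I would fix a countable approximate identity $(e_n)$ for $A$ obeying the strong coherence guaranteed by Lemma~\ref{lem:approxid1EV}, so that each $x\in\mathcal M(A)$ admits a strictly convergent partition-of-unity decomposition $x=\sum_n f_n x f_n$ with $f_n=(e_{n+1}-e_n)^{1/2}$. This reduces the global analysis of $\Lambda$ to countably many problems on the separable corners $f_n\mathcal M(A)f_n$ and their pairwise interactions.

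Next, on each such corner I would apply Theorem~\ref{thm:FA.liftingtheorem} to extract a $\Cmeas$-measurable (hence Baire-measurable, by Theorem~\ref{thm:STPrel.JVN}) lift of $\Lambda$ restricted to that corner. When $A$ is nuclear and carries an approximate identity of projections, the CPAP together with the Ulam-stability results of Chapter~\ref{ch:Ulam} upgrade these measurable $\epsilon$-$^*$-homomorphisms to genuine $^*$-homomorphisms on a countable dense $^*$-subalgebra, and one extends by strict continuity using that $\mathcal M(A)_{\le 1}$ is Polish in the strict topology. The coherence problem of stitching local lifts into a global one is where the Forcing Axioms earn their keep: one sets up a ccc poset whose conditions are finite coherent Borel pre-liftings, applies $\OCA_\infty$ to dichotomize the space of ``bad pairs'' of corners, and then uses a $\Delta$-system argument in the vein of Lemma~\ref{lem:DeltaSystem} combined with $\MA_{\aleph_1}$ to exclude the uncountable-obstruction side of the $\OCA_\infty$ dichotomy.

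Once a Borel global lift $\Phi\colon\mathcal M(A)\to\mathcal M(A)$ of $\Lambda$ is in hand, the graph
\[
\Gamma_\Lambda=\{(a,b)\in\mathcal M(A)_{\le 1}\times\mathcal M(A)_{\le 1}\mid \Phi(a)-b\in A\}
\]
is Borel, since $A$ is $F_\sigma$ in $\mathcal M(A)$ in the strict topology (it is the countable union of the closed norm-balls of $\bigcup_n e_n A e_n$). This is exactly triviality in the sense of Definition~\ref{defin:TrivialNonAbel}.

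The main obstacle, and the reason this statement remains a conjecture rather than a theorem, is handling $\Cstar$-algebras $A$ that are \emph{neither} nuclear \emph{nor} equipped with an approximate identity of projections. The CPAP is essential to the Ulam-stability machinery that converts the measurable approximate $^*$-homomorphisms produced by Theorem~\ref{thm:FA.liftingtheorem} into honest $^*$-homomorphisms on dense subalgebras; without it, the pointwise lift may fail to be multiplicative even modulo $A$, and the local-to-global gluing collapses. A genuinely new input --- perhaps an Ulam-stability substitute valid in the non-nuclear regime, or a reduction that tensors with a suitable nuclear absorbing algebra before running the argument --- appears to be needed to remove the nuclearity hypothesis and settle Conjecture~\ref{conj:PFA} in full generality.
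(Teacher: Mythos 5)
Before anything else: the statement you are proving is a conjecture that the paper does \emph{not} establish in full generality; it proves only the instance where $\cstar{A}$ is separable, nuclear, and carries an increasing approximate identity of projections (Theorem~\ref{thm:Borel}). You correctly observe this at the end, but your sketch opens as though an unconditional argument were available, and the route you describe would not prove even the nuclear case. The central gap is a misapplication of Theorem~\ref{thm:FA.liftingtheorem}: that theorem concerns maps $\iso\colon\prod M_{k(n)}/\bigoplus M_{k(n)}\to\mathcal M(\cstar{A})/\cstar{A}$ satisfying~\eqref{eqn:restrictions}, so there is no sense in which it can be ``applied on the corner $f_n\mathcal M(\cstar{A})f_n$'' to extract a lift of $\Lambda$ restricted there; the domain must first be cast in the form $\prod M_{k(n)}/\bigoplus M_{k(n)}$. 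That is exactly where nuclearity and the CPAP enter: for each $f\in\NN^\NN$ and each partition $E\in\PP$ the paper builds cpc maps $\Phi_{f,E}$, $\Psi_{f,E}$ through the finite-dimensional algebras $F_n$, and the lifting theorem is applied to the composites $\Lambda_{f,E}=\Lambda\circ\Psi'_{f,E}$. The decomposition $t-\Psi_{f,E^0}(x^0)-\Psi_{f,E^1}(x^1)\in\cstar{A}$ of a multiplier element is supplied by Lemma~\ref{lem:summing}, not by Lemma~\ref{lem:approxid1EV}, which belongs to the saturation argument of Chapter~\ref{ch:CH}; your $f_n=(e_{n+1}-e_n)^{1/2}$ are not projections and \S\ref{ss:FA.BorelLift} never passes through those corners.

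Several of your supporting steps are also misrouted. Theorem~\ref{thm:FA.liftingtheorem} outputs asymptotically additive (indeed skeletal) lifts, not $\Cmeas$-measurable ones; $\Cmeas$-measurable lifts are an intermediate stage inside its proof, and Theorem~\ref{thm:STPrel.JVN} is a uniformization statement, not the implication ``$\Cmeas$-measurable implies Baire-measurable.'' Ulam stability (Theorem~\ref{thm:US.ulam-stability}) is used to prove Theorem~\ref{thm:FA.ApproxStruct}, which the paper then applies in \S\ref{s:FA.Emb} to obtain the non-embedding results; it plays no role in the proof of Theorem~\ref{thm:Borel}, where the maps $\alpha^0,\alpha^1$ are used as-is without any upgrade to honest $^*$-homomorphisms, and there is no ``extension by strict continuity'' from a dense subalgebra. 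The coherence across choices of $(f,E)$ is not handled by a ccc poset of finite pre-liftings; instead $\OCA_\infty$ is applied a second time, directly to the Polish space of tuples $(f,E,\alpha^0,\alpha^1)$ with the open colorings $K_0^\epsilon$, and $\mathfrak b>\omega_1$ rules out the uncountable homogeneous set. The $\Delta$-system and ccc poset you invoke live inside the proof of Lemma~\ref{lemma:oca->alternative}, which is part of the lifting theorem itself, not part of the gluing stage. Finally, the paper never manufactures a single global Borel lift $\Phi$ with $\Gamma_\Lambda=\{(a,b)\mid\Phi(a)-b\in\cstar{A}\}$; it shows that $\Gamma_\Lambda$ restricted to positive contractions satisfies simultaneously an analytic and a coanalytic description, concludes Borelness from that, and reduces the general case algebraically. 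Your closing formula presupposes the strictly stronger fact that a Borel lift exists, which the argument does not deliver.
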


The following table contains most of the known results regarding solutions of the Conjectures for noncommutative algebras. In what follows $A$ is always assumed to be $\sigma$-unital and nuclear.

Let $(1)$ be the statement ``there are nontrivial automorphisms of $\mathcal M(A)/A$'' and $(2)$ its negation.
\vspace{7pt}

\begin{tabular}{|c|c|c|}\hline
 & $\CH\Rightarrow (1)$ & Forcing Axioms $\Rightarrow (2)$\\\hline\hline
 $A=\mathcal K(H)$&\cite{Phillips-Weaver}&\cite{Farah.AC}\\\hline
 $A$ stable &\cite{Coskey-Farah}&Theorem~\ref{thm:Borel}\\\hline
 $A$ simple&\cite{Coskey-Farah}& Unknown\\\hline
 $A=\bigoplus A_n$, $A_n$ unital UHF&\cite{Coskey-Farah}&\cite{McKenney.UHF}\\\hline
 $A=\bigoplus A_n$, $A_n$ unital &\cite{Coskey-Farah} & Theorem~\ref{thm:Borel}\\
 \hline
 $A=C_0(X,B)$, $X$ manifold, $B$ unital & Theorem~\ref{thm:CHMani}&Unknown\\\hline 
\end{tabular}

\vspace{7pt}

In the results in \cite{Coskey-Farah}, unlike Theorem~\ref{thm:Borel}, the assumption of nuclearity is not needed. More instances of the conjecture have been proved: in case $A=\bigoplus M_{n(k)}$, Ghasemi (\cite{Ghasemi.FDD}) proved that is possible to force that every automorphism of $\prod M_{k(n)}/\bigoplus M_{k(n)}$ is trivial. Also, the results proved in \cite{Coskey-Farah} from $\CH$ go slightly beyond the simple or the stable case.

Regarding the abelian case, the following table shows what is known regarding trivial homeomorphisms (in the sense of Definition~\ref{defin:TrivialAbel}). In what follows $X$ represents a Polish locally compact noncompact space. Let $(1)$ be $\Homeo(\beta X\setminus X)\neq\Triv(\beta X\setminus X)$ and $(2)$ its negation.

\vspace{7pt}

\begin{tabular}{|c|c|c|}\hline
 & $\CH\Rightarrow(1) $ & Forcing Axioms $\Rightarrow(2)$\\\hline\hline

 $X=\NN$&\cite{Rudin}&\cite{Shelah.PF}, \cite{Shelah-Steprans.PFAA}, \cite{Velickovic.OCAA}\\\hline

 $X$ is $0$-dimensional&Parovicenko's Theorem&\cite{Farah-McKenney.ZD}\\\hline
  $X=\bigsqcup X_i$, $X_i$ clopen&\cite{Coskey-Farah}&partially Theorem~\ref{thm:Borel}\\\hline
 $X=[0,1)$, $X=\er$ &Yu, see \cite{KP.STCC}& Unknown, but see \cite[Theorem 5.3]{Farah-Shelah.RCQ}\\\hline
 $X$ manifold &Theorem~\ref{thm:CHMani} &Unknown, but see \cite[Theorem 5.3]{Farah-Shelah.RCQ}\\\hline
 $X$ as in \S\ref{ss:arigidspace}&Unknown&Unknown, but see \cite[Theorem 5.3]{Farah-Shelah.RCQ}\\
 \hline
\end{tabular}

\vspace{7pt}

Adding to this list, Farah and Shelah proved in \cite{Farah-Shelah.RCQ} that if $X$ can be written as an increasing union of compact spaces $X_i$, with $\sup |\delta X_i|<\infty$ , $\delta X_i$ being the topological boundary of $X_i$, then the algebra $C(\beta X\setminus X)$ is countably saturated and, consequently (see \S\ref{ss:CH.consofCH}), $\CH$ implies that $\beta X\setminus X$ has nontrivial homeomorphisms. Also, Theorem~\ref{thm:CHMani} goes slightly beyond the case that $X$ is a manifold.
When proving that there are nontrivial homemorphisms of $\beta X\setminus X$, it is actually proved that the amount of homeomorphisms exceeds $\mathfrak c$, and therefore are constructed automorphisms of $C(\beta X\setminus X)$ which are nontrivial according to Definition~\ref{defin:TrivialNonAbel}.

\chapter{Saturation and $\CH$}\label{ch:CH}
\section{Model theory for $\Cstar$-algebras}\label{s:CH.MTforcstar}
Model Theory for continuous structures is a newly developed and rising topic with exciting prospects in its applications to Operator Algebras. We will use a fragment of the theory to provide some embedding results for $\Cstar$-algebras, and we will deeply analyze the model theoretical concept of saturation.

We will sketch here an introduction to model theory for $\Cstar$-algebras. We will be considering $\Cstar$-algebras as structures for the continuous logic formalism of \cite{BYBHU} (or, for the more specific case of operator algebras, \cite{FHS.II}).  Nevertheless, for many of our results it is not necessary to be familiar with that logic.  Informally, a \emph{formula} is an expression obtained from a finite set of norms of $*$-polynomials with complex coefficients by applying continuous functions and taking suprema and infima over some of the variables. Quantifications are allowed only on bounded sets, or, more specifically, on definable sets (see \cite[\S 3]{bourbaki}). A formula is \emph{quantifier-free} if it does not involve suprema or infima. The following is the precise definition:

Let $P(\bar x)$ be a $^*$-polynomial with complex coefficient in a finite tuple $\bar x=x_1,\ldots,x_n$. Then
\begin{enumerate}
\item $\norm{P(\bar x)}$ is a formula;
\item if $f\colon \er^n\to\er$ is a continuous function and $\phi_1(\bar x),\ldots,\phi(\bar x)$ are formulas, so is $f(\phi_1(\bar x), \ldots,\phi_n(\bar x))$;
\item if $\phi(\bar x,y)$ is a formula and $n\in\NN$, then $\inf_{\norm{y}\leq n} \phi(\bar x,y)$ and $\sup_{\norm{y}\leq n}\phi(\bar x,y)$ are formulas.
\end{enumerate}
If a formula is constructed only using clauses 1. and 2. is said \emph{quantifier-free}. We will make use of formulas with parameters. For this, let $A$ be a $\Cstar$-algebra and $B\subseteq A$. If $\phi$ is constructed by clauses 1.--3. by allowing the polynomials in 1. to have also coefficients in $B$, we say that $\phi$ is a $B$-formula.

Particularly interesting cases of formulas are \emph{sentences}, which are formulas without free variables (i.e., every variable is in the scope of a supremum or an infimum). Sentences can be evaluated in $\Cstar$-algebra:  if $A$ is a $\Cstar$-algebra and $\phi$ is a sentence then $\phi^A$ is a unique real number obtained by evaluating $\phi$ in the algebra $A$. The set of all sentences evaluating $0$ in $A$ is said the \emph{theory} of $A$, and denoted by $\Th(A)$. If $A$ and $B$ are $\Cstar$-algebras and $\Th(A)=\Th(B)$ then $A$ and $B$ are said to be \emph{elementary equivalent}, denoted by $A\equiv B$. The following is a combination of  \L os' Theorem and the Keisler-Shelah's Theorem in the continuous setting:
\begin{theorem}
Let $\mathcal U$ be a free ultrafilter on a cardinal $\kappa$. Then $A\equiv A^{\mathcal{U}}$. On the other hand, suppose that $A\equiv B$, then there are ultrafilters $\mathcal U$ and $\mathcal V$ such that $A^\mathcal U\cong B^\mathcal V$. 

If $\CH$ holds and $A$ and $B$ are separable then $A^\mathcal U\cong B^\mathcal V$ for every $\mathcal U,\mathcal V$ free ultrafilters on $\NN$.
\end{theorem}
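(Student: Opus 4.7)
The plan is to treat the three assertions in order, as each builds on the previous.

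First, for the continuous Łoś theorem $A\equiv A^{\mathcal{U}}$, I would proceed by induction on the complexity of formulas. The base case asserts that for a $^*$-polynomial $P(\bar x)$ and a tuple $\bar a=[(\bar a_i)_i]_{\mathcal U}\in A^{\mathcal U}$, one has $\norm{P(\bar a)}=\lim_{i\to\mathcal U}\norm{P(\bar a_i)}$, which is immediate from the coordinatewise definition of operations and the norm on the ultrapower. The step for continuous connectives is immediate by continuity of $f\colon\mathbb R^n\to\mathbb R$. The crucial case is the quantifier step: for $\phi(\bar x)=\inf_{\norm{y}\le n}\psi(\bar x,y)$, the inequality $\phi^{A^{\mathcal U}}(\bar a)\le\lim_{i\to\mathcal U}\phi^A(\bar a_i)$ comes from picking near-optimal witnesses $b_i$ in each coordinate and taking $b=[(b_i)_i]_{\mathcal U}$; the reverse inequality follows by noting that any witness in $A^{\mathcal U}$ is represented by a sequence of near-witnesses in the coordinates, so its value lies in every neighborhood of the ultralimit of the coordinatewise infima. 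Applying this with no free variables gives $\phi^{A}=\phi^{A^{\mathcal U}}$ for every sentence $\phi$.

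Next, for the Keisler–Shelah direction, suppose $A\equiv B$. The plan is to find a common ultrapower isomorphism by first passing to a large enough ultrafilter to gain saturation. Fix a cardinal $\kappa$ with $\kappa^{\aleph_0}=\kappa$ so large that both $A$ and $B$ have density character $\le\kappa$, and choose a $\kappa^+$-good (countably incomplete) ultrafilter $\mathcal U$ on $\kappa$; then $A^{\mathcal U}$ and $B^{\mathcal U}$ are $\kappa^+$-saturated of density character $\kappa^+$, and by part one remain elementarily equivalent. A standard continuous back-and-forth along a well-ordering of length $\kappa^+$ then produces an isomorphism $A^{\mathcal U}\cong B^{\mathcal U}$, so the statement holds with $\mathcal V=\mathcal U$. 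The main technical point here is the existence of $\kappa^+$-good ultrafilters and the fact that $\mathcal U$-ultrapowers by such ultrafilters are $\kappa^+$-saturated; both go through verbatim in the continuous setting (see \cite{FHS.II}), the key being that the infimum in a saturated structure is attained by an element within the $\kappa$-bound.

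Finally, for the CH assertion, assume $A$ and $B$ are separable and elementary equivalent, and let $\mathcal U,\mathcal V$ be any free ultrafilters on $\mathbb N$. Since $\mathcal U$ is a free ultrafilter on a countable set, it is countably incomplete, and a standard continuous-logic argument shows $A^{\mathcal U}$ is countably saturated; likewise $B^{\mathcal V}$. By part one, $A^{\mathcal U}\equiv A\equiv B\equiv B^{\mathcal V}$. Both ultrapowers have density character at most $\mathfrak c$; under $\CH$, this is $\aleph_1$. A countable back-and-forth now succeeds: enumerate dense $\aleph_1$-sequences $(a_\alpha)_{\alpha<\omega_1}$ and $(b_\alpha)_{\alpha<\omega_1}$ in the two ultrapowers, and at each countable stage extend the partial elementary map using countable saturation to realize, in the opposite ultrapower, the partial type over the finitely many parameters already matched, consistent by elementary equivalence. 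The union is an isomorphism. The main obstacle will be to carry out the back-and-forth cleanly in the metric/continuous setting: one must work with approximate types and use the fact that in a countably saturated structure every finitely satisfiable countable set of conditions over a separable parameter set is realized, so that the extension step is always possible.
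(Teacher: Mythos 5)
The paper itself offers no proof of this theorem: it is stated as a black-box combination of the continuous \L o\'s and Keisler--Shelah theorems (with the relevant arguments living in \cite{BYBHU} and \cite{FHS.II}), so there is no in-paper argument to compare against. Your treatment of the first assertion (\L o\'s by induction on formula complexity, with near-optimal witnesses handling the quantifier step) and of the third (under $\CH$, both ultrapowers are countably saturated, elementarily equivalent, and of density character $\aleph_1$, so a transfinite back-and-forth of length $\omega_1$ succeeds --- noting, as you do at the end, that at each stage one realizes a countable type over the \emph{countably many}, not finitely many, already-matched parameters) are correct and standard.

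The second assertion, however, contains a genuine gap. You take a $\kappa^+$-good countably incomplete ultrafilter $\mathcal U$ on $\kappa$ and assert that $A^{\mathcal U}$ and $B^{\mathcal U}$ are $\kappa^+$-saturated \emph{of density character $\kappa^+$}. The saturation claim is right, but the density character claim is not: a $\kappa^+$-good countably incomplete ultrafilter on $\kappa$ is regular, and by Keisler's theorem on cardinalities of ultrapowers (which transfers to density character in the metric setting for any non-compact structure, in particular any infinite-dimensional $\Cstar$-algebra) the density character of $A^{\mathcal U}$ is $2^\kappa$, which may strictly exceed $\kappa^+$. A back-and-forth of length $\kappa^+$ then cannot exhaust the two ultrapowers, and elementarily equivalent $\kappa^+$-saturated structures of density character larger than $\kappa^+$ need not be isomorphic. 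What you have reproduced is Keisler's original argument, which is valid exactly when $2^\kappa=\kappa^+$; since it is consistent with $\ZFC$ that $2^\kappa>\kappa^+$ for every $\kappa$, no choice of $\kappa$ rescues the argument in general. The $\ZFC$ statement is Shelah's theorem, whose proof does not pass through saturation and uniqueness of saturated models at all: the ultrafilter is constructed by a transfinite recursion that builds the isomorphism and the ultrafilter simultaneously, and the continuous version is obtained by adapting that construction. So either you must add a $\mathrm{GCH}$-type hypothesis (not available here), or replace this step by an appeal to, or adaptation of, Shelah's argument.
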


\subsection{Three layers of saturation}\label{ss:sat}
The concept of saturation is key in analyzing the structure of ultraproducts and ultrapowers, as well as certain corona algebras. A condition very similar to the countable saturation of ultraproducts was considered by Kirchberg under the name ``$\epsilon$-test" in \cite{Kirch.CentralSeq} (see also \cite[Lemma 3.1]{KirchbergRordam}).                                                                                                                                                                                                                                                                                                                                  Morally, a $\Cstar$-algebra $A$ is countably saturated if every countable set of conditions that can be approximately satisfied in a given closed ball can be satisfied precisely in the same closed ball.

If $\phi(\bar x)$ is a formula with free variables $x_1,\ldots,x_n$ and $r\in\er$, we call $\phi(\bar x)=r$ a \emph{condition}. If $\bar a\in A^n$ and $\phi(\bar a)=r$ we say that $\bar a$ \emph{satisfies} the condition $\phi=r$. If $B\subseteq A$ and $\phi$ is a $B$-formula (i.e., a formula with parameters in $B$), then $\phi=r$ is said a $B$-condition.

\begin{notation}
For a compact set $K\subseteq\er$ and $\epsilon>0$, we denote the $\epsilon$-thickening of $K$ by $(K)_\epsilon=\{x\in\er \colon d(x,K)<\epsilon\}$.
\end{notation}

\begin{defin}\label{defin:Sat.Sat}
Let $A$ be a $\Cstar$-algebra, and $\Phi$ be a collection of formulas.  We say that $A$ is \emph{countably $\Phi$ saturated} if for every sequence $(\phi_n)_{n \in \en}$ of formulas from $\Phi$ with parameters from $A_{\leq 1}$, and sequence $(K_n)_{n \in \en}$ of compact sets, the following are equivalent:
\begin{enumerate}
\item[(1)] There is a sequence $(b_k)_{k \in \mathbb{N}}$ of elements of $A_{\leq 1}$ such that $\phi_n^A(\overline{b}) \in K_n$ for all $n \in \en$,
\item[(2)] For every $\epsilon > 0$ and every finite $F \subset \en$ there is $(b_k)_{k \in \mathbb{N}} \subseteq A_{\leq 1}$, depending on $\epsilon$ and $F$, such that $\phi_n^A(\overline{b}) \in (K_n)_{\epsilon}$ for all $n \in F$.
\end{enumerate}
The three most important special cases for us will be the following:
\begin{itemize}
\item{
If $\Phi$ contains all degree-$1$ $^*$-polynomials, we say that $A$ is \emph{countably degree-$1$ saturated}.
}
\item{
If $\Phi$ contains all quantifier free formulas, we say that $A$ is \emph{countably quantifier-free saturated}.}
\item{
If $\Phi$ is the set of all formulas we say that the algebra $A$ is \emph{countably saturated}.
}
\end{itemize}
\end{defin}

Clearly condition (1) in the definition always implies condition (2), but the converse does not always hold.  We recall the (standard) terminology for the various parts of the above definition.  A set of conditions satisfying (2) in the definition is called a \emph{type}; we say that the conditions are \emph{approximately finitely satisfiable} or \emph{consistent}.  When condition (1) holds, we say that the type is \emph{realized} (or \emph{satisfied}) by $(b_k)_{k \in \mathbb{N}}$. As we said, for an algebra, to be countably-$\Phi$ saturated means that every countable set of $A_{\leq1}$-conditions whose formulas are taken from $\Phi$ which is approximately satisfiable  is actually satisfiable.

An equivalent definition of quantifier-free saturation is obtained by allowing only $^*$-polynomials of degree at most $2$, see \cite[Lemma 1.2]{farah2011countable}.  By (model-theoretic) compactness the concepts defined by Definition~\ref{defin:Sat.Sat} are unchanged if each compact set $K_n$ is assumed to be a singleton.  

In the setting of logic for $\Cstar$-algebras, the analogue of a finite discrete structure is a $\Cstar$-algebra with compact unit ball, that is, a finite-dimensional algebra, and if $A$ is such, then $A\cong A^{\mathcal U}$ for every choice of $\mathcal U$ (see \cite[p. 24]{BYBHU}).

\begin{fact}[{\cite[Proposition 7.8]{BYBHU}}]
Every ultraproduct of $\Cstar$-algebras over a countably incomplete ultrafilter is countably saturated.  In particular, every finite-dimensional $\Cstar$-algebra is countably saturated.
\end{fact}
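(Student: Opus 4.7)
The plan is a standard two-step argument. First I will prove that every ultraproduct $A = \prod_{\mathcal U} A_i$ over a countably incomplete ultrafilter $\mathcal U$ on an index set $I$ is countably saturated. Then I will deduce the finite-dimensional case by invoking the observation already recorded just before the statement: if $A$ is finite-dimensional, then $A \cong A^{\mathcal U}$ for any ultrafilter $\mathcal U$ on $\NN$ (the unit ball is compact so every bounded sequence has a unique $\mathcal U$-limit in $A$, making the diagonal embedding surjective), and $\mathcal U$ may be chosen countably incomplete, so $A$ inherits countable saturation from the main claim.

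For the main claim, fix a consistent type $\{\phi_n(\bar x) = r_n : n \in \NN\}$ with parameters in $A_{\leq 1}$ (by compactness it suffices to treat singleton target sets). Using countable incompleteness, fix a decreasing sequence $I = I_0 \supseteq I_1 \supseteq \cdots$ of sets in $\mathcal U$ with $\bigcap_n I_n = \emptyset$. For each $n$, approximate satisfiability of the finite subtype $\{\phi_m = r_m : m \leq n\}$ within tolerance $1/n$ furnishes a sequence $\bar c^{(n)} = (c^{(n)}_k)_k$ in $A_{\leq 1}$; I then fix coordinatewise representatives $c^{(n)}_k = [(c^{(n)}_{k,i})_i]_{\mathcal U}$ with $\|c^{(n)}_{k,i}\| \leq 1$. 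By \L os's theorem for continuous logic,
\[
X_n := \{i \in I : |\phi_m^{A_i}(\bar c^{(n)}_i) - r_m| < 1/n \text{ for all } m \leq n\} \in \mathcal U.
\]

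Now I perform the diagonal pasting. Set $J_n := I_n \cap \bigcap_{k \leq n} X_k \in \mathcal U$; then $(J_n)$ is decreasing with $\bigcap_n J_n = \emptyset$, so for every $i \in I$ there is a largest $n(i) \in \NN$ with $i \in J_{n(i)}$ (put $n(i) := 0$ if $i \notin J_1$). Define $b_{k,i} := c^{(n(i))}_{k,i}$ and $b_k := [(b_{k,i})_i]_{\mathcal U} \in A_{\leq 1}$. For each fixed $m$ and each $n \geq m$, whenever $i \in J_n$ we have $n(i) \geq n \geq m$ and $i \in X_{n(i)}$, so $|\phi_m^{A_i}(\bar b_i) - r_m| < 1/n(i) \leq 1/n$; \L os then gives $|\phi_m^A(\bar b) - r_m| \leq 1/n$, and letting $n \to \infty$ yields $\phi_m^A(\bar b) = r_m$. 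Hence $(b_k)$ realizes the entire type.

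The only genuine obstacle is this diagonal pasting: one has to merge the coordinate approximate realizers $\bar c^{(n)}_i$, each accurate only to tolerance $1/n$, into a single global sequence whose ultralimit realizes every condition exactly. Countable incompleteness is precisely what enables this — the nested family $J_n \in \mathcal U$ with empty intersection forces the assignment $i \mapsto n(i)$ to tend to $\infty$ along $\mathcal U$, thereby converting uniform approximations in each $A_i$ into exact equalities in the quotient.
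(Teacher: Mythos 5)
Your argument is correct and is precisely the standard diagonal argument for saturation of ultraproducts over countably incomplete ultrafilters; the paper itself gives no proof of this Fact, deferring entirely to \cite[Proposition 7.8]{BYBHU}, whose proof is the same nested-sets-with-empty-intersection pasting you carry out. The reduction of the finite-dimensional case to $A\cong A^{\mathcal U}$ also matches the remark the paper makes immediately before the statement.
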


The interest in countably degree-$1$ saturated algebras started with the work of Farah and Hart in \cite{farah2011countable}, where they proved the following:

\begin{theorem}\label{thm:cd1sFH}
Let $A$ be a $\sigma$-unital $\Cstar$-algebra. Then the corona of $A$ is countably degree-1 saturated.
\end{theorem}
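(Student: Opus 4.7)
The plan is to exploit the quasicentral approximate identity constructed in Lemma~\ref{lem:approxid1EV} (applied to $M = \mathcal{M}(A)$, $A$-strictly complete, with the canonical countable approximate identity of $A$) in a diagonal patching argument, which is the standard Kasparov--Higson technique adapted to realize types.

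First I would fix a countably degree-$1$ approximately finitely satisfiable type $\{\phi_n(\bar{x}) = r_n\}_{n\in\en}$ with parameters in $(\mathcal{M}(A)/A)_{\leq 1}$, where each $\phi_n = \|P_n(\bar{x})\|$ for a $^*$-polynomial $P_n$ of total degree $1$ in a fixed countable tuple of variables $\bar{x} = (x_k)_{k\in\en}$. Lift each parameter to a contraction in $\mathcal{M}(A)$, and collect all parameters appearing in $P_1,\ldots,P_n$ into a finite set $G_n \subseteq \mathcal{M}(A)_{\leq 1}$. By approximate finite satisfiability, for each $n$ pick a contractive tuple $\bar{b}^{(n)} = (b_k^{(n)})_{k\in\en}$ in $\mathcal{M}(A)$ such that $|\,\|\pi(P_m(\bar b^{(n)}))\| - r_m\,| < \epsilon_n$ for all $m \le n$, where $\epsilon_n \to 0$ with $\epsilon_0 < 1/4$.

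Second, arrange $F_n \supseteq G_n \cup \{b_k^{(m)} : k,m \le n\}$ into an increasing sequence of finite subsets of $\mathcal{M}(A)_{\leq 1}$, and apply Lemma~\ref{lem:approxid1EV} to obtain $(e_n)$ and $f_n = (e_{n+1}-e_n)^{1/2}$ satisfying \ref{cond0}--\ref{cond8}. For each $k \in \en$, define
\[
b_k = \sum_{n\in\en} f_n b_k^{(n)} f_n \in \mathcal{M}(A),
\]
which converges strictly by \ref{cond6} and has norm at most $1$ by \ref{cond7}. It remains to verify that $\phi_m^{\mathcal{M}(A)/A}(\pi(b_1),\pi(b_2),\ldots) = r_m$ for every $m$. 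A single degree-$1$ monomial $c\,x_k\,d$ with parameters $c,d \in F_{n_0}$ evaluated at $\bar b$ yields $\sum_n c f_n b_k^{(n)} f_n d$; using quasicentrality \ref{cond1}, for $n \ge n_0$ the commutators $[f_n,c]$ and $[f_n,d]$ are bounded by $\epsilon_n$, so
\[
P_m(\bar b) = \sum_{n\ge n_0} f_n P_m(\bar b^{(n)}) f_n + \text{(error)},
\]
where the error term is a strictly convergent sum whose tail is norm-controlled by $\sum_n \epsilon_n < \infty$, hence lies in $A$ up to an arbitrarily small norm perturbation. Combining with \ref{cond8} applied to $x_n = P_m(\bar b^{(n)})$ (whose norm stabilizes to $r_m$) and the lower bound \ref{cond4}, we get $\|\pi(P_m(\bar b))\| = \limsup_n \|P_m(\bar b^{(n)}) f_n^2\| = \limsup_n \|\pi(P_m(\bar b^{(n)}))\| = r_m$. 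The constant term of $P_m$ is absorbed by $\sum_n f_n^2 = 1$ via \ref{cond5} together with the same commutator estimate.

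The main obstacle is the error estimate in the previous paragraph: one must show that the commutator corrections truly become negligible modulo $A$, which requires that quasicentrality of $\{f_n\}$ holds uniformly against all parameters of all $\phi_m$ simultaneously, that the $f_n$'s with indices differing by at least $2$ are nearly orthogonal (\ref{cond2}), and that $\pi(b_k)$ depends only on the tail behaviour of $(b_k^{(n)})$. These are exactly what Lemma~\ref{lem:approxid1EV} supplies once the $F_n$ are chosen to exhaust all countably many parameters and lifts. It is essential here that $P_m$ be \emph{degree-$1$}: for higher-degree monomials, substituting $b_k = \sum_n f_n b_k^{(n)} f_n$ produces cross-terms like $f_n b_k^{(n)} f_n f_m b_\ell^{(m)} f_m$ in which the middle factor $f_n f_m$ is only small for $|n-m|\ge 2$ and has no diagonal cancellation on $n=m$, so the diagonal identification of $\|\pi(P_m(\bar b))\|$ with $\limsup_n \|\pi(P_m(\bar b^{(n)}))\|$ breaks down.
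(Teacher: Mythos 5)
Your proposal is correct and follows essentially the same route as the paper: the thesis itself only cites Theorem~\ref{thm:cd1sFH} (to Farah--Hart), but its proof of the generalization Theorem~\ref{thm:FactorCtbleSat} is exactly your argument --- collect the lifted coefficients, the partial solutions, \emph{and} the evaluated polynomials $P_m(\bar b^{(n)})$ into the finite sets $F_n$, apply Lemma~\ref{lem:approxid1EV}, glue via $b_k=\sum_n f_n b_k^{(n)} f_n$, and compare $P_m(\bar b)$ with $\sum_n f_n P_m(\bar b^{(n)}) f_n$ modulo $A$ using the summable commutator estimates from \ref{cond1}. The only imprecise spot is your parenthetical that $\norm{P_m(\bar b^{(n)})}$ ``stabilizes to $r_m$'': only the quotient norm does, and to transfer this to the norm in $\mathcal M(A)$ one must first cut down to $(1-e_{n-2})P_m(\bar b^{(n)})(1-e_{n-2})$ using conditions \ref{cond0} and \ref{cond2a} --- which is precisely what the paper's proof does, and why the evaluated polynomials must be placed in $F_n$.
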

In the same paper, they showed that the Calkin algebra is not countably quantifier-free saturated, by an argument of Phillips reproduced in~\cite[Proposition~4.2]{farah2011countable}. If one wants only to show that the Calkin algebra is not countably saturated one can appeal to an easier argument.

\begin{proposition}[{\cite[Proposition 4.1]{farah2011countable}}]
The Calkin algebra is not countably saturated.
\end{proposition}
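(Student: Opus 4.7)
The plan is to exhibit an explicit countable type with parameters in $\mathcal{C}(H)$ that is approximately finitely satisfiable yet has no realization. Because $\mathcal{C}(H)$ is countably degree-$1$ saturated by Theorem~\ref{thm:cd1sFH}, the witnessing type must necessarily involve conditions of degree at least $2$ in the unknown; in practice this means I will use a projection condition $\|x - x^2\| + \|x - x^*\| = 0$ together with order-theoretic conditions on $x$.

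The strategy is to exploit the failure of $\sigma$-completeness of the projection lattice of $\mathcal{C}(H)$. Fix an orthonormal basis $(e_k)_{k \in \NN}$ of $H$ and a partition $\NN = \bigsqcup_{n \in \NN} A_n$ into infinite disjoint blocks, and let $P_n \in \mathcal{B}(H)$ denote the orthogonal projection onto $\overline{\mathrm{span}}\{e_k : k \in A_n\}$; write $q_n$ for the image of $P_n$ in $\mathcal{C}(H)$, and set $s_n = q_0 + q_1 + \cdots + q_n \in \mathcal{C}(H)$, an increasing sequence of projections. A key preliminary observation is that for any choice of a single vector $e_{k_n} \in A_n H$ for each $n$, the rank-one projections onto these vectors assemble into a noncompact projection $R \in \mathcal{B}(H)$ with the property that $RP_n \in \mathcal{K}(H)$ for every $n$. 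Consequently $1 - R$ projects to a nontrivial upper bound of $(s_n)_n$ in $\mathcal{C}(H)$ that is strictly below the unit, and by varying the transversal $(k_n)$ one obtains a countable family $(r_k)_{k \in \NN}$ of upper bounds of $(s_n)_n$ whose ``infimum'' in the projection lattice of $\mathcal{C}(H)$ fails to dominate any single $s_n$.

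The type $t(x)$ I would write down asserts: (i) $x$ is a projection; (ii) $x \geq s_n$ for every $n$; and (iii) $x \leq r_k$ for every $k$. Finite satisfiability is immediate: for any finite subtype, the meet of the finitely many specified $r_k$'s (taken inside $\mathcal{B}(H)$ and then pushed down) is a projection above the finitely many specified $s_n$'s, and one may perturb slightly to land exactly in $\mathcal{C}(H)$. Non-realizability would follow by showing that a simultaneous realization $x$ must lie below every $r_k$ while remaining above the sequence $(s_n)$, forcing $x$ to be a least upper bound of $(s_n)$---which does not exist in $\mathcal{C}(H)$ by the transversal construction above.

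The main obstacle, and the step requiring the most care, is the verification of non-realizability: one must rule out that some cunning projection lying in $\mathcal{C}(H)$ simultaneously sits below every $r_k$ and above every $s_n$, and this is not entirely obvious because meets of countable families of projections need not behave well in the corona. I expect this to reduce to a direct spectral/compactness argument on a lift $X \in \mathcal{B}(H)$ of $x$, using that $XP_n - P_n$ and $(1-X)$ times each transversal projection must be compact, together with a pigeonhole extraction along the block decomposition $H = \bigoplus_n P_n H$ to produce a contradictory noncompact operator. This is essentially the argument of Farah and Hart cited after the proposition.
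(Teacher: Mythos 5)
Your proposed type is in fact realizable, so the non-realizability step cannot be carried out; the gap is in the claim that a realization would have to be a least upper bound of $(s_n)$.

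Concretely, write $T_k = \{\,j_n^{(k)} : n\in\NN\,\}$ for the $k$-th transversal (so $r_k = \pi(1 - P_{T_k})$), and set $C_n = \{\,j_n^{(0)},\,j_n^{(1)},\,\dots,\,j_n^{(n)}\,\} \subseteq A_n$. Each $C_n$ is finite, so $S = \NN \setminus \bigcup_n C_n$ is cofinite in every block $A_n$; hence $\pi(P_S) \geq s_n$ for all $n$. On the other hand, for each fixed $k$ we have $j_n^{(k)} \in C_n$ for all $n \geq k$, so $S \cap T_k$ is finite and $\pi(P_S) \leq r_k$ for every $k$. Thus $q = \pi(P_S)$ is a projection realizing your entire type. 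The reason a realization need not be a least upper bound of $(s_n)$ is that the type only asks for \emph{some} upper bound of the $s_n$'s lying below all the $r_k$'s, and the diagonalization above manufactures one. At bottom this is the classical fact that $\mathcal{P}(\NN)/\mathrm{fin}$ is countably saturated as a Boolean algebra: any countable ``$(\sigma,\sigma)$-pregap'' of diagonal projections in the Calkin algebra can be interpolated. So no type built purely from the order structure of commuting projections can witness the failure of countable saturation.

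The paper's argument is fundamentally different and unavoidably so: it quantifies over unitaries via $\phi_n(x) = \inf_{u\in\mathcal{U}} \|u^n - x\|$ and uses the Fredholm index, a genuinely noncommutative $K$-theoretic invariant invisible to the Boolean structure. The type demands a unitary whose index is divisible by every power of $2$ (forcing index $0$) yet bounded away from the unitaries of index divisible by $3$, which is finitely satisfiable but globally contradictory. If you want to salvage a projection-lattice approach, you would need to exploit Murray--von~Neumann equivalence classes or relative dimensions rather than the order alone; the order by itself will not do.
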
 
\begin{proof}
For a unital $\Cstar$-algebra let $\mathcal U$ be the set of unitaries in $A$. This is a definable set, and therefore we can quantify over it by \cite[\S3]{bourbaki}.

Let $\phi_n(x)=\inf_{u\in\mathcal U}\norm{u^{n}-x}$ and $\psi(x)=\norm{xx^*-1}+\norm{x^*x-1}$. If $x$ is in the Calkin algebra then $\psi(x)+\phi_n(x)=0$ if and only $x$ is a unitary whose Fredholm index is divided by  $n$. Also, if $\psi(x)=0$ then $\phi_n(x)\geq 1$ if and only if its Fredholm index is not divided by $n$. Therefore for every finite set $F,G\subseteq \NN$ there is an $x$ for which $\psi(x)=0$, $\phi_{2^n}(x)=0$ if $n\in F$ and $\phi_{3^n}(x)\in[1,2]$ if $n\in G$, but it is impossible to find an $x$ for which $\psi(x)=\phi_{2^n}(x)=0$ and $\psi_{3^n}(x)\in[1,2]$ for all $n\in\NN$.
\end{proof}

Interesting cases of countably saturated algebras are given by reduced product, as shown in the following Theorem, which we will use in~\S\ref{ss:CH.consofCH}.
\begin{theorem}[{\cite[Theorem 2.7]{Farah-Shelah.RCQ}}]\label{thm:CH.RedProdSat}
Let $A_n$ be unital and separable $\Cstar$-algebras. Then $\prod A_n/\bigoplus A_n$ is countably saturated.
\end{theorem}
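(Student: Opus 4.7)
The plan is to adapt the classical countable saturation argument for ultraproducts, exploiting the fact that the Fréchet filter on $\NN$ is countably incomplete (the sequence $X_m = [m,\infty)$ has empty intersection). Let $C = \prod A_n / \bigoplus A_n$ with quotient map $\pi\colon \prod A_n \to C$, and suppose $\{\phi_k(\bar x) = r_k : k \in \NN\}$ is an approximately finitely satisfiable type in $C_{\leq 1}^{|\bar x|}$. By absorbing parameters into a single tuple we may assume every $\phi_k$ uses one fixed tuple of parameters $\bar c$ from $C_{\leq 1}$; lift $\bar c$ coordinate-wise to $(\bar c_n) \in \prod A_n$ with $\|c_{i,n}\| \le 1$ (using unitality of each $A_n$ to truncate if necessary via functional calculus).

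The technical core is a \emph{reduced-product \L{}o\'s-type lemma}: for every formula $\phi(\bar x)$ with parameters from $\bar c$, every $\varepsilon > 0$ and every lift $(\bar a_n)$ of $\bar a \in C_{\leq 1}^{|\bar x|}$, one has
\[
\bigl|\phi^{A_n}(\bar a_n,\bar c_n) - \phi^{C}(\bar a,\bar c)\bigr| < \varepsilon \quad \text{for cofinitely many } n.
\]
I would prove this by induction on the syntactic complexity of $\phi$. For quantifier-free formulas it reduces to $\|\pi((x_n))\| = \limsup_n \|x_n\|$ applied to the relevant $^*$-polynomial; continuous connectives pass through. For $\inf_{\|y\|\le 1}\psi(\bar x, y)$, pick an approximate minimizer $\bar b \in C_{\leq 1}^{|\bar x|}$ in $C$ (and similarly for each $A_n$), lift it, and use the induction hypothesis together with the fact that any coordinate witness also gives an element of $C_{\leq 1}$ (via its canonical embedding using $\bar c_n$-compatible coordinates, set to $0$ elsewhere, which is where both unitality and the structure of $\bigoplus A_n$ enter). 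Separability of the $A_n$ is used to ensure that the infima and suprema are attained on fixed countable dense subsets, so that the approximations are compatible across $n$.

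With the lemma in hand, diagonalize. For each $m \in \NN$, approximate satisfiability yields $\bar b^m \in C_{\leq 1}^{|\bar x|}$ with $|\phi_k^C(\bar b^m, \bar c) - r_k| < 1/m$ for $k \le m$. Lift to $(\bar b^m_n) \in \prod A_n$ with each entry in the unit ball. By the key lemma applied to finitely many formulas $\phi_1,\dots,\phi_m$, choose $N_m$ (increasing and tending to $\infty$) so that
\[
|\phi_k^{A_n}(\bar b^m_n, \bar c_n) - r_k| < 2/m \quad \text{for all } n \ge N_m \text{ and } k \le m.
\]
Define $(\bar b_n) \in \prod A_n$ by $\bar b_n = \bar b^m_n$ for $N_m \le n < N_{m+1}$. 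Then for each fixed $k$, $\phi_k^{A_n}(\bar b_n, \bar c_n) \to r_k$ as $n \to \infty$ along a cofinite set, and by the key lemma applied to $\phi_k$ we conclude $\phi_k^C(\pi(\bar b), \bar c) = r_k$. Thus $\pi(\bar b)$ realizes the whole type, and $C$ is countably saturated.

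The principal obstacle is the inductive proof of the \L{}o\'s-type lemma at the quantifier step: propagating an approximate witness for $\inf_{\|y\| \le 1}\psi$ in the quotient $C$ down to approximate witnesses in cofinitely many coordinate algebras $A_n$, and conversely lifting coordinate witnesses back to an element of $C_{\leq 1}$. Unitality of each $A_n$ is what allows the latter (one can use the $1_{A_n}$ to pad witnesses while respecting the relevant parameter tuple), and separability controls the dependence on the parameters so that a uniform choice works for all $n$ large enough.
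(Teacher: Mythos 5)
Your proposal has a genuine gap: the ``reduced-product \L{}o\'{s}-type lemma'' at its core is false, already for atomic formulas. Take $A_n=\ce$ for all $n$, so that $C=\ell_\infty/c_0$, let $\phi(x)=\norm{x}$, and let $a_n$ alternate between $0$ and $1_{A_n}$. Then $\phi^C(\pi(a))=\limsup_n\norm{a_n}=1$, while $\phi^{A_n}(a_n)=0$ for infinitely many $n$, so the claimed estimate $\bigl|\phi^{A_n}(\bar a_n)-\phi^C(\bar a)\bigr|<\e$ ``for cofinitely many $n$'' fails for $\e=1/2$. The quotient norm is a $\limsup$, not a limit, and no choice of lift removes this. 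Consequently both invocations of the lemma collapse: the extraction of coordinate-wise estimates from the partial realizations $\bar b^m$, and the final step where you deduce $\phi_k^C(\pi(\bar b),\bar c)=r_k$ from coordinate-wise convergence of $\phi_k^{A_n}(\bar b_n,\bar c_n)$.

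The deeper problem is the quantifier step, which you correctly flag as the principal obstacle but do not actually resolve. Even if one replaces your lemma by the correct statement for quantifier-free $\psi$ (namely that $\norm{P(\bar a)}^C=\limsup_n\norm{P(\bar a_n)}$ for $^*$-polynomials $P$), for $\phi=\inf_{\norm{y}\le 1}\psi$ one must compare $\inf_b\limsup_n$ with $\limsup_n\inf_{b_n}$, and these differ in general; moreover, once $\psi$ itself contains quantifiers, $\psi^C(\bar a,b)$ is no longer any kind of $\limsup$ of coordinate values, so the induction does not close. This is exactly where the cited proof of Farah and Shelah (and, more systematically, Ghasemi's Feferman--Vaught theorem for reduced products) does real work: $\phi^C(\pi(\bar a))$ is controlled not by evaluations in the individual $A_n$ but by evaluations of $\phi$ in finite blocks $\prod_{j\in E_i}A_j$ for a carefully chosen partition of $\NN$ into intervals, together with stability of these block values when $\bar a$ is modified off the blocks. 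Your final diagonalization, gluing the $\bar b^m$ along intervals $[N_m,N_{m+1})$, is the right outer shape of the argument, but the intervals must be chosen relative to such a block-wise analysis of all the $\phi_k$ simultaneously, and the verification that the glued element realizes the type must be carried out at the level of blocks; as written, the proof does not go through.
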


Other cases of countably degree-$1$ saturated metric structures were treated in \cite{Voiculescu.c1ds}, where it was analyzed countable degree-$1$ saturation of certain $\Cstar$-algebras which appear as corona of Banach algebras.

In \S\ref{ss:Breuerideal} and \ref{ss:Satforabelian} we will enlarge the class of algebra which are, at some level, saturated. Before doing so, we state and prove an easy lemma, allowing us to restrict the class of saturated algebras.

\begin{defin}\label{def:sccc}
A $\Cstar$-algebra $A$ has \emph{few orthogonal positive elements} if every family of pairwise orthogonal positive elements of $A$ of norm $1$ is countable.
\end{defin}

\begin{remark}This condition was introduced recently in \cite{Masumoto} under the name \emph{strong countable chain condition}, where it was expressed in terms of the cardinality of a family of pairwise orthogonal hereditary ${}^*$-subalgebras. On the other hand, in general topology this name was already introduced by Hausdorff in a different context, so we have given this property a new name to avoid overlaps.  In the non-abelian case, it is not known whether or not this condition coincides with the notion of countable chain condition for any partial order.
\end{remark}

\begin{lemma}\label{lem:CHSat.scccbad}
If an infinite dimensional $\Cstar$-algebra $A$ has few orthogonal positive elements, then $A$ is not countably degree-$1$ saturated.
\end{lemma}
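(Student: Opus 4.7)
The plan is to exhibit a countable type of degree-$1$ conditions with parameters in $A_{\leq 1}$ that is approximately finitely satisfiable but has no realization in $A$.

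First I would invoke the standard fact that every infinite-dimensional $\Cstar$-algebra contains an infinite sequence of pairwise orthogonal positive norm-$1$ elements (obtained, for instance, by applying continuous functional calculus to a self-adjoint element of infinite spectrum, using bump functions supported on pairwise disjoint subintervals). Applying Zorn's Lemma, I would extend such a sequence to a maximal family $\mathcal F$ of pairwise orthogonal positive norm-$1$ elements of $A$. The hypothesis forces $|\mathcal F|\leq\aleph_0$, while the preceding observation forces $\mathcal F$ to be infinite, so I may enumerate $\mathcal F=(a_n)_{n\in\NN}$.

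Next I would write down the type $t(x)$ consisting of the conditions $\|x\|=1$, $\|x-x^*\|=0$, and $\|xa_n\|=\|a_nx\|=0$ for every $n\in\NN$. Each formula appearing in $t(x)$ is the norm of a $^*$-polynomial of degree $1$ in the variable $x$ with parameters from $A_{\leq 1}$, so $t(x)$ is genuinely a degree-$1$ type. Approximate finite satisfiability is immediate: given a finite $F\subseteq\NN$ and any $\e>0$, choose $m\in\NN\setminus F$ and set $x=a_m$. Then $a_m$ is a positive self-adjoint contraction of norm $1$ which is orthogonal to every $a_n$ with $n\in F$, so it exactly satisfies the associated finite subfamily of $t$.

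The core of the argument is that $t(x)$ has no realization in $A$. Assume for contradiction that $x\in A_{\leq 1}$ realized $t(x)$; then $x=x^*$, $\|x\|=1$, and $xa_n=a_nx=0$ for every $n\in\NN$. The positive and negative parts $x^\pm$ arise as continuous functions of $x$ vanishing at $0$, and hence lie in the non-unital $\Cstar$-subalgebra of $A$ generated by $x$. Since $xa_n=a_nx=0$, every element of this subalgebra is annihilated on both sides by each $a_n$, and in particular so are $x^\pm$. Because $\|x\|=\max(\|x^+\|,\|x^-\|)=1$, at least one of $x^\pm$ is a positive norm-$1$ element of $A$ orthogonal to every member of $\mathcal F$, contradicting the maximality of $\mathcal F$. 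I expect the only nontrivial input to be the initial lemma on the existence of an infinite orthogonal sequence in any infinite-dimensional $\Cstar$-algebra; once that is in hand, the rest is essentially bookkeeping with continuous functional calculus.
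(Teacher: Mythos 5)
Your proof is correct, and it follows the same overall strategy as the paper: take a maximal family of pairwise orthogonal positive norm-one elements (countable by hypothesis), encode ``norm-one element orthogonal to every member of the family'' as a countable type whose finite fragments are exactly satisfied by the tail elements $a_m$, and contradict maximality with a realization. The one substantive difference is at the level of the formulas used. The paper's type consists of $\norm{a_n x x^*}=0$ together with $\norm{x}=1$, so that a realization is automatically the positive element $xx^*$; but $a_n x x^*$ is a degree-\emph{two} $^*$-polynomial, so as written the paper is not literally working within degree-$1$ saturation. You instead use the genuinely degree-$1$ conditions $\norm{x-x^*}=0$ and $\norm{xa_n}=\norm{a_nx}=0$, and then pass to $x^{\pm}$ by functional calculus to extract a positive norm-one element orthogonal to the family; this costs you one extra (routine) step but keeps every formula honestly of degree $1$, which is what the lemma requires. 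You are also more careful than the paper in noting that the maximal family must be \emph{infinite} (by first producing an infinite orthogonal sequence and extending it), whereas the paper tacitly enumerates its maximal family by $\NN$ even though, e.g., $\{1\}$ is a maximal orthogonal family in any unital algebra. The only point you leave implicit is that the new element $x^{\pm}$ is not already a member of $\mathcal F$, but this is immediate since any $a_m$ orthogonal to all of $\mathcal F$ would satisfy $a_m^2=0$.
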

\begin{proof}
Suppose to the contrary that $A$ has few orthogonal positive elements and is countably degree-$1$ saturated.  Using Zorn's lemma, find a set $Z \subseteq A_1^+$ which is maximal (under inclusion) with respect to the property that if $x, y \in Z$ and $x \neq y$, then $xy = 0$.  By hypothesis, the set $Z$ is countable; list it as $Z = \{a_n\}_{n \in \mathbb{N}}$.

For each $n \in \mathbb{N}$, define $P_n(x) = a_n xx^*$, and let $K_n = \{0\}$.  Let $P_{-1}(x) = x$, and $K_{-1} = \{1\}$.  The type $\{\norm{P_n(x)} \in K_n \colon  n \geq -1\}$ is finitely satisfiable.  Indeed, by definition of $Z$, for any $m \in \mathbb{N}$ and any $0 \leq n \leq m$ we have $\norm{P_n(a_{m+1})} = \norm{a_na_{m+1}} = 0$, and $\norm{a_{m+1}} = 1$.  By countable degree-$1$ saturation there is a positive element $b=aa^* \in A_1^+$ such that $\norm{P_n(a)} = 0$ for all $n \in \mathbb{N}$.  This contradicts the maximality of $Z$.
\end{proof}

Subalgebras of $\mc{B}(H)$ clearly have few positive orthogonal elements, whenever $H$ is separable.  As a result, we obtain the following.

\begin{corollary}\label{cor:Hnotseparable}
No infinite dimensional subalgebra of $\mc{B}(H)$, with $H$ separable, can be countably degree-$1$ saturated.
\end{corollary}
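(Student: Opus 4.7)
The plan is to verify the hypothesis of Lemma~\ref{lem:CHSat.scccbad} for any subalgebra $A\subseteq\mathcal B(H)$ with $H$ separable; namely that $A$ has few orthogonal positive elements (Definition~\ref{def:sccc}). Once this is established, Lemma~\ref{lem:CHSat.scccbad} applied to the infinite dimensional $A$ immediately yields that $A$ is not countably degree-$1$ saturated.

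The key observation is that if $a,b\in\mathcal B(H)^+$ satisfy $ab=0$, then the support projections (i.e.\ the range projections) of $a$ and $b$ are orthogonal. To see this, note that $ab=0$ implies $ba=0$ by taking adjoints, so
\[
(a^{1/2}b^{1/2})(a^{1/2}b^{1/2})^*=a^{1/2}ba^{1/2}=0,
\]
hence $a^{1/2}b^{1/2}=0$. Since $\overline{\operatorname{ran}(b^{1/2})}$ is the range of the support projection $p_b$ and $a^{1/2}$ vanishes on it, this range is contained in $\ker(a^{1/2})=(p_aH)^\perp$, giving $p_ap_b=0$.

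Now suppose, for contradiction, that there exists an uncountable family $\{a_\alpha\}_{\alpha<\omega_1}\subset A_{\le 1}^+$ of pairwise orthogonal positive elements of norm $1$. Each $a_\alpha$ has a nonzero support projection $p_\alpha\in\mathcal B(H)$, and the observation above shows that the family $\{p_\alpha\}_{\alpha<\omega_1}$ consists of pairwise orthogonal nonzero projections. Choosing a unit vector $\xi_\alpha\in p_\alpha H$ for each $\alpha$ produces an uncountable orthonormal set in $H$, contradicting the separability of $H$. Therefore $A$ has few orthogonal positive elements.

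Since $A$ is assumed infinite dimensional, Lemma~\ref{lem:CHSat.scccbad} yields that $A$ is not countably degree-$1$ saturated, completing the proof. The argument is essentially a one-line deduction once Lemma~\ref{lem:CHSat.scccbad} is in place; the only nontrivial point is the support projection calculation above, which is standard and presents no real obstacle.
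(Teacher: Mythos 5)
Your proof is correct and follows the same route as the paper, which simply asserts the key fact ("Subalgebras of $\mathcal{B}(H)$ clearly have few positive orthogonal elements, whenever $H$ is separable") before invoking Lemma~\ref{lem:CHSat.scccbad}. Your contribution is to flesh out the "clearly": the support-projection calculation showing that pairwise orthogonal positive elements in $\mathcal B(H)$ yield pairwise orthogonal nonzero projections, hence an orthonormal family whose cardinality is bounded by the density of $H$ — this is exactly what the paper takes for granted.
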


Corollary \ref{cor:Hnotseparable} shows that many familiar $\Cstar$-algebras fail to be countably degree-$1$ saturated.  In particular, it implies that no infinite dimensional separable $\Cstar$-algebra is countable degree-$1$ saturated.  It also shows that the class of countably degree-$1$ saturated algebras is not closed under taking inductive limits (consider, for example, the CAR algebra $\bigotimes_{i=1}^{\infty}M_2(\mathbb C)$, or any AF algebra) or subalgebras. In fact, it not very difficult to see that if $A$ is countably degree-1 saturated, then $A$ has density character at least $\mathfrak c$.

\subsection{The consequences of $\CH$}\label{ss:CH.consofCH}
In this section we explore the relations between $\CH$ and countably saturated algebras.

\begin{theorem}\label{thm:CHandSat}
Let $C$ be a countably saturated $\Cstar$-algebra. Then
\begin{itemize}
\item If $A$ is a $\Cstar$-algebra of density character $\aleph_1$ that embeds in an ultrapower of $C$, then $A$ embeds into $C$.
\end{itemize}
Also, if $\CH$ holds and $C$ has density character $\mathfrak c$, then
\begin{itemize} 
\item if $D$ a countably saturated $\Cstar$-algebra of density character $\mathfrak c$, and $\Th(C)=\Th(D)$, then $C\cong D$;
\item $C$ has $2^{\mathfrak c}$-many automorphisms.
\end{itemize}
\end{theorem}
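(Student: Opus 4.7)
The proof is a standard transfinite back-and-forth argument powered by countable saturation; I sketch the three bullets in turn.

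\textbf{First bullet (embedding).} Write $A$ as the norm-closure of a continuous increasing chain $(A_\alpha)_{\alpha<\omega_1}$ of separable subalgebras. I would build compatible embeddings $\iota_\alpha\colon A_\alpha\to C$ (with $\iota_\beta\rs A_\alpha=\iota_\alpha$ for $\alpha<\beta$) by transfinite recursion, taking closures of unions at limits. Fix a given embedding $j\colon A\to C^{\mathcal U}$. At a successor step $\alpha+1$, choose countable norm-dense $D_\alpha\subseteq A_\alpha$ and $D_{\alpha+1}\supseteq D_\alpha$ dense in $A_{\alpha+1}$, and consider the type $p(\bar x)$ over $\iota_\alpha(D_\alpha)$ whose conditions prescribe $\norm{P(\bar x,\iota_\alpha(\bar b))}=\norm{P(\bar d,\bar b)}$ for every $^*$-polynomial $P$, every $\bar b\in D_\alpha$, and every $\bar d\in D_{\alpha+1}$. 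Since $j$ witnesses the same quantifier-free diagram of $A_{\alpha+1}$ over $A_\alpha$ inside $C^{\mathcal U}$, \L os's theorem makes $p$ approximately finitely satisfiable in $C$, and countable saturation of $C$ then realizes $p$ exactly, producing $\iota_{\alpha+1}$. The main obstacle is that the type lives over $\iota_\alpha(D_\alpha)$ rather than over $j(D_\alpha)$; the standard fix is to maintain the invariant that $\iota_\alpha$ realizes the same quantifier-free type in $C$ as $j\rs A_\alpha$ does in $C^{\mathcal U}$, and to use the homogeneity of $C^{\mathcal U}$ over separable subsets (a consequence of its countable saturation) to slide $j\rs A_{\alpha+1}$ so that it extends $\iota_\alpha$.

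\textbf{Second bullet (isomorphism under $\CH$).} Under $\CH$, both $C$ and $D$ have density character $\aleph_1$; fix enumerations $(c_\alpha)_{\alpha<\omega_1}$ and $(d_\alpha)_{\alpha<\omega_1}$ of norm-dense subsets. I would construct a continuous increasing chain of partial isomorphisms $\phi_\alpha\colon C_\alpha\to D_\alpha$ between separable subalgebras, with $c_\alpha\in C_{\alpha+1}$ (forth step) and $d_\alpha\in D_{\alpha+1}$ (back step). Each step requires realizing in $D$ (respectively $C$) a countable type determined by the desired extension; this type is consistent with $\Th(D)=\Th(C)$ (hence approximately finitely satisfiable in $D$ by \L os applied to any ultrapower), and countable saturation of the target then realizes it precisely. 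The union $\bigcup_{\alpha<\omega_1}\phi_\alpha$ extends to the required isomorphism $C\cong D$.

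\textbf{Third bullet ($2^{\mathfrak c}$ automorphisms).} Apply the back-and-forth construction of the second bullet with $D=C$. By Lemma~\ref{lem:CHSat.scccbad}, a countably degree-$1$ saturated algebra of density $\mathfrak c$ cannot be finite-dimensional, so at each stage the countable type describing the next extension has at least two distinct realizations in $C$ (indeed, infinitely many, since $C$ carries a large unitary group to conjugate any candidate). Making a binary choice at each of the $\omega_1$ stages yields a tree of coherent partial isomorphisms with $2^{\omega_1}=2^{\mathfrak c}$ branches, each branch giving a distinct automorphism. The upper bound $\card{\Aut(C)}\le 2^{\mathfrak c}$ is trivial from $\card{C}=\mathfrak c$.
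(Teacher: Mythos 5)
The paper does not give a self-contained proof here; it defers to the discrete-model-theory literature for the first bullet (Chang--Keisler or Hodges) and to \cite{FHS.II} and \cite{Farah-Shelah.RCQ} for the second and third. Your back-and-forth strategy is the right one, and your second bullet is essentially the standard argument. Two points in the other bullets need attention.

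For the first bullet, the invariant ``$\iota_\alpha$ and $j\rs A_\alpha$ realize the same \emph{quantifier-free} type'' is too weak to make the recursion close. At the successor step you want the countable quantifier-free type of $A_{\alpha+1}$ over $A_\alpha$ to be approximately finitely satisfiable over the parameter tuple $\iota_\alpha(D_\alpha)\subset C$; what you actually know from $j$ is that the corresponding existential formulas evaluate to $0$ at $j(D_\alpha)$ in $C^{\mathcal U}$, and to transfer that to $\iota_\alpha(D_\alpha)$ in $C$ you need that the two tuples have the same \emph{existential} (hence, cleanest: the same \emph{full}) type, using $C\prec C^{\mathcal U}$. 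The ``slide $j$'' variant you sketch can be made to work but requires care at limits, where you have to stitch together countably many automorphisms of $C^{\mathcal U}$; the type-matching invariant is cleaner. For the third bullet there is a genuine gap. You assert that ``the countable type describing the next extension has at least two distinct realizations in $C$'' and justify this via unitary conjugation, but for $ucu^*$ to realize the same type as $c$ over $\phi_\alpha(C_\alpha)$ and be distinct from it you need a unitary $u$ that commutes with everything in $\phi_\alpha(C_\alpha)$ yet not with $c$. This fails outright when $C$ is abelian --- for instance $C = C(\beta\NN\setminus\NN)$ under $\CH$ is countably saturated of density $\mathfrak c$, every unitary commutes with every candidate $c$, and yet the theorem is true. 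So the splitting must come from a different source (distinct realizations of the same nonprincipal type that are not inner-conjugate), and arranging $\omega_1$ independent binary splittings is precisely the content of \cite[Theorem 3.1]{Farah-Shelah.RCQ}. As written, your third bullet proves the conclusion only under the unstated hypothesis that such splittings are always available, which is what remains to be shown.
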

\begin{proof}
The proof of the first statement can be adopted from the discrete case, see \cite[Theorem~5.1.14]{ChangKeisler} or \cite[Theorem~10.1.6]{Hodges.MT}. For the second and third statement, see \cite[\S 4.4]{FHS.II} and \cite[Theorem~3.1]{Farah-Shelah.RCQ}.
\end{proof}

A formula $\phi(\bar x)$ is said $\er^+$-valued if for every $\Cstar$-algebra $A$ and $\bar a\in A^n$, $n$ being the arity of $\bar x$, then $\phi(\bar a)\in\er^+$. $\phi(\bar x,\bar y)$ is a $\sup$-formula if it is of the form $\phi(\bar x,\bar y)=\sup_{\bar x}\psi(\bar x,\bar y)$ where $\psi$ is a quantifier-free $\er^+$-valued formula. Equivalently it is possible to define $\sup$-sentences. If $T$ is a theory of $\Cstar$-algebras then $T_\forall=\{\phi\in T\mid \phi\text{ is a }\sup\text{-sentence}\}$ is known as the universal part of $T$.
\begin{defin}
A class of $\Cstar$-algebras is said universally axiomatizable if there is a set $S$ of $\sup$-sentences such that $A\in\mathcal C$ if and only if $S\subseteq \Th(A)_\forall$.
\end{defin}
The following is Proposition~2.4.4 in \cite{bourbaki}
\begin{proposition}
A class of $\Cstar$-algebra is universally axiomatizable if and only if it is closed under ultraproducts, ultrapowers and substructures.
\end{proposition}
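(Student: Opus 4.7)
The plan is to prove the two directions separately, with the backward direction doing the real work. The forward direction is straightforward: suppose $\mathcal C$ is axiomatized by a set $S$ of sup-sentences. By \L os's theorem for continuous logic, the value of any sentence in an ultraproduct is the ultralimit of its values in the factors, so every $\phi \in S$ still evaluates to $0$ in any ultraproduct (hence any ultrapower) of members of $\mathcal C$. For closure under substructures, fix $A \in \mathcal C$, a $\Cstar$-subalgebra $B \subseteq A$, and $\phi = \sup_{\bar x}\psi(\bar x) \in S$ with $\psi$ quantifier-free and $\er^+$-valued. Quantifier-free formulas are computed identically in $A$ and $B$ on tuples from $B$, so $\phi^B = \sup_{\bar b \in B}\psi^A(\bar b) \leq \phi^A = 0$, and $B \in \mathcal C$.

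For the backward direction, set $T = \bigcap_{A \in \mathcal C}\Th(A)_\forall$, the collection of sup-sentences satisfied by every member of $\mathcal C$; I will show that any $\Cstar$-algebra $B$ with $T \subseteq \Th(B)_\forall$ belongs to $\mathcal C$. The strategy is to embed $B$ as a $\Cstar$-subalgebra into an ultraproduct of elements of $\mathcal C$; closure under ultraproducts places the ambient algebra in $\mathcal C$, and closure under substructures then places $B$ in $\mathcal C$. The key technical step is a local approximation claim: for every $n$, every $k$-tuple $\bar b \in (B_{\leq n})^k$, every finite set $\Psi$ of quantifier-free formulas in $k$ variables, and every $\epsilon > 0$, there exist $A \in \mathcal C$ and $\bar a \in (A_{\leq n})^k$ with $|\psi^A(\bar a) - \psi^B(\bar b)| < \epsilon$ for all $\psi \in \Psi$.

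To prove this claim, suppose toward contradiction it fails for some $(\bar b,\Psi,\epsilon)$. Let $r_\psi = \psi^B(\bar b) \in \er$ be the fixed real constants and put $\theta(\bar x) = \max_{\psi \in \Psi}|\psi(\bar x) - r_\psi|$; this is still a quantifier-free $\er^+$-valued formula. The failure hypothesis says $\theta^A(\bar a) \geq \epsilon$ for every $A \in \mathcal C$ and every $\bar a \in (A_{\leq n})^k$, so the sup-sentence $\phi := \sup_{\bar x}\max\{0,\epsilon - \theta(\bar x)\}$, with the supremum restricted to the norm-$n$ ball, evaluates to $0$ on every member of $\mathcal C$ and hence lies in $T$. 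But $\theta^B(\bar b) = 0$, so $\phi^B \geq \epsilon$, contradicting $B \models T$.

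With the claim in hand, let $I$ be the directed set of all triples $(\bar b,\Psi,\epsilon)$ under natural refinement, choose witnesses $A_i \in \mathcal C$ and $\bar a^i$ for each $i \in I$, and extend the Fr\'echet-style filter of cofinal tails on $I$ to an ultrafilter $\mathcal U$. Then $A^* = \prod_{\mathcal U}A_i \in \mathcal C$ by ultraproduct closure. Each $b \in B$ appears as a coordinate of $\bar b^i$ cofinally often in $i$, so mapping $b$ to the class of the sequence recording the corresponding coordinate of $\bar a^i$ (arbitrary otherwise) yields a well-defined function $\iota \colon B \to A^*$. Including in each $\Psi$ the formulas needed to express $*$-polynomial norms and the norm itself, the approximation claim and \L os give $\psi^{A^*}(\iota(\bar b)) = \lim_{\mathcal U}\psi^{A_i}(\bar a^i) = \psi^B(\bar b)$ for every quantifier-free $\psi$; in particular $\iota$ is an isometric $\Cstar$-embedding. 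Thus $B$ sits as a $\Cstar$-subalgebra of $A^*$ and closure under substructures gives $B \in \mathcal C$. The main obstacle is the bookkeeping needed to track the norm-bounds through the argument (continuous-logic quantifiers range over bounded balls, so the index set must record the radius $n$ and $\Psi$ must include enough formulas to force $\iota$ to be multiplicative and isometric in the limit); once this is carried out the argument is a direct adaptation of the classical Tarski--\L os proof.
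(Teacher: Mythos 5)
The paper does not actually give a proof of this proposition; it simply cites it as Proposition~2.4.4 of the monograph \cite{bourbaki}, so there is no in-paper argument to compare against. Your proof is correct and is the standard Tarski--\L os preservation argument transported to continuous logic: the separating quantifier-free formula $\theta$, the sup-sentence $\sup_{\bar x}\max\{0,\epsilon-\theta(\bar x)\}$ forcing the contradiction, and the directed-ultrafilter construction embedding $B$ into an ultraproduct of members of $\mathcal{C}$, with the bounded-ball quantifiers and the inclusion of enough $^*$-polynomial norm formulas in $\Psi$ to make $\iota$ a $\Cstar$-embedding all handled properly. This matches the treatment one finds in the cited reference and in the general continuous model theory literature, so there is nothing to correct.
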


\begin{lemma}\label{lem:CHemb}
Let $A$ and $B$ be $\Cstar$-algebras. Then $\Th(B)_\forall\subseteq\Th(A)_\forall$ if and only if $A$ embeds into some ultrapower of $B$.
\end{lemma}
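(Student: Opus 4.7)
The backward direction $(\Leftarrow)$ is the easier one. Suppose $A$ embeds into some ultrapower $B^{\mathcal U}$. By \L os' theorem (quoted in the excerpt, $B \equiv B^{\mathcal U}$), one has $\Th(B)_\forall = \Th(B^{\mathcal U})_\forall$, so it suffices to verify that $\sup$-sentences pass from $B^{\mathcal U}$ to its $\Cstar$-subalgebra $A$. Given a $\sup$-sentence $\sigma = \sup_{\|\bar x\|\leq n} \psi(\bar x)$ with $\psi$ quantifier-free and $\mathbb{R}^+$-valued such that $\sigma^{B^{\mathcal U}} = 0$, we have $\psi(\bar c) = 0$ for every tuple $\bar c$ in the unit ball (and indeed any bounded ball) of $B^{\mathcal U}$. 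Since quantifier-free formulas are evaluated using only the $\Cstar$-operations, they are absolute between $A$ and $B^{\mathcal U}$; thus $\psi^A(\bar a) = 0$ for every admissible $\bar a$ in $A$, so $\sigma^A = 0$.

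For $(\Rightarrow)$ the plan is a standard diagram/compactness argument. Introduce a constant symbol $c_a$ for every $a \in A$, and let $D(A)$ be the set of conditions $\psi(c_{a_1}, \dots, c_{a_n}) = \psi^A(a_1, \dots, a_n)$ as $\psi$ ranges over quantifier-free formulas and $a_1, \dots, a_n$ over $A$. I claim $T := \Th(B) \cup D(A)$ is consistent. Granting this, by the standard existence theorem for continuous logic (or directly by forming an ultraproduct of the $\Cstar$-algebras that witness approximate satisfiability of finite subsets of $T$) one obtains a $\Cstar$-algebra $M \models \Th(B)$ together with an embedding $A \hookrightarrow M$ given by $a \mapsto c_a^M$. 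Since $M \equiv B$, the Keisler--Shelah theorem from the excerpt provides ultrafilters $\mathcal U, \mathcal V$ with $M^{\mathcal U} \cong B^{\mathcal V}$, and composing gives an embedding $A \hookrightarrow M \hookrightarrow M^{\mathcal U} \cong B^{\mathcal V}$, as required.

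The crux, and the step that uses the hypothesis $\Th(B)_\forall \subseteq \Th(A)_\forall$, is consistency of $T$. Suppose for contradiction $T$ is inconsistent; by compactness of continuous logic there exist finitely many formulas $\psi_1, \dots, \psi_k$ and reals $r_1, \dots, r_k$ and a fixed radius $n$ with $\psi_i^A(\bar a) = r_i$ for $\bar a \in A^m_{\leq n}$, such that in every model $N$ of $\Th(B)$ one has
\[
\inf_{\|\bar x\| \le n} \max_{i \le k} |\psi_i^N(\bar x) - r_i| \;\geq\; \delta
\]
for some $\delta > 0$ independent of $N$. But then the sentence
\[
\sigma \;:=\; \sup_{\|\bar x\| \le n} \max\!\left(0,\; \delta - \max_{i\le k}|\psi_i(\bar x) - r_i|\right)
\]
is a $\sup$-sentence (its matrix is $\mathbb R^+$-valued and quantifier-free), and it satisfies $\sigma^B = 0$ yet $\sigma^A \geq \delta > 0$ (witnessed by the tuple $\bar a$). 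This contradicts $\Th(B)_\forall \subseteq \Th(A)_\forall$. The main technical obstacle is just setting up the right continuous-logic formulation of ``finite inconsistency'' so that it really does produce a $\sup$-sentence of the required form; once that is in place, both directions are routine.
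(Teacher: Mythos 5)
The paper does not supply a proof of this lemma; it is stated as a known fact from continuous model theory (implicitly a companion to Proposition 2.4.4 of \cite{bourbaki}, which it precedes in \S\ref{ss:CH.consofCH}). So there is nothing in the source to compare against, and I evaluate your proposal on its own.

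Your proof is correct. The backward direction is handled exactly as it should be: \L o\'s gives $\Th(B)_\forall=\Th(B^{\mathcal U})_\forall$, and then you only need the observation that quantifier-free formulas are absolute for $\Cstar$-subalgebras (they are evaluated purely from the algebra operations and the norm), together with the $\er^+$-valuedness of the matrix to pass from $\sup=0$ to pointwise vanishing. The forward direction follows the standard diagram argument: you correctly identify the crux as consistency of $\Th(B)\cup D(A)$, and your reduction of a putative finite inconsistency to the single $\sup$-sentence
$\sigma=\sup_{\norm{\bar x}\le n}\max\bigl(0,\;\delta-\max_{i\le k}\lvert\psi_i(\bar x)-r_i\rvert\bigr)$
is exactly the right way to extract a universal sentence witnessing the failure of $\Th(B)_\forall\subseteq\Th(A)_\forall$: the matrix is quantifier-free and $\er^+$-valued, $\sigma^B=0$, and $\sigma^A\ge\delta$ via the tuple $\bar a$. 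One point worth making explicit (you gesture at it but it bears stating): the uniform $\delta>0$ exists because a finite set of conditions that is inconsistent cannot be approximately satisfiable — otherwise an ultraproduct of the approximate witnesses would realize it — so the failure of satisfiability is quantitative. Finally, routing through Keisler--Shelah to get from $M\equiv B$ and $A\hookrightarrow M$ to an embedding $A\hookrightarrow B^{\mathcal V}$ is legitimate and uses only the version of Keisler--Shelah quoted in the paper. The usual alternative, which avoids Keisler--Shelah, is to use the consistency argument to find, for each finite $F\subseteq A$, finite set of quantifier-free formulas, and $\e>0$, an approximate witness in $B$ itself, and then build the embedding into a single ultrapower $B^{\mathcal U}$ over the directed index set of such triples; this is slightly more economical but buys nothing essential here. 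Either route is standard, and your write-up is sound.
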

We will apply this to the class of MF algebras.
\begin{defin}
A separable $\Cstar$-algebra is MF if it embeds into $\prod M_n/\bigoplus M_n$.
\end{defin}
It is clear that the class of MF algebras is universally axiomatizable when restricted to its separable models. In fact, for a separable $A$, $A$ is MF if and only if $\Th(\prod M_n/\bigoplus M_n)_\forall\subseteq\Th(A)_\forall$. The class of MF algebras includes all AF algebras (and much more, in fact, see \cite[V.4.3.5]{Blackadar.OA})

\begin{theorem}\label{thm:CH.EmbMFCH}
Assume $\CH$ and let $A_n$ be (unital) MF separable algebras. Then $\prod A_n/\bigoplus_{\SI} A_n$ embeds (unitally) into $\prod M_{n}/\bigoplus M_{n}$ for every ideal $\SI\subseteq\mathcal P(\NN)$.
\end{theorem}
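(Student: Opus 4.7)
The plan is to apply Theorem~\ref{thm:CHandSat}. Under $\CH$ the algebra $\prod M_n/\bigoplus M_n$ is countably saturated (Theorem~\ref{thm:CH.RedProdSat}) with density character $\mathfrak c=\aleph_1$, while $\prod A_n/\bigoplus_\SI A_n$ has density character at most $\aleph_1$, so it suffices to produce an embedding of $\prod A_n/\bigoplus_\SI A_n$ into some ultrapower of $\prod M_n/\bigoplus M_n$.  By Lemma~\ref{lem:CHemb} this is equivalent to the inclusion
\[\Th\left(\prod M_n/\bigoplus M_n\right)_\forall\subseteq \Th\left(\prod A_n/\bigoplus_\SI A_n\right)_\forall,\]
and since a universal sentence is determined by its values on tuples which each live in a separable subalgebra, the inclusion reduces in turn to showing that every separable $\Cstar$-subalgebra $B\subseteq \prod A_n/\bigoplus_\SI A_n$ is MF.

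For this key step, pick a countable generating sequence $(b_k)_k$ for $B$, lift each $b_k$ to $(b_k^{(n)})_n\in\prod A_n$, and set $B_n = C^*(b_k^{(n)}\colon k\in\NN)\subseteq A_n$. As a separable subalgebra of the MF algebra $A_n$, each $B_n$ is itself MF, and hence carries a sequence of asymptotic matrix morphisms $\psi_n^{(m)}\colon B_n\to M_{d(n,m)}$. Given a finite set $F$ of $^*$-polynomials in the $b_k$ and $\epsilon>0$, one exploits the fact that norms in $\prod A_n/\bigoplus_\SI A_n$ are $\limsup_\SI$'s of the component norms: the set $X = \bigcup_{p\in F}\{n : \|p(b_k^{(n)})\|_{A_n}>\|p(b_k)\|+\epsilon\}$ is a finite union of $\SI$-sets, hence in $\SI$, and for each $p\in F$ the set $\{n\notin X : \|p(b_k^{(n)})\|_{A_n}\geq\|p(b_k)\|-\epsilon\}$ lies outside $\SI$ and so is nonempty. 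Pick $n_p$ in that set and then $m_p$ large enough that $\psi_{n_p}^{(m_p)}$ is $\epsilon$-multiplicative and $\epsilon$-isometric on the relevant evaluations $p(b_k^{(n_p)})$, and define
\[\phi_{F,\epsilon}(v) = \bigoplus_{p\in F}\psi_{n_p}^{(m_p)}\left(v(b_k^{(n_p)})\right)\]
for each $^*$-polynomial $v$ in the $b_k$. The choice of $X$ gives $\|\phi_{F,\epsilon}(p)\|\le\|p(b_k)\|+\epsilon$ and the choice of $n_p$ gives $\|\phi_{F,\epsilon}(p)\|\ge\|p(b_k)\|-\epsilon$ for every $p\in F$, while the direct-sum construction combined with the $\epsilon$-multiplicativity of each $\psi_{n_p}^{(m_p)}$ makes $\phi_{F,\epsilon}$ an $\epsilon$-$^*$-homomorphism on $F$. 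Assembling these maps over a sequence $F_1\subseteq F_2\subseteq\cdots$ with dense union in $B$ and $\epsilon_m\to 0$, and using continuity to push the construction from the free $^*$-algebra on the $b_k$ down to $B$, produces an embedding $B\hookrightarrow \prod_m M_{D_m}/\bigoplus M_{D_m}$, so $B$ is MF.

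The principal obstacle is the necessity of controlling the norms of all polynomials in $F$ simultaneously: a single index $n$ typically cannot satisfy the isometric lower bound $\|p(b_k^{(n)})\|\geq\|p(b_k)\|-\epsilon$ for every $p\in F$ at once, which is why the direct sum over $p\in F$ is essential, and the upper bound on $\NN\setminus X$ is precisely what prevents the blocks corresponding to other $p'\in F$ from inflating $\|\phi_{F,\epsilon}(p)\|$ past $\|p(b_k)\|+\epsilon$. Once every separable subalgebra of $\prod A_n/\bigoplus_\SI A_n$ is MF, the universal theory inclusion holds, and Lemma~\ref{lem:CHemb} together with Theorem~\ref{thm:CHandSat} yields the embedding into $\prod M_n/\bigoplus M_n$; the unital case follows by the same argument after passing to unital lifts.
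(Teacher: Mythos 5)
Your proof is correct and its overall architecture is exactly that of the paper: reduce the problem, via Lemma~\ref{lem:CHemb} and Theorem~\ref{thm:CHandSat} together with the countable saturation of $\prod M_n/\bigoplus M_n$ (Theorem~\ref{thm:CH.RedProdSat}) and $\CH$, to the single inclusion $\Th(\prod M_n/\bigoplus M_n)_\forall\subseteq\Th(\prod A_n/\bigoplus_\SI A_n)_\forall$. Where you diverge is in how that inclusion is established. The paper disposes of it in one line: a $\sup$-sentence $\phi$ satisfies $\phi^A=\lim_{n\in\SI}\phi^{A_n}$ on the reduced product (this is Ghasemi's $\bigcap\Th(A_n)_\forall\subseteq\Th(A)$), and each $A_n$ being MF gives $\Th(\prod M_n/\bigoplus M_n)_\forall\subseteq\Th(A_n)_\forall$. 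You instead first observe that a universal sentence evaluates on $A$ as the supremum over separable subalgebras, reducing the inclusion to the statement that every separable subalgebra of $\prod A_n/\bigoplus_\SI A_n$ is MF, and then verify this by hand: lifting generators, using MF-ness of the coordinate subalgebras $B_n$, and assembling block-diagonal approximately multiplicative, approximately isometric maps, with the $\limsup_\SI$ formula for the quotient norm controlling the upper and lower norm bounds. Your computation that $\{n:\|p(b_k^{(n)})\|>\|p(b_k)\|+\epsilon\}\in\SI$ while $\{n:\|p(b_k^{(n)})\|\ge\|p(b_k)\|-\epsilon\}\notin\SI$ is precisely the concrete content of the paper's semantic observation, so nothing is lost; what the explicit route buys is self-containedness (no appeal to Ghasemi's theorem, only to the standard local characterization of MF algebras), at the cost of several paragraphs of bookkeeping that the model-theoretic formulation makes automatic.
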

\begin{proof}
Let $A=\prod A_n/\bigoplus_{\SI} A_n$.
That
\[
\bigcap \Th(A_n)_\forall\subseteq\Th(A)
\]
was proved by Ghasemi in \cite{Ghasemi.FFV} (or, simply, note that if $\phi$ is a $\forall$-sentence then $\phi^A=\lim_{n\in\SI}\phi^{A_n}$). Since for all $n$  we have that $A_n$ is MF, then
\[
\Th(\prod M_n/\bigoplus M_n)_\forall\subseteq\Th(A_n)_\forall.
\]
Since $\prod M_n/\bigoplus M_n$ is countably saturated (see Theorem~\ref{thm:CH.RedProdSat}) and we have $\CH$, the thesis follows from Lemma~\ref{lem:CHemb} and Theorem~\ref{thm:CHandSat}
\end{proof}

It is now known if there is an algebra which is only countably degree-1 saturated but fails to have $2^\mathfrak c$-many automorphisms in presence of $\CH$. Countable saturation of the algebra $C(\beta[0,1)\setminus[0,1))$ was used in \cite{Farah-Shelah.RCQ} to prove that under $\CH$ the space $\beta[0,1)\setminus[0,1)$ has nontrivial homeomorphisms, a result that was previously proved, with different methods, by Yu (see \cite{KP.STCC}).

\section{Instances of saturation}\label{s:CH.instances}
In this section we prove that certain algebras carry some degree of saturation. We will focus first on quotients of $\II_\infty$-factors, and later on abelian $\Cstar$-algebras.

\subsection{Saturation of quotients of factors}\label{ss:Breuerideal}

The purpose of what follows is to extend Theorem 1.4 in \cite{farah2011countable} and expanding the class of quotients which are countably degree-$1$ saturated. Following the motivating example of the unique ideal of a $\II_\infty$-factor with separable predual, we provide a positive results for such quotient. It should be noted that such ideals are not $\sigma$-unital (see \S\ref{sss:CstarPrel.breuer}), and therefore Theorem~\ref{thm:cd1sFH} doesn't apply here. The proof of the following relies heavily on Lemma~\ref{lem:approxid1EV}.

\begin{theorem}\label{thm:FactorCtbleSat}
Let $M$ be a unital $\Cstar$-algebra, and let $A \subseteq M$ be an essential ideal.  Suppose that there is an increasing sequence $(g_n)_{n\in\en} \subset A$ of positive elements whose supremum is $1_M$, and suppose that any increasing uniformly bounded sequence converges in $M$.  Then $M/A$ is countably degree-$1$ saturated.
\end{theorem}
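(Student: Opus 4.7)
The plan is to mimic the Farah–Hart proof that coronas of $\sigma$-unital $\Cstar$-algebras are countably degree-$1$ saturated, using Lemma~\ref{lem:approxid1EV} as the ``custom-built'' approximate identity in place of the one that is available in the $\sigma$-unital setting. The hypotheses of the theorem are exactly what is needed to invoke Lemma~\ref{lem:approxid1EV}, so the construction transfers to this more general setting.

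So assume we are given a countable degree-$1$ type $\{\varphi_n(x)=r_n\}_{n\in\en}$ with parameters from the unit ball of $M/A$, which is approximately finitely satisfiable. After lifting all parameters to the unit ball of $M$ and writing each $\varphi_n$ in its standard form $\varphi_n(x)=\norm{P_n(x)}$ with
\[
P_n(x)=\alpha_n+\sum_{i\in I_n}a_{n,i}xb_{n,i}+\sum_{j\in J_n}c_{n,j}x^*d_{n,j},
\]
the first step is to pick, for each $n$, a contraction $y_n\in M_{\leq 1}$ whose image approximately satisfies the first $n$ conditions up to error $2^{-n}$. I would let $F_n\subseteq M_{\leq 1}$ be an enlarging finite set containing all the lifted parameters of $\varphi_1,\dots,\varphi_n$ together with $y_1,\dots,y_n$, and set $\epsilon_n=2^{-n}$. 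Applying Lemma~\ref{lem:approxid1EV} to $(F_n,\epsilon_n)$ produces an approximate identity $(e_n)\subset A$ with the functions $f_n=(e_{n+1}-e_n)^{1/2}$ enjoying the decomposition-of-unity properties \ref{cond1}–\ref{cond5} and the summation properties \ref{cond6}–\ref{cond8}.

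The candidate realizer is $x=\sum_n f_ny_nf_n$; by \ref{cond6} and \ref{cond7}, this converges $A$-strictly in $M$ and has norm at most $1$. The heart of the argument is the identity
\[
P_n(x)\;\equiv\;\sum_{k\in\en} f_k\,P_n(y_k)\,f_k \pmod{A},
\]
which uses three facts: (i) $\sum f_k^2=1$ from \ref{cond5} (so the scalar term $\alpha_n$ behaves correctly); (ii) $f_k$ almost commutes with every parameter of $P_n$ by \ref{cond1}, once $k$ is large enough that those parameters lie in $F_k$, and the resulting commutators form an $A$-strictly summable series in $A$; and (iii) the linear and antilinear ``sandwich'' terms $a_{n,i}xb_{n,i}$ and $c_{n,j}x^*d_{n,j}$ distribute over the sum defining $x$ and absorb $f_k$ through the coefficients modulo $A$. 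Granted this identity, conditions \ref{cond4}, \ref{cond7} and \ref{cond8} (applied with $x_k=P_n(y_k)$, whose norms converge by construction, so the equality of limsups required by \ref{cond8} holds) give $\norm{\pi(P_n(x))}=\lim_k\norm{\pi(P_n(y_k))}=r_n$.

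The main obstacle is justifying the congruence $P_n(x)\equiv\sum_k f_kP_n(y_k)f_k\pmod{A}$ cleanly: one must track that each commutator $[f_k,a_{n,i}]$ and $[f_k,b_{n,i}]$ lies in $A$ with norm at most $\epsilon_k$, that the cross terms $f_k y_k f_k \cdot b_{n,i}$ decompose appropriately, and that the resulting tail sums are $A$-strictly convergent elements of $A$. This is where the full strength of Lemma~\ref{lem:approxid1EV}—particularly the quasicentrality in \ref{cond1}, the decay of products in \ref{cond2}--\ref{cond3}, and the resolution of unity in \ref{cond5}—is used in tandem. Once this bookkeeping is in place, applying \ref{cond4}/\ref{cond8} to read off both the upper and lower bounds for $\norm{\pi(P_n(x))}$ closes the argument and shows that $\pi(x)$ realizes the type, proving countable degree-$1$ saturation of $M/A$.
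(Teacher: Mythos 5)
Your proposal follows the same route as the paper's proof: use Lemma~\ref{lem:approxid1EV} to extract the decomposition of unity $(f_n)$ from the custom-built approximate identity, take the candidate realizer $x=\sum_n f_ny_nf_n$, establish the congruence $P_n(x)\equiv\sum_k f_kP_n(y_k)f_k\pmod A$, and read off $\norm{\pi(P_n(x))}=r_n$ from conditions \ref{cond4}, \ref{cond7}, and \ref{cond8}. Two specific pieces of the bookkeeping, however, would fail as currently set up.

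First, your $F_n$ contains the lifted parameters and the partial solutions $y_1,\dots,y_n$ but not the polynomial evaluations. Conditions~\ref{cond0} and~\ref{cond4} of Lemma~\ref{lem:approxid1EV} apply only to $a\in F_n$, yet you want to apply them with $a=P_n(y_k)$; you must therefore also put each lifted value $Q_k(y_i)$ (for $k\leq i\leq n$) into $F_n$, exactly as the paper does. Without this, the lower-bound estimate $\norm{f_kQ_n(y_k)f_k}\geq\norm{\pi(Q_n(y_k))}-\epsilon_k$ and the approximation $\norm{(1-e_{k-2})Q_n(y_k)(1-e_{k-2})}\approx\norm{\pi(Q_n(y_k))}$ are simply unavailable. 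Second, you assert that the hypothesis of condition~\ref{cond8}, namely $\limsup_k\norm{x_k}=\limsup_k\norm{x_kf_k^2}$, holds ``by construction''. It does not follow merely from $\norm{\pi(Q_n(y_k))}\to r_n$. The paper in fact works with $a_k=(1-e_{k-2})Q_n(\overline{x}_{n,k})(1-e_{k-2})$ rather than $Q_n(\overline{x}_{n,k})$ itself (passing between the two using \ref{cond0} and \ref{cond2a}), and then verifies the $\limsup$ equality by a short but genuine argument combining \ref{cond2a} with the lower bound from \ref{cond4}. This is the lower-bound half of the computation and needs to be spelled out; it is where the interaction with condition \ref{cond8} actually becomes nontrivial. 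A minor further point: the type may involve countably many free variables $x_0,x_1,\dots$; your write-up fixes a single variable $x$, and the general case requires partial solutions $x_{k,i}$ for each variable $x_k$ with realizers $y_k=\sum_{i\geq k}f_ix_{k,i}f_i$, again as in the paper.
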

\begin{proof}
Let $\pi \colon M \to M/A$ be the quotient map.  Let $(P_n(\overline x))_{n \in \en}$ be a collection of $^*$-polynomial of degree $1$ with coefficients in $M/A$, and for each $n \in \en$ let $r_n\in\er^+$.  Without loss of generality, reordering the polynomials and eventually adding redundancy if necessary, we can suppose that the only variables occurring in $P_n$ are $x_0, \ldots, x_n$.

Suppose that the set of conditions $\{\norm{P_n(x_0,\ldots,x_n)} = r_n \colon n \in \en\}$ is approximately finitely satisfiable, in the sense of Definition \ref{defin:Sat.Sat}.  As we noted immediately after Definition \ref{defin:Sat.Sat}, it is sufficient to assume that the partial solutions are all in $(M/A)_{\leq 1}$, and we must find a total solution also in $(M/A)_{\leq 1}$.  So we have partial solutions
\[\{\pi(x_{k,i})\}_{k\leq i}\subseteq (M/A)_{\leq1}\]
such that for all $i\in\en$ and $n\leq i$ we have
\[\norm{P_n(\pi(x_{0,i}),\ldots,\pi(x_{n,i}))}\in (r_n)_{1/i}.\] 

For each $n \in \en$, let $Q_n(x_0,\ldots,x_n)$ be a polynomial whose coefficients are liftings of the coefficients of $P_n$ to $M$, and let $F_n$ be a finite set that contains
\begin{itemize}
\item all the coefficients of $Q_k$, for $k\leq n$
\item $x_{k,i}, x_{k,i}^*$ for $k\leq i\leq n$.
\item $Q_k(x_{0,i},\ldots,x_{k,i})$ for $k\leq i\leq n$.
\end{itemize}
Let $\epsilon_n = 4^{-n}$.  Find sequences $(e_n)_{n \in \en}$ and $(f_n)_{n \in \en}$ satisfying the conclusion of Lemma ~\ref{lem:approxid1EV} for these choices of $(F_n)_{n \in \en}$ and $(\epsilon_n)_{n \in \en}$.

Let $\overline x_{n,i}=(x_{0,i},\ldots,x_{n,i})$, $y_k=\sum_{i\geq k} f_ix_{k,i}f_i$, $\overline y_n=(y_0,\ldots,y_n)$ and $\overline z_n=\pi(\overline y_n)$.  Fix $n \in \en$; we will prove that $\norm{P_n(\overline z_n)}=r_n$.

First, since $x_{k,i}\in M_{\leq 1}$, as a consequence of condition \ref{cond7} of Lemma ~\ref{lem:approxid1EV}, we have that $y_i\in M_{\leq 1}$   for all $i$. 
 Moreover, since $Q_n$ is a polynomial whose coefficients are lifting of those of $P_n$ we have 
\[\norm{P_n(\overline z_n)}=\norm{\pi(Q_n(\overline y_n))}.\]
We claim that 
\[Q_n(\overline y_n)-\sum_{j \in \en} f_jQ_n(\overline x_{n,j})f_j\in A.\] 
It is enough to show that 
\[\sum_{j \in \en} f_jax_{k,j}bf_j-\sum_{j \in \en} af_jx_{k,j}f_jb\in A,\] 
where $a,b$ are coefficients of a monomial in $Q_n$,  since $Q_n$ is the sum of finitely many of these elements (and the proof for monomials of the form $ax_{k,j}^*b$ is essentially the same as the one for $ax_{k, j}b$). 

By construction we have $a,b\in F_n$, and hence by condition \ref{cond1} of Lemma ~\ref{lem:approxid1EV}, for $j$ sufficiently large, 
\[\forall x\in M_{\leq 1}\,(\norm{af_jxf_jb-f_jaxbf_j}\leq 2^{-j}(\norm{a}+\norm{b})).\] 
Therefore $\sum_{j \in \en} (f_jax_{k,j}bf_j-af_jx_{k,j}f_jb)$ is a series of elements in $A$ that is converging in norm, which implies that the claim is satisfied. In particular, 
\[\norm{P_n(\overline z_n)}=\norm{\pi\left(\sum_{j \in \en} f_jQ_n(\overline x_{n,j})f_j\right)}.\]
For each $j \geq 2$, let $a_{j}=(1-e_{j-2})Q_n(\overline x_{n,j})(1-e_{j-2})$. By condition \ref{cond0} of Lemma \ref{lem:approxid1EV}, the fact that $Q_n(\overline x_{n,j})\in F_n$, and the original choice of the $x_{n, j}$'s, we have that $\limsup \norm{a_j}=r_n$.
Similarly to the above, but this time using condition \ref{cond2a} of Lemma \ref{lem:approxid1EV}, we have
\[\norm{\pi\left(\sum_{j \in \en} f_jQ_n(\overline x_{n,j})f_j\right)}=\norm{\pi\left(\sum_{j \in \en} f_ja_jf_j\right)}\leq \norm{ \sum_{j \in \en} f_ja_jf_j}.\]

Using condition \ref{cond7} of Lemma ~\ref{lem:approxid1EV} and the fact that $Q_n(\overline x_{n,j})\in F_j$ we have that \[\norm{\sum_{j \in \en} f_ja_jf_j}\leq \limsup_{j\to\infty} \norm{a_j}=r_n.\]
Combining the calculations so far, we have shown
\[\norm{P_n(\overline z_n)} = \norm{\pi\left(\sum_{j\in\en}f_jQ_n(\overline{x}_{n, j})f_j\right)} = \norm{\pi\left(\sum_{j \in \en}f_ja_jf_j\right)} \leq r_n.\]
Since $Q_n(\overline{x}_{n, j}) \in F_j$ for all $j$, condition \ref{cond4} of Lemma \ref{lem:approxid1EV} implies
\[r_n \leq \limsup_{j\to\infty}\norm{f_jQ_n(\overline{x}_{n,j})f_j}.\]
It now remains to prove that
\[\limsup_{j\to\infty} \norm{f_ja_jf_j}\leq\norm{\pi\left(\sum_{j\in\en} f_ja_jf_j\right)}\]
so that we will have
\begin{align*}
r_n &\leq \limsup_{j\to\infty} \norm{ f_jQ_n(\overline x_{n,j})f_j} \\
 &=\limsup_{j\to\infty} \norm{f_ja_jf_j}\\
 &\leq \norm{\pi\left(\sum_{j\in\en} f_ja_jf_j\right)} \\
 &= \norm{P_n(\overline{z_n})}.
\end{align*}
We have $Q_n(\overline x_{n,j})\in F_j$, so by condition \ref{cond1} of Lemma \ref{lem:approxid1EV}, we have that 
\[\limsup_{j\to\infty} \norm{f_ja_jf_j}=\limsup_{j\to\infty} \norm{a_jf_j^2},\]
and hence
\[\sum_{j\in\en} f_ja_jf_j-\sum_{j\in\en} a_jf_j^2\in A.\]
The final required claim will then follow by condition \ref{cond8} of Lemma \ref{lem:approxid1EV}, once we verify \[\limsup_{j\to\infty}\norm{a_jf_j^2} = \limsup_{j\to\infty}\norm{a_j}.\]
We clearly have that for all $j$,
\[\norm{a_jf_j^2} \leq \norm{a_j}.\]
On the other hand, 
\begin{align*}
\limsup_{j \to \infty}\norm{a_jf_j^2} &= \limsup_{j\to\infty}\norm{f_ja_jf_j} \\
 &= \limsup_{j\to\infty}\norm{f_jQ_n(\overline{x}_{n, j})f_j} &\text{by condition \ref{cond2a}} \\
 &\geq r_n \\
 &= \limsup_{j\to\infty} \norm{a_j}.
\end{align*}\qedhere
\end{proof}
This proof followed the same strategy as \cite[Theorem 1.4]{farah2011countable}, fixing a small technical error that one can find there. Specifically, our proof  avoids their equation (10) on p. 14, which is incorrect.

An immediate corollary is the following: 
\begin{corollary}
Let $N$ be a $\II_1$ factor, $H$ a separable Hilbert space and $M=N\,\overline\otimes\,\mathcal B(H)$ be the associated $\II_\infty$ factor and $\mathcal K_M$ be its unique two-sided closed ideal.  Then $M/\mc{K}_M$ is countably degree-$1$ saturated.  In particular, this is the case when $N = \mc{R}$, the hyperfinite $\II_{1}$ factor.
\end{corollary}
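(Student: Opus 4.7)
The plan is to deduce this directly from Theorem~\ref{thm:FactorCtbleSat} applied with the pair $(M,\mathcal{K}_M)$. Thus the task reduces to verifying the three hypotheses of that theorem: (i) $\mathcal{K}_M$ is an essential ideal of $M$; (ii) there is an increasing sequence $(g_n)_{n\in\en}\subset \mathcal{K}_M$ of positive contractions whose supremum is $1_M$; and (iii) every bounded increasing sequence in $M$ converges in $M$ (in the $\mathcal{K}_M$-strict sense needed in the proof of Theorem~\ref{thm:FactorCtbleSat}, which actually enters through Lemma~\ref{lem:approxid1EV}).

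For (i), essentiality is immediate from the factoriality and semifiniteness of $M$: any nonzero ideal $J\subseteq M$ contains a nonzero projection $q$; since $M$ is semifinite, $q$ dominates a nonzero finite projection $q'$; then $q'\in\mathcal{K}_M\cap J$, so $\mathcal{K}_M\cap J\neq 0$. For (ii), since $M$ has separable predual it is $\sigma$-finite, and semifiniteness together with $\sigma$-finiteness allows one to produce an increasing sequence of finite projections $(p_n)$ with $p_n\nearrow 1_M$ in the strong operator topology. Each $p_n\in\mathcal{K}_M$ by definition of the Breuer-type ideal, and $(p_n)$ is an increasing sequence of positive contractions with supremum $1_M$; take $g_n=p_n$.

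For (iii), the key point is the identification $\mathcal{M}(\mathcal{K}_M)=M$ recalled in \S\ref{sss:CstarPrel.breuer}. Equivalently, $M$ is $\mathcal{K}_M$-strictly complete: the $\mathcal{K}_M$-strict topology on $M$ agrees with the restriction to $M$ of the strict topology on its own multiplier, so bounded $\mathcal{K}_M$-strictly Cauchy nets have limits in $M$. In particular bounded increasing nets, whose SOT-limits lie in $M$, converge $\mathcal{K}_M$-strictly in $M$, which is exactly what is used in the proof of Lemma~\ref{lem:approxid1EV}.

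Having verified (i)--(iii), Theorem~\ref{thm:FactorCtbleSat} applies and yields that $M/\mathcal{K}_M$ is countably degree-$1$ saturated. The only place where something could conceivably go wrong is (iii), since one must be sure that the strict completeness used inside Lemma~\ref{lem:approxid1EV} is genuinely provided by $M=\mathcal{M}(\mathcal{K}_M)$; but this is exactly the content of the multiplier identification, so the argument goes through. The case $N=\mathcal{R}$ is then a special case, with $\mathcal{K}_M$ being the classical Breuer ideal.
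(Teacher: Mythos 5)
Your proposal is correct and matches the paper's (implicit) argument: the paper states this as an immediate corollary of Theorem~\ref{thm:FactorCtbleSat}, and your verification of the three hypotheses — essentiality via semifiniteness, an increasing sequence of finite projections converging to $1_M$, and convergence of bounded increasing sequences (which holds simply because $M$ is a von Neumann algebra, so such sequences have suprema) — is exactly what is needed.
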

\noindent

\begin{corollary}\label{cor:vnAsigmafinite}
Let $M$ be a von Neumann algebra with a  $\sigma$-finite infinite trace, and let $A$ be the ideal generated by the finite trace projections. Then $M/A$ is countably degree-$1$ saturated.
\end{corollary}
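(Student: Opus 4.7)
The plan is simply to verify that the hypotheses of Theorem~\ref{thm:FactorCtbleSat} are satisfied by the pair $A \subseteq M$, and then invoke that theorem directly. There are three things to check: (i) $A$ is an essential ideal of $M$; (ii) there is an increasing sequence $(g_n) \subset A$ of positive elements whose supremum is $1_M$; (iii) every increasing uniformly bounded sequence in $M$ converges in $M$.

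Condition (iii) is automatic because $M$ is a von Neumann algebra, hence monotone complete: any bounded increasing net has a SOT-limit which is also its supremum in $M$. For condition (ii), the key is that $\tau$ being $\sigma$-finite means exactly that there is a countable family of mutually orthogonal finite-trace projections $\{p_n\}_{n \in \en}$ with $\sum_{n} p_n = 1_M$ in the SOT. Setting $g_n = \sum_{k \leq n} p_k$, each $g_n$ is a finite sum of finite-trace projections, hence $g_n \in A$. The sequence is increasing of positive contractions, and $\sup_n g_n = 1_M$ in $M$ because the SOT-limit of an increasing bounded sequence agrees with its supremum.

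For condition (i), $A$ is an ideal by construction, so I only need to verify essentiality. Since $\tau$ is a (semifinite) trace, every nonzero projection in $M$ dominates a nonzero projection of finite trace. Given $b \in M$ with $b \neq 0$, let $e$ be the support projection of $b^*b$. Pick a nonzero finite-trace subprojection $q \leq e$; then $q \in A$ and $bq \neq 0$, so $bA \neq 0$. The symmetric argument gives $Ab \neq 0$. By the characterization of essential ideals in Proposition from \S\ref{ss:idealscstar}, this shows $A$ is essential in $M$.

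The verification is routine and I do not expect any serious obstacle: $\sigma$-finiteness of $\tau$ was precisely designed to give a countable approximate unit out of finite-trace projections, and semifiniteness of the trace is exactly what makes $A$ essential. Once (i)--(iii) are in place, Theorem~\ref{thm:FactorCtbleSat} applies verbatim and yields that $M/A$ is countably degree-$1$ saturated.
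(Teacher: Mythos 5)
Your proof is correct and is exactly the intended argument: the paper states this as an immediate corollary of Theorem~\ref{thm:FactorCtbleSat} without spelling out the verification, and your three checks (essentiality via $\sigma$-finiteness, the sequence $g_n=\sum_{k\le n}p_k$ of finite-trace projections with supremum $1_M$, and monotone completeness of a von Neumann algebra) are precisely what is needed. No gaps.
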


\subsection{Saturation of abelian algebras}\label{ss:Satforabelian}

Here we consider abelian $\Cstar$-algebras, and in particular the saturation properties of real rank zero abelian $\Cstar$-algebras. We show how saturation of abelian $\Cstar$-algebras is related to the classical notion of saturation for Boolean algebras.  We begin by recalling some well-known definitions and properties.

A topological space $X$ is said \emph{sub-Stonean} if any pair of disjoint open $\sigma$-compact sets has disjoint closures; if, in addition, those closures are open and compact, $X$ is said \emph{Rickart}. A space $X$ is said to be \emph{totally disconnected} if the only connected components of $X$ are singletons and \emph{$0$-dimensional} if $X$ admits a basis of clopen sets.

A topological space $X$ such that every collection of disjoint nonempty open subsets of $X$ is countable is said to carry the \emph{countable chain condition}.

Note that for a compact space being totally disconnected is the same as being $0$-dimensional, and this corresponds to the fact that $C(X)$ has real rank zero. Moreover any compact Rickart space is $0$-dimensional and sub-Stonean, while the converse is false (take for example $\beta\en\setminus\en$).  The space $X$ carries the countable chain condition if and only if $C(X)$ has few orthogonal positive elements (see Definition~\ref{def:sccc}).

\begin{remark}
Let $X$ be a compact $0$-dimensional space, $CL(X)$ its algebra of clopen sets. For a Boolean algebra $B$, let $S(B)$ its Stone space, i.e., the space of all ultrafilters in $B$.

Note that if two $0$-dimensional spaces $X$ and $Y$ are homeomorphic then $CL(X)\cong CL(Y)$ and conversely, we have that $CL(X)\cong CL(Y)$ implies $X$ and $Y$ are homeomorphic to $S(CL(X))$.

Moreover, if $f\colon X\to Y$ is a continuous map of compact $0$-dimensional spaces we have that $\phi_f\colon CL(Y)\to CL(X)$ defined as $\phi_f(C)=f^{-1}[C]$ is an homomorphism of Boolean algebras.  Conversely, for any homomorphism of Boolean algebras $\phi\colon CL(Y)\to CL(X)$ we can define a continuous map $f_\phi\colon X\to Y$. If $f$ is injective, $\phi_f$ is surjective. If $f$ is onto $\phi_f$ is $1$-to-$1$ and same relations hold for $\phi$ and $f_\phi$.
\end{remark}

A Boolean algebra is atomless if $\forall a\neq 0$ there is $b$ such that $0<b<a$. For $Y,Z\subset B$ we say that $Y<Z$ if $\forall (y,z)\in Y\times Z$ we have $y< z$. Note that, for a $0$-dimensional space, $CL(X)$ is atomless if and only if $X$ does not have isolated points. In particular \[\abs{\{a\in CL(X)\colon a\text{ is an atom}\}}=\abs{\{x\in X\colon x\text{ is isolated}\}}.\]
\begin{defin}
Let $\kappa$ be an uncountable cardinal. A Boolean algebra $B$ is said to be \emph{$\kappa$-saturated} if every finitely satisfiable type of cardinality $<\kappa$ in the first-order language of Boolean algebras is satisfiable.
\end{defin}

For atomless Boolean algebras this model-theoretic saturation can be equivalently rephrased in terms of increasing and decreasing chains:

\begin{theorem}[{\cite[Theorem 2.7]{Mija79}}]\label{thm:CHSat.satbooleanalgebra}
Let $B$ be an atomless Boolean algebra, and $\kappa$ an uncountable cardinal.  Then $B$ is $\kappa$-saturated if and only if for every directed $Y<Z$ such that $\abs{Y}+\abs{Z}<\kappa$ there is $c\in B$ such that $Y<c<Z$.
\end{theorem}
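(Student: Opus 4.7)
For the forward implication, assuming $B$ is $\kappa$-saturated and given directed $Y < Z$ with $\abs{Y}+\abs{Z}<\kappa$, I would form the $1$-type
\[
p(x) = \{y \leq x : y \in Y\} \cup \{x \leq z : z \in Z\} \cup \{x \neq y : y \in Y\} \cup \{x \neq z : z \in Z\}.
\]
Finite satisfiability would follow from directedness and atomlessness: given finite $Y_0 \subseteq Y$ and $Z_0 \subseteq Z$, there are $y^* \in Y$ above $Y_0$ and $z^* \in Z$ below $Z_0$, hence $y^* < z^*$, and atomlessness of $B$ produces a strict interpolant $c \in B$ with $y^* < c < z^*$; this $c$ realizes the finite fragment of $p$. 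Then $\kappa$-saturation hands back a global realizer of $p$, which is precisely the element strictly between $Y$ and $Z$ demanded by the chain condition.

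For the reverse implication, given a finitely satisfiable $1$-type $p(x)$ with parameter set $A$ of cardinality $<\kappa$, I would invoke quantifier elimination for atomless Boolean algebras together with Boolean normal form to rewrite every atomic formula in $p$ as a conjunction of formulas of the shape $s \leq x$ or $x \leq s$, with $s \in B$. Letting $Y$ be the collection of finite joins of elements $s$ with ``$s \leq x$'' in $p$, and $Z$ the collection of finite meets of elements $s$ with ``$x \leq s$'' in $p$, both sets are directed and of cardinality $<\kappa$. Finite satisfiability entails $y \leq z$ for $y \in Y$ and $z \in Z$, with strict inequality unless $p$ is already realized by some element of $B$. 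Applying the chain condition then yields $c \in B$ with $Y < c < Z$, which realizes every positive atomic formula of $p$ by construction.

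The main obstacle is the negated atomic formulas: when $p$ forces $s \not\leq x$ for some $s$, the bare interpolant $c$ with $Y < c < Z$ need not satisfy $s \not\leq c$. To handle this, I would refine the construction by iterating over finite $A_0 \subseteq A$ and, for each atom $r$ of the finite subalgebra $B\langle A_0 \rangle$, classifying $r$ as being (a) forced below $x$ by $p$, (b) forced disjoint from $x$ by $p$, or (c) properly split by $x$. Atoms of type (a) would be absorbed into $Y$ and complements of atoms of type (b) into $Z$; for atoms of type (c), I would apply the chain condition localized to the interval $[0,r]$ of $B$, which remains atomless and directly inherits the chain condition from $B$, to select a proper subelement as $c \wedge r$. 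The coherence of the case-assignment under refinement of $A_0$ should let these local interpolants be patched into a single $c \in B$ realizing all of $p$, completing the argument.
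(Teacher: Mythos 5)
The paper does not prove this statement at all --- it is quoted from Mijajlovi\'c's paper --- so your argument has to stand on its own. Your forward direction does: the type $\{y\le x,\ x\le z,\ x\neq y,\ x\neq z\}$ has fewer than $\kappa$ formulas, directedness reduces a finite fragment to a single pair $y^*<z^*$, and one application of atomlessness to $z^*\wedge\neg y^*$ produces a strict interpolant, so saturation fills the cut (modulo the degenerate convention that $0\notin Z$ and $1\notin Y$).

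The reverse direction, however, has a genuine gap exactly where you locate the difficulty, and that step is the entire content of this implication. Two issues. First, an atomic formula $t(x,\bar a)=0$ unwinds to a \emph{conjunction} ``$x\wedge b=0$ and $c\le x$'', so its negation is a \emph{disjunction}; to classify atoms of $B\langle A_0\rangle$ as (a)/(b)/(c) ``forced by $p$'' you must first pass to a complete type over the subalgebra generated by the parameters, and you need completeness and quantifier elimination of the theory of atomless Boolean algebras to see that the completion remains finitely satisfiable in $B$ itself. Second, and fatally for the argument as written: the atoms of $B\langle A_0\rangle$ refine as $A_0$ grows. An atom $r$ of type (c) at one stage decomposes at a later stage into fragments $r_1,\dots,r_k$ whose individual fates the complete type dictates, so the value $c\wedge r$ is constrained to equal the join of the values $c\wedge r_i$ chosen later. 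Your local interpolants are produced by separate, independent applications of the cut condition inside each $[0,r]$ at each finite stage, so nothing forces these choices to cohere along refinement; the sentence ``the coherence of the case-assignment \dots should let these local interpolants be patched'' names the problem rather than solving it. The standard repair is to apply the cut condition only once, at the end: by recursion of length $|A|<\kappa$ build an increasing family $(y_\alpha)$ and a decreasing family $(z_\alpha)$, consistent at every stage with the cut coming from the positive formulas (consistency at each step being exactly finite satisfiability of $p$), so that every element $a$ the type requires to be split eventually satisfies $y_\alpha\wedge a\neq 0$ and $\neg z_\alpha\wedge a\neq 0$; the enlarged cut still has size $<\kappa$, and any $c$ filling it realizes $p$. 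Without some mechanism of this kind your proof of the reverse implication is incomplete.
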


We are ready to study which kind of topological properties the compact Hausdorff space $X$ has to carry in order to have some degree of saturation of the metric structure $C(X)$ and, conversely, to establish properties that are incompatible with the weakest degree of saturation of the corresponding algebra.  From now on $X$ will denote an infinite compact Hausdorff space (note that if $X$ is finite then $C(X)\cong\ce^n$ for some $n$, and so $C(X)$ is countably saturated).

The first limiting condition for the weakest degree of saturation are given by the following Lemma:
\begin{lemma}\label{lem:CHSat.abcondition}
Let $X$ be an infinite compact Hausdorff space, and suppose that $X$ satisfies one of the following conditions:
\begin{enumerate}[label=(\roman*)]
\item\label{abcond1} $X$ has the countable chain condition;
\item\label{abcond2} $X$ is separable;
\item\label{abcond3} $X$ is metrizable;
\item\label{abcond3a} $X$ is homeomorphic to a product of two infinite compact Hausdorff spaces;
\item\label{abcond4} $X$ is not sub-Stonean;
\item\label{abcond5} $X$ is Rickart.
\end{enumerate}
Then $C(X)$ is not countably degree-$1$ saturated.
\end{lemma}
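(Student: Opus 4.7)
The plan is to handle each case by exhibiting a countable degree-$1$ type over $C(X)_{\leq 1}$ that is finitely satisfiable but not realized. Cases (i)--(iii) reduce uniformly to Lemma~\ref{lem:CHSat.scccbad}: a pairwise orthogonal family of positive norm-$1$ elements of $C(X)$ corresponds, via cozero sets (with converse by Urysohn), to a pairwise disjoint family of nonempty open subsets of $X$, so $X$ having the countable chain condition is equivalent to $C(X)$ having few orthogonal positive elements. Clause (i) is thus immediate; for (ii), a countable dense subset meets every nonempty open set, so separable spaces are ccc; for (iii), compact metrizable implies separable.

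For case (v), fix disjoint open $\sigma$-compact sets $U, V \subseteq X$ with $\overline{U} \cap \overline{V} \neq \emptyset$, write $U = \bigcup_n K_n$ and $V = \bigcup_n L_n$ as increasing unions of compact sets, and by Urysohn pick $f_n, g_n \in C(X)_{\leq 1}^+$ with $f_n|_{K_n} \equiv 1$, $\supp f_n \subseteq U$, and symmetrically for $g_n$. The type
\[
\{\norm{(1-x)f_n} = 0 : n \in \NN\} \cup \{\norm{x g_n} = 0 : n \in \NN\}
\]
is finitely satisfiable: the supports $\supp f_n \subseteq U$ and $\supp g_m \subseteq V$ are closed compacts, pairwise disjoint because $\overline{V} \cap U = \emptyset$, so Urysohn interpolates between them. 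But any realization would force $x \equiv 1$ on $U$ and $x \equiv 0$ on $V$, hence $x \equiv 1$ on $\overline{U}$ and $x \equiv 0$ on $\overline{V}$ by continuity, contradicting $\overline{U} \cap \overline{V} \neq \emptyset$.

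For case (vi), use that a Rickart compact space is $0$-dimensional to extract an infinite family of pairwise disjoint nonempty clopens $(V_n)$ in $X$ (iteratively split $X$ into proper clopens, keep the infinite piece, and take the successive differences). Set $W = \overline{\bigcup_n V_n}$, which is clopen by the Rickart property. Consider the type
\[
\{\norm{x \cdot 1_{V_n}} = 0 : n \in \NN\} \cup \{\norm{x(1 - 1_W)} = 0\} \cup \{\norm{x} = 1\}.
\]
It is finitely satisfied by $x = 1_{V_m}$ for any $m$ outside the finite subset under consideration (since $V_m \cap V_n = \emptyset$ for the relevant $n$ and $V_m \subseteq W$), but it is unsatisfiable: the first family of conditions forces $x$ to vanish on the dense subset $\bigcup_n V_n$ of $W$, hence on $W$ by continuity, while the second forces $\supp x \subseteq W$, together giving $x \equiv 0$ and contradicting $\norm{x} = 1$.

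Case (iv) is the most delicate, as it is not always subsumed by the other clauses (for instance, a Suslin line squared is a product of infinite compact Hausdorff spaces that fails each of (i)--(iii)). The approach I would pursue is to show $X = Y \times Z$ always fails sub-Stoneanity, reducing to case (v): fix an accumulation point $y_0 \in Y$ and distinct $z_1, z_2 \in Z$, choose a cozero $\psi \in C(Y)$ vanishing at $y_0$ and cozeros $\phi_1, \phi_2 \in C(Z)$ with disjoint supports, and consider the disjoint open $\sigma$-compact sets $\supp(\psi\cdot\phi_i) \subseteq Y \times Z$. Their closures both accumulate onto points of the fiber $\{y_0\} \times Z$, witnessing the required failure --- provided the accumulation structure of $\psi$ at $y_0$ can be leveraged against the disjointness of $\phi_1, \phi_2$. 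Establishing this cleanly in full generality is the main obstacle, since when $Z$ is itself sub-Stonean $\overline{\supp\phi_1} \cap \overline{\supp\phi_2} = \emptyset$ in $Z$, and the closures in $Y \times Z$ must meet only via accumulation in the $Y$-coordinate, requiring careful control over how the cozero of $\psi$ approaches $y_0$.
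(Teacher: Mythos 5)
Your handling of cases (i)--(iii) matches the paper exactly (reduce to ``few orthogonal positive elements'' via Lemma~\ref{lem:CHSat.scccbad}, with (iii)~$\Rightarrow$~(ii)~$\Rightarrow$~(i)). For (v) [not sub-Stonean] you give a direct type-construction where the paper merely cites~\cite[Remark~7.3]{pedersencorona} and~\cite[Proposition~2.6]{farah2011countable}; your argument is correct and self-contained, which is nice. For (vi) [Rickart] your route is genuinely different from the paper's: the paper uses the characterization of Rickart via existence of least upper bounds of bounded increasing sequences, building a type whose realization would contradict the lub property, whereas you exploit $0$-dimensionality, extract infinitely many pairwise disjoint nonempty clopens $V_n$, and use that $W = \overline{\bigcup V_n}$ is clopen. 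Your version is cleaner to verify, though the clopenness of $W$ deserves a one-line justification (e.g., apply the Rickart condition to the disjoint open $\sigma$-compact pair $\bigcup_n V_{2n}$, $\bigcup_n V_{2n+1}$, and restrict the type to the even-indexed clopens). Both arguments are sound.

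Case (iv) [product of two infinite compact Hausdorff spaces], however, has a genuine gap, which you are candid about. The paper disposes of this case by citing~\cite[Theorem~1]{farah2011countable}, a $\Cstar$-algebraic result on tensor products with infinite-dimensional factors. Your intended reduction --- show $Y\times Z$ is never sub-Stonean and fall back on (v) --- is a reasonable route, but the sketch does not get there: for product sets, $\overline{\coz(\psi)\times\coz(\phi_i)} = \overline{\coz(\psi)}\times\overline{\coz(\phi_i)}$, so if $\phi_1,\phi_2$ have disjoint supports (as you arrange), these closures are automatically disjoint and witness nothing. You correctly identify this as the obstruction when $Z$ is itself sub-Stonean. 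Witnessing failure of the F-space property in a product requires diagonal-type open sets that are \emph{not} rectangles (the classical Gillman--Henriksen/Negrepontis style argument interleaves two disjoint families of opens in each factor across the index set), which is a substantively different idea that your proposal neither states nor cites. As it stands, (iv) is unproved; either import that topological theorem explicitly or, as the paper does, cite the Farah--Hart tensor-product result.
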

\begin{proof}
First, note that \ref{abcond3} $\Rightarrow$ \ref{abcond2} $\Rightarrow$ \ref{abcond1}.  The fact that \ref{abcond1} implies that $C(X)$ is not countably degree-$1$ saturated is an instance of Lemma \ref{lem:CHSat.scccbad}.  Failure of countable degree-$1$ saturation for spaces satisfying \ref{abcond3a} follows from \cite[Theorem 1]{farah2011countable}, while for those satisfying \ref{abcond4} it follows from \cite[Remark 7.3]{pedersencorona} and \cite[Proposition 2.6]{farah2011countable}.  It remains to consider \ref{abcond5}.

Let $X$ be Rickart.  The Rickart condition can be rephrased as saying that any bounded increasing monotone sequence of self-adjoint functions in $C(X)$ has a least upper bound in $C(X)$ (see \cite[Theorem 2.1]{pedersensubstonean}). 

Consider a sequence $(a_n)_{n\in\en}\subseteq C(X)_1^+$ of positive pairwise orthogonal elements, and let $b_n=\sum_{i\leq n} a_i$.  Then $(b_n)_{n\in\en}$ is a bounded increasing sequence of positive operators,  so it has a least upper bound $b$. Since $\norm{b_n}=1$ for all $n$, we also have $\norm{b}=1$. The type consisting of $P_{-3}(x)=x$, with $K_{-3}=\{1\}$, $P_{-2}(x)=b-x$ with $K_{-2}=[1,2]$, $P_{-1}(x)=b-x-1$ with $K_{-1}=\{1\}$ and $P_n(x)=x-b_n-1$ with $K_n=[0,1]$ is consistent with partial solution $b_{n+1}$ (for $\{P_{-3},\ldots,P_n\}$).  This type cannot have a positive solution $y$, since in that case we would have that $y-b_n\geq 0$ for all $n\in\en$, yet $b-y>0$, a contradiction to $X$ being Rickart.
\end{proof}

Note that this proof shows that the existence of a particular increasing bounded sequence that is not norm-convergent but does have a least upper bound (a condition much weaker than being Rickart) is sufficient to prove that $C(X)$ does not have countable degree-$1$ saturation. The latter argument does not use that the ambient algebra is abelian.
 
We will compare the saturation of $C(X)$ (in the sense of Definition \ref{defin:Sat.Sat}) with the saturation of $CL(X)$, in the sense of the above theorem.

The results that we are going to obtain are the following:
\begin{theorem}\label{thm:CHSat.Abelian.Conds}
Let $X$ be a compact $0$-dimensional Hausdorff space. Then 
\[C(X) \text{ is countably saturated} \Rightarrow CL(X)\text{ is countably saturated }\] 
and 
\[CL(X)\text{ is countably saturated }\Rightarrow C(X) \text{ is countably q.f. saturated.}\]
\end{theorem}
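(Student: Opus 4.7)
The first implication, that $C(X)$ countably saturated implies $CL(X)$ countably saturated, follows from viewing $CL(X)$ as a quantifier-free definable reduct of $C(X)$. Projections in $C(X)$ correspond bijectively to clopen subsets of $X$ via $C\mapsto\chi_{C}$; being a projection is a q.f.\ condition, $\norm{x-x^{2}}+\norm{x-x^{*}}=0$; and Boolean operations translate into $*$-polynomial expressions, namely $\chi_{C\cap D}=\chi_{C}\chi_{D}$, $\chi_{\neg C}=1-\chi_{C}$, and $\chi_{C\cup D}=\chi_{C}+\chi_{D}-\chi_{C}\chi_{D}$. The plan is to translate a countable finitely satisfiable Boolean-algebra type $q(\bar x)$ on $CL(X)$ into a $C^{*}$-algebra type on $C(X)$: atomic equations $s=t$ become $\norm{\tilde s-\tilde t}=0$, existential quantifiers become infima over the definable set of projections (legal by~\cite[\S3]{bourbaki}), and the projection conditions are added to every free variable. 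Finite satisfiability in $CL(X)$ gives exact, hence approximate, finite satisfiability of the translated type in $C(X)$; full countable saturation of $C(X)$ then realizes it, and the added projection conditions force the realization to lie in $CL(X)$.

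The second implication, that $CL(X)$ countably saturated implies $C(X)$ countably q.f.\ saturated, is more delicate, and the plan is to replace continuous data by discrete data via step-function approximation. Let $p=\{\norm{P_{n}(\bar x)}=r_{n}:n\in\NN\}$ be an approximately finitely satisfiable countable q.f.\ type over countably many parameters in $C(X)_{\leq 1}$. Since $X$ is compact $0$-dimensional, every element of $C(X)_{\leq 1}$ is a uniform limit of step functions $\sum_{j}\lambda_{j}\chi_{C_{j}}$ with $\{C_{j}\}$ a finite clopen partition of $X$ and the $\lambda_{j}$ drawn from any prescribed countable dense set $D$ in the closed unit disk. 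The plan is to introduce, for each variable $x_{i}$, each level $n\in\NN$, and each $\lambda\in D$, an auxiliary projection-valued variable $q^{(i)}_{n,\lambda}$ encoding the clopen cell on which a $2^{-n}$-approximating step function for $x_{i}$ takes the value $\lambda$. The condition $\norm{P_{n}(\bar x)}=r_{n}$ is then recoded in the Boolean-algebra type as the combinatorial assertion that the $q^{(i)}_{n,\lambda}$ form partitions, refine as $n$ grows, and that the finite maximum of $|P_{n}(\bar\lambda)|$ taken over nonempty cells lies within $2^{-n}$ of $r_{n}$.

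Approximate finite satisfiability of $p$ combined with norm-density of step functions yields finite satisfiability of this Boolean-algebra type in $CL(X)$, so countable saturation of $CL(X)$ produces a coherent family of clopen sets; assembling them into step functions $\tilde a^{(i)}_{n}$ yields a Cauchy sequence whose norm limit $a_{i}\in C(X)_{\leq 1}$ is intended to realize $p$. The main obstacle will be arranging the Boolean-algebra type so that the discretization errors introduced by rounding scalars to $D$ and partitioning at level $n$ telescope in the limit and leave exact equalities $\norm{P_{n}(\bar a)}=r_{n}$; this is feasible precisely because $p$ is quantifier-free --- we need only control finitely many $*$-polynomials at each level rather than arbitrary formulas involving suprema and infima over all of $C(X)$ --- which is why the conclusion is only q.f.\ saturation of $C(X)$ rather than full countable saturation.
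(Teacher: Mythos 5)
Your first implication is the same as the paper's: translate Boolean formulas into formulas of continuous logic by quantifying over the definable set of projections. Your second implication, however, takes a genuinely different route. The paper does not attempt a direct combinatorial reduction; instead it first codes each $f\in C(X)_{\leq 1}$ by a countable family of clopen sets (Proposition~\ref{prop:CHSat.coding}), then, \emph{assuming $\CH$}, finds a countably saturated Boolean subalgebra $B\subseteq CL(X)$ of size $\aleph_1$ containing all the coding sets, identifies $C(S(B))$ with an ultrapower $C(S(B'))^{\mathcal U}$ of a separable algebra (so that its countable saturation comes for free from \L o\'s-type soft facts), transfers the type along the norm-preserving embedding $C(S(B))\to C(X)$ dual to the inclusion $B\subseteq CL(X)$, and finally removes $\CH$ by a $\sigma$-closed collapse together with an absoluteness argument for saturation (Lemmas~\ref{lem:CHSat.qfsat}--\ref{lem:CHSat.removingCH}). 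Your plan replaces all of this with a direct first-order coding of the quantifier-free type into a countable Boolean type realized by saturation of $CL(X)$ itself. If carried out, this buys a proof entirely inside $\ZFC$ with no detour through forcing; what the paper's route buys is that the actual realization of the type is delegated to the single soft fact that ultrapowers are countably saturated, at the price of the set-theoretic scaffolding. Your step-function discretization is in substance the same device as the paper's coding proposition, so the underlying insight is shared.

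Two points in your sketch need care before the argument closes. First, as you anticipate, conditions controlling $P_n$ only at level $n$ yield $|\norm{P_n(\bar a)}-r_n|\lesssim \omega_{P_n}(2^{-n})$ rather than equality; you must include, for each $n$ and every $m\geq n$, a condition forcing the level-$m$ cell data to approximate $r_n$ within an error tending to $0$ in $m$ (using the known moduli of continuity of the $P_n$), and check that finite satisfiability survives because the original type admits partial solutions with arbitrarily small tolerance. Second, the coefficients of the $P_n$ are elements of $C(X)_{\leq 1}$, not scalars, so $|P_n(\bar\lambda)|$ on a cell is not yet a number: the parameters must themselves be discretized by clopen partitions, which then enter the Boolean type as parameters, and the nonemptiness pattern and max must be computed on the common refinement. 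Neither point breaks the approach, but both must appear in the type you hand to $CL(X)$.
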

\begin{theorem}\label{thm:CHSat.Abelian.Conds2}
Let $X$ be a compact $0$-dimensional Hausdorff space, and assume further that $X$ has a finite number of isolated points.  If $C(X)$ is countably degree-$1$ saturated, then $CL(X)$ is countably saturated.  Moreover, if $X$ has no isolated points, then countable degree-$1$ saturation and countable saturation coincide for $C(X)$.
\end{theorem}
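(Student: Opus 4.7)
The plan is to first reduce to $X$ with no isolated points. Let $X_{\mathrm{iso}}$ denote the finite set of isolated points; the splitting $X = X_{\mathrm{iso}} \sqcup X_0$ induces $C(X) = C(X_{\mathrm{iso}}) \oplus C(X_0)$ and $CL(X) \cong CL(X_{\mathrm{iso}}) \times CL(X_0)$, with the finite factor on each side trivially countably saturated. Countable degree-$1$ saturation of $C(X)$ passes to the summand $C(X_0)$ by augmenting any type with the degree-$1$ conditions $\norm{\chi_{X_{\mathrm{iso}}} x_k} = 0$. So assume $X$ has no isolated points, whence $CL(X)$ is atomless; by Theorem~\ref{thm:CHSat.satbooleanalgebra} it then suffices to fill every countable cut. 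That is, given a countable upward directed $\{Y_n\}$ and downward directed $\{Z_n\}$ with $Y_n \subsetneq Z_m$ for all $n, m$ (WLOG both chains strictly monotone after passing to cofinal subchains), I will produce a clopen $C$ with $Y_n \subsetneq C \subsetneq Z_n$ for every $n$.

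\textbf{Setting up the type.} Put $y_n = \chi_{Y_n}$, $z_n = \chi_{Z_n}$ and consider the following conditions on a single variable $x \in C(X)_{\leq 1}$: that $x = x^*$ with $\norm{x - 1/2} \leq 1/2$ (so $x$ is a positive contraction); $\norm{(x - y_n) - 1} \leq 1$ for each $n$, which for self-adjoint unit-ball elements captures $y_n \leq x$ via the spectral characterization $\sigma(x - y_n) \subseteq [0, 2]$; $\norm{(z_n - x) - 1} \leq 1$, capturing $x \leq z_n$; and $\norm{x - y_n} \geq 3/4$ and $\norm{z_n - x} \geq 3/4$. All of these use only degree-$1$ $^*$-polynomials in $x$. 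For finite satisfiability, restrict to indices $\leq N$: strict inclusion $Y_N \subsetneq Z_N$ together with the absence of isolated points in $X$ yields a clopen $Y_N \subsetneq D \subsetneq Z_N$, and $x = \chi_D$ satisfies every condition, with $\norm{\chi_D - y_n} = \norm{z_n - \chi_D} = 1 > 3/4$. Countable degree-$1$ saturation of $C(X)$ then produces $f \in C(X)_{\leq 1}^+$ realizing the full type.

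\textbf{Extracting the clopen.} By the contrapositive of Lemma~\ref{lem:CHSat.abcondition}(iv), countable degree-$1$ saturation of $C(X)$ forces $X$ to be sub-Stonean. The sets $U_+ = f^{-1}((2/3, 1])$ and $U_- = f^{-1}([0, 1/3))$ are disjoint, open, and $\sigma$-compact, so $\overline{U_+}$ and $\overline{U_-}$ are disjoint compact sets, which by $0$-dimensionality of $X$ are separated by a clopen $C$ with $\overline{U_+} \subseteq C \subseteq X \setminus \overline{U_-}$. Since $y_n \leq f$ forces $f \equiv 1$ on $\bigcup_n Y_n$, we have $Y_n \subseteq U_+ \subseteq C$; since $f \leq z_n$ forces $f \equiv 0$ on $X \setminus \bigcap_n Z_n$, we have $C \subseteq X \setminus U_- \subseteq Z_n$. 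The condition $\norm{f - y_n} \geq 3/4 > 2/3$ supplies a point of $(X \setminus Y_n) \cap U_+ \subseteq C \setminus Y_n$, and symmetrically $\norm{z_n - f} \geq 3/4$ supplies a point of $Z_n \cap U_- \subseteq Z_n \setminus C$; thus $c = \chi_C$ strictly fills the cut.

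\textbf{The \emph{moreover}, and the main obstacle.} For the \emph{moreover}, combine the first assertion with Theorem~\ref{thm:CHSat.Abelian.Conds} to derive countable quantifier-free saturation of $C(X)$ from its countable degree-$1$ saturation. Upgrading from quantifier-free to full countable saturation then exploits that $X$ is $0$-dimensional without isolated points, so every bounded continuous function on $X$ is a uniform limit of $\mathbb{C}$-linear combinations of pairwise orthogonal projections. A Skolem-style argument should then replace each $\sup$ or $\inf$ over the unit ball appearing in a formula by countably many quantifier-free conditions on auxiliary projection-valued and scalar-valued variables, whose joint realization combines the countable quantifier-free saturation of $C(X)$ with the countable saturation of $CL(X)$ already established. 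I expect the main obstacle to be uniformity of these step-function approximations through nested quantifier alternations: verifying that the resulting countable family of quantifier-free conditions remains approximately finitely satisfiable within the compact-set codomains of Definition~\ref{defin:Sat.Sat} is where the bookkeeping becomes delicate.
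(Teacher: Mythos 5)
Your proof of the first assertion is correct and runs along essentially the same lines as the paper's Proposition~\ref{prop:CHSat.totaldisc}: reduce to Theorem~\ref{thm:CHSat.satbooleanalgebra}, use countable degree-$1$ saturation of $C(X)$ to realize a type interpolating between the chains of characteristic functions, and extract a clopen filling the cut. The packaging differs slightly: your explicit conditions $\norm{x-y_n}\geq 3/4$ and $\norm{z_n-x}\geq 3/4$ cleanly secure strictness, whereas the paper relies on strict monotonicity of the chains together with a separate treatment of the case $\bigcup Y_n=\bigcap Z_n$. Your extraction detours through the sub-Stonean property (note this is item (v), not (iv), of Lemma~\ref{lem:CHSat.abcondition}) to get disjoint closures of $U_+$ and $U_-$; the paper instead separates the level sets $f^{-1}(\{0\})$ and $f^{-1}(\{1\})$, which are disjoint and closed to begin with, directly via compactness and $0$-dimensionality, so the sub-Stonean input is avoidable. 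Your reduction to the perfect case by augmenting types with $\norm{\chi_{X_{\mathrm{iso}}} x_k}=0$ is sound.

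The \emph{moreover} is where you have a genuine gap, and to your credit you flag it yourself. Combining the first assertion with Theorem~\ref{thm:CHSat.Abelian.Conds} does give countable quantifier-free saturation of $C(X)$. But the step from quantifier-free saturation to full countable saturation cannot be handled by a hand-built Skolemization argument; the obstacle you name --- controlling the modulus of step-function approximation through nested quantifier alternations while keeping the type approximately finitely satisfiable within compact codomains --- is real, and there is no elementary way around it. The missing ingredient in the paper is a theorem of Farah and Hart, published in \cite{EFKV.QE}: the theory of unital abelian real rank zero $\Cstar$-algebras without minimal projections, i.e.\ $\Th(C(2^\NN))$, which coincides with $\Th(C(X))$ for $X$ compact, $0$-dimensional, and perfect, admits quantifier elimination. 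Quantifier elimination is exactly what collapses quantifier-free saturation to full saturation. Without citing or proving it, the \emph{moreover} remains unproved; your proposed route would in effect amount to re-deriving quantifier elimination from scratch.
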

\subsubsection{Proof of Theorem \ref{thm:CHSat.Abelian.Conds}}
Countable saturation of $C(X)$ (for all formulas in the language of $\Cstar$-algebras) implies saturation of the Boolean algebra, since being a projection is a weakly-stable relation, so every formula in $CL(X)$ can be rephrased in a formula in $C(X)$; to do so, write $\sup$ for $\forall$, $\inf$ for $\exists$, $\norm{x-y}$ for $x\neq y$, and so forth, restricting quantification to projections (this is possible since the set of projections is definable, see \cite[\S 3]{bourbaki}).  This establishes the first implication in Theorem \ref{thm:CHSat.Abelian.Conds}.  The second implication will require more effort.  To start, we will to need the following Proposition, relating elements of $C(X)$ to certain collections of clopen sets:

\begin{proposition}\label{prop:CHSat.coding}
 Let $X$ be a compact $0$-dimensional space and $f\in C(X)_{\leq 1}$. Then there exists a countable collection of clopen sets $\tilde Y_f=\{Y_{n,f} \colon n \in \en\}$ which completely determines $f$, in the sense that for each $x \in X$, the value $f(x)$ is completely determined by $\{n\colon x\in Y_{n,f}\}$.
\end{proposition}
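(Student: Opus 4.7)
The plan is to exploit the fact that continuous functions on a compact $0$-dimensional space can be uniformly approximated by step functions whose level sets are clopen. Concretely, for each $n\in\en$ I would construct a finite clopen partition $\mathcal{P}_n=\{V^{(n)}_1,\dots,V^{(n)}_{k_n}\}$ of $X$ with the property that $\operatorname{diam}(f[V^{(n)}_i])<1/n$ for every $i\leq k_n$, and then take $\tilde Y_f=\bigcup_{n\in\en}\mathcal P_n$, enumerated as $\{Y_{n,f}:n\in\en\}$. This is a countable collection of clopen sets, since each $\mathcal P_n$ is finite.

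The existence of $\mathcal P_n$ uses the hypotheses on $X$ in a standard way: for every $x\in X$, continuity of $f$ yields an open neighborhood $U_x$ with $f[U_x]\subseteq B(f(x),1/(2n))$; since $X$ is $0$-dimensional I can shrink $U_x$ to a clopen neighborhood $W_x$; compactness of $X$ gives a finite subcover $W_{x_1},\dots,W_{x_{k_n}}$, which I then disjointify in the usual way (taking $V^{(n)}_1=W_{x_1}$ and $V^{(n)}_j=W_{x_j}\setminus\bigcup_{i<j}W_{x_i}$). The resulting sets are clopen because the class of clopen sets is closed under finite Boolean operations, and they form a partition of $X$ on which $f$ varies by at most $1/n$.

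It then remains to check that $\tilde Y_f$ codes $f$ as required. Suppose $x,y\in X$ satisfy $\{n:x\in Y_{n,f}\}=\{n:y\in Y_{n,f}\}$. For each $m\in\en$, the partition $\mathcal P_m$ is a subcollection of $\tilde Y_f$, so $x$ belongs to exactly one $V^{(m)}_{i}$ and $y$ belongs to exactly one $V^{(m)}_{j}$; by the assumption on the indicator patterns, these two cells must coincide, so $|f(x)-f(y)|\leq\operatorname{diam}(f[V^{(m)}_{i}])<1/m$. Letting $m\to\infty$ gives $f(x)=f(y)$, showing that $f(x)$ is determined by the set $\{n:x\in Y_{n,f}\}$.

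There is no real obstacle here; the only slightly nontrivial point is verifying that $0$-dimensionality plus compactness genuinely produces the finite clopen partitions $\mathcal P_n$, and this is handled by the standard disjointification argument above. The proposition is essentially a reformulation of the fact that, on a compact $0$-dimensional space, $\operatorname{span}\{\chi_C:C\text{ clopen}\}$ is norm-dense in $C(X)$.
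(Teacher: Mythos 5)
Your proof is correct, but it takes a genuinely different route from the paper's. The paper works on the \emph{range} side: it fixes, for each $m$, a finite $1/m$-grid of complex numbers $y$ in the unit disc, pulls back each ball $B_{1/m}(y)$ to an open $\sigma$-compact set $f^{-1}(B_{1/m}(y))$, writes that as a countable union of clopen sets, and then recovers $f(x)$ by an explicit limiting recipe over the grid points $y$ whose associated clopen sets contain $x$. You instead work on the \emph{domain} side, producing for each $n$ a finite clopen partition $\mathcal P_n$ on which $f$ oscillates by less than $1/n$ (continuity, $0$-dimensionality, compactness, disjointification), and you verify determination by noting that two points with the same membership pattern lie in the same cell of every $\mathcal P_n$. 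Both arguments are valid for arbitrary compact $0$-dimensional Hausdorff spaces, and yours is the more standard one (it is essentially the norm-density of locally constant functions in $C(X)$). The trade-off shows up in how the proposition is used later, in the proof of Theorem~\ref{thm:CHSat.qfsat}: there one transports $\tilde X_f$ into a Boolean subalgebra $B\subseteq CL(X)$ and must define a function $f'$ on $S(B)$ from the membership pattern of an arbitrary ultrafilter; the paper's grid-based recipe is what is literally invoked there (including the compactness claim that $\bigcup\tilde X_f$ still covers $S(B)$). Your finite partitions would serve equally well for that purpose --- indeed the covering claim becomes automatic, since a finite partition of $X$ lying in $B$ remains a partition of $S(B)$ --- but the later argument would then need to be rewritten to extract $f'(u)$ as the limit of the values of $f$ on the cells selected by $u$, rather than via the paper's grid points.
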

\begin{proof}
Let $\ce_{m,1}=\{\frac{j_1+\sqrt{-1}j_2}{m}\colon j_1,j_2\in\ZZ\wedge \norm{j_1+\sqrt{-1}j_2}\leq m\}$.

For every $y\in\ce_{m,1}$ consider $X_{y,f}=f^{-1}(B_{1/m}(y))$. We have that each $X_{y,f}$ is a $\sigma$-compact open subset of $X$, so is a countable union of clopen sets $X_{y,f,1},\ldots,X_{y,f,n},\ldots\in CL(X)$. Note that $\bigcup_{y\in \ce_{m,1}}\bigcup_{n\in\en} X_{y,f,n}=X$. Let $\tilde X_{m,f}=\{X_{y,f,n}\}_{(y,n)\in\ce_{m,1}\times\en}\subseteq CL(X)$.

We claim that $\tilde X_f=\bigcup_m\tilde X_{m,f}$ describes $f$ completely. Fix $x\in X$. For every $m\in\en$ we can find a (not necessarily unique) pair $(y,n)\in\ce_{m,1}$ such that $x\in X_{y,f,n}$. Note that, for any $m,n_1,n_2\in\en$ and $y\neq z$, we have that $X_{y,f,n_1}\cap X_{z,f,n_2}\neq\emptyset$ implies $\abs{y-z}\leq\sqrt{2}/m$. In particular, for every $m\in\en$ and $x\in X$ we have \[2\leq \abs{\{y\in\ce_{m,1}\colon \exists n (x\in X_{y,f,n})\}}\leq 4.\] Let $A_{x,m}=\{y\in\ce_{m,1}\colon \exists n (x\in X_{y,f,n})\}$ and choose $a_{x,m} \in A_{x, m}$ to have minimal absolute value. Then $f(x)=\lim_m a_{x,m}$ so the collection $\tilde X_f$ completely describes $f$ in the desired sense.
\end{proof}

The above proposition will be the key technical ingredient in proving the second implication in Theorem \ref{thm:CHSat.Abelian.Conds}.  We will proceed by first obtaining the desired result under $\CH$, and then showing how to eliminate the set-theoretic assumption.

\begin{lemma}\label{lem:CHSat.qfsat}
Assume $\CH$.  Let $B$ be a countably saturated Boolean algebra of cardinality $2^{\aleph_0} = \aleph_1$.  Then $C(S(B))$ is countably saturated.
\end{lemma}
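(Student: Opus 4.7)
Given CH and a countably saturated Boolean algebra $B$ of cardinality $\aleph_1$, my plan is to construct realizations of countable types in $C(X)$, with $X=S(B)$, by transferring conditions to the Boolean algebra $B$ via the coding in Proposition~\ref{prop:CHSat.coding} and then invoking the countable saturation of $B$.

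Fix a countable type $\Sigma(\bar x)=\{\phi_n(\bar x,\bar a)=r_n:n\in\en\}$ with countable parameters $\bar a=(a_i)_{i\in\en}\subseteq C(X)_{\leq 1}$ that is approximately finitely satisfiable in $C(X)_{\leq 1}$. By Proposition~\ref{prop:CHSat.coding} each $a_i$ is determined by a countable family $\tilde Y_{a_i}\subseteq B$, and any prospective realization $f_j$ is determined by a countable family $\tilde Y_{f_j}\subseteq B$ that we aim to construct. Under CH, $|B|=\aleph_1$ forces $|C(X)|\le \aleph_1$, which is essential for what follows.

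For quantifier-free $\phi_n$, the value of $\phi_n(\bar x,\bar a)$ at a point $p\in X=S(B)$ depends only on the membership of $p$ in finitely many clopen sets coming from the codes, and $\norm{\phi_n(\bar x,\bar a)}$ is the supremum of these pointwise values over $p\in X$. Hence each q.f. condition $\phi_n(\bar x,\bar a)=r_n$ translates, via Stone duality, into countably many first-order conditions in the language of Boolean algebras on the corresponding codes, with parameters in the countable set $\bigcup_i\tilde Y_{a_i}$. Approximate finite satisfiability in $C(X)$ transfers to approximate finite satisfiability in $B$; countable saturation of $B$ then produces a realization, and reassembling the witnessing clopen families gives a $\bar f\in C(X)_{\leq 1}^\en$ realizing the q.f. part of $\Sigma$. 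This also reproves countable quantifier-free saturation of $C(X)$, which was already obtained by the second implication in Theorem~\ref{thm:CHSat.Abelian.Conds}.

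The main obstacle is handling $\sup_y$ and $\inf_y$ quantifiers occurring in the $\phi_n$, since these quantify over all of $C(X)_{\leq 1}$. Here I would use CH essentially: enumerate $C(X)_{\leq 1}=\{c_\alpha:\alpha<\omega_1\}$ and build an elementary chain of countable substructures $C_\alpha\preceq C(X)$ with $\bigcup_\alpha C_\alpha=C(X)$, augmenting $\Sigma$ at each stage with explicit near-witnesses (drawn from the enumeration) for each $\sup_y\psi$ and $\inf_y\psi$ appearing in the type, and applying countable saturation of $B$ at the quantifier-free level to preserve consistency. A standard Tarski--Vaught argument then shows that a realization produced at some countable stage (the original type being countable) satisfies the quantified conditions in the whole of $C(X)$. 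The most delicate point is coordinating the choice of witnesses across stages so that the countable quantifier-free type extracted at the end pins down the correct sup/inf values; this relies on the abundance of clopen sets in $B$ provided by countable saturation, allowing step-function approximation of arbitrary continuous witnesses to arbitrary precision.
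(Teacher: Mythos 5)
Your proposal takes a fundamentally different route from the paper's.  The paper's proof is very short: take a countable $B'\preceq B$; by $\CH$ and uniqueness of countably saturated models of size $\aleph_1$, $B'^{\mathcal U}\cong B$ for a nonprincipal $\mathcal U$ on $\NN$; by results of Gurevic and Bankston the Stone functor commutes with ultrapowers, so $C(S(B))\cong C(S(B'))^{\mathcal U}$; and ultrapowers of separable structures over a nonprincipal ultrafilter on $\NN$ are countably saturated.  You instead attempt a direct Henkin-style construction of realizations, and this introduces genuine gaps.

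First, in your quantifier-free step, the claim that ``the value of $\phi_n(\bar x,\bar a)$ at a point $p\in X=S(B)$ depends only on the membership of $p$ in finitely many clopen sets coming from the codes'' is not correct.  In the coding of Proposition~\ref{prop:CHSat.coding}, the value $f(p)$ is recovered as the limit $\lim_m a_{p,m}$, and determining each $a_{p,m}$ already requires knowing which of infinitely many of the sets $X_{y,f,n}$ contain $p$.  Thus the pointwise value of a function, and a fortiori of a $^*$-polynomial in several such functions, depends on countably many memberships, not finitely many, and the condition $\norm{P(\bar x,\bar a)}=r$ involves a supremum over the whole Stone space.  Turning such a continuous condition into a (countable family of) first-order Boolean conditions on the codes, in a way that preserves finite approximate satisfiability in both directions, is a substantive piece of work that your sketch does not carry out.

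Second, your treatment of quantifiers does not close.  You propose an $\omega_1$-chain of elementary substructures $C_\alpha\preceq C(X)$ with Henkin-style near-witnesses, and then a Tarski--Vaught step.  But realizing the enlarged type at some countable stage would require saturation of a separable elementary substructure $C_\alpha$, which is impossible: by Corollary~\ref{cor:Hnotseparable} no infinite-dimensional separable $\Cstar$-algebra is even countably degree-$1$ saturated.  Nor is it explained how the witnesses are chosen coherently with a realization that does not yet exist, or how consistency of the growing type is preserved beyond invoking ``the abundance of clopen sets.''  The paper's route avoids all of this by reducing at one stroke to the fact that $C(S(B))$ is (isomorphic to) an ultrapower of a separable algebra, for which countable saturation is automatic; your approach is essentially trying to redo that saturation proof by hand, and the missing transfer principle for Stone spaces and ultrapowers is precisely the content of the Gurevic/Bankston results the paper quotes.
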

\begin{proof}
Let $B' \preceq B$ be countable, and let $\mc{U}$ be a non-principal ultrafilter on $\en$.  By the uniqueness of countably saturated models of size $\aleph_1$, and the $\CH$, we have $B'^{\mc{U}} \cong B$.  By \cite[Proposition 2]{Gurevic} and \cite[Remark 2.5.1]{Bankston} we have $C(S(B)) \cong C(S(B'))^{\mc{U}}$, and hence $C(S(B))$ is countably saturated.
\end{proof}

\begin{theorem}\label{thm:CHSat.qfsat}
Assume $\CH$.  Let $X$ be a compact Hausdorff $0$-dimensional space. If $CL(X)$ is countably saturated as a Boolean algebra, then $C(X)$ is countably quantifier-free saturated.
\end{theorem}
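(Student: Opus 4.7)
The plan is to reduce a given countable quantifier-free type over $C(X)$ to one over a separable subalgebra $C(S(B))$ for a countable elementary subalgebra $B \preceq CL(X)$, realize it inside the countably saturated ultrapower $C(S(B))^{\mc{U}} \cong C(S(B^{\mc{U}}))$, and transport the realization back to $C(X)$ via an elementary embedding $B^{\mc{U}} \hookrightarrow CL(X)$ produced by a back-and-forth argument. Concretely, given a countable, approximately finitely satisfied quantifier-free type with parameters $\{f_j\}_j \subseteq C(X)_{\leq 1}$ and partial solutions $\{b_{k,F,m}\}$ indexed by finite $F\subseteq\NN$ and $m\in\NN$, Proposition~\ref{prop:CHSat.coding} encodes each such element of $C(X)_{\leq 1}$ by a countable collection of clopen sets; the union of all these codes is countable, so L\"owenheim-Skolem in the first-order language of Boolean algebras supplies a countable elementary subalgebra $B \preceq CL(X)$ containing them all. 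The dual continuous surjection $X \twoheadrightarrow S(B)$ induces an isometric $\Cstar$-embedding $C(S(B)) \hookrightarrow C(X)$ whose image contains every parameter and every partial solution, and since quantifier-free formulas take identical values on a $\Cstar$-subalgebra and its ambient algebra, the type is approximately finitely satisfied inside $C(S(B))$.

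Fixing a free ultrafilter $\mc{U}$ on $\NN$, the ultrapower $C(S(B))^{\mc{U}}$ is countably saturated and, by the ingredients used in the proof of Lemma~\ref{lem:CHSat.qfsat} (\cite{Gurevic} and \cite{Bankston}), isomorphic to $C(S(B^{\mc{U}}))$. Consequently the type is realized by some tuple $(y_k)_k$ in $C(S(B^{\mc{U}}))$. Under $\CH$, $B^{\mc{U}}$ is a countably saturated Boolean algebra of cardinality $\aleph_1$, elementarily equivalent to $B$ and therefore to $CL(X)$.

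The crux of the argument will be to extend the inclusion $B \hookrightarrow CL(X)$ to an elementary embedding $\iota \colon B^{\mc{U}} \hookrightarrow CL(X)$ that fixes $B$ pointwise. I would enumerate $B^{\mc{U}}$ as $\{c_\alpha\}_{\alpha < \omega_1}$ and build $\iota$ by transfinite recursion: at stage $\alpha$, the type of $c_\alpha$ over the countable parameter set $B \cup \{\iota(c_\beta)\}_{\beta < \alpha}$ (transferred across by the partial elementary embedding constructed so far) is a countable type in $CL(X)$, which is realized by countable saturation of $CL(X)$, furnishing $\iota(c_\alpha)$. Dualizing, $\iota$ yields a continuous surjection $X \twoheadrightarrow S(B^{\mc{U}})$ factoring through $X \twoheadrightarrow S(B)$, and hence an isometric $\Cstar$-embedding $\iota^{*} \colon C(S(B^{\mc{U}})) \hookrightarrow C(X)$ which is the identity on each parameter $f_j$. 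Applying $\iota^{*}$ to $(y_k)_k$ produces the required total realization of the original type inside $C(X)_{\leq 1}$. The main obstacle is precisely this back-and-forth step, which essentially consumes both the assumed countable saturation of $CL(X)$ and $\CH$ (the latter to enumerate $B^{\mc{U}}$ in length $\omega_1$).
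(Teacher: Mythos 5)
Your proposal is correct and takes essentially the same route as the paper's proof: both rely on coding elements of $C(X)_{\leq 1}$ by countably many clopen sets (Proposition~\ref{prop:CHSat.coding}), on the identification $C(S(B))^{\mathcal U}\cong C(S(B^{\mathcal U}))$ via Gurevic and Bankston, and on $\CH$ together with countable saturation of $CL(X)$ to move the realization from the ultrapower back into $C(X)$. The only difference is organizational — the paper first closes off to an $\aleph_1$-sized countably saturated subalgebra of $CL(X)$ containing the codes and then recognizes it as an ultrapower of a countable subalgebra, whereas you form the ultrapower first and embed it elementarily over $B$ by the transfinite ``forth'' argument; these are the same construction read in opposite directions.
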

\begin{proof}
Let $\norm{P_n}=r_n$ be a collection of conditions, where each $P_n$ is a $2$-degree $*$-polynomial in $x_0,\ldots,x_n$, such that there is a collection $F=\{f_{n,i}\}_{n\leq i}\subseteq C(X)_{\leq 1}$, with the property that for all $i$ we have $\norm{P_n(f_{0,i},\ldots,f_{n,i})}\in (r_n)_{1/i}$ for all $n\leq i$.

For any $n$, we have that $P_n$ has finitely many coefficients. Consider $G$ the set of all coefficients of every $P_n$ and $L$ the set of all possible $2$-degree ${}^*$-polynomials in $F\cup G$. Note that for any $n\leq i$ we have that $P_{n}(f_{0,i},\ldots,f_{n,i})\in L$ and that $L$ is countable. For any element $f\in L$ consider a countable collection $\tilde X_f$ of clopen sets describing $f$, as in Proposition \ref{prop:CHSat.coding}.

Since $CL(X)$ is countably saturated and $2^{\aleph_0}=\aleph_1$ we can find a countably saturated Boolean algebra $B\subseteq CL(X)$ such that $\emptyset,X\in B$, for all $f\in L$ we have $\tilde X_f\subseteq B$, and $\abs{B}=\aleph_1$. 

Let $\iota \colon B\to CL(X)$ be the inclusion map.  Then $\iota$ is an injective Boolean algebra homomorphism and hence admits a dual continuous surjection $g_\iota\colon X\to S(B)$.

\begin{claim}
For every $f\in L$ we have that $\bigcup\tilde X_f=S(B)$.
\end{claim}

\begin{proof}
Recall that \[\bigcup \tilde X_f=X.\]
By compactness of $X$, there is a finite $C_{f}\subseteq \tilde X_{f}$ such that $\bigcup C_{f}=X$. In particular every ultrafilter on $B$ (i.e., a point of $S(B)$), corresponds via $g_\iota$ to an ultrafilter on $CL(X)$ (i.e., a point of $X$), and it has to contain an element of $C_{f}$. So $\bigcup \tilde X_{f}=S(B)$.
\end{proof}

From $g_\iota$ as above, we can define the injective map $\phi\colon C(S(B))\to C(X)$ defined as $\phi(f)(x)=f(g_\iota^{-1}(x))$.  Note that $\phi$ is norm preserving: Since $\phi$ is a unital $*$-homomorphism of $\Cstar$-algebra we have that $\norm{\phi(f)}\leq\norm{f}$. For the converse, suppose that $x\in S(B)$ is such that $\abs{f(x)}=r$, and by surjectivity take $y\in X$ such that $g_\iota(y)=x$. Then \[\abs{\phi(f)(y)}=\abs{f(g_\iota(g_\iota^{-1}(x)))}=\abs{f(x)}.\]

For every $f\in L$ consider the function $f'$ defined by $\tilde X_f$ and construct the corresponding ${}^*$-polynomials $P_n'$.
\begin{claim}
\begin{enumerate}
\item\label{1} $f=\phi(f')$ for all $f\in L$.
\item \label{2}$\norm{P_n'(f_{0,i}',\ldots,f_{n,i}')}\in (r_n)_{1/i}$ for all $i$ and $n\leq i$.
\end{enumerate}
\end{claim}
\begin{proof}
Note that, since $f_{n,i}\in L$ and every coefficient of $P_n$ is in $L$, we have that $P_n(f_{0,i},\ldots,f_{n,i})\in L$. It follows that condition 1, combined with the fact that $\phi$ is norm preserving, implies condition 2.

Recall that $g=g_\iota$ is defined by Stone duality, and is a continuous surjective map $g\colon X\to Y$. In particular $g$ is a quotient map. Moreover by definition, since $X_{q,f,n}\in CL(Y)=B\subseteq CL(X)$, we have that if $x\in Y$ is such that $x\in X_{q,f,n}$ for some $(q,f,n)\in \QQ\times L \times \en$, then for all $z$ such that $g(z)=x$ we have $z\in X_{q,f,n}$. Take $f$ and $x\in X$ such that $f(x)\neq\phi(f')(x)$. Consider $m$ such that $\abs{ f(x)-\phi(f')(x)}>2/m$. Pick $y\in\ce_{m,1}$ such that there is $k$ for which $x\in X_{y,f,k}$ and find $z\in Y$ such that $g(z)=x$. Then $z\in X_{y,f,k}$, that implies $f'(z)\in B_{1/m}(y)$ and so $\phi(f')(x)=f'(z)\in B_{1/m}(y)$ contradicting $\abs{f(x)-\phi(f')(x)}\geq 2/m$. 
\end{proof}
Consider now $\{\norm {P'_n(x_0,\ldots,x_n)}=r_n\}$. This type is consistent type in $C(S(B))$ by condition 2, and $C(S(B))$ is countably saturated by Lemma \ref{lem:CHSat.qfsat}, so there is a total solution $\overline g$. Then $h_j=\phi(g_j)$ will be such that $\norm{P_n(\overline h)}=r_n$, since $\phi$ is norm preserving, proving quantifier-free saturation for $C(X)$.
\end{proof}

To remove $\CH$ from Theorem \ref{thm:CHSat.qfsat} we will show that the result is preserved by $\sigma$-closed forcing.  We first prove a more general absoluteness result about truth values of formulas.  


Our result will be phrased in terms of truth values of formulas of \emph{infinitary} logic for metric structures.  Such a logic, in addition to the formula construction rules of the finitary logic we have been considering, also allows the construction of $\sup_n \phi_n$ and $\inf_n \phi_n$ as formulas when the $\phi_n$ are formulas with a total of finitely many free variables.  Two such infinitary logics have been considered in the literature.  The first, introduced by Ben Yaacov and Iovino in \cite{BenYaacovIovino}, allows the infinitary operations only when the functions defined by the formulas $\phi_n$ all have a common modulus of uniform continuity; this ensures that the resulting infinitary formula is again uniformly continuous.  The second, introduced by the first author in \cite{Eagle2014}, does not impose any continuity restriction on the formulas $\phi_n$ when forming countable infima or suprema; as a consequence, the infinitary formulas of this logic may define discontinuous functions.  The following result is valid in both of these logics; the only complication is that we must allow metric structures to be based on incomplete metric space, since a complete metric space may become incomplete after forcing.

\begin{lemma}\label{lem:CHSat.absoluteness}
Let $M$ be a metric structure, $\phi(\overline{x})$ be a formula of infinitary logic for metric structures, and $\overline{a}$ be a tuple from $M$ of the appropriate length.  Let $\mathbb{P}$ be any notion of forcing.  Then the value $\phi^M(\overline{a})$ is the same whether computed in $V$ or in the forcing extension $V[G]$.
\end{lemma}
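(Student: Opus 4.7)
The proof will go by induction on the complexity of $\phi\in V$. Throughout I will use two basic absoluteness facts: a specific real number coded in $V$ is the same real in $V[G]$, and the supremum or infimum of a set of reals that has the same extension in $V$ and in $V[G]$ is itself absolute. Moreover the underlying set of $M$, together with every one of its operations and distinguished predicates, is an object of $V$ and is therefore unchanged by passing to $V[G]$; only the ambient completeness of the metric may fail after forcing, which is exactly why the lemma is phrased for (possibly incomplete) metric structures.

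For the base case, when $\phi$ is atomic, $\phi^M(\bar a)$ is obtained by applying the basic operations of $M$ to the specific tuple $\bar a$ and then reading off a norm or evaluating a distinguished predicate; each such application is a ground-model function evaluated at a specific point, so the output real is absolute. For a continuous connective $\phi=f(\phi_1,\ldots,\phi_n)$, the continuous $f\colon\RR^n\to\RR$ is coded in $V$ by its restriction to $\QQ^n$, and applied to the values $\phi_i^M(\bar a)$, which are absolute by the inductive hypothesis, it produces an absolute real number. The two countable infinitary cases $\sup_n\phi_n$ and $\inf_n\phi_n$ reduce, by the inductive hypothesis, to the supremum or infimum of a countable set of reals in $V$, and are therefore absolute; this holds in both infinitary formalisms referenced in the excerpt, since we are not using any continuity of the outer function.

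The only case that really engages the statement of the lemma is the bounded quantifier $\phi(\bar x)=\sup_{\|y\|\le n}\psi(\bar x,y)$ (and its dual $\inf$). Here the point is that the $n$-ball $\{b\in M\mid\|b\|\le n\}$ is defined from the norm (a function in $V$) on the fixed underlying set of $M$, so it is the same set in $V$ and in $V[G]$; by the inductive hypothesis the values $\psi^M(\bar a,b)$ are absolute for each such $b$, so $\{\psi^M(\bar a,b)\mid b\in M,\ \|b\|\le n\}$ is the same set of reals in both models, whence its supremum is absolute. The main obstacle, and the conceptual payoff of the lemma, is precisely that this step would fail if we required $M$ to be complete: a Cauchy sequence in $M$ without a limit in $V$ might acquire a limit in $V[G]$, enlarging the $n$-ball and potentially changing the supremum. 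Dropping completeness prevents new points from entering $M$ after forcing and lets the induction run without incident.
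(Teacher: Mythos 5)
Your proof is correct and follows essentially the same induction on formula complexity as the paper's argument, with the same decomposition into atomic, connective/countable-sup/inf, and bounded-quantifier cases. Your added remarks — that the $n$-ball over which the quantifier ranges is itself an absolute set, and that dropping completeness is what prevents the ball from gaining points after forcing — are helpful elaborations but do not change the substance of the argument.
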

\begin{proof}
The proof is by induction on the complexity of formulas; the key point is that we consider the structure $M$ in $V[G]$ as the same set as it is in $V$.  The base case of the induction is the atomic formulas, which are of the form $P(\overline{x})$ for some distinguished predicate $P$.  In this case since the structure $M$ is the same in $V$ and in $V[G]$, the value of $P^M(\overline{a})$ is independent of whether it is computed in $V$ or $V[G]$.

The next case is to handle the case where $\phi$ is $f(\psi_1, \ldots, \psi_n)$, where each $\psi_i$ is a formula and $f \colon [0, 1]^n \to [0, 1]$ is continuous.  Since the formula $\phi$ is in $V$, so is the function $f$.  By induction hypothesis each $\psi_i^M(\overline{a})$ can be computed either in $V$ or $V[G]$, and so the same is true of $\phi^M(\overline{a}) = f(\psi_1^M(\overline{a}), \ldots, \psi_n^M(\overline{a}))$.  A similar argument applies to the case when $\phi$ is $\sup_n\psi_n$ or $\inf_n\psi_n$.

Finally, we consider the case where $\phi(\overline{x}) = \inf_y \psi(\overline{x}, y)$ (the case with $\sup$ instead of $\inf$ is similar).  Here we have that for every $b \in M$, $\psi^M(\overline{a}, b)$ is independent of whether computed in $V$ or $V[G]$ by induction.  In both $V$ and $V[G]$ the infimum ranges over the same set $M$, and hence $\phi^M(\overline{a})$ is also the same whether computed in $V$ or $V[G]$.
\end{proof}

We now use this absoluteness result to prove absoluteness of countable saturation under $\sigma$-closed forcing.

\begin{proposition}\label{prop:CHSat.saturationAbsolute}
Let $\mathbb{P}$ be a $\sigma$-closed notion of forcing.  Let $M$ be a metric structure, and let $\Phi$ be a set of (finitary) formulas.  Then $M$ is countably $\Phi$-saturated in $V$ if and only if $M$ is countably $\Phi$-saturated in the forcing extension $V[G]$.
\end{proposition}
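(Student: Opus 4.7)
The plan is to exploit two facts: the absoluteness of formula values provided by Lemma~\ref{lem:CHSat.absoluteness}, and the defining property of $\sigma$-closed forcing that no new countable sequences of ground model elements are added. Since $M\in V$, the underlying set of $M$, its distinguished operations/predicates, and the norm are all literally the same objects in $V$ and $V[G]$; moreover $\sigma$-closed forcing adds no new reals, so $M_{\leq 1}$ is also the same set in both models. Thus ``type'' and ``realization'' can be translated between the two models essentially verbatim, once we check which pieces of data might be new in $V[G]$.

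For the forward direction, assume $M$ is countably $\Phi$-saturated in $V$, and in $V[G]$ let $\mathfrak{t}=\{\phi_n^M(\bar x)\in K_n\mid n\in\omega\}$ be a countable approximately finitely satisfiable type over $M_{\leq 1}$ with each $\phi_n\in\Phi$. Each $\phi_n$ is a finite syntactic object paired with a finite tuple of parameters from $M\subseteq V$, and each $K_n$ is a compact subset of $\mathbb R$, hence a real coded in $V$. Therefore the whole sequence $(\phi_n,K_n)_{n\in\omega}$ is an $\omega$-sequence of ground model elements lying in $V[G]$, and $\sigma$-closedness forces it to be in $V$. By Lemma~\ref{lem:CHSat.absoluteness}, for every finite tuple $\bar b$ from $M$, every $n$, and every $\varepsilon>0$, the statement $\phi_n^M(\bar b)\in (K_n)_\varepsilon$ has the same truth value in $V$ and $V[G]$; so the approximate finite satisfiability of $\mathfrak t$ transfers to $V$. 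Applying saturation in $V$ gives a realizing sequence $(b_k)_{k\in\omega}\subseteq M_{\leq 1}$ in $V$, and a second application of absoluteness shows that the same sequence realizes $\mathfrak t$ in $V[G]$.

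For the converse direction, assume $M$ is countably $\Phi$-saturated in $V[G]$, and let $\mathfrak t=\{\phi_n^M(\bar x)\in K_n\mid n\in\omega\}$ be a countable approximately finitely satisfiable type over $M_{\leq 1}$ in $V$. The type is trivially also in $V[G]$, and approximate finite satisfiability passes upward via Lemma~\ref{lem:CHSat.absoluteness}. Saturation in $V[G]$ produces a realization $(b_k)_{k\in\omega}\subseteq M_{\leq 1}$ in $V[G]$. Since each $b_k$ is an element of $M\in V$, the sequence is an $\omega$-sequence of ground model elements; by $\sigma$-closedness it already lies in $V$. A final invocation of Lemma~\ref{lem:CHSat.absoluteness} shows that $(b_k)$ realizes $\mathfrak t$ in $V$ as well.

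The only conceptual obstacle is making sure that nothing genuinely new can appear in $V[G]$ that could witness or destroy approximate satisfiability, and this is exactly what $\sigma$-closedness rules out at the level of countable data (the type, the compact sets $K_n$, and any candidate realization); Lemma~\ref{lem:CHSat.absoluteness} then ensures that such data have the same semantic content in both models. No additional finesse is required, because all quantifications in the saturation condition range either over $M$ itself (which is unchanged) or over finite tuples and rational $\varepsilon$'s (which are unaffected by forcing).
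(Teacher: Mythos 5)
Your proof is correct and rests on the same two pillars as the paper's: Lemma~\ref{lem:CHSat.absoluteness} and the fact that $\sigma$-closed forcing adds no new $\omega$-sequences of ground-model elements. The one genuine difference is in how the realization condition is transferred. The paper packages the entire type $\mathbf{t}$ into a single infinitary formula $\phi(\overline{x})=\inf\{\psi(\overline{x})\colon\psi\in\mathbf{t}\}$ and invokes the infinitary case of Lemma~\ref{lem:CHSat.absoluteness} once, whereas you transfer the type condition by condition, so you only ever need the finitary instance of the absoluteness lemma. The price you pay is that you must separately account for the realizing sequence $(b_k)_{k\in\omega}$ being a countable object, which you do correctly by a second appeal to $\sigma$-closedness (the paper elides this by speaking only of ``finite tuples''). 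Your version is thus slightly more elementary and, arguably, more careful about where countability enters; the paper's version is shorter but leans on the infinitary-logic formalism it set up precisely for this purpose. Both arguments are sound, and your observation that the $K_n$ are coded by ground-model reals (since $\sigma$-closed forcing adds no reals) is a detail the paper leaves implicit.
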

\begin{proof}
First, observe that since $\mathbb{P}$ is $\sigma$-closed, forcing with $\mathbb{P}$ does not introduce any new countable set.  In particular, the set of types which must be realized for $M$ to be countably $\Phi$-saturated are the same in $V$ and in $V[G]$.

Let $\mathbf{t}(\overline{x})$ be a set of instances of formulas from $\Phi$ with parameters from a countable set $A \subseteq M$.  Add new constants to the language for each $a \in A$, so that we may view $\mathbf{t}$ as a type without parameters.  Define
\[\phi(\overline{x}) = \inf\{\psi(\overline{x}) \colon \psi \in \mathbf{t}\}.\]
Note that $\phi^M(\overline{a}) = 0$ if and only if $\overline{a}$ satisfies $\mathbf{t}$ in $M$.  This $\phi$ is a formula in the infinitary logic of \cite{Eagle2014}.  By Lemma \ref{lem:CHSat.absoluteness} for any $\overline{a}$ from $M$ we have that $\phi^M(\overline{a}) = 0$ in $V$ if and only if $\phi^M(\overline{a}) = 0$ in $V[G]$.  As the same finite tuples $\overline{a}$ from $M$ exist in $V$ and in $V[G]$, this completes the proof.
\end{proof}

Finally, we return to the proof of Theorem \ref{thm:CHSat.Abelian.Conds}.  All that remains is to show:

\begin{lemma}\label{lem:CHSat.removingCH}
$\CH$ can be removed from the hypothesis of Theorem \ref{thm:CHSat.qfsat}. 
\end{lemma}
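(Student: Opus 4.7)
The plan is to deduce the $\CH$-free statement from Theorem~\ref{thm:CHSat.qfsat} by a forcing-absoluteness argument, using Proposition~\ref{prop:CHSat.saturationAbsolute}. Let $X$ be compact Hausdorff $0$-dimensional with $CL(X)$ countably saturated in the ambient universe $V$. We want $C(X)$ to be countably quantifier-free saturated in $V$; by Proposition~\ref{prop:CHSat.saturationAbsolute}, applied to the metric structure $C(X)^V$, it suffices to show this in some $\sigma$-closed forcing extension $V[G]$. Take $\mathbb P$ to be a $\sigma$-closed forcing that forces $\CH$, for instance the $\sigma$-closed Lévy collapse $\mathrm{Coll}(\omega_1, 2^{\aleph_0})$, and let $G$ be $V$-generic. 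In $V[G]$, $\CH$ holds; moreover $CL(X)^V$ (unchanged as a set with Boolean operations) remains countably saturated, either by applying Proposition~\ref{prop:CHSat.saturationAbsolute} to it as a metric structure with the discrete metric, or directly from the fact that $\sigma$-closed forcing adds no new countable sequences of ground-model elements and hence no new types.

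Work now inside $V[G]$. Let $Y = S(CL(X)^V)$. Then $Y$ is compact Hausdorff $0$-dimensional in $V[G]$ with $CL(Y) = CL(X)^V$ countably saturated, so by Theorem~\ref{thm:CHSat.qfsat}, available since $\CH$ holds in $V[G]$, the algebra $C(Y)^{V[G]}$ is countably quantifier-free saturated in $V[G]$. The key claim is that the natural map $\iota \colon C(X)^V \to C(Y)^{V[G]}$ arising from Proposition~\ref{prop:CHSat.coding} is a $^*$-isomorphism. Given $f\in C(X)^V$, its coding $\tilde X_f \subseteq CL(X)^V$ from Proposition~\ref{prop:CHSat.coding} is a countable family of elements of $CL(X)^V$; interpreting the same family in $V[G]$ defines a continuous function on $Y$, and this is $\iota(f)$. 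The coding is purely combinatorial in $CL(X)^V$, which is the same Boolean algebra in $V$ and in $V[G]$, so $\iota$ is well defined, isometric, and preserves the $^*$-algebra operations.

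For surjectivity, given $g\in C(Y)^{V[G]}$, apply Proposition~\ref{prop:CHSat.coding} in $V[G]$ to obtain a countable coding $\tilde Y_g$ of $g$ by elements of $CL(X)^V$. Since $\mathbb P$ is $\sigma$-closed, every countable sequence of elements of $V$ that lives in $V[G]$ already lives in $V$, so $\tilde Y_g\in V$ and determines some $f\in C(X)^V$ with $\iota(f)=g$. Consequently $C(X)^V\cong C(Y)^{V[G]}$ in $V[G]$, so $C(X)^V$ is countably quantifier-free saturated in $V[G]$, and by Proposition~\ref{prop:CHSat.saturationAbsolute} also in $V$. The main obstacle is the surjectivity of $\iota$: one must see that the coding given by Proposition~\ref{prop:CHSat.coding} in $V[G]$ already lives in the ground model, and this is exactly what $\sigma$-closedness of $\mathbb P$ provides.
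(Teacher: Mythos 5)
Your argument is correct and follows essentially the same route as the paper: force $\CH$ with a $\sigma$-closed poset, apply Theorem~\ref{thm:CHSat.qfsat} in the extension, and transfer countable quantifier-free saturation back to $V$ via Proposition~\ref{prop:CHSat.saturationAbsolute}. The only real difference is in one step: where you construct the isomorphism $C(X)^V\cong C(S(CL(X)))^{V[G]}$ by hand from the coding of Proposition~\ref{prop:CHSat.coding} (using $\sigma$-closedness for surjectivity and density of the ground-model ultrafilters), the paper obtains the same identification by observing, via Lemma~\ref{lem:CHSat.absoluteness}, that $C(X)^V$ still satisfies the axioms of commutative unital real-rank-zero $\Cstar$-algebras in $V[G]$ and has $CL(X)$ as its projection lattice, so Gelfand duality applies in the extension.
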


\begin{proof}
Let $X$ be a $0$-dimensional compact space such that $CL(X)$ is countably saturated, and suppose that $\CH$ fails.  Let $\mathbb{P}$ be a $\sigma$-closed forcing which collapses $2^{\aleph_0}$ to $\aleph_1$ (see \cite[\S VII.6]{kunen:settheory}).  Let $A = C(X)$ and $B = CL(X)$.  Observe that since $\mathbb{P}$ is $\sigma$-closed we have that $A$ remains a complete metric space in $V[G]$, and by Lemma \ref{lem:CHSat.absoluteness} $A$ still satisfies the axioms for commutative unital $\Cstar$-algebras of real rank zero.  Also by Lemma \ref{lem:CHSat.absoluteness} we have that $B$ remains a Boolean algebra, and the set of projections in $A$ in both $V$ and $V[G]$ is $B$.  We note that it may not be true in $V[G]$ that $X = S(B)$, or even that $X$ is compact (see \cite{Tall}), but this causes no problems because it follows from the above that $A = C(S(B))$ in $V[G]$.  By Proposition \ref{prop:CHSat.saturationAbsolute} $B$ remains countably saturated in $V[G]$.  Since $V[G]$ satisfies $\CH$ we can apply Theorem \ref{thm:CHSat.qfsat} to conclude that $A$ is countably quantifier-free saturated in $V[G]$, and hence also in $V$ by Proposition \ref{prop:CHSat.saturationAbsolute}.
\end{proof}

With $\CH$ removed from Theorem \ref{thm:CHSat.qfsat}, we have completed the proof of Theorem \ref{thm:CHSat.Abelian.Conds}.  It would be desirable to improve this result to say that if $CL(X)$ is countably saturated then $C(X)$ is countably saturated.  We note that if the map $\phi$ in Theorem \ref{thm:CHSat.qfsat} could be taken to be an elementary map then the same proof would give the improved conclusion.
\subsubsection{Proof of Theorem \ref{thm:CHSat.Abelian.Conds2}}
We now turn to the proof of Theorem \ref{thm:CHSat.Abelian.Conds2}. We start from the easy direction:

\begin{proposition}\label{prop:CHSat.totaldisc}
If $X$ is a $0$-dimensional compact space with finitely many isolated points such that $C(X)$ is countably degree-$1$ saturated, then the Boolean algebra $CL(X)$ is countably saturated.
\end{proposition}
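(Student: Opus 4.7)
The plan is to reduce the claim, via Theorem~\ref{thm:CHSat.satbooleanalgebra}, to a single splitting instance, encode that instance as a degree-$1$ type in $C(X)$, and then cash in the sub-Stonean property that countable degree-$1$ saturation of $C(X)$ forces on $X$ (by Lemma~\ref{lem:CHSat.abcondition}, clause~\ref{abcond4}, applied contrapositively).

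First I will dispose of the finitely many isolated points. The set $F$ of isolated points of $X$ is finite and clopen, so $CL(X)\cong CL(X\setminus F)\times\mathcal P(F)$ with a finite second factor; hence it suffices to prove countable saturation of $CL(X\setminus F)$. Since $C(X\setminus F)$ is a direct summand of $C(X)$ cut off by the central projection $\chi_{X\setminus F}$, and countable degree-$1$ saturation passes to such summands by adjoining to any type the degree-$1$ condition $\|\chi_F\,x\|=0$ pinning the realization to the summand, I may assume $X$ has no isolated points, so that $CL(X)$ is atomless. Theorem~\ref{thm:CHSat.satbooleanalgebra} then reduces countable saturation of $CL(X)$ to the following problem: given increasing $\{a_n\}_{n\in\en}$ and decreasing $\{b_n\}_{n\in\en}$ in $CL(X)$ with $a_n<b_n$ for every $n$, find $c\in CL(X)$ with $a_n<c<b_n$ for all $n$.

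To produce such a $c$, I set up a degree-$1$ type in $C(X)$ whose realizations morally are characteristic functions of $c$. Writing $A_n,B_n$ for the clopen sets underlying $a_n,b_n$, consider
\[
\Phi(x)=\{\|xa_n-a_n\|=0,\ \|xb_n-x\|=0,\ \|x-a_n\|\ge 1,\ \|b_n-x\|\ge 1\}_{n\in\en}\cup\{\|x-x^*\|=0,\ \|1-x\|\le 1\}.
\]
Each instance has the form $\|P(x)\|\in K$ with $P$ a degree-$1$ $^*$-polynomial in $x$ with parameters from $C(X)_{\le 1}$ and $K\subset\er$ compact (note that $\|1-x\|\le 1$ together with $x=x^*$ and $\|x\|\le 1$ pins $x$ to be a positive contraction). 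For finite satisfiability of the fragment indexed by $n\le N$, pick, by atomlessness, a clopen $A_N\subsetneq C\subsetneq B_N$ and take $x=\chi_C$. Countable degree-$1$ saturation then furnishes $x\in C(X)_{\le 1}$ realizing $\Phi$; a short unwinding shows $x$ is a positive contraction with $x\equiv 1$ on every $A_n$, $x\equiv 0$ outside every $B_n$, and attaining both values $1$ and $0$ at points of every gap $B_n\setminus A_n$.

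Finally I will convert $x$ into a clopen set. Put $U=\{t\in X:x(t)>1/2\}$ and $V=\{t\in X:x(t)<1/2\}$: these are disjoint open $\sigma$-compact subsets of the compact space $X$. Because $X$ is sub-Stonean, $\overline U\cap\overline V=\emptyset$, and by $0$-dimensionality I can insert a clopen $C\in CL(X)$ with $\overline U\subseteq C\subseteq X\setminus\overline V$. The inclusions $A_n\subseteq\{x=1\}\subseteq U\subseteq C$ and $X\setminus B_n\subseteq\{x=0\}\subseteq V$ give $a_n\le\chi_C\le b_n$, while the witnesses in each $B_n\setminus A_n$ produced by $\|x-a_n\|\ge 1$ and $\|b_n-x\|\ge 1$ make both inequalities strict, so $c:=\chi_C$ is the required splitting element. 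The main obstacle is precisely this last step: being a projection is not a degree-$1$ condition, so $x$ itself need not be a characteristic function; the sub-Stonean property is what bridges continuous functions and clopen sets, and it is here that the assumption of countable degree-$1$ saturation is used in an essential way.
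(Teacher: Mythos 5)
Your proof is correct and follows the paper's route at every structural step: reduce to the chain-interpolation criterion of Theorem~\ref{thm:CHSat.satbooleanalgebra}, realize an interpolating positive contraction $x$ by a degree-$1$ type whose finite satisfiability is supplied by atomlessness, and then extract a clopen set using compactness and $0$-dimensionality of $X$; the isolated-points reduction likewise matches the paper's decomposition $C(X)=C(Y)\oplus C(Z)$. The one genuine divergence, on which your closing commentary is slightly off, is the last step. You invoke sub-Stonean-ness of $X$ (correctly deduced from the contrapositive of clause~\ref{abcond4} of Lemma~\ref{lem:CHSat.abcondition}) to separate $\overline{\{x>1/2\}}$ from $\overline{\{x<1/2\}}$, and declare this the place where degree-$1$ saturation ``is used in an essential way.'' The paper shows this is a detour: the level sets $\{x=1\}$ and $\{x=0\}$ are already disjoint closed sets with no help from sub-Stonean-ness, and the same compactness-plus-$0$-dimensionality argument produces a clopen set containing the first and missing the second, hence containing each $A_n$, contained in each $B_n$, and separating your strictness witnesses $t$ and $s$. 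Saturation is spent entirely on producing $x$; passing from $x$ to a clopen set is a soft topological consequence of $X$ being compact and $0$-dimensional. A smaller contrast: the paper opens with a preliminary argument that $\bigcup A_n\ne\bigcap B_n$ (via the Rickart-style remark), which your encoding renders moot, since your explicit conditions $\norm{x-a_n}\in\{1\}$ and $\norm{b_n-x}\in\{1\}$ are finitely satisfiable by atomlessness alone regardless of whether the two limits coincide.
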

\begin{proof}
Assume first that $X$ has no isolated points.  In this case we get that $CL(X)$ is atomless, so it is enough to see that $CL(X)$ satisfies the equivalent condition of Theorem~\ref{thm:CHSat.satbooleanalgebra}.

Let $Y < Z$ be directed such that $\abs{Y} + \abs{Z} < \aleph_1$.  Assume for the moment that both $Y$ and $Z$ are infinite.  Passing to a cofinal increasing sequence in $Z$ and a cofinal decreasing sequence in $Y$, we can suppose that $Z=\{U_n\}_{n\in\en}$ and $Y=\{V_n\}_{n\in\en}$, where
\[U_1\subsetneq\ldots\subsetneq U_n\subsetneq U_{n+1}\subsetneq\ldots\subsetneq V_{n+1}\subsetneq V_n\subsetneq\ldots\subsetneq V_1.\]
If $\bigcup_{n\in\en} U_n=\bigcap_{n\in\en} V_n$ then $\bigcup_{n\in\en} U_n$ is a clopen set, so by the remark following the proof of Lemma \ref{lem:CHSat.abcondition}, we have a contradiction to the countable degree-$1$ saturation of $C(X)$. 

For each $n \in \en$, let $p_n=\chi_{U_n}$ and $q_n=\chi_{V_n}$, where $\chi_A$ denotes the characteristic function of the set $A$. Then 
\[p_1<\ldots < p_n<p_{n+1}<\ldots<q_{n+1}<q_n<\ldots<q_1\] 
and by countable degree-$1$ saturation there is a positive $r$ such that $p_n<r<q_n$ for every $n$. In particular $A=\{x\in X\colon r(x)=0\}$ and $C=\{x\in X\colon r(x)=1\}$ are two disjoint closed sets such that $\overline{\bigcup_{n\in\en} U_n}\subseteq C$ and $\overline{X\setminus\bigcap_{n\in\en} V_n}\subseteq A$. We want to find a clopen set $D$ such that $A\subseteq D\subseteq X\setminus C$. For each $x\in A$ pick $W_x$ a clopen neighborhood contained in $X\setminus C$. Then $A\subseteq\bigcup_{x\in A}W_x$. By compactness we can cover $A$ with finitely many of these sets, say $A\subseteq \bigcup_{i\leq n}W_{x_i}\subseteq X\setminus C$, so $D = \bigcup_{i\leq n}W_{x_i}$ is the desired clopen set.

Essentially the same argument works when either $Y$ or $Z$ is finite.  We need only change some of the inequalities from $<$ with $\leq$, noting that a finite directed set has always a maximum and a minimum.

If $X$ has a finite number of isolated points, write $X = Y \cup Z$, where $Y$ has no isolated points and $Z$ is finite.  Then $C(X) = C(Y) \oplus C(Z)$ and $CL(X) = CL(Y) \oplus CL(Z)$.  The above proof shows that $CL(Y)$ is countably saturated, and $CL(Z)$ is saturated because it is finite, so $CL(X)$ is again saturated.
\end{proof}

To finish the proof of Theorem \ref{thm:CHSat.Abelian.Conds2} it is enough to show that when $X$ has no isolated points the theory of $X$ admits elimination of quantifiers, therefore countable quantifier-free saturation is equivalent to countable saturation. This was an unpublished result of Farah and Hart, that later appeared in \cite{EFKV.QE}. By later work on quantifier elimination (\cite{EFKV.QE} and \cite{EGV.Pseudo}), it is now known that $C(\beta\NN\setminus\NN)$ (which is elementary equivalent to $C(2^\NN)$) is the only infinite dimensional $\Cstar$-algebra which admits elimination of quantifiers in the theory of unital $\Cstar$-algebras.

The proof of Theorem \ref{thm:CHSat.Abelian.Conds2} is now complete by combining Theorem \ref{thm:CHSat.Abelian.Conds}, Proposition \ref{prop:CHSat.totaldisc}, and that the theory of unital abelian $\Cstar$-algebras of real rank zero without minimal projections (i.e., the theory of $C(2^\NN)$) has quantifier elimination.

\section{$\CH$ and homeomorphisms of \v{C}ech-Stone remainders of manifolds}\label{s:CH.SCmani}
This section is dedicated to show the first instances of the validity of Conjecture~\ref{conj:CH} for projectionless abelian algebras whose spectrum has Hausdorff dimension greater than $1$. Recall that if $X$ is a locally compact noncompact Polish space, then the corona of $C_0(X)$ is isomorphic to $C(\beta X\setminus X)$, the continuous functions on the \v{C}ech-Stone remainder of $X$, and automorphisms of $C(\beta X\setminus X)$ correspond bijectively to homeomorphisms of $\beta X\setminus X$. Throughout this section $X$ will always denote a locally compact noncompact Polish topological space.

With $\Homeo(\beta X\setminus X)$ and $\Triv(\beta X\setminus X)$ the sets of all, and trivial, homeomorphisms of $\beta X\setminus X$ respectively (see Definition~\ref{defin:TrivialAbel}), by the table in \S\ref{sss:CFconj}, it was unclear whether $\CH$ implies that $\Homeo(\beta X\setminus X)\neq\Triv(\beta X\setminus X)$ for connected spaces of dimension greater $1$. In particular, it was not known whether it is consistent to have a nontrivial homeomorphism of $\beta\er^n\setminus\er^n$, for $n\geq 2$.

The following theorem, contained in \cite{V.Nontrivial}  settles this uncertainty:
\begin{theorem}\label{thm:CHMani}
Let $X$ be a locally compact noncompact manifold. Then $\CH$ implies that $\Homeo(\beta X\setminus X)$ has nontrivial elements.
\end{theorem}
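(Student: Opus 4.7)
The plan is to adapt Rudin's classical construction of a nontrivial self-homeomorphism of $\beta\NN\setminus\NN$ to the manifold setting, exploiting the flexibility in the annular regions of a nice compact exhaustion of $X$.

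First I would fix a compact exhaustion $X_0\subseteq X_1\subseteq\cdots$ of $X$ by codimension-zero compact submanifolds with collared boundary $\delta X_n$, whose union is $X$. Since $X$ is a manifold, a collar-neighbourhood argument gives that each annular region $A_n=X_{n+1}\setminus\mathrm{int}(X_n)$ is homeomorphic to $\delta X_n\times[0,1]$. In particular each $A_n$ admits a large supply of self-homeomorphisms fixing the boundary pointwise; fix once and for all a non-identity $\sigma_n\colon A_n\to A_n$ which is the identity on $\delta X_n\cup\delta X_{n+1}$ (for instance, a rotation supported inside a chart contained in $\mathrm{int}(A_n)$). For any $S\subseteq\NN$ the piecewise map $\sigma_S$ that equals $\sigma_n$ on $A_n$ for $n\in S$ and the identity elsewhere is a homeomorphism of $X\setminus\mathrm{int}(X_0)$, and because $\sigma_S$ is the identity on the sequence of compact boundaries $\delta X_n$ it extends uniquely to a homeomorphism $\sigma_S^*$ of $\beta X\setminus X$.

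Next I would use $\CH$ to enumerate as $\{f_\alpha\mid\alpha<\omega_1\}$ all \emph{trivial} homeomorphisms of $\beta X\setminus X$; each is induced by a homeomorphism $g_\alpha\colon X\setminus K_1^\alpha\to X\setminus K_2^\alpha$ between cocompact open subsets of $X$, hence is encoded by countably much data. Running a transfinite back-and-forth of length $\omega_1$, I would build a decreasing family of conditions pinning down some $S\subseteq\NN$ (and simultaneously dually twisting on the other side, to maintain a homeomorphism), at stage $\alpha$ ensuring that the eventual $\sigma_S^*$ does not agree with $f_\alpha$ on the corona. To carry out successor stages I would appeal to the countable saturation of the relevant corona data (an extension of the Farah--Shelah reduced-product/saturation results from \cite{Farah-Shelah.RCQ} to compact exhaustions with boundaries of locally bounded complexity, which in the manifold case is automatic) so that countably many compatible constraints assembled so far can be simultaneously realised.

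The main obstacle will be the diagonalisation step: given a trivial homeomorphism $f_\alpha$ realised by $g_\alpha\colon X\setminus K_1^\alpha\to X\setminus K_2^\alpha$, produce an annulus $A_n$ large in the enumeration (so untouched by previous constraints) on which $g_\alpha$ necessarily fails to mimic the twist $\sigma_n$ after passing to the corona. Concretely, one has to show that no single continuous bijection between cocompact subsets of $X$ can simultaneously reproduce all but finitely many of the independent annular twists $\sigma_n$: because $g_\alpha$ must respect the ends of $X$ and shift each $A_n$ to a ``nearby'' cofinite translate while matching the prescribed twist, a counting/continuity argument on the space of germs of homeomorphisms of $A_n$ at $\delta X_n$ — combined with the fact that there are only $\mathfrak c=\aleph_1$ many $f_\alpha$ but $2^{\aleph_0}$ many $\sigma_S^*$ with distinct corona traces — gives the required incompatibility. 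Collecting the construction, the resulting $\sigma_S^*$ is a homeomorphism of $\beta X\setminus X$ that differs from every $f_\alpha$, hence is nontrivial.
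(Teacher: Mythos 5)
There is a genuine gap, and it is fatal to the strategy as written. For any $S\subseteq\NN$, your map $\sigma_S$ is an honest homeomorphism of the cocompact set $X\setminus\mathrm{int}(X_0)$ onto itself, so $\sigma_S^*$ is \emph{trivial} in the sense of Definition~\ref{defin:TrivialAbel} (take $K_1=K_2=X_0$). Hence every homeomorphism of $\beta X\setminus X$ your construction can possibly output already lies in $\Triv(\beta X\setminus X)$, and no transfinite diagonalisation against an enumeration of the trivial homeomorphisms can succeed: you are choosing your witness from inside the very class you are trying to escape. (The closing cardinality remark also gives nothing, since under $\CH$ the number of sets $S$ is $2^{\aleph_0}=\mathfrak c=\aleph_1$, the same as the number of trivial maps.) The paper's proof gets around exactly this obstruction: it does \emph{not} fix one twist per annulus. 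Instead it uses $\mathfrak d=\omega_1$ to fix a fast $\leq^*$-cofinal family $\{f_\alpha\}_{\alpha<\omega_1}$, filters the corona as $\bigcup_\alpha C_{f_\alpha}$ by modulus-of-continuity classes, and builds a \emph{coherent} $\omega_1$-sequence of twists $\phi_n^\alpha$ whose radii shrink like $1/f_\alpha(n)$; the automorphism is defined level by level ($\tilde\Psi(g)=\tilde\psi_\alpha(g)$ for $g\in C_{f_\alpha}$) and is not induced by any single homeomorphism of $X$. Distinctness of the $2^{\aleph_1}=2^{\mathfrak c}$ resulting automorphisms, versus only $\mathfrak c$ trivial ones, then gives nontriviality by counting --- no diagonalisation against trivial maps occurs.

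A second, independent problem is your appeal to ``countable saturation of the relevant corona data,'' which you assert is automatic for manifolds via an extension of the Farah--Shelah results. It is not: their criterion requires an exhaustion with $\sup_n|\delta X_n|<\infty$, which fails for every manifold of dimension at least $2$ (the boundaries are infinite connected sets), and whether $C(\beta\er^n\setminus\er^n)$ is countably saturated for $n\geq 2$ is open. The paper's argument is designed precisely to avoid any saturation hypothesis, using only $\mathfrak d=\omega_1$ and $2^{\aleph_0}<2^{\aleph_1}$. To repair your proof you would need to replace the single family $\{\sigma_S\}$ by twists of unboundedly fine scale organised along a dominating family, which is essentially the paper's notion of a flexible space.
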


The rest of the section is dedicated to prove a stronger version of Theorem~\ref{thm:CHMani}, which relies on the definition of a flexible space (see Definition~\ref{defin:flexible}). We obtain a result concerning the algebra $Q(X,A)=C_b(X,A)/C_0(X,A)$ (see~\S\ref{s:multicorona}), for a flexible space $X$ and a $\Cstar$-algebra $A$, and we will then prove Theorem~\ref{thm:CHMani} from Theorem~\ref{thm:CHFlexi}. Lastly, in \S\ref{ss:arigidspace} we give an example of a very rigid space $X$ to which our result does not yield the existence of nontrivial elements of $\Homeo(\beta X\setminus X)$.

\begin{theorem}\label{thm:CHFlexi}
Let $X$ be a flexible space and $A$ be a $\Cstar$-algebra. Suppose that $\mathfrak d=\omega_1$ and $2^{\aleph_0}<2^{\aleph_1}$. Then $Q(X,A)$ has $2^{\aleph_1}$-many automorphisms. In particular, under $\CH$, there are $2^{\mathfrak c}$-many automorphisms of $Q(X,A)$, and so nontrivial ones.
\end{theorem}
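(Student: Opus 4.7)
The strategy is a transfinite back-and-forth of length $\omega_1$ with binary branching at each stage. Using $\mathfrak d=\omega_1$, I would first fix a $\leq^*$-cofinal family $\{g_\alpha:\alpha<\omega_1\}\subseteq \NN^\NN$ and an increasing exhaustion $X=\bigcup_{\alpha<\omega_1}K_\alpha$ by compact sets with $K_\alpha\subseteq\operatorname{int} K_{\alpha+1}$, arranging that the rate at which the annular regions $K_{\alpha+1}\setminus K_\alpha$ exhaust $X$ is controlled by the $g_\alpha$. Simultaneously, using that the strict topology on bounded sets of $\mathcal{M}(C_0(X,A))$ is Polish, I would enumerate along $\omega_1$ countable dense pieces of the unit ball of $C_b(X,A)$ so that each $\alpha < \omega_1$ comes equipped with a finite set of ``tasks'' (parameters to place in the domain and range of the eventual automorphism) localized to $K_\alpha$.

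At each stage $\alpha$ the induction produces, for every node $\sigma\in 2^\alpha$, a partial $^*$-isomorphism $\Phi_\sigma$ between subalgebras of $Q(X,A)$ that are already ``saturated'' below $\alpha$, together with a lift defined on elements supported essentially in $K_\alpha$. The core combinatorial input is the flexibility of $X$: at each successor stage it supplies two genuinely inequivalent self-homeomorphisms of the annular region $\overline{K_{\alpha+1}\setminus K_\alpha}$, which pull back to two distinct extensions of $\Phi_\sigma$ to $\Phi_{\sigma\cat 0}$ and $\Phi_{\sigma\cat 1}$ that agree modulo $C_0(X,A)$ on everything already handled but disagree on some new norm-$1$ element supported in $K_{\alpha+1}\setminus K_\alpha$. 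The indispensable technical ingredient for closing up the induction is that $Q(X,A)$ is countably degree-$1$ saturated by Theorem~\ref{thm:cd1sFH} (applied to the $\sigma$-unital algebra $C_0(X,A)$, noting that $\mathfrak d=\omega_1$ makes $X$ $\sigma$-compact so that this is available); countable saturation at the quantifier-free level is what allows the amalgamation of partial maps at limit ordinals of countable cofinality, and the control by the cofinal $\{g_\alpha\}$ is what prevents unbounded accumulation of error.

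For each branch $\sigma\in 2^{\omega_1}$ the resulting automorphism $\Phi_\sigma$ of $Q(X,A)$ is well-defined, and the construction is arranged so that whenever $\sigma\neq\tau$ split at coordinate $\alpha$, the images $\Phi_\sigma(\pi(f))$ and $\Phi_\tau(\pi(f))$ differ for the witness $f$ attached to that stage. The main obstacle is checking that these local incompatibilities persist in the quotient rather than being absorbed by $C_0(X,A)$; this requires tracking that the witnesses $f$ chosen at stage $\alpha$ have norm $1$ \emph{modulo} $C_0(X,A)$ and that subsequent extensions never undo the discrepancy, which is precisely where flexibility beyond mere rigidity is needed. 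This yields $2^{\aleph_1}$ distinct automorphisms of $Q(X,A)$.

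Finally, a trivial automorphism of $Q(X,A)$ (in the sense of Definition~\ref{defin:TrivialNonAbel}) has a Borel graph in the strict topology on $\mathcal{M}(C_0(X,A))_{\le 1}^2$, which is Polish, so there are at most $\mathfrak c=2^{\aleph_0}$ trivial automorphisms. The assumption $2^{\aleph_0}<2^{\aleph_1}$ then forces all but a negligible fraction of the $2^{\aleph_1}$ constructed automorphisms to be nontrivial. Under $\CH$ we have both $\mathfrak d=\omega_1$ and $2^{\aleph_0}=\aleph_1<2^{\aleph_1}=2^{\mathfrak c}$, so the hypotheses hold and the count becomes $2^{\mathfrak c}$ automorphisms, completing the ``in particular'' clause.
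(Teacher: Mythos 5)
Your outline captures several genuine ingredients of the argument --- the binary tree of length $\omega_1$, the use of $\mathfrak d=\omega_1$ to get a cofinal family controlling ``rates'', the role of flexibility in producing two inequivalent extensions at each successor stage via homeomorphisms of compact pieces, the norm-$1$-mod-$C_0(X,A)$ witnesses for distinctness, and the final count of trivial automorphisms. But there is a genuine gap at the limit stages, and it sits exactly at the point you flag as ``the indispensable technical ingredient''. You propose to amalgamate the partial maps at countable limits using countable degree-$1$ saturation of $Q(X,A)$ (Theorem~\ref{thm:cd1sFH}). Degree-$1$ saturation only realizes types built from degree-$1$ $^*$-polynomial conditions; it cannot realize the types needed to extend a partial $^*$-isomorphism over a countable parameter set, which is a quantifier-free (in fact, for a genuine back-and-forth, a full) saturation task. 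Full countable saturation of $Q(X,A)$ is precisely what is \emph{not} available here --- the Farah--Shelah criterion requires $\sup_i|\delta K_i|<\infty$, which fails for manifolds of dimension $\ge 2$, and the whole point of this theorem (and of Question~\ref{q:intro2}) is to produce many automorphisms \emph{without} saturation. So as written, the amalgamation step does not close.

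The paper avoids saturation entirely. Instead of a model-theoretic back-and-forth on abstract partial isomorphisms, it filters $Q(X,A)=\bigcup_{\alpha<\omega_1}C_{f_\alpha}$ along a fast $\leq^*$-cofinal sequence $\{f_\alpha\}$ (this is where $\mathfrak d=\omega_1$ enters), and for each $p\in 2^{\omega_1}$ builds a \emph{coherent} sequence of honest homeomorphisms $\phi_n^\alpha(p)$ of the compact witnesses $Y_n$, where coherence means $r(\phi_n^\alpha(\phi_n^\beta)^{-1})\le k/f_\alpha(n)$ eventually. At a countable limit $\beta$ the extension is purely combinatorial: one fixes a cofinal $\omega$-sequence $\alpha_i\uparrow\beta$ and defines $\phi_n^\beta=\phi_n^{\alpha_i}$ for $n$ in a suitable interval $[m_{i-1},m_i)$, using the fast growth of the $f_\alpha$'s to keep the coherence estimates. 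Lemma~\ref{lemma:coherent->uniqueness} then glues the induced automorphisms of the $C_{f_\alpha}$'s into a single automorphism of $Q(X,A)$, and Lemma~\ref{lem:CHMani:agreeing}(2) guarantees that the successor-stage discrepancies (of radius $\ge n/f_{\alpha+1}(n)$) survive into the quotient. If you replace your saturation-based amalgamation with this coherence-plus-diagonalization mechanism, the rest of your argument goes through.
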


\subsection{Flexible spaces and the proof of Theorem~\ref{thm:CHFlexi}}
Let $X$ be locally compact, noncompact and Polish, and fix a metric $d$ which is inducing the topology on $X$. 

Given a closed $Y\subseteq X$ we say that $\phi\in \Homeo(Y)$ \emph{fixes the boundary} of $Y$ if, whenever $y\in \bd_X(Y)=Y\cap (\overline{X\setminus Y})$, then $\phi(y)=y$. We denote the set of all such homeomorphisms by $\Homeo_{\bd_X(Y)}(Y)$ (or $\Homeo_{\bd}(Y)$ if $X$ is clear from the context). Every $\phi\in \Homeo_{\bd}(Y)$ can be extended in a canonical way to $\tilde\phi\in\Homeo(X)$ by 
\[
\tilde\phi(x)=\begin{cases}\phi(x) & \text{ if }x\in Y\\
x&\text{otherwise.}
\end{cases}
\]
If $Y_n\subseteq X$, for $n\in \NN$, are closed and disjoint sets with the property that no compact subset of $X$ intersects infinitely many $Y_n$'s, we have that $Y=\bigcup Y_n$ is closed. If $\phi_n\in\Homeo_{\bd}(Y_n)$ then $\phi=\bigcup\phi_n\in\Homeo_{\bd}Y$ is well defined. In this situation we abuse of notation and say that $\tilde\phi$ as constructed above extends canonically $\{\phi_n\}$.

For a $\phi\in\Homeo(X)$ we will denote by $r(\phi)$ the \emph{radius} of $\phi$  as
\[
r(\phi)=\sup_{x\in X}d(x,\phi(x)).
\]
If $Y$ is compact and $\phi\in\Homeo_{\bd}(Y)$ we have that $r(\phi)<\infty$ and $r(\phi)$ is attained by some $y\in Y$. It can be easily verified that $r(\phi_0\phi_1)\leq r(\phi_0)+r(\phi_1)$ for $\phi_0,\phi_1\in\Homeo(X)$.

Note that every $\tilde\phi\in\Homeo(X)$ determines uniquely a $\psi\in\Aut(C_b(X,A))$, which induces a $\tilde\psi\in\Aut(Q(X,A))$. If $Y_n\subseteq X$ are disjoint closed sets with the property that no compact $Z\subseteq X$ intersects infinitely many of them and $\phi_n\in\Homeo_{\bd}(Y_n)$, we will abuse of notation and say that $\psi$ and $\tilde\psi$ are \emph{canonically determined} by $\{\phi_n\}$.

\begin{defin}\label{defin:flexible}
A locally compact noncompact Polish space $(X,d)$ is \emph{flexible} if there are disjoint sets $Y_n\subseteq X$ and $\phi_{n,m}\in\Homeo_{bd}(Y_n)$ with the following properties:
\begin{enumerate}[label=(\arabic*)]
\item\label{defin:cond1} every $Y_n$ is a compact subset of $X$ and there is no compact $Z\subseteq X$ that intersects infinitely many $Y_n$'s and
\item\label{defin:cond2} for all $n$, $r(\phi_{n,m})$ is a decreasing sequence tending to $0$ as $m\to\infty$, with $r(\phi_{n,m})\neq 0$ whenever $n,m\in\NN$.
\end{enumerate}
The sets $Y_n$ and the homeomorphisms $\phi_{n,m}$ are said to witness that $X$ is flexible.
\end{defin}
\begin{remark}\label{rem:maniareflexible}
 We don't know whether condition \ref{defin:cond2} is equivalent to having a sequence of disjoint $Y_n$'s satisfying \ref{defin:cond1} for which $\Homeo_{\bd}(Y_n)$ has a continuous path. This condition is clearly stronger than \ref{defin:cond2}. In fact, being $\Homeo_{bd}(Y_n)$ a group, if it contains a path, then there is a path $a(t)\subseteq\Homeo_{\bd}(Y_n)$ with $a(0)=Id$ and $a(t)\neq a(0)$ if $t\neq 0$. By continuity, if a path exists, it can be chosen so that $s<t$ implies $r(a(s))<r(a(t))$. Since any closed ball in $\er^n$ has this property, a typical example of a flexible space is a manifold. 

We should also note that if $X$ is a locally compact Polish space for which there is a closed discrete sequence $x_n$ and a sequence of open sets $U_n$ with $U_i\cap U_j=\emptyset$ if $i\neq j$, $x_n\in U_n$, and such that each $U_n$ is a manifold, then $X$ is flexible. In particular, if $X$ has a connected component which is a noncompact Polish manifold, then $X$ is flexible.

Lastly, if $X$ is flexible and $Y$ has a compact clopen $Z$, then $\bd(Y_n\times Z)=\bd(Y_n)\times Z$, therefore $Z_n=Y_n\times Z$ and $\rho_{m,n}=\psi_{n,m}\times id$ witness the flexibility of $X\times Y$. In particular, if $Y$ is compact and metrizable, $X\times Y$ is flexible.
\end{remark}

By $\NN^{\NN\uparrow}$ we denote the set of all increasing sequences of natural numbers, where $f(n)>0$ for all $n$. If $f_1,f_2\in\NN^{\NN\uparrow}$ we write $f_1\leq^* f_2$ if 
\[
\forall^\infty n  (f_1(n)\leq f_2(n)).
\]
If $Y_n\subseteq X$ are compact sets with the property that no compact $Z\subseteq X$ intersects infinitely many $Y_n$, we can associate to every $f\in\NN^{\NN\uparrow}$ a subalgebra of $C_b(X,A)$ as
\begin{eqnarray*}
D_f(X,A,Y_n)=\{g\in C_b(X,A)\mid &&\forall \epsilon>0\forall^\infty n\forall x,y\in Y_n \\
&&(d(x,y)<\frac{1}{f(n)}\Rightarrow \norm{g(x)-g(y)}<\epsilon)\}.
\end{eqnarray*}
We denote by $C_f(X,A,Y_n)$ the image of $D_f(X,A,Y_n)$ under the quotient map $\pi\colon C_b(X,A)\to Q(X,A)$. As $X$, $A$ and $Y_n$ will be fixed throughout the proof, we will simply write $C_f$ and $D_f$.

The following proposition clarifies the structure of the $D_f$'s and the $C_f$'s.

\begin{proposition}\label{prop:buildingblocks}
Let $(X,d)$ be a locally compact noncompact Polish space and $A$ be a $\Cstar$-algebra. Let $Y_n\subset X$ be infinite  compact disjoint sets such that no compact subset of $X$ intersects infinitely many $Y_n$'s. Then:
\begin{enumerate}[label=(\arabic*)]
\item\label{prop.bb1} For all $f\in\NN^{\NN\uparrow}$ we have that $D_f$ is a $\Cstar$-subalgebra of $C_b(X,A)$. If $A$ is unital, so is $D_f$;
\item\label{prop.bb2} if $f_1\leq^* f_2$ then $C_{f_1}\subseteq C_{f_2}$;
\item\label{prop.bb3} $C_b(X,A)=\bigcup_{f\in\NN^{\NN\uparrow}}D_f$;
\item\label{prop.bb4} for all $f\colon \NN\to\NN$ there is $g\in C_b(X,A)$ such that $\pi(g)\notin C_f$.
\end{enumerate}
\end{proposition}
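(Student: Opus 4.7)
The proof of parts \ref{prop.bb1} and \ref{prop.bb2} will be essentially routine. For \ref{prop.bb1}, note that $D_f$ is closed under the algebraic operations: if $g_1,g_2\in D_f$ with $\norm{g_i}\le M$, then the standard estimate $\norm{g_1(x)g_2(x)-g_1(y)g_2(y)}\le M(\norm{g_1(x)-g_1(y)}+\norm{g_2(x)-g_2(y)})$ together with a tail-intersection argument (splitting $\epsilon$ into $\epsilon/2$ and taking the maximum of the two cofinite sets of $n$'s) shows that $g_1g_2\in D_f$. Linearity and the $^*$-operation are immediate since $\norm{g(x)^*-g(y)^*}=\norm{g(x)-g(y)}$, and norm-closure follows from a $3\epsilon$-argument. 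The unit is the constant function $1_A$. For \ref{prop.bb2}, if $f_1\le^* f_2$ then $1/f_2(n)\le 1/f_1(n)$ for all but finitely many $n$, so $D_{f_1}\subseteq D_{f_2}$ directly from the definition, and the inclusion passes to the quotient.

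For \ref{prop.bb3}, given $g\in C_b(X,A)$, the restriction $g\rs Y_n$ is uniformly continuous on the compact set $Y_n$. Hence for each $n,k\in\NN$ I can choose $\delta_{n,k}>0$ (decreasing in $k$) such that $x,y\in Y_n$ and $d(x,y)<\delta_{n,k}$ imply $\norm{g(x)-g(y)}<1/k$. Define $f(n)$ strictly increasing with $1/f(n)\le \delta_{n,n}$. Given $\epsilon>0$, pick $k$ with $1/k<\epsilon$; for $n\ge k$, $1/f(n)\le\delta_{n,n}\le\delta_{n,k}$, so the defining condition of $D_f$ holds cofinitely, witnessing $g\in D_f$.

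Part \ref{prop.bb4} is the heart of the matter and the one I expect to require the most care. Fix $f\in\NN^{\NN\uparrow}$ and a contraction $a\in A$ with $\norm{a}=1$ (the case $A=0$ being trivial). Since each $Y_n$ is infinite and compact it has a limit point, so I can pick distinct $x_n,y_n\in Y_n$ with $0<d(x_n,y_n)<1/f(n)$. The set $S=\set{x_n,y_n}{n\in\NN}$ is closed and discrete in $X$: any compact neighbourhood of a point meets only finitely many $Y_n$ (by the hypothesis on $\{Y_n\}$), hence meets $S$ in a finite set, which can be shrunk to isolate or exclude the point. Using normality of $X$ I can choose pairwise disjoint open neighbourhoods of the points of $S$ and associated bump functions, producing a bounded continuous $g\colon X\to A$ with $g(x_n)=a$ and $g(y_n)=0$ for every $n$ (alternatively, apply Dugundji's extension theorem to $S\to A$ sending $x_n\mapsto a$, $y_n\mapsto 0$).

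It remains to show $\pi(g)\notin C_f$. Suppose toward contradiction that $h\in D_f$ with $g-h\in C_0(X,A)$. Because every compact subset of $X$ meets only finitely many $Y_n$, the sequences $(x_n)$ and $(y_n)$ leave every compact set, so $\norm{(g-h)(x_n)}\to 0$ and $\norm{(g-h)(y_n)}\to 0$. Therefore $\norm{h(x_n)-h(y_n)}\to\norm{g(x_n)-g(y_n)}=\norm{a}=1$. On the other hand $d(x_n,y_n)<1/f(n)$, so applying the defining property of $D_f$ with $\epsilon=1/2$ forces $\norm{h(x_n)-h(y_n)}<1/2$ for all but finitely many $n$, a contradiction. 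The main technical obstacle, which the argument above handles by construction, is to guarantee that the pair $(x_n,y_n)$ is genuinely available inside $Y_n$ at scale $<1/f(n)$ and that the resulting pointwise assignment on $S$ can be globalised to an element of $C_b(X,A)$; closedness and discreteness of $S$ make the extension step painless.
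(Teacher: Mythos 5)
Your proof is correct and follows essentially the same route as the paper: part (3) by choosing $f$ from the moduli of uniform continuity of $g$ on each compact $Y_n$, and part (4) by selecting two points in each $Y_n$ at distance $<1/f(n)$, separating them by a bounded continuous function, and observing that any representative of the corresponding class in $D_f$ would be forced to oscillate by less than $1/2$ on almost all $Y_n$. The paper states (1), (2), and the final step of (4) without elaboration; your proposal simply fills in those routine verifications.
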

\begin{proof}
(1) and (2) follow directly from the definition of $D_f$ and $C_f$. For (3), take $g\in C_b(X,A)$. Since each $Y_n$ is compact and metric we have that $g\restriction Y_n$ is uniformly continuous. In particular there is $\delta_n>0$ such that $d(x,y)<\delta_n$ implies $\norm{g(x)-g(y)}<2^{-n}$ for all $x,y\in Y_n$. Fix $m_n$ such that $\frac{1}{m_n}<\delta_n$ and let $f(n)=m_n$. Then $g\in D_f$.

For (4), fix $f\in\NN^{\NN\uparrow}$ and $x_n\neq y_n\in Y_n$ with $d(y_n,z_n)<\frac{1}{f(n)}$. Since no compact set intersects infinitely many $Y_n$'s, both $Y'=\{y_n\}_n$ and $Z'=\{z_n\}_n$ are closed in $X$. Pick any $a\in A$ with $\norm{a}=1$ and let $g$ be a bounded continuous function such that $g(Y')=0$ and $g(Z')=a$. It is easy to see that $g\notin C_f$.
\end{proof}
The following Lemma represents the connections between the filtration we obtained and an automorphism of $Q(X,A)$.
\begin{lemma}\label{lem:CHMani:agreeing}
Let $(X,d)$, $A$, and $Y_n$ be as in Proposition~\ref{prop:buildingblocks} and suppose that $\phi_n\in \Homeo_{\bd}(Y_n)$. Let $\tilde\phi\in\Homeo(X)$ and $\tilde\psi\in\Aut(Q(X,A))$ be canonically determined by $\{\phi_n\}$ and $f\in\NN^{\NN\uparrow}$. Then:
\begin{enumerate}[label=(\arabic*)]
\item if there are $k,n_0$ such that for all $n\geq n_0$ 
\[
r(\phi_n)\leq\frac{k}{f(n)}
\]
 we have that $\tilde\psi(g)=g$ for all $g\in C_f$;
\item if for infinitely many $n$ we have that 
\[
r(\phi_n)\geq\frac{n}{f(n)}.
\]
 then there is $g\in C_f$ such that $\tilde\psi(g)\neq g$.
\end{enumerate}
\end{lemma}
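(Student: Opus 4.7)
The plan is to control both parts through the induced automorphism $\psi$ on $C_b(X,A)$ given by $\psi(g) = g \circ \tilde\phi^{-1}$; indeed $\tilde\psi(\pi(g)) = \pi(g)$ if and only if $g - \psi(g) \in C_0(X,A)$. Since $\tilde\phi = \id$ outside $\bigcup_n Y_n$, this difference is supported on $\bigcup_n Y_n$, and because no compact subset of $X$ meets infinitely many $Y_n$, membership in $C_0(X,A)$ reduces to checking whether $\sup_{x\in Y_n}\norm{g(x)-g(\phi_n^{-1}(x))}$ tends to $0$ as $n \to \infty$.

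For part (1), I fix $g \in D_f$ and $\epsilon > 0$. Applying the definition of $D_f$ with $\epsilon/k$ in place of $\epsilon$, I obtain $N \ge n_0$ such that for all $n \ge N$ and all $a,b \in Y_n$ with $d(a,b) < 1/f(n)$, one has $\norm{g(a)-g(b)} < \epsilon/k$. For such $n$ and any $x \in Y_n$, the point $\phi_n^{-1}(x)$ lies in $Y_n$ at distance at most $r(\phi_n) \le k/f(n)$ from $x$, and the geometric structure of $Y_n$ inside $X$ allows me to interpolate a chain $x = x_0, x_1, \ldots, x_k = \phi_n^{-1}(x)$ in $Y_n$ with consecutive distance below $1/f(n)$. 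Summing the resulting estimates yields $\norm{g(x) - g(\phi_n^{-1}(x))} < \epsilon$ uniformly in $x \in Y_n$, hence $g - \psi(g) \in C_0(X,A)$ and $\tilde\psi$ fixes $\pi(g)$.

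For part (2), let $S = \{n : r(\phi_n) \ge n/f(n)\}$, which is infinite by hypothesis. For each $n \in S$, compactness of $Y_n$ yields $x_n \in Y_n$ attaining $r(\phi_n)$, with $y_n = \phi_n(x_n)$; both lie in the interior of $Y_n$ since $\phi_n$ fixes $\bd Y_n$ and $y_n \ne x_n$. Fix $a \in A$ with $\norm{a} = 1$. The plan is to build $g \in C_b(X,A)$ supported in $\bigcup_{n \in S} Y_n$ with $g(y_n) = a$, $g(x_n) = 0$, and $g|_{Y_n}$ Lipschitz of constant $L_n = o(f(n))$. The natural choice $g|_{Y_n}(z) = a \cdot \max(0, 1 - L_n d(z, y_n))$ works provided $L_n \ge 1/r(\phi_n)$ (so $g(x_n) = 0$, for which $L_n = f(n)/n$ suffices given $d(x_n,y_n) = r(\phi_n) \ge n/f(n)$) and the support ball $B(y_n, 1/L_n)$ lies in the interior of $Y_n$ (so the function glues continuously to $0$ outside $Y_n$). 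Any Lipschitz constant $L_n = o(f(n))$ forces $g \in D_f$ because the modulus of continuity of $g|_{Y_n}$ at scale $1/f(n)$ is at most $L_n/f(n) \to 0$, and $\psi(g)(y_n) - g(y_n) = g(x_n) - g(y_n) = -a$ has norm $1$ for every $n \in S$, so $g - \psi(g) \notin C_0(X,A)$ and $\tilde\psi(\pi(g)) \ne \pi(g)$.

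The main obstacle lies in part (1): the chaining step requires that any two points of $Y_n$ at distance $k/f(n)$ can be linked inside $Y_n$ by a chain of $k$ steps of length $1/f(n)$, which is immediate when the witnesses of flexibility carry an intrinsic length metric (as in manifolds via geodesics) but must be extracted from the geometric structure of the particular $Y_n$ in general. A secondary technical point in part (2) is ensuring $y_n$ can be chosen deep enough inside $Y_n$ to accommodate the support ball; this may require adjusting the selection of $(x_n, y_n)$ via an intermediate-value argument along paths where $d(\cdot, \phi_n(\cdot))$ interpolates between $0$ on $\bd Y_n$ and $r(\phi_n)$ in the interior, so that $d(y_n, \bd Y_n) \cdot f(n) \to \infty$ and the Lipschitz constant can be chosen to meet both constraints simultaneously.
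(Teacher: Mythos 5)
Your proof follows essentially the same route as the paper's; the part (2) construction differs only cosmetically (a single Lipschitz bump centered at $y_n$ rather than the paper's pair of cone functions near $x_0(m)$ and $x_1(m)$), and the estimates are the same.

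Your chaining concern in part (1) is well-placed, and in fact the paper's own write-up contains the same lacuna: it fixes $n_1$ so that the $D_f$-modulus of $g$ on $Y_n$ at scale $1/f(n)$ is at most $\epsilon/k$, then asserts $\norm{g(x)-g(\phi_n(x))}\leq\epsilon$ directly from $d(x,\phi_n(x))\leq k/f(n)$. That step is valid only if two points of $Y_n$ at ambient distance $\leq k/f(n)$ can be linked by an in-$Y_n$ chain of roughly $k$ steps each of length $<1/f(n)$, which Definition~\ref{defin:flexible} does not guarantee on its own. It does hold for the witnesses used in the manifold case (balls, via geodesic segments), which is all that Corollary~\ref{cor:CHMani} and the subsequent applications require, so the downstream results stand; but the general statement of Lemma~\ref{lem:CHMani:agreeing} implicitly relies on this extra geometric property of the $Y_n$'s, and you are right to single it out.

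Your secondary concern in part (2) is a self-inflicted complication that the paper avoids. The paper prescribes the values of $h$ only on $\bigcup Y_n$, where they form a bounded continuous function because the $Y_n$'s are disjoint, compact, and locally finite, and then takes any bounded continuous extension $h'$ to $X$. Since membership in $D_f$ depends only on the restriction to the $Y_n$'s, no support constraint on the extension is needed. If you drop the requirement that your $g$ vanish outside $\bigcup Y_n$ and extend by Tietze, the intermediate-value argument about pushing $y_n$ into the interior of $Y_n$ becomes unnecessary and your part (2) is complete as written.
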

\begin{proof}
Note that, if $g\in C_b(X,A)$ and $\tilde\psi$ is as above, we have $\tilde\psi(g)=g$ if and only if $g-\psi(g)\in C_0(X,A)$ where $\psi\in\Aut(C_b(X,A))$ is canonically determined by $\{\phi_n\}$.

To prove (1), let $k,n_0$ as above. Fix $\epsilon>0$ and $n_1>n_0$ such that whenever $n\geq n_1$ we have that $\frac{k}{f(n)}<\epsilon$ and if $x,y\in Y_n$ with $d(x,y)<\frac{1}{f(n)}$ then $\norm{g(x)-g(y)}<\epsilon/k$. Such an $n_1$ can be found, since $g\in D_f$. 
Let now $x\notin \bigcup_{i\leq n_1}Y_i$. Since $g(x)-\psi(g(x))=g(x)-g(\tilde\phi(x))$, if $x\notin\bigcup Y_n$ we have $\tilde\phi(x)=x$ and so $g(x)-\psi(g)(x)=0$. 
If $x\in Y_n$ for $n\geq n_1$,we have $d(x,\phi_n(x))<r(\phi_n)\leq\frac{k}{f(n)}$ and by our choice of $n_1$,
\[
\norm{g(x)-\psi(g)(x)}=\norm{g(x)-g(\phi_n(x))}\leq\epsilon.
\]
Since $\bigcup_{i\leq n_1}Y_i$ is compact, we have that $g-\psi(g)\in C_0(X,A)$, and (1) follows.

For (2), let $k(n)$ be a sequence of natural numbers such that 
\[
r(\phi_{k(n)})\geq\frac{k(n)}{f(k(n))}.
\]
We will construct $h\in D_f$ and show that $h-\psi(h)\notin C_0(X,A)$. Fix some $a\in A$ with $\norm{a}=1$.
If $m\neq k(n)$ for all $n$, set $h(Y_m)=0$. If $m=k(n)$, let $r=r(\phi_m)$ and pick $x_0=x_0(m)$ such that $d(x_0,\phi_m(x_0))=r$. Set $x_1=x_1(m)=\phi_m(x_0)$ and, for $i=0,1$, let 
\[
Z_i=\{z\in Y_m\mid d(z,x_i)\leq r/2\}.
\]
If $z\in Z_0$ define 
\[
h(z)=(\frac{d(z,x_0)}{r})a
\]
and if $z\in Z_1$ let
\[
h(z)=(1-\frac{d(z,x_1)}{r})a,
\]
while for $z\in Y_m\setminus(Z_0\cup Z_1)$ let $h(z)=\frac{a}{2}$. Let $h'\in C_b(X,A)$ be any function such that $h'(x)=h(x)$ whenever $x\in\bigcup Y_i$. Note that we have that $h'\in D_f$, as this only depends on its values on $\bigcup Y_i$. We want to show that $h'-\psi(h')\notin C_0(X,A)$. To see this, note that if $m=k(n)$ for some $n$ we have 
\[
h'(x_0(m))=0\text{ and }\psi(h')(x_0(m))=h'(\psi_m(x_0(m)))=h'(x_1(m))=a.
\] 
Since $\{x_0(m)\}_{m\in\NN}$ is not contained in any compact subsets of $X$ we have the thesis.
\end{proof}

We are ready to introduce a notion of coherence for sequences of homeomorphisms.
\begin{defin}\label{defin:CHMani:coherent}
Let $(X,d)$, $Y_n$ and $A$ be as in Proposition~\ref{prop:buildingblocks} and $\kappa$ be uncountable. Let $\{f_\alpha\}_{\alpha<\kappa}\subseteq\NN^{\NN\uparrow}$ be a $\leq^*$-increasing sequence of functions and $\{\phi_n^\alpha\}_{\alpha<\kappa}$ be such that for all $\alpha$ and $n$, 
\[
\phi_n^\alpha\in\Homeo_{\bd}(Y_n).
\]
$\{\phi_n^\alpha\}$ is said \emph{coherent with respect to }$\{f_\alpha\}$ if 
\[
\alpha<\beta\Rightarrow \exists k \forall^\infty n (r(\phi_n^\alpha(\phi_n^{\beta})^{-1})\leq\frac{k}{f_\alpha(n)}).
\]

If $\gamma$ is countable, $\{\phi_n^\alpha\}_{\alpha\leq\gamma}$ is coherent w.r.t.  $\{f_\alpha\}_{\alpha\leq\gamma}\subseteq\NN^{\NN\uparrow}$ if for all $\alpha<\beta\leq\gamma$ we have that $ \exists k \forall^\infty n (r(\phi_n^\alpha(\phi_n^{\beta})^{-1})\leq\frac{k}{f_\alpha(n)})$.
\end{defin}
\begin{remark}
Definition~\ref{defin:CHMani:coherent} is stated in great generality. We don't ask for the sequence $\{f_\alpha\}_{\alpha<\kappa}$ to have particular properties (e.g., being cofinal) or for the space $X$ to be flexible, even though such notionwill be used in such context.

Note that if $\{\phi_n^\alpha\}_{\alpha<\omega_1}$ is such, that for all $\gamma<\omega_1$, $\{\phi_n^\alpha\}_{\alpha\leq\gamma}$ is coherent w.r.t. $\{f_\alpha\}_{\alpha\leq\gamma}$ then $\{\phi_n^\alpha\}_{\alpha<\omega_1}$ is coherent w.r.t. $\{f_\alpha\}_{\alpha<\omega_1}$.
\end{remark}

Recalling that $\mathfrak d$ denotes the smallest cardinality of a $\leq^*$-cofinal family in $\NN^{\NN\uparrow}$, we say that a $\leq^*$ increasing and cofinal sequence $\{f_\alpha\}_{\alpha<\kappa}\subseteq\NN^{\NN\uparrow}$, for some $\kappa>\mathfrak d$, is \emph{fast} if for all $\alpha$ and $n$, 
\[
 n f_\alpha(n)\leq f_{\alpha+1}(n).
\]
If $\{f_\alpha\}_{\alpha<\mathfrak d}$ is fast, the same argument as in Proposition~\ref{prop:buildingblocks} shows that
\[
Q(X,A)=\bigcup_\alpha C_{f_\alpha}.
\]

The following lemma is going to be key for our construction. Its proof follows almost immediately from the definitions above, but we sketch it for convenience.

\begin{lemma}\label{lemma:coherent->uniqueness}
Let $(X,d)$, $A$ and $Y_n$ be fixed as in Proposition~\ref{prop:buildingblocks}. Let $\{f_\alpha\}$ be a fast sequence and suppose that $\{\phi_n^\alpha\}$ is a coherent sequence w.r.t. $\{f_\alpha\}$. Let $\tilde\psi_\alpha\in\Aut(Q(X,A))$ be canonically determined by $\{\phi_n^\alpha\}_{n}$. Then there is a unique $\tilde\Psi\in\Aut(Q(X,A))$ with the property that 
\[
\tilde\Psi(g)=\tilde\psi_\alpha(g), \,\,\, g\in C_{f_\alpha}.
\]
\end{lemma}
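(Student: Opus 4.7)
The plan is to glue the $\tilde\psi_\alpha$ together along the filtration $\{C_{f_\alpha}\}$. Since $\{f_\alpha\}$ is fast it is $\leq^*$-cofinal, and the argument of Proposition~\ref{prop:buildingblocks}(3) yields $Q(X,A) = \bigcup_\alpha C_{f_\alpha}$. For each $g \in Q(X,A)$ I would pick any $\alpha$ with $g \in C_{f_\alpha}$ and set $\tilde\Psi(g) = \tilde\psi_\alpha(g)$. Uniqueness is then immediate: any map satisfying the hypothesis must agree with this one on $\bigcup_\alpha C_{f_\alpha}$, hence on all of $Q(X,A)$.

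The main step is well-definedness. Suppose $g \in C_{f_\alpha} \cap C_{f_\beta}$ with, without loss of generality, $\alpha \leq \beta$; I must show $\tilde\psi_\alpha(g) = \tilde\psi_\beta(g)$, equivalently that $\tilde\psi_\beta^{-1}\tilde\psi_\alpha$ fixes $g$. Unwinding the conventions, $\tilde\psi_\beta^{-1}\tilde\psi_\alpha$ is canonically determined by $\sigma_n = (\phi_n^\beta)^{-1}\phi_n^\alpha$. Writing $\sigma_n = (\phi_n^\beta)^{-1}\bigl(\phi_n^\alpha(\phi_n^\beta)^{-1}\bigr)\phi_n^\beta$ exhibits $\sigma_n$ as a conjugate of $\phi_n^\alpha(\phi_n^\beta)^{-1}$, and since conjugation by a homeomorphism of $X$ does not increase the radius,
\[ r(\sigma_n) \leq r\bigl(\phi_n^\alpha(\phi_n^\beta)^{-1}\bigr) \leq \frac{k}{f_\alpha(n)} \]
for some $k$ and all sufficiently large $n$, the second bound being the coherence hypothesis. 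Lemma~\ref{lem:CHMani:agreeing}(1) then applies and forces $\tilde\psi_\beta^{-1}\tilde\psi_\alpha$ to act as the identity on all of $C_{f_\alpha}$. This is the principal obstacle, and it is precisely what the definition of coherence was engineered to deliver: the bound is phrased in terms of the smaller index $f_\alpha$, which is exactly the input Lemma~\ref{lem:CHMani:agreeing}(1) needs in order to fix elements of $C_{f_\alpha}$.

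Once well-definedness is secured, $\tilde\Psi$ is automatically a $*$-homomorphism, because any finite tuple from $Q(X,A)$ sits inside a common $C_{f_\alpha}$ by cofinality and each $\tilde\psi_\alpha$ is itself a $*$-homomorphism; injectivity follows similarly by picking $\alpha$ with $g \in C_{f_\alpha}$ and using injectivity of $\tilde\psi_\alpha$. To obtain the automorphism property I would apply the entire construction to the inverse sequence $\{(\phi_n^\alpha)^{-1}\}$, whose coherence with respect to $\{f_\alpha\}$ follows from the same conjugation identity and whose canonically determined automorphisms are the $\tilde\psi_\alpha^{-1}$. The resulting $*$-homomorphism $\tilde\Phi$ is a two-sided inverse of $\tilde\Psi$: given $g \in C_{f_\alpha}$, pick $\beta \geq \alpha$ with $\tilde\psi_\alpha(g) \in C_{f_\beta}$ and use the compatibility from the previous paragraph to compute $\tilde\Phi(\tilde\Psi(g)) = \tilde\psi_\beta^{-1}(\tilde\psi_\beta(g)) = g$, and symmetrically for $\tilde\Psi\tilde\Phi$.
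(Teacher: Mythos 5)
Your overall strategy — glue the $\tilde\psi_\alpha$ along the filtration, reduce well-definedness to showing $(\tilde\psi_\beta)^{-1}\tilde\psi_\alpha$ fixes $C_{f_\alpha}$, and obtain invertibility by running the same construction on $\{(\phi_n^\alpha)^{-1}\}$ — is the same as the paper's. But there is a genuine gap in the well-definedness step. The assertion that ``conjugation by a homeomorphism of $X$ does not increase the radius'' is false in general: substituting $y=\psi(x)$ gives $r(\psi^{-1}\tau\psi)=\sup_y d(\psi^{-1}(y),\psi^{-1}(\tau(y)))$, which can exceed $r(\tau)$ by an arbitrary factor whenever $\psi^{-1}$ expands distances somewhere (take $X=[0,1]$, $\psi(x)=x^2$, and $\tau$ a small bump near $0$). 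Since the abstract lemma gives you no control on the moduli of continuity of the $\phi_n^\beta$, there is no fix for the conjugation bound as you've written it; all one gets from triangle inequalities is $r(\psi^{-1}\tau\psi)\le r(\tau)+2r(\psi)$, and the lemma does not bound $r(\phi_n^\beta)$.

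The good news is that the conjugation step is not actually needed: it was introduced only to patch an incorrect unwinding of the conventions. Under the paper's convention for canonical determination, visible in the proof of Lemma~\ref{lem:CHMani:agreeing} where $\psi(g)(x)=g(\tilde\phi(x))$, one computes directly that
\[
(\tilde\psi_\beta)^{-1}\tilde\psi_\alpha(g)=(g\circ\tilde\phi^\alpha)\circ(\tilde\phi^\beta)^{-1}=g\circ\bigl(\tilde\phi^\alpha(\tilde\phi^\beta)^{-1}\bigr),
\]
so $(\tilde\psi_\beta)^{-1}\tilde\psi_\alpha$ is canonically determined by $\{\phi_n^\alpha(\phi_n^\beta)^{-1}\}_n$, not by $\{(\phi_n^\beta)^{-1}\phi_n^\alpha\}_n$. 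This is exactly the quantity coherence controls, $r(\phi_n^\alpha(\phi_n^\beta)^{-1})\le k/f_\alpha(n)$ eventually, and Lemma~\ref{lem:CHMani:agreeing}(1) applies verbatim with no detour through conjugation. Once this is corrected, the rest of your argument (uniqueness, morphism property from cofinality, and invertibility via the inverse coherent sequence) goes through.
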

\begin{proof}
We define $\tilde\Psi(g)=\tilde\psi_\alpha(g)$ for $g\in C_{f_\alpha}$. If $\alpha<\beta$,  we define $\tilde\psi_{\alpha\beta}=\tilde\psi_\alpha(\tilde\psi_\beta)^{-1}$. As $\tilde\psi_{\alpha\beta}$ is canonically determined by $\{\psi_{n}^\alpha(\phi_n^{\beta})^{-1}\}_n$, and by coherence there are $k,n_0\in\NN$ such that whenever $n>n_0$ we have 
\[
r(\phi_n^\alpha(\phi_n^{\beta})^{-1})<\frac{k}{f_\alpha(n)}.
\]
 By condition (1) of Lemma~\ref{lem:CHMani:agreeing} we therefore have that $\tilde\psi_{\alpha\beta}(g)=g$ whenever $g\in C_{f_\alpha}$, and in this case $\tilde\psi_{\alpha}(g)=\tilde\psi_\beta(g)$, so $\tilde\Psi$ is a well defined morphisms of $Q(X,A)$ into itself.
Let $\tilde\psi'_\alpha\in\Aut(Q(X,A))$ be canonically determined by $\{(\psi_n^\alpha)^{-1}\}_n$. Since $\{\phi_n^\alpha\}$ is coherent w.r.t $\{f_\alpha\}$, so is $\{(\phi_n^{\alpha)^{-1}}\}$. In particular, if we let $\tilde\Psi'$ defined by $\tilde\Psi'(g)=\tilde\psi'_\alpha(g)$ for $g\in C_{f_\alpha}$, we have that $\tilde\psi'$ is a well-defined morphisms from $Q(X,A)$ into itself, with the property that $\tilde\Psi'\tilde\Psi=\tilde\Psi\tilde\Psi'=Id$, hence $\tilde\Psi$ is an automorphism.
This concludes the proof.
\end{proof}

We are now ready to proceed with the proof of Theorem~\ref{thm:CHFlexi}
\begin{proof}[Proof of Theorem~\ref{thm:CHFlexi}]
Fix $d$, $Y_n$ and $\phi_{n,m}\in\Homeo_{\bd}Y_n$ witnessing that $X$ is flexible.

We have to give a technical restriction (see Remark \ref{rem:technicalities}) on the kind of elements of $\NN^{\NN\uparrow}$ we are allowed to use. This restriction depends strongly on the choice of $d$, on the witnesses $Y_n$ and on the $\phi_{n,m}$'s. We define
\[
A_n=\{k\in \NN\mid\exists m (r(\phi_{n,m})\in [1/(k+1),1/k])\}
\]
As $r(\phi_{n,m})\to 0$ for $m\to \infty$, $A_n$ is always infinite. We define 
\[
\NN^{\NN\uparrow}(X)=\{f\in\NN^{\NN\uparrow}\mid f(n)\in A_n\}\subseteq\NN^{\NN\uparrow}
\]
Since each $A_n$ is infinite, $\NN^{\NN\uparrow}(X)$ is cofinal in $\NN^{\NN\uparrow}$.

As $\mathfrak d=\omega_1$, we can fix a fast sequence $\{f_\alpha\}_{\alpha\in\omega_1}\subseteq\NN^{\NN\uparrow}(X)$. Let $C_\alpha:=C_{f_\alpha}$.
Finally fix, for each limit ordinal $\beta<\omega_1$, a sequence $\alpha_{\beta,n}$ that is strictly increasing and cofinal in $\beta$.

We will make use of Lemma \ref{lemma:coherent->uniqueness} and construct, for each $p\in 2^{\omega_1}$, a sequence $\phi_n^\alpha(p)$ that is coherent w.r.t. $\{f_\alpha\}$.  For simplicity we write $\phi_n^\alpha$ for $\phi_n^\alpha(p)$.
Let $\phi_n^0=\Id$. Once $\phi_n^\alpha$ has been constructed, let 
\[
\phi_n^{\alpha+1}=\phi_{n,m}\phi_n^\alpha,\,\,\, \text{ if }p(\alpha)=1,
\]
where $m$ is the smallest integer such that $r(\phi_{n,m})\in[\frac{1}{f_\alpha(n)+1},\frac{1}{f_\alpha(n)}]$, and $\phi_n^{\alpha+1}=\phi_n^\alpha$ otherwise.
\begin{claim}
If $\{\phi_n^\gamma\}_{\gamma\leq\alpha}$ is coherent w.r.t. $\{f_\gamma\}_{\gamma\leq\alpha}$ then $\{\phi_n^\gamma\}_{\gamma\leq\alpha+1}$ is coherent w.r.t. $\{f_\gamma\}_{\gamma\leq\alpha+1}$.
\end{claim}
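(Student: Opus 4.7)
The plan is to split on the value of $p(\alpha)$. If $p(\alpha)=0$ there is nothing to do: $\phi_n^{\alpha+1}=\phi_n^\alpha$ for every $n$, so the new coherence requirements collapse to the old ones, which hold by hypothesis. All the work is therefore in the case $p(\alpha)=1$, where for each $n$ there is an $m=m(n)$ with $r(\phi_{n,m(n)})\in[1/(f_\alpha(n)+1),1/f_\alpha(n)]$ and $\phi_n^{\alpha+1}=\phi_{n,m(n)}\phi_n^\alpha$. I must then produce, for each $\gamma\leq\alpha$, constants $k=k(\gamma)$ and $n_0=n_0(\gamma)$ such that $r(\phi_n^\gamma(\phi_n^{\alpha+1})^{-1})\leq k/f_\gamma(n)$ for all $n\geq n_0$.

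First I would rewrite
\[
\phi_n^\gamma(\phi_n^{\alpha+1})^{-1}=\phi_n^\gamma(\phi_n^\alpha)^{-1}\phi_{n,m(n)}^{-1},
\]
and apply the subadditivity $r(AB)\leq r(A)+r(B)$ (noted right after the definition of $r$) together with $r(\phi_{n,m(n)}^{-1})=r(\phi_{n,m(n)})$ to obtain
\[
r(\phi_n^\gamma(\phi_n^{\alpha+1})^{-1})\leq r(\phi_n^\gamma(\phi_n^\alpha)^{-1})+\tfrac{1}{f_\alpha(n)}.
\]
For $\gamma=\alpha$ the first summand is $0$, so $k=1$ works everywhere. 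For $\gamma<\alpha$ the inductive hypothesis supplies some $k_\gamma$ and $n_1$ with $r(\phi_n^\gamma(\phi_n^\alpha)^{-1})\leq k_\gamma/f_\gamma(n)$ for all $n\geq n_1$, while the $\leq^*$-increasing character of $\{f_\beta\}$ gives $f_\gamma(n)\leq f_\alpha(n)$ for all $n$ past some $n_2$. Setting $n_0=\max(n_1,n_2)$ then yields
\[
r(\phi_n^\gamma(\phi_n^{\alpha+1})^{-1})\leq \frac{k_\gamma}{f_\gamma(n)}+\frac{1}{f_\alpha(n)}\leq \frac{k_\gamma+1}{f_\gamma(n)}
\]
for every $n\geq n_0$, so that $k=k_\gamma+1$ witnesses the new instance of coherence.

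This successor step is essentially bookkeeping, and the technical restriction to $f\in\NN^{\NN\uparrow}(X)$ is precisely what makes the choice of $m(n)$ possible; the genuinely nontrivial part of the overall construction will not be here but at the countable limit ordinals $\beta$, where one will have to combine the cofinal sequence $\alpha_{\beta,n}$ with a diagonal argument to produce $\phi_n^\beta$ simultaneously respecting every earlier coherence constraint.
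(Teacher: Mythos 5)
Your proof is correct and follows essentially the same route as the paper's: split on $p(\alpha)$, factor $\phi_n^\gamma(\phi_n^{\alpha+1})^{-1}=\phi_n^\gamma(\phi_n^{\alpha})^{-1}\phi_{n,m}^{-1}$, apply subadditivity of $r$ together with $r(\phi_{n,m})\leq 1/f_\alpha(n)$, and absorb the extra term using $f_\gamma\leq^* f_\alpha$. Your explicit treatment of the case $\gamma=\alpha$ and of $r(\phi^{-1})=r(\phi)$ is slightly more careful than the paper's, but the argument is the same.
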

\begin{proof}
We want to show that whenever $\gamma<\alpha$ there is $k$ such that 
\[
\forall^\infty n (r(\phi_n^\gamma(\phi_n^{\alpha+1})^{-1})\leq\frac{k}{f_\gamma(n)}).
\]
If $p(\alpha)=0$ this is clear, so suppose that $p(\alpha)=1$.

Note that 
\[
\phi_n^\gamma(\phi_n^{\alpha+1})^{-1}=\phi_n^\gamma(\phi_n^{\alpha})^{-1}\phi_{n,m}^{-1}
\]
where $m$ was chosen as above, and so
\[
r(\phi_n^\gamma(\phi_n^{\alpha+1})^{-1})\leq r(\phi_n^\gamma(\phi_n^{\alpha})^{-1})+r(\phi_{n,m})\leq  \frac{k}{f_\gamma(n)}+\frac{1}{f_\alpha(n)}
\]
for some $k$ (and eventually after a certain $n_0$). Since $f_\alpha(n)\geq f_\gamma(n)$ (again, eventually after a certain $n_1$), the conclusion follow.
\end{proof}
We are left with the limit step. Suppose then that $\phi_n^\alpha$ has be defined whenever $\alpha<\beta$. For shortness, let $\alpha_i=\alpha_{i,\beta}$.
\begin{claim}
For all $i\in\NN$ there is $k_i$ such that whenever $j\geq i$ there exists $n_{i,j}$ such that
\[
r(\phi_n^{\alpha_i}(\phi_n^{\alpha_j})^{-1}))\leq\frac{k_i}{f_{\alpha_i(n)}},
\]
whenever $n\geq n_{i,j}$
\end{claim}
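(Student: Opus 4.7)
The plan is to telescope through the cofinal sequence $(\alpha_l)_{l\in\NN}$ and exploit the fact that $\{f_\alpha\}$ is fast. First I would decompose
\[
\phi_n^{\alpha_i}(\phi_n^{\alpha_j})^{-1} = \prod_{l=i}^{j-1} \phi_n^{\alpha_l}(\phi_n^{\alpha_{l+1}})^{-1}
\]
(composed left-to-right, with the convention that for $j=i$ the product is the identity and the claim is trivial), and apply the subadditivity $r(\psi_0\psi_1)\leq r(\psi_0)+r(\psi_1)$ recorded at the start of this section to get
\[
r\bigl(\phi_n^{\alpha_i}(\phi_n^{\alpha_j})^{-1}\bigr) \leq \sum_{l=i}^{j-1} r\bigl(\phi_n^{\alpha_l}(\phi_n^{\alpha_{l+1}})^{-1}\bigr).
\]

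For each $l$ with $i \leq l < j$, the inductive coherence hypothesis applied to the pair $\alpha_l < \alpha_{l+1}$ (both less than $\beta$) furnishes constants $c_l, m_l \in \NN$ with $r(\phi_n^{\alpha_l}(\phi_n^{\alpha_{l+1}})^{-1}) \leq c_l/f_{\alpha_l}(n)$ for all $n \geq m_l$. Crucially, $c_i$ is determined by the single pair $(\alpha_i, \alpha_{i+1})$ and so depends only on $i$, not on $j$. Next, the fastness of $\{f_\alpha\}$ together with $\alpha_l + 1 \leq \alpha_{l+1}$ gives $f_{\alpha_{l+1}}(n) \geq f_{\alpha_l+1}(n) \geq n f_{\alpha_l}(n)$ for cofinitely many $n$; iterating, there is $M_l$ such that $f_{\alpha_l}(n) \geq n^{l-i} f_{\alpha_i}(n)$ for every $n \geq M_l$.

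Substituting produces
\[
r\bigl(\phi_n^{\alpha_i}(\phi_n^{\alpha_j})^{-1}\bigr) \leq \frac{1}{f_{\alpha_i}(n)}\left(c_i + \sum_{l=i+1}^{j-1} \frac{c_l}{n^{l-i}}\right)
\]
for every $n$ exceeding $\max_{i \leq l < j}(m_l, M_l)$. Because $j$ is fixed and only finitely many $c_l$ intervene, enlarging $n_{i,j}$ so that $c_l/n^{l-i} \leq 2^{i-l}$ for $l = i+1,\dots,j-1$ bounds the parenthesised sum by $c_i + 1$; setting $k_i := c_i + 1$ then completes the argument. The main point, more conceptual than technical, is the realisation that the constants $c_l$ for $l > i$ are automatically absorbed into the polynomial decay $n^{-(l-i)}$ produced by fastness, so that only the \emph{initial} constant $c_i$ contributes to $k_i$; this is precisely why $k_i$ can be taken independently of $j$, with all $j$-dependence pushed into the tail $n_{i,j}$.
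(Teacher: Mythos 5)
Your proof is correct, but it takes a different decomposition from the paper's. The paper splits $\phi_n^{\alpha_i}(\phi_n^{\alpha_j})^{-1}$ into just \emph{two} factors, $\phi_n^{\alpha_i}(\phi_n^{\alpha_{i+1}})^{-1}$ and $\phi_n^{\alpha_{i+1}}(\phi_n^{\alpha_j})^{-1}$, applies the inductive coherence hypothesis directly to the (non-consecutive) pair $(\alpha_{i+1},\alpha_j)$ to get a bound $k'(j)/f_{\alpha_{i+1}}(n)$ with a $j$-dependent constant, and then absorbs that constant by choosing $n_{i,j}>k'(j)$, so that $k'(j)/f_{\alpha_{i+1}}(n)\leq n/f_{\alpha_{i+1}}(n)\leq 1/f_{\alpha_i}(n)$ after a single application of fastness; this yields $k_i=\bar k+1$ where $\bar k$ is the coherence constant for $(\alpha_i,\alpha_{i+1})$. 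You instead telescope through all $j-i$ consecutive pairs and kill the constants $c_l$ for $l>i$ via the iterated fastness bound $f_{\alpha_l}(n)\geq n^{l-i}f_{\alpha_i}(n)$ and a geometric-series estimate, arriving at the structurally identical conclusion $k_i=c_i+1$. The paper's route is shorter and needs only two coherence constants; yours uses only coherence of \emph{consecutive} pairs of the cofinal sequence and makes the mechanism more transparent (all $j$-dependence is polynomially suppressed in $n$), at the cost of tracking finitely many thresholds $m_l$, $M_l$. Both are valid; note only that your inequality $f_{\alpha_l+1}\leq^* f_{\alpha_{l+1}}$ is correctly flagged as holding just cofinitely, which is why the thresholds $M_l$ (and hence $n_{i,j}$) must depend on $j$.
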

\begin{proof}
Fix $i\in\NN$. By coherence there are $\bar k<\bar n$ such that whenever $n\geq\bar n$ we have 
\[
r(\phi_n^{\alpha_i}(\phi_n^{\alpha_{i+1}})^{-1}))<\frac{\bar k}{f_{\alpha_i}(n)}.
\]
Let $j>i$ and $n'(j)>k'(j)>\bar n$ such that if $n\geq n'(j)$ then 
\[
r(\phi_n^{\alpha_{i+1}}(\phi_n^{\alpha_j})^{-1}))<\frac{k'(j)}{f_{\alpha_{i+1}}(n)}
\]
and
\[
f_{\alpha_{i+1}}(n)\geq nf_{\alpha_i}(n).
\]
Fix $k_i=\bar k+1$ and $n_{i,j}=n'(j)$. Then for $n\geq n_{i,j}$
\begin{eqnarray*}
r(\phi_n^{\alpha_i}(\phi_n^{\alpha_j})^{-1}))&\leq& r(\phi_n^{\alpha_i}(\phi_n^{\alpha_{i+1}})^{-1}))+r(\phi_n^{\alpha_{i+1}}(\phi_n^{\alpha_j})^{-1}))\leq \frac{k'(j)}{f_{\alpha_{i+1}}(n)}+\frac{\bar k}{f_{\alpha_i}(n)}\\
&\leq& \frac{n}{f_{\alpha_{i+1}}(n)}+\frac{\bar k}{f_{\alpha_i}(n)}\leq \frac{k_i}{f_{\alpha_i}(n)}
\end{eqnarray*}
\end{proof}
Fix an sequence of $k_i$ as provided by the claim. Let $m_0=0$ and $m_{i+1}$ be the least natural above $m_i$ such that if $n\geq m_i$ and $j> i\geq l$ then 
\[
r(\phi_n^{\alpha_l}(\phi_n^{\alpha_j})^{-1})<\frac{k_l}{f_{\alpha_l}(n)}.
\]
Defining $\phi_n^\beta=\phi_n^{\alpha_i}$ whenever $n\in [m_{i-1},m_i)$, we have that coherence is preserved, that is, $\{\phi_n^\alpha\}_{\alpha\leq\beta}$ is coherent w.r.t. $\{f_\alpha\}_{\alpha\leq\beta}$. We just proved that we can define $\phi_n^\alpha$ for every countable ordinal.

By the remark following Definition~\ref{defin:CHMani:coherent} we have that the sequence $\{\phi_n^\alpha\}_{\alpha<\omega_1}$ it is coherent w.r.t. $\{f_\alpha\}_{\alpha<\omega_1}$. By Lemma~\ref{lemma:coherent->uniqueness} there is a unique $\tilde\Psi=\tilde\Psi_p\in\Aut(Q(X,A))$ determined by $\{\phi_{n}^\alpha(p)\}_{\alpha<\omega_1}$. 

To conclude the proof, we claim that if $p\neq q$ we have $\tilde\Phi_p\neq\tilde\Phi_q$.
Let $\alpha$ be minimum such that $p(\alpha)\neq q(\alpha)$, and suppose $p(\alpha)=1$. Then 
\[
\phi_n^{\alpha}(p)=\phi_n^\alpha(q)
\]
so
\[
\phi_n^{\alpha+1}(p)=\phi_{n,m}\phi_n^{\alpha}(p)=\phi_{n,m}\phi_n^{\alpha}(q)
\]
where $m$ is the smallest integer for which $r(\phi_{n,m})\in [1/(f_\alpha(n)+1),1/f_\alpha(n)]$. In particular, eventually after a certain $n_0$,
\[
r(\phi_n^{\alpha+1}(q)\phi_n^{\alpha+1}(p)^{-1})=r(\phi_{n,m})\geq \frac{1}{f_\alpha(n)+1}\geq\frac{n}{f_{\alpha+1}(n)}.
\]
By Lemma~\ref{lem:CHMani:agreeing}, if $\tilde\psi_{\alpha+1}(p)\in\Aut(Q(X,A))$ is determined by $\phi_n^{\alpha+1}(p)$,  there is $g\in C_{\alpha+1}$ such that 
\[
\tilde\psi_{\alpha+1}(p)(g)\neq\tilde\psi_{\alpha+1}(q)(g).
\]
As Lemma \ref{lemma:coherent->uniqueness} states that 
\[
\tilde\Psi_p(g)=\tilde\psi_{\alpha+1}(p)(g)
\]
whenever $g\in C_{\alpha+1}$, we have that
\[
\tilde\Psi_p(g)=\tilde\psi_{\alpha+1}(p)(g)\neq \tilde\psi_{\alpha+1}(q)(g)=\tilde\Psi_q(g)
\]
\end{proof}
\begin{remark}\label{rem:technicalities}
The requirement of using $\NN^{\NN\uparrow}(X)$ instead of $\NN^{\NN\uparrow}$ is purely technical. Following Remark \ref{rem:maniareflexible}, if it is possible to choose $Y_n$ so that $\Homeo_{\bd}(Y_n)$ has a path (e.g., if $X$ is a manifold) then we can pick $\{\phi_{n,m}\}$ in order to have $A_n=\NN$ (eventually truncating a finite set).
\end{remark}

As promised, we are ready to give a proof of Theorem~\ref{thm:CHMani}, which in particular shows that, whenever $n\geq 1$, $\beta(\er^n)\setminus\er^n$ has plenty of nontrivial homeomorphisms under $\CH$. This is evidently stronger than Theorem~\ref{thm:CHMani}.
\begin{corollary}\label{cor:CHMani}
Assume $\CH$. Let $X$ be a locally compact noncompact metrizable manifold. Then there are $2^{\mathfrak c}$-many nontrivial homeomorphisms of $X^*$. Suppose moreover that $Y$ is a locally compact space with a compact connected component. Then $\beta(X\times Y)\setminus (X\times Y)$ has $2^{\mathfrak c}$-many nontrivial homeomorphisms.
\end{corollary}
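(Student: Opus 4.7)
The plan is to deduce the corollary directly from Theorem~\ref{thm:CHFlexi} applied with $A=\ce$, since $Q(X,\ce)=C_b(X)/C_0(X)\cong C(\beta X\setminus X)$ and, by Gelfand duality, $\Aut(C(X^*))\cong\Homeo(X^*)$. Under $\CH$, $\aleph_1\le\mathfrak d\le\mathfrak c=\aleph_1$ forces $\mathfrak d=\omega_1$, and $2^{\aleph_0}=\aleph_1<2^{\aleph_1}$ by Cantor's theorem, so the set-theoretic hypotheses of Theorem~\ref{thm:CHFlexi} are in place.

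Next I would check that $X$ itself is flexible. Following Remark~\ref{rem:maniareflexible}, pick a closed discrete sequence $\{x_n\}\subseteq X$ together with pairwise disjoint open chart-neighborhoods $U_n\ni x_n$, each homeomorphic to $\er^{\dim X}$, and inside each $U_n$ a closed Euclidean ball $Y_n$. The $Y_n$ are compact, pairwise disjoint, and no compact subset of $X$ meets infinitely many of them. Inside each $Y_n$ a continuous one-parameter family of compactly-supported diffeomorphisms (shrinking to the identity) supplies $\phi_{n,m}\in\Homeo_{\bd}(Y_n)$ with $r(\phi_{n,m})\searrow 0$ and each $r(\phi_{n,m})\neq 0$. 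This establishes flexibility, and Theorem~\ref{thm:CHFlexi} then produces $2^{\aleph_1}=2^{\mathfrak c}$ automorphisms of $Q(X,\ce)$, equivalently $2^{\mathfrak c}$ elements of $\Homeo(X^*)$. Because $X$ is Polish, $|\Triv(X^*)|\le\mathfrak c$ (compare the discussion after Definition~\ref{defin:TrivialAbel}), and subtracting leaves $2^{\mathfrak c}$ nontrivial homeomorphisms.

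For the product assertion I would reduce to showing $X\times Y$ is flexible and then repeat the previous argument with $X\times Y$ in place of $X$. The only mildly technical step---and the one real obstacle---is converting the compact connected component $Z\subseteq Y$ into a compact \emph{clopen} subset, as required by the product clause of Remark~\ref{rem:maniareflexible}. I would do this as follows: by local compactness choose a precompact open $V\supseteq Z$; in the compact Hausdorff space $\overline V$ the component $Z$ equals its quasicomponent, so for each $x\in\overline V\setminus V$ there is a clopen $C_x\subseteq\overline V$ with $Z\subseteq C_x$ and $x\notin C_x$. Finitely many of the $\overline V\setminus C_{x_i}$ cover the compact set $\overline V\setminus V$, hence $Z':=\bigcap_i C_{x_i}\subseteq V$ is clopen in $\overline V$, contains $Z$, and (since $\overline V$ is closed in $Y$ and $Z'\subseteq V$ is open in $Y$) is compact clopen in $Y$. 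The last clause of Remark~\ref{rem:maniareflexible} then furnishes flexibility witnesses $Y_n\times Z'$ with homeomorphisms $\phi_{n,m}\times\id_{Z'}$ fixing the boundary $\bd_X(Y_n)\times Z'$ and with $r(\phi_{n,m}\times\id_{Z'})=r(\phi_{n,m})$. Applying Theorem~\ref{thm:CHFlexi} and the trivial-count argument from the previous paragraph yields the asserted $2^{\mathfrak c}$ nontrivial homeomorphisms of $\beta(X\times Y)\setminus(X\times Y)$.
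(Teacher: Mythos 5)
Your proposal is correct and follows essentially the same route as the paper: apply Theorem~\ref{thm:CHFlexi} with $A=\ce$, use Remark~\ref{rem:maniareflexible} for flexibility of $X$ and of $X\times Y$, and observe that only $\mathfrak c$-many homeomorphisms can be trivial. The one place you go beyond the paper is welcome: Remark~\ref{rem:maniareflexible} is stated for $Y$ containing a compact \emph{clopen} set while the corollary hypothesizes only a compact connected component, and your quasicomponent (\v{S}ura-Bura) argument producing the compact clopen $Z'\supseteq Z$ correctly supplies the step the paper leaves implicit.
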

\begin{proof}
Manifolds are flexible thanks to Remark~\ref{rem:maniareflexible}, and homeomorphisms of $\beta X\setminus X$ correspond to automorphisms of $Q(X,\ce)$. Since there can be only $\mathfrak c$-many trivial homeomorphisms of $\beta X\setminus X$, the first assertion is proved. The second assertion follows similarly from Remark~\ref{rem:maniareflexible}, as if $X$ is flexible and $Y$ has a compact connected component, then $X\times Y$ is flexible.
\end{proof}
Even though Theorem~\ref{thm:CHFlexi} doesn't apply to the corona of $C_0(X,A)$ whenever $A$ is nonunital, we can still say something in a particular case. Along the same lines as in the proof of Corollary~\ref{cor:CHMani}, if $A$ is a $\Cstar$-algebra that has a nonzero central projection (recall that $p\in A$ is central if $pa=ap$ for all $a\in A$) then it is possible to prove that under $\CH$ the corona of $C_0(X,A)$ has $2^{\mathfrak c}$-many automorphisms.

\subsection{A very rigid space}\label{ss:arigidspace}

We show the existence of a one dimensional space $X$ for which it is still unknown whether $\CH$ implies the existence of a nontrivial element of $\Homeo(\beta X\setminus X)$. In fact, this space it is not flexible, it is connected (), and it does not have an increasing sequence of compact subsets $K_n$ for which $\sup_n|K_n\cap (\overline{X\setminus K_n})|<\infty$ (and therefore it doesn't satisfy the hypothesis of \cite[Theorem 2.5]{Farah-Shelah.RCQ}). The space $X$ is a modification of a construction of Kuperberg that appeared in the introduction of \cite{Phillips-Weaver}. 

The construction of $X$ take place in the plane $\er^2$, and it goes as follows: take a copy of interval $a(t)$, $t\in [0,1]$ and let $x_0=a(0)$. At the midpoint of $a$, we attach a copy of the interval. We now have three copies of the interval attached to each other. At the midpoint of the first one, we attach two copies of the interval, at the midpoint of the second one we attach three of them, and four to the third.  We order the new midpoints and attach five intervals to the first one, six to the second, and so forth. We repeat this construction infinitely many times, making sure  at every step of the construction the length of new intervals attached is short enough to satisfy the following two conditions:
  \begin{itemize}
  \item the construction takes place in a prescribed big enough compact subset of $\er^2$ containing $a$;
  \item for every new interval attached at the $n$-th stage the only point of intersection with the construction at stage $n-1$ is the midpoint to which the new interval was attached. Similarly, two new interval intersect if and only if they are attached to the same point of the construction at stage $n-1$
  \item the length of every interval attached at stage $n$ is less than $2^{-n}$.
  \end{itemize}
 Let $Y$ be the closure of this iterated construction. For $n\geq 1$, let $y_n=a(\frac{1}{2^n})$, $a_n$ be one of the intervals attached to $y_n$, and $x_n$ be the endpoint of $a_n$ not belonging to $a$. Let $X=Y\setminus\{x_n\}_{n\geq 0}$. Since $y_n\to x_0$ and the length of $a_n$ goes to $0$ when $n\to\infty$, we have  $x_n\to x_0$. In particular,  $X$ is locally compact. By construction, the set 
 \[ 
 \bigcup_{n\geq 3}\{x\mid X\setminus\{x\}\text{ has $n$-many connected components}\}
 \] is dense in $X$, and for every $n\geq 3$ there is a unique point $x\in X$ such that $X\setminus\{x\}$ has exactly $n$-many connected components, therefore $X$ has no homeomorphism other than the identity. Being $X$ connected, it is not flexible and it doesn't satisfy \cite[Hypothesis 4.1]{Coskey-Farah}.

We are left to show that we cannot apply \cite[Theorem 2.5]{Farah-Shelah.RCQ}, that is, we show that if $X=\bigcup K_n$ for some compact sets $K_n$, then $\sup_n|K_n\cap(\overline{X\cap K_n})|=\infty$. Note that $a_k\setminus\{x_k\}$ cannot be contained in a compact set of $X$ and all $a_k$'s are disjoint. In particular, if $K$ is compact and $K\cap a_k\neq\emptyset$ then there is $y\in K\cap a_k$ such that $y\in\overline{X\setminus K}$. If $\bigcup K_n=X$ for some compact sets $K_n$, for all $k$ there is $n$ such that $K_n\cap a_i\neq\emptyset$ for all $i\leq k$. Therefore $|K_n\cap (\overline{X\setminus K_n})|\geq k$.

\chapter{Ulam stability}\label{ch:Ulam}
This chapter is dedicated to the development of a very strong notion of stability which will be used in understanding the structure of automorphisms of certain corona algebras in \ref{s:FA.Emb}. Such a strong notion of stability was already used in \cite{Farah.C}, while proving that all automorphisms of the Calkin algebra are inner, under the assumption of certain Forcing Axioms. Versions of Ulam stability related to near inclusions and perturbation of operator algebras are ubiquitous in the literature. The study of this phaenomena was initiated by J. Phillips and Raeburn (\cite{PR:pertAF}) and Christensen (\cite{Christensen.NI}) and \cite{Christ.PertOpAlg} among others), and culminated in \cite{christensen2012perturbations}.

With in mind the notion of $\epsilon$-$^*$-homomorphism as in Definition~\ref{defin:CstarPrel.Approxmaps}, we are ready to give the definition of the notion of stability we will use.
\begin{defin}\label{defin:US.ulamstab}
 Let $\SC$ and $\SD$ be two classes of $\Cstar$-algebras.  We say that the pair $(\SC,\SD)$ is \emph{Ulam stable} if for every
 $\e > 0$ there is a $\delta > 0$ such that for all $A\in\SC$ and $B\in\SD$ and for every $\delta$-$^*$-homomorphism
 $\phi \colon A\to B$, there is a $^*$-homomorphism $\psi \colon A\to B$ with $\norm{\phi - \psi} < \e$.
\end{defin}

Since this is a very strong of stability, it is not surprising that the classes of $\Cstar$-algebras which are known to be Ulam stable are very few.

\begin{theorem}[Theorem 5.1, \cite{Farah.C}]\label{thm:farahulam}
There are constants $K_1,\gamma> 0$ such that whenever $\e <\gamma$, $F_1,F_2\in\SF$ and $\phi \colon F_1\to F_2$ is an $\e$-$^*$-homomorphism, there is a $^*$-homomorphism $\psi \colon F_1\to F_2$ with $\norm{\phi - \psi} < K_1 \e$.
  Hence, the pair $(\SF,\SF)$ is Ulam stable.
\end{theorem}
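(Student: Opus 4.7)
My strategy is to reduce to the case of a single full matrix algebra and then perturb an approximate system of matrix units into an exact one.

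\textbf{Reduction to matrix algebras.} By Remark~\ref{rmk:CstarPrel.unital-contractive} I may assume $\norm{\phi}\le 1$ and that $\phi(1_{F_1})$ is an exact projection. Decompose $F_1=\bigoplus_{i\le k}M_{n_i}$ with central projections $1_i$; the elements $\phi(1_i)$ are mutually almost orthogonal almost-projections summing (up to $O(\e)$) to $\phi(1_{F_1})$. Spectral calculus applied to $\tfrac12(\phi(1_i)+\phi(1_i)^*)$, together with a successive orthogonalization, replaces them by mutually orthogonal projections $q_i\in F_2$ with $\norm{q_i-\phi(1_i)}=O(\e)$. Each map $\phi_i(x):=q_i\phi(x)q_i$ is then an $O(\e)$-$^*$-homomorphism $M_{n_i}\to q_iF_2q_i$, and it suffices to correct each $\phi_i$ separately; summing the results and using the orthogonality of the $q_i$ yields $\psi$.

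\textbf{Perturbing matrix units.} Now assume $F_1=M_n$ with $\phi$ unital, and let $\{e_{ij}\}$ denote the standard matrix units, writing $v_{ij}=\phi(e_{ij})$. These elements satisfy the matrix unit relations up to error $O(\e)$. I construct exact matrix units $w_{ij}\in F_2$ as follows: take $w_{11}$ to be the spectral projection $\mathbf{1}_{[1/2,1]}\bigl(\tfrac12(v_{11}+v_{11}^*)\bigr)$, so $\norm{w_{11}-v_{11}}=O(\e)$. For each $j\ge 2$, form the polar decomposition of $w_{11}v_{1j}$ to obtain a partial isometry $w_{1j}$ normalized so that $w_{1j}w_{1j}^*=w_{11}$; set $w_{jj}=w_{1j}^*w_{1j}$ and $w_{ij}=w_{1i}^*w_{1j}$. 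The resulting $\{w_{ij}\}$ satisfy the matrix unit relations exactly, and because every $w_{ij}$ is built through a bounded number of operations anchored on the single corner $v_{11}$, the bound $\norm{w_{ij}-v_{ij}}=O(\e)$ holds with a constant \emph{independent of $n$}.

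\textbf{Comparing $\phi$ and $\psi$.} Define $\psi:M_n\to F_2$ by $\psi(e_{ij})=w_{ij}$ extended linearly; this is a genuine $^*$-homomorphism. For $a\in M_n$ with $\norm{a}\le 1$, I exploit approximate multiplicativity of $\phi$ to write
\[
  \phi(a)-\psi(a) \;=\; \sum_{i,j}\bigl(v_{i1}\,\phi(e_{1i}ae_{j1})\,v_{1j} \;-\; w_{i1}\,\psi(e_{1i}ae_{j1})\,w_{1j}\bigr) \;+\; O(\e),
\]
and then swap the factors $v_{i1},v_{1j}$ for $w_{i1},w_{1j}$ one at a time. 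Each swap is controlled by $O(\e)$ at the factor level; by regrouping the sum as a product $\psi(\sum_i e_{i1})\cdot x\cdot\psi(\sum_j e_{1j})$ for a suitable $x$ with $\norm{x}\le\norm{a}$, the apparently $n^2$-many contributions telescope and yield $\norm{\phi(a)-\psi(a)}=O(\e)\cdot\norm{a}$ with no $n$-dependent prefactor.

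\textbf{Main obstacle.} The crucial difficulty is guaranteeing that the constant $K_1$ does not depend on the matrix size $n$ (nor on the dimensions $n_i$ or the number of summands $k$). A naive triangle inequality on the matrix-unit expansion $a=\sum a_{ij}e_{ij}$ gives an unusable $n^2\e$ bound. Dimension independence requires both building the $w_{ij}$ via a bounded number of polar-decomposition steps anchored at $v_{11}$, and, in the final comparison, replacing pointwise-on-generators bounds by a global estimate that exploits the approximate multiplicativity of $\phi$ and the fact that elements of $M_n$ are expressible as short words in matrix units with scalars of modulus at most $\norm{a}$.
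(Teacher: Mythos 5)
First, a point of reference: the thesis does not prove this statement at all --- it is quoted verbatim as Theorem~5.1 of \cite{Farah.C}, and the thesis's own stability result (Theorem~\ref{thm:US.ulam-stability}) \emph{uses} it as an ingredient rather than reproving it. Your proposal is therefore a from-scratch attempt; its architecture (reduce to a full matrix algebra, perturb approximate matrix units to exact ones, compare the two maps) is indeed the architecture of Farah's original argument, but the execution has genuine gaps, both located exactly at the dimension-independence you correctly identify as the main obstacle.

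The first gap is that your $w_{ij}$ are not exact matrix units. You normalize each $w_{1j}$ so that $w_{1j}w_{1j}^*=w_{11}$, but the source projections $w_{jj}=w_{1j}^*w_{1j}$ are then only \emph{approximately} orthogonal (each is $O(\e)$-close to $v_{jj}$, and $v_{jj}v_{kk}=O(\e)$ for $j\neq k$, which does not force $w_{jj}w_{kk}=0$). Consequently $w_{1j}w_{1k}^*=w_{1j}(w_{jj}w_{kk})w_{1k}^*$ need not vanish for $j\neq k$, and the relation $w_{ij}w_{kl}=\delta_{jk}w_{il}$ fails by $O(\e)$; your $\psi$ is then not a $^*$-homomorphism. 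The standard repair is to first replace the diagonal family $v_{11},\dots,v_{nn}$ by \emph{exactly} orthogonal projections $p_j$ and only then take the partial isometry part of $p_1v_{1j}p_j$. But producing the $p_j$ with $\norm{p_j-v_{jj}}=O(\e)$ uniformly in $n$ is itself one of the delicate points: a naive ``successive orthogonalization'' accumulates an error proportional to the number of projections (one needs, e.g., nested spectral projections of the almost-projections $\phi\bigl(\sum_{j\le i}e_{jj}\bigr)$, so that each $p_i$ is a difference of two of them). The identical issue is glossed over in your reduction to matrix blocks, where the constant must not depend on the number $k$ of direct summands of $F_1$.

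The second gap is the final comparison. The proposed regrouping as $\psi\bigl(\sum_i e_{i1}\bigr)\cdot x\cdot\psi\bigl(\sum_j e_{1j}\bigr)$ cannot work: $\bigl\lVert\sum_i e_{i1}\bigr\rVert=\sqrt{n}$, so this factorization bounds the expression by $n\norm{x}$ and gives nothing dimension-free; ``the contributions telescope'' is an assertion, not an argument. Controlling $\norm{\phi(a)-\psi(a)}$ without an $n$-dependent prefactor is the real content of the theorem, and it requires an actual mechanism --- for instance an approximate intertwiner $T=\sum_j w_{j1}w_{11}v_{1j}$ whose relevant sums collapse because the $w_{j1}$ have \emph{exactly} orthogonal ranges (so that norms of sums become maxima), or an operator-matrix argument identifying $\sum_{ij}w_{i1}c_{ij}w_{1j}$ with the norm of $(c_{ij})$ in $M_n(w_{11}F_2w_{11})$. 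As written, your step 3 would have to be replaced wholesale; I would either supply such an intertwining argument or simply cite \cite{Farah.C} as the thesis does.
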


\begin{theorem}[{\cite{Semrl.USAbel}}]\label{thm:SemrlAbel}
Let $\mathcal A$ be the class of unital abelian $\Cstar$-algebras. Then $(\mathcal A,\mathcal A)$ is Ulam stable.
\end{theorem}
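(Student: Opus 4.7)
The natural approach is via Gelfand duality. Write $A = C(Y)$ and $B = C(X)$ for compact Hausdorff spaces $X, Y$, so that genuine $^*$-homomorphisms $C(Y) \to C(X)$ correspond bijectively to continuous maps $f \colon X \to Y$ via $g \mapsto g \circ f$. The goal is then to extract, from a $\delta$-$^*$-homomorphism $\phi \colon C(Y) \to C(X)$, a continuous $f \colon X \to Y$ such that the induced composition map is uniformly within $\e(\delta)$ of $\phi$, with $\e(\delta) \to 0$ as $\delta \to 0$. By Remark~\ref{rmk:CstarPrel.unital-contractive}, I may first assume $\phi$ is unital and contractive, paying only a polynomial cost in $\delta$.

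Next I would work pointwise. For each $x \in X$, the functional $\omega_x \colon C(Y) \to \ce$ defined by $\omega_x(g) = \phi(g)(x)$ is $\delta$-linear, $\delta$-multiplicative, and $\delta$-$^*$-preserving, so it is a unital approximate character on $C(Y)$. Since the pure state space of $C(Y)$ is exactly the set of point evaluations, a Gelfand-style compactness argument shows that any such $\delta$-character is within $K\delta$, for some absolute constant $K$, of an actual evaluation $\delta_y$ for some $y \in Y$. Concretely, I would show the closed set
\[
V_x = \set{y \in Y}{|\phi(g)(x) - g(y)| \le K\delta \text{ for all } g \in C(Y)_{\le 1}}
\]
is nonempty for every $x$, by exploiting the near-multiplicativity of $\omega_x$ together with the fact that on a commutative $\Cstar$-algebra characters are detected by the joint spectrum of finite tuples of self-adjoints.

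The main obstacle will be producing a genuinely continuous selection $f \colon X \to Y$ with $f(x) \in V_x$ for every $x$. The set-valued map $x \mapsto V_x$ has nonempty closed values and is upper semicontinuous, but $V_x$ need not be convex, connected, or even small, so Michael's selection theorem does not apply directly. My plan to overcome this is a finite-dimensional reduction: fix a finite open cover $\{U_1,\dots,U_m\}$ of $Y$ of small diameter together with a partition of unity $\{h_1,\dots,h_m\}$ subordinate to it; apply $\phi$ to produce an almost-partition of unity $\{\phi(h_i)\}$ on $X$, perturb it to a genuine partition of unity (using normality of $X$), and glue locally chosen $y_i \in U_i$ into a continuous $f$ via this partition. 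The key intuition is that the functions $\phi(g)$ are themselves already continuous on $X$, so they supply the continuity witness; the combinatorial task reduces to ensuring that the local selections are coherent where the $U_i$ overlap.

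Once such an $f$ is in place, setting $\psi(g) = g \circ f$ gives a genuine $^*$-homomorphism, and by the defining property of $V_x$ one has $\norm{\phi(g) - \psi(g)}_\infty \le K\delta$ for every $g \in C(Y)_{\le 1}$, so $\norm{\phi - \psi} \to 0$ as $\delta \to 0$. This yields Ulam stability for $(\mathcal A, \mathcal A)$. I expect the hardest technical step to be the continuous selection in the non-metrizable case, where one cannot rely on sequential arguments and must combine upper semicontinuity of $x \mapsto V_x$ with the partition-of-unity construction to obtain a globally defined continuous $f$.
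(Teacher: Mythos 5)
Your Gelfand-duality framework (passing to approximate characters $\omega_x$ pointwise, locating each near a true evaluation $\delta_{y}$, and assembling a continuous $f\colon X\to Y$) is the right skeleton, and indeed the content of \v{S}emrl's theorem is concentrated in the claim that $V_x\neq\emptyset$ with an absolute constant $K$ --- that is precisely the statement that unital abelian $\Cstar$-algebras have the AMNM property in a uniform way, and it deserves more respect than ``a Gelfand-style compactness argument.'' The paper itself offers no proof (Theorem~\ref{thm:SemrlAbel} is a citation), so there is nothing internal to compare against, but your sketch does have a genuine structural gap in the second half.

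The gap is that you are solving a selection problem that does not exist. For $K\delta<1/2$ the set $V_x$ is automatically at most a singleton: if $y_1\neq y_2$ both lay in $V_x$, Urysohn's lemma would give $g\in C(Y)$ with $0\le g\le 1$, $g(y_1)=0$, $g(y_2)=1$, and then
\[
1 = \abs{g(y_1)-g(y_2)} \le \abs{g(y_1)-\phi(g)(x)} + \abs{\phi(g)(x)-g(y_2)} \le 2K\delta < 1,
\]
a contradiction. So once $\delta$ is taken small (which Definition~\ref{defin:US.ulamstab} entitles you to do), $V_x=\{f(x)\}$ for a uniquely determined $f(x)$, and continuity of $f$ is then immediate by a net argument: if $x_\alpha\to x$, compactness of $Y$ lets you pass to a subnet with $f(x_\alpha)\to y'$, and for each $g\in C(Y)_{\le 1}$ the continuity of $\phi(g)$ on $X$ and of $g$ on $Y$ gives $\abs{\phi(g)(x)-g(y')}\le K\delta$, hence $y'\in V_x$, hence $y'=f(x)$. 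No metrizability, no Michael selection, no partition of unity. Conversely, the gluing you propose --- weighting ``locally chosen $y_i\in U_i$'' by a partition of unity $\{\phi(h_i)\}$ --- cannot be carried out as stated: a general compact Hausdorff space $Y$ carries no linear or convex structure, so there is no way to form a $\phi(h_i)(x)$-weighted average of points $y_i$ and land back in $Y$. That step would need to be abandoned, not merely made careful, and the singleton observation makes it unnecessary.

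Two smaller points. First, your $\omega_x$ is only $\delta$-linear, not linear, so appealing to pure-state structure or Riesz representation requires the (standard, Hyers--Ulam-type) upgrade to a nearby genuinely linear functional before the AMNM machine can be turned on; say so explicitly. Second, once $f$ is continuous, the estimate $\norm{\phi-\psi}\le K\delta$ for $\psi(g)=g\circ f$ follows directly from the definition of $V_x$, and choosing $\delta\le\min(\e/(2K),\,1/(3K))$ finishes the verification of Definition~\ref{defin:US.ulamstab}; you should close the loop on this quantifier bookkeeping rather than leaving it at ``$\norm{\phi-\psi}\to 0$.''
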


The goal of this chapter is to prove the following two Ulam stability results:

\begin{theorem}\label{thm:US.USFinDim}
Let $\mathcal F$ be the class of all finite dimensional $\Cstar$-algebras and $\mathcal C^*$ be the class of all $\Cstar$-algebras. Then $(\mathcal F,\mathcal C^*)$ is Ulam stable.
\end{theorem}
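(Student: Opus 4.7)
My plan is to construct a $^*$-homomorphism $\psi\colon A\to B$ near $\phi$ by perturbing the image of a system of matrix units of $A$ to a genuine system of matrix units in $B$, and to reduce the verification of closeness to Theorem~\ref{thm:farahulam}.

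First, by Remark~\ref{rmk:CstarPrel.unital-contractive} I may assume $\phi$ is unital and $\norm{\phi}\le 1$. Writing $A=\bigoplus_{i\le k}M_{n_i}$ with minimal central projections $z_i$, the images $\phi(z_i)$ are almost self-adjoint, almost idempotent, and pairwise almost orthogonal, and so generate an approximately commutative subalgebra of $B$. Applying Theorem~\ref{thm:SemrlAbel} (or elementary functional calculus) yields pairwise orthogonal projections $P_i\in B$ with $\sum_i P_i=\phi(1_A)$ and $\norm{P_i-\phi(z_i)}$ of order $\sqrt\delta$. This reduces the problem to the simple case $A=M_n$, with $\phi$ unital.

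Fix matrix units $\{e_{jk}\}$ of $M_n$. Since $\phi(e_{11})$ is almost self-adjoint and almost idempotent, functional calculus produces a projection $q\in B$ with $\norm{q-\phi(e_{11})}=O(\sqrt\delta)$. For $j=2,\ldots,n$ I then inductively construct partial isometries $v_j\in B$ satisfying $v_j^*v_j=q$, the ranges $v_jv_j^*$ pairwise orthogonal, and $\norm{v_j-\phi(e_{j1})}=O(\sqrt\delta)$: at stage $j$ one takes the polar decomposition of $\phi(e_{j1})q$ and then cuts the range down by the complement of $\bigvee_{k<j}v_kv_k^*$. The essential point is that the error at each stage is controlled by an absolute constant times $\sqrt\delta$, independent of $j$, because each step involves only a bounded number of products. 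Setting $v_1:=q$ and $f_{jk}:=v_jv_k^*$ gives a genuine system of matrix units generating a finite-dimensional subalgebra $F\cong M_n$ of $B$, and I define $\psi\colon M_n\to F\subseteq B$ by $\psi(e_{jk}):=f_{jk}$.

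To bound $\norm{\phi-\psi}$ independently of $n$, I compress both maps into the corner $pBp$ where $p=\sum_j f_{jj}$. Via the canonical isomorphism $pBp\cong M_n(qBq)$ given by $x\mapsto(v_j^*xv_k)_{j,k}$, the $^*$-homomorphism $\psi$ corresponds to the inclusion $a\mapsto a\otimes q$. A direct computation using $v_j\approx\phi(e_{j1})$ and $\delta$-multiplicativity gives the entrywise estimate $\norm{v_j^*\phi(a)v_k-a_{jk}q}=O(\sqrt\delta)\norm{a}$ with an absolute constant. The main obstacle — and the technical heart of the argument — lies precisely here: naively summing the entrywise estimates gives a useless $O(n^2\sqrt\delta)$ bound on the $M_n(qBq)$-operator norm of the difference. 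To avoid this dimension blow-up, I observe that the compressed map $a\mapsto(v_j^*\phi(a)v_k)_{j,k}$ is itself a $O(\sqrt\delta)$-$^*$-homomorphism $M_n\to M_n(qBq)$, and its image lies in the finite-dimensional $\Cstar$-subalgebra $M_n\otimes C$, where $C\subseteq qBq$ is the finite-dimensional subalgebra generated by finitely many translates of $q$ arising from the perturbation. Applying Theorem~\ref{thm:farahulam} to this compressed approximate $^*$-homomorphism (whose domain and codomain are both in $\mathcal F$) gives a genuine $^*$-homomorphism within $O(\sqrt\delta)$, which by uniqueness of the $M_n$-structure must agree with $a\mapsto a\otimes q$; this transfers the uniform-in-$n$ Ulam constant of $(\mathcal F,\mathcal F)$ to the $(\mathcal F,\mathcal C^*)$ setting and yields $\norm{\phi-\psi}\le C\sqrt\delta$ with $C$ absolute.
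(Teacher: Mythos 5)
Your overall strategy --- perturbing the images of matrix units to exact matrix units and then transferring the problem to a finite-dimensional codomain --- has a genuine gap at exactly the point where the theorem is hard, namely dimension-independence of the constants. In your inductive construction of the $v_j$, the claim that the error at stage $j$ is $O(\sqrt\delta)$ ``because each step involves only a bounded number of products'' is not correct: cutting the range of $\phi(e_{j1})q$ down by the complement of $\bigvee_{k<j}v_kv_k^*$ requires that $\phi(e_{jj})$ be almost orthogonal to the \emph{sum} $\sum_{k<j}v_kv_k^*$, and the best one gets from pairwise $\delta$-multiplicativity is $\norm{\bigl(\sum_{k<j}v_kv_k^*\bigr)\phi(e_{jj})}=O(\sqrt{j\delta})$, since one is summing $j-1$ positive elements each of norm $O(\delta)$. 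The same accumulation appears when you try to show that $p=\sum_j f_{jj}$ is close to $\phi(1)$, and again in your preliminary decomposition of $A$ into its simple summands. This is precisely why the naive matrix-unit argument only yields constants depending on $\dim A$; the dimension-independent statement is the entire content of the theorem (and already of Theorem~\ref{thm:farahulam}, whose own proof does not proceed by matrix units).

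The proposed repair also fails: the compressed map $a\mapsto(v_j^*\phi(a)v_k)_{j,k}$ does not take values in a finite-dimensional subalgebra of $M_n(qBq)$ --- as $a$ ranges over $M_n$ the entries $v_j^*\phi(a)v_k$ sweep out an uncontrolled subset of $qBq$, since $\phi$ is an arbitrary approximate morphism --- so Theorem~\ref{thm:farahulam} does not apply; and even granting that, verifying that the compressed map is an $O(\sqrt\delta)$-$^*$-homomorphism in the operator norm of $M_n(qBq)$ reintroduces the same $n$-term summation. The paper's proof circumvents all of this by working with the unitary group globally: it discretizes $\phi$ to a Borel-measurable map, averages over the Haar measure of $\SU(F)$ (Proposition~\ref{proposition:approximate-multiplicativity}, iterated) to obtain an exactly multiplicative map close to $\phi$, unitarizes it, decomposes the resulting representation via Peter--Weyl into finite-dimensional blocks, applies Theorem~\ref{thm:farahulam} block by block (where both domain and codomain genuinely lie in $\mathcal F$), and finally uses Christensen's near-inclusion theorem to conjugate the resulting copy of $F$ into $\sigma[A]$. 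Some version of these averaging and perturbation inputs seems unavoidable if you want constants independent of $\dim A$.
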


\begin{theorem}\label{thm:US.AF}
Let $\mathcal{AF}$ be the class of unital AF algebras and $\mathcal M$ be the class of all von Neumann algebras. Then $(\mathcal {AF},\mathcal M)$ is Ulam stable.
\end{theorem}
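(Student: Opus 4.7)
The plan is to bootstrap Theorem~\ref{thm:US.USFinDim} along the canonical inductive system defining an AF algebra, using the abundance of projections and partial isometries in a von Neumann algebra to glue the resulting $^*$-homomorphisms into a coherent sequence. Fix $\epsilon > 0$, write the unital AF algebra $A$ as $A = \overline{\bigcup_n A_n}$ with each $A_n$ a unital finite-dimensional subalgebra (so that $1_{A_n} = 1_A$) and $A_n \subseteq A_{n+1}$, and let $\phi \colon A \to M$ be a $\delta$-$^*$-homomorphism with $\delta$ sufficiently small (to be determined).

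The main auxiliary result I would establish is a \emph{relative} stability lemma: there exists a universal constant $L > 0$ such that whenever $F \subseteq F'$ are unital finite-dimensional $\Cstar$-algebras, $\psi \colon F \to M$ is a unital $^*$-homomorphism into a von Neumann algebra $M$, and $\phi \colon F' \to M$ is a unital $\delta$-$^*$-homomorphism with $\norm{\phi|_F - \psi} \leq L\delta$, then there is a $^*$-homomorphism $\psi' \colon F' \to M$ with $\psi'|_F = \psi$ and $\norm{\psi' - \phi} \leq L\delta$. The proof would use a system of matrix units: fix a matrix unit system for $F$ refined by one for $F'$; the $\phi$-images of the $F'$-units are approximate matrix units that almost refine the honest matrix unit system $\psi(e)$ for the image of $F$. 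Compressing into the reductions $\psi(e) M \psi(e)$ and applying polar decomposition in $M$ — available because $M$ is a von Neumann algebra — straightens the almost-projections $\psi(e)\phi(e')\psi(e)$ into honest subprojections of $\psi(e)$, and the Murray--von Neumann equivalences among these subprojections (approximately witnessed by the $\phi(e'_{ij})$) are realized \emph{exactly} by partial isometries in $M$. Assembling these partial isometries yields a genuine matrix unit system for the image of $F'$ strictly refining the one for the image of $F$, which defines $\psi'$, with the norm bound following from standard estimates on how far a polar decomposition moves an almost-partial-isometry.

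With the relative lemma in hand, I would construct $^*$-homomorphisms $\psi_n \colon A_n \to M$ inductively so that $\psi_{n+1}|_{A_n} = \psi_n$ and $\norm{\psi_n - \phi|_{A_n}} \leq L\delta$, using Theorem~\ref{thm:US.USFinDim} for the base case and the relative lemma for each successor step. The crucial feature is that the error bound $L\delta$ is preserved through the induction rather than accumulating, because each extension step depends only on the current defects of $\phi$ and not on the depth $n$. The union $\bigcup_n \psi_n$ is then a $^*$-homomorphism on the dense $^*$-subalgebra $\bigcup_n A_n$ of norm at most $1 + L\delta$, and therefore extends uniquely and continuously to a $^*$-homomorphism $\psi \colon A \to M$ with $\norm{\psi - \phi} \leq L\delta$; taking $\delta = \epsilon / L$ closes the argument. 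The main obstacle is exactly the relative lifting lemma. A naive approach that independently applies Theorem~\ref{thm:US.USFinDim} to each $\phi|_{A_n}$ and then tries to conjugate the resulting $^*$-homomorphisms into coherence via unitaries $u_n \in M$ close to $1$ fails: one only obtains $\norm{u_n - 1}$ of some fixed size $\gamma(\delta)$ at each step, and the cumulative unitaries $u_1 u_2 \cdots u_n$ generally fail to converge, so the final $^*$-homomorphism drifts arbitrarily far from $\phi$. Constructing extensions of an already-fixed $\psi_n$ — rather than producing independent $^*$-homomorphisms and trying to stitch them after the fact — is what breaks this vicious circle, and it is precisely here that we require $M$ to be a von Neumann algebra rather than an arbitrary $\Cstar$-algebra, since we need polar decompositions and exact Murray--von Neumann equivalences between projections.
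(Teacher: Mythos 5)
Your overall architecture (extend $^*$-homomorphisms stage by stage along $A_1\subseteq A_2\subseteq\cdots$) is genuinely different from the paper's, but it has a gap exactly at the point you flag as the ``crucial feature'': the claim that the relative extension lemma returns an error bounded by the \emph{same} quantity $L\delta$ that it receives as input. Trace the estimates in your own sketch. To produce a projection under $\psi(e_{11})$ close to $\phi(e'_{jj})$ you must first compress by $\psi(e_{11})$, and the tilt between $\psi(e_{11})$ and $\phi(e_{11})$ is of size up to $\norm{\psi-\phi\rs F}$; straightening the compression and repairing the partial isometries via polar decomposition each cost a further multiplicative constant. So the honest conclusion of the relative lemma is $\norm{\psi'-\phi\rs{F'}}\le C\left(\norm{\psi-\phi\rs F}+\delta\right)$ for some universal $C>1$ (indeed $C\ge 2$ already from replacing an almost-projection by a projection), not $\le L\delta$. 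Feeding this into the induction gives $\eta_{n+1}=C(\eta_n+\delta)$, hence $\eta_n\gtrsim C^n\delta\to\infty$. Crucially, you cannot rescue this by the usual intertwining trick of summable tolerances, because $\phi\rs{A_{n+1}}$ is a $\delta$-$^*$-homomorphism with the \emph{same} $\delta$ at every stage; there is no decreasing sequence of defects to telescope against. Your justification --- ``each extension step depends only on the current defects of $\phi$ and not on the depth $n$'' --- is not correct: each step also depends on $\norm{\psi_n-\phi\rs{A_n}}$, which is precisely the quantity that accumulates. You correctly diagnose this drift for the naive conjugation-by-unitaries approach, but the same drift reappears inside your unproved lemma. (A smaller imprecision: Theorem~\ref{thm:US.ulam-stability} only gives error $K\delta^{1/2}$, not $K\delta$, already in the base case.)

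This accumulation problem is exactly what the paper's proof is engineered to avoid, and it does so by abandoning coherence altogether. In Theorem~\ref{thm:US.limits} one applies the finite-dimensional result to each $A_\lambda$ separately to get mutually incompatible $^*$-homomorphisms $\psi_\lambda$ with $\norm{\psi_\lambda-\phi\rs{A_\lambda}}<\eta$, takes a WOT-ultralimit $\psi(a)=\mathrm{WOT}\text{-}\lim_{\mathcal U}\psi_\lambda(a)$ (this is where the von Neumann algebra structure of $M$ is used, via weak compactness of bounded sets and Proposition~\ref{lem:US.closeness}), and observes that the resulting globally defined cpc map is $4\eta$-multiplicative and $\eta$-close to $\phi$. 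A single application of Johnson's AMNM theorem (Theorem~\ref{thm:US.johnrevised}), valid because AF algebras are nuclear hence amenable and $M$ is a dual Banach algebra, then perturbs $\psi$ to an honest $^*$-homomorphism in one shot, with no induction and no error propagation. If you want to keep your inductive strategy, you would need to actually prove a non-degrading extension lemma, and I do not see how to do that; as written, the argument does not close.
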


Given a class of unital nuclear $\Cstar$-algebras $\mathcal C$, let $\mathcal D_\mathcal C$ be the class of all unital inductive limits of $\Cstar$-algebras in $\mathcal C$.
Formally, $A\in\mathcal D_\mathcal C$ if and only if there are a net $\Lambda$ and algebras $A_\lambda\in\mathcal C$, for $\lambda\in\Lambda$, with
\begin{itemize}
\item $A_\lambda\subseteq A_\mu$ for every $\lambda<\mu\in\Lambda$, where the inclusion is unital;
\item $\overline{\bigcup_{\lambda\in\Lambda}}A_\lambda=A$.
\end{itemize}
If $\mathcal C$ denotes the class of full matrix algebras, $\mathcal D_\mathcal C$ is the class of all unital UHF algebras. If $\mathcal C=\mathcal F$, then $\mathcal D_\mathcal C=\mathcal {AF}$, the class of all unital AF algebras.

\begin{theorem}\label{thm:US.limits}
Let $\mathcal C$ be a class of unital nuclear $\Cstar$-algebras and $\mathcal M$ be the class of Von Neumann algebras. If $(\mathcal C,\mathcal M)$ is Ulam stable, so is $(\mathcal D_{\mathcal C},\mathcal M)$.
\end{theorem}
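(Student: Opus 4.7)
The plan is a bootstrapping argument: apply the hypothesis to each building block of the inductive system, then stitch the local $^*$-homomorphisms into a global one using a perturbation principle for $^*$-homomorphisms into a von Neumann algebra. Given the target $\varepsilon > 0$, I would fix $\eta > 0$ small compared to $\varepsilon$ and let $\delta = \delta_{\mathcal{C},\mathcal{M}}(\eta)$ be the threshold witnessing Ulam stability of $(\mathcal{C},\mathcal{M})$ at precision $\eta$. Let $A = \overline{\bigcup_{\lambda} A_\lambda} \in \mathcal{D}_{\mathcal{C}}$, $M \in \mathcal{M}$, and let $\phi \colon A \to M$ be a $\delta$-$^*$-homomorphism (contractive, with $\phi(1_A)$ a projection, by Remark~\ref{rmk:CstarPrel.unital-contractive}). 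For each index $\lambda$, the restriction $\phi|_{A_\lambda}$ is a $\delta$-$^*$-homomorphism from $A_\lambda \in \mathcal{C}$ into $M \in \mathcal{M}$, so Ulam stability of $(\mathcal{C},\mathcal{M})$ yields a genuine $^*$-homomorphism $\psi_\lambda \colon A_\lambda \to M$ with $\|\psi_\lambda - \phi|_{A_\lambda}\| < \eta$.

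The family $\{\psi_\lambda\}$ is only \emph{almost} coherent. To align it I would appeal to a perturbation result of Christensen: two $^*$-homomorphisms from a nuclear $\Cstar$-algebra into a von Neumann algebra that differ by a sufficiently small norm are unitarily conjugate via a unitary close to $1$, with quantitative control on the norm of the unitary in terms of the distance. For $\mu<\lambda$, the homomorphisms $\psi_\lambda|_{A_\mu}$ and $\psi_\mu$ satisfy $\|\psi_\lambda|_{A_\mu} - \psi_\mu\| < 2\eta$; applied stagewise this lets me produce, by transfinite recursion along a well-ordered cofinal subchain $(A_\alpha)_{\alpha<\kappa}$, a coherent family of $^*$-homomorphisms $\psi'_\alpha \colon A_\alpha \to M$ with $\psi'_\beta = \psi'_\alpha|_{A_\beta}$ whenever $\beta<\alpha$. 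At a successor $\alpha+1$ I would conjugate the Ulam approximant $\psi_{\alpha+1}$ by the Christensen unitary aligning $\psi_{\alpha+1}|_{A_\alpha}$ with $\psi'_\alpha$; at a limit $\alpha$ the coherent $(\psi'_\beta)_{\beta<\alpha}$ assemble to a contractive $^*$-homomorphism on $\bigcup_{\beta<\alpha}A_\beta$ which extends by norm continuity to $A_\alpha = \overline{\bigcup_{\beta<\alpha}A_\beta}$. The union yields a global $\psi\colon A\to M$.

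The principal obstacle, and where the full argument will require the most care, is maintaining the uniform estimate $\|\psi'_\alpha - \phi|_{A_\alpha}\| < \varepsilon$ throughout the recursion. Each Christensen adjustment moves the working $^*$-homomorphism by a constant multiple of $\|\psi_{\alpha+1}|_{A_\alpha} - \psi'_\alpha\| \leq \rho_\alpha + \eta$, where $\rho_\alpha := \|\psi'_\alpha - \phi|_{A_\alpha}\|$; the naive recursion $\rho_{\alpha+1} \leq c\rho_\alpha + c'\eta$ with $c\geq 1$ would blow up along uncountable $\kappa$. Two ingredients are needed to control this. First, by choosing $\eta$ sufficiently small one arranges that the Christensen unitaries $u_\alpha$ satisfy $\|u_\alpha - 1\|$ bounded by a specified polynomial of $\eta$, so that in the separable case (countable $\kappa$) the total distortion is summable. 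Second, for the general uncountable case, the accumulated correction must be absorbed into a single unitary of $M$ chosen via ultraweak compactness of the unit ball of $M$: one identifies the cumulative products of Christensen unitaries with a net in the unit ball of $\mathcal{U}(M)$, extracts an ultraweak limit $u_\infty \in M$, and verifies that $\operatorname{Ad}(u_\infty)\circ\psi$ is the desired global $^*$-homomorphism within $\varepsilon$ of $\phi$. With $\eta$ shrunken appropriately this yields the threshold $\delta$ witnessing Ulam stability of $(\mathcal{D}_{\mathcal{C}},\mathcal{M})$.
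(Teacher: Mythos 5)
Your opening move --- restricting $\phi$ to each $A_\lambda$ and invoking Ulam stability of $(\mathcal C,\mathcal M)$ to get local $^*$-homomorphisms $\psi_\lambda$ with $\norm{\psi_\lambda-\phi\rs A_\lambda}<\eta$ --- is exactly how the paper's proof begins. But the stitching step is where your argument has a genuine gap. The family $\{\psi_\lambda\}$ is only coherent up to the \emph{fixed} error $2\eta$: for $\mu<\lambda$ you know $\norm{\psi_\lambda\rs A_\mu-\psi_\mu}<2\eta$, and that bound does not improve as you move up the system. Consequently each Christensen conjugation moves the working homomorphism by an amount bounded below (in the worst case) by a constant multiple of $\eta$, and over an infinite chain the cumulative drift $\sum_\alpha\norm{u_\alpha-1}$ is a sum of terms each of size $O(\eta)$ --- it is not summable, and shrinking $\eta$ at the outset does not help because the number of steps is unbounded. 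Your proposed rescue via ultraweak compactness also fails: the unitary group of $M$ is not WOT-closed in the unit ball (WOT-limits of unitaries are merely contractions), and $\operatorname{Ad}(u)$ is not continuous in $u$ for the weak topologies, so an ultraweak accumulation point of the partial products of correction unitaries does not implement the desired conjugation. A further structural issue: $\Lambda$ is only assumed directed, and a directed set need not contain a cofinal well-ordered chain, so the transfinite recursion cannot even be set up in general.

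The paper avoids coherence entirely. It extends each $\psi_\lambda$ by zero to $\bigcup_\mu A_\mu$ and defines $\psi(a)=\mathrm{WOT}\text{-}\lim_{\mathcal U}\psi_\lambda(a)$ pointwise along an ultrafilter on $\Lambda$, using WOT-compactness of bounded sets in the von Neumann algebra $M$. Since for each fixed $a$ the approximants are eventually within $\eta$ of $\phi(a)$, the limit $\tilde\psi$ is a globally defined linear (indeed cpc) map with $\norm{\tilde\psi-\phi}\le\eta$ on the unit ball, hence $4\eta$-multiplicative --- but there is no reason for it to be multiplicative, and no attempt is made to make the local maps agree. The multiplicativity defect is then repaired in a single stroke by Johnson's AMNM theorem (Theorem~\ref{thm:US.johnrevised}): a \emph{linear} $4\eta$-$^*$-homomorphism from a nuclear (hence $1$-amenable) $\Cstar$-algebra into a dual Banach algebra is within $O(\eta^{1/2})$ of a genuine $^*$-homomorphism. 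This is where nuclearity of the building blocks is actually used, and it is the ingredient your proposal is missing; without something like it, the local-to-global passage cannot be completed by perturbation alone.
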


It is clear that Theorem~\ref{thm:US.AF} will follows as a corollary, my applying the above to $\mathcal C=\mathcal F$, the class of finite-dimensional $\Cstar$-algebras, and using Theorem~\ref{thm:US.USFinDim}.

\section{Ulam stability for finite-dimensional algebras: the proof of Theorem~\ref{thm:US.USFinDim}}

Here we offer a quantitative version and a proof of Theorem~\ref{thm:US.USFinDim}.
\begin{notation}Throughout this section  $\mathcal F$ will denote the class of finite-dimensional $\Cstar$-algebras, and $\mathcal C^*$ the class of all $\Cstar$-algebras.
\end{notation}

\begin{theorem}\label{thm:US.ulam-stability}
There are $K,\delta>0$ such that given $\e<\delta$, $F\in\mathcal F$, $A\in\mathcal C^*$ and an $\e$-$^*$-homomorphism $\phi\colon F\to A$, there exists a $^*$-homomorphism $\psi\colon F\to A$ with \[\norm{\psi-\phi}<K\e^{1/2}.\]
Consequently, the pair $(\mathcal F,\mathcal C^*)$ is Ulam stable.
\end{theorem}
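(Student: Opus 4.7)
The plan is to construct a genuine matrix-unit system in $A$ close to the images under $\phi$ of the canonical matrix units of $F$, and to define $\psi$ as the unique $^*$-homomorphism this system induces. By Remark~\ref{rmk:CstarPrel.unital-contractive}, I may assume $\norm{\phi}\le 1$ and that $\phi(1_{F})$ is an actual projection. Writing $F=\bigoplus_{i=1}^{k}M_{n_{i}}(\ce)$ with canonical matrix units $\{e_{jl}^{(i)}\}$, the construction splits into two tasks: (a) producing exactly orthogonal projections $p_{i}\in A$ with $\norm{p_{i}-\phi(e_{11}^{(i)})}=O(\e^{1/2})$, uniformly in $k$; and (b) inside each corner $p_{i}Ap_{i}$, producing partial isometries $w_{j}^{(i)}$ with $w_{j}^{(i)}(w_{j}^{(i)})^{*}=p_{i}$ and mutually orthogonal source projections approximating $\phi(e_{jj}^{(i)})$, with bounds uniform in $n_{i}$. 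Setting $\psi(e_{jl}^{(i)}):=(w_{j}^{(i)})^{*}w_{l}^{(i)}$ and extending linearly yields the desired $\psi$.

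For task~(b), I would anchor the construction to the single projection $p_{i}$: set $v_{j}:=p_{i}\phi(e_{1j}^{(i)})$ for $j=2,\dots,n_{i}$, noting that $v_{j}v_{j}^{*}$ is $O(\e^{1/2})$-close to $p_{i}$. Continuous functional calculus applied to $v_{j}v_{j}^{*}$, with a function equal to $t^{-1/2}$ near $1$ and $0$ near $0$, yields a genuine partial isometry $w_{j}^{(i)}\in A$ with $w_{j}^{(i)}(w_{j}^{(i)})^{*}=p_{i}$ and $\norm{w_{j}^{(i)}-\phi(e_{1j}^{(i)})}=O(\e^{1/2})$. Since all $w_{j}^{(i)}$ share range projection $p_{i}$, their source projections $q_{j}^{(i)}:=(w_{j}^{(i)})^{*}w_{j}^{(i)}$ satisfy $q_{j}^{(i)}q_{l}^{(i)}\approx 0$ for $j\ne l$ because the $q_{j}^{(i)}$ are close to $\phi(e_{jj}^{(i)})$; a second spectral-projection correction (the analogue of task~(a), carried out inside $(1-p_{i})A(1-p_{i})$) turns them into exactly orthogonal projections with bounds uniform in $n_{i}$, and the $w_{j}^{(i)}$ are readjusted accordingly. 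Setting $w_{jl}^{(i)}:=(w_{j}^{(i)})^{*}w_{l}^{(i)}$ yields a complete matrix-unit system for the block.

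The crux of the argument is task~(a), where one must avoid accumulating errors proportional to $k$. The approach I would pursue is to extract the $p_{i}$ simultaneously as spectral projections of a single self-adjoint operator $X:=\sum_{i=1}^{k}\lambda_{i}\phi(e_{11}^{(i)})$ with well-chosen weights; using the approximate orthogonality $\phi(e_{11}^{(i)})\phi(e_{11}^{(i')})=O(\e)$, one aims to show that $X^{2}$ is close to $\sum_{i}\lambda_{i}^{2}\phi(e_{11}^{(i)})$, so that the spectrum of $X$ clusters tightly near the $\lambda_{i}$ with separation gaps independent of $k$. The hard part will be the bookkeeping that guarantees the cross-term error $\sum_{i\ne i'}\lambda_{i}\lambda_{i'}\phi(e_{11}^{(i)})\phi(e_{11}^{(i')})$ is controlled independently of $k$; one cannot simply sum over pairs but must exploit that $\sum_{i}\phi(e_{11}^{(i)})$ is itself close to the projection $\phi(1_{F})$, propagating the $\e$-linearity of $\phi$ along a balanced binary tree of additions rather than along a linear chain, so the problematic products can be bounded in bulk by a norm estimate rather than term by term.

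Once tasks~(a) and (b) are completed with uniform $O(\e^{1/2})$ bounds, $\psi$ is a genuine $^*$-homomorphism, and the estimate $\norm{\psi-\phi}<K\e^{1/2}$ is verified by expanding an arbitrary $x\in F_{\le1}$ in matrix units, comparing $\psi(x)$ and $\phi(x)$ termwise, and bounding the matrix-unit coefficients via a matricial Cauchy--Schwarz inequality that is uniform in the block dimensions $n_{i}$. The constants $K$ and $\delta$ are then obtained by tracking the finitely many polynomial-in-$\e^{1/2}$ estimates produced along the way.
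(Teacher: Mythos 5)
Your proposal diverges substantially from the paper's proof and contains a genuine gap precisely at the point you yourself flag as ``the crux.''

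The paper does not build a matrix-unit system directly. Instead it restricts $\phi$ to the unitary group $\SU(F)$, averages over $\SU(F)$ (Proposition~\ref{proposition:approximate-multiplicativity}, iterated) to produce a genuine multiplicative map $\rho$ into $\GL(A)$, represents $A$ on a Hilbert space, unitarizes $\rho$ (so it becomes a continuous unitary representation of the compact group $\SU(F)$), and then invokes the Peter--Weyl theorem to decompose the representation into finite-dimensional irreducibles. On each finite-dimensional block one can apply Farah's Theorem~5.1, which handles the case where the \emph{codomain} is also finite-dimensional; Christensen's near-inclusion theorem then conjugates the resulting direct-sum $^*$-homomorphism back into $A$. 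The uniformity of the constants over all $F\in\SF$ comes entirely from this averaging/representation-theoretic machinery and from Farah's theorem, not from any termwise estimate.

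Your direct matrix-unit construction has to face the uniformity problem head-on, and the mechanisms you propose do not resolve it. Concretely: in task~(a), iterating $\e$-linearity along a balanced binary tree to compare $\sum_i\phi(e_{11}^{(i)})$ with $\phi(1_F)$ still yields a bound of order $k\e$, not $O(\e)$. At each level $j$ one replaces $k/2^{j+1}$ pairs $\phi(s_I)+\phi(s_J)$ by $\phi(s_{I\cup J})$, each with error $<\e$; there is no orthogonality among the error terms (they are arbitrary elements of $A$), so the triangle inequality gives a total error $\sum_{j}(k/2^{j+1})\e = (k-1)\e$, exactly as for a linear chain. Thus the claim that $\sum_i\phi(e_{11}^{(i)})$ is close to $\phi(1_F)$ uniformly in $k$ is precisely what needs to be proved, and your proposal assumes it rather than establishing it. The weighted-spectral-projection idea $X=\sum\lambda_i\phi(e_{11}^{(i)})$ has the same issue: to get spectral clusters with gaps independent of $k$ while keeping $\|X\|$ bounded, the $\lambda_i\in[0,1]$ must be $1/(k-1)$-separated, so the gaps shrink with $k$. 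Task~(b) inherits the same problem: orthogonalizing the source projections $q_j^{(i)}=(w_j^{(i)})^*w_j^{(i)}$ inside a corner is again a simultaneous-orthogonalization problem for $n_i$ almost-orthogonal projections, and $n_i$ is unbounded over $\mathcal F$.

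In short, you have reduced the theorem to the assertion ``approximate matrix units can be corrected uniformly in the number of blocks and in the block sizes,'' which is essentially the content of the theorem (and of Farah's Theorem~5.1 in the finite-dimensional-codomain case) rather than a lemma one can obtain by termwise or tree-wise perturbation. The missing ingredient is exactly the averaging over the compact group $\SU(F)$ that the paper uses: averaging yields the uniform bounds that no finitary rearrangement of the matrix-unit relations provides. If you want a self-contained proof in the matrix-unit style you would at minimum need to invoke (or re-prove) Farah's Theorem~5.1 and extend it to non-finite-dimensional codomains, which is where the unitarization/Peter--Weyl/Christensen arguments come in.
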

The proof goes through successive approximations of an $\e$-$^*$-homomorphism $\phi$ with increasingly nice properties. Each step will consist of an already-known approximation result; our proof will thus consist of stringing each of these results together, sometimes with a little work in between.  Before beginning the proof we describe some of the tools we will use.

The first results is Proposition~\ref{proposition:approximate-multiplicativity}. This is essentially proved in~\cite[Proposition~5.14]{AGG};
one can also find similar ideas in the proof of~\cite[Proposition~5.2]{Kazhdan}.
Our version is slightly more general, in that the values of $\rho$ are taken
from the invertible elements of a separable Banach algebra, and $\rho$ is
allowed to be just Borel measurable.  In our proof, we will need the Bochner
integral, which is defined for certain functions taking values in a Banach
space.  For an introduction to the Bochner integral and its properties, we refer
the reader to~\cite[Appendix~E]{Cohn}.  For our purposes, we note that if
$(X,\Sigma,\mu)$ is a measure space, $E$ is a separable Banach space, and $\SB$
is the Borel $\sigma$-algebra on $E$ generated by the norm-open subsets of $E$,
then the Bochner integral is defined for any $(\Sigma,\SB)$-measurable function
$f \colon X\to E$ such that the function $x\mapsto \norm{f(x)}$ is in
$L^1(X,\Sigma,\mu)$, and in this case,
\[
  \int f(x)\,d\mu(x) \in E
\]
and
\[
  \norm{\int f(x)\,d\mu(x)} \le \int \norm{f(x)}\,d\mu(x).
\]
Moreover such an $f$ is the pointwise limit of $(\Sigma,\SB)$-measurable
functions $f_n$ with finite range, such that $\norm{f_n(x)} \le \norm{f(x)}$ for
all $x\in X$.

Now suppose $G$ is a compact group, $\mu$ is the Haar measure on $G$, and $E$ is
a separable Banach space.  We will call an $f \colon G\to E$ \emph{Borel-measurable}
if $f$ is measurable with respect to the Borel $\sigma$-algebras on $G$ and $E$
generated by the given topology on $G$ and the norm topology on $E$.  If $f :
G\to E$ is Borel-measurable and $g\in G$, then we have
\[
  \int f(x)\,d\mu(x) = \int f(gx)\,d\mu(x).
\]
since this holds for such functions with finite range.

Finally, if $A$ is a unital Banach algebra, we will denote by $\GL(A)$ the set of
invertible elements of $A$.

\begin{proposition}
  \label{proposition:approximate-multiplicativity}
  Suppose $A$ is a unital separable Banach algebra, $G$ is a compact, second
  countable group, and $\rho \colon G\to \GL(A)$ is a Borel-measurable map
  satisfying, for all $u,v\in G$,
  \[
    \norm{\rho(u)^{-1}} \le \kappa
  \]
  and
  \[
    \norm{\rho(uv) - \rho(u)\rho(v)} \le \e
  \]
  where $\kappa$ and $\e$ are positive constants satisfying $\e < \kappa^{-2}$.
  Then there is a Borel-measurable $\tilde{\rho} \colon G\to\GL(A)$ such that
  \begin{enumerate}
    \item\label{rho:close} for all $u\in G$, $\norm{\tilde{\rho}(u) - \rho(u)} \le \kappa\e$,
    \item\label{rho:invertible} for all $u\in G$,
    \[
      \norm{\tilde{\rho}(u)^{-1}} \le \frac{\kappa}{1 - \kappa^2\e},
    \]
    and finally,
    \item\label{rho:multiplicative} for all $u,v\in G$,
    \[
      \norm{\tilde{\rho}(uv) - \tilde{\rho}(u)\tilde{\rho}(v)} \le 2\kappa^2\e^2.
    \]
  \end{enumerate}
\end{proposition}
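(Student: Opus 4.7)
The plan is to define $\tilde\rho$ by averaging $\rho$ against Haar measure. Specifically, I would set
\[
\tilde\rho(u) = \int_G \rho(uv)\rho(v)^{-1}\, d\mu(v),
\]
so that $\tilde\rho(u) = \rho(u)$ exactly whenever $\rho$ is already multiplicative. Borel-measurability and definedness of the Bochner integral require a global norm bound on $\rho$: the identity $\rho(e)-\rho(e)^2=\rho(e)-\rho(e\cdot e)$ forces $\norm{1-\rho(e)}\le\kappa\e$, and then $\rho(u)\rho(u^{-1})=\rho(e)-(\rho(e)-\rho(u)\rho(u^{-1}))$ gives $\norm{\rho(u)}\le(1+2\kappa\e)\kappa$. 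The integrand is therefore uniformly norm-bounded and Borel-measurable (a composition of $\rho$ with continuous Banach-algebra operations), so it is a pointwise limit of Borel simple functions, the Bochner integral exists, and Borel measurability of $\tilde\rho$ in $u$ follows from Fubini on $G\times G$.

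Part (1) is immediate from the identity
\[
\rho(uv)\rho(v)^{-1}-\rho(u)=(\rho(uv)-\rho(u)\rho(v))\rho(v)^{-1},
\]
whose integrand has norm at most $\kappa\e$. Part (2) then follows from the standard Neumann-series perturbation: from $\tilde\rho(u)=\rho(u)(1+\rho(u)^{-1}(\tilde\rho(u)-\rho(u)))$ together with $\norm{\rho(u)^{-1}(\tilde\rho(u)-\rho(u))}\le\kappa^2\e<1$, one inverts the parenthesized factor as a geometric series and multiplies by $\rho(u)^{-1}$ to get $\norm{\tilde\rho(u)^{-1}}\le\kappa/(1-\kappa^2\e)$.

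The main work is (3). Write $\delta(u,v)=\rho(uv)-\rho(u)\rho(v)$ and $J(u)=\tilde\rho(u)-\rho(u)=\int\delta(u,v)\rho(v)^{-1}\,d\mu(v)$; then expanding gives
\[
\tilde\rho(uv)-\tilde\rho(u)\tilde\rho(v) = J(uv)+\delta(u,v)-\rho(u)J(v)-J(u)\rho(v)-J(u)J(v).
\]
The decisive cancellation comes from the cocycle identity
\[
\delta(uv,w)-\rho(u)\delta(v,w)=\delta(u,vw)-\delta(u,v)\rho(w),
\]
which one gets by expanding $\rho(uvw)$ two ways. Substituting this into $J(uv)-\rho(u)J(v)$, then performing the left-invariant change of variable $w\mapsto v^{-1}w$ to align the remaining integrand with $J(u)\rho(v)$, produces terms of the form $\delta(u,w)\cdot[\rho(v^{-1}w)^{-1}-\rho(w)^{-1}\rho(v)]$. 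The bracketed expression is itself $O(\e)$ by the resolvent identity applied to $\rho(v)\rho(v^{-1}w)=\rho(w)-\delta(v,v^{-1}w)$, so the whole integral is $O(\e^2)$; adding $\norm{J(u)J(v)}\le\kappa^2\e^2$ gives the claimed quadratic bound.

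The hard part, as I see it, is two-fold. First, extracting exactly the stated constant $2\kappa^2\e^2$ (rather than a looser $O(\kappa^d\e^2)$) requires careful bookkeeping and may need to exploit that several of the error terms above re-average to zero rather than being bounded term-by-term. Secondly, the measurability arguments — while routine in principle — rely crucially on the separability of $A$ to ensure that Bochner integrals and Fubini interchange are legitimate on Borel-measurable (rather than merely continuous) integrands, as explicitly set up in the preamble to the proposition.
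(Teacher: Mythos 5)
Your proposal is correct and follows essentially the same route as the paper: average against Haar measure (the paper uses the mirror-image $\tilde{\rho}(u)=\int\rho(x)^{-1}\rho(xu)\,d\mu(x)$), get part (1) from the $\kappa\e$ bound on the integrand, invert by a Neumann series, and obtain (3) by splitting the multiplicativity defect into a product of two first-order errors plus a remainder that translation-invariance of $\mu$ converts into an integral of $\delta(u,w)$ against a factor of norm at most $\kappa^2\e$. Your closing worry about the constant is unfounded: writing the bracketed term as $\rho(w)^{-1}\delta(v,v^{-1}w)\rho(v^{-1}w)^{-1}$ bounds that integral by $\kappa^2\e^2$, and $\norm{J(u)J(v)}\le\kappa^2\e^2$, so term-by-term estimates already give exactly $2\kappa^2\e^2$ with no re-averaging needed.
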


\begin{proof}
  Define
  \[
    \tilde{\rho}(u) = \int \rho(x)^{-1}\rho(xu)\,d\mu(x)
  \]
  where $\mu$ is the Haar measure on $G$, and the integral above is the Bochner
  integral.  First we must check that $\tilde{\rho}$ is
  Borel-measurable.  To see this, consider the set $\SD$ of all bounded,
  Borel-measurable functions $f \colon G\times G\to \GL(A)$ such that
  \[
    \tilde{f}(u) = \int f(u,x)\,d\mu(x)
  \]
  is also Borel-measurable.  Then $\SD$ is closed under finite linear
  combinations, and contains all functions of the form
  \[
    f(u,v) = \left\{\begin{array}{ll} a & (u,v)\in S\times T \\ 0 & (u,v)\not\in
    S\times T \end{array}\right.
  \]
  where $S,T\subseteq G$ are Borel and $a\in \GL(A)$.  Moreover, by
  \cite[Proposition~E.1 and Theorem~E.6]{Cohn}, $\SD$ is closed
  under pointwise limits of uniformly bounded sequences of functions.  It
  follows that $\SD$ contains each function of the form $\chi_S a$ where
  $S\subseteq G\times G$ is Borel and $a\in \GL(A)$.  (Here we are using the fact
  that the Borel $\sigma$-algebra on $G\times G$ is generated by the Borel
  rectangles, which requires the second countability of $G$.)  Furthermore this
  implies that $\SD$ contains every bounded, Borel-measurable function $f :
  G\times G\to \GL(A)$.  In particular $\SD$ contains the function $(u,v)\mapsto
  \rho(v)^{-1}\rho(vu)$.
  
  Now, to check condition~\eqref{rho:close}, we have
  \begin{align*}
    \norm{\tilde{\rho}(u) - \rho(u)} & \le \int \norm{\rho(x)^{-1}\rho(xu) - \rho(u)}\,d\mu(x) \\
      & \le \int \norm{\rho(x)^{-1}}\norm{\rho(xu) - \rho(x)\rho(u)}\,d\mu(x) \le \kappa\e.
  \end{align*}
  Note that
  \[
    \norm{1 - \tilde{\rho}(u)\rho(u)^{-1}} \le \norm{\rho(u) - \tilde{\rho}(u)}\norm{\rho(u)^{-1}} \le \kappa^2\e.
  \]
  By standard spectral theory, since $\rho(u)$ is invertible and $\norm{1 - \tilde{\rho}(u)\rho(u)^{-1}} < 1$, we have that $\tilde{\rho}(u)$ is invertible too, and moreover
  \begin{align*}
    \norm{\tilde{\rho}(u)^{-1}} & \le \norm{\rho(u)^{-1}}\left(1 + \norm{1 - \tilde{\rho}(u)\rho(u)^{-1}} + \norm{1 - \tilde{\rho}(u)\rho(u)^{-1}}^2 + \cdots\right) \\
      & \le \frac{\kappa}{1 - \kappa^2\e}
  \end{align*}
  which proves condition~\eqref{rho:invertible}.  The real work comes now in proving condition~\eqref{rho:multiplicative}.  First, we note that
  \begin{eqnarray*}
    \tilde{\rho}(u)\tilde{\rho}(v) - \tilde{\rho}(uv) &=& \iint \left(\rho(x)^{-1}\rho(xu) \rho(y)^{-1} \rho(yv) - \rho(x)^{-1}\rho(xuv)\right)\,d\mu(x)\,d\mu(y)\\& =& I_1 + I_2
  \end{eqnarray*}
  where
  \[
    I_1 = \iint \left(\rho(x)^{-1}\rho(xu) - \rho(u)\right)\left(\rho(y)^{-1}\rho(yv) - \rho(v)\right)\,d\mu(x)\,d\mu(y)
  \]
  and
  \[
    I_2 = \iint \left(\rho(x)^{-1}\rho(xu)\rho(v) + \rho(u)\rho(y)^{-1}\rho(yv) - \rho(u)\rho(v) - \rho(x)^{-1}\rho(xuv)\right)\,d\mu(x)\,d\mu(y).
  \]
  For $I_1$ we have
  \[
    \norm{I_1} \le \iint \norm{\rho(x)^{-1}}\norm{\rho(xu) - \rho(x)\rho(u)}\norm{\rho(y)^{-1}}\norm{\rho(yv) - \rho(y)\rho(v)}\,d\mu(x)\,d\mu(y) \le \kappa^2\e^2.
  \]
  As for $I_2$, we have
  \[
    I_2 = \int \rho(x)^{-1} (\rho(xu)\rho(v) - \rho(xuv))\,d\mu(x) - \int (\rho(u)\rho(x)^{-1}\rho(x)\rho(v) - \rho(u)\rho(x)^{-1}\rho(xv))\,d\mu(x).
  \]
  Using the translation-invariance of $\mu$ on the first integral above to replace $xu$ with $x$, we see that
  \begin{align*}
    I_2 & = \int \rho(xu^{-1})^{-1} (\rho(x)\rho(v) - \rho(xv))\,d\mu(x) - \int \rho(u)\rho(x)^{-1}(\rho(x)\rho(v) - \rho(xv))\,d\mu(x) \\
      & = \int (\rho(xu^{-1})^{-1} - \rho(u)\rho(x)^{-1})(\rho(x)\rho(v) - \rho(xv))\,d\mu(x)
  \end{align*}
  Finally, note that
  \[
    \norm{\rho(xu^{-1})^{-1} - \rho(u)\rho(x)^{-1}} = \norm{\rho(xu^{-1})^{-1}(\rho(x) - \rho(xu^{-1})\rho(u))\rho(x)^{-1}} \le \kappa^2\e
  \]
  and
  \[
    \norm{\rho(x)\rho(v) - \rho(xv)} \le \e
  \]
  so we have that $\norm{I_2} \le \kappa^2\e^2$.  This proves condition~\eqref{rho:multiplicative}.
\end{proof}

We are now ready to prove our main result.  In the proof we will make several successive modifications to $\phi$, and in
each case the relevant $\e$ will increase by some linear factor.  In order to keep the notation readable, we will call
the resulting $\e$'s $\e_1, \e_2, \ldots$
\begin{proof}[Proof of Theorem~\ref{thm:US.ulam-stability}]
Let $\gamma, K > 0$ witness Farah's Theorem.  Let $\delta \ll \gamma, 1/K$.  We will in particular require $\delta <
2^{-12}$.
Fix $\e < \delta$, $A\in\mathcal C^*$, $F\in\mathcal F$, and an
$\e$-$^*$-homomorphism $\phi \colon F \to A$.   By Remark~\ref{rmk:CstarPrel.unital-contractive}, we can assume that $A$ is
unital, $\phi(1) = 1$, and $\norm{\phi} \le 1$.

Let $X = \{x_0,\ldots,x_k\}$ be a finite subset of $F_{\le 2}$ which is $\e/2$-dense in $F_{\le 2}$, with $x_0=1$, and such that the elements of norm $\leq 1$ are listed first. Define a map $\phi' \colon F_{\le 2} \to A$ by letting $\phi'(x) = \phi(x_i)$, where $i$ is the minimal integer
such that $\norm{x - x_i} < \e$.  Clearly, the range of $\phi'$ is just $\{\phi(x_0),\ldots,\phi(x_k)\}$, and if $B_i =
B(x_i,\e)\cap F_{\le 2}$, then
\[
  (\phi')^{-1}(\phi(x_i)) = B_i \setminus \bigcup_{j < i} B_j
\]
so $\phi'$ is a Borel measurable map.  Moreover, $\norm{\phi'(x) - \phi(x)} < \e$ for all $x\in F_{\le 2}$. For $x\in F$ with $\norm{x}>2$, define $\phi'(x)=\frac{\phi'(\lambda x)}{\lambda}$, where $\lambda=\frac{2}{\norm{x}}$. It follows that
$\phi'$ is an $\e_1$-$^*$-homomorphism, where $\e_1 = 4\e$, and that $\phi'$ remains Borel-measurable. Note also that $\phi'(1) = 1$, as $x_0=1$ and $\norm{\phi'} \le 1$, since we listed the elements of norm $\leq 1$ first  and $X$ is required to be $\e/2$-dense. Replacing $\phi$ with
$\phi'$ and $A$ with the $\Cstar$-algebra generated by $\{\phi(x_0),\ldots,\phi(x_k)\}$, we may assume that $\phi$ is
Borel-measurable and $A$ is separable.

Since $\e_1 < 1$ and $\phi$ is unital, it follows that for every $u\in\SU(F)$, we have $\norm{\phi(u^{-1})\phi(u) -
1} < 1$ and hence that $\phi(u)$ is invertible, and $\norm{\phi(u)^{-1}} \le 2$.  Let $\rho_0$ be the restriction of $\phi$ to $\SU(F)$.   Applying
Proposition~\ref{proposition:approximate-multiplicativity} repeatedly we may find a sequence of maps $\rho_n :
\SU(F)\to \GL(A)$ satisfying, for all $u,v\in \SU(F)$, 
\[
  \norm{\rho_n(uv) - \rho_n(u)\rho_n(v)} \le \delta_n,\,\,\,\norm{\rho_{n+1}(u) - \rho_n(u)} \le \kappa_n\delta_n \text{ and } \norm{\rho_n(u)^{-1}} \le \kappa_n,
\]
where $\delta_n$ and $\kappa_n$ are defined by letting $\delta_0 = \e_1$, $\kappa_0 = 2$, and
\[
  \delta_{n+1} = 2\kappa_n^2\delta_n^2, \text{ and } \kappa_{n+1} = \frac{\kappa_n}{1 - \kappa_n^2 \delta_n}.
\]
\begin{claim}
  For each $n$, $\kappa_{n+1} - \kappa_n < 2^{-n}$ and $\delta_n \le 2^{5(1 - 2^n)}\e_1$.  Consequently, $\kappa_n < 4$ for all
  $n$, and
  \[
    \sum_{n=0}^\infty \kappa_n \delta_n < 8\e_1.
  \]
\end{claim}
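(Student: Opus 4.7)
The plan is to prove both inequalities $\kappa_{n+1}-\kappa_n<2^{-n}$ and $\delta_n\le 2^{5(1-2^n)}\e_1$ simultaneously by a single induction on $n$, carrying along the a priori bound $\kappa_n<4$ as part of the induction hypothesis. The starting restriction $\delta<2^{-12}$ (so $\e_1=4\e<2^{-10}$) has been set up precisely so that the denominators $1-\kappa_n^2\delta_n$ stay safely bounded away from $0$, which is the mechanism that lets the quadratic recursion $\delta_{n+1}=2\kappa_n^2\delta_n^2$ drive doubly-exponential decay.

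First I would verify the base case: $\delta_0=\e_1=2^{5(1-2^0)}\e_1$ and $\kappa_0=2<4$, so all three statements hold at $n=0$. For the inductive step, assume $\delta_n\le 2^{5(1-2^n)}\e_1$ and $\kappa_n<4$. Since $\kappa_n^2\delta_n<16\cdot 2^{5(1-2^n)}\e_1\le 16\e_1<2^{-6}$, the denominator satisfies $1-\kappa_n^2\delta_n>\tfrac12$. For the $\delta$-bound I compute
\[
\delta_{n+1}=2\kappa_n^2\delta_n^2\le 32\bigl(2^{5(1-2^n)}\e_1\bigr)^2=2^{15-10\cdot 2^n}\e_1^2\le 2^{5-10\cdot 2^n}\e_1=2^{5(1-2^{n+1})}\e_1,
\]
where the last inequality uses $\e_1\le 2^{-10}$ and the identity $10\cdot 2^n=5\cdot 2^{n+1}$. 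For the $\kappa$-bound I write
\[
\kappa_{n+1}-\kappa_n=\frac{\kappa_n^3\delta_n}{1-\kappa_n^2\delta_n}<2\cdot 64\cdot 2^{5(1-2^n)}\e_1=2^{7}\cdot 2^{5(1-2^n)}\e_1\le 2^{2-5\cdot 2^n},
\]
and then observe that $2-5\cdot 2^n<-n$ for every $n\ge 0$ (an easy check since $5\cdot 2^n\ge n+2$), so indeed $\kappa_{n+1}-\kappa_n<2^{-n}$. Summing telescopically gives $\kappa_{n+1}<\kappa_0+\sum_{k=0}^{n}2^{-k}<2+2=4$, closing the induction.

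The two consequences now fall out immediately: the $\kappa_n<4$ bound has just been proved, and for the final estimate
\[
\sum_{n=0}^{\infty}\kappa_n\delta_n<4\e_1\sum_{n=0}^{\infty}2^{5(1-2^n)}=4\e_1\bigl(1+2^{-5}+2^{-15}+2^{-35}+\cdots\bigr)<8\e_1,
\]
where the series in parentheses is dominated termwise by a geometric series of ratio $\tfrac12$ and therefore comfortably less than $2$. There is no genuine obstacle here; the only thing to be careful about is the bookkeeping on the exponents and to keep $\e_1\le 2^{-10}$ available at every use, which is exactly why the hypothesis $\delta<2^{-12}$ was imposed.
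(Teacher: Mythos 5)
Your proof is correct and follows essentially the same inductive scheme as the paper's: you carry the bound $\kappa_n<4$ as part of the induction hypothesis, verify the same doubly-exponential decay of $\delta_n$, bound $\kappa_{n+1}-\kappa_n$ by $2^{2-5\cdot 2^n}\le 2^{-n}$, and close with the same geometric comparison for $\sum\kappa_n\delta_n$. The only cosmetic difference is that you separately record the crude denominator estimate $1-\kappa_n^2\delta_n>\tfrac12$ before using it, whereas the paper embeds that estimate inline; the underlying computation is the same.
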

\begin{proof}
  We will prove the first part of the claim by induction on $n$.  For the base
  case we note that $\delta_0 = \e_1 = 4\e < 2^{-10}$,
  \[
    \kappa_1 - \kappa_0 \le \frac{2}{1 - 2^{-8}} - 2 < 1.
  \]
  Now suppose $\kappa_0,\ldots,\kappa_n$ and $\delta_0,\ldots,\delta_n$ satisfy the induction hypothesis above.  Then we clearly
  have
  \[
    \kappa_n < 2 + 1 + \cdots + 2^{-(n-1)} < 4.
  \]
  Using this fact and the assumption $\e_1 < 2^{-10}$,
  \[
    \delta_{n+1} = 2\kappa_n^2 \delta_n^2
      < 2(4^2) 2^{10(1 - 2^n)}\e_1^2
      < 2(4^2) (2^{-10}) 2^{10(1 - 2^n)}\e_1
      = 2^{5(1 - 2^{n+1})}\e_1.
  \]
  Moreover,
  \[
    \kappa_{n+1} - \kappa_n = \frac{\kappa_n^3 \delta_n}{1 - \kappa_n^2\delta_n} < \frac{(4^3) 2^{5(1 - 2^n)}\e_1}{1 -
    (4^2) 2^{5(1 - 2^n)}\e_1} < \frac{2^{1 - 5\cdot 2^n}}{1 - 2^{-1}} = 2^{2 - 5\cdot 2^n}
  \]
  Finally, note that $2 - 5\cdot 2^n \le -n$ for all $n\ge 0$.  This proves the first two parts of the claim.  We have
  already noted that $\kappa_n < 4$, therefore
  \[
    \sum_{n=0}^\infty \kappa_n \delta_n < 4\e_1 \sum_{n=0}^\infty 2^{5(1 - 2^n)} < 4\e_1 \sum_{n=0}^\infty 2^{-n} =
    8\e_1.
  \]
  This concludes the proof.
\end{proof}

It follows from the above claim that the map $\rho$ given by $\rho(u) = \lim \rho_n(u)$ is defined on $\SU(F)$, maps
into $\GL(A)$, and is multiplicative, Borel-measurable, and satisfies $\norm{\rho - \phi} \le  8\e_1 = \e_2$.

Fix a faithful representation $\sigma$ of $A$ on a separable Hilbert space $H$,
and let $\tau \colon \SU(F)\to\GL(H)$ be the composition $\sigma\circ\rho$.
Then $\tau$ is a group homomorphism which is Borel-measurable with respect to
the strong operator topology on $\B(H)$, and for each $u\in\SU(F)$,
$\norm{\tau(u)} \le 1 + \e_2$ and $\norm{\tau(u)^*\tau(u) - 1} \le \e_2(4 +
\e_2) = \e_3$.  Since $\SU(F)$ is compact, and hence unitarizable, it follows
that there is a $T\in\GL(H)$ such that $\pi(u) = T \tau(u) T^{-1}$ is unitary
for every $u\in\SU(F)$.  Moreover, following for example the construction in the proof
of \cite[Theorem~1.4.4]{Runde.LA}, we see that we may choose such a $T$
satisfying $\norm{T - 1} \le \e_3$.  Then,
\[
  \norm{\pi(u) - \tau(u)} \le \frac{2(1 + \e_2)\e_3}{1 - \e_3} = \e_4.
\]

Recall that $\SU(F)$, with the norm topology, and $\SU(H)$, with the strong operator topology, are Polish groups; then,
by Pettis's Theorem (see e.g.,~\cite[Theorem~2.2]{Rosendal.AC}), it follows that $\pi$, a Borel-measurable group
homomorphism, is continuous with respect to these topologies.  By the Peter-Weyl Theorem, we may write $H = \bigoplus
H_k$, where each $H_k$ is finite-dimensional and $\pi\rs H_k$ is irreducible. In particular, if $p_k=\text{proj}(H_k)$,
we have that for every $k\in\NN$ and $u\in \SU(F)$, $[p_k,\pi(u)]=0$, and moreover $\pi(u)=\sum p_k\pi(u)p_k$.  Now,
recall that $\norm{\phi(u) - \rho(u)} \le \e_2$ for each $u\in\SU(F)$; hence
\[
  \norm{\sigma(\phi(u)) - \pi(u)} \le \norm{\sigma(\phi(u)) - \tau(u)} + \norm{\tau(u) - \pi(u)} \le \e_4 + \e_2.
\]
It follows that $\norm{[\sigma(\phi(u)), p_k]} \le 2(\e_4 + \e_2)$ for each $u\in\SU(F)$ and $k\in\NN$.  Since each element $a$
of a unital $\Cstar$-algebra is a linear combination of 4 unitaries whose coefficients have absolute value at most $\norm{a}$,
we deduce that
\[
  \sup_{a\in F,\norm{a}\leq 1}\norm{[\sigma(\phi(a)),p_k]}\leq 8(\e_4 + \e_2) + 8\e_1
\]
Let $\phi_k$ be defined as 
\[
  \phi_k(a)=p_k((\sigma\circ \phi)(a))p_k.
\]
It is not hard to show that $\phi_k\colon F\to\SB(H_k)$ is an $\e_5$-$^*$-homomorphism, where $\e_5 = 8(\e_4 + \e_2) +
9\e_1$.  (In fact, $\phi_k$ is nearly an $\e_1$-$^*$-homomorphism; however, to check that $\phi_k(ab) -
\phi_k(a)\phi_k(b)$ is small we need the norm on the commutator computed above.)  By \cite[Theorem 5.1]{Farah.C} and our
choice of $\gamma$ and $K$, there is a $^*$-homomorphism $\psi_k\colon F\to\mathcal B(H_k)$ such that
$\norm{\phi_k-\psi_k} \le K\e_5$.

Consider now $\psi'=\bigoplus\psi_k$ and the $\Cstar$-algebras $C = \psi'[F]$ and $B = \sigma[A]$. For every $u\in\mathcal
U(F)$, we have
\[
  \norm{\psi'(u)-\pi(u)}=\sup_k\norm{\psi_k(u)-p_k\pi(u)p_k} \le K\e_5 + \e_4 + \e_2.
\]
Since we also have $\norm{\pi(u) - \sigma(\phi(u))} \le \e_4 + \e_2$, it follows that $C \subset^{\e_6} B$, where $\e_6
= K\e_5 + 2\e_4 + 2\e_2$.
By~\cite[Theorem~5.3]{Christensen.NI}, there is a partial isometry $V\in\B(H)$ such that $\norm{V-1_H}<120\e_6^{1/2}$ and
$V C V^* \subseteq B$.  In particular, $V$ is unitary, and the $^*$-homomorphism $\eta \colon F\to B$ defined by
$\eta(a) = V \psi'(a) V^*$ satisfies $\norm{\eta(a) - \psi'(a)} < 240\e_6^{1/2}$.  Since $\sigma$ is injective, for every $x\in F$ we can define
\[
  \psi(x)=\sigma^{-1}(\eta(x)).
\]
Then $\psi$ is a $*$-homomomorphism mapping $F$ into $A$.
Moreover by construction we have that
\[
  \norm{\psi-\phi}<L\e^{1/2},
\]
where $L$ is a constant independent of $\e$, the dimension of $F$, $A$, and $\phi$.  This completes the proof.
\end{proof}

\section{Ulam stability for AF algebras: the proof of Theorem~\ref{thm:US.limits}}
We now work towards the proof of a quantitative version of Theorem~\ref{thm:US.limits} and, specifically, of Theorem~\ref{thm:US.AF}.

 \begin{corollary}\label{cor:US.AF2}
Let $\mathcal{AF}$ be the class of all unital AF algebras and $\mathcal M$ be the class of all von Neumann algebras. There is $K$ such that whenever $A\in\mathcal{AF}$,
$M\in\mathcal{M}$, $\e>0$, and $\phi \colon A\to M$ is an $\e$-$^*$-homomorphism, there is a $^*$-homomorphism $\psi\colon
A\to M$ with $\norm{\phi-\psi}<K\e^{1/4}$. Therefore $(\mathcal{AF},\mathcal M)$ is Ulam stable.
\end{corollary}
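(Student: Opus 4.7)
The corollary is essentially a direct invocation of Theorem~\ref{thm:US.limits} applied to the class $\mathcal C = \mathcal F$, since by definition $\mathcal D_{\mathcal F} = \mathcal{AF}$. The hypothesis required by Theorem~\ref{thm:US.limits}, namely Ulam stability of $(\mathcal F, \mathcal M)$, follows at once from Theorem~\ref{thm:US.USFinDim} through the trivial inclusion $\mathcal M \subseteq \mathcal C^*$; quantitatively, for a universal constant $K > 0$ it supplies a $^*$-homomorphism approximating any given $\e$-$^*$-homomorphism from a finite-dimensional algebra into $M$ within $K\e^{1/2}$.

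To justify the worse exponent $1/4$ appearing in the conclusion I would trace through the quantitative form of the proof of Theorem~\ref{thm:US.limits}. Given a unital AF algebra $A = \overline{\bigcup_\lambda A_\lambda}$ with each $A_\lambda$ finite-dimensional and an $\e$-$^*$-homomorphism $\phi \colon A \to M$, Theorem~\ref{thm:US.USFinDim} supplies $^*$-homomorphisms $\psi_\lambda \colon A_\lambda \to M$ with $\norm{\psi_\lambda - \phi|_{A_\lambda}} < K\e^{1/2}$. These local approximations must be glued into a single $^*$-homomorphism on the dense subalgebra $\bigcup_\lambda A_\lambda$, then extended by norm continuity to $A$. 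For $\lambda \le \mu$ the two genuine $^*$-homomorphisms $\psi_\mu|_{A_\lambda}$ and $\psi_\lambda$ from $A_\lambda$ into the von Neumann algebra $M$ lie within norm distance $2K\e^{1/2}$, and Christensen-style perturbation results---precisely the family used at the end of the proof of Theorem~\ref{thm:US.ulam-stability} to absorb the partial isometry $V$---produce a unitary $u_{\lambda,\mu} \in M$ with $\norm{u_{\lambda,\mu} - 1} = O(\e^{1/4})$ conjugating one into the other. A transfinite construction along a well-ordered cofinal chain in $\Lambda$, exploiting weak-operator completeness of $M$, then assembles the adjusted $\psi_\lambda$'s into a coherent $^*$-homomorphism $\psi \colon A \to M$ at distance $O(\e^{1/4})$ from $\phi$.

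The main obstacle is coherence of the unitary corrections: the $u_{\lambda,\mu}$ obtained pairwise do not automatically satisfy a cocycle identity, so naive composition would let errors accumulate with the length of the chain. The essential point is that at limit stages one passes to a weak-operator (or $\sigma$-strong) limit of unitaries in $M$, which remains a unitary and provides the required adjustment without additional norm loss; this is exactly why the theorem is stated for $M$ a von Neumann algebra rather than a general $\Cstar$-algebra. The single square-root loss incurred at the perturbation step, compounded with the $\e^{1/2}$ from Theorem~\ref{thm:US.USFinDim}, is what produces the final exponent $1/4$, and the constant $K$ in the corollary absorbs the product of the two universal constants coming from those two steps.
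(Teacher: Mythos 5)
Your opening observation is exactly the paper's derivation: apply Theorem~\ref{thm:US.limits} with $\mathcal C = \mathcal F$, noting $\mathcal D_{\mathcal F} = \mathcal{AF}$, and supply the hypothesis $(\mathcal F,\mathcal M)$ Ulam stable from Theorem~\ref{thm:US.USFinDim}. Had you stopped there and cited Theorem~\ref{thm:US.limits} as a black box, the proof would be correct and essentially the paper's.

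However, the route you sketch for the gluing step inside Theorem~\ref{thm:US.limits} is not what the paper does, and it contains a genuine gap. The paper never attempts to assemble the local $^*$-homomorphisms $\psi_\lambda$ into a single $^*$-homomorphism by unitary conjugations. Instead it sets $\psi_\lambda = 0$ off $A_\lambda$, takes a \emph{WOT ultralimit} $\tilde\psi(a) = \mathrm{WOT}\text{-}\lim_{\mathcal U}\psi_\lambda(a)$ (which exists because closed balls of $M$ are WOT-compact), obtains from Proposition~\ref{lem:US.closeness} that $\tilde\psi$ is still uniformly close to $\phi$ and hence approximately multiplicative, and then invokes the Johnson AMNM theorem for nuclear domains and von Neumann codomains (Theorem~\ref{thm:US.johnrevised}) to replace the approximately multiplicative cpc map $\tilde\psi$ by an honest $^*$-homomorphism. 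The extra square root in the exponent comes from \emph{that} application of Johnson's theorem stacked on top of the finite-dimensional $\e^{1/2}$, not from a Christensen-type perturbation of a partial isometry. Your proposed transfinite conjugation scheme fails on its key claim: the WOT limit of a net of unitaries in a von Neumann algebra is in general \emph{not} a unitary (the WOT closure of $\mathcal U(M)$ is the whole unit ball), and $\sigma$-strong$^*$ limits, where unitaries are closed, need not exist for an arbitrary bounded net. So the limit stages of your recursion do not produce the correcting unitaries you need, and the cocycle/error-accumulation problem you yourself flag is left unresolved. The paper's ultralimit-plus-AMNM device is precisely what sidesteps both difficulties.
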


It should be pointed out that we do not require, in the statement of Theorem \ref{thm:US.limits}, the $\e$-$^*$-homomorphisms to be $\delta$-injective, for any $\delta$.

We will make use of a small proposition and of a consequence of \cite[Theorem 7.2]{Johnson.AMNM}:
\begin{proposition}\label{lem:US.closeness}
Let $M$ be a von Neumann algebra and $x\in M$, $Y\subseteq M$ such that $\norm{x - y} \le \e$ for all $y\in Y$.  If
$z$ is any WOT-accumulation point of $Y$, then $\norm{x - z} \le \e$.
\end{proposition}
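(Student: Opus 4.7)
My plan is to exploit the lower semicontinuity of the operator norm with respect to the weak operator topology (WOT). I would realize $M$ as a weakly closed $^*$-subalgebra of $\mathcal B(H)$ for some Hilbert space $H$, so that the WOT on $M$ is generated by the seminorms $w\mapsto \abs{\langle w\xi,\eta\rangle}$ for $\xi,\eta\in H$. Since translations are WOT-continuous, it suffices to show that the norm-closed ball $\{w\in M:\norm{w}\le\e\}$ is WOT-closed; once this is known, replacing $Y$ by $Y-x$ and $z$ by $z-x$ finishes the job.

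To verify the closedness of the ball, let $(y_\alpha)\subseteq Y-x$ be any net converging to $z-x$ in WOT. For each pair of unit vectors $\xi,\eta\in H$ and each index $\alpha$ we have $\abs{\langle y_\alpha\xi,\eta\rangle}\le\norm{y_\alpha}\le\e$. Passing to the WOT-limit yields $\abs{\langle(z-x)\xi,\eta\rangle}\le\e$, and taking the supremum over unit $\xi,\eta$ gives $\norm{z-x}\le\e$. If the phrase ``WOT-accumulation point'' is interpreted as cluster point rather than net limit, I would simply pass to a subnet of $Y$ converging in WOT to $z$, which exists by the definition of cluster point in a topological space, and then run the same estimate on this subnet.

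No substantial obstacle is anticipated: the argument merely packages the standard observation that norm-closed balls in $\mathcal B(H)$ are WOT-closed (indeed, the unit ball is WOT-compact by Banach--Alaoglu applied to the predual), applied here to the ball of radius $\e$ centered at $x$. The only point to be careful about is the meaning of ``accumulation point'', which I handle as above by reducing to a convergent (sub)net.
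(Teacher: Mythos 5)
Your argument is correct: the key point is precisely the WOT-lower-semicontinuity of the norm (equivalently, that norm-closed balls in $\mathcal B(H)$ are WOT-closed), combined with WOT-continuity of translation and passage to a subnet converging to the accumulation point. The paper states this proposition without proof, treating it as standard, and your write-up is exactly the expected justification.
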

\begin{theorem}\label{thm:US.johnrevised}
There is $K$ such that for any unital, nuclear $\Cstar$-algebra $A$, von Neumann algebra $M$,
$\e>0$ and for any linear $\e$-$^*$-homomorphism $\phi\colon A\to M$, there is a $^*$-homomorphism $\psi$ with
$\norm{\phi-\psi}<K\e^{\frac{1}{2}}$. 
\end{theorem}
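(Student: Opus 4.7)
The plan is to reduce to Johnson's AMNM theorem and then upgrade from an algebra homomorphism to a $^*$-homomorphism via a similarity argument in $M$.

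First, a nuclear $\Cstar$-algebra is amenable as a Banach algebra (Connes--Haagerup), and any von Neumann algebra $M$ is a dual Banach algebra via its predual. Thus the hypotheses of \cite[Theorem 7.2]{Johnson.AMNM} are satisfied. Since $\phi$ is linear and $\e$-contractive (so $\norm{\phi} \le 1+\e$) and $\e$-multiplicative, the quantitative form of Johnson's result yields a bounded algebra homomorphism $\rho \colon A \to M$ with
\[
\norm{\phi - \rho} \le K_0\sqrt{\e}
\]
for a universal constant $K_0$; the $\sqrt{\e}$ rate is precisely what Johnson's theorem delivers for amenable source and dual target.

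Second, one must promote $\rho$ to a $^*$-homomorphism. Because $\phi$ is $\e$-$^*$-preserving and $\rho$ is close to $\phi$, for $x \in A_{\le 1}$,
\[
\norm{\rho(x^*) - \rho(x)^*} \le 2\norm{\phi - \rho} + \norm{\phi(x^*) - \phi(x)^*} \le 2K_0\sqrt{\e} + \e,
\]
so $\rho$ is approximately $^*$-preserving. Since $\rho$ is a bounded algebra homomorphism from an amenable $\Cstar$-algebra into $M$, a Haagerup-type similarity argument produces an invertible $S \in M$ with $\psi(\cdot) := S\rho(\cdot)S^{-1}$ a genuine $^*$-homomorphism. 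The element $S$ is built as $S = T^{1/2}$, where $T$ is a WOT-accumulation point of the net of convex combinations $\mathrm{conv}\{\rho(u)^*\rho(u) \colon u \in \SU(A)\}$; amenability of $A$ supplies an invariant mean that makes such an accumulation point satisfy $\rho(u)^* T \rho(u) = T$ for all $u \in \SU(A)$, and Proposition~\ref{lem:US.closeness} is invoked to pass from the approximate identities $\rho(u)^*\rho(u) \approx 1$ (coming from approximate $^*$-preservation) to the exact bound $\norm{T - 1} = O(\sqrt{\e})$ on the limit, and hence $\norm{S - 1} = O(\sqrt{\e})$. It follows that $\norm{\psi - \rho} = O(\sqrt{\e})$.

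Combining via the triangle inequality,
\[
\norm{\phi - \psi} \le \norm{\phi - \rho} + \norm{\rho - \psi} \le K\sqrt{\e}
\]
for a universal $K$, as required. The main obstacle is the quantitative similarity step: the bound $\norm{S - 1} = O(\sqrt{\e})$ must survive the extraction of a WOT-limit in $M$, so that the $\sqrt{\e}$ rate from Johnson is not degraded to $\e^{1/4}$. This is precisely where the von Neumann algebra structure of $M$ (and Proposition~\ref{lem:US.closeness}) is essential: without weak$^*$-compactness of bounded sets, neither the invariant $T$ nor the quantitative control of $\norm{T - 1}$ would be available.
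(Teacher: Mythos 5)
Your first step is, in essence, the paper's entire proof: Theorem~\ref{thm:US.johnrevised} is established there by a one-paragraph remark citing \cite[Theorem 7.2]{Johnson.AMNM} together with the observation that the constant in Johnson's theorem depends only on the best possible norm of an approximate diagonal in $A\hat\otimes A$, which is $1$ for every amenable (equivalently, nuclear) $\Cstar$-algebra. The paper reads Johnson's result, which is stated for a class of Banach $^*$-algebras mapping into dual Banach algebras, as already delivering the $^*$-homomorphism, so no separate unitarization step is performed.

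Where you diverge is in adding an explicit similarity argument to upgrade the algebra homomorphism $\rho$ to a $^*$-homomorphism, and that step has a genuine soft spot: you invoke ``an invariant mean'' over $\SU(A)$ to produce the invariant element $T$ in the WOT-closed convex hull of $\{\rho(u)^*\rho(u)\}$, but for an infinite-dimensional $\Cstar$-algebra the unitary group is not amenable as a discrete group, so no such mean exists. The averaging must instead be run through the virtual diagonal of the amenable Banach algebra $A$ (as in Haagerup's similarity theorem), or through a Dixmier-type fixed-point argument in the weak$^*$-compact convex set $\overline{\mathrm{conv}}^{w^*}\{\rho(u)^*\rho(u)\}\subseteq M$; either route does preserve the bound $\norm{T-1}\le\sup_u\norm{\rho(u)^*\rho(u)-1}=O(\sqrt{\e})$ by Proposition~\ref{lem:US.closeness}, so your quantitative conclusion survives, but as written the mechanism you name is not available. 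Two smaller points to tidy: $\rho(1)$ need not equal $1_M$ (it is only an idempotent near a projection), so $\rho(u^*)\rho(u)=\rho(1)$ rather than $1$ and one should first correct the unit and work in the corresponding corner of $M$; and the invertibility of $S=T^{1/2}$ requires $\norm{T-1}<1$, which your estimates supply only for $\e$ below a universal threshold (for large $\e$ the statement is vacuous by rescaling $K$, so this is harmless but worth saying).
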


\begin{remark}
  Theorem 7.2 in \cite{Johnson.AMNM} is more general, as it applies to a class of Banach $*$-algebras
  which does not include just nuclear $\Cstar$-algebras.  However, in this context the constant $K$ depends on the constant of
  amenability of $A$, as it depends on the best possible norm of an approximate diagonal in $A\hat\otimes A$. Since
  every $\Cstar$-algebra is $1$-amenable (see, for example,
  \cite{Runde.LA}), Theorem~\ref{thm:US.johnrevised} follows.
\end{remark}

The proof of Theorem \ref{thm:US.limits} relies heavily on the fact that the range algebra, being a von Neumann algebra, is a dual Banach algebra. The assumption of nuclearity for elements of the class $\mathcal C$ is crucial due to the application of \cite[Theorem 3.1]{Johnson.AMNM}.

Recall (see the paragraph after Theorem~\ref{thm:US.USFinDim}), that if $\mathcal C$ is a class of unital $\Cstar$-algebras the class $\mathcal D_\mathcal C$ has been defined to be the class of all unital inductive limits of algebras in $\mathcal C$.

\begin{proof}[Proof of Theorem \ref{thm:US.limits}]
Let $\epsilon>0$, $A\in\mathcal D_{\mathcal C}$, $M\in\mathcal M$ and let $\{A_\lambda\}_{\lambda\in\Lambda}$ be a directed system of algebras in $\SC$ with direct limit $A$. Fix a nonprincipal ultrafilter $\mathcal U$ on $\Lambda$ and let $\eta=\frac{\epsilon^2}{K^2}$ where $K$ is given by Theorem \ref{thm:US.johnrevised}.

As $(\mathcal C,\mathcal M)$ is Ulam stable by hypothesis, we can fix $\delta$ such that whenever $C\in\SC$,
$M\in\mathcal M$, and $\phi\colon C\to M$ is a $\delta$-$^*$-homomorphism, there is a $^*$-homomorphism $\psi$ with
$\norm{\psi-\phi}<\eta$. Let $\rho\colon A\to M$ be a
$\delta$-$^*$-homomorphism.  Now for each $\lambda\in\Lambda$, there is a $^*$-homomorphism $\psi_\lambda$ with $\psi_\lambda\colon A_\lambda\to M$ such that 
\[
\norm{\psi_\lambda-\rho\restriction A_\lambda} < \eta
\] 
We extend each map $\psi_\lambda$ to $\bigcup_{\mu\in\Lambda} A_\mu$, setting $\psi_\lambda(a)=0$ if $a\notin A_\lambda$. Note that for every $a\in \bigcup A_\lambda$ there is $\lambda_0$ such that for all $\lambda\geq \lambda_0$ we have that $\norm{\psi_\lambda(a)-\rho(a)}<\eta$. For every $a\in\bigcup A_\lambda$, define 
\[
\psi(a)=\textrm{WOT}-\lim_{\mathcal U}\psi_\lambda(a)\in M.
\]
 Such a limit exists, since $M$ is a von Neumann algebra and $\norm{\psi_\lambda(a)}\leq\norm{a}$ for every
 $\lambda\in\Lambda$. In particular the map $\psi$ is a continuous, bounded, unital, linear map with domain equal to
 $\bigcup A_\lambda$, so it can be extended to a linear (actually, a completely positive and contractive) map 
\[
\tilde\psi\colon A\to M.
\]
By Proposition~\ref{lem:US.closeness}, for every $a\in \bigcup A_\lambda$ with
$\norm{a} \le 1$, we have $\norm{\tilde{\psi}(a) - \rho(a)} \le \eta$.  It
follows that $\tilde{\psi}$ is $4\eta$-multiplicative, i.e.
$\norm{\tilde{\psi}(ab) - \tilde{\psi}(a)\tilde{\psi}(b)} \le 4\eta$ for all
$a,b\in A$ with norm at most $1$.

As $M$ is a von Neumann algebra, and particular a dual Banach algebra, we can
now apply Theorem \ref{thm:US.johnrevised} to get a $^*$-homomorphism $\psi'\colon
A\to M$ with \[\norm{\rho-\psi'}<16K\eta^{1/2}=16\epsilon.\] The conclusion
follows.
\end{proof}

\chapter{The consequences of Forcing Axioms}\label{ch:FA}
The goal of this chapter is to explore the consequences of Forcing Axioms on the structure of automorphisms of a corona algebra, in the same way that Chapter\ref{ch:CH} has done for the assumption of $\CH$.

Due to the nigh technical complexity of the results contained in this chapter we will have to abandon the convention of using $A$, $B$,... to denote a $\Cstar$-algebra, as we will use such variables to denote subsets of $\NN$. Throughout this chapter, $\Cstar$-algebras will be denoted with the letters $\cstar{A}$, $\cstar{B}$,... .

\section{A lifting Theorem I: statements}\label{s:FA.Lift1} 
 Let $\seq{k(n)}{n \in \NN}$ be a sequence of natural numbers, and $\cstar{A}$ a separable $\Cstar$-algebra admitting an increasing approximate identity of projections $\{q_n\}$. If $A\subset\NN$ we denote by $P_A$ the projection in $\prod M_{k(n)}$ whose value at coordinate $n$ is $1$ if $n\in A$ and $0$ otherwise. By $\pi$ we will denote the canonical quotient map $\pi\colon\prod M_{k(n)}\to\prod M_{k(n)}/\bigoplus M_{k(n)}$. Throughout this section and the next, we will be working with a fixed linear $^*$-preserving contractive map
  \[
    \iso \colon \prod M_{k(n)} / \bigoplus M_{k(n)}\to \mathcal M(\cstar{A}) / \cstar{A}
  \]
  such that $\norm{\iso} = 1$ and for all $A\subseteq\NN$ and all $x\in\prod M_{k(n)}$,
  \begin{gather}
    \label{eqn:restrictions}  \iso(\pi(x P_A)) = \iso(\pi(x)) \iso(\pi(P_A)) \tag{$\land$}.
  \end{gather}
    Equation (\ref{eqn:restrictions}) implies that
  \begin{itemize}
    \item  $\iso(\pi(P_A))$ is a projection for every $A\subseteq\NN$, and
    \item  If $x$ and $y$ are elements of $\prod M_{k(n)}$ with almost-disjoint supports, then $\iso(\pi(x))\iso(\pi(y)) =
    0$.
  \end{itemize}
 Given $\iso$ as above, our goal is to find a lift of $\iso$ on a large set of the following nice form:
  \begin{defin}\label{def:asymptadditive}
  Let $\cstar{A}$ be a $\Cstar$-algebra with an increasing approximate identity of projections $\{q_n\}$.     A map $\alpha \colon \prod M_{k(n)} \to \mathcal M(\cstar{A})$ is \emph{asymptotically additive} if there is a sequence of maps $\alpha_n \colon M_{k(n)}\to \cstar{A}$ such that  
  \begin{itemize}
  \item for all $x=(x_n) \in \prod M_{k(n)}$, we have 
\[
\alpha(x)   = \sum_{n=0}^\infty \alpha_n(x_n),
 \] where the sum is intended in the strict topology as the limit of the partial sums $\sum_{n\leq N}\alpha_n(x_n)$ and 
 \item for all $n$ there are $n_0,k_1<k_2$ such that the image of $\alpha_j$, for $j\leq n$, is contained in $q_{k_1}\cstar{A}q_{k_1}$ and the image of $\alpha_j$, for $j\geq n_0$, is contained in $(1-q_{k_2})\cstar{A}(1-q_{k_2})$.
 \end{itemize}

We will often identify $\alpha$ with the sequence $\seq{\alpha_n}{n\in\NN}$.
  \end{defin}
\begin{remark}
    It should be emphasized that we make no assumptions on the structure of the maps  $\alpha_n$, other than that the partial sums $\sum_{n\leq N} \alpha_n(x_n)$ must converge in the strict topology. In particular, for this to happen, if $\alpha=\sum\alpha_n$ is asymptotically additive then $\norm{\alpha}$ is well defined. 
    In particular, if $\alpha$ is itself a unital $^*$-homomorphisms, then $\alpha$ is strictly-strictly continuous, as the image of an approximate identity for $\prod M_{k(n)}$ is an approximate identity for $\mathcal M(\cstar{A})$.
    \end{remark}
    Fix a sequence of finite sets $X_{n,k}\subseteq (M_n)_{\leq 1}$ such that $0,1\in X_{n,k}$ for all $n,k$ and $X_{n,k}$ is a $2^{-k}$-dense set of $(M_{n})_{\leq 1}$.
    \begin{defin}\label{defin:FA.Skel}
    Let $\cstar{A}$ be a $\Cstar$-algebra with an approximate identity of projections $\{q_n\}$ and $\alpha\colon\prod M_{k(n)}\to\mathcal M(\cstar{A})$ be an asymptotically additive map. $\alpha$ is said to be \emph{skeletal} if for all $n$ there is $k$ such that for all $x\in M_{k(n)}$ with $\norm{x}\leq 1$ and  $y\in X_{k(n),k}$ such that $\alpha_n(x)=\alpha_n(y)$.
    \end{defin}
    
    Skeletal maps are determined by a finite set when restricted to the unit ball. We will restrict to skeletal maps because of the following fact, that we will use in \S\ref{s:FA.Lift2}.     
    \begin{proposition}
    Let $\cstar{A}$ be a $\Cstar$-algebra with an approximate identity of projections $\{q_n\}$. Then $\Skel(\cstar{A})$, the set of all skeletal maps is separable in uniform topology as a subset of $\prod_{k,n}\Map(X_{n,k},\cstar{A})$.
    \end{proposition}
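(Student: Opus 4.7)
My plan is to exhibit a countable subset of the ambient product that is dense in $\Skel(\cstar{A})$ with respect to the uniform topology. The argument rests on three ingredients fixed earlier in the section: (i) $\cstar{A}$ is separable, (ii) each $X_{n,k}$ is finite, and (iii) skeletality forces each $\alpha_n$, restricted to $\ball{M_{k(n)}}$, to take only finitely many values, namely those on $X_{k(n),k}$ for some $k$.

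First, fix a countable norm-dense subset $D\subseteq\cstar{A}$ containing $0$. Since $X_{n,k}$ is finite, each factor $\Map(X_{n,k},\cstar{A})$ is isomorphic to a finite power of $\cstar{A}$, hence separable in the uniform metric $d(f,g)=\sup_{x\in X_{n,k}}\norm{f(x)-g(x)}$, with countable dense subset $\Map(X_{n,k},D)$. Let $\mathcal D_{0}$ be the set of all tuples $(\beta_{k,n})\in\prod_{k,n}\Map(X_{n,k},\cstar{A})$ such that each coordinate $\beta_{k,n}$ takes values in $D$ and $\beta_{k,n}=0$ for all but finitely many $(k,n)$. A tuple in $\mathcal D_{0}$ is specified by finite combinatorial data (a finite support set, and, for each coordinate in that set, a function from the finite set $X_{n,k}$ into the countable set $D$), so $\mathcal D_{0}$ is countable.

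Next, I verify density. Under the natural embedding $\alpha=(\alpha_{n})\mapsto(\alpha_{n}{\restriction}X_{k(n),k})_{k,n}$ of $\Skel(\cstar{A})$ into $\prod_{k,n}\Map(X_{n,k},\cstar{A})$, the uniform topology on the product is the topology of uniform convergence on each factor, equivalently the initial topology for the family of uniform metrics. A basic open neighborhood $U$ of a given $\alpha\in\Skel(\cstar{A})$ is then determined by a finite set $F$ of indices $(k,n)$ and a tolerance $\varepsilon>0$. For each $(k,n)\in F$, the image $\alpha_{n}(X_{k(n),k})$ is a finite subset of $\cstar{A}$ by skeletality, and each of its finitely many values can be replaced by an element of $D$ within distance $\varepsilon$. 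Collecting these approximations yields an element of $\Map(X_{n,k},D)$ at each coordinate in $F$; setting the remaining coordinates to $0$ gives an element of $\mathcal D_{0}\cap U$. Thus $\mathcal D_{0}$ is dense in the embedded image of $\Skel(\cstar{A})$, so $\Skel(\cstar{A})$ is separable.

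The main subtlety, and the part where I would need to be careful, is the interplay between the two structures on a skeletal map: it is both an element of a product of function spaces (which is what the ambient product captures) and a genuine map $\prod M_{k(n)}\to\mathcal M(\cstar{A})$ obtained by summing asymptotically additive pieces. The plan above proves separability for the inherited topology on the image in the product, which is the statement as written. If one wanted every approximant to remain in $\Skel(\cstar{A})$ itself rather than merely in the ambient product, the argument would also require extending each discretized restriction $\beta_{k,n}\in\Map(X_{n,k},D)$ to a piecewise constant map $M_{k(n)}\to\cstar{A}$ (using a nearest-point rule on $X_{k(n),k_{n}}$) and transporting the witnesses $n_{0},k_{1},k_{2}$ of asymptotic additivity for $\alpha$ to $\beta$; since these witnesses range over countably many natural numbers, this does not harm countability.
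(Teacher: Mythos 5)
The paper states this proposition without proof, so there is nothing to compare your argument against; on its own terms your proof is correct, and the route (countable dense $D\subseteq\cstar{A}$, finiteness of each $X_{n,k}$, hence separability of each factor $\Map(X_{n,k},\cstar{A})\cong\cstar{A}^{|X_{n,k}|}$, then countability of the product index set) is surely the intended one. The only point that deserves more emphasis than you give it is the sentence where you declare that ``the uniform topology on the product is the topology of uniform convergence on each factor, equivalently the initial topology for the family of uniform metrics'': this is an interpretive choice, not a tautology, and it is the choice on which the whole proposition hinges. Under the other natural reading --- the sup metric taken over \emph{all} coordinates simultaneously --- the claim is false: fixing orthogonal projections $p_n=q_{n+1}-q_n$, the skeletal maps $\alpha^A$ ($A\subseteq\NN$) with $\alpha^A_n$ supported exactly when $n\in A$ and $\|\alpha^A_n(1)\|=1$ are pairwise at uniform distance $1$, so no countable set is dense. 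Your reading is the one forced by the paper's later use (Proposition on $K_0^\epsilon$ being open in a Polish topology requires $\Skel(\cstar{A})$ to be separable metrizable), and once it is adopted the ambient product is a countable product of separable metric spaces, so every subset is separable by hereditarity; your explicit $\mathcal D_0$, and the closing remark about producing approximants inside $\Skel(\cstar{A})$ itself via finitely supported $D$-valued discretizations, are both fine.
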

    
    If $\alpha$ is a lift of $\iso$ on some dense ideal $\SI$, then we can infer some approximate structure for the $\alpha_n$'s. Recall the definition of $\epsilon$-linear, $\epsilon$-multiplicative etc. etc. from Definition~\ref{defin:CstarPrel.Approxmaps}.

  \begin{proposition}\label{prop:FA.ApproxStruct}
  Let $\iso\colon \prod M_{k(n)}/\bigoplus M_{k(n)}\to\mathcal M(\cstar{A})/\cstar{A}$ be any map.  Suppose 
  \[
  \alpha=\sum\alpha_n\colon\prod M_{k(n)}\to\mathcal M(\cstar{A})
  \]
   is an asymptotically additive lift of $\iso$ on a dense ideal $\SI$. Then for every $\epsilon>0$ 
    \begin{enumerate}[label=(\arabic*)]
  \item\label{1} if $\iso$ is linear there is $n_0$ such that for every $n\geq n_0$ $\alpha_{[n_0,n]}=\sum_{n_0\leq j\leq n}\alpha_j$ is $\epsilon$-linear;
  \item if $\iso$ is also $^*$-preserving there is $n_0$ such that for every $n\geq n_0$, $\alpha_{[n_0,n]}=\sum_{n_0\leq j\leq n}\alpha_j$ is $\epsilon$-$^*$-preserving;
  \item\label{4} if $\iso$ is also multiplicative there is $n_0$ such that for all $n\geq n_0$, $\alpha_{[n_0,n]}=\sum_{n_0\leq j\leq n}\alpha_j$ is $\epsilon$-multiplicative.
    \end{enumerate}
  Also
  \begin{enumerate}[label=(\arabic*)]\setcounter{enumi}{4}
  \item\label{5} Suppose that $\supp(x)=\{n\mid x_n\neq 0\}\in\SI$ and $\iso$ is  norm-preserving. Then $\lim_n | \norm{x_n}-\norm{\alpha_n(x_n)}|=0$.
  \end{enumerate}
  \end{proposition}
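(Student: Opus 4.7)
The strategy for parts (1)--(3) (linearity, $^*$-preservation, multiplicativity) is a uniform contradiction argument driven by the shell structure in Definition~\ref{def:asymptadditive}. Iterating that definition one obtains, after thinning the index set, an increasing sequence $k_0<k_1<\cdots$ with $\img(\alpha_j)\subseteq(q_{k_{l+1}}-q_{k_l})\cstar{A}(q_{k_{l+1}}-q_{k_l})$ for $j$ in a suitable ``slab'' around $l$; the projections $q_{k_{l+1}}-q_{k_l}$ are pairwise orthogonal, so sums of shell-supported elements have norm equal to the $\ell^\infty$-supremum and strictly-convergent sums have quotient norm equal to the $\limsup$. Applied to the pointwise defects $a_j=\alpha_j(\lambda x_j+\mu y_j)-\lambda\alpha_j(x_j)-\mu\alpha_j(y_j)$ (and their $^*$- and multiplicative analogues), this converts a failure of $\e$-linearity (resp.\ $\e$-$^*$-preservation, $\e$-multiplicativity) of $\alpha_{[n_0,n]}$ as $n_0\to\infty$ into a failure of (approximate) $\e$-linearity of some single $\alpha_j$ with $j\in[n_0,n]$.

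Toward contradiction in (1), assume the conclusion fails for some $\e>0$. The previous observation produces an infinite set $\{j_k\}$ with witnesses $x^{(k)},y^{(k)}\in(M_{k(j_k)})_{\le 1}$ and scalars $\lambda_k,\mu_k\in\ce_{\le 1}$ witnessing failure of $\e$-linearity of $\alpha_{j_k}$. Compactness of $\ce_{\le 1}\times\ce_{\le 1}$ lets one thin to obtain $\lambda_k\to\lambda^*,\mu_k\to\mu^*$; since the $\alpha_j$ are uniformly bounded on $(M_{k(j)})_{\le 1}$ (as $\norm\alpha$ is well defined), the witnesses remain valid for $(\lambda^*,\mu^*)$ at the cost of halving $\e$. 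Density of $\SI$ permits a further thinning so that $J=\{j_k\}\in\SI$, and a final thinning along the shell decomposition makes the images $\img(\alpha_{j_k})$ pairwise orthogonal inside $\cstar{A}$.

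Define $u,v\in\prod M_{k(n)}$ by $u_{j_k}=x^{(k)}$, $v_{j_k}=y^{(k)}$, and zero elsewhere. Since $\supp(u),\supp(v)\subseteq J\in\SI$ and $\alpha$ lifts the linear map $\iso$ on $\SI$,
\[
\alpha(\lambda^*u+\mu^*v)-\lambda^*\alpha(u)-\mu^*\alpha(v)\in\cstar{A}.
\]
By asymptotic additivity the left-hand side is the strict sum $\sum_k\delta_k$ with $\delta_k\in\img(\alpha_{j_k})$ and $\norm{\delta_k}\ge\e/2$ for $k$ large. Pairwise orthogonality of the shells then forces the quotient norm of $\sum_k\delta_k$ to be at least $\e/2$, contradicting its membership in $\cstar{A}$. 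Parts (2) and (3) follow by the identical three-step scheme; in (3) one uses additionally that off-diagonal products $\alpha_{j_k}(x^{(k)})\alpha_{j_l}(y^{(l)})$ for $k\neq l$ vanish by shell orthogonality, so that modulo $\cstar{A}$ the product $\alpha(u)\alpha(v)$ reduces to $\sum_k\alpha_{j_k}(x^{(k)})\alpha_{j_k}(y^{(k)})$ and the multiplicative defects assemble correctly.

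For (5), suppose instead that $\lvert\norm{x_{n_k}}-\norm{\alpha_{n_k}(x_{n_k})}\rvert\ge\e$ for some $\e>0$ along a subsequence $(n_k)\subseteq\supp(x)$, with constant sign of the gap after thinning. Set $x'=xP_{\{n_k\}}$; then $\supp(x')\subseteq\supp(x)\in\SI$, hence $\pi(\alpha(x'))=\iso(\pi(x'))$, and norm-preservation of $\iso$ combined with the standard formula $\norm{\pi(x')}=\limsup_k\norm{x_{n_k}}$ in $\prod M_{k(n)}/\bigoplus M_{k(n)}$ gives $\norm{\pi(\alpha(x'))}=\limsup_k\norm{x_{n_k}}$. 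Applying the same shell estimate to the strict sum $\alpha(x')=\sum_k\alpha_{n_k}(x_{n_k})$ yields $\norm{\pi(\alpha(x'))}=\limsup_k\norm{\alpha_{n_k}(x_{n_k})}$; the two $\limsup$'s differ by at least $\e$, a contradiction. The main technical obstacle throughout is upgrading the asymptotic-shell hypothesis of Definition~\ref{def:asymptadditive} to genuine operator-algebraic orthogonality via iterated thinning, since only then can a quotient-norm lower bound on the global defect be extracted; absorbing the scalars by compactness and keeping every constructed index set inside $\SI$ by density are routine once the shell-orthogonality is in place.
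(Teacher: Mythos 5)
The argument for part (5) is essentially sound, and is a mild streamlining of the paper's version: where the paper fixes an accumulation point $r$ of $\{\norm{\alpha_n(x_n)}\}$ and splits into the two cases $\norm{x_n}>r+\epsilon/2$ and $\norm{x_n}<r-\epsilon/2$, you instead thin to constant sign of the gap and compare $\limsup$'s directly. Both versions work because a single element $x$ with $\supp(x)\in\SI$ is being considered, so the $\alpha_n(x_n)$'s can be thinned to pairwise orthogonal shells and then $\norm{\pi_1(\alpha(xP_K))}=\limsup_{n\in K}\norm{\alpha_n(x_n)}$; that is all one needs.

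The proof of parts (1)--(4), however, has a genuine gap at the very first step, namely the reduction
``failure of $\epsilon$-linearity of $\alpha_{[n_0,n]}$ for arbitrarily large $n_0$ $\Longrightarrow$ failure of approximate $\epsilon$-linearity of some single $\alpha_j$.''
This reduction is false. The block defect
\[
\alpha_{[n_0,n]}(\lambda x+\mu y)-\lambda\alpha_{[n_0,n]}(x)-\mu\alpha_{[n_0,n]}(y)=\sum_{j=n_0}^{n}\bigl(\alpha_j(\lambda x_j+\mu y_j)-\lambda\alpha_j(x_j)-\mu\alpha_j(y_j)\bigr)
\]
is a sum of $n-n_0+1$ coordinate defects, and these are \emph{not} pairwise orthogonal: Definition~\ref{def:asymptadditive} only guarantees a ``gapped'' orthogonality between far-apart indices, not between adjacent ones. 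So the block defect can have norm $\ge\epsilon$ while every individual coordinate defect has norm as small as $\epsilon/(n-n_0+1)$, which tends to zero. Your ``after thinning the index set'' device throws away coordinates, but $\alpha_{[n_0,n]}$ sums over all of $[n_0,n]$, so thinned shells tell you nothing about it. The paper avoids this by keeping entire blocks $[n_i,m_i]$ as the atomic objects: the witnesses $x_i,y_i$ live in $\prod_{n_i\le j\le m_i}M_{k(j)}$, the block defects $c_i=\alpha_{[n_i,m_i]}(\lambda x_i+\mu y_i)-\lambda\alpha_{[n_i,m_i]}(x_i)-\mu\alpha_{[n_i,m_i]}(y_i)$ have norm $\gtrsim\epsilon$, and the blocks are chosen spaced out so that the ranges of $\alpha_{[n_i,m_i]}$ and $\alpha_{[n_{i'},m_{i'}]}$ are orthogonal for $i\neq i'$; then $\sum_{i\in X}c_i\notin\cstar{A}$ while linearity of $\iso$ forces it into $\cstar{A}$.

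This also changes which property of $\SI$ is needed. With blocks, the combined support is $\bigcup_{i\in X}[n_i,m_i]$, which one needs to lie in $\SI$. Mere density of $\SI$ does not give this (density only hands you an arbitrary infinite subset of a given infinite set, which may pick one point from each block and thus destroy the witnesses); what is used is nonmeagerness of $\SI$ via the partition characterization (Proposition~\ref{prop:JT2}), which selects an infinite $X$ with $\bigcup_{i\in X}E_i\in\SI$ for the intervals $E_i=[n_i,n_{i+1})$. The paper's own proof invokes ``$\SI$ is nonmeager and dense'' even though the statement only says ``dense''; your appeal to density alone is only consistent because of the (false) single-coordinate reduction. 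If you repair the reduction by working with blocks, you will need to replace the density step with the nonmeagerness/partition argument.
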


\begin{proof}
We prove \ref{1} and \ref{5} and we leave to the reader the rest of the proof. In both cases we will argue by contradiction.

If \ref{1} is false then there is $\epsilon>0$ such that for all $n$ there is $n'>n$, $\lambda,\mu\in\ce$ such that $|\lambda|,|\mu|\leq 1$ and $x,y\in \prod_{n\leq j\leq n'}(M_{k(j)})_{\leq 1}$ such that 
\[
\norm{\alpha_{[n,n']}(\lambda x+\mu y)-\lambda\alpha_{[n,n']}(x)-\mu\alpha_{[n,n']}(y)}\geq \epsilon.
\]

We construct inductively sequences $n_i<m_i<n_{i+1}<\cdots$ and $\lambda_i,\mu_i,x_i,y_i$ with the following properties
\begin{itemize}
\item $\norm{\alpha_{[n_i,m_i]}(\lambda_ix_i+\mu_iy_i)-\lambda_i\alpha_{[n_i,m_i]}(x_i)-\mu_i\alpha_{[n_i,m_i]}(y_i)}\leq \epsilon$,
\item the ranges of $\alpha_{[n_i,m_i]}$ and $\alpha_{[n_j,m_j]}$ are orthogonal if $i\neq j$.
\end{itemize}
By passing to subsequences, we can assume that there are $\lambda,\mu$ such that $|\lambda_i-\lambda|<\epsilon,|\mu_i-\mu|<\epsilon$ for all $i$. Fix $E_i=[n_i,n_{i+1})$. Since $\SI$ is nonmeager and dense there is an infinite $X$ such that $\bigcup_{n\in X}E_n\in\SI$.

For \ref{1}, let $X=\{n_i\}$ be infinite and, for $n\in X$, $x_n,y_n,\lambda_n,\mu_n\in M_{k(n)}$ be sequences witnessing the failure of \ref{1} for $\epsilon>0$ with $\norm{x_n}\norm{y_n},|\lambda_n|, |\mu_n|\leq 1$ and 

\[
\norm{\alpha_n(\lambda_nx_n+\mu_ny_n)-\lambda_n\alpha_n(x_n)-\mu_n\alpha_n(y_n)}>\epsilon.
\]

Then for all $i\in X$ we have that 
\[
\norm{\alpha_[n_i,m_i](\lambda x_i+\mu y_i)-\lambda\alpha_{[n_i,m_i]}(x_i)-\mu\alpha_{[n_i,m_i]}(y_i)}>\frac{\epsilon}{2\norm{\alpha}}
\]

  Let $\bar x=\sum_{i\in X}\lambda x_i$ and $\bar y=\sum_{i\in X} \mu y_i$. Let $c_i=\alpha_{[n_i,m_i]}(\lambda x_i+\mu y_i)-\lambda\alpha_{[n_i,m_i]}(x_i)-\mu\alpha_{[n_i,m_i]}(y_i)$. Then $\{c_i\}_{i\in X}$ is a bounded sequence of orthogonal elements of norm $\leq3\norm{\alpha} $ and therefore converges in $\mathcal M(\cstar{A})$ to an element $c=\sum_{i\in X} c_i$. Since each $c_i$ has norm greater than $\frac{\epsilon}{2\norm{\alpha}}$ we have $c\notin \cstar{A}$.
 
On the other hand it is easy to verify that, since $\bigcup_{n\in X}E_n\in\SI$ and $\alpha$ is asymptotically additive, $c=\alpha(\bar x+\bar y)-\alpha(\bar x)-\alpha(\bar y)\in\cstar{A}$, as $\alpha$ is a lift of $\iso$, which is linear. This is a contradiction.

We now prove \ref{5}. Again, by contradiction. Suppose then that $x$ is such that $\supp(x)\in\SI$ and there is $\epsilon>0$ such that for an infinite $Y\subseteq X$ we have that $|\norm{x_n}-\norm{\alpha_n(x_n)}|>\epsilon$ for all $n\in Y$. Fix $r$ an accumulation point of $\{\norm{\alpha_n(x_n)}\}_{n\in Y}$ (since $\alpha=\sum_n\alpha_n$ is well defined, this sequence is bounded) and $Y_1\subseteq Y$ be infinite and such that $n\in Y_1$ implies $|\norm{\alpha_n(x_n)-r}<\epsilon/3$. Let
\[
Y_2=\{n\mid\norm{x_n}>r+\epsilon/2\}\text{ and }Y_3=\{n\mid\norm{x_n}<r-\epsilon/2\}.
\]
Either $Y_2$ or $Y_3$ is infinite and we can assume that $Y_2$ is. (If $Y_2$ is finite the contradiction will proceed in the same exact way, and we leave the proof to the reader).

As before, we can refine $Y_2$ to have that if $i\neq j\in Y_2$ then the ranges of $\alpha_i$ and $\alpha_j$ are orthogonal. Let $y=\sum_{n\in Y_2} x_{n}$. Then $\norm{\pi(y)}>r+\epsilon$, $\pi$ being the canonical map $\prod M_{k(n)}\to\prod M_{k(n)}/\bigoplus M_{k(n)}$ and so 
 \[\norm{\iso(\pi(y))}=\norm{\pi_1(\alpha(y))}\geq r+\epsilon/2,\]
  $\pi_1\colon\mathcal M(\cstar{A})\to\mathcal M(\cstar{A})/\cstar{A}$ being the canonical quotient. On the other hand, since for $n\neq m\in Y_2$ we have $\alpha_n(x_n)\alpha_m(x_m)=0$, we have that $\norm{\alpha(y)}=\sup_{n\in Y_2}\norm{\alpha(x_n)}< r+\epsilon/3$, a contradiction to $Y_2\in\SI$ and $\alpha$ being a lift of $\iso$ on $\SI$.
\end{proof}

From now on $\iso$ will always denote a linear $^*$-preserving map with $\norm{\iso}=1$ and satisfying \eqref{eqn:restrictions}.
\begin{theorem}\label{thm:FA.ApproxStruct}
Let $\iso$ be a $^*$-homomorphism $\iso\colon\prod M_{k(n)}/\bigoplus M_{k(n)}\to\mathcal M(\cstar{A})/\cstar{A}$ and $\alpha\colon\prod M_{k(n)}\to\mathcal M(\cstar{A})$ be an asymptotically additive lift for $\iso$ on a dense ideal $\SI$. Then there is a $^*$-homomorphism $\gamma\colon\prod M_{k(n)}\to\mathcal M(\cstar{A})$ such that, for all $x\in\prod M_{k(n)}$ we have $\gamma(x)-\alpha(x)\in\cstar{A}$.
\end{theorem}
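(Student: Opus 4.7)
The plan is to decompose $\alpha$ into finite-dimensional blocks, apply Theorem~\ref{thm:US.USFinDim} blockwise to replace each approximate $^*$-homomorphism by a genuine one, and reassemble these genuine $^*$-homomorphisms by aligning their supports along a pairwise orthogonal sequence of projections in $\cstar{A}$. Concretely, using Proposition~\ref{prop:FA.ApproxStruct} together with the asymptotic additivity of $\alpha$, I pick a rapidly increasing sequence $0=n_0<n_1<\cdots$ and indices $m_0<m_1<\cdots$ of the approximate identity $\{q_m\}$ of $\cstar{A}$ so that, writing $B_i := \prod_{n_i \le j < n_{i+1}} M_{k(j)}$ and $\alpha_{[i]} := \sum_{n_i \le j < n_{i+1}} \alpha_j$:
\begin{enumerate}[label=(\roman*)]
\item each $\alpha_{[i]}\colon B_i \to \mathcal M(\cstar{A})$ is an $\epsilon_i$-$^*$-homomorphism with $\sum_i K_0\epsilon_i^{1/2} < \infty$, where $K_0$ is the Ulam-stability constant from Theorem~\ref{thm:US.USFinDim};
\item the image of $\alpha_{[i]}$ lies inside the corner $r_i \cstar{A} r_i$, where $r_i := q_{m_{i+1}} - q_{m_i}$ are pairwise orthogonal projections in $\cstar{A}$;
\item the element $e_i := \alpha_{[i]}(1_{B_i}) \in r_i\cstar{A}r_i$ is an $\epsilon_i$-almost projection.
\end{enumerate}

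Applying Theorem~\ref{thm:US.USFinDim} to the finite-dimensional algebra $B_i$, I obtain a genuine $^*$-homomorphism $\gamma_i \colon B_i \to \mathcal M(\cstar{A})$ with $\norm{\gamma_i - \alpha_{[i]}} \le K_0 \epsilon_i^{1/2}$. The projection $p_i := \gamma_i(1_{B_i})$ is then $O(\epsilon_i^{1/2})$-close to the spectral projection $E_i := \chi_{[\tfrac12,\infty)}\bigl(\tfrac12(e_i + e_i^*)\bigr) \in r_i\cstar{A}r_i$, and the $E_i$'s are pairwise orthogonal since the $r_i$'s are. For $i$ large enough that $\norm{p_i - E_i} < 1$, standard perturbation theory for projections gives a unitary $u_i \in \mathcal M(\cstar{A})$ with $u_i p_i u_i^* = E_i$ and $\norm{u_i - 1} = O(\epsilon_i^{1/2})$; setting $\tilde\gamma_i := u_i\gamma_i(\cdot)u_i^*$ produces a $^*$-homomorphism with $\tilde\gamma_i(1_{B_i}) = E_i$ and image in $E_i \mathcal M(\cstar{A}) E_i \subseteq \cstar{A}$ (using that $E_i \in \cstar{A}$ and $\cstar{A}$ is an ideal), still $O(\epsilon_i^{1/2})$-close to $\alpha_{[i]}$. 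The finitely many small indices $i$ where the perturbation fails can be handled by absorbing a single correction into $\alpha$ modulo $\cstar{A}$.

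Now define $\gamma(x) := \sum_i \tilde\gamma_i(x|_{B_i})$ in the strict topology; convergence follows because the ranges sit in the mutually orthogonal corners $E_i\cstar{A}E_i$ with $E_i \le r_i \to 0$ strictly, so for each $a \in \cstar{A}$, $\norm{a \sum_{i > N} \tilde\gamma_i(x|_{B_i})} \le \norm{a(1 - q_{m_N})}\norm{x} \to 0$. Multiplicativity is immediate from $E_iE_j = 0$ for $i \neq j$, which kills every cross term: $\gamma(x)\gamma(y) = \sum_{i,j}\tilde\gamma_i(x|_{B_i})\tilde\gamma_j(y|_{B_j}) = \sum_i \tilde\gamma_i((xy)|_{B_i}) = \gamma(xy)$. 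Linearity and $^*$-preservation descend from each $\tilde\gamma_i$. Finally, $\gamma(x) - \alpha(x) = \sum_i \bigl[\tilde\gamma_i(x|_{B_i}) - \alpha_{[i]}(x|_{B_i})\bigr]$ where each summand has norm at most $K_0 \epsilon_i^{1/2}\norm{x}$, so the series is absolutely norm-convergent by the choice of $\epsilon_i$ in clause (i); since every partial sum lies in $\cstar{A}$ and $\cstar{A}$ is norm-closed, the limit $\gamma(x) - \alpha(x) \in \cstar{A}$.

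The hard part is clause (ii) of the first step: the definition of asymptotic additivity only guarantees that $\alpha_j$ eventually enters any prescribed corner $(1-q_k)\cstar{A}(1-q_k)$, not that it sits inside a specific annulus $r_i\cstar{A}r_i$. Making this work requires a careful interleaving of the sequences $\{n_i\}$ and $\{m_i\}$, and possibly a small cut-down of $\alpha_{[i]}$ by $r_i$ whose norm error must be absorbable into $\epsilon_i$. A secondary (but routine) point is that the $e_i$'s are only $\epsilon_i$-almost projections, so the polished $E_i$'s must be extracted via a $\sqrt{\epsilon_i}$-robust functional calculus rather than read off directly from the corona.
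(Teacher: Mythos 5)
Your overall architecture -- chop $\alpha$ into finite-dimensional blocks, apply Theorem~\ref{thm:US.USFinDim} blockwise with summable errors $K_0\epsilon_i^{1/2}$, and resum -- is the same as the paper's. The gap is exactly where you flag it, in clause (ii), and your proposed repair does not close it. The definition of asymptotic additivity gives, for each $n$, some $n_0$ and $k_1<k_2$ so that $\img(\alpha_j)\subseteq q_{k_1}\cstar{A}q_{k_1}$ for $j\le n$ and $\img(\alpha_j)\subseteq(1-q_{k_2})\cstar{A}(1-q_{k_2})$ for $j\ge n_0$. No matter how you interleave $\{n_i\}$ and $\{m_i\}$, this only forces blocks at distance at least two to land in exactly orthogonal corners; the corners of \emph{consecutive} blocks always overlap, because the ``bottom'' cut of block $i+1$ is determined only after the ``top'' cut of block $i$ is already fixed. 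Cutting $\alpha_{[i]}$ down to a disjoint annulus $r_i\cstar{A}r_i$ therefore discards the part of its range living in the overlap with $r_{i+1}$, and nothing in the hypotheses bounds the norm of that part -- it can be comparable to $\norm{\alpha_{[i]}(x)}$ itself, so the error is not absorbable into $\epsilon_i$. Without pairwise orthogonal $r_i$'s your projections $E_i$ need not be orthogonal, the cross terms in $\gamma(x)\gamma(y)$ do not vanish, and multiplicativity of $\gamma$ fails.

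The ingredient you are missing is that $\alpha$ is not just asymptotically additive: it lifts the \emph{multiplicative} map $\iso$ on a dense (hence nonmeager) ideal. This yields the Claim in the paper's proof -- for every $\epsilon>0$, contractions supported on sufficiently late and widely separated blocks satisfy $\norm{\alpha(x)\alpha(y)}<\epsilon$ in \emph{norm} (proved by the same orthogonal-series contradiction as Proposition~\ref{prop:FA.ApproxStruct}(1)). With this norm-almost-orthogonality in hand, the paper orthogonalizes not against geometric annuli $q_{m_{i+1}}-q_{m_i}$ but against the algebra itself: it first compresses by $\alpha_n(1)\alpha_n(1)^*$ and then, inductively, cuts the $i$-th block down by $1-\sum_{k<n_i}\gamma_k(1)$, the complement of the units of the genuine $^*$-homomorphisms already constructed. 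The Claim guarantees this cut-down perturbs the block map by at most $O(\epsilon_i)$, so it is still an approximate $^*$-homomorphism to which Ulam stability applies, and the resulting $\gamma_j$ have exactly orthogonal ranges by construction. If you replace your clause (ii) with this two-step compression, the rest of your argument (summability of $K_0\epsilon_i^{1/2}$, strict convergence, $\gamma-\alpha$ landing in $\cstar{A}$) goes through as you wrote it.
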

\begin{proof}
To simplify notation we assume that $M_{k(n)}=M_n$. If not, the proof goes in the same way.

Note first of all that we can assume $\norm{\alpha_n}\leq 1$, as $\limsup_n\norm{\alpha_n}\leq 1$ by condition~\ref{5} of Proposition~\ref{prop:FA.ApproxStruct}. Again by Proposition~\ref{prop:FA.ApproxStruct} there is a decreasing sequence $\epsilon_n\to 0$ such that each $\alpha_n$ is an $\epsilon_n$-$^*$-homomorphism. 
\begin{claim}
For all $\epsilon>0$ there is $n_0$ such that for all $n_0<n_1<n_2<n_3$ if $x,y$ are contractions with $x\in \prod_{n_0\leq j\leq n_1} M_{j}$ and $y\in \prod_{n_2\leq j\leq n_3}M_j$ then $\norm{\alpha(x)\alpha(y)}<\epsilon$.
\end{claim}
\begin{proof}
This proof is similar to the one of condition~\ref{1} in Proposition~\ref{prop:FA.ApproxStruct}, using the fact that $\alpha$ is an asymptotically additive map for a $^*$-homomorphism, hence for a multiplicative map, on a nonmeager dense ideal $\SI$.
\end{proof}
We modify each $\alpha_n$ by setting 
\[
\alpha_n'(x)=\alpha_n(1)\alpha_n(1)^*\alpha_n(x)\alpha_n(1)\alpha_n(1)^*.
\]
 Note that $\alpha'=\sum_n\alpha_n'$ is still an asymptotically additive map which is a lift for $\iso$ on $\SI$. Note also that if $n_0<n_1<n_2<n_3$ are such that for all contractions $x,y$ with $x\in\prod_{n_0\leq j\leq n_1}M_j$ and $y\in \prod_{n_2\leq j\leq n_3}M_j$ we have that $\norm{\alpha(x)\alpha(y)}<\epsilon$, then $\norm{\alpha'(x)z}\leq \epsilon$ whenever $z$ is a contraction in the $\Cstar$-algebra generated by $\alpha'[\prod_{n_0\leq j\leq n_1}M_j]$.
  
With $\epsilon_n=2^{-n}$, we can find an increasing sequence of natural number $\{n_i\}$ such that 
\begin{itemize}
\item $\alpha'[\prod_{n_i\leq j<n_{i+1}}M_j]$ is an $\epsilon_i$-$^*$-homomorphisms;
\item if $i<l$ then for all contractions $x,y$ with $x$ in the $\Cstar$-algebra generated by $\alpha'[\prod_{n_i\leq j<n_{i+1}}]M_j$ and $y$ in the $\Cstar$-algebra generated by $\alpha'[\prod_{n_l\leq j<n_{l+1}}M_l]$;
\item if $i+1\leq l$ then $\alpha'(x)\alpha'(y)=0$ whenever $x\in\prod_{n_i\leq j<n_{i+1}}]M_j$, $y\in \prod_{n_l\leq j<n_{l+1}}]M_j$.
\end{itemize}
Set now $\beta_j=0$ if $j<n_0$ and if $n_i\leq j<n_{i+1}$ let, for $x\in\prod_{n_i\leq j<n_{i+1}}M_j$,
\[
\beta_j(x)=\alpha'(1_{[n_i,n_{i+1})})\alpha'(1_{[n_i,n_{i+1})})^*\alpha'(x)\alpha'(1_{[n_i,n_{i+1})})\alpha'(1_{[n_i,n_{i+1})})^*.
\]
We will construct inductively $\gamma_j$ with the following properties:
\begin{itemize}
\item if $n_i\leq j<n_{i+1}$, the range of $\gamma_j$ is included in the $\Cstar$-algebra generated by $\beta(\prod_{n_i\leq j<n_{i+1}}M_j)$.
\item $\gamma_j\gamma_{j'}=0$ if $j\neq j'$, 
\item $\gamma_j$ is a $^*$-homomorphisms, and there is a fixed $K$ such that if $n_i\leq j$ then
\[
\norm{\gamma_{j}-\alpha_j}\leq10K(\epsilon_k)^{1/2}.
\]
\end{itemize}
It is clear that, if it is possible to construct such $\gamma_i$, then $\gamma(x)=\sum\gamma_i(x)$ satisfies the thesis of the theorem.

Let $K$ and $\delta>0$ be the numbers provided in Theorem~\ref{thm:US.ulam-stability}. Let $l$ such that $\epsilon_l<\delta$. If $j<n_l$, set $\gamma_j=0$. If $\gamma_j$ has been constructed for all $j<n_i$. If $n_i\leq j$, let $\beta'_j(x)=(1-\sum_{k<n_i}\gamma_k(1))\beta_j(x)(1-\sum_{k<n_i}\gamma_k(1))$. Since the range of $(1-\sum_{k<n_i}\gamma_k(1))$ is included in the $\Cstar$-algebra generated by $b_i=\beta(\prod_{k<n_i}M_k)$, and in particular in $b_i\cstar{A}b_i$, and for each contraction $x$ in $\prod_{n_i\leq j<n_{i+1}}M_j$ we have that $\norm{xb_i}<\epsilon_i$, we have that $\sum_{n_i\leq j<n_{i+1}}\beta'_j$ is a $5\epsilon_i$-$^*$-homomorphisms whose range is in the $\Cstar$-algebra generated by  $\beta(\prod_{n_i\leq j<n_{i+1}}M_j)$. By Theorem~\ref{thm:US.ulam-stability} and our choice of $\delta$ and $K$, we can find a $^*$-homomorphisms 
\[\gamma_{[n_i,n_{i+1})}\colon \prod_{n_i\leq j<n_{i+1}}M_j\to\beta(1_{[n_i,n_{i+1})})\beta(1_{[n_i,n_{i+1})})^*\cstar{A}\beta(1_{[n_i,n_{i+1})})\beta(1_{[n_i,n_{i+1})})^*
\] such that 
\[
\norm{\gamma_{[n_i,n_{i+1})}-\sum_{n_i\leq j<n_{i+1}}\beta_j}<5K(\epsilon_i)^{1/2}.
\]
By letting $\gamma_j=\gamma_{[n_i,n_{i+1})}\restriction M_j$ we have the thesis.
\end{proof}

We may state the main Theorem of this section, with in mind the definitions of ccc/Fin from \S\ref{ss:ST.idealsonomega}.

 \begin{theorem}
    \label{thm:FA.liftingtheorem}
    Assume $\OCA_\infty + \MA_{\aleph_1}$ and let $\iso$ be as above.  Then there is a ccc/Fin ideal $\SJ \subseteq \mathcal P(\NN)$, and an  asymptotically additive $\alpha \colon \prod M_{k(n)}\to \mathcal M(\cstar{A})$, such that $\alpha$ is a lift of $\iso$ on $\SJ$.
  \end{theorem}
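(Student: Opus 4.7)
The plan follows the template of Farah's lifting theorem for the Calkin algebra \cite{Farah.C} and McKenney's for reduced products of UHF algebras \cite{McKenney.UHF}: uniformize, localize, apply $\OCA_\infty$, and use $\MA_{\aleph_1}$ to rule out the bad alternative of the OCA dichotomy. The output will be a ccc/Fin ideal $\SJ$ on which a single skeletal asymptotically additive lift of $\iso$ exists.

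First I would apply Theorem~\ref{thm:STPrel.JVN} to the analytic graph of $\iso$ inside the Polish space $(\prod M_{k(n)})_{\le 1} \times \mathcal M(\cstar{A})_{\le 1}$ equipped with the strict topology, in order to obtain a $\mathrm{C}$-measurable, hence Baire-measurable, contractive lift $F$. Coordinatewise use of equation \eqref{eqn:restrictions} (applied to the singletons $A = \{n\}$) yields measurable local maps $\alpha_n \colon M_{k(n)} \to \mathcal M(\cstar{A})$ whose ranges are, modulo $\cstar{A}$, asymptotically in orthogonal corners. For each sufficiently fast partition $\vec E = (E_n)$ of $\NN$ into finite intervals, Theorem~\ref{thm:STPrel.talagrand} provides a comeager family of sparse index sets $L \subseteq \NN$ on which $F$ is represented modulo $\cstar{A}$ by a strict sum $x \mapsto \sum_{n \in L} \alpha_n(x_n)$. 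Combining this representation with Proposition~\ref{prop:FA.ApproxStruct} and the quasicentral absorption of Lemma~\ref{lem:approxid1EV}, each such sum can be chosen skeletal and asymptotically linear and $^*$-preserving, so it is a bona fide asymptotically additive lift of $\iso$ on the support determined by $L$.

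Next I would code these local lifts as points in the separable metric space of skeletal maps and invoke $\OCA_\infty$ on the decreasing sequence of open colorings $K_0^n$ given by putting $\{\alpha, \beta\} \in K_0^n$ iff $\norm{\alpha_m(x) - \beta_m(x)} > 2^{-n}$ for some contraction $x \in M_{k(m)}$ and some $m \le n$ at which both lifts are defined. In Case (1) of the dichotomy there is a countable cover by $K_1^n$-homogeneous classes; the lifts in a single such class agree to within $2^{-n}$ on cofinite blocks, so a quasicentral amalgamation (again via Lemma~\ref{lem:approxid1EV}) produces a single asymptotically additive map $\alpha \colon \prod M_{k(n)} \to \mathcal M(\cstar{A})$ that lifts $\iso$ on the ideal $\SJ$ generated by the supports of the lifts in that class. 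The ideal $\SJ$ will be ccc/Fin because any uncountable almost disjoint family escaping $\SJ$ would, by construction, witness incoherence among lifts that were declared $K_1^n$-compatible.

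The hard part will be ruling out Case (2) of $\OCA_\infty$, which would yield an uncountable $Z \subseteq 2^{\NN}$ and a continuous injection $f \colon Z \to \Skel(\cstar{A})$ satisfying $\{f(x), f(y)\} \in K_0^{\Delta(x,y)}$ for all distinct $x, y \in Z$. To derive a contradiction under $\MA_{\aleph_1}$, I would first apply the $\Delta$-system lemma (Lemma~\ref{lem:DeltaSystem}) to the finite data defining each $f(x)$, reducing to an uncountable family indexed by $\omega_1$ with a common root of parameters. Next I would define a ccc poset $\PP$ whose conditions are finite subsets of $\omega_1$ together with a compatible asymptotically additive amalgamation of the corresponding $f(\alpha)$'s on their supports, and whose dense sets force this amalgamation to extend across arbitrarily long blocks. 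Ccc-ness of $\PP$ is established by a second application of the $\Delta$-system lemma, now to the conditions themselves, and a filter meeting the $\aleph_1$ relevant dense sets — granted by $\MA_{\aleph_1}$ — produces a single asymptotically additive amalgamation of uncountably many $K_0$-incompatible germs, contradicting the asymptotic norm-preservation provided by Proposition~\ref{prop:FA.ApproxStruct}. This rules out Case (2) and completes the argument.
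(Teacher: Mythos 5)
Your plan collapses at the very first step. You propose to apply Jankov--von Neumann (Theorem~\ref{thm:STPrel.JVN}) to ``the analytic graph of $\iso$'' in $\prod (M_{k(n)})_{\le 1}\times \mathcal M(\cstar{A})_{\le 1}$ to produce a $\Cmeas$-measurable global lift $F$. But the graph $\{(x,y):\iso(\pi(x))=\pi(y)\}$ of an arbitrary $\iso$ satisfying only the stated algebraic conditions is \emph{not} a priori analytic, Borel, or even Baire-measurable --- that $\iso$ admits a definable lift is precisely the content to be proved, and it provably \emph{fails} to hold for all such $\iso$ absent extra axioms (under $\CH$ there are $2^{\mathfrak c}$ such maps but only $\mathfrak c$-many Borel graphs). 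Once you grant yourself a $\Cmeas$-measurable global lift of $\iso$, the theorem follows with no use of $\OCA_\infty$ or $\MA_{\aleph_1}$ at all, which is a red flag: the theorem is genuinely dependent on Forcing Axioms. What the paper actually does is entirely different in logic: it starts from an \emph{arbitrary} lift $F$ (Axiom of Choice, no regularity), introduces the ideals $\SI^\e$, $\SI^\e_C$, and $\SI$ of sets $A\subseteq\NN$ on which a nice lift exists \emph{locally}, and then shows under $\OCA_\infty+\MA_{\aleph_1}$ that $\SI$ is ccc/Fin (Lemma~\ref{lemma:oca->ccc/fin}, via a coloring on pairs $(A,x)$ from a treelike a.d. family) and that once $\SI$ is nonmeager one can amalgamate the scattered local $\Cmeas$-measurable lifts into a single one (Lemmas~\ref{lemma:sigma-baire->borel}, \ref{lemma:epsilon-borel->C-measurable}, \ref{lemma:C->asymptotic}). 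Descriptive set theory (Talagrand's characterization of comeager sets, Jankov--von Neumann for analytic \emph{local} relations) is applied only \emph{after} one has already secured, from the set-theoretic hypotheses, a Baire-measurable or $\Cmeas$-measurable lift on some $M[A]$; it is never applied to $\iso$ itself.

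Beyond that, two secondary points. First, the coloring you propose ($\{\alpha,\beta\}\in K_0^n$ iff they disagree by $>2^{-n}$ on some $m\le n$) is closer to the one used later in the proof of Theorem~\ref{thm:Borel} than to the ones used in Lemma~\ref{lemma:oca->treelike} and Lemma~\ref{lemma:oca->alternative}; it also does not obviously give an \emph{open} partition of $[X]^2$ because the condition is indexed by $m\le n$ rather than controlling a tail. Second, the ccc/Fin claim for the generated ideal is asserted by a hand-wave (``would witness incoherence among lifts declared compatible''); the paper instead establishes ccc/Fin \emph{first}, as a separate input (using $\MA_{\aleph_1}$ to expand treelike a.d. families and Veli{\v c}kovi{\'c}'s argument to cover arbitrary a.d. families), and only then uses it to rule out the bad $\OCA_\infty$ alternative. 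Your ordering --- extract a lift, then check the ideal is ccc/Fin --- inverts the actual dependency structure.
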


  \begin{remark}
    Theorem~\ref{thm:FA.liftingtheorem} is an analogue of the ``$\OCA$ lifting theorem'' from~\cite{Farah.AQ}.  The ideal $\SJ$ cannot always be as large as $\mathcal P(\NN)$; however, in our applications of    Theorem~\ref{thm:FA.liftingtheorem}, we will always find that $\SJ = \mathcal P(\NN)$.
  \end{remark}

If $A\subseteq\en$ we denote by $M[A]$ the set of elements whose support is in $A$, that is \[M[A]=\{(x_n)\in\prod M_{k(n)}\mid (i\notin A\Rightarrow x_i=0)\}.\] With this notation we have that $M[\NN]=\prod M_{k(n)}$.

 The remainder of this section, along with the next, is dedicated to prove Theorem~\ref{thm:FA.liftingtheorem}.  Before getting to lifts that are asymptotically additive, however, we will need to deal with lifts that are nice in other  (weaker) senses. 
 The various notions are as follows. (Recall that $\mathcal M(\cstar{A})_{\leq 1}$ and $M[\NN]_{\leq 1}$ are Polish spaces when equipped with the strict topology, as noted in \ref{ss:CstarPrel.strict}).
  
\begin{defin}
  Let $\e \ge 0$ be given, and $X\subseteq M[\NN]$.
    \begin{itemize}
      \item  An \emph{$\e$-lift of $\iso$ on $X$} is a function $F$ mapping to $\mathcal M(\cstar{A})$, whose domain contains $X$,     such that $\norm{\pi(F(x)) - \iso(\pi(x))} \le \e$ for all $x\in X$ with $\norm{x}\leq 1$.
      \item  A \emph{$\sigma$-$\e$-lift} of $\iso$ on $X$ is a sequence of functions $F_n$ ($n\in\NN$) mapping to      $\mathcal M(\cstar{A})$, such that the domain of each $F_n$ contains $X$, and for every $x\in X$ with $\norm{x}\leq 1$, there is some $n\in\NN$ such  that $\norm{\pi(F_n(x)) - \iso(\pi(x))} \le \e$.
    \end{itemize}
  \end{defin}

  When $\e = 0$, we come back to our first definition of lift.  Our efforts will be focused on finding lifts that have  various nice properties with respect to the ambient topological structure of $M[\NN]$.

  \begin{defin}\label{defin:theideals}
  Let $\e \ge 0$ be given.
    \begin{itemize}
      \item  We define $\SI^\e$ be the set of $A\subseteq \NN$ such    that there exists an $\e$-lift of $\iso$ on $M[A]$ which is asymptotically additive.
      \item  We define $\SI^\e_C$ to be the    set of $A\subseteq\NN$ such that there exists a $\Cmeas$-measurable $\e$-lift of $\iso$ on $M[A]$.
    \end{itemize}
    When $\e = 0$, we write $\SI^0 = \SI$ and $\SI^0_C = \SI_C$.
  \end{defin}

\begin{lemma}\label{lemma:ideals}

For all $\e \ge 0$, $\SI^\e$ and $\SI^\e_C$ are ideals on $\NN$.

\end{lemma}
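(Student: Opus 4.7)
The plan is to verify the two defining properties of an ideal—hereditariness and closure under finite unions—separately for each of $\SI^\e$ and $\SI^\e_C$, with the argument for the $\Cmeas$-measurable version paralleling that for the asymptotically additive one.

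Hereditariness is immediate from the definitions. If $A \in \SI^\e$ and $B \subseteq A$, then $M[B] \subseteq M[A]$, so any asymptotically additive $\e$-lift of $\iso$ on $M[A]$ (which is itself a map defined on all of $\prod M_{k(n)}$) satisfies the $\e$-lift condition on the smaller test set $M[B]$ as well. The same lift, without modification, witnesses $B \in \SI^\e$. The identical argument works for $\SI^\e_C$, since $\Cmeas$-measurability is a property of the map itself.

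For closure under finite unions, suppose $A, B \in \SI^\e$ are witnessed by asymptotically additive maps $\alpha = \sum \alpha_n$ and $\beta = \sum \beta_n$. I would splice them: define $\gamma_n \colon M_{k(n)} \to \cstar{A}$ by setting $\gamma_n = \alpha_n$ for $n \in A$, $\gamma_n = \beta_n$ for $n \in B \setminus A$, and $\gamma_n = 0$ for $n \notin A \cup B$. The two bullets in Definition~\ref{def:asymptadditive} for $(\gamma_n)$ follow from those for $(\alpha_n)$ and $(\beta_n)$ by taking pointwise maxima over the corresponding indices $k_1, k_2$, so $\gamma(x) := \sum_n \gamma_n(x_n)$ converges strictly and defines an asymptotically additive map. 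Observing that $\gamma(x) = \alpha(xP_A) + \beta(xP_{B \setminus A})$ for all $x$, and using that for $x \in M[A \cup B]$ with $\norm{x} \le 1$ we have both $\norm{xP_A}, \norm{xP_{B \setminus A}} \le 1$ (since $P_A$ and $P_{B \setminus A}$ are central projections) and the decomposition
\[
\iso(\pi(x)) = \iso(\pi(xP_A)) + \iso(\pi(xP_{B\setminus A}))
\]
coming from the linearity of $\iso$ together with $x = xP_A + xP_{B \setminus A}$, the $\e$-lift condition for $\gamma$ reduces to the pair of $\e$-lift conditions satisfied by $\alpha$ and $\beta$ on the respective orthogonal pieces; the orthogonality of $\iso(\pi(P_A))$ and $\iso(\pi(P_{B\setminus A}))$ coming from \eqref{eqn:restrictions} lets us combine the two error terms without accumulation beyond the given constant $\e$.

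For $\SI^\e_C$, the parallel construction is $H(x) := F(xP_A) + G(xP_{B \setminus A})$, where $F$ and $G$ are $\Cmeas$-measurable $\e$-lifts of $\iso$ on $M[A]$ and $M[B]$. Since the maps $x \mapsto xP_A$ and $x \mapsto xP_{B \setminus A}$ are strict-to-strict continuous (multiplication by the central projection $P_A$ in each coordinate), the compositions $x \mapsto F(xP_A)$ and $x \mapsto G(xP_{B \setminus A})$ are $\Cmeas$-measurable, and so is their sum; the $\e$-lift bound follows verbatim from the decomposition above. The main obstacle I anticipate is purely bookkeeping: verifying that the tail conditions of Definition~\ref{def:asymptadditive} really do survive the splicing, which amounts to tracking how the $k_1, k_2$ witnesses for $(\alpha_n)$ and $(\beta_n)$ combine to witnesses for $(\gamma_n)$, and confirming that strict convergence of $\sum_n \gamma_n(x_n)$ for arbitrary $x \in \prod M_{k(n)}$ (not only for $x \in M[A \cup B]$) follows from the analogous convergence for $\alpha(xP_A)$ and $\beta(xP_{B \setminus A})$ individually.
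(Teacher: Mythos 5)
Your hereditariness argument is correct, and your basic strategy for closure under unions---decompose $x = xP_A + xP_{B\setminus A}$ and feed the pieces to the two given lifts---is the same one the paper uses. But there is a genuine gap in the error estimate: the claim that ``the orthogonality of $\iso(\pi(P_A))$ and $\iso(\pi(P_{B\setminus A}))$ lets us combine the two error terms without accumulation beyond the given constant $\e$'' is false as stated. Writing $\delta_A = \pi(F(xP_A)) - \iso(\pi(xP_A))$ and $\delta_B = \pi(G(xP_{B\setminus A})) - \iso(\pi(xP_{B\setminus A}))$, you know only that each has norm at most $\e$. There is no reason for $\delta_A$ and $\delta_B$ to live in orthogonal corners: $F$ and $G$ are arbitrary $\Cmeas$-measurable (or asymptotically additive) maps, and the definition of $\e$-lift places no constraint on \emph{where} $\pi(F(xP_A))$ sits, only on how far it is from $\iso(\pi(xP_A))$. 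So the best you can conclude from $H(x) = F(xP_A) + G(xP_{B\setminus A})$ is $\norm{\pi(H(x)) - \iso(\pi(x))} \le \norm{\delta_A} + \norm{\delta_B} \le 2\e$, which witnesses $A\cup B \in \SI^{2\e}_C$, not $A\cup B \in \SI^\e_C$, and hence does not prove $\SI^\e_C$ is closed under finite unions.

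The paper repairs this by compressing: fix $Q, R \in \mathcal M(\cstar{A})$ with $\pi(Q) = \iso(\pi(P_A))$, $\pi(R) = \iso(\pi(P_{B\setminus A}))$ (which are orthogonal projections by \eqref{eqn:restrictions}), and set $H(x) = QF(xP_A)Q + RG(xP_{B\setminus A})R$. Using \eqref{eqn:restrictions} and the $^*$-preserving property of $\iso$, one checks $\iso(\pi(xP_A)) = \pi(Q)\iso(\pi(xP_A))\pi(Q)$ and similarly for $B\setminus A$, so
\[
\pi(H(x)) - \iso(\pi(x)) = \pi(Q)\,\delta_A\,\pi(Q) + \pi(R)\,\delta_B\,\pi(R),
\]
and now the two summands really do live in orthogonal corners ($\pi(Q)\pi(R) = 0$), so the norm of the sum is the maximum of the two norms, which is at most $\e$. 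Your proof is missing this conjugation step, which is the actual content of the lemma. (As a smaller point, your ``pointwise maxima'' claim for the asymptotic-additivity witnesses isn't right either: you'd want the max of the two inner cutoffs but the min of the two outer cutoffs, and there is no a priori reason the resulting pair satisfies $k_1 < k_2$; you'd need to re-index by choosing the witnesses more carefully, but this is repairable and secondary to the norm issue above.)
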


\begin{proof}

By definition, each $\SI^\e$ and $\SI^\e_C$ is hereditary.  To see that $\SI^\e_C$ is closed under finite unions, we will show that if $A,B\in\SI^\e_C$ and $A\cap B = \emptyset$, then $A\cup B\in\SI^\e_C$.  Choose $\Cmeas$-measurable functions $F$ and $G$ such that $F$ and $G$ are $\e$-lifts of $\iso$ on $M[A]$ and $M[B]$ respectively.  Choose $Q,R\in\mathcal M(\cstar{A})$ with $\pi(Q) = \iso(\pi(P_A))$ and $\pi(R) = \iso(\pi(P_B))$.  Put
    \[
      H(x) = Q F(x P_A) Q + R G(x P_B) R
    \]
    Then $H$ is $\Cmeas$-measurable.  Moreover, if $x\in M[A\cup B]_{\leq 1}$, we have $x = x P_A + x P_B$, and hence
    \begin{align*}
      \pi(H(x)) - \iso(\pi(x)) & = \pi(Q) (\pi(F(xP_A)) - \iso(\pi(x P_A)))\pi(Q) \\
         & \qquad + \pi(R) (\pi(G(x P_B)) - \iso(\pi(x P_B)))\pi(R)
    \end{align*}
    Since $\pi(Q)$ and $\pi(R)$ are orthogonal projections, and
    \[
      \norm{\pi(F(x P_A)) - \iso(\pi(x P_A))}, \norm{\pi(G(x P_B)) - \iso(\pi(x P_B))} \le \e
    \]
    it follows that $\norm{\pi(H(x)) - \iso(\pi(x))} \le \e$.  To see that $\SI^\e$ is closed under finite unions, note that if $F$ and $G$ above are asymptotically additive, then so is the resulting $H$.
  \end{proof}

  The strategy for the proof of Theorem~\ref{thm:FA.liftingtheorem} is to show that each of the ideals $\SI^\e$ and $\SI^\e_C$ is, in a sense, large.  The following five lemmas will do this.  Their proofs are self-contained, and together they form the backbone of the argument towards Theorem~\ref{thm:FA.liftingtheorem}.  For now, we will simply state them, deferring their proofs to section \ref{s:FA.Lift2}.

  \begin{lemma}
    \label{lemma:oca->treelike}
    Assume $\OCA_\infty$, let $\e > 0$ and $\SA\subseteq\mathcal P(\NN)$ be a treelike, a.d. family.  Then for all but  countably-many $A\in\SA$, there is a $\sigma$-$\e$-lift of $\iso$ on $M[A]$ consisting of $\Cmeas$-measurable functions.
  \end{lemma}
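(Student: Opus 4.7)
The proof plan is an $\OCA_\infty$ argument that mirrors the template of the OCA lifting arguments developed in Farah's monograph on analytic quotients, suitably adapted to the noncommutative Polish setting provided by the strict topology on bounded subsets of $\mathcal{M}(\cstar{A})$.

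\textbf{Step 1 (Polish parameterization of partial lifts).} Using the treelike bijection $f\colon \NN \to 2^{<\omega}$, I will identify each $A \in \SA$ with the branch $b_A \in 2^\NN$ whose initial segments lie in $f[A]$. For each $s \in 2^{<\omega}$ set $E_s = f^{-1}\{t : t \subseteq s\}$, so that $M[E_s]$ is finite-dimensional. Let $\mathcal{B}$ be the Polish space of Borel codes of $\Cmeas$-measurable maps from $M[\NN]_{\le 1}$ (equipped with its Polish strict topology) into $\mathcal{M}(\cstar{A})_{\le 1}$. By Jankov--von Neumann (Theorem~\ref{thm:STPrel.JVN}) applied to the analytic set of pairs $(x,y) \in M[E_s]_{\le 1} \times \mathcal{M}(\cstar{A})_{\le 1}$ with $\norm{\pi(y)-\iso(\pi(x))} \le \e/3$, we may fix a $\Cmeas$-measurable partial $\e/3$-lift $F_s$ on $M[E_s]_{\le 1}$ for every $s$. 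Define
\[
X = \set{\,(A,(c_s)_{s \subset b_A})\,}{A \in \SA,\ c_s \in \mathcal{B}\ \text{codes a partial $\e/3$-lift of }\iso\text{ on }M[E_s]}
\]
and topologize $X$ as a subset of the Polish space $2^\NN \times \mathcal{B}^\NN$.

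\textbf{Step 2 (Open coloring).} I will define, for each $n$, an open partition $[X]^2 = K_0^n \cup K_1^n$ with $K_0^{n+1} \subseteq K_0^n$. Place $\{p,q\} \in K_0^n$ when $b_p$ and $b_q$ first split at some depth $m \le n$ and there exist $s \in 2^m$ extendable along both branches, together with $x \in M[E_s]_{\le 1}$, such that the two partial lifts encoded by $p$ and $q$ disagree on $x$ by at least $3\e$ modulo $\cstar{A}$. Openness follows from the fact that strict-topology evaluation of a Borel-coded function at a fixed element and norm distances in $\mathcal{M}(\cstar{A})$ are continuous in the codes.

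\textbf{Step 3 (First alternative of $\OCA_\infty$).} If $X = \bigcup_n X_n$ with $[X_n]^2 \subseteq K_1^n$, then the $K_1^n$-homogeneity of each $X_n$ says that any two points of $X_n$ have compatible partial lifts on their (almost-disjoint) branches up to tolerance $3\e$. For each $n$ this allows me to assemble the partial lifts along any branch that belongs to some point of $X_n$ into a single $\Cmeas$-measurable function $G_n^A$ defined on $M[A]_{\le 1}$ that is an $\e$-lift of $\iso$ there. The sequence $(G_n^A)_n$ is then a $\sigma$-$\e$-lift on $M[A]$ for every $A$ whose associated point is captured by some $X_n$, and only countably many $A$ can fail to be captured (those for which no homogeneous $X_n$ accepts the branch). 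This yields the conclusion.

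\textbf{Step 4 (Ruling out the second alternative — the main obstacle).} Suppose there is an uncountable $Z \subseteq 2^\NN$ and a continuous injection $g\colon Z \to X$ with $\{g(z),g(z')\} \in K_0^{\Delta(z,z')}$ for all $z \ne z' \in Z$. This produces an uncountable family $(A_\zeta,(c^\zeta_s))_{\zeta<\omega_1}$ together with, for each pair $\zeta \ne \eta$, a witness $s_{\zeta\eta}$ on the common stem of $b_{A_\zeta},b_{A_\eta}$ and $x_{\zeta\eta} \in M[E_{s_{\zeta\eta}}]_{\le 1}$ exhibiting a $3\e$-separation modulo $\cstar{A}$. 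Using the $\Delta$-system Lemma (Lemma~\ref{lem:DeltaSystem}) to refine to an uncountable subfamily with common stem of bounded size, and exploiting the multiplicative identity~(\ref{eqn:restrictions}) to relocate the witnesses along the almost-disjoint tails, I aim to produce uncountably many elements $\iso(\pi(y_\zeta)) \in \mathcal{M}(\cstar{A})/\cstar{A}$ that pairwise require lifts at norm-distance bounded below, contradicting separability of $\mathcal{M}(\cstar{A})_{\le 1}$ in the strict topology (since $\cstar{A}$ is $\sigma$-unital and $\mathcal{M}(\cstar{A})_{\le 1}$ is therefore Polish).

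The main difficulty lies in Step 4: the definition of $K_0^n$ in Step 2 must be chosen finely enough that the witnesses assembled in the chain alternative can actually be propagated using~(\ref{eqn:restrictions}) into genuinely distant elements of $\mathcal{M}(\cstar{A})/\cstar{A}$, rather than merely witnessing a local disagreement that could be reconciled by further modification of the partial lifts. Handling this correctly is the technical core, and is the step where separability of $\cstar{A}$ and the algebraic constraint~(\ref{eqn:restrictions}) on $\iso$ are used in an essential way.
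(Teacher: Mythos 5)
Your overall architecture (an $\OCA_\infty$ coloring on a space coding the family $\SA$ together with lifting data, with the $\sigma$-homogeneous decomposition producing the $\sigma$-$\e$-lift and the uncountable alternative to be refuted) is the right shape, but the objects you color are the wrong ones, and this is fatal. Since $E_s = f^{-1}\{t : t\subseteq s\}$ is a \emph{finite} set, $M[E_s]\subseteq\bigoplus M_{k(n)}$, so $\iso(\pi(x))=0$ for every $x\in M[E_s]$ and a ``partial $\e/3$-lift on $M[E_s]$'' is vacuous (the zero map qualifies). The points of your space $X$ therefore carry no information whatsoever about $\iso$, and the assembly in Step 3 cannot produce an $\e$-lift on $M[A]$: the value $\iso(\pi(x))$ for $x$ of infinite support is not constrained by any amount of finite-dimensional data. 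The paper instead colors pairs $(A,x)$ with $x$ an infinite-support element of $\SX[A]$, fixes \emph{one} arbitrary (choice-function) lift $F$ of $\iso$ on all of $M[\NN]$, and builds the separable topology by recording the actual values $F(x)$ and $F(P_A)$ in $\mathcal M(\cstar{A})_{\le 1}$ with the strict topology; the eventual $\Cmeas$-measurable functions come from uniformizing (Jankov--von Neumann) Borel relations $\Lambda^{T_i}_m$ whose sections approximate $F$ modulo $\cstar{A}$, after a careful interleaved splitting $T=T_0\cup T_1$ against countable dense subsets of the homogeneous pieces. (That density argument, not a counting of ``captured'' branches, is also where the ``all but countably many $A$'' in the statement comes from: one must discard the countably many branches met by the dense sets.)

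Two further steps fail as written. First, openness of your $K_0^n$: ``disagree by at least $3\e$ modulo $\cstar{A}$'' is a quotient-norm condition, hence a countable conjunction of conditions and not open in the strict topology; and evaluation of a Borel-coded function at a point is \emph{not} continuous in the code, so no Polish topology on your space of codes makes the coloring open. Moreover your clause ``first split at depth $m\le n$'' makes $K_0^n$ increase with $n$, the opposite of the monotonicity $K_0^{n+1}\subseteq K_0^n$ that $\OCA_\infty$ requires. The paper resolves both issues at once: the index $m$ of the coloring is spent on replacing the quotient norm by the relations $\not\sim_{m,\e}$, whose openness is witnessed by compressions $e_p(1-e_m)(\cdot)(1-e_m)$. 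Second, your Step 4 contradiction does not exist: an uncountable family of pairwise norm-separated elements of $\mathcal M(\cstar{A})/\cstar{A}$ is entirely consistent with strict separability of $\mathcal M(\cstar{A})_{\le 1}$, since the corona is norm-nonseparable. The actual refutation glues the coordinates of the uncountable set into a single $z\in\SX$ (legitimate because $K_0$ includes the agreement clause $x\rs(A\cap B)=y\rs(A\cap B)$), uses the multiplicativity hypothesis \eqref{eqn:restrictions} to get $\pi(F(z)F(P_A))=\pi(F(x))$ for each member, extracts by pigeonhole one $m$ working for uncountably many, and then invokes the $\OCA_\infty$ requirement $\{\zeta(a),\zeta(b)\}\in K_0^{\Delta(a,b)}$ with $\Delta(a,b)\ge m$ to contradict $\not\sim_{m,\e}$. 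Without an analogue of the gluing element $z$ and of the $\Delta(a,b)\ge m$ mechanism, your Step 4 cannot be completed.
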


  \begin{lemma}
    \label{lemma:sigma->single}
    Let $\e > 0$ and $A\subseteq\NN$. Suppose that there is a $\sigma$-$\e$-lift of $\iso$ on $M[A]$, consisting of $\Cmeas$-measurable functions and that $A = \bigcup_n A_n$ is a partition of $A$ into infinite sets.  Then there is some $n$ such that $A_n\in \SI_C^{4\e}$.
  \end{lemma}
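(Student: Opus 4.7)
The plan is to argue by contradiction: assume that for every $n$, $A_n \notin \SI_C^{4\e}$. Since each $F_m$ is itself a $\Cmeas$-measurable function whose domain contains each $M[A_n]$, this assumption implies that no restriction $F_m \rs M[A_n]_{\le 1}$ can be a $4\e$-lift of $\iso$, so for every pair $(n, m)$ one extracts a witness $x_{n, m} \in M[A_n]_{\le 1}$ with $\norm{\pi(F_m(x_{n, m})) - \iso(\pi(x_{n, m}))} > 4\e$. The goal is to fuse these witnesses into a single element $y \in M[A]_{\le 1}$ at which no $F_m$ can serve as an $\e$-lift, contradicting the $\sigma$-$\e$-lift hypothesis.

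The key structural tool is property \eqref{eqn:restrictions}: setting $\tilde P_n := \iso(\pi(P_{A_n}))$, these are pairwise orthogonal projections in $\mathcal M(\cstar{A}) / \cstar{A}$, and $\iso(\pi(x P_{A_n})) = \iso(\pi(x)) \tilde P_n$ for every $x$. By a diagonal construction, choose each witness $x_{n, m}$ to be supported on a finite set $B_{n, m} \subseteq A_n$, with the $B_{n, m}$ pairwise disjoint and spread out enough that $y := \sum_{n, m} x_{n, m}$ converges strictly to an element of $M[A]_{\le 1}$ of norm at most $1$. For any candidate $m^*$, compressing the purported $\e$-lift relation $\norm{\pi(F_{m^*}(y)) - \iso(\pi(y))} \le \e$ on both sides by $\iso(\pi(P_{B_{n, m^*}}))$ shows that $\pi(F_{m^*}(y))$ cut by this projection is within $\e$ of $\iso(\pi(x_{n, m^*}))$, for every $n$.

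The main obstacle, and the source of the factor $4$ rather than $1$ in the conclusion, is transferring this compressed information back to $\pi(F_{m^*}(x_{n, m^*}))$: since $F_{m^*}$ is only $\Cmeas$-measurable and carries no algebraic structure, $F_{m^*}(y)$ localized to the $A_n$-corner need not agree with $F_{m^*}(x_{n, m^*})$. The plan to resolve this is to first replace each $F_m$ by a $\Cmeas$-measurable localization $\tilde F_m$ that respects the decomposition $A = \bigcup_n A_n$ at the corona level, for instance $\tilde F_m(x) := \sum_n Q_n F_m(x P_{A_n}) Q_n$, where $Q_n \in \mathcal M(\cstar{A})$ is a fixed norm-$1$ lift of $\tilde P_n$ and the sum is taken in the strict topology (truncating appropriately against the approximate identity of $\cstar{A}$ to ensure convergence). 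The family $\{\tilde F_m\}$ inherits the $\sigma$-$O(\e)$-lift property and remains $\Cmeas$-measurable, and by construction $\pi(\tilde F_m(y)) \tilde P_n$ equals $\pi(\tilde F_m(x_{n, m})) \tilde P_n$ up to a $\cstar{A}$-error whenever $y P_{A_n} = x_{n, m}$. A triangle inequality combining the compression, the localization blow-up, and the original $\e$-lift then absorbs the errors into the $4\e$ budget, forcing $\tilde F_{m^*}$ to be a $4\e$-lift at $x_{n, m^*}$ and contradicting the choice of witness. Making the localization precise and verifying its convergence and measurability properties is the hardest technical step; everything else is triangle inequalities against the orthogonality structure of the $\tilde P_n$.
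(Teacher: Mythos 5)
Your approach has a genuine gap that stems from wanting finitely-supported witnesses $x_{n,m}$. If $x_{n,m}$ has finite support, then $\pi(x_{n,m}) = 0$ in $\prod M_{k(n)}/\bigoplus M_{k(n)}$, hence $\iso(\pi(x_{n,m})) = 0$, and moreover $\iso(\pi(P_{B_{n,m^*}})) = 0$ for finite $B_{n,m^*}$. This makes the compression step vacuous: cutting by $\iso(\pi(P_{B_{n,m^*}}))$ yields $0 = 0$ and carries no information. The witnesses to failure of a $4\e$-lift genuinely must have infinite support, but then the fused element $y = \sum_{n,m} x_{n,m}$ is no longer achievable by a disjoint-support sum: for each fixed $n$, the $x_{n,m}$ over all $m$ live in the same infinite $A_n$ and cannot be spread out disjointly while retaining infinite support. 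What is actually required is to choose \emph{one} element $x_n \in \SX[A_n]$ per $n$, each of which defeats every $F_m$ simultaneously, and this demands a careful diagonalization that your proposal only gestures at.

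The paper's proof has a key step you do not replicate, and it is where the factor $4$ really comes from. For each $n$ one defines a Borel set $\SE_n(x,y)$ of ``tails $z$ on $B_n$ making $F_n(\cdots x \cdots z)$ close to $y$ in the $A_n$-corner'', and the crucial claim is that for \emph{some} $x \in \SX[A_n \setminus C_{n-1}]$ the set $\SE_n(x, F(x))$ fails to be comeager --- because if $\SE_n(x, F(x))$ were comeager for all such $x$, one could uniformize (via Jankov--von Neumann) the relation ``$\SE_n(x,y)$ is comeager'' to produce a $\Cmeas$-measurable $4\e$-lift on $M[A_n \setminus C_{n-1}]$, contradicting $A_n \notin \SI_C^{4\e}$. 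This descriptive-set-theoretic maneuver is what converts the hypothesis into usable combinatorial data (finite sets $s_i$, stems $u_i$ via Theorem~\ref{thm:STPrel.talagrand}) feeding the inductive construction of the $x_n$, $C_n$, $z_n$. Your localization map $\tilde F_m(x) = \sum_n Q_n F_m(x P_{A_n}) Q_n$ also does not obviously converge strictly (the $Q_n$ are orthogonal only modulo $\cstar{A}$, so the partial sums need not be bounded), and even if it did, the localized functions would not address the core issue of building one infinite-support witness per $A_n$ defeating all $F_m$ at once.
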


  \begin{lemma}
    \label{lemma:oca->alternative}
    Assume $\OCA_\infty + \MA_{\aleph_1}$.  Then either
    \begin{enumerate}[label=(\Roman*)]
      \item\label{alt:I}  there is an uncountable, treelike, a.d. family $\SA\subseteq\P(\NN)$ which is disjoint from  $\SI$, or
      \item\label{alt:II}  for every $\e > 0$, there is a sequence $\{F_n\}_{n\in\NN}$ of $\Cmeas$-measurable functions such that for every  $A\in\SI^\e$, there is an $n$ such that $F_n$ is an $\e$-lift of $\iso$ on $M[A]$.
    \end{enumerate}
  \end{lemma}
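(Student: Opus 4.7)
The plan is to apply $\OCA_\infty$ to a carefully designed coloring on the space of $\Cmeas$-measurable $\e$-lifts of $\iso$, and in the non-pathological case to use $\MA_{\aleph_1}$ to amalgamate countably many classes of lifts into single uniform lifts. Fix $\e > 0$ and let $X$ be the separable metric space of pairs $(A, F)$, where $A \in \SI^\e$ and $F$ is a $\Cmeas$-measurable $\e$-lift of $\iso$ on $M[A]$, coded by the characteristic function of $A$ together with a Borel code for $F$. Fix a countable strictly dense $D \subseteq \mathcal M(\cstar{A})$. For each $n$, define an open coloring $[X]^2 = K_0^n \cup K_1^n$ by placing $\{(A, F), (B, G)\}$ in $K_0^n$ if there are $a \in D$ with $\norm{a} \le 1$ and $x \in M[A \cup B]_{\le 1}$ concentrated on the first $n$ coordinates witnessing $\norm{a(F(xP_A) + G(xP_B) - R)a} > 2\e$ for every candidate lift $R$ of $\iso(\pi(x))$ at level $n$. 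The sequence is decreasing in $n$ and each $K_0^n$ is open in the chosen topology on $X$.

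Apply $\OCA_\infty$. In the second alternative, the continuous injection $f \colon Z \to X$ from an uncountable $Z \subseteq 2^\NN$ together with the condition $\{f(x), f(y)\} \in K_0^{\Delta(x,y)}$ furnishes, by projecting onto the $A$-coordinate, an uncountable treelike a.d. family whose elements all lie outside $\SI$: the $K_0^n$-condition ensures disagreement beyond any finite truncation, and if some $A$ in the family lay in $\SI$ then its asymptotically additive lift would eventually $\e$-match arbitrary partners drawn from the family, contradicting membership in $K_0^n$ for large $n$. This yields alternative~\ref{alt:I}.

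In the first alternative, write $X = \bigcup_n X_n$ with $[X_n]^2 \subseteq K_1^n$. Fix $n$ and define a forcing $\mathbb P_n$ whose conditions are pairs $(\sigma, \mathcal F)$ with $\sigma$ a finite partial description of a $\Cmeas$-measurable function and $\mathcal F \subseteq X_n$ finite, such that $\sigma$ agrees with every $F \in \mathcal F$ within $\e$ on $M[A]$ for the corresponding $A$; order by refinement. Density of the relevant sets (one per pair $(A, F) \in X_n$) is immediate from $K_1^n$-homogeneity, and ccc is established via Lemma~\ref{lem:DeltaSystem}: any uncountable antichain would, after passing to a $\Delta$-system of supports and applying a $\Cmeas$-measurable uniformization (Theorem~\ref{thm:STPrel.JVN}), produce a pair in $X_n$ landing in $K_0^n$, contradicting homogeneity. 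Applying $\MA_{\aleph_1}$ to the $\aleph_1$ dense sets corresponding to the elements of $X_n$ yields a single $\Cmeas$-measurable $\e$-lift $F_n$ that works for every $(A, F) \in X_n$, giving alternative~\ref{alt:II}.

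The main obstacle will be formulating the coloring $K_0^n$ and the forcing $\mathbb P_n$ precisely enough that: the coloring is genuinely open in a Polish topology on $X$ (requiring care in the coding of $\Cmeas$-measurable functions), the treelike structure provided by alternative (2) of $\OCA_\infty$ descends through the projection onto the $A$-coordinate, and the $\Delta$-system argument for ccc of $\mathbb P_n$ meshes with the structure of $K_1^n$ — this last being precisely the Claim~\ref{claim:provingisccc} that the excerpt flags as the technical heart of the argument.
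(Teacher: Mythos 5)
Two genuine gaps, one per alternative, prevent your proposal from working. For alternative~\ref{alt:I}: the claim that projecting the uncountable $K_0$-homogeneous set onto the $A$-coordinate yields an uncountable treelike a.d. family is false. The $A$'s in that set have no a priori almost-disjointness or tree structure, and in the correct setup they all lie \emph{inside} $\SI$, because the coloring must be defined on $\SI$ so that the fixed asymptotically additive lifts $\alpha^A$ are available for the $K_0^m$ condition; coding each $A$ by $(A,\alpha^A) \in \mathcal P(\NN) \times \Fn(\SX,\cstar{A}_{\leq 1})$ is also what gives a genuinely separable metrizable space for $\OCA_\infty$, whereas ``a Borel code for a $\Cmeas$-measurable function'' does not. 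The treelike a.d. family $\{S_\xi\}$ must be built from scratch, and this is exactly where $\MA_{\aleph_1}$ is used: one defines a ccc poset $\mathbb P$ (the paper's Claim~\ref{claim:provingisccc}) whose conditions finitely approximate the sets $S_\xi$, a treelike witness $f$, and, for each $\xi$, an uncountable $K_0$-homogeneous set $\SH_\xi$ with a common partial witness $x^\xi$; a sufficiently generic filter yields the family, and a pigeonhole argument through the $\SH_\xi$'s shows $S_\xi \notin \SI^\e$. Your proposal moves $\MA_{\aleph_1}$ to the other branch of the dichotomy and has nothing in its place.

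For alternative~\ref{alt:II}: the proposed forcing argument both has a cardinality problem (each $X_n$ may have size $\mathfrak c$, far too many dense sets for $\MA_{\aleph_1}$) and would not produce a $\Cmeas$-measurable function even if it ran, since measurability is a global regularity property that finite approximations cannot force. No forcing is needed here: fix a \emph{countable} $\SD$ dense in $\SH_m$ in the topology where $A$ is identified with $(A,\alpha^A)$, form the analytic relation $\SR$ of pairs $(x,y)$ arising as strict limits of $\alpha^{B_p}(x)$ along Cauchy sequences $B_p$ from $\SD$, and take a Jankov--von Neumann uniformization; $K_1^m$-homogeneity of $\SH_m$, applied repeatedly to consecutive blocks, is exactly what shows the uniformizing function is a genuine $\e$-lift on each $M[A]$ with $A \in \SH_m$. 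Finally, your $K_0^n$ is not actually well-defined (``candidate lift $R$ of $\iso(\pi(x))$ at level $n$'' has no meaning), and as stated --- quantifying over $x$ concentrated on the first $n$ coordinates --- the condition becomes \emph{easier} to satisfy as $n$ grows, which is the wrong monotonicity for $\OCA_\infty$; the paper's $K_0^m$ (disagreement of $\alpha^A$ and $\alpha^B$ by more than $\e$ on $m$ pairwise-disjoint finite blocks of $(A\cap B)\setminus m$) is manifestly decreasing in $m$.
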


  \begin{lemma}
    \label{lemma:sigma-baire->borel}
    Suppose $F_n \colon M[\NN]\to \mathcal M(\cstar{A})$ is a sequence of Baire-measurable maps, $\e>0$ and $\SJ\subseteq\P(\NN)$ is a  nonmeager ideal such that for all $A\in\SJ$ there is some $n$ such that $F_n$ is an $\e$-lift of $\iso$ on $\M[A]$. Then there is a Borel-measurable map $G \colon M[\NN]\to\mathcal M(\cstar{A})$ that is a $24\epsilon$-lift of $\iso$ on $\SI$.
  \end{lemma}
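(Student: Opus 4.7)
\medskip

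\noindent\textbf{Plan.} My approach is to locate a single $F_n$ from the given list that lifts $\iso$ on a sufficiently large collection of subsets of $\NN$, apply Talagrand's theorem to extract combinatorial data from this collection, and then paste $F_n$ (applied to suitable modifications of a given $x$) against the projection structure in~(\ref{eqn:restrictions}) to obtain a single function $G$. Throughout, I will use that $\mathcal M(\cstar{A})_{\leq 1}$ is Polish in the strict topology and that $M[\NN]_{\leq 1}$ is Polish as well, so that notions like Baire/Borel measurability apply.

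\medskip

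\noindent\emph{Step 1 (Baire category).} For each $n$, set
\[
  \mathcal B_n = \set{A \subseteq \NN}{F_n \text{ is an } \e\text{-lift of } \iso \text{ on } M[A]}.
\]
Each $\mathcal B_n$ is downward hereditary. Since the condition inside is a universal statement over a Polish space and $F_n$ is Baire-measurable, $\mathcal B_n$ has the Baire property. The hypothesis gives $\SJ \subseteq \bigcup_n \mathcal B_n$, and $\SJ$ nonmeager forces some $\mathcal B_n$ to be nonmeager (else $\SJ$ would be a countable union of meager sets). Fix such an $n$ and write $F=F_n$, $\mathcal B=\mathcal B_n$. Using the Baire property of $\mathcal B$ together with hereditariness, I locate a finite set $D \subseteq \NN$ such that the trace of $\mathcal B$ on $\P(\NN \setminus D) \cong 2^{\NN \setminus D}$ is comeager there. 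Because finite sets contribute nothing in the corona, I may and do work throughout with supports inside $\NN \setminus D$.

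\medskip

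\noindent\emph{Step 2 (Talagrand).} Apply Theorem~\ref{thm:STPrel.talagrand} with $Y_n=\{0,1\}$ to the comeager set $\mathcal B \cap \P(\NN\setminus D) \subseteq 2^{\NN\setminus D}$. This yields a partition of $\NN\setminus D$ into finite intervals $\seq{E_i}{i\in\NN}$ and tuples $t_i\in 2^{E_i}$ so that whenever $A\cap E_i = t_i$ for infinitely many $i$, one has $A\in\mathcal B$. Let $T_i=\set{n\in E_i}{t_i(n)=1}$.

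\medskip

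\noindent\emph{Step 3 (Construction of $G$).} Partition the interval indices by parity: $A_0=\bigsqcup_{i\text{ even}} E_i$, $A_1=\bigsqcup_{i\text{ odd}} E_i$. Given $x\in M[\NN]_{\leq 1}$, define, for $k\in\{0,1\}$, a modification $x^{(k)}$ that agrees with $x$ on $A_k$ and has, on each $E_i \subseteq A_{1-k}$, a \emph{fixed} canonical value $c_n\in M_{k(n)}$ of norm $1$ for $n\in T_i$ and $0$ for $n\in E_i\setminus T_i$. Then on a cofinal set of indices the support of $x^{(k)}$ meets $E_i$ exactly at $t_i$, so $\supp(x^{(k)})\in\mathcal B$ and hence $F(x^{(k)})$ is an $\e$-lift of $\iso(\pi(x^{(k)}))$. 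Pick lifts $Q_0,Q_1\in\mathcal M(\cstar{A})$ of the projections $\iso(\pi(P_{A_0})),\iso(\pi(P_{A_1}))$ with $Q_0+Q_1=1$ modulo $\cstar{A}$, and set
\[
  G(x) \;=\; F(x^{(0)})\,Q_0 \;+\; F(x^{(1)})\,Q_1.
\]
The map $x\mapsto x^{(k)}$ is strictly continuous, so $G$ is Baire-measurable; I then invoke the fact that any Baire-measurable map into a Polish space agrees with a Borel-measurable one on a comeager set, and absorb the discrepancy into the final constant (this is a routine but fiddly modification of $F$, done separately for $x^{(0)}$ and $x^{(1)}$).

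\medskip

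\noindent\emph{Step 4 (Estimate).} For $x$ with $\supp(x)\in\SI$, decompose $\iso(\pi(x))$ via (\ref{eqn:restrictions}) as $\iso(\pi(xP_{A_0}))+\iso(\pi(xP_{A_1})) = \iso(\pi(x^{(0)}))\,\iso(\pi(P_{A_0}))+\iso(\pi(x^{(1)}))\,\iso(\pi(P_{A_1}))$, using that $xP_{A_k}=x^{(k)}P_{A_k}$. Each term is within $\e$ of $\pi(F(x^{(k)}))\pi(Q_k)$, giving a nominal bound of $2\e$; the additional slack in the constant $24\e$ absorbs the passage from the Baire-measurable $F$ to a Borel surrogate, the fact that the lifts $Q_k$ are only approximately orthogonal, and the contributions of the finite ``leftover'' set $D$ and the norm control coming from $\norm{\iso}=1$.

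\medskip

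\noindent\emph{Main obstacle.} The hard point is the interface between Baire-measurability of the $F_n$ (which is what Baire category naturally yields) and the Borel-measurability demanded in the conclusion: while $F$ can be replaced by a Borel function off a meager set, the construction of $G$ evaluates $F$ at two constructed points $x^{(0)},x^{(1)}$ that depend on $x$ in a controlled but nontrivial way, so some care is needed to ensure the Borel replacement succeeds uniformly. This, together with bookkeeping of the constants across Steps~3 and~4, is where the factor $24$ enters; the clean conceptual content is that Talagrand together with equation~(\ref{eqn:restrictions}) lets a single $F_n$ be promoted to a lift on all of $\SI$.
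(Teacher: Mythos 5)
Your proposal runs into a genuine gap at the very first step, and that gap is where the real content of the lemma lives. You define $\mathcal B_n = \set{A}{F_n\text{ is an }\e\text{-lift of }\iso\text{ on }M[A]}$ and claim it has the Baire property because the defining condition ``is a universal statement over a Polish space and $F_n$ is Baire-measurable.'' This is unjustified: the condition quantified over $x\in M[A]_{\le 1}$ is $\norm{\pi(F_n(x)) - \iso(\pi(x))}\le\e$, which involves $\iso$, and $\iso$ carries no definability hypothesis at all. There is no reason for $\mathcal B_n$ to have the Baire property, and without BP you cannot promote ``$\mathcal B_n$ nonmeager'' to ``$\mathcal B_n$ comeager on a cone,'' which is what your Talagrand step requires. (A secondary issue: even granting BP, a nonmeager hereditary family is not automatically comeager below a finite pattern --- the shift trick gives ``everywhere nonmeager below some $\sigma$,'' not comeager.)

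The paper's proof is built precisely around this obstacle, and never asks $\SH_n$ (your $\mathcal B_n$) to be regular. Instead it first applies Talagrand to the domain $\SX$, obtaining a comeager $\SG$ on which all $F_n$ are continuous, and straightens each $F_n$ into a globally continuous $F_n'$ by plugging in Talagrand patterns (costing a factor of $2$ in $\e$). It then introduces the $\iso$-free comparison sets
\[
\SZ_{mn} = \set{A}{\forall x\in\SX[A]\ \norm{\pi(F_m(x) - F_n(x))}\le 4\e}
\]
for $m,n$ in the index set $L'$ of nonmeager $\SH_n$'s. These refer only to the (now continuous) $F_m, F_n$, not to $\iso$, and are therefore coanalytic, hence have BP; since each contains $\SH_m\cap\SH_n$ they are everywhere nonmeager, hence comeager. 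Talagrand is then applied to the intersection $\SE$ of the $\SZ_{mn}$, and a single $N\in L'$ is used to build the Borel $G$, by subtraction of $F_N$ at shifted inputs rather than by multiplying with lifts $Q_k$ of projections as you propose (either pasting would work, once a comeager set is actually in hand --- though note $Q_0+Q_1$ only lifts $\iso(\pi(1))$, which need not be $1$). Your nominal $2\e$ would be correct if $\mathcal B_n$ were directly comeager; the paper's $24$ is the honest cost of the detour: $\e\to 2\e$ from the straightening, $\to 6\e$ from routing through $\SZ_{Nn}$ for the $n$ witnessing $A\in\SH_n$, $\to 12\e$ from the subtraction $F_N(\,\cdot\,+P_{S_j})-F_N(P_{S_j})$, $\to 24\e$ from adding the two halves.
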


  \begin{lemma}
    \label{lemma:C->asymptotic}
  Suppose $F \colon M[\NN]\to \mathcal M(\cstar{A})$ is a $\Cmeas$-measurable map and $\SJ\subseteq \P(\NN)$ is a nonmeager ideal such  that for every $A\in\SJ$, $F$ is a lift of $\iso$ on $M[A]$.  Then there is an asymptotically additive $\alpha$ that is a lift of $\iso$ on $\SI$. Also, $\alpha$ can be chosen to be a skeletal map. Hence, $\SI_C = \SI$.
  \end{lemma}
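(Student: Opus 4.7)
The strategy is to use that $\Cmeas$-measurable functions on a Polish space are Baire-measurable (\cite[Theorem~21.6]{Kechris.CDST}) together with Talagrand's theorem (Theorem~\ref{thm:STPrel.talagrand}) to extract a block structure for $F$, and then package the blocks into an asymptotically additive map via the quasicentral approximate identity from Lemma~\ref{lem:approxid1EV}. For each $n$ let $Y_n = X_{k(n),n}$, so $\prod_n Y_n$ is compact Polish and $F\rs \prod_n Y_n$ is Baire-measurable; fix a dense $G_\delta$ set $G\subseteq \prod_n Y_n$ on which it is strictly continuous. Talagrand produces a partition $\{E_i\}$ of $\NN$ into intervals and witnesses $t_i \in \prod_{n\in E_i} Y_n$ such that $\exists^\infty i (y\rs E_i = t_i) \Rightarrow y \in G$; a harmless modification makes $t_i = 0$.

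Next, apply Lemma~\ref{lem:approxid1EV} to $\cstar{A}\subseteq \mathcal M(\cstar{A})$, with the finite sets $F_n$ chosen to include the relevant values of $F$ on the finite approximations. This produces $(e_n)$ and $f_n = (e_{n+1}-e_n)^{1/2}$ quasicentral, mutually almost orthogonal, and strictly summing to $1$. By thinning and gathering indices, assign a positive element $p_i$ (built from consecutive $f_n^2$'s) to each interval $E_i$ so that $\{\pi(p_i)\}_i$ are pairwise orthogonal projections in $\mathcal M(\cstar{A})/\cstar{A}$ with $\sum_i \pi(p_i) = 1$ and $\pi(p_i)\iso(\pi(P_{E_i}))\pi(p_i) = \iso(\pi(P_{E_i}))$. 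For $n \in E_i$, set
\[
\alpha_n(x) = p_i F(xe_n) p_i,
\]
where $xe_n \in M[\NN]$ has $x$ at coordinate $n$ and $0$ elsewhere. Since $\SJ$ is nonmeager it contains every finite set, so $\{n\} \in \SJ$ and hence $F(xe_n) \in \cstar{A}$; thus $\alpha_n \colon M_{k(n)} \to p_i \cstar{A} p_i$, verifying the range-localization clause of Definition~\ref{def:asymptadditive}. The skeletal condition is obtained post hoc by replacing $\alpha_n(x)$ on the unit ball by $\alpha_n(y)$, where $y$ is the closest point of $X_{k(n), k_n}$ to $x$ for a suitable $k_n \to \infty$; conditions~\ref{cond7} and~\ref{cond8} of Lemma~\ref{lem:approxid1EV} ensure the resulting strict-topology error lies in $\cstar{A}$.

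The main obstacle is verifying that $\alpha(x) := \sum_n \alpha_n(x_n)$ is a lift of $\iso$ on $M[A]$ for every $A \in \SJ$. The argument splits into three equalities modulo $\cstar{A}$: (i) $F(x) \equiv \sum_i p_i F(x) p_i$, from the strict partition $\sum_i \pi(p_i) = 1$ provided by Lemma~\ref{lem:approxid1EV}; (ii) $p_i F(x) p_i \equiv p_i F(x\rs E_i) p_i$ for Talagrand-generic $x$, using strict continuity of $F$ on $G$ together with absorption by $p_i$ of the perturbations of $F(x)$ caused by changing $x$ outside $E_i$; and (iii) $p_i F(x\rs E_i) p_i \equiv \sum_{n\in E_i} \alpha_n(x_n)$, which uses the block multiplicativity~\eqref{eqn:restrictions} together with the fact that $F$ lifts $\iso$ on the compact-support set $E_i \in \SJ$. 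Assembling these yields $\alpha(x) \equiv F(x)$ first on $\prod_n Y_n \cap G$ and then on all of $M[A]$ by approximation (since $Y_n$ is $2^{-n}$-dense in the unit ball). Step (iii) is the delicate one: $F$ is only a set-theoretic lift with no algebraic structure, and the quasicentral localization provided by Lemma~\ref{lem:approxid1EV} is precisely what promotes the coordinate-wise additivity of $\iso$ to additivity of $F$ modulo $\cstar{A}$. The final identity $\SI_C = \SI$ follows: $\SI \subseteq \SI_C$ is automatic (asymptotically additive maps are Borel, hence $\Cmeas$-measurable), and the construction just outlined produces, from any $\Cmeas$-measurable lift on $M[A]$, an asymptotically additive lift on the same set.
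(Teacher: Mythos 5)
Your opening reduction (restricting to $\SX=\prod X_{k(n),n}$, using that a $\Cmeas$-measurable map is Baire-measurable, invoking Theorem~\ref{thm:STPrel.talagrand}, and applying the two-sided translation trick to replace $F$ by a strictly continuous lift on $\SX\cap M[\SI]$) is exactly the paper's first step. After that the proposal breaks down at the definition of the blocks. You set $\alpha_n(x)=p_iF(xe_n)p_i$ where $xe_n$ is supported on the single coordinate $n$. Every finitely supported element maps to $0$ in the quotient, so $F(xe_n)$ is merely some lift of $0$, i.e.\ an essentially arbitrary element of $\cstar{A}$ with no connection to $\iso$; for instance $F$ could vanish identically on $M[\Fin]$, in which case your $\alpha$ is the zero map. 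The block map has to be a difference $F(u+\sigma_i(x))-F(u)$ of values of $F$ at two \emph{infinitely supported} points differing only on the $i$-th interval, taken against a fixed background $u$, and one must then prove that this difference is, up to summable error, independent of the choices made off the $i$-th interval and localized in the corner $(q_{k_{i+1}}-q_{k_i})\cstar{A}(q_{k_{i+1}}-q_{k_i})$. That requires a separate Baire-category ``stabilization'' argument (the paper's Claims on stabilizers), exploiting that $F(x)-F(y)\in\cstar{A}$ whenever $x,y\in M[\SI]$ differ on finitely many coordinates. This is the heart of the lemma and is missing from your proposal: your step (ii) attributes the localization to ``absorption by $p_i$'', but quasicentrality of the $f_n$ from Lemma~\ref{lem:approxid1EV} only controls commutators $[f_n,a]$ for prescribed $a$; it says nothing about how $F(x)$ varies when $x$ is altered outside $E_i$, which is the actual obstruction.

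Two further problems arise from importing Lemma~\ref{lem:approxid1EV} here. First, each $p_i$ built from finitely many consecutive $f_n^2$'s equals $e_{m+1}-e_l\in\cstar{A}$, so $\pi(p_i)=0$; the asserted facts that ``$\{\pi(p_i)\}$ are pairwise orthogonal projections summing to $1$'' and that ``$\pi(p_i)\iso(\pi(P_{E_i}))\pi(p_i)=\iso(\pi(P_{E_i}))$'' are vacuous (both sides are $0$) and cannot support steps (i)--(iii), which in any case need quantitative, summable estimates rather than term-by-term congruences modulo $\cstar{A}$. Second, Definition~\ref{def:asymptadditive} (and Remark~\ref{remark:asymptorthogonal}, on which the later lemmas rely) requires the ranges of the $\alpha_j$ to sit in corners of the \emph{given} approximate identity of projections $(q_n)$; cutting by the non-projection positive elements $f_n$ does not deliver this, and one cannot arrange the $e_n$ to be simultaneously quasicentral and projections. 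The paper avoids Lemma~\ref{lem:approxid1EV} altogether: it cuts by the projections $q_{k_{i+1}}-q_{k_i}$ and replaces quasicentrality by the stabilization property of $F$ itself. Your closing remark that $\SI\subseteq\SI_C$ because asymptotically additive maps are Borel is correct.
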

  With these lemmas in hand, we will finish this section by connecting the dots and proving Theorem~\ref{thm:FA.liftingtheorem}.
 First we provide an easy connection between Borel-measurable $\e$-lifts and $\Cmeas$-measurable lifts.

  \begin{lemma}    \label{lemma:epsilon-borel->C-measurable}

    Suppose that $\SJ$ is an ideal, and for every $\e > 0$, there is a Borel-measurable $G_\e \colon M[\NN]\to \mathcal M(\cstar{A})$ that is an $\e$-lift of $\iso$ on $\SJ$.  Then there is a $\Cmeas$-measurable $F \colon M[\NN]\to \mathcal M(\cstar{A})$ that is a lift of $\iso$ on $\SJ$.
  \end{lemma}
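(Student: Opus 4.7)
The plan is to obtain the $\Cmeas$-measurable lift by applying the Jankov--von Neumann uniformization theorem (Theorem~\ref{thm:STPrel.JVN}). For each $n\geq 1$, fix a Borel-measurable $(1/n)$-lift $G_n\colon M[\NN]\to \mathcal M(\cstar{A})$ of $\iso$ on $\SJ$ as provided by hypothesis. I would then define
\[
  Z = \Big\{(x,y)\in M[\NN]_{\leq 1}\times \mathcal M(\cstar{A})_{\leq 2} \mathrel{}:\mathrel{} \forall n\geq 1,\ \norm{\pi(y - G_n(x))}\leq 1/n \Big\}.
\]
Observe that for $x\in M[\SJ]_{\leq 1}$, the vertical section $Z_x$ is exactly the set of lifts of $\iso(\pi(x))$ of norm at most $2$, because $\pi(G_n(x))\to \iso(\pi(x))$ in norm. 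This section is nonempty: since $\norm{\iso(\pi(x))}\leq 1$, by the definition of the quotient norm there exists $y\in \mathcal M(\cstar{A})$ with $\pi(y) = \iso(\pi(x))$ and $\norm{y}<2$.

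Next I would verify that $Z$ is analytic in the product of the Polish spaces $M[\NN]_{\leq 1}$ and $\mathcal M(\cstar{A})_{\leq 2}$, both equipped with their strict topologies. Since the norm is lower semicontinuous on strictly bounded sets, the set
\[
  W_r = \{(z,a)\in \mathcal M(\cstar{A})_{\leq 3}\times \cstar{A}_{\leq 3} : \norm{z-a}\leq r\}
\]
is strictly closed, hence Borel. Its projection onto the first coordinate is precisely $\{z : \norm{\pi(z)}\leq r\}$, which is therefore analytic. Composing with the Borel map $(x,y)\mapsto y - G_n(x)$ shows that each individual constraint defining $Z$ is analytic, and since analytic sets are closed under countable intersection, $Z$ is analytic.

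By Theorem~\ref{thm:STPrel.JVN}, $Z$ admits a $\Cmeas$-measurable uniformization $F_0\colon M[\NN]_{\leq 1}\to \mathcal M(\cstar{A})_{\leq 2}$. To pass from the unit ball to all of $M[\NN]$, define $F\colon M[\NN]\to \mathcal M(\cstar{A})$ by $F(x) = \norm{x}\,F_0(x/\norm{x})$ for $x\neq 0$ and $F(0)=0$; this function is $\Cmeas$-measurable because $x\mapsto \norm{x}$ and $x\mapsto x/\norm{x}$ are Borel. For any $x\in M[\SJ]$ we have $x/\norm{x}\in M[\SJ]_{\leq 1}$, so $\pi(F_0(x/\norm{x})) = \iso(\pi(x/\norm{x}))$, and by linearity of $\iso$ and $\pi$ this gives $\pi(F(x)) = \iso(\pi(x))$, as required.

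The main obstacle is establishing the analyticity of $Z$: it rests on combining the strict-lower-semicontinuity of the norm on bounded sets with the Polish structure of $\cstar{A}_{\leq K}$ in the strict topology, so that the ``distance to $\cstar{A}$'' constraint can be written as the projection of a Borel set. Once this analyticity is in hand, Jankov--von Neumann produces the desired uniformization and the extension by scaling completes the argument.
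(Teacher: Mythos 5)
Your proposal is correct and follows essentially the same route as the paper: one forms the set of pairs $(x,y)$ satisfying $\norm{\pi(y-G_{1/n}(x))}\le 1/n$ for all $n$ and uniformizes it via Jankov--von Neumann (Theorem~\ref{thm:STPrel.JVN}). The only differences are cosmetic — you restrict to the unit ball and rescale afterwards, and you spell out why the relation is analytic where the paper simply asserts it is Borel; either suffices for the uniformization theorem.
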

  
\begin{proof}

    Define
    \[
      \Gamma = \set{(x,y)}{\forall n\in\NN\quad \norm{\pi(y - G_{1/n}(x))} \le 1/n}
    \]
As each $G_\epsilon$ is Borel-measurable, $\Gamma$ is Borel.  Moreover, if $A\in\SJ$ and $x\in M[A]$, then for any choice of a lift $y$ of $\iso(\pi(x))$,  we have $(x,y)\in \Gamma$.  Let $F$ be a $\Cmeas$-measurable uniformization of $\Gamma$ according to Theorem~\ref{thm:STPrel.JVN}.  Then $F$ is a lift of $\iso$ on $M[A]$ for any $A\in\SJ$.

  \end{proof}

 Now we have the necessary tools to connect $\SI_C$, and hence $\SI$, to $\SI^\e_C$;

\begin{lemma}    \label{lemma:epsilon-C->C}

    $\bigcap_{\e > 0} \SI^\e_C = \SI$.

  \end{lemma}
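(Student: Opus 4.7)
The inclusion $\SI \subseteq \bigcap_{\e > 0} \SI^\e_C$ is immediate, since an exact $\Cmeas$-measurable lift is automatically a $\Cmeas$-measurable $\e$-lift for every $\e > 0$.

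For the reverse, fix $A \in \bigcap_{\e > 0} \SI^\e_C$ and, for each $n \geq 1$, a $\Cmeas$-measurable $F_n$ that is a $2^{-n}$-lift of $\iso$ on $M[A]$. Mimicking the scheme of Lemma~\ref{lemma:epsilon-borel->C-measurable}, I would telescope the $F_n$'s into a norm-Cauchy sequence of $\Cmeas$-measurable maps by adding corrections in $\cstar{A}$, and take the limit. Concretely, set $H_1 = F_1$ and, inductively, $H_{n+1}(x) = F_{n+1}(x) + c_n(x)$, where $c_n(x) \in \cstar{A}$ is chosen so that $\norm{H_n(x) - F_{n+1}(x) - c_n(x)} \leq 2^{-n+1}$; such a $c_n(x)$ exists pointwise because $\norm{\pi(H_n(x) - F_{n+1}(x))} \leq 3\cdot 2^{-n-1}$, using that $H_n$ inherits the property $\pi\circ H_n = \pi\circ F_n$ on $M[A]_{\leq 1}$ from the construction. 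This gives $\norm{H_{n+1}(x) - H_n(x)} \leq 2^{-n+1}$, so $(H_n(x))_n$ is uniformly norm-Cauchy in $\mathcal M(\cstar{A})$, and $F(x) := \lim_n H_n(x)$ is a $\Cmeas$-measurable map with $\pi(F(x)) = \lim_n \pi(F_n(x)) = \iso(\pi(x))$. Extending $F$ arbitrarily to $M[\NN]$ then witnesses $A \in \SI$.

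The main obstacle is extracting the correction $c_n$ in a $\Cmeas$-measurable manner. Theorem~\ref{thm:STPrel.JVN} would suffice if the set
\[
\{(x, c) \in M[A]_{\leq 1} \times \cstar{A}_{\leq R} : \norm{H_n(x) - F_{n+1}(x) - c} \leq 2^{-n+1}\}
\]
were analytic, but since $F_{n+1}$ and $H_n$ are only $\Cmeas$-measurable, this set sits just beyond the reach of Jankov--von Neumann. I would resolve this by invoking the Kuratowski--Ryll-Nardzewski measurable selection theorem applied to this nonempty, norm-closed-valued, $\Cmeas$-measurable multifunction on the Polish space $\cstar{A}_{\leq R}$ (Polish in the norm topology because $\cstar{A}$ is separable): this yields a $\Cmeas$-measurable selector $c_n$ and closes the induction.
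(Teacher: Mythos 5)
Your route is genuinely different from the paper's. The paper does not telescope the approximate lifts directly; instead it first converts each $\Cmeas$-measurable $\e$-lift into a \emph{Borel}-measurable $24\e$-lift by passing to the comeager set where the map is continuous (Lemma~\ref{lemma:sigma-baire->borel}, applied with the constant sequence and $\SJ=\mathcal P(A)$), and only then forms the Borel graph $\Gamma=\set{(x,y)}{\forall n\ \norm{\pi(y-G_{1/n}(x))}\le 1/n}$, to which Jankov--von Neumann (Theorem~\ref{thm:STPrel.JVN}) legitimately applies (Lemma~\ref{lemma:epsilon-borel->C-measurable}). Your approach keeps the maps $\Cmeas$-measurable throughout and glues them with pointwise corrections in $\cstar{A}$; this is a reasonable alternative, and you correctly diagnose that the obstruction is precisely that the graph you would feed to Jankov--von Neumann is no longer analytic. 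However, the appeal to Kuratowski--Ryll-Nardzewski is not yet a proof: KRN requires the multifunction to be weakly measurable with respect to the $\sigma$-algebra generated by analytic sets, i.e.\ that $\{x:\Phi(x)\cap U\neq\emptyset\}$ lies in that $\sigma$-algebra for every norm-open $U\subseteq\cstar{A}_{\le R}$, and showing that the corresponding target set $\{z:\exists c\in U\ \norm{z-c}\le 2^{-n+1}\}$ is strictly Borel on bounded sets is exactly the kind of point that needs an argument (a norm-bounded sequence of approximate witnesses from a countable dense subset of $U$ need not converge). This is fixable, and in fact more simply than via KRN: fix a countable norm-dense $\{d_k\}\subseteq\cstar{A}_{\le R}$ and let $c_n(x)=d_{k}$ for the least $k$ with $\norm{H_n(x)-F_{n+1}(x)-d_k}<2^{-n+1}$; since the norm is strictly lower semicontinuous on bounded sets, each set $\{x:\norm{H_n(x)-F_{n+1}(x)-d_k}<2^{-n+1}\}$ is the preimage of a strictly Borel set under a $\Cmeas$-measurable map, so $c_n$ is $\Cmeas$-measurable with countable range.

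The second point to repair is your last sentence. A $\Cmeas$-measurable exact lift of $\iso$ on $M[A]$ witnesses $A\in\SI_C$, not $A\in\SI$: by Definition~\ref{defin:theideals}, membership in $\SI$ requires an \emph{asymptotically additive} lift. You must still invoke Lemma~\ref{lemma:C->asymptotic} (applied inside $\mathcal P(A)$ with the nonmeager ideal $\SJ=\mathcal P(A)$) to upgrade your $\Cmeas$-measurable lift to an asymptotically additive one; this is exactly how the paper closes its own proof. With that citation added and the selection step made precise as above, your argument goes through.
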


  \begin{proof}

    The inclusion $\supseteq$ is clear.  For the other inclusion, let $A\in\bigcap_{\e>0}\SI^\e_C$ and fix for each $\e > 0$ a $\Cmeas$-measurable map $F_\e$ which forms an $\e$-lift of $\iso$ on $M[A]$.  Working in $\mathcal P(A)$ and applying Lemma~\ref{lemma:sigma-baire->borel} with $F_n = F_\e$ for every $n$ and $\SJ = \mathcal P(A)$, we get a Borel-measurable $G_{24\e}$ which is a $24\e$-lift of $\iso$ on $M[A]$.  Applying Lemma~\ref{lemma:epsilon-borel->C-measurable}, again with $\SJ = \mathcal P(A)$, we get a $\Cmeas$-measurable lift of $\iso$ on $M[A]$, and by Lemma~\ref{lemma:C->asymptotic}, an asymptotically additive lift of $\iso$ on $M[A]$.

  \end{proof}

  \begin{lemma}    \label{lemma:oca->ccc/fin}

    Assume $\OCA_\infty + \MA_{\aleph_1}$.  Then $\SI$ meets every uncountable, treelike, a.d. family $\SA$. Moreover, it is ccc/Fin, and therefore nonmeager.
  \end{lemma}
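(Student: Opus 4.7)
The strategy is to rule out alternative~(I) of Lemma~\ref{lemma:oca->alternative}, obtaining that $\SI$ meets every uncountable treelike a.d.\ family, and then to upgrade from treelike families to all uncountable a.d.\ families. Suppose for contradiction that $\SA\subseteq\mathcal P(\NN)$ is an uncountable treelike a.d.\ family with $\SA\cap\SI=\emptyset$. Applying Lemma~\ref{lemma:oca->treelike} with $\epsilon=1/k$ for each $k\in\NN$ excludes at most countably many $A\in\SA$ from admitting a $\Cmeas$-measurable $\sigma$-$(1/k)$-lift of $\iso$ on $M[A]$; discarding the countable exceptional union yields an uncountable $\SA'\subseteq\SA$ whose every member admits such $\sigma$-lifts for every $k$ simultaneously.

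Fix $A\in\SA'$ and a partition $A=\bigsqcup_n A_n$ into infinite pieces. Lemma~\ref{lemma:sigma->single} applied with $\epsilon=1/k$ produces an index $n(k)$ with $A_{n(k)}\in\SI_C^{4/k}$, and pigeonhole yields a single $n_0$ with $A_{n_0}\in\SI_C^{4/k}$ for infinitely many $k$; together with the monotonicity $\SI_C^{\epsilon_1}\subseteq\SI_C^{\epsilon_2}$ for $\epsilon_1\le\epsilon_2$ and Lemma~\ref{lemma:epsilon-C->C}, this gives $A_{n_0}\in\bigcap_{\epsilon>0}\SI_C^\epsilon=\SI$. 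To promote this infinite subset of $A$ to $A$ itself---and thereby reach a contradiction---I iterate the construction on $A\setminus A_{n_0}$ (which still carries $\sigma$-$(1/k)$-lifts by restriction) and use $\MA_{\aleph_1}$ to control the successive partitions so that each element of a fixed enumeration of $A$ is eventually captured in an extracted piece. This produces a partition $A=\bigsqcup_k B_k$ with each $B_k\in\SI$. Because asymptotically additive lifts are defined coordinate-by-coordinate with pairwise orthogonal image blocks (as in Definition~\ref{def:asymptadditive}), the individual lifts on the $M[B_k]$ assemble into a single asymptotically additive lift of $\iso$ on $M[A]$, forcing $A\in\SI$ and contradicting $A\in\SA$ and $\SA\cap\SI=\emptyset$. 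The principal obstacle is exactly this bookkeeping: without it the iteration only guarantees countably many pairwise disjoint infinite subsets of $A$ in $\SI$, whose union can fall short of $A$.

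With (I) ruled out, $\SI$ meets every uncountable treelike a.d.\ family. For the full ccc/Fin property, I would refine an arbitrary uncountable a.d.\ family $\SF$ to an uncountable treelike one using $\MA_{\aleph_1}$: a suitable bijection $\NN\to 2^{<\omega}$ can be produced along whose distinct branches lie uncountably many members of a refinement of $\SF$, reducing to the treelike case. Finally, that ccc/Fin implies nonmeager is immediate from Proposition~\ref{prop:JT2}: a meager ideal $\SI$ provides a partition $\NN=\bigsqcup_n E_n$ into finite intervals with $\bigcup_{n\in L}E_n\notin\SI$ for every infinite $L$, and coding a classical uncountable treelike a.d.\ family through the index parameter $n\mapsto E_n$ would place it entirely outside $\SI$, contradicting ccc/Fin.
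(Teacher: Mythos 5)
Your proof has several genuine gaps, and as a result it misses the key structural move that makes the paper's argument work.

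The central problem is that you try to conclude $A\in\SI$ for some $A\in\SA$ directly. To do this via Lemma~\ref{lemma:epsilon-C->C} you must show $A\in\SI_C^\epsilon$ for \emph{every} $\epsilon>0$, and your route to this is a pigeonhole that fails. Lemma~\ref{lemma:sigma->single} applied with precision $1/k$ to a fixed partition $A=\bigsqcup_n A_n$ only yields \emph{some} index $n(k)$ with $A_{n(k)}\in\SI_C^{4/k}$; nothing forces $n(k)$ to repeat as $k$ increases, so there need not exist a single $n_0$ lying in $\SI_C^{4/k}$ for infinitely many $k$. Your fallback---iterating on $A\setminus A_{n_0}$ to write $A=\bigsqcup_k B_k$ with each $B_k\in\SI$, and then ``assembling'' the asymptotically additive lifts---also fails. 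The ideal $\SI$ is closed under finite unions only, not countable ones, and the proposed assembly of countably many asymptotically additive lifts into one is exactly the statement that would make $\SI$ a $\sigma$-ideal; the convergence and block-orthogonality conditions of Definition~\ref{def:asymptadditive} are not preserved under such countable patchings, and nothing in the lemmas you invoke licenses this.

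The paper avoids both problems in one stroke. First it applies Lemma~\ref{lemma:epsilon-C->C} \emph{in the other direction}: if $\SA\cap\SI=\emptyset$, then by a counting argument (since $\SI=\bigcap_\epsilon\SI_C^\epsilon$) there is a \emph{single} $\epsilon>0$ with an uncountable $\SA'\subseteq\SA$ disjoint from $\SI_C^\epsilon$. One fixed $\epsilon$ now suffices for the contradiction. Second, rather than partitioning an element $A$ of $\SA$ into pieces that are not themselves in $\SA$, it uses $\MA_{\aleph_1}$ to build a treelike a.d.\ family $\SB$ ``above'' $\SA$, each $B\in\SB$ almost-containing infinitely many $A\in\SA'$. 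Lemma~\ref{lemma:oca->treelike} then produces a $\sigma$-$(\epsilon/100)$-lift on such a $B$, and Lemma~\ref{lemma:sigma->single}, applied to the partition of $B$ into those almost-contained $A$'s, places one of them into $\SI_C^{4\epsilon/100}\subseteq\SI_C^\epsilon$. This is an $A\in\SA'$ in $\SI_C^\epsilon$---an immediate contradiction with no promotion or assembly needed. This auxiliary-family construction is exactly the missing idea in your sketch, and it is where Lemma~\ref{lemma:oca->treelike}'s treelike hypothesis actually gets used (on $\SB$, not on $\SA$).

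Two smaller remarks. Your extension to arbitrary a.d.\ families via ``refining $\SF$ to a treelike family'' is too optimistic as phrased---an arbitrary uncountable a.d.\ family cannot in general be literally refined to a treelike one; the paper instead cites Veli\v{c}kovi\'{c}'s lemma together with $\MA_{\aleph_1}$ to pass from the treelike case to the general one. Finally, your derivation of nonmeagerness from ccc/Fin via Proposition~\ref{prop:JT2} is essentially correct and matches the paper's (terser) conclusion.
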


  \begin{proof}
We work by contradiction. 
Let $\SA$ be an uncountable, a.d. family which is disjoint from $\SI$.  By Lemma~\ref{lemma:C->asymptotic}, $\SA$ is disjoint from $\SI_C$, and by  Lemma~\ref{lemma:epsilon-C->C}, there is an $\e > 0$ such that $\SI^\e_C$ is disjoint from an uncountable subset of $\SA$.  Without loss of generality we will assume $\SI^\e_C$ and $\SA$ are disjoint.  By $\MA_{\aleph_1}$, there is  an uncountable, a.d. family $\SB$ such that for every $B\in\SB$, there are infinitely-many $A\in\SA$ with  $A\subseteq^* B$ (see, for example,~\cite[Claim 4.10]{McKenney.UHF}). By Lemma~\ref{lemma:oca->treelike}, and $\OCA_\infty$, for all   but countably-many $B\in\SB$ there is a $\sigma$-$\e/100$-lift of $\iso$ on $M[B]$ consisting of $\Cmeas$-measurable  functions.  By Lemma~\ref{lemma:sigma->single}, for each such $B\in\SB$, there is an $A\in\SA$ such that $\iso$ has a $\Cmeas$-measurable $\e$-lift of $\iso$ on $M[A]$.  This is a contradiction.

To prove that $\SI$ is cc/Fin, note that, by~\cite[Lemma~2.3]{Velickovic.OCAA} and $\MA_{\aleph_1}$, Lemma~\ref{lemma:oca->ccc/fin} can actually be extended to include all uncountable, a.d. families $\SA$. As every ccc/Fin ideal containing all finite sets is nonmeager, we have the thesis.
  \end{proof}

  \begin{proof}[Proof of Theorem~\ref{thm:FA.liftingtheorem}]

   Let $\SJ=\SI$, the set of all $A\subseteq\NN$ on which $\iso$ has a lift which is   asymptotically additive.  Lemma~\ref{lemma:C->asymptotic} and Lemma~\ref{lemma:oca->ccc/fin} imply that $\SI$ is ccc/Fin.

Since $\SI$ is ccc/Fin, the first alternative of Lemma~\ref{lemma:oca->alternative} must fail.  The second alternative implies, by Lemma~\ref{lemma:sigma-baire->borel}, that for every $\e > 0$ there is a Borel-measurable map which is an $\e$-lift of $\iso$ on $M[A]$ for every $A\in\SI$.  By Lemma~\ref{lemma:epsilon-borel->C-measurable}, it follows that there is a $\Cmeas$-measurable $F$ which lifts $\iso$ on $M[A]$, for every $A\in\SI$.  Lemma~\ref{lemma:C->asymptotic} gives an $\alpha$ as required.
\end{proof}

\section{A lifting Theorem II: proofs}\label{s:FA.Lift2}

In this section we prove the lemmas needed in \S\ref{s:FA.Lift1} for the proof of Theorem \ref{thm:FA.liftingtheorem}. The $\Cstar$-algebra $\cstar{A}$ and $\iso$ are fixed as in \S\ref{s:FA.Lift1}. Similarly $\SI$, $\SI^\e$ and $\SI_C^\e$ are as in Definition \ref{defin:theideals}.

 For each $n\in\NN$, let $X_n=X_{k(n),n}$ where the sequence $X_{n,k}$ was fixed before Definition~\ref{defin:FA.Skel}, that is, $X_{k(n),n}$ is a $2^{-n}$-dense subset of the unit ball of $M_{k(n)}$ containing both $0$ and $1$.
 
  Let $\SX = \prod X_n$, and $\SX[A] = \SX\cap M[A]$ for each $A\subseteq\NN$.  $\SX$, with the subspace (strict) topology,  is homeomorphic to the set of branches through a finitely-branching tree, with the Cantor-space topology.  We will work with just $\SX$ instead of $M[\NN]=\prod_n M_{k(n)}$. In fact, if $a\in M[\NN]$ with $\norm{a}\leq 1$ there is $x\in\SX$ with $\pi(a)=\pi(x)$.

  We view elements of $\SX[A]$ as functions, with domain $A$.  Hence if $A\subseteq B$ and $x\in\SX[A]$,  $y\in\SX[B]$, then $x\subseteq y$ means that $y$ extends $x$, or in other words that $y_n = x_n$ whenever $x_n\neq 0$.  If $x$ and $y$ have a common extension we will denote the minimal one by $x\cup y$. If $A\subseteq B$ and $x\in \SX[B]$, we will denote by $x\rs A$ the element of $\SX[A]$ which is extended  by $x$ (that is, $x P_A$). We make use of Theorem \ref{thm:STPrel.talagrand}, both applied to $\SX$ and to $\mathcal P(\NN)$.

We can now proceed with the proofs required.
 \begin{lemma}
    Assume $\OCA_\infty$, let $\e > 0$ and $\SA\subseteq\mathcal P(\NN)$ be a treelike, a.d. family.  Then for all but  countably-many $A\in\SA$, there is a $\sigma$-$\e$-lift of $\iso$ on $M[A]$ consisting of $\Cmeas$-measurable functions.
  \end{lemma}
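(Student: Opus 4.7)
The plan is to apply $\OCA_\infty$ to a suitable open coloring of $[\SA]^2$. Via the treelike bijection $f\colon\NN\to 2^{<\omega}$, each $A\in\SA$ corresponds to a branch $b_A\in 2^\NN$, so $\SA$ embeds as an uncountable subset of the Polish space $2^\NN$. I would define a decreasing sequence of open sets $K_0^n\subseteq [\SA]^2$ (inheriting its topology from the product $2^\NN\times 2^\NN$) by putting $\{A,B\}\in K_0^n$ when, at some level beyond $n$ at which the branches $b_A$ and $b_B$ have already split, there is a finitary obstruction witnessing that no candidate $\Cmeas$-measurable function drawn from a fixed countable family of partial lifts can simultaneously $\e$-lift $\iso$ on both $M[A]$ and $M[B]$. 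Openness in the tree topology follows from the finitary nature of the witnesses, and the nesting $K_0^{n+1}\subseteq K_0^n$ is built in by shifting the splitting level.

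If the second alternative of $\OCA_\infty$ produced an uncountable $Z\subseteq 2^\NN$ and a continuous injection $g\colon Z\to\SA$ with $\{g(x),g(y)\}\in K_0^{\Delta(x,y)}$ for all $x\neq y\in Z$, I would derive a contradiction as follows. Using AC, fix any lift $\alpha\colon M[\NN]\to\mathcal M(\cstar{A})$ of $\iso$. By the almost-disjointness of $\SA$ and the orthogonality relation \eqref{eqn:restrictions}, the projections $\{\iso(\pi(P_A))\}_{A\in g[Z]}$ would be forced to witness uncountably many pairwise incompatible "approximate lift types", while the restriction $\|\iso\|=1$ together with the separability of the targets of any single candidate $\Cmeas$-measurable function on the treelike family's common stem limits the number of compatible types to countably many. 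A $\Delta$-system argument (Lemma~\ref{lem:DeltaSystem}) refines this to an actual contradiction by selecting an uncountable subfamily on which the finitary obstructions align.

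Consequently the first $\OCA_\infty$ alternative applies, yielding $\SA\setminus S=\bigcup_n\SA_n$ with $S$ countable and $[\SA_n]^2\cap K_0^n=\emptyset$. Fix $A\in\SA\setminus S$, say $A\in\SA_m$. For each $n\geq m$, the absence of level-$n$ obstructions across $\SA_n$ makes the relation
\[
R_n^A=\{(x,y)\in \SX[A]\times\mathcal M(\cstar{A})_{\leq 1}\colon \|\pi(y)-\iso(\pi(x))\|\leq \e\}
\]
analytic on the "level-$n$ stratum" of $\SX[A]$ (the blue-homogeneity supplies the analytic definition through the countable pool of compatible partial lifts). Applying the Jankov-von Neumann uniformization (Theorem~\ref{thm:STPrel.JVN}) gives a $\Cmeas$-measurable $F_n\colon M[\NN]\to\mathcal M(\cstar{A})$ that $\e$-lifts $\iso$ on this stratum, and hence on a cofinite portion of $M[A]$. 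Since every $x\in M[A]_{\leq 1}$ is approximated, up to elements of $\cstar{A}$, by elements lying in some level-$n$ stratum, the sequence $(F_n)_{n\geq m}$ forms a $\sigma$-$\e$-lift of $\iso$ on $M[A]$. As only the countably many $A\in S$ are excluded, this establishes the lemma.

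The main obstacle will be defining $K_0^n$ so that it is simultaneously open in the tree topology, restrictive enough that the second alternative genuinely contradicts the hypotheses on $\iso$, and permissive enough that blue-homogeneous pieces deliver analytic relations amenable to Jankov-von Neumann. This balancing act is the technical heart of $\OCA_\infty$-lifting arguments, and in this $\Cstar$-setting it is complicated by the need to manage strict-topology convergence, the approximate identity of projections in $\cstar{A}$, and norm estimates in $\mathcal M(\cstar{A})/\cstar{A}$ simultaneously.
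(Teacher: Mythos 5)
Your high-level template (open colourings $K_0^n$, rule out the uncountable-injection alternative of $\OCA_\infty$, then combine the $\sigma$-homogeneous decomposition with Jankov--von Neumann uniformization) is the right one, but the heart of the argument is missing and the part you do write down is circular. You define $\{A,B\}\in K_0^n$ in terms of ``a fixed countable family of partial lifts'' failing on both $M[A]$ and $M[B]$ --- but no such countable family is available at the outset; producing one is essentially the content of the lemma. The paper avoids this by fixing a single arbitrary (choice-function) lift $F$ of $\iso$ and colouring pairs of \emph{pointed} sets: the colouring lives on the set $\SR$ of pairs $(A,x)$ with $A$ an infinite subset of some member of $\SA$ and $x\in\SX[A]_{\le 1}$, and $\{(A,x),(B,y)\}\in K_0^m$ when the branches $\tau(A),\tau(B)$ differ, $x\rs(A\cap B)=y\rs(A\cap B)$, and $F(x)F(P_B)$ fails to agree with $F(P_A)F(y)$ up to $\e$ modulo $e_m$ (or the symmetric failure). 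Carrying the elements $x$ in the colour is essential twice over. First, in refuting the injection alternative one glues $z=\bigcup x$ over the uncountable family (legitimate only because of the compatibility clause), pigeonholes on the $m$ for which $(1-e_m)F(z)F(P_A)$ is $\e/4$-close to $(1-e_m)F(x)$ for all members, and then picks two points with $\Delta(a,b)\ge m$ to contradict membership in $K_0^{\Delta(a,b)}$; your ``incompatible approximate lift types'' count plus $\Delta$-system sketch has no analogue of this gluing and does not visibly close. Second, the $\Cmeas$-measurable functions are ultimately obtained by uniformizing Borel relations $\Lambda_m^A$ defined by approximating $(x,F(x))$ with members of a \emph{countable dense subset} $\SD_m$ of the homogeneous piece $\SH_m$; that countable approximation is what makes the relation Borel. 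Your $R_n^A=\{(x,y)\colon \norm{\pi(y)-\iso(\pi(x))}\le\e\}$ is not analytic for an arbitrary $\iso$, and ``blue-homogeneity supplies the analytic definition'' is an assertion, not an argument.

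Two smaller but genuine errors. The first alternative of $\OCA_\infty$ decomposes \emph{all} of the space into $K_1^n$-homogeneous pieces; there is no countable exceptional set $S$ produced by the partition itself. The countably many excluded $A\in\SA$ arise because one must choose $A$ whose branch $\tau(A)$ avoids the branches of the countably many sets occurring in the dense families $\SD_m$ --- this is where treelikeness is actually used, and your proposal never uses it. Finally, even after choosing such an $A=T$, one still needs a combinatorial partition $T=T_0\cup T_1$ (interleaving intervals $[k_i,k_{i+1})$ against the finite approximating sets $\SF_k$) to guarantee $(x,F(x))\in\Lambda_m^{T_i}$ and hence that the uniformizations really are $\e$-lifts; this step has no counterpart in your sketch.
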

  \begin{proof}
  Fix  $\e>0$ and $\SA$ as required and an arbitrary lift $F$ of $\iso$ such that $\norm{F(x)} \le \norm{x}$ for all $x\in M[\NN]$.

 Fix an increasing approximate identity $\{e_n\}_{n\in\NN}$ for $\cstar{A}$ (see \S\ref{ss:CstarPrel.approxid}).  To simplify the notation, if $a,b\in\mathcal M(\cstar{A})$, $m\in\NN$ and $\delta > 0$ we write $a \sim_{m,\delta} b$ for $\norm{(1 - e_m)(a - b)(1-e_m)} \leq \delta$, and $a \sim_\delta b$ for $\norm{a - b} \leq \delta$.

Fix a bijection $f \colon \NN\to 2^{<\omega}$ witnessing that $\SA$ is treelike, and  for each $A\subseteq\NN$, let $\tau(A) = \bigcup f[A]$, the branch containing the image of $A$.  Note that for any $B\in\SA$ and any infinite subset $A$ of $B$, $\tau(B) =\tau(A)\in 2^\NN$.  

Let $\SR$ be the  set of all pairs $(A,x)$ such that for some $B\in\SA$, $A$ is an infinite subset of $B$, and $x\in\SX[A]_{\leq 1}$. We define colorings  $[\SR]^2 = K_0^m\cup K_1^m$ ($m\in\NN$) by placing $\{(A,x),(B,y)\}\in K_0^m$ if and only if

\begin{enumerate}[label=($K$-\arabic*)]
\item\label{K0:branches}  $\tau(A)\neq \tau(B)$,
 \item\label{K0:equal}  $x\rs (A\cap B) = y\rs (A\cap B)$ and
      \item\label{K0:unequal} $F(x) F(P_B) \not\sim_{m,\e} F(P_A) F(y)$, or $F(P_B) F(x) \not\sim_{m,\e} F(y) F(P_A)$.
    \end{enumerate}
 
 Note that $K_0^m\supseteq K_0^{m+1}$ for every $m$. We give $\SR$ the separable metric topology obtained by identifying $(A,x)\in\SR$ with the tuple
    \[
      (A,\tau(A),x,F(x),F(P_A))\in \mathcal P(\NN)\times\mathcal P(\NN)\times \SX\times \mathcal M(\cstar{A})_{\leq 1}\times\mathcal M(\cstar{A})_{\leq 1},
    \]
where $\mathcal P(\NN)$ is endowed with the Cantor set topology from $2^\NN$ and $\SX$ and $\mathcal M(\cstar{A})$ with the strict topology.
    \begin{claim}

      For every $m\in\NN$, $K_0^m$ is open.

    \end{claim}

    \begin{proof}

      Suppose $\{(A,x),(B,y)\}\in K_0^m$.  By~\ref{K0:branches}, there is some $n$ such that $\tau(A)\rs n \neq \tau(B)\rs     n$.  Let $s = f^{-1}(2^n)$; then $A\cap B \subseteq s$.  By~\ref{K0:unequal} we may also suppose that for some $\delta > 0$ and $p\in\NN$, either
      \[
        \norm{e_{p}(1 - e_m)(F(x) F(P_B) - F(P_A) F(y))(1-e_m)} > \e + \delta
      \]
      or
      \[
        \norm{e_{p}(1 - e_m)(F(P_B) F(x) - F(y) F(P_A))(1-e_m)} > \e + \delta
      \]
 Now, let $(\bar{A},\bar{x})$ and $(\bar{B},\bar{y})$ be  elements of $\SR$ such that   
 \[
 A\cap s = \bar{A}\cap s, B\cap s = \bar{B}\cap s, x\rs s = \bar{x}\rs s\text{ and }y\rs s = \bar{y}\rs s,
 \] and
      \begin{align*}
         \norm{e_{p}(1 - e_m)F(P){\bar B})(F(x) - F(\bar{x}))(1-e_m)} &+ \norm{e_{p}(1 - e_m)F(\bar y)(F(P_A) - F(P_{\bar{A}}))(1-e_m)}+ \\
     +     \norm{e_{p}(1 - e_m)F(P_{\bar{A}})(F(y)-F(\bar{y}))(1-e_m)} &+ \norm{e_{p}(1 - e_m)F(\bar{x})(F(P_B) - F(P_{\bar{B}}))(1-e_m)} < \delta
      \end{align*}
The set of such pairs $\{(\bar{A},\bar{x}), (\bar{B},\bar{y})\}$ is an open neighborhood of  $\{(A,x), (B,y)\}$ in $[\SR]^2$, and for each such pair we have  that $\{(\bar{A},\bar{x}), (\bar{B},\bar{y})\}\in K_0^m$.

\end{proof}

Recall that, for $a,b\in 2^{\NN}$, $\Delta(a,b)=\min\{n\mid a(n)\neq b(n)\}$.

\begin{claim}

The first alternative of $\OCA_\infty$ fails for the colors $K_0^m$ ($m\in\NN$), that is, there is no uncountable $Z\subseteq 2^\NN$, and an injection $\zeta \colon Z\to \SR$ such that, for all distinct $a,b\in Z$, $\{\zeta(a),\zeta(b)\}\in K_0^{\Delta(a,b)}$.

\end{claim}

\begin{proof}

Suppose otherwise and let $Z$ and $\zeta\colon Z\to\SR$ as above  Let $\SH = \zeta[Z]$ and put
      \[
        z = \bigcup\set{x}{(A,x)\in\SH}
      \]
      By~\ref{K0:equal}, $z\in\SX$ and $z\rs A = x$ for all $(A,x)\in\SH$, therefore $z$ is well defined.  For all $(A,x)\in\SH$, $\pi(F(z)  F(P_A)) = \pi(F(P_A) F(z)) = \pi(F(x))$, hence there is $m\in\NN$ such that
      \[
       (1-e_m) F(z)F(P_A) \sim_{\e/4} (1-e_m)F(x) \sim_{\e/4}(1-e_m) F(P_A)F(z),
      \] 
     and
     \[
        F(z)F(P_A)(1-e_m) \sim_{\e/4} F(x)(1-e_m) \sim_{\e/4} F(P_A)F(z)(1-e_m).
    \]
     
        By the Pigeonhole principle, refining $Z$ to an uncountable subset (which we will still call $Z$), we may assume that      there is some fixed $m\in\NN$ such that the above holds for all $(A,x)\in\SH = \zeta[Z]$.  Since $Z$ is uncountable, we may find $a,b\in Z$ such that $\Delta(a,b)\ge m$.  Let $(A,x) = \zeta(a)$ and $(B,y) = \zeta(b)$.  Then, we have
      \[
        F(x)F(P_B) \sim_{m,\e/4} F(P_A)F(z)F(P_B) \sim_{m,\e/4} F(P_A)F(y)
      \]
      which implies $F(x)F(P_B) \sim_{m,\e/2} F(P_A)F(y)$, and similarly,
\[
        (1 - e_m) F(P_B)F(x) (1-e_m)\sim_{\e/4} (1-e_m)F(P_B)  F(z) F(P_A)(1 - e_m)  \sim_{\e/4}  (1 - e_m)F(y) F(P_A)(1-e_m) 
\]
   a contradiction to $\{(A,x),(B,y)\}\in K_0^m$.

    \end{proof}

  The second alternative of $\OCA_\infty$ must hold.  Let $(\SH_m)_{m\in\NN}$ be  some sequence of sets with 
\[
\SR = \bigcup\SH_m\text{ and }[\SH_m]^2 \subseteq K_1^m.
\]
  For each    $m\in\NN$, let $\SD_m$ be a countable subset of $\SH_m$ which is dense in $\SH_m$ with respect to the topology on   $\SR$ described above.  Fix $T\in\SA$ such that $T\neq \tau(D)$ for all $D\in\set{A}{\exists x,m ((A,x)\in\SD_m)}$. We will show that there is a $\sigma$-$\e$-lift of $\iso$ on $M[T]$ consisting of $\Cmeas$-measurable functions.

    For each $A\subseteq\NN$ and $m\in\NN$, define $\Lambda_m^A$ to be the set of pairs $(x,z)$ such that $x\in\SX[A]$,   $z\in\mathcal M(\cstar{A})_{\le 1}$, and for all $n\in\NN$ and $\delta > 0$, there is some $(B,y)\in\SD_m$ such that
 
   \begin{enumerate}
      \item\label{lem:notreelike.cond1}  $x\rs (A\cap B) = y\rs (A\cap B)$,
      \item\label{lem:notreelike.cond2}  $A\cap n = B\cap n$, and
      \item\label{lem:notreelike.cond3}  $e_n F(P_A) \sim_{\delta} e_n F(P_B)$ , $e_n F(x)F(P_A) \sim_{\delta} e_n F(x)F(P_B)$ and $e_n z \sim_{\delta} e_n F(y)$.
    \end{enumerate}
Since $\SD_m$ is countable, each $\Lambda_m^A$ is Borel.
  
  \begin{claim}\label{lem:notreelike.partition}

      There is a partition $T = T_0\cup T_1$ such that for each $i < 2$, $m\in\NN$, and $x\in\SX[T_i]$, if      $(T_i,x)\in\SH_m$, then $(x,F(x))\in \Lambda_m^{T_i}$.  Moreover, if $(x,z)\in\Lambda_m^{T_i}$ and      $(T_i,x)\in\SH_m$, then $F(x)F(P_{T_i}) \sim_{m,\e} z F(P_{T_i})$.

    \end{claim}

    \begin{proof}

We identify $n$ with the set $\{0,\ldots,n-1\}$. Fix $\{b_i\}_{i\in\NN}$ an enumeration of a dense subset of $\cstar{A}$ and $k\in\NN$. If $s\subseteq k$,      $t\in\SX[k]$ and $m,n,p,q,r < k$, then for any $(A,x)\in\SH_m$ with 
\[
A\subseteq T,\,A\cap k = s, \,x\rs k = t,\, e_n F(P_A) \sim_{1/k} b_p, \, e_nF(x)F(P_A)\sim_{1/k} b_r\text{ and }e_n F(x) \sim_{1/k} b_q,
\]
 by density of $\SD_m$ in $\SH_m$, we may find some  $(B,y)\in\SD_m$ such that $B\cap k = s$, $y\rs k = t$, $e_n F(P_B) \sim_{1/k} b_p$, $e_nF(x)F(P_B)\sim_{1/k} b_r$ and $e_n F(y) \sim_{1/k}      b_q$. This is because each of the sets
 \[
\{a\in\mathcal M(\cstar{A})_{\leq 1}\mid e_na\sim_{1/k}b_p\},\, \{a\in\mathcal M(\cstar{A})_{\leq 1}\mid e_na\sim_{1/k}b_q\}\text{ and } \{a\in\mathcal M(\cstar{A})_{\leq 1}\mid e_nF(x)a\sim_{1/k}b_r\}
 \] is open in the strict topology, and by density of $\SD_m$. 
 
  As the set of all such tuples $(s,t,m,p,q,r)$ is finite, $(B,y)$ can be chosen from some fixed  finite set $\SF_k \subseteq \bigcup\set{\SD_m}{m < k}$.  Note that, for any $(B,y)\in\SF_k$, we have that $T\cap B$ is finite. As $\SF_k$ is finite, there is $k^+ > k$ such that $T\cap B \subseteq k^+$      for all $(B,y)\in\SF_k$.
      
 We recursively construct a sequence $0 = k_0 < k_1 < \cdots$, by setting $k_{i+1} = k_i^+$ for each $i\in\NN$.      Let $T_0 = T\cap \bigcup_i[k_{2i+1},k_{2i+2})$ and $T_1 = T\sm T_0$.  Suppose $x\in\SX[T_0]$, and      $m\in\NN$ is such that $(T_0,x)\in\SH_m$.  Let $n\in\NN$ and $\delta > 0$ be given and choose $i\in\NN$ large enough      that $1 / k_{2i} < \delta / 2$, $m,n < k_{2i}$, and for some $p,q,r < k_{2i}$, we have $e_n F(P_{T_0}) \sim_{1/      k_{2i}} b_p$, $e_nF(x)F(P_{T_0})\sim_{1/k} b_r$ and $e_n F(x) \sim_{1 / k_{2i}} b_q$. 

By our choice of $\SF_{k_{2i}}$ we may find      $(B,y)\in\SF_{k_{2i}}$ such that 
\[
T_0\cap k_{2i} = B\cap k_{2i},\,\,x\rs k_{2i} = y\rs k_{2i},\,\, e_n F(P_B) \sim_{1      / k_{2i}} b_p,\, e_nF(x)F(P_B)\sim_{1/k_{2i}} b_r\text{ and }e_n F(y) \sim_{1 / k_{2i}} b_q.
\]
  All that remains to check to have $(x,F(x))\in\Lambda_m^{T_0}$      is that $x\rs (T_0\cap B) = y\rs (T_0\cap B)$.  To see this, note that by definition of $k_{2i+1} = k_{2i}^+$, we      have $T_0\cap B \subseteq k_{2i+1}$; but since $T_0\cap [k_{2i},k_{2i+1}) = \emptyset$, it follows that $T_0\cap      B\subseteq k_{2i}$, and since $x\rs k_{2i} = y\rs k_{2i}$, this implies that $x\rs (T_0\cap B) = y\rs (T_0\cap      B)$.  The same argument shows that if $(T_1,x)\in\SH_m$, then $(x,F(x))\in\Lambda_m^{T_1}$.
      
To prove the second assertion, suppose that $(x,z)\in\Lambda_m^{T_i}$, $(T_i,x)\in\SH_m$ and $\delta > 0$. Choose      $n$ large enough so that $\norm{[e_n, 1 - e_m]} < \delta.$ Since $(x,z)\in\Lambda_m^{T_i}$, we may choose $(B,y)\in \SD_m$ satisfying conditions \ref{lem:notreelike.cond1}--\ref{lem:notreelike.cond3} preceding this claim with $A =      T_i$.  Since $(T_i,x),(B,y)\in\SH_m$, $\tau(T_i) = T \neq \tau(B)$ and $x\rs (T_i\cap B) = y\rs (T_i\cap B)$.  By the $K_1^m$-homogeneity of $\SH_m$ we have that $F(P_{T_i}) F(y)\sim_{m,\e} F(x) F(P_B)$ and $F(y) F(P_{T_i})      \sim_{m,\e} F(P_B) F(x)$.

      Now,
      \begin{align*}
        e_n (1 - e_m) F(x) F(P_{T_i})(1 - e_m) & \sim_{4\delta} e_n (1 - e_m) F(x) F(P_B)(1 - e_m) \\
          & \sim_\e e_n (1 - e_m) F(y) F(P_{T_i})(1 - e_m) \\
          & \sim_\delta (1 - e_m) e_n F(y) F(P_{T_i})(1 - e_m) \\
          & \sim_\delta (1 - e_m) e_n z F(P_{T_i})(1 - e_m) \\
          & \sim_\delta e_n (1 - e_m) z F(P_{T_i})(1 - e_m)
      \end{align*}
      and hence $e_n (1 - e_m) F(x) F(P_{T_i})(1 - e_m) \sim_{7\delta + \e} e_n (1 - e_m) z F(P_{T_i})(1 - e_m)$.  Since this holds for      all sufficiently large $n$ and all $\delta > 0$, we have that $F(x) F(P_{T_i}) \sim_{m,\e} z F(P_{T_i})$ as desired.

    \end{proof}

    Let $F_m^i$ be a $\Cmeas$-measurable uniformization of $\Lambda_m^{T_i}$ given by Theorem \ref{thm:STPrel.JVN} and $G^i_m(x)=F_m^i(x)P(T_i)$. Since $F_m^i$ is $\Cmeas$-measurable and $P(T_i)$ is fixed, $G_m^i$ is $\Cmeas$-measurable.  By the above claim, and the fact that $\SR =    \bigcup\SH_m$, it follows that for every $x\in\SX[T_i]$, there is some $m$ such that $G_m^i(x)$ is    defined, and moreover 
\[
\norm{\pi(G_m^i(x)) - \iso(\pi(x))} \le \e.
\]
 This shows that the desired conclusion holds for    each $T_i$.  Repeating the argument from Lemma~\ref{lemma:ideals}, we see that it holds for $T = T_0\cup    T_1$ as well.

  \end{proof}
  \begin{lemma}

    Let $\e > 0$ and $A\subseteq\NN$. Suppose that there is a $\sigma$-$\e$-lift of $\iso$ on $M[A]$, consisting of $\Cmeas$-measurable functions and that $A = \bigcup_n A_n$ is a partition of $A$ into infinite sets.  Then there is some $n$ such that $A_n\in \SI_C^{4\e}$.
  \end{lemma}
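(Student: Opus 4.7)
The plan is a direct construction that actually produces a $\Cmeas$-measurable lift of $\iso$ on $M[A_n]$ for every $n$ in the partition (not just some), with quality close to $\e$; the looser constant $4\e$ in the statement accommodates slack in the measurability argument described below. The underlying observation is just that $M[A_n]\subseteq M[A]$, so for any $x\in M[A_n]_{\le 1}$ the $\sigma$-$\e$-lift hypothesis already supplies an index $m$ with $F_m$ an $\e$-lift at $x$. The task therefore reduces to (i) selecting $m=m(x)$ in a $\Cmeas$-measurable way, and (ii) projecting the resulting element into the ``$A_n$-corner'' of the corona by conjugating with a lift of $\iso(\pi(P_{A_n}))$.

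For the selector, I would show that the sets $B_m = \{x\in M[A]_{\le 1} : \|\pi(F_m(x))-\iso(\pi(x))\|\le\e\}$ lie in the $\sigma$-algebra generated by analytic sets. The key ingredients are: (a) the quotient norm $c\mapsto\|\pi(c)\|$ is Borel on norm-bounded subsets of $\mathcal M(\cstar{A})$ with the strict topology, as a countable infimum of the strictly lower-semicontinuous functions $c\mapsto\|c-b\|$ for $b$ in a countable dense subset of a bounded ball in $\cstar{A}$; and (b) a Jankov--von Neumann uniformization (Theorem~\ref{thm:STPrel.JVN}) applied to the analytic relation $\bigcup_k\{(x,y) : y=F_k(x)\}$ intersected with an approximate ``is-a-lift'' condition expressed using the $F_k$'s themselves, which yields a $\Cmeas$-measurable reference lift $\phi_0$ of $\iso\circ\pi$ with quantitative error at most $\e$ on a comeager set. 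Then $B_m^{\prime} = \{x : \|\pi(F_m(x)-\phi_0(x))\|\le 2\e\}$ is manifestly $\Cmeas$-measurable and satisfies $B_m\subseteq B_m^{\prime}\subseteq\{x : F_m$ is $3\e$-lift at $x\}$, with $\bigcup_m B_m^{\prime} = M[A]_{\le 1}$; set $m(x)=\min\{m : x\in B_m^{\prime}\}$.

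Once the selector is in place, fix a lift $Q\in\mathcal M(\cstar{A})$ of the projection $\iso(\pi(P_{A_n}))$, which may be taken to be a projection since $\cstar{A}$ has an approximate identity of projections. Define $G\colon M[A_n]\to\mathcal M(\cstar{A})$ by $G(x) = Q\,F_{m(x)}(x)\,Q$, extended trivially beyond the unit ball; this is $\Cmeas$-measurable because $m$ is, each $F_m$ is, and pre- and post-multiplication by a fixed element of $\mathcal M(\cstar{A})$ are strictly continuous. For the lift property, I would use the sandwich identity
\[\iso(\pi(x)) = \iso(\pi(P_{A_n}xP_{A_n})) = \iso(\pi(P_{A_n}))\,\iso(\pi(x))\,\iso(\pi(P_{A_n})),\]
valid because $P_{A_n}xP_{A_n}=x$ for $x$ supported on $A_n$, the relation $(\land)$ gives right multiplication by $\iso(\pi(P_{A_n}))$, and $^*$-preservation of $\iso$ applied to $(\land)$ on $x^*$ gives left multiplication. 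Therefore $\|\pi(G(x))-\iso(\pi(x))\|\le \|\pi(Q)\|^2\cdot 3\e = 3\e < 4\e$, establishing $A_n\in\SI_C^{4\e}$.

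The principal obstacle is the measurability step for the $B_m$'s: since $\iso$ is given abstractly on the corona and carries no intrinsic measurability structure, producing the $\Cmeas$-measurable reference lift $\phi_0$ requires a delicate bootstrap that uses only the hypothesis that each $F_n$ is $\Cmeas$-measurable. The constant $4\e$ in the statement is precisely the budget absorbing the accumulated triangle-inequality losses in that bootstrap; the sandwich identity and the final verification are routine once the selector is available.
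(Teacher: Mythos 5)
Your proposal has a genuine gap at the measurability step, and the gap is not incidental: if the argument worked as outlined, it would prove far more than the lemma asserts (namely that \emph{every} $A_n$, indeed $A$ itself, lies in $\SI_C^{O(\e)}$), and it would render the partition hypothesis superfluous. The lemma's conclusion ``there is \emph{some} $n$'' reflects a real obstruction. The crux is that the sets $B_m = \set{x\in M[A]_{\le 1}}{\norm{\pi(F_m(x))-\iso(\pi(x))}\le\e}$ are \emph{not} $\Cmeas$-measurable: they refer to $\iso(\pi(x))$, and $\iso$ is an abstract contraction on the corona with no a priori descriptive-set-theoretic structure. You acknowledge this and propose to bootstrap a ``$\Cmeas$-measurable reference lift $\phi_0$'' out of the $F_k$'s alone via Jankov--von Neumann, but this is circular: to know which pair $(x, F_k(x))$ in the analytic union of graphs is the ``right'' one at $x$ requires comparing against $\iso(\pi(x))$, and there is no purely $F_k$-internal condition (majority vote, closeness of many $F_k$'s to each other, etc.) that recovers this, since at a given $x$ only a single $F_m$ may be close to $\iso(\pi(x))$ and the other $F_k(x)$ can be arbitrary. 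A toy version of the obstruction: if $g_1\equiv 0$, $g_2\equiv 1$ on $2^\NN$ and $h\colon 2^\NN\to\{0,1\}$ is any non-Borel function, then $\{g_1,g_2\}$ is a $\sigma$-$0$-lift of $h$, yet no Borel function approximates $h$ uniformly within $1/2$. A $\sigma$-lift simply does not produce a single measurable lift without additional structure.

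The paper's proof exploits the partition for exactly this reason, and runs by contradiction: assume $A_n\notin\SI_C^{4\e}$ for \emph{all} $n$. After using Theorem~\ref{thm:STPrel.talagrand} to replace each $F_n$ by a function $F_n'$ continuous on all of $\SX$ at the cost of doubling $\e$, it builds, block by block over the $A_n$, elements $x_n\in\SX[A_n]$ and witnesses so that the combined $x=\bigcup x_n\in\SX[A]$ defeats every $F_n$ simultaneously, contradicting the $\sigma$-lift hypothesis. The engine is a Baire-category claim: if $A_n\notin\SI_C^{4\e}$, there must exist $x\in\SX[A_n\sm C_{n-1}]$ for which the set $\SE_n(x,F(x))$ of ``good'' continuations $z$ is \emph{not} comeager --- for otherwise a Jankov--von Neumann uniformization of $\set{(x,y)}{\SE_n(x,y)\text{ comeager}}$ would yield a $\Cmeas$-measurable $4\e$-lift on $M[A_n\sm C_{n-1}]$, violating the contradiction hypothesis. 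The constant $4\e$ absorbs the triangle-inequality losses in comparing that uniformization with an arbitrary fixed lift $F$ of $\iso$, not a measurability bootstrap. Your ``sandwich'' step using the projection $Q$ and equation $(\land)$ is fine and in the same spirit as the cut by $\pi(F(P_{A_n}))$ in the paper, but it cannot rescue the argument once the selector step fails.
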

  \begin{proof}

We work by contradiction and suppose that $A_n\notin \SI_C^{4\e}$ for all $n$.  Fix an arbitrary lift $F$ of $\iso$ on $M[A]$ and let $\{F_n\}$ be a $\sigma$-$\epsilon$-lift of $\iso$ on $M[A]$ as in the hypothesis.    Since each $F_n$ is Baire-measurable, it follows that there is a comeager subset $\SG$ of $\SX$ (recall that $\SX=\prod X_n$ was defined at the beginning of \S\ref{s:FA.Lift2}) on which every $F_n$   is continuous. Thanks to Theorem~\ref{thm:STPrel.talagrand}, we can find a partition of interval $E_i$ and $t_i$ such that $y\in\SG$ whenever $\exists^\infty i (y\restriction E_i=t_i)$. For $i<2$, put  
\[
E^i = \bigcup_{k}E_{2k+i}\,\,\,\text{and}\,\,\,t^i = \bigcup_kt_{2k+i}.
\]
   Define
    \[
      F_n'(x) = F_n(x\rs E^0 + t^1) - F_n(t^1) + F_n(x\rs E^1 + t^0) - F_n(t^0).
    \]
 Each $F_n'$ is continuous on all of $\SX$, and the functions $F_n'$ form a $\sigma$-$2\e$-lift of $\iso$ on $M[A]$.  We will write $F_n = F_n'$ in the following.
    
    For each $m\in\NN$, let $B_m = \bigcup_{n>m}A_n$.  We will construct sequences
    \begin{itemize}
      \item  $x_n \in\SX[A_n]$,
      \item  $C_n\subseteq B_n$, and
      \item  $z_n \in\SX[C_n]$,
    \end{itemize}
    such that for all $n < m$,

    \begin{enumerate}
      \item  $A_m\sm C_n\not\in\SI^{4\e}_C$,
      \item  $C_n\cap B_m \subseteq C_m$,
      \item  $z_n\rs (C_n\cap C_m) \subseteq z_m$,
      \item  $z_{n-1}\rs (C_{n-1}\cap A_n) \subseteq x_n$, and
      \item  for all $y\in\SX[B_n]$, if $y\supseteq z_n$, then
      \[
        \norm{\pi(F_n(x_0 \cup \cdots \cup x_n \cup y) - F(x_n))\pi(F(P_{A_n}))} > 2\e
      \]
    \end{enumerate}
    The construction goes by induction on $n$.  Suppose we have constructed $x_k$, $C_k$ and $z_k$ for $k<n$.  For each $x\in\SX[A_n]$ and $y\in\ball{\mathcal M(\cstar{A})}$, define $\SE_n(x,y)$ to be the set of all $z\in\SX[B_n\sm C_{n-1}]$ such that 
    \[
      \norm{\pi(F_n(x_0 \cup \cdots \cup x_{n-1} \cup x \cup z_{n-1}\cup z) - y)\pi(F(P_{A_n}))} \le 2\e
    \]
  Since $F_n$ is continuous, $\SE_n(x,y)$ is Borel, for every $x$ and $y$.

    \begin{claim}

      There is some $x\in\SX[A_n\sm C_{n-1}]$ such that $\SE_n(x,F(x))$ is not comeager.

    \end{claim}

    \begin{proof}

      Suppose otherwise.  Let 
\[
\SR=\set{(x,y)\in\SX[A_n\sm C_{n-1}]\times\ball{\mathcal M(\cstar{A})}}{\SE_n(x,y)\text{  is comeager}}.
\]
$\SR$ is analytic and, applying Theorem ~\ref{thm:STPrel.JVN}, it has a $\Cmeas$-measurable uniformization $G$.  For all $x\in\SX[A_n]$      extending $z_{n-1}\rs (C_{n-1}\cap A_n)$, then, $\SE_n(x,G(x))$ and $\SE_n(x,F(x))$ are both comeager, and hence must intersect;      so for all such $x$,
      \[
        \norm{\pi(F(x) - G(x))\pi(F(P_{A_n}))} \le 4\e
      \]
      Then the map $x\mapsto G(x)F(P_{A_n})$ is a $\Cmeas$-measurable $4\e$-lift of $\iso$ on $\SX[A_n\sm C_{n-1}]$, a contradiction.

    \end{proof}

 Choose $x\in\SX[A_n\sm C_{n-1}]$ as in the claim, and let 
\[
x_n = x\cup (z_{n-1}\rs (C_{n-1}\cap A_n)).
\]
 Since $\SE_n(x,F(x))$ is Borel, there is some finite $a\subseteq B_n\sm C_{n-1}$ and some $\sigma\in\SX[a]$    such that the set of $z\in\SE_n(x,F(x))$ extending $\sigma$ is meager.  Applying Theorem~\ref{thm:STPrel.talagrand}, we    may find a partition of $B_n\sm (a\cup C_{n-1})$ into finite sets $s_i$, and $u_i\in\SX[s_i]$, such that    for any $z\in\SX[B_n\sm C_{n-1}]$, if $z$ extends $\sigma$ and infinitely-many $u_i$, then $z\not\in\SE_n(x,F(x))$.

\begin{claim}

      There is an infinite set $L\subseteq\NN$ such that 
\[
A_m\sm (C_{n-1}\cup \bigcup\set{s_i}{i\in L})\not\in\SI^{4\e}_C
\]
for all  $m > n$.

    \end{claim}
    
\begin{proof}

      Recursively construct infinite sets $J_{n+1} \supseteq J_{n+2}\supseteq\cdots$ such that for each $m > n$,
      \[
        A_m\sm (C_{n-1}\cup \bigcup\set{s_i}{i\in J_m})\not\in\SI^{4\e}_C
      \]
  using the fact that $A_m\sm C_{n-1}\not\in\SI^{4\e}_C$ for all $m > n$.  Any infinite $L$ such  that $I\subseteq^* J_m$ for all $m > n$ satisfies the claim.

    \end{proof}
    
Let $L$ be as in the claim, and put $C_n = B_n\cap (C_{n-1} \cup \bigcup\set{s_i}{i\in L})$.  Let $z_n = (z_{n-1}\rs (C_{n-1}\cap    B_n))\cup \bigcup\set{s_i}{i\in L}$.  This completes the construction.

    Now let $x = \bigcup\set{x_n}{n\in\NN}$.  Then $x\in\SX[A]$, and hence there is some $n\in\NN$ such that
    \[
      \norm{\pi(F_n(x) - F(x))} \le 2\e
    \]
    Notice that if $y = \bigcup\set{x_m}{m > n}$, then $x = x_0\cup \cdots \cup x_n \cup y$, $y\in\SX[B_n]$, and $y$ extends $z_n$; hence
    \[
      \norm{\pi(F_n(x) - F(x_n))\pi(F(P_{A_n}))} > 2\e
    \]
    But we have
    \[
      \pi(F(x_n))\pi(F(P_{A_n})) = \pi(F(x))\pi(F(P_{A_n}))
    \]
    and this is a contradiction.

  \end{proof}

  \begin{lemma}
    Assume $\OCA_\infty + \MA_{\aleph_1}$.  Then either
    \begin{enumerate}[label=(\Roman*)]
      \item\label{alt:I}  there is an uncountable, treelike, a.d. family $\SA\subseteq\P(\NN)$ which is disjoint from  $\SI$, or
      \item\label{alt:II}  for every $\e > 0$, there is a sequence $\{F_n\}_{n\in\NN}$ of $\Cmeas$-measurable functions such that for every  $A\in\SI^\e$, there is an $n$ such that $F_n$ is an $\e$-lift of $\iso$ on $M[A]$.
    \end{enumerate}
  \end{lemma}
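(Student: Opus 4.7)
The plan is to run an $\OCA_\infty$-style dichotomy on the separable metric space $\SR$ of pairs $(A,\alpha)$, where $A \in \SI^\e$ and $\alpha$ is a skeletal asymptotically additive $\e$-lift of $\iso$ on $M[A]$. Since $\Skel(\cstar{A})$ is separable in the uniform topology and $\mathcal{P}(\NN)$ is Polish, $\SR$ inherits a separable metric topology in which the map $(A,\alpha)\mapsto \alpha(x)$ is continuous (in the strict topology) for each fixed $x\in \SX$ of finite support. Fix also an arbitrary lift $F$ of $\iso$ and an approximate identity $\{e_n\}$ of $\cstar{A}$.

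For each $m\in\NN$, define $[\SR]^2 = K_0^m \cup K_1^m$ by putting $\{(A,\alpha),(B,\beta)\}\in K_0^m$ iff there exist $x\in \SX[A\cap B]$, $p\in\NN$, and $\delta>0$ such that
\[
\norm{e_p(1-e_m)(\alpha(x) - \beta(x))(1-e_m)} > 2\e + \delta,
\]
so that $\alpha$ and $\beta$ demonstrably fail to be approximately compatible on $M[A\cap B]$ modulo $\cstar{A}$. Since $\alpha$ and $\beta$ are skeletal, this condition depends only on finitely much of their codes, so each $K_0^m$ is open, and clearly $K_0^m \supseteq K_0^{m+1}$. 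Apply $\OCA_\infty$.

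Suppose the first alternative holds and we obtain an uncountable $Z\subseteq 2^\NN$ with continuous injection $\zeta \colon Z \to \SR$, writing $\zeta(a) = (A_a,\alpha_a)$, such that $\{\zeta(a),\zeta(b)\}\in K_0^{\Delta(a,b)}$ for all $a\neq b\in Z$. The $K_0^m$-incompatibility forces that no $A_a\cup A_b$ lies in $\SI$: a common asymptotically additive lift on $A_a\cup A_b$ would by Theorem~\ref{thm:FA.ApproxStruct}-style $\e$-agreement arguments have to reconcile $\alpha_a$ and $\alpha_b$ modulo $\cstar{A}$ on $M[A_a\cap A_b]$, contradicting $K_0^m$ for all large $m$. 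Using the pigeonhole principle on finite fingerprints of $\zeta$ (together with the standard identification of uncountable subsets of $2^\NN$ with branches through $2^{<\omega}$, as in \cite{Farah.AQ}), I will thin $Z$ to an uncountable $Z'$ so that $\{A_a : a\in Z'\}$ is genuinely almost-disjoint and treelike, witnessing alternative~\ref{alt:I}.

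Suppose instead that the second alternative holds, so $\SR = \bigcup_m \SH_m$ with $[\SH_m]^2 \subseteq K_1^m$. Fix a countable dense $\SD_m \subseteq \SH_m$ for each $m$. For each $m$, define an analytic relation
\[
\Lambda_m = \bigl\{(x,z)\in \SX\times \mathcal M(\cstar{A})_{\leq 1} : \exists\,(B,\beta)\in \SD_m\text{ approximating }(x,z)\text{ in the $K_1^m$-sense}\bigr\},
\]
formulated so that for every $A\in\SI^\e$ witnessed by some $(A,\alpha)\in\SH_m$, every $x\in\SX[A]$ pairs with $\alpha(x)$ through $\Lambda_m$; this uses the density of $\SD_m$ in $\SH_m$, exactly as in the proof of Claim~\ref{lem:notreelike.partition}. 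Theorem~\ref{thm:STPrel.JVN} yields a $\Cmeas$-measurable uniformization $F_m$ of $\Lambda_m$, and $K_1^m$-homogeneity propagates compatibility from $\SD_m$ to the full space, so $F_m$ is an $\e$-lift of $\iso$ on every $M[A]$ with $(A,\alpha)\in\SH_m$. The role of $\MA_{\aleph_1}$ enters in two places: to guarantee that the ideal $\SI$ is nonmeager/ccc-Fin in the background (so that the density arguments for $\SD_m$ can actually be used), and, combined with $\OCA_\infty$, to rule out pathological configurations of partial lifts via the standard a.d. family arguments of \cite{Farah.AQ,Velickovic.OCAA}.

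The main obstacle will be the second alternative: converting the pointwise $K_1^m$-compatibility of the countable set $\SD_m$ into a single $\Cmeas$-measurable function that works on \emph{every} $A\in\SI^\e$. This requires a careful density/absorption argument, analogous to the $\SF_k$-construction in the proof of Lemma~\ref{lemma:oca->treelike}, combined with the Jankov--von Neumann uniformization; verifying that the $\Cmeas$-measurable $F_m$ actually achieves the $\e$-lift bound on every $A\in\SI^\e$ (not just those appearing directly in $\SD_m$) is the most delicate step.
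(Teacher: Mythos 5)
There is a genuine gap in your treatment of the first alternative, which is the hardest part of this lemma. The space you color consists of pairs $(A,\alpha)$ with $A\in\SI^\e$, so every set $A_a$ appearing in the range of $\zeta$ already admits an asymptotically additive $\e$-lift; such a family can never witness alternative~(I), which demands an uncountable treelike a.d.\ family \emph{disjoint} from $\SI$. Your claim that ``no $A_a\cup A_b$ lies in $\SI$'' (even if it held) says nothing about the $A_a$ themselves, and the proposed ``thinning by pigeonhole on finite fingerprints'' cannot work: an arbitrary uncountable family of subsets of $\NN$ (e.g.\ all $A_a=\NN$ with distinct lifts $\alpha_a$) has no uncountable almost disjoint subfamily at all. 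The paper instead builds entirely \emph{new} sets $S_\xi$ from the homogeneous set: it defines a ccc poset $\PP$ whose conditions finitely approximate the $S_\xi$'s, the treelike witness $f$, and, for each $\xi$, an uncountable $K_0$-homogeneous subfamily $\SH_\xi$ together with a common witness $x^\xi$ assembled from the finite blocks of disagreement between pairs in $\zeta[Z]$. A filter meeting the relevant dense sets, obtained from $\MA_{\aleph_1}$, yields the family $\{S_\xi\}$, and a separability-plus-pigeonhole argument shows $S_\xi\notin\SI^\e$: any single asymptotically additive $\e$-lift on $M[S_\xi]$ would be $\e/2$-close beyond some $N$ to uncountably many of the $\alpha^A$ with $A\in\SH_\xi$, forcing two of them too close to each other and contradicting the recorded disagreement. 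This is where $\MA_{\aleph_1}$ is actually used in this lemma --- not, as you suggest, to make $\SI$ nonmeager.

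Two further points. First, your coloring puts $\{(A,\alpha),(B,\beta)\}\in K_0^m$ on the strength of a \emph{single} witness $x$ with disagreement in the corner $(1-e_m)\cdot(1-e_m)$; the paper's coloring instead demands $m$ pairwise disjoint finite witnesses of $\e$-disagreement located beyond $m$. That quantitative structure is essential both for the forcing argument (the $M$-many witnesses are distributed among the finitely many coordinates of a condition) and for the second alternative, where the verification that the uniformized $F_m$ is an $\e$-lift proceeds by iteratively accumulating $m$ disjoint intervals of disagreement with members of $\SD_m$ until $\{A,B\}\in K_0^m$ is forced, contradicting $K_1^m$-homogeneity. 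Second, your second-alternative sketch correctly identifies the shape of the argument (countable dense $\SD_m$, an analytic relation, Jankov--von Neumann uniformization), but it leaves exactly this delicate verification unproved, and with your weaker coloring the iterative contradiction is not available. As written, the proposal does not establish the lemma.
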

  \begin{proof}

    For each $A\in\SI$, we may find an asymptotically additive $\alpha^A$ such that $\alpha^A$ is a lift of $\iso$ on    $M[A]$.  
Recall that $X_n$ was chosen to be a finite $2^{-n}$ dense subset of the unital ball of $M_{k(n)}$. Without loss of generality, we may assume that for every $n\in\NN$ and $A\in\SI$, $\alpha_n^A$ is    determined by the set $X_n$, in the sense that, for some fixed linear order $<$ of $X_n$, for all $x\in X_n$ we have    that $\alpha_n^A(x)$ is equal to $\alpha_n^A(y)$, where $y$ is the $<$-minimal element of $X_n$ such that $\norm{x -    y} \le 2^{-n}$, for some well-order $<$ on $X_n$.  In this way we view each $\alpha^A$ as an element of the Polish space
    \[
      \Fn(\SX,\cstar{A}_{\leq 1}) = \prod_n (\cstar{A}_{\le 1})^{X_n}
    \]
    where the topology is given by the product topology, and each $\cstar{A}_{\leq 1}$ is considered in the norm topology. We will assume that $\alpha_n^A = 0$ whenever $n\not\in A$.

    Fix $\e > 0$, and define colorings $[\SI]^2 = K_0^m \cup K_1^m$ by placing $\{A,B\}\in K_0^m$ if and only if there are $E_0, \ldots, E_{m-1}$ pairwise disjoint, finite subsets of $(A\cap B)\sm m$ such that for all $i <  m$, there is some $x^i\in\SX[E_i]$ with
    \[
      \norm{\sum_{n\in E_i} \alpha_n^A(x^i_n) - \sum_{n\in E_i} \alpha_n^B(x^i_n)} > \e.
    \]
    Define a separable metric topology on $\SI$ by identifying $A\in\SI$ with the pair $(A,\alpha^A)\in   \mathcal P(\NN)\times\Fn(\SX,\cstar{A}_{\leq1})$.  In the corresponding topology on $[\SI]^2$, each color $K_0^m$ is open and  $K_0^m\supseteq K_0^{m+1}$ for each $m\in\NN$.

 Suppose that the first    alternative of $\OCA_\infty$ holds and fix an uncountable $Z\subseteq 2^\NN$ and a map $\zeta \colon Z\to\SI$    such that for all $x,y\in Z$, $\{\zeta(x),\zeta(y)\}\in K_0^{\Delta(x,y)}$.
 We will define a poset $\mathbb P$ with the intent to form a treelike, a.d. family which is disjoint from $\SI^\e$.  The    conditions $p\in\PP$ are of the form $p = (I_p, G_p, n_p, s_p, x_p, f_p)$, where

    \begin{enumerate}[label=($\PP$-\arabic*)]
      \item\label{PP-1}  $I_p\in [\omega_1]^{<\omega}$, $n_p\in\NN$, $G_p \colon I_p \to [Z]^{<\omega}$, $s_p \colon I_p\times n_p\to 2$,      $x_p \colon I_p\to \SX[n_p]$, and $f_p \colon n_p\to 2^{<\omega}$,
      \item\label{PP-3} if for all $\xi\in I_p$ and $m,n\in n_p$ we have that $s_p(\xi,m) = s_p(\xi,n) = 1$, then $f_p(m)$ and      $f_p(n)$ are comparable, ($x_p(\xi,n)$ is the $n$-th coordinate of $x_p(\xi)$) and
      \item\label{PP-2}  for all $\xi\in I_p$ and distinct $A,B\in \zeta''(G_p(\xi))$,
      \[
        \exists E\subseteq\set{n < n_p}{s_p(\xi,n) = 1}\quad \norm{\sum_{n\in E} \alpha^A_n(x_p(\xi,n)) - \sum_{n\in E} \alpha^B_n(x_p(\xi,n))} > \e
      \]
    \end{enumerate}
    We let $p\le q$ if and only if

    \begin{enumerate}[label=($\le$-\arabic*)]
      \item  $I_p\supseteq I_q$, $n_p\ge n_q$, $s_p\supseteq s_q$, $f_p \supseteq f_q$, and for all $\xi\in I_q$, $G_p(\xi)\supseteq      G_q(\xi)$.
      \item  for all $(\xi,n)\in I_q\times n_q$ we have $x_p(\xi,n) = x_q(\xi,n)$.
      \item  for all $m,n\in [n_q, n_p)$, if there are distinct $\xi,\eta\in I_q$ such that $s_p(\xi,m) = s_p(\eta,n) =      1$, then $f_p(m) \perp f_p(n)$.
    \end{enumerate}
 The    sets $S_\xi$ in the family will be approximated by the functions $s_p(\xi,\cdot)$, and the function witnessing that    the family is treelike will be approximated by $f_p$.  The sets $G_p(\xi)$ will form a large $K_0^1$-homogeneous    set, with common witness approximated by $x_p(\xi)$, which will be used to show that $S_\xi\not\in\SI^\e$.

    \begin{claim}\label{claim:provingisccc}
      $\PP$ is ccc.
    \end{claim}
    \begin{proof}
      Let $\SQ\subseteq\PP$ be uncountable.  By refining $\SQ$ to an uncountable subset, we may assume that the      following hold for $p\in\SQ$. (Recall that a $\Delta$-system is a family $\SF$ of finite sets for which there is root $r$ such that whenever $x,y\in\SF$ we have $x\cap y=r$, see \S\ref{sss:cccDeltasystem} and the $\Delta$-systema Lemma~\ref{lem:DeltaSystem}).

      \begin{enumerate}
        \item  There are $N\in\NN$ and $f \colon  N\to 2^{<\omega}$ such that $n_p = N$ and $f_p = f$ for all $p\in\SQ$,
        \item  The sets $I_p$ ($p\in\SQ$) form a $\Delta$-system with root $J$, and the tails $I_p\sm J$ have the same        size $\ell$, for all $p\in\SQ$.
        \item  For each $\xi\in J$, the sets $G_p(\xi)$ ($p\in\SQ$) form a $\Delta$-system with root $G(\xi)$, and the        tails $G_p(\xi)\sm G(\xi)$ all have the same size $m(\xi)$.
        \item  There are functions $t \colon J\times N\to 2$ and $y \colon J\to \SX[N]$ such that for all $(\xi,n)\in J\times N$ and all $p\in\SQ$,        $s_p(\xi,n) = t(\xi,n)$ and $x_p(\xi,n) = y(\xi,n)$.
        \item\label{condition-u} If $I_p\sm J = \{\xi_0^p < \cdots < \xi_{\ell-1}^p\}$, then the map $u \colon \ell\times N\to 2$ given by
        \[
          u(i,n) = s_p(\xi_i,n)
        \]
        is the same across all $p\in\SQ$.
        \item\label{condition-large-Delta}  If $\xi\in J$ and $G_p(\xi)\sm G(\xi) = \{z_0^p(\xi),\ldots,z_{m(\xi)-1}^p(\xi)\}$, then for all        $p,q\in\SQ$ and $i < m(\xi)$, we have $\Delta(z_i^p(\xi),z_i^q(\xi)) \ge M$, where $M = \max\{N,        \sum_{\xi\in J}m(\xi)\}$.
      \end{enumerate}

      Let $p,q\in\SQ$ be given; we claim that $p$ and $q$ are compatible.  We define an initial attempt at an      amalgamation $r = (I_r,G_r,n_r,s_r,x_r,f_r)$ as follows.  Let $I_r = I_p\cup I_q$, $n_r = N$, $f_r = f$, $s_r =  s_p\cup s_q$, and $x_r = x_p\cup x_q$, and for each $\xi\in I_r$, we let $G_r(\xi) = G_p(\xi)\cup G_q(\xi)$.  If  $r$ were in $\PP$, then we would have $r\le p,q$, as required; however, condition~\ref{PP-2} may not be satisfied  by $r$.

      It is easily verified that the following cases of condition~\ref{PP-2} are in fact already satisfied by $r$;
    
  \begin{itemize}
        \item  $\xi\not\in J$,
        \item  $\xi\in J$ and $A,B\in\zeta[G(\xi)]$, and
        \item  $\xi\in J$ and $A = \zeta(z_i^p(\xi))$, $B = \zeta(z_j^q(\xi))$, where $i\neq j$.
      \end{itemize}
      (The first two cases simply use the fact that $p,q\in\PP$; the last case uses, in addition,~\eqref{condition-u}      above.)  For the last remaining case, fix $\xi\in J$ and $i < m(\xi)$, and put $A = \zeta(z_i^p(\xi))$, $B =      \zeta(z_i^q(\xi))$.  By~\eqref{condition-large-Delta}, we have $\{A,B\}\in K_0^M$, hence there are      $M$-many pairwise-disjoint, finite subsets $E_i$ of $(A\cap B)\sm M$, such that 
      \[
        \exists x^i\in \SX[E_i] \quad \norm{\sum_{n\in E_i} \alpha_n^A(x^i_n) - \sum_{n\in E_i} \alpha_n^B(x^i_n)} > \e
      \]     
 Since $M\ge \sum_{\xi\in J}m(\xi)$, we may choose pairwise disjoint, finite sets $E(\xi,i)$ for each $\xi\in      J$ and $i < m(\xi)$, such that for each $\xi\in J$ and $i < m(\xi)$, $E+i = E(\xi,i)$ satisfies the above, with $A =      \zeta(z_i^p(\xi))$ and $B = \zeta(z_i^q(\xi))$.  Let $x^{\xi,i}\in \SX[E(\xi,i)]$ be the corresponding witness.      Let $\bar{N}\ge M$ be large enough to include every set $E(\xi,i)$, and define      $s \colon I_r\times \bar{N}\to 2$, $x \colon I_r\to \SX[\bar{N}]$, and $g \colon \bar{N}\to 2^{<\omega}$ so that

      \begin{itemize}
        \item  $s\supseteq s_r$, $x\supseteq x_r$, and $g\supseteq f_r$, 
        \item  for all $\xi\in J$ and $i < m(\xi)$, and $n \in E(\xi,i)$, $s(\xi,n) = 1$ and $x(\xi,n) = x^{\xi,i}_n$,
        \item  $s(\eta,k) = 0$ and $x(\eta,k) = 0$ for all other values of $(\eta,k)\in I_r\times \bar{N}$,
        \item  for all $\xi\in J$ and 
\[
n,n'\in \bigcup_{i< m(\xi)}E(\xi,i),
\] 
$g(n)$ and $g(n')$ are comparable and extend $\bigcup\set{g(k)}{k < N\land s_r(\xi,k) = 1}$,
        \item  for all distinct $\xi,\eta\in J$, if
\[
n\in \bigcup_{i<m(\xi)}E(\xi,i)\,\,\,,\,\,\,n'\in \bigcup_{i<m(\eta)}E(\eta,i),
\]
 then $g(n)\perp g(n')$.
      \end{itemize}
      It follows that $r' = (I_r, G_r, \bar{N}, s, x, g)\in \PP$ and $r'\le p,q$, as required.

    \end{proof}

    Let $Z = \set{z_\sigma}{\sigma < \omega_1}$ be an enumeration of $Z$.  For each $\xi$, $\sigma < \omega_1$, define    $p_{\xi,\sigma}\in \PP$ by $I_{p_{\xi,\sigma}} = \{\xi\}$, $F_{p_{\xi,\sigma}}(\xi) = \{z_\sigma\}$,    $n_{p_{\xi,\sigma}} = 0$, and $s_{p_{\xi,\sigma}} = x_{p_{\xi,\sigma}} = f_{p_{\xi,\sigma}} = \emptyset$.  Since    $\PP$ is ccc, for each $\xi$ there is some $q_\xi\in\PP$ such that for all $\sigma$, the set $D_{\xi,\sigma} =    \set{p\in\PP }{\exists \tau\ge \sigma(p \le p_{\xi,\sigma})}$ is dense below $q_\xi$.  Again applying the ccc, we    may find a $q\in\PP$ such that for all $\xi$, the set $E_\xi = \set{p\in\PP}{\exists \eta\ge \xi ( p\le q_\xi)}$ is    dense below $q$.  It follows that, by $\MA_{\aleph_1}$, we may find a filter $G\subseteq\PP$ such that $G$ meets    uncountably-many $E_\xi$, and for each such $\xi$, $G$ meets uncountably-many $D_{\xi,\sigma}$.  Moreover we may    assume that $G$ meets all of the sets
    \[
      C_n = \set{p\in\PP}{n < n_p\land \forall \xi\in I_p\; \exists i\in [n,n_p)\;\; (s_p(\xi,i) = 1)}.
    \]
    Define $I = \bigcup_{p\in G}I_p$ and, for each $\xi\in I$, define
    \begin{gather*}
      S_\xi = \set{n\in\NN}{\exists p\in G\; (n < n_p\land s(\xi,n) = 1)} \\
      \SH_\xi = \zeta''\left(\bigcup_{p\in G}G_p(\xi)\right)
    \end{gather*}
    It follows from the above that $I$ is uncountable and, for uncountably-many $\xi\in I$, $\SH_\xi$ is uncountable and $S_\xi$ is infinite.  We will assume without loss of generality that for every $\xi\in I$, $\SH_\xi$ is uncountable and $S_\xi$ is infinite.  If $f = \bigcup_{p\in G}f_p$, then $f$ witnesses that $S_\xi$ ($\xi\in I$) is a treelike, a.d. family.  For each $\xi$, define $x^\xi\in\SX[\NN]$ by $x^\xi_n = x_p(\xi,n)$ for any $p\in G$ with $n < n_p$.  Notice that for any $A,B\in\SH_\xi$, we have
    \begin{gather}
      \label{eqn:homogeneity} \exists E\in [A\cap B\cap S_\xi]^{<\omega} \quad \norm{\sum_{n\in E} \alpha_n^A(x^\xi_n) -
      \sum_{n\in E} \alpha_n^B(x^\xi_n)} > \e
      \tag{$\ast$}
    \end{gather}

    \begin{claim}

      For all $\xi \in I$, $S_\xi\not\in \SI^\e$.

    \end{claim}

    \begin{proof}

      Suppose otherwise, and fix an asymptotically additive $\beta$ which is an $\e$-lift of $\iso$ on $M[S_\xi]$.      For each $A\in\SH_\xi$, since $\beta$ and $\alpha^A$ both lift $\iso$ on $M[S_\xi\cap A]$, there is some $N\in\NN$     such that for any finite $E\subseteq A\cap S_\xi\sm N$,
      \[
        \norm{\sum_{n\in E} \alpha_n^A(x^\xi_n) - \sum_{n\in E} \beta_n(x^\xi_n)} <   \frac{\e}{2}
      \]
      By the pigeonhole principle, the same $N$ works for all $A$ in an uncountable subset $\SL$ of $\SH_\xi$. Moreover, by the separability of $\cstar{A}^N$, we may find distinct $A,B\in\SL$ such that $\norm{\alpha_n^A(x_n) -     \alpha_n^B(x_n)} < \e / 2N$ for all $n\in A\cap B\cap N$, a contradiction to~\eqref{eqn:homogeneity}.

    \end{proof}
 
   This shows that the first alternative of $\OCA_\infty$ implies (in the presence of $\MA_{\aleph_1}$) that there is    an uncountable a.d. family which is disjoint from $\SI^\e$.      Now we will show that the second alternative of $\OCA_\infty$    implies~\ref{alt:II}.
    

    Suppose $\SI = \bigcup_m \SH_m$, where $[\SH_m]^2 \subseteq K_1^m$ for each $m\in\NN$.  Fix $m\in\NN$.  We will    define a $\Cmeas$-measurable function $F$ such that, for every $A\in\SH_m$, $F$ is an $\e$-lift of    $\iso$ on $M[A]$.  Let $\SD\subseteq\SH_m$ be a countable set which is dense in $\SH_m$ in the topology where $\SI$ was identified with $(A,\alpha^A)\in \mathcal P(\NN)\times \Fn(\SX,\mathcal A_{\leq 1})$ (this topology was the one  making $K_0^m$ open.)  Let $(e_n)_{n\in\NN}$ be an  approximate identity for $\cstar{A}$.  We define $\SR$ to be the subset of $M[\NN]_{\leq 1}\times \mathcal M(\cstar{A})_{\le 1}$ consisting of those    $(x,y)$ such that there is a Cauchy sequence $B_p$ ($p\in\NN$) in $\SD$ for which $y$ is the strict limit of    $\alpha^{B_p}(x)$ as $p\to\infty$, and for every $N\in\NN$, $\supp(x)\cap N\subseteq B_p$ for all large enough $p\in\NN$.  Since $\SD$ is countable, $\SR$ is analytic.  It will suffice to prove that for all $A\in\SH_m$    and all $x\in M[A]$,
    
    
\begin{enumerate}[label=$\SR$-(\arabic*)]
      \item\label{cdn:covers}  $(x,\alpha^A(x))\in \SR$, and
      \item\label{cdn:approx}  for all $y$ with $(x,y)\in \SR$, we have $\norm{\pi(y - \alpha^A(x))} \le \e$,
    \end{enumerate}
    since then any $\Cmeas$-measurable uniformization $F$ of $\SR$ will satisfy the required properties. 

 Fix $A\in\SH_m$ and    $x\in M[A]_{\leq 1}$.  Condition~\eqref{cdn:covers} follows simply from the fact that $\SD$ is dense in $\SH_m$ and    $A\in\SH_m$.
    To show~\eqref{cdn:approx}, let $y$ be given with $(x,y)\in \SR$.  Suppose
    \[
      \norm{\pi(y - \alpha^A(x))} > \e
    \]
    Then there is some $\delta > 0$ such that for all $k\in\NN$, 
    \[
      \norm{(1 - e_k)(y - \alpha^A(x))} > \e + \delta
    \]
    Let $B_p$ ($p\in\NN$) be a Cauchy sequence from $\SD$ witnessing that $(x,y)\in\SR$ and set $N_0 = 0$.  Since $\norm{y    - \alpha^A(x)} > \e + \delta$, we may find $p_0\in\NN$ and $N_1\in\NN$ large enough that $\supp(x)\cap N_1 \subseteq    B_{p_0}$, and

    \begin{gather}
      \label{cdn:far-0} 
\norm{\sum_{n=N_0}^{N_1 - 1} \alpha_n^{B_{p_0}}(x_n) - \sum_{n=N_0}^{N_1 - 1}\alpha_n^A(x_n)} > \e + \delta
    \end{gather}
    Since the above sums are in $\cstar{A}$, we may find $k_0$ large enough that

    \begin{gather}      \label{cdn:ortho-0} 
\norm{(1 - e_{k_0})\left(\sum_{n=N_0}^{N_1 - 1} \alpha_n^{B_{p_0}}(x_n) - \sum_{n=N_0}^{N_1 -      1} \alpha_n^A(x_n)\right)} < \frac{\delta}{4}
    \end{gather}

    Now as $\norm{(1 - e_{k_0})(y - \alpha^A(x))} > \e + \delta$, we may find $p_1,N_2\in\NN$ large enough that    $B_{p_1}\cap N_1 = B_{p_0}\cap N_1$, $\supp(x)\cap N_2 \subseteq B_{p_1}$, and

    \begin{gather}     
 \label{cdn:close-0} 
\norm{\sum_{n=0}^{N_1 - 1} \alpha_n^{B_{p_0}}(x_n) - \sum_{n=0}^{N_1 - 1} \alpha_n^{B_{p_1}}(x_n)} < \frac{\delta}{2} \\
      \label{cdn:far-1} 
\norm{(1 - e_{k_0})\left(\sum_{n=0}^{N_2 - 1} \alpha_n^{B_{p_1}}(x_n) - \sum_{n=0}^{N_2 - 1} \alpha_n^A(x_n)\right)} > \e + \delta
    \end{gather}
    By conditions~\eqref{cdn:far-0} and~\eqref{cdn:close-0}, we have
    \[
      \norm{\sum_{n=N_0}^{N_1 - 1} \alpha_n^{B_{p_1}}(x_n) - \sum_{n=N_0}^{N_1 - 1} \alpha_n^A(x_n)} > \e + \frac{\delta}{2}
    \]
    whereas by conditions~\eqref{cdn:ortho-0},~\eqref{cdn:close-0} and~\eqref{cdn:far-1}, we have
    \[
      \norm{\sum_{n=N_1}^{N_2 - 1} \alpha_n^{B_{p_1}}(x_n) - \sum_{n=N_1}^{N_2 - 1} \alpha_n^A(x_n)} > \e + \frac{\delta}{4}
    \]
    Repeating this construction, we may find a sequence $N_0 < N_1 < \cdots < N_m$, and a set $B = B_{p_{m-1}}\in\SD$, such    that $\supp(x)\cap N_m \subseteq B$, and for each $i < m$,
    \[
      \norm{\sum_{n = N_i}^{N_{i+1}-1} \alpha_n^B(x_n) - \sum_{n=N_i}^{N_{i+1}-1} \alpha_n^A(x_n)} > \e
    \]
    Then, $\{A,B\}\in K_0^m$, a contradiction to $A,B\in\SH_m$, which is $K_1^m$-homogeneous.

  \end{proof}
  \begin{lemma}
    Suppose $F_n \colon M[\NN]\to \mathcal M(\cstar{A})$ is a sequence of Baire-measurable maps, $\e>0$ and $\SJ\subseteq\P(\NN)$ is a  nonmeager ideal such that for all $A\in\SJ$ there is some $n$ such that $F_n$ is an $\e$-lift of $\iso$ on $\M[A]$. Then there is a Borel-measurable map $G \colon M[\NN]\to\mathcal M(\cstar{A})$ that is a $24\epsilon$-lift of $\iso$ on $\SI$.
  \end{lemma}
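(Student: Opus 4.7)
The proof will run in two phases: first, I will upgrade each Baire-measurable $F_n$ to a function continuous on all of $M[\NN]$, at the cost of a small multiplicative constant in the lifting error; second, I will assemble the resulting sequence of continuous maps into a single Borel $G$ via a uniformization of an appropriately chosen Borel relation.

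\textbf{Phase 1 (continuity).} Since each $F_n$ is Baire-measurable there is a comeager $\SG_n \subseteq M[\NN]$ on which $F_n$ is continuous, and $\SG := \bigcap_n \SG_n$ remains comeager by Baire category. Applying Theorem~\ref{thm:STPrel.talagrand} to $\SG \subseteq \SX$ yields a partition of $\NN$ into finite intervals $(E_i)_i$ and points $t_i \in \prod_{n \in E_i} X_n$ such that $y \in \SG$ whenever $y \rs E_i = t_i$ for infinitely many $i$. By Proposition~\ref{prop:JT2}, the nonmeagerness of $\SJ$ provides an infinite $L \subseteq \NN$ with $\bigcup_{i \in L} E_i \in \SJ$; split $L = L^0 \sqcup L^1$ into two infinite subsets and set $B^j := \bigcup_{i \in L^j} E_i \in \SJ$ (by heredity) and $t^j := \sum_{i \in L^j} t_i \in M[B^j]$. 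I define
\begin{align*}
G_n(x) := {} & F_n(xP_{\NN\setminus B^0} + t^0) - F_n(t^0) + F_n(xP_{\NN\setminus B^1} + t^1) - F_n(t^1) \\
& {} - F_n(xP_{\NN\setminus(B^0\cup B^1)} + t^0 + t^1) + F_n(t^0 + t^1).
\end{align*}
A direct check shows that each argument of $F_n$ agrees with $t_i$ on $E_i$ for every $i$ in one of $L^0, L^1$ or $L$, so each argument lies in $\SG$; continuity of $F_n$ on $\SG$ then makes $G_n$ continuous on all of $M[\NN]$. Using the linearity of $\iso \circ \pi$, equation~(\ref{eqn:restrictions}), and the disjointness $B^0 \cap B^1 = \emptyset$ (which gives $P_{\NN\setminus B^0} + P_{\NN\setminus B^1} - P_{\NN\setminus(B^0\cup B^1)} = 1$), the six terms telescope: when $F_n$ is an $\e$-lift on $M[A \cup B^0 \cup B^1]$ — a set in $\SJ$ since $\SJ$ is an ideal containing $A, B^0, B^1$ — the map $G_n$ is a $6\e$-lift of $\iso$ on $M[A]$. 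Thus for every $A \in \SJ$ there is some $n$ for which $G_n$ is continuous and a $6\e$-lift on $M[A]$.

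\textbf{Phase 2 (Borel aggregation).} Having the continuous sequence $(G_n)$, I will consider the Borel set
\[
\Gamma = \{(x,y) \in M[\NN] \times \ball{\mathcal M(\cstar{A})} : \{n \in \NN : \norm{\pi(y - G_n(x))} \le 12\e\} \text{ is infinite}\}.
\]
For $x \in M[A]$ with $A \in \SJ$, picking any lifting $y$ of $\iso(\pi(x))$ in the unit ball gives $\norm{\pi(y - G_n(x))} \le 6\e \le 12\e$ for all ``good'' $n$ (of which there are infinitely many), hence $\Gamma_x \ne \emptyset$. Applying Theorem~\ref{thm:STPrel.JVN} (or Theorem~\ref{thm:STPrel.BorelUnif} after verifying nonmeagerness of the sections) yields a measurable $G$ uniformizing $\Gamma$. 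Taking any $n$ in the infinite cluster of $\Gamma_x$ that is also good for $A$ — which exists by pigeon-hole on the infinitely many good indices — the triangle inequality gives $\norm{\pi(G(x)) - \iso(\pi(x))} \le 12\e + 6\e = 18\e \le 24\e$, with the remaining slack absorbing the passage from $\Cmeas$-measurable to Borel if necessary.

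\textbf{Main obstacle.} The delicate step is Phase 2: making sure that \emph{every} $y \in \Gamma_x$ is forced close to $\iso(\pi(x))$, rather than clustering with a ``bad'' collection of $G_n(x)$'s. A naive uniformization only picks some $y$ in the section, and the infinite subsequence witnessing $y \in \Gamma_x$ could in principle consist entirely of bad indices. Resolving this requires either a pigeonhole argument ensuring that any infinite cluster of $(\pi G_n(x))_n$ must meet the good indices (which is where the factor $2$ in the tolerance $12\e$ enters, since any bad cluster outside a $12\e$-ball would have to be disjoint from the good one), or refining $\Gamma$ further by intersecting over many $A \in \SJ$ to exploit the ideal structure. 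The constant $24$ compounds the factor of $6$ from Phase 1 with the factors coming from cluster tolerance and triangle inequality in Phase 2.
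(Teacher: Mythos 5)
Your Phase 1 is essentially the paper's first step: restrict to a comeager set of continuity, apply Theorem~\ref{thm:STPrel.talagrand} to produce intervals $E_i$ and patterns $t_i$, use nonmeagerness of $\SJ$ to place the patterns on a set in $\SJ$, and translate. Your six-term inclusion--exclusion formula is a legitimate variant (costing $6\e$ where the paper's two-term version costs less), and the verification that each argument lands in the comeager set and that the terms telescope under $\iso\circ\pi$ is correct.

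Phase 2, however, contains the genuine gap, and it is exactly the one you flag as the ``main obstacle'' without resolving. Two distinct problems arise. First, for a fixed $A\in\SJ$ the hypothesis produces only \emph{one} index $n$ for which $G_n$ is a $6\e$-lift on $M[A]$; your definition of $\Gamma$ requires infinitely many $n$ with $\norm{\pi(y-G_n(x))}\le 12\e$, so the assertion that the canonical lift $y$ of $\iso(\pi(x))$ lies in $\Gamma_x$ (``of which there are infinitely many'') is unjustified, and $\Gamma_x$ may fail to contain any correct $y$. Second, and more seriously, even when $\Gamma_x\neq\emptyset$ the uniformization may select a $y$ whose witnessing infinite set consists entirely of bad indices: nothing prevents infinitely many of the $\pi(G_n(x))$ from clustering at a point $w$ with $\norm{w-\iso(\pi(x))}$ arbitrarily large, and then the uniformized $G(x)$ lifts $w$ rather than $\iso(\pi(x))$. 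Your proposed repair by ``pigeonhole on clusters'' is false in general, and ``intersecting over many $A\in\SJ$'' is not developed into an argument. A further, separate defect is that Jankov--von Neumann yields only a $\Cmeas$-measurable uniformization; the lemma demands a \emph{Borel} $G$ (this is used downstream in Lemma~\ref{lemma:epsilon-borel->C-measurable}), and no amount of slack in $\e$ converts $\Cmeas$-measurability into Borel-measurability. To invoke the Borel uniformization theorem instead you would have to verify that the nonempty sections $\Gamma_x$ are nonmeager, which you do not do.

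The paper's resolution is structurally different and is worth internalizing: it never aggregates over $n$ pointwise in $x$. Instead it works in $\mathcal P(\NN)$: letting $\SH_n$ be the (hereditary) family of $A$ on which $F_n$ is a $2\e$-lift, it isolates the indices $n$ for which $\SH_n$ is nonmeager, translates so that each such $\SH_n$ is everywhere nonmeager, and then observes that the coherence sets $\SZ_{mn}=\{A : \forall x\in\SX[A]\ \norm{\pi(F_m(x)-F_n(x))}\le 4\e\}$ are coanalytic and everywhere nonmeager, hence comeager. On the comeager intersection $\SE$ all the ``frequently correct'' maps agree up to $4\e$, so a \emph{single} continuous $F_N$ works for every $A\in\SJ\cap\SE$; one final translation (again via Theorem~\ref{thm:STPrel.talagrand} and nonmeagerness of $\SJ$) pushes an arbitrary $A\in\SJ$ into $\SE$ and defines $G$ explicitly from that one $F_N$, which is why $G$ comes out Borel (indeed continuous) with no uniformization needed. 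This pigeonhole at the level of the ideal, rather than at the level of points $x$, is the idea missing from your proposal.
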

\begin{proof}
Fix $F_n$ as in the hypothesis.    Since each $F_n$ is Baire-measurable, we may find a $\SG\subseteq \SX$ comeager on which every $F_n$ is   continuous.  By Theorem~\ref{thm:STPrel.talagrand}, we may find a partition of $\NN$ into finite intervals $E_i$, and $t_i\in\SX[E_i]$, such that if $\exists^\infty n (x\supseteq    t_n)$, then $x\in \SG$.  Since $\SJ$ is nonmeager, by Theorem~\ref{thm:STPrel.talagrand} there is an infinite    $L\subseteq\NN$ such that $S = \bigcup_{n\in L}E_n\in\SJ$.  For $k=0,1$, put
\[
S_k=\bigcup_i E_{2i+k}\,\,\,\,,\,\,\,\, t^k=\sum_it_{2i+k}
\]
and
    \[
      F_n'(x) = F_n(x\rs S_0 + t^1) - F(t^1) + F_n(x\rs S_1 + t^0) - F(t^0).
    \]
   Each $F_n'$ is continuous, and moreover if $A\in\SJ$, then $A\cup S\in\SJ$. Hence there is some $n$ such    that $F_n$ is an $\e$-lift of $\iso$ on $M[A\cup S]$, in which case $F_n'$ is then a $2\e$-lift of $\iso$ on $M[A]$.    We will write $F_n = F_n'$ in what follows.

    Let $\SH_n$ be the family of all $A\subseteq\NN$ such that $F_n$ is a $2\e$-lift of $\iso$ on $M[A]$; then each    $\SH_n$ is hereditary, and $\SJ =    \bigcup_n\SH_n$.  Let 
\[
L'=\{n\mid\SH_n\text{ is nonmeager}\}.
\]
  If $n\in L'$, there is some $k\subseteq\NN$ and $\sigma\in\SX[k]$ such that $\SH_n\cap \SN_\tau$ is    nonmeager, for every finitely-supported $\tau\supseteq\sigma$, where $\SN_\tau$ is the basic open subset of $\SX$    consisting of those $x\in\SX$ extending $\tau$.  Put
    \[
      F_n'(x) = F_n(x\rs [k,\infty) + \sigma)
    \]
    Then, $F_n'$ is a $2\e$-lift of $\iso$ on $M[A]$ for all $A\subseteq\NN$ such that $A\sm k\in\SH_n$.  Replacing again  $F_n$ by $F_n'$, we may assume that for every $n\in L'$, $\SH_n$ is everywhere nonmeager.  For each $m,n\in L'$, put
    \[
      \SZ_{mn} = \set{A\subseteq\NN}{\forall x\in \SX[A]\quad (\norm{\pi(F_m(x) - F_n(x))} \le 4\e)}
    \]
    Then $\SZ_{mn}$ contains $\SH_m\cap\SH_n$ and hence is everywhere nonmeager.  Moreover, $\SZ_{mn}$ is coanalytic,    and hence comeager.  Define
    \[
      \SE = \bigcap_{n,m\in L'}\SZ_{mn}\sm \bigcup_{n\notin L'}\SH_n.
    \]
    Then $\SE$ is comeager.  Applying again Theorem~\ref{thm:STPrel.talagrand}, we find a partition of $\NN$ into finite intervals $E'_i$,    and a sequence $s_i\subseteq E'_i$, such that
\[\exists^\infty i (A\cap E_i'=s_i)\Rightarrow A\in \SE.
\]
 Since $\SJ$ is nonmeager, we may find disjoint, infinite $I_0$ and $I_1$ such that 
\[
S_j =    \bigcup_{i\in I_j}s_i\in\SJ \text{ for }j=0,1.
\]
  Put $E^j = \bigcup_{i\in I_j}E'_i$ for $j=0,1$.  Now fix any $N\in L'$ and put
    \[
      G(x) = F_N(x\rs (\NN\sm E^0) + P_{S_0}) - F_N(P_{S_0}) + F_N(x\rs E^0 + P_{S_1}) - F_N(P_{S_1})
    \]
    Since $F_N$ is continuous, $G$ is Borel. Suppose now that $A\in\SJ$.  Then $(A\sm E^0)\cup S_0$ and $(A\cap    E^0)\cup S_1$ are both in $\SJ\cap\SE$.  It follows that $G$ is a $24\e$-lift of $\iso$ on $\SX[A]$, for any    $A\in\SJ$.

  \end{proof}
 \begin{lemma}
    \label{lemma:C->asymptotic}
    Suppose $F \colon M[\NN]\to \mathcal M(\cstar{A})$ is a $\Cmeas$-measurable map and $\SJ\subseteq \P(\NN)$ is a nonmeager ideal such  that for every $A\in\SJ$, $F$ is a lift of $\iso$ on $M[A]$.  Then there is an asymptotically additive $\alpha$ that is a lift of $\iso$ on $\SI$. Also, $\alpha$ can be chosen to be a skeletal map. Hence, $\SI_C = \SI$.
  \end{lemma}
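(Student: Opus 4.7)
The strategy is to slice the $\Cmeas$-measurable map $F$ into asymptotically orthogonal pieces by compressing it with a quasicentral approximate identity. First I would upgrade $F$ to a continuous function on all of $\SX$, at the cost of retaining its lifting property only on $\SJ$. Since every $\Cmeas$-measurable map is Baire-measurable, $F$ is continuous on a comeager $\SG\subseteq\SX$; Theorem~\ref{thm:STPrel.talagrand} supplies a partition $\NN=\bigsqcup_i E_i$ into finite intervals and elements $t_i\in\SX[E_i]$ such that any $x\in\SX$ agreeing with infinitely many $t_i$ lies in $\SG$. Splitting $\NN$ into even and odd block-unions $S_0,S_1$ and choosing $t^0,t^1$ correspondingly, the nonmeagerness of $\SJ$ allows us to arrange $S_0,S_1\in\SJ$; the standard cut-and-paste
\[
F'(x)=F(x\rs S_0+t^1)-F(t^1)+F(x\rs S_1+t^0)-F(t^0)
\]
then produces a continuous $F'\colon\SX\to\mathcal{M}(\cstar{A})$ which remains a lift of $\iso$ on $M[A]$ for every $A\in\SJ$.

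For the local construction, I would fix an increasing sequence $\{F_n\}$ of finite subsets of $\mathcal{M}(\cstar{A})_{\le 1}$ sufficient for the recursion and apply Lemma~\ref{lem:approxid1EV} to the pair $(\mathcal{M}(\cstar{A}),\cstar{A})$ to obtain an increasing $(e_n)\subseteq\cstar{A}^+$ whose square-root differences $f_n=(e_{n+1}-e_n)^{1/2}$ satisfy conditions \ref{cond0}--\ref{cond8} of that lemma. For each $n$ and each $x\in X_n$, set
\[
\alpha_n(x)=f_n\bigl(F'(\iota_n(x))-F'(0)\bigr)f_n,
\]
where $\iota_n(x)\in M[\NN]$ equals $x$ at coordinate $n$ and $0$ elsewhere, and extend $\alpha_n$ to all of $M_{k(n)}$ by $\alpha_n(y)=\alpha_n(\hat y)$ for a fixed nearest element $\hat y\in X_n$. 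Skeletality is built into this definition; condition~\ref{cond2a} of Lemma~\ref{lem:approxid1EV} puts the image of $\alpha_n$ inside $(1-e_{n-2})\cstar{A}(1-e_{n-2})$, and conditions~\ref{cond6},\ref{cond7} yield strict convergence of $\sum_n\alpha_n(x_n)$ for every bounded $(x_n)$, so $\alpha=(\alpha_n)$ is asymptotically additive in the sense of Definition~\ref{def:asymptadditive}.

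The main obstacle, and the step I expect to be the most technical, is verifying that $\alpha$ lifts $\iso$ on the full ideal $\SI$, not merely on $\SJ$. That $\alpha$ lifts $\iso$ on $\SJ$ itself follows from condition~\ref{cond8} of Lemma~\ref{lem:approxid1EV} together with a telescoping-and-averaging argument giving $\sum_n\alpha_n(x_n)\equiv F'(x)\pmod{\cstar{A}}$ for $x\in M[A]$, $A\in\SJ$. To pass from $\SJ$ to $\SI$ I would use a uniqueness argument: if $\beta$ is any asymptotically additive lift of $\iso$ on $M[A]$, then $\alpha-\beta$ is asymptotically additive and lifts the zero map, and a contradiction argument analogous to the proof of condition~\ref{5} in Proposition~\ref{prop:FA.ApproxStruct} forces $\norm{\alpha_n(x_n)-\beta_n(x_n)}\to 0$ along every $x\in M[A]$, so $\alpha-\beta$ takes values in $\cstar{A}$ on $M[A]$ and $\alpha$ lifts $\iso$ on $M[A]$ as well. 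The equality $\SI_C=\SI$ is then immediate: any $A\in\SI_C$ admits a $\Cmeas$-measurable lift of $\iso$ on $M[A]$, so applying the main construction to that lift with $\SJ=\mathcal{P}(A)$ (a nonmeager ideal on $A$) yields an asymptotically additive skeletal lift on $M[A]$, witnessing $A\in\SI$; the reverse inclusion is trivial since asymptotically additive maps are Borel.
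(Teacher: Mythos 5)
Your first step (upgrading $F$ to a continuous $F'$ via Theorem~\ref{thm:STPrel.talagrand} and a cut-and-paste along even and odd block-unions with $S_0,S_1\in\SJ$) is correct and is exactly what the paper's proof of Claim~\ref{Claim:continuous} does.

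The local construction, however, has a genuine gap, and the tool you reach for is the wrong one. You propose $\alpha_n(x)=f_n\bigl(F'(\iota_n(x))-F'(0)\bigr)f_n$, with $f_n=(e_{n+1}-e_n)^{1/2}$ coming from Lemma~\ref{lem:approxid1EV}. The problem is that $F'$ is merely a continuous lift of $\iso$; it is not additive in coordinates, so there is no reason why $F'(x)-\sum_n f_n\bigl(F'(\iota_n(x_n))-F'(0)\bigr)f_n$ should land in $\cstar{A}$ for $x\in M[A]$, $A\in\SJ$. Note that $\pi(\iota_n(x_n))=0$ for each $n$, so linearity of $\iso$ gives you nothing at the coordinate level; the decomposition $\pi(x)=\sum_n\pi(\iota_n(x_n))$ is simply false. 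The ``telescoping and averaging'' you invoke via condition~\ref{cond8} of Lemma~\ref{lem:approxid1EV} does not do this work --- that condition controls limsup norms of a sum, not agreement modulo $\cstar{A}$ of $F'(x)$ with a coordinate-wise slicing. Even granting a decomposition $F'(x)\approx\sum_n f_nF'(x)f_n$, you would still need $f_nF'(x)f_n\approx f_nF'(\iota_n(x_n))f_n - f_nF'(0)f_n$, i.e.\ that modifying $x$ away from coordinate $n$ does not change $f_nF'(x)f_n$; this is precisely a stability property of $F'$ which has to be \emph{proved} and which your $e_n,f_n$ (chosen abstractly from a quasicentral approximate identity) are not coordinated with. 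The paper proves this stability directly: it uses nonmeagerness of $\SJ$ (via a Baire-category argument on $\SX$) to extract ``stabilizer'' blocks $[n_i,n_{i+1})$, anchor patterns $u_i\in Z_i$, and indices $k_i$ with the property that changing either the initial segment or the tail of the argument to $F$, while keeping the pattern $u_i$ fixed, perturbs $F$ by at most $\epsilon_i$ past $q_{k_i}$. It then defines $\Lambda_j$ by evaluating $F$ at $u^{1-k}+\sigma_j(x)$ --- a full \emph{background} element, not $0$ --- and cuts each $\Lambda_j$ down to the \emph{exact} corner $(q_{k_{j+1}}-q_{k_j})\cdot(q_{k_{j+1}}-q_{k_j})$. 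That last step is also needed to satisfy Definition~\ref{def:asymptadditive} literally: condition~\ref{cond2a} of Lemma~\ref{lem:approxid1EV} only gives $f_n\approx f_n(1-e_{n-2})$ approximately, so your $\alpha_n$ need not have range \emph{exactly} inside a corner as the definition of asymptotic additivity requires. Finally, your uniqueness argument for passing from $\SJ$ to $\SI$ is also shaky: given $A\in\SI$ and a second asymptotically additive lift $\beta$ on $M[A]$, the set on which $\alpha$ and $\beta$ both lift $\iso$ is $\SJ\cap\mathcal P(A)$, which need not be nonmeager relative to $\mathcal P(A)$ when $A\notin\SJ$, so the Proposition~\ref{prop:FA.ApproxStruct}-style argument does not apply as stated. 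The correct route is the one in the proof: apply the whole construction directly to the given $\Cmeas$-measurable lift on $M[A]$ with the nonmeager ideal $\mathcal P(A)$ to witness $A\in\SI$.
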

\begin{proof}
  For $a\in M[\NN]$ we write $a\in M[\SI]$ if there is $A\in\SI$ such that $a\in M[A]$. For simplicity we will write $M[n]$ for $M_{k(n)}$. As before, let $X_n=X_{k(n),n}\subseteq (M[n])_{\leq 1}$ such that $X_n$ is finite and $X_n$ is $2^{-n}$-dense in the ball of $M[n]$, and we let $\SX=\prod X_n$. By definition of $X_n$, $0_{M[n]},1_{M[n]}\in X_n$.

\begin{claim}\label{Claim:continuous}

There is a strongly continuous $F'$ that is a lift of $\iso$ on $\SX\cap M[\SI]$.

\end{claim}

\begin{proof}

By the choice of $X_n$, each of these sets is finite, therefore the strict topology on $\SX$ coincides with the usual product topology, making $\SX$ a compact metric space. In this sense, since $F$ is a Baire-measurable map, there are open dense sets $U_n\supseteq U_{n+1}\cdots$ such that $F$ is strictly continuous on $Z=\bigcap U_n$. Moreover from the fact that $X_n$ is finite, and via a diagonalization argument, we can assure that there are an increasing sequence $n_i$, for $i\in\en$, and elements $x_i\in\prod_{n_i\leq j<n_{i+1}}X_j$ with the property that if $a\restriction [n_i,n_{i+1})=x_i$ then $a\in\bigcap_{n\leq i}U_n$. In particular, 
\[
\{x\mid\exists^\infty i\;( x\restriction [n_i,n_{i+1})=x_i)\}\subseteq Z.
\]
 Since $\SI$ is nonmeager, we can find an infinite $L=\{l_k\}$ such that $\bigcup_{i\in L}[n_i,n_{i+1})\in \SI$. Set now 
\[
L^0_k=[n_{l_{2k}},n_{l_{2k+1}}),\,\,\,L_k^1=[n_{l_{2k+1}},n_{l_{2k+2}}),\,\,\,L^0=\bigcup L_k^0\text{ and }L^1=\bigcup L_k^1,
\]
and  let $P_{L^r}$ be the canonical projections onto $\{(x_i)\mid i\notin L^r\Rightarrow x_i=0\}$ for $r=0,1$. Fix $x^0=\sum_k x_{l_{2k}}$ and $x^1=\sum_k x_{l_{2k+1}}$ where $x_i$ were chosen as above. Thanks to our choice of $L$, we have that for every $a\in M[\SI]$, both $aP_{L^0}+x^0$ and $aP_{L^1}+x^1$ belong to $M[\SI]$, and moreover 
\[
\{x\mid\exists^\infty i (x\restriction L^0_i=x_{l_i}\text{ or }x\restriction L^1_i=x_{l_i})\}\subseteq Z.
\]
 Putting all of these together we can therefore construct 
\[
F'(a)=F(aP_{L^0}+x^0)-F(x^0)+F(aP_{L^1}+x^1)-F(x^1).
\]
 This function is strongly continuous by definition, and it is a lift of $\iso$ on $ M[\SI]\cap\SX$.

\end{proof}

From now on we will assume that $F$ is a strongly continuous map on $\SX$ that is a lift of $\iso$ on $\SX\cap  M[\SI]$, as established in Claim \ref{Claim:continuous}. The next Claim will be needed in the proceeding.

\begin{claim}\label{Claim:Ynonmeager}

$\SX\cap  M[\SI]$ is relatively nonmeager in $\SX$.

\end{claim}

\begin{proof}

If $\SX\cap  M[\SI]$ is meager in $\SX$, we can find an increasing sequence $n_i$ and some $s_i\in\prod_{n_i\leq j<n_{i+1}}X_j$  with $\norm{s_i}=1$ such that 
\[
\{x\in\SX\mid \exists^\infty i (x\restriction[n_i,n_{i+1})=s_i)\}\cap  M[\SI]=\emptyset
\]
 Since $\SI$ is nonmeager, we have that there is an infinite $L\subseteq\en$ such that $\bigcup_{i\in L}[n_i,n_{i+1})\in \SI$. On the other hand, as $0\in X_n$ for every $n\in\en$, we have that $s=\sum_{i\in L}s_i\in M[\SI]$, and moreover we have that 
\[
s\in\{x\in\SX\mid \exists^\infty i (x\restriction[n_i,n_{i+1})=s_i)\},
\]
 a contradiction.

\end{proof}

Let $q_n$ be an approximate identity of projections for $\cstar{A}$ as required by our assumptions in \S\ref{s:FA.Lift1}. Clearly $q_n$ converges strictly to $1$, when seen as an element of $\mathcal M(\cstar{A})$. Note that $a\in \cstar{A}$ if and only if $\norm{a(1-q_n)}\to 0$.
Define 
\[
\Delta(x,y,k)=\max\{\norm{(x-y)(1-q_k)},\norm{(1-q_k)(x-y)}\}.
\]

\begin{claim}\label{Claim:stabilizer1}

For every $n$ and $\epsilon>0$ there exists $k\geq n$ and $a\in\prod_{n\leq i<k}X_i$ such that for all $x,y\in\SX$ with $x\restriction [n,\infty)=y\restriction [n,\infty)$ and $x\restriction [n,k)=a$ we have $\Delta(F(x),F(y),k)\leq\epsilon$

\end{claim}

\begin{proof}

Fix $n$ and $\epsilon$ and let $W=\prod_{i<n}X_i$. If $x\in\SX$ and $s\in W$ we can write $x(s)=s+x\restriction [n,\infty)$. If $k>n$ let 
\[
V_k=\{x\in\SX\mid\exists s,t\in W(\Delta(F(x(s)),F(x(t)),k)>\epsilon)\}.
\]
Since $F$ is continuous, $V_k$ is open in $\SX$. Let $x\in  M[\SI]$ and $s,t\in W$. As $F(x(s))-F(x(t))\in \cstar{A}$, there is $k=k(x,s,t)$ such that $\Delta(F(x(s)),F(x(t)),k)\leq\epsilon$. Since $W$ is finite, $x\notin V_k$, and in particular $\bigcap V_k\cap\SX\cap  M[\SI]=\emptyset$. By Claim \ref{Claim:Ynonmeager} there are $k\in\en$ and $U$ basic open set with $V_k\cap U=\emptyset$. Note moreover that, by definition of $V_k$,, for every $y\in\SX$ we have that $y\in V_k$ if and only if $y(s)\in V_k$ for all $s\in W$, therefore we have that there exists an $l\geq k$ and an $a\in\prod_{n\leq i<l} X_i$ with the property that 

\[
\{x\in\SX\mid x\restriction [n,l)=a\}\cap V_l=\emptyset.
\]
 Since $V_l\subseteq V_k$, $l$ and $a$ satisfy the claim.

\end{proof}

For $J\subseteq \en$ denote $Z_J=\prod_{i\in J}X_i$, and $\epsilon_i=2^{-i}$.

\begin{claim}\label{Claim:stabilizer2}

There are sequences $n_i<k_i<n_{i+1}$ and $u_i\in Z_{i}$ (where, for simplicity, $Z_i=Z_{[n_i,n_{i+1})}$) with the property that if $x,y\in\SX$ are such that $x\restriction [n_i,n_{i+1})=y\restriction [n_i,n_{i+1})=u_i$ then

\begin{enumerate}
\item\label{c1:stabilizer} $x\restriction [n_i,\infty)=y\restriction[n_i,\infty)$ implies $\Delta(F(x),F(y),k_i)<\epsilon_i$
\item\label{c2:stabilizer} $x\restriction [0,n_{i+1})=y\restriction[0,n_{i+1})$ implies $\Delta(F(x),F(y),k_i)<\epsilon_i$
\end{enumerate}

\end{claim}

\begin{proof} 
We construct such sequences by induction.
Set $k_{-1}=0=n_0$ and $u_{-1}=\emptyset$.
Suppose that we have found $n_i$, $k_{i-1}$ and $u_{i-1}$ to satisfy the requirements of the Claim. Using Claim  \ref{Claim:stabilizer1} we can find $k_i> n_i$ and $a_i\in\prod_{n_i\leq j<k_i}X_j$ so that condition \ref{c1:stabilizer} is satisfied. We now apply continuity of $F$ to find $n_{i+1}\geq k_i$ and $u_i\in\prod_{n_i\leq j<n_{i+1}}X_j$ with the property that $u_i\restriction[n_i,k_i)=a_i$ and condition \ref{c2:stabilizer} is satisfied. This concludes the proof.

\end{proof}

Let $\mc V_i=M[[n_i,n_{i+1})]$. Every $x\in M[\NN]$ can be seen as $x=\sum x_i$, with $x_i\in\mc V_i$.

Recall that $Z_i=Z_{[n_i,n_{i+1})}=\prod_{n_i\leq j<n_{i+1}}X_j$ is finite and since each $1_{M[n]}\in X_n$ we have $1_{\mc V_i}\in Z_i$. Fix a linear order of $Z_i$ and let 
\[
\sigma_i(\mc V_i)\to Z_i
\]
 mapping $x$ to the first element of $Z_i$ that is within $2\epsilon_i$ from $x$. Note that each $\sigma_i$ is Borel measurable.

 Note that since each $X_i$ is $\epsilon_i$-dense we have that $Z_i$ is $2\epsilon_{n_{i}}$-dense and, consequently, $2\epsilon_i$-dense. If $x=\sum x_i$, with $x_i\in\mc V_i$, let $x^0=\sum_{i}\sigma_{2i}(x_{2i})$ and $x^1=\sum_{i}\sigma_{2i+1}(x_{2i+1})$. Then $x^0,x^1\in\SX$ and $x-x^0-x^1\in\oplus M_{k(n)}$, as, coordinatewise, we have $\norm{x_i-x^0_i-x_i^1}<3\epsilon_i$. 

Recall that $u_i\in Z_i$ was defined to satisfy Claim \ref{Claim:stabilizer2} and let $u^0$ and $u^1$ defined as above. Define, for $k=0,1$ and $x\in\mc V_{2i+k}$,
\[
\Lambda_{2i+k}(x)=F(u^{1-k}+\sigma_{2i+k}(x))-F(u^{1-k}).
\]
Note that $\Lambda_{2i+k}\colon M[[n_{2i+k},n_{2i+k+1})]\to \cstar{A}$. Since $u^0$ and $u^1$ are fixed and $\sigma_i$ is Borel-measurable, so is each $\Lambda_i$.

Let $k_i$ as fixed in Claim \ref{Claim:stabilizer2}. We modify again $\Lambda_i$ on $\mc V_i$ setting 

\[
\Lambda_i'=(q_{k_{i+1}}-q_{k_i})\Lambda_i(q_{k_{i+1}}-q_{k_i}).
\]
Let
\[
\Lambda=\sum\Lambda_i'.
\]
Since the ranges of the $\Lambda'_i$'s are orthogonal, $\Lambda$ is a well defined asymptotically additive map from $M[\NN]$ to $\mc(\cstar{A})$. In particular, since the value of $\Lambda$ is completely determined by the values of $\sigma_i$, and by our choice of $X_n$, $\Lambda$ is a skeletal map.
\begin{claim}\label{claim:itlifts}

For every $x\in  M[\SI]$, $\Lambda(x)$ is a lift for $x$.

\end{claim}
\begin{proof}
Write $x=\sum x_i$ with $x_i\in\mc V_i$. If $x=\sum x_i$ and $y=\sum y_i$, then we have that, as $i\to\infty$, $\Lambda'_i(x_i+y_i)\to\Lambda_i'(x_i)+\Lambda'_i(y_i)$, and so $\Lambda(x+y)=\Lambda(x)+\Lambda(y)$. For this reason, is it enough to prove that  $\Lambda(x)$ is a lift for $x$ if $x_{2i}=0$ for all $i$. Recalling that, if $x^1=\sum\sigma(x_{2i+1})$, we have $x-x^1\in \oplus M_{k(n)}$, we can infer that $F(x)+F(u^0)-F(u^0-x^1)\in\cstar{A}$. In particular, we can apply conditions \ref{c1:stabilizer} and \ref{c2:stabilizer} of Claim \ref{Claim:stabilizer2} to $x^1$ and $\sigma_{2i+1}(x_{2i+1})$ to get that 
\[
\norm{(\Lambda_{2i+1}(x_{2i+1})+F(u^0)-F(u^0+x^1))(q_{k_{2i+2}}-q_{k_{2i+1}})}<2\cdot 2^{-2i}.
\]
 Since $2\cdot 2^{-2i}$ is summable, and 
\[
1-\sum_i (q_{k_{i+2}}-q_{k_{i+1}})\in \cstar{A}
\]
 we have that 
\[
F(u^0+x^1)-F(u^0)-\sum_i \Lambda_{2i+1}(x_{2i+1})(q_{k_{i+2}}-q_{k_{i+1}})\in \cstar{A}.
\]
 For the multiplication on the other side, we can apply again \ref{c1:stabilizer} and \ref{c2:stabilizer} of Claim \ref{Claim:stabilizer2}.\qedhere
\end{proof}

This concludes the proof of the Lemma.\qedhere
\end{proof}

\begin{remark}\label{remark:asymptorthogonal}
As asymptotically additive lifts are constructed as in the proof of Lemma~\ref{lemma:C->asymptotic} (see, in particular, the definition of $\Lambda_i'$ before Claim~\ref{claim:itlifts}), we have, for $j=0,1$ and $n\in \NN$, 
\[
\ran(\alpha_n^j)\subseteq (q_k-q_l)\cstar{A}(q_k-q_l),
\]
for some $k,l\in\NN$. For this reason, whenever we will consider $\alpha,\beta$ are asymptotically additive functions $\alpha\colon\prod M_{k(n)}\to\mathcal M(\cstar{A})$, $\beta\colon\prod M_{l(n)}\to\mathcal M(\cstar{A})$, we will assume that 
\[
\forall n\forall^\infty m(\alpha_n\beta_m=0).
\]
\end{remark}

\section{Consequences of the lifting theorem}\label{s:FA.consofLT}

We analyze the consequences of Theorem~\ref{thm:FA.liftingtheorem} In \S\ref{ss:FA.BorelLift} we provide partial solutions to Conjecture~\ref{conj:PFA}, and in \S\ref{s:FA.Emb} we state and prove results related to the consistency of the existence of certain embeddings between corona $\Cstar$-algebras, proving in particular Theorem~\ref{thmi:1}.

\subsection{Consequences I: Trivial automorphisms}\label{ss:FA.BorelLift}
 Recall that, if $\cstar{A}$ is a separable $C^*$-algebra, $\Lambda\in \Aut(\mathcal M(\cstar{A})/\cstar{A})$ is said to be \emph{trivial} if its graph 
\[
\Gamma_\Lambda=\{(a,b)\in\mathcal M(\cstar{A})_{\leq 1}^2\mid \Lambda(\pi(a))=\pi(b)\}
\]
 is Borel in the strict topology of $\mathcal M(\cstar{A})_{\leq 1}$, where $\pi$ denotes the usual quotient map. The following is Conjecture~\ref{conj:PFA}

\begin{conjecture}
Let $\cstar{A}$ a $\sigma$-unital $\Cstar$-algebra. Then the assumption of Forcing Axioms implies that every automorphism of $\mathcal M(\cstar{A})/\cstar{A}$ is trivial.
\end{conjecture}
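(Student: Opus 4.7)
The plan is to make Theorem~\ref{thm:FA.liftingtheorem} the workhorse of the argument, following the strategy that succeeds in the special cases to be handled by Theorem~\ref{thm:Borel} and then to try to identify what extra ingredient is missing in full generality. First, I would reduce a given automorphism $\Lambda$ of $\mathcal M(\cstar{A})/\cstar{A}$ to a map of the type on which the lifting theorem operates. In the favorable case where $\cstar{A}$ carries an approximate identity of projections $\{q_n\}$ with finite-dimensional corners $p_n\cstar{A}p_n \cong M_{k(n)}$ (setting $p_n=q_{n+1}-q_n$), there is a canonical unital embedding $\iota\colon\prod M_{k(n)}/\bigoplus M_{k(n)}\hookrightarrow\mathcal M(\cstar{A})/\cstar{A}$, and the composition $\iso := \Lambda\circ\iota$ is a linear, $^*$-preserving contraction satisfying the multiplicativity on projection-cutdowns required by condition~\eqref{eqn:restrictions}. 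Theorem~\ref{thm:FA.liftingtheorem} then applies, yielding, under $\OCA_\infty+\MA_{\aleph_1}$, an asymptotically additive map $\alpha=\sum\alpha_n$ that lifts $\iso$ on a ccc/Fin ideal $\SJ$.

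Next I would upgrade $\alpha$ to a genuine $^*$-homomorphism. Since $\iso$ is multiplicative, Proposition~\ref{prop:FA.ApproxStruct} shows that the blocks $\alpha_n$ are eventually $\varepsilon_n$-$^*$-homomorphisms with $\varepsilon_n\to 0$. Theorem~\ref{thm:FA.ApproxStruct}, which is essentially an application of the Ulam stability result of Theorem~\ref{thm:US.USFinDim}, then perturbs each block inside $\cstar{A}$ to produce a genuine $^*$-homomorphism $\gamma\colon\prod M_{k(n)}\to\mathcal M(\cstar{A})$ with $\gamma(x)-\alpha(x)\in\cstar{A}$ for every $x$. Such a $\gamma$ is strictly-strictly continuous on bounded sets, which forces the graph of $\Lambda\circ\iota$ to be Borel.

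To pass from this partial triviality to triviality of $\Lambda$ itself, I would use the CPAP of a nuclear $\cstar{A}$: arbitrary elements of $\mathcal M(\cstar{A})$ can be approximated strictly by sums of the form $\sum p_na_np_n$ with $a_n\in p_n\cstar{A}p_n$, and coherent genuine lifts of the corresponding reduced-product cutdowns can be glued using Corollary~\ref{cor:US.AF2}. The resulting global lift of $\Lambda$ is strictly-strictly Borel, and triviality of $\Lambda$ follows from the definition of trivial automorphism.

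The hard part will be dispensing with the auxiliary hypotheses on $\cstar{A}$. The reduction above exploits two features that are unavailable in full generality: an approximate identity of projections with finite-dimensional corners, used both to produce $\iota$ and to invoke the finite-dimensional version of Ulam stability; and nuclearity, which the final CPAP step uses to reconstruct the whole corona from its block approximations. For genuinely projectionless (e.g.\ $C_0(\RR^n,\cstar{B})$ with $\cstar{B}$ projectionless) or non-nuclear $\sigma$-unital $\cstar{A}$, neither of these is available, and the conjecture seems to require both a substantially more general lifting theorem working directly with asymptotically additive lifts on $\mathcal M(\cstar{A})$ without passing through a reduced product, and an Ulam stability input valid beyond the AF/von Neumann setting of Corollary~\ref{cor:US.AF2}. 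Identifying either a replacement reduction for the projectionless case or such a generalized stability result is, in my view, where the main effort should be concentrated if one hopes to close the gap between the partial results tabulated in \S\ref{sss:CFconj} and the full conjecture.
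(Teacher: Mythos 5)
This statement is a conjecture, and the paper does not prove it; what it proves is only the special case Theorem~\ref{thm:Borel} (separable nuclear $\cstar{A}$ admitting an increasing approximate identity of projections). You recognize this, and your discussion of the obstructions — projectionless algebras and non-nuclearity — is accurate and aligns with the remarks following Conjecture~\ref{conj:PFA} and the table in~\S\ref{sss:CFconj}. So there is nothing to "verify" against a ground-truth proof; instead let me flag two places where your sketch of the tractable case drifts from what actually happens in~\S\ref{ss:FA.BorelLift}.

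First, the reduction to $\prod M_{k(n)}/\bigoplus M_{k(n)}$ is not obtained by assuming the corners $p_n\cstar{A}p_n$ are full matrix algebras. That would restrict you essentially to $\cstar{A}=\mathcal K(H)$ or similar. Instead, the paper keeps the corners $q_{E_n}\cstar{A}q_{E_n}$ arbitrary and uses the CPAP of $\cstar{A}$ to interpolate: the cpc maps $\phi_n\colon\cstar{A}\to F_n$, $\psi_n\colon F_n\to\cstar{A}$ are assembled into $\Phi_{f,E}$ and $\Psi_{f,E}$ (Proposition~\ref{prop:cpc-products-basic-properties}), which carry elements of $\mathcal M(\cstar{A})$ into a reduced product of finite-dimensional algebras and back, with an error that drops into $\cstar{A}$ via Lemma~\ref{lem:summing} and Proposition~\ref{prop:mainproperties}. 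This is a genuinely different mechanism from the canonical embedding $\iota$ you describe, and it is what allows the argument to run for any nuclear $\cstar{A}$ with an approximate identity of projections, not just those with finite-dimensional corners.

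Second, the ``gluing'' of lifts across the different blocks $(f,E)$ is not done by an application of Corollary~\ref{cor:US.AF2}. What the paper does is: after Theorem~\ref{thm:FA.liftingtheorem} and Theorem~\ref{thm:FA.ApproxStruct} supply, for each admissible pair $(f,E)$, a pair of skeletal asymptotically additive lifts $(\alpha^0,\alpha^1)$ of $\Lambda_{f,E^0},\Lambda_{f,E^1}$ (after upgrading them to genuine $^*$-homomorphisms via the finite-dimensional Ulam stability of Theorem~\ref{thm:US.USFinDim}), it runs a \emph{second} $\OCA_\infty$ coloring argument on the set $X$ of tuples $(f,E,\alpha^0,\alpha^1)$. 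The $\mathfrak b>\omega_1$ consequence of $\MA_{\aleph_1}$ is used to rule out the uncountable homogeneous alternative (Lemma~\ref{lem:afteragivenpoint} and the following lemma), and the second alternative yields a $\sigma$-directed decomposition $X=\bigcup\SY_k$ with countable dense subsets $D_k$. The graph $\Gamma_\Lambda$ is then exhibited as both analytic (condition \ref{thm:Borelgraph,c2}) and coanalytic (condition \ref{thm:Borelgraph,c3}), hence Borel. There is no appeal to the AF/von Neumann stability result anywhere in this step, and indeed the codomain $\mathcal M(\cstar{A})$ is typically not a von Neumann algebra, so Corollary~\ref{cor:US.AF2} would not apply. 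The only Ulam stability input that Theorem~\ref{thm:Borel} uses is the finite-dimensional one, Theorem~\ref{thm:US.USFinDim}, packaged into Theorem~\ref{thm:FA.ApproxStruct}.

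Your identification of where the general conjecture gets stuck is on target, and this is a conjecture precisely because no one knows how to replace the reduced-product reduction when there is no approximate identity of projections, nor how to dispense with nuclearity in the block-approximation scheme.
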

The main result proved in this section is the following.

\begin{theorem}\label{thm:Borel}

Assume $\OCA_\infty$ and $\MA_{\aleph_1}$. Let $\cstar{A}$ be a separable nuclear $\Cstar$-algebra admitting an increasing approximate identity of projections. Then every automorphism of $\mathcal M(\cstar{A})/\cstar{A}$ is trivial.

\end{theorem}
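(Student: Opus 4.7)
The plan is to reduce Theorem~\ref{thm:Borel} to Theorem~\ref{thm:FA.liftingtheorem} by embedding a large block-diagonal matrix subalgebra inside $\mathcal M(\cstar{A})/\cstar{A}$ and exploiting the asymptotically additive lifts produced by the main lifting theorem of \S\ref{s:FA.Lift1}. First I would fix an increasing approximate identity of projections $\{p_n\}$ for $\cstar{A}$ and set $q_n = p_{n+1}-p_n$, so the $q_n$ are pairwise orthogonal projections summing strictly to $1$ in $\mathcal M(\cstar{A})$. Using the nuclearity of $\cstar{A}$, and in particular the CPAP applied to each separable corner $q_n\cstar{A}q_n$, I would choose finite-dimensional matrix subalgebras $F_n \subseteq q_n\cstar{A}q_n$ with $\dim F_n = k(n)$ whose union $\bigcup_n F_n$ is dense enough in $\cstar{A}$ that any strictly-strictly continuous $^*$-homomorphism defined on the $^*$-subalgebra they generate extends uniquely to a strictly continuous endomorphism of $\mathcal M(\cstar{A})$. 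This yields a canonical unital $^*$-homomorphism $\iota\colon \prod M_{k(n)}/\bigoplus M_{k(n)} \to \mathcal M(\cstar{A})/\cstar{A}$ with a block-diagonal asymptotically additive lift.

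Next I would apply Theorem~\ref{thm:FA.liftingtheorem} to the composite $\iso = \Lambda \circ \iota$. This map is a unital $^*$-homomorphism and hence satisfies the additivity-over-projections relation $(\land)$: since $\iota(\pi(P_A))=\pi(\sum_{n\in A}1_{F_n})$ and $\Lambda$ is multiplicative, we get $\iso(\pi(xP_A))=\iso(\pi(x))\iso(\pi(P_A))$. Under $\OCA_\infty+\MA_{\aleph_1}$, Theorem~\ref{thm:FA.liftingtheorem} together with Lemma~\ref{lemma:oca->ccc/fin} produces a ccc/Fin ideal $\SJ$ (which in this block-diagonal setting can be taken to be all of $\mathcal P(\NN)$ after a harmless reindexing of the $F_n$'s) and an asymptotically additive lift $\alpha$ of $\iso$ on $\SJ$. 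Theorem~\ref{thm:FA.ApproxStruct} then upgrades $\alpha$ to a genuine $^*$-homomorphism $\gamma\colon \prod M_{k(n)} \to \mathcal M(\cstar{A})$ lifting $\iso$; by construction $\gamma$ is strictly-strictly continuous on bounded sets, so $\Lambda$ admits a strictly continuous, and in particular Borel, lift on the image of $\iota$.

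Finally I would extend this local Borel lift of $\Lambda$ to a global Borel lift witnessing that $\Gamma_\Lambda$ is Borel. Since $\cstar{A}$ is strictly dense in $\mathcal M(\cstar{A})$ and $\bigcup F_n$ is dense in $\cstar{A}$, every $x\in\mathcal M(\cstar{A})_{\leq 1}$ is a strict limit of bounded sequences in the $^*$-subalgebra generated by $\bigcup F_n$, on which $\gamma$ already provides a lift modulo $\cstar{A}$. I would then uniformize the analytic set
\[
\{(x,y)\in\mathcal M(\cstar{A})_{\leq 1}^2 \mid \Lambda(\pi(x))=\pi(y)\}
\]
via Theorem~\ref{thm:STPrel.BorelUnif}, using the strict continuity of $\gamma$ and the density just described to check that the vertical fibers are nonmeager in the strict Polish topology on $\mathcal M(\cstar{A})_{\leq 1}$. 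The resulting Borel uniformization $\tilde\gamma$ satisfies $\pi\circ\tilde\gamma = \Lambda\circ\pi$, and hence $\Gamma_\Lambda = \{(a,b)\mid \pi(b)=\pi(\tilde\gamma(a))\}$ is Borel.

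The main obstacle will be step three, namely extending the $^*$-homomorphism lift $\gamma$ from the matrix subalgebra to a Borel lift of $\Lambda$ on the whole corona. This requires a careful combination of CPAP-based density with strict continuity, plus a nonmeagerness argument to make the Borel uniformization of Theorem~\ref{thm:STPrel.BorelUnif} applicable. A secondary technical point will be to arrange the $F_n$ so that the units $1_{F_n}$ are close enough to $q_n$ in the strict topology for the exact additivity condition $(\land)$ to hold for $\iso$, which may require slight perturbations and reindexing along the lines of Remark~\ref{remark:asymptorthogonal}.
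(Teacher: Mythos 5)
Your Step 1 cannot work for a general nuclear $\cstar{A}$. You propose to choose finite-dimensional matrix subalgebras $F_n\subseteq q_n\cstar{A}q_n$ whose union is dense in $\cstar{A}$, but nuclearity via the CPAP produces completely positive contractions $\phi_n\colon\cstar{A}\to F_n$ and $\psi_n\colon F_n\to\cstar{A}$, not embedded copies of the $F_n$. Take $\cstar{A}=\bigoplus_n C([0,1])$: each corner $q_n\cstar{A}q_n\cong C([0,1])$ contains no matrix subalgebra larger than $\ce$, so no union of such subalgebras can be dense, and there is no fixed sequence $k(n)$ and single map $\iota\colon\prod M_{k(n)}/\bigoplus M_{k(n)}\to\mathcal M(\cstar{A})/\cstar{A}$ onto a strictly dense subalgebra. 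The paper instead works with the bidirectional cpc maps $\Phi_{f,E}$ and $\Psi_{f,E}$ of Proposition~\ref{prop:cpc-products-basic-properties}, which are not multiplicative, and with Lemma~\ref{lem:summing}, which shows that every $t\in\mathcal M(\cstar{A})$ equals $\Psi_{f,E^0}(x^0)+\Psi_{f,E^1}(x^1)$ modulo $\cstar{A}$ for some $(f,E)$ that depends on $t$ --- there is no single pair $(f,E)$ serving all $t$ simultaneously.

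The more serious gap is Step 3, which is circular. You want to apply Theorem~\ref{thm:STPrel.BorelUnif} to $\{(x,y)\mid\Lambda(\pi(x))=\pi(y)\}$, asserting it is analytic; but this set is precisely $\Gamma_\Lambda$, whose Borelness is the conclusion of the theorem, and $\Lambda$ is an arbitrary automorphism with no a priori definability. A strictly continuous lift on the image of one $\iota$ does not give an analytic description of $\Gamma_\Lambda$, since the lift controls $\Lambda$ only on a proper subalgebra of the corona. The paper's remedy is an additional $\OCA_\infty$ argument, beyond the single invocation of Theorem~\ref{thm:FA.liftingtheorem}: it forms the space $X$ of all tuples $(f,E,\alpha^0,\alpha^1)$ where $\alpha^i$ is a skeletal asymptotically additive lift of $\Lambda_{f,E^i}$, defines open colorings $[X]^2=K_0^\epsilon\cup K_1^\epsilon$ measuring disagreement between lifts, uses $\mathfrak b>\omega_1$ to kill the first $\OCA_\infty$ alternative, and then extracts countable dense $\leq^*\times\leq_1$-cofinal families $D_k$ inside $K_1^{\epsilon_k}$-homogeneous pieces. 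Lemma~\ref{lem:almostthere} and the final lemma of the section convert membership in $\Gamma_\Lambda$ into the purely combinatorial conditions \ref{thm:Borelgraph,c2} (analytic) and \ref{thm:Borelgraph,c3} (coanalytic), which is what makes $\Gamma_\Lambda$ Borel. This coherence-of-lifts machinery is the real content of the proof, and your proposal does not supply any substitute for it.
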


From now on, $\cstar{A}$ will denote a separable, nuclear $\Cstar$-algebra with an increasing approximate identity of projections $(q_n)$.  Given $S\subseteq\NN$, define $q_S = \sum\set{q_n - q_{n-1}}{n\in S}\in\mathcal M(\cstar{A})$.  (We set $q_{-1} =  0$.)

\begin{proposition}\label{prop:onedominates}

Let $\cstar{A}$ be a separable $\Cstar$-algebra with an increasing approximate identity of projections. Let $\{E_n\}$ be a partition of $\NN$ into finite intervals. Let $\SI\subseteq\mathcal P(\NN)$ be a nonmeager dense ideal and, for $A\subseteq\NN$, $q_A^E=\sum_{n\in A}q_{E_n}$. If a projection $q\in\mathcal M(\cstar{A})/\cstar{A}$ dominates each $\{\pi(q_A^E)\mid A\in\SI\}$, then $q=1$.
\end{proposition}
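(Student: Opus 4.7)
I will argue by contradiction. Suppose $q\neq 1$, and set $p=1-q$, a nonzero projection in $\mathcal M(\cstar{A})/\cstar{A}$. Lift $p$ to a positive contraction $a\in\mathcal M(\cstar{A})$; since $p$ is a nonzero projection, $\|\pi(a)\|=1$. For every $A\in\SI$, the hypothesis $q\geq\pi(q_A^E)$ gives $p\,\pi(q_A^E)=0$, hence $a\,q_A^E\in\cstar{A}$. Define
\[
\SJ=\{A\subseteq\NN\mid a\,q_A^E\in\cstar{A}\}.
\]
Routine checks show $\SJ$ is a hereditary ideal containing $\Fin$; and by construction $\SI\subseteq\SJ$, so $\SJ$ is nonmeager. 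The strategy is to contradict this by producing a partition of $\NN$ witnessing (via Proposition~\ref{prop:JT2}) that $\SJ$ is meager.

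The crucial measurement is that, since $\pi(a)\neq 0$, the standard identity $\|\pi(a)\|=\inf_N\|a(1-q_N)\|$ yields $\|a(1-q_N)\|\geq 1$ for all $N$. Writing $q^E_{[N,\infty)}=1-q_{\max E_{N-1}}$, this says $\|a\,q^E_{[N,\infty)}\|\geq 1$ for every $N$. I will inductively choose $0=N_0<N_1<N_2<\cdots$ so that $\|a\,q^E_{[N_j,N_{j+1})}\|>1/2$ for each $j$. To pass from an infinite tail to a finite block, I use that the multiplier norm can be computed from $\cstar{A}$, namely $\|T\|=\sup\{\|Tx\|\colon x\in\cstar{A},\ \|x\|\leq 1\}$: pick $x\in\cstar{A}_{\leq 1}$ with $\|a\,q^E_{[N_j,\infty)}x\|>1/2$, then approximate $x$ in norm by $q_Mx$ (since $q_n$ is an approximate identity for $\cstar{A}$) for $M$ so large that $q_Mq^E_{[N_j,\infty)}=q^E_{[N_j,N_{j+1})}$ for some $N_{j+1}>N_j$; this forces $\|a\,q^E_{[N_j,N_{j+1})}\|>1/2$.

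Set $F_j=[N_j,N_{j+1})$; these are finite intervals partitioning $\NN$. For any infinite $L\subseteq\NN$ let $A_L=\bigcup_{j\in L}F_j$. The projections $q^E_{F_j}$ are pairwise orthogonal, so for any $j_0\in L$ with $j_0>J$,
\[
\Bigl(\sum_{j\in L,\,j>J} a\,q^E_{F_j}\Bigr)q^E_{F_{j_0}}=a\,q^E_{F_{j_0}},
\]
which implies $\|\sum_{j\in L,\,j>J}a\,q^E_{F_j}\|\geq\|a\,q^E_{F_{j_0}}\|>1/2$. Hence the tail norms of the strictly convergent series $a\,q^E_{A_L}=\sum_{j\in L}a\,q^E_{F_j}$ do not tend to $0$, so $a\,q^E_{A_L}\notin\cstar{A}$, i.e.\ $A_L\notin\SJ$. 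By Proposition~\ref{prop:JT2} this witnesses that $\SJ$ is meager, contradicting $\SI\subseteq\SJ$ and nonmeagerness of $\SI$. Therefore $p=0$ and $q=1$.

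The main technical obstacle is Step~2 of the block construction: strict convergence of $a\,q^E_{[N_j,N)}\to a\,q^E_{[N_j,\infty)}$ does not give norm convergence, so one cannot simply truncate. Passing through a witness $x\in\cstar{A}$ and approximating $x$ by $q_Mx$ in norm is what converts the strict statement $\|a\,q^E_{[N_j,\infty)}\|\geq 1$ into a norm lower bound on a finite-interval block, and this is the sole delicate point of the argument.
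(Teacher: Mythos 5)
The paper states Proposition~\ref{prop:onedominates} without proof, so there is no authorial argument to compare against; your proof must be judged on its own merits, and it is correct. The structure is sound: you lift $p=1-q$ to a positive contraction $a$, observe that the domination hypothesis forces $\SI\subseteq\SJ=\{A\mid a\,q_A^E\in\cstar{A}\}$ and hence that the ideal $\SJ$ is nonmeager; on the other hand, $\|\pi(a)\|=1$ gives $\|a(1-q_N)\|\geq 1$ for all $N$, and from this you extract a partition of $\NN$ into intervals $F_j$ with $\|a\,q^E_{F_j}\|>1/2$. The delicate point you flag — passing from the strict-topology lower bound $\|a\,q^E_{[N_j,\infty)}\|\geq 1$ to a norm lower bound on a finite block — is handled correctly by testing against an element of $\cstar{A}_{\leq 1}$ and truncating it with the approximate identity, which is exactly the right move. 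The final step, that $\|\sum_{j\in L,\,j>J}a\,q^E_{F_j}\|>1/2$ for every $J$ because the orthogonal projection $q^E_{F_{j_0}}$ picks out a single summand, shows the tails of $a\,q^E_{A_L}$ stay bounded below, hence $A_L\notin\SJ$ for every infinite $L$, and Proposition~\ref{prop:JT2} then forces $\SJ$ to be meager — the required contradiction. One small observation: your argument never invokes the density of $\SI$. This is not a gap, because density is redundant given nonmeagerness: if $\SI$ were not dense there would be an infinite $X$ none of whose infinite subsets lie in $\SI$, and the partition of $\NN$ into intervals determined by the enumeration of $X$ would witness meagerness of $\SI$ via Proposition~\ref{prop:JT2}.
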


Let $(Y_n)$ be an increasing sequence of finite subsets of $\cstar{A}_{\leq 1}$ whose union is dense in $\cstar{A}$, and $(\e_n)$ a sequence of positive reals converging to zero. Recall that, by nuclearity of $\cstar{A}$, for  each $n\in\NN$ we may find a finite-dimensional $\Cstar$-algebra $F_n$ and cpc maps $\phi_n \colon \cstar{A}\to F_n$,  $\psi_n \colon F_n\to \cstar{A}$, such that 
\[
\norm{\psi_n(\phi_n(x)) - x} \le \e_n\norm{x}\text{ for all }x\in Y_n.
\]
 Let $\PP$ be the set of all partitions $E = \seq{E_n}{n\in\NN}$ of $\NN$ into finite, consecutive intervals. We turn $\PP$ into a partial order, with 
 \[
 E\leq_1 F\iff \forall^\infty i\exists j E_i\cup E_{i+1}\subseteq F_j\cup F_{j+1}.
\]
 For each $E\in\PP$ set $E^0 = \seq{E_{2n}\cup E_{2n+1}}{n\in\NN}$ and $E^1 = \seq{E_{2n+1}\cup  E_{2n+2}}{n\in\NN}$ (with $E_{-1}=\emptyset$), and define, for $f\in\NN^\NN$
 \[
 \Phi_{f,E} \colon \mathcal M(\cstar{A})\to \prod_n F_{f(\max E_n)}\text{ and }\Psi_{f,E} \colon\prod_n
  F_{f(\max E_n)}\to \mathcal M(\cstar{A})
  \]
   as follows:

  \begin{align*}
    \Phi_{f,E}(t)(n) & = \phi_{f(\max E_n)}(q_{E_n}t q_{E_n}), \text{ and } \\
    \Psi_{f,E}(x) & = \sum_{n = 0}^\infty q_{E_n} \psi_{f(\max E_n)}(x_n) q_{E_n} \text{ for }x=(x_n)\in\prod F_{f(\max E_n)}.
  \end{align*}
  Since the projections $q_{E_n}$ are pairwise orthogonal, and the norms of $\psi_{f(\max E_n)}(x_n)$ are bounded, the sum in the definition of $\Psi_{f,E}(x)$ is a well-defined element of $\mathcal M(\cstar{A})$. The proof of the following is immediate from the definitions.

  \begin{proposition}    \label{prop:cpc-products-basic-properties}

    For each $E\in\PP$ and $f\in\NN^\NN$, the maps $\Phi_{f,E}$ and $\Psi_{f,E}$ are cpc.  Moreover 
    \[
    \Phi_{f,E}(\cstar{A})\subseteq\bigoplus_n F_{f(\max E_n)}\text{ and }\Psi_{f,E}(\bigoplus_n
    F_{f(\max E_n)})\subseteq\cstar{A}.
\]
  \end{proposition}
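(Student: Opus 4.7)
The plan is to verify each of the four claims in turn, all of which reduce to basic properties of the approximate identity $(q_n)$ together with the cpc assumption on the pairs $(\phi_n,\psi_n)$.

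First I would handle $\Phi_{f,E}$. For each fixed $n$, the map $t\mapsto q_{E_n}tq_{E_n}$ is cpc on $\mathcal M(\cstar{A})$ since $q_{E_n}$ is a projection, and $\phi_{f(\max E_n)}$ is cpc by choice; hence each coordinate map of $\Phi_{f,E}$ is cpc. Complete positivity of the product map $\Phi_{f,E}$ then follows coordinatewise, and contractivity follows from $\norm{\Phi_{f,E}(t)} = \sup_n\norm{\phi_{f(\max E_n)}(q_{E_n}tq_{E_n})} \le \sup_n \norm{q_{E_n}tq_{E_n}} \le \norm{t}$. To see that $\Phi_{f,E}$ carries $\cstar{A}$ into $\bigoplus_n F_{f(\max E_n)}$, I would use that for $a\in\cstar{A}$ and any $\varepsilon>0$, norm-density of $(q_k)$ gives $k$ with $\norm{q_k a - a}<\varepsilon$; for all $n$ large enough that $E_n\cap[0,k]=\emptyset$ we have $q_{E_n}q_k=0$, so $\norm{q_{E_n}a q_{E_n}}\le\norm{q_{E_n}(a-q_k a)}<\varepsilon$, and applying the contractive $\phi_{f(\max E_n)}$ keeps the estimate.

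For $\Psi_{f,E}$, the key point is convergence of the defining series. Since the projections $q_{E_n}$ are pairwise orthogonal and each summand $q_{E_n}\psi_{f(\max E_n)}(x_n)q_{E_n}$ is bounded by $\norm{x_n}\le\norm{x}$, the partial sums form a bounded, pairwise-orthogonal (on both sides) sequence and converge strictly in $\mathcal M(\cstar{A})$ to an element of norm $\sup_n\norm{q_{E_n}\psi_{f(\max E_n)}(x_n)q_{E_n}}\le\norm{x}$. This already gives contractivity. Complete positivity of each partial sum is immediate from the sum decomposition together with cpc-ness of $\psi_{f(\max E_n)}$ and of compression by $q_{E_n}$; complete positivity passes to the strict limit because the cone of positives in $\mathcal M(\cstar{A})$ is strictly closed on bounded sets, and similarly at each matrix level.

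Finally, for the ideal statement for $\Psi_{f,E}$, I would argue that if $x\in\bigoplus_n F_{f(\max E_n)}$, i.e.\ $\norm{x_n}\to 0$, then $\norm{q_{E_n}\psi_{f(\max E_n)}(x_n)q_{E_n}}\to 0$ and each term lies in $\cstar{A}$ (since $\psi_{f(\max E_n)}$ has range in $\cstar{A}$, which is an ideal in $\mathcal M(\cstar{A})$). The orthogonality of the $q_{E_n}$'s then implies that the partial sums converge in \emph{norm} to $\Psi_{f,E}(x)$, and the norm limit lies in $\cstar{A}$. No step here is delicate; the only thing to watch is using orthogonality both for strict convergence in the general case and for norm convergence in the $c_0$-case, which already distinguishes the two halves of the proposition.
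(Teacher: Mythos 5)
Your argument is correct and is exactly the routine verification the paper has in mind (the paper states this proposition with no written proof, calling it immediate from the definitions). All four claims are handled the right way: coordinatewise cp for $\Phi_{f,E}$, the approximate-identity estimate $\norm{q_{E_n}aq_{E_n}}\le\norm{a-q_k a}$ for the $\bigoplus$-containment, orthogonality of the $q_{E_n}$ for the norm and strict convergence of the $\Psi$-series, and norm convergence when $\norm{x_n}\to 0$. One small imprecision in phrasing: you say ``the partial sums form a bounded, pairwise-orthogonal sequence'' --- it is the \emph{summands} $q_{E_n}\psi_{f(\max E_n)}(x_n)q_{E_n}$ that are pairwise orthogonal, not the partial sums themselves. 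What actually drives strict convergence is that the tail $T_M-T_N$ lies in the corner $(1-q_{\min E_{N+1}-1})\mathcal M(\cstar{A})(1-q_{\min E_{N+1}-1})$ and has norm $\le\norm{x}$, so $\norm{a(T_M-T_N)}\le\norm{a(1-q_{\min E_{N+1}-1})}\norm{x}\to 0$ for $a\in\cstar{A}$; this is implicit in your sketch but worth making explicit if you write it up.
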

The following is a crucial lemma for our construction, since it allows us to see $\mathcal M(\cstar{A})/\cstar{A}$ as union of ``building blocks''. The proof resembles techniques used in \cite[Theorem 3.1]{Elliott.Der2} (see also \cite[Lemma 1.2]{Farah.C} or \cite{Arveson:Notes}).

  \begin{lemma}\label{lem:summing}

 Let $t\in\mathcal M(\cstar{A})$. Then there are $f\in\NN^\NN$, $E\in\PP$, and $x^i\in\prod F_{f(\max E^i_n)}$, for $i=0,1$, such that
    \[
      t - (\Psi_{f,E^0}(x^0) + \Psi_{f,E^1}(x^1))\in\cstar{A}
    \]

  \end{lemma}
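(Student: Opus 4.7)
The plan is to choose a partition $E \in \PP$ tailored to $t$ so that, modulo $\cstar{A}$, $t$ collapses to a ``tridiagonal'' sum in the block projections $q_{E_n}$, and then to split that tridiagonal sum into two pieces that are realized by $\Psi_{f, E^0}$ and $\Psi_{f, E^1}$ after applying the CPAP. For the first step, since $q_k \to 1$ strictly and each $q_{k_{n-1}} t$ lies in $\cstar{A}$, the quantity $\norm{q_{k_{n-1}} t (1 - q_{k_n})}$ tends to $0$ as $k_n \to \infty$. I would therefore choose a strictly increasing sequence $k_0 < k_1 < \cdots$ inductively so that
\[
\norm{q_{k_{n-1}} t (1 - q_{k_n})} + \norm{(1 - q_{k_n}) t q_{k_{n-1}}} < 2^{-n},
\]
and let $E_n = [k_n, k_{n+1})$. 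Because $q_{E_n}$ is a projection dominated by a tail $1 - q_{k_{n-1}}$ of the approximate identity, a short estimate yields $\norm{q_{E_m} t q_{E_n}} < 2^{-\max(m,n)}$ whenever $|m - n| \geq 2$. Grouping the off-tridiagonal sum by its row or column,
\[
\sum_{|m-n| \geq 2} q_{E_m} t q_{E_n} = \sum_{n \geq 2} q_{k_{n-1}} t q_{E_n} + \sum_{m \geq 2} q_{E_m} t q_{k_{m-1}},
\]
produces an absolutely norm-convergent series in $\cstar{A}$, so $t \equiv S \pmod{\cstar{A}}$ with $S = \sum_{|m-n| \leq 1} q_{E_m} t q_{E_n}$ the strict-limit tridiagonal part.

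Next, I would split $S$ using the overlapping partitions $E^0, E^1$ by setting
\begin{align*}
s^0_n &= q_{E_{2n}} t q_{E_{2n}} + q_{E_{2n}} t q_{E_{2n+1}} + q_{E_{2n+1}} t q_{E_{2n}}, \\
s^1_n &= q_{E_{2n+1}} t q_{E_{2n+1}} + q_{E_{2n+1}} t q_{E_{2n+2}} + q_{E_{2n+2}} t q_{E_{2n+1}}.
\end{align*}
By construction $s^i_n \in q_{E^i_n} \cstar{A} q_{E^i_n}$, and bookkeeping of which matrix entries $q_{E_j} t q_{E_k}$ appear shows that every diagonal $(k,k)$ and every neighbouring pair $(k,k+1), (k+1,k)$ is counted exactly once in $\sum_n s^0_n + \sum_n s^1_n$, so this strict-convergent sum equals $S$.

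Finally, I would invoke the CPAP. Since $\bigcup_m Y_m$ is norm-dense in $\cstar{A}$, for each $n$ and each $i \in \{0, 1\}$ I can pick $y^i_n \in Y_{m^i_n}$ with $\norm{y^i_n - s^i_n} < 2^{-n}$ and $m^i_n$ so large that $\e_{m^i_n} \norm{y^i_n} < 2^{-n}$. Setting $f(\max E^i_n) = m^i_n$ (and extending $f$ arbitrarily elsewhere) and $x^i_n = \phi_{m^i_n}(y^i_n) \in F_{f(\max E^i_n)}$, the estimate $\norm{\psi_{m^i_n}(x^i_n) - s^i_n} < 2^{-n+1}$ is preserved under compression by $q_{E^i_n}$ because $s^i_n = q_{E^i_n} s^i_n q_{E^i_n}$. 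The resulting error terms are pairwise orthogonal with summable norms, so $\sum_n q_{E^i_n} \psi_{m^i_n}(x^i_n) q_{E^i_n}$ differs from $\sum_n s^i_n$ by a norm-convergent element of $\cstar{A}$. Combining the three steps then yields $\Psi_{f, E^0}(x^0) + \Psi_{f, E^1}(x^1) \equiv S \equiv t \pmod{\cstar{A}}$. The main technical obstacle is the first step: the partition $E$ must be chosen so that the ``far'' off-tridiagonal mass of $t$ is absorbed into $\cstar{A}$, which the inductive choice of the $k_n$ accomplishes by exploiting the gap between norm and strict convergence on $\cstar{A}$.
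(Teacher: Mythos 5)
Your proof is correct and follows essentially the same route as the paper's: choose $k_n$ so that the off-tridiagonal blocks of $t$ relative to $E_n=[k_n,k_{n+1})$ are norm-summable (hence lie in $\cstar{A}$), split the tridiagonal remainder across the interleaved partitions $E^0,E^1$ exactly as the paper's $t^0,t^1$, and use the CPAP to realize each block via $\Psi_{f,E^i}$ with summable orthogonal errors. Your intermediate step of passing through a nearby element of $Y_{m^i_n}$ before applying $\phi_{m^i_n}$ just makes the CPAP estimate slightly more explicit than the paper's phrasing, but the argument is the same.
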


  \begin{proof}

    If $k\in\NN$ and $\e > 0$, then $t q_k$ and $q_k t$ are both in $\cstar{A}$, hence we may find $\ell > k$ such that    $\norm{q_\ell t q_k - t q_k} + \norm{q_k t q_\ell - q_k t} < \e$.  Applying  this process recursively, we may find $0=k_0<k_1 < k_2 < \cdots < k_n < \cdots $ such that for every $n\in\NN$,
    \[
      \norm{q_{k_{n+1}} t q_{k_n} - t q_{k_n}} + \norm{q_{k_n} t q_{k_{n+1}} - q_{k_n} t} < \frac{1}{2^n}
    \]
    Let $E_n = [k_n, k_{n+1})$ for each $n \ge 0$.  Put

  \begin{align*}
      t^0 & = \sum_{n=0}^\infty q_{E_{2n}} t q_{E_{2n}}+ q_{E_{2n+1}} t q_{E_{2n}}+ q_{E_{2n}} t q_{E_{2n+1}} \\
      t^1 & = \sum_{n=0}^\infty q_{E_{2n+1}} t q_{E_{2n+1}}+q_{E_{2n+1}} t q_{E_{2n+2}} + q_{E_{2n+2}} t q_{E_{2n+1}}
    \end{align*}
    Then,
    \[
      q_{k_m}(t - (t^0 + t^1)) q_{k_m} = \sum_{n = 0}^m (1 - q_{k_{n+2}}) t q_{E_n} + q_{E_n} t (1 - q_{k_{n+2}})
    \]
    Since the above sum has norm smaller than $2^{-m+1}$, it follows that $t - (t^0 + t^1)\in\cstar{A}$.  Now, for   each $n\in\NN$, choose $f(n)\in\NN$ large enough that for each $i < 2$,
    \[
      \norm{q_{E_n^i} t^i q_{E_{n^i}} - \psi_{f(n)}(\phi_{f(n)}(q_{E_n^i} t^i q_{E_n^i}))} < \frac{1}{2^n}
    \]
    It follows that
    \[
      t^i - \Psi_{f,E^i}(\Phi_{f,E}(t^i))\in\cstar{A}
    \]
   Setting $x^i = \Phi_{f,E}(t^i)$, the thesis follows.

  \end{proof}

Let $D[E]=\{x\in\mathcal M(\cstar{A})\mid x=\sum q_{E_n}xq_{E_n}\}$ and, for $f\in\NN^\NN$, define $D_{f}[E]$ to be the set of all $x\in D[E]$ such that
\[
       \forall n\forall m\geq f(\max E_n)  (\norm{q_{E_n}xq_{E_n}-\psi_m(\phi_m(q_{E_n}xq_{E_n}))}<2^{-n}).
\]
We define $G[E]\subseteq D[E^1]$ by letting $x=\sum q_{E^1_n}x_nq_{E^1_n}\in G[E]$ if and only if $x_n\neq 0$ implies $x_n\notin D[E^0]$ and $G_f[E]=G[E]\cap D_f[E^1]$.
The following properties follow from the definition and Lemma \ref{lem:summing} above.

\begin{proposition}\label{prop:mainproperties}
\begin{enumerate}
\item If $f\leq^*g$ and $E\in\PP$, then $\pi(D_f[E])\subseteq \pi(D_g[E])$;
\item For all $E\leq_1 F$ and $f\leq^*g$ then \[\pi(D_f[E^0]+D_f[E^1])\subseteq\pi(D_g[F^0]+D_g[F^1]);\]
\item For every $t\in\mathcal M(\cstar{A})$ there are $f$, $E$, $x_0$ and $x_1$ such that $t-x_0-x_1\in\cstar{A}$, $x_0\in D_f[E^0]$, $x_1\in G_f[E]$. Moreover, if $t$ is positive, $x_0$ and $x_1$ may be chosen to be positive.
\end{enumerate}
\end{proposition}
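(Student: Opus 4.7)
My plan is to grind through the definitions of $D_f[E]$, $G_f[E]$, and $\Psi_{f,E^i}$, invoking Lemma~\ref{lem:summing} for the existence part.

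For (1), I would start from $x\in D_f[E]$ and fix $N$ large enough that $f(\max E_n)\leq g(\max E_n)$ for every $n\geq N$. For such $n$ the defining quantifier ``$\forall m\geq g(\max E_n)$'' is weaker than ``$\forall m\geq f(\max E_n)$'', so the $D_g[E]$-inequality is automatic. For the finitely many exceptional $n<N$, each $q_{E_n}=q_{\max E_n}-q_{\min E_n-1}$ lies in $\cstar A$, whence $\sum_{n<N}q_{E_n}xq_{E_n}\in\cstar A$; subtracting it off replaces $x$ by an element of $D_g[E]$ in the same $\pi$-coset.

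For (2), I would exploit that $E\leq_1 F$ forces each super-block $E_i\cup E_{i+1}$ to be eventually contained in some $F_j\cup F_{j+1}$, which is itself a super-block of either $F^0$ (if $j$ is even) or $F^1$ (if $j$ is odd). Partitioning the $E^0$-indices (and independently the $E^1$-indices) of $x_0,x_1$ by this parity and discarding a finite-support $\cstar A$-remainder exhibits $x_0+x_1$ modulo $\cstar A$ as a sum of two pieces, one block-diagonal in $F^0$ and one in $F^1$. The $D_g[F^i]$-approximation condition then follows from the $D_f[E^i]$-condition by choosing $g$ sufficiently large: the aggregated contribution inside each $F^i$-super-block is a finite sum in $\cstar A$, which is approximated uniformly in $m\geq g(\max F^i_k)$ by $\psi_m\circ\phi_m$ thanks to the pointwise convergence $\psi_m(\phi_m(a))\to a$ for $a\in\cstar A$ (itself a consequence of the density of $\bigcup Y_n$ and the cpc norm bound on $\psi_m\circ\phi_m$).

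For (3), the plan is to apply Lemma~\ref{lem:summing} to obtain $f,E,x^0,x^1$ with $t-\Psi_{f,E^0}(x^0)-\Psi_{f,E^1}(x^1)\in\cstar A$. By construction $\Psi_{f,E^i}(x^i)\in D[E^i]$; to upgrade this to $D_f[E^i]$ I would strengthen the choice of $f(n)$ in the proof of Lemma~\ref{lem:summing} so that the approximation $\norm{\psi_m(\phi_m(\cdot))-(\cdot)}<2^{-n}$ holds for \emph{all} $m\geq f(\max E^i_n)$ simultaneously, not just at $m=f(\max E^i_n)$---this is permissible by the same pointwise convergence. To put the $E^1$-piece in $G_f[E]$, I would transfer to the $E^0$-piece every $E^1$-block of $\Psi_{f,E^1}(x^1)$ that is already block-diagonal in $E^0$; this rearrangement preserves all desired properties and leaves the sum unchanged modulo $\cstar A$.

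The subtle point, where I expect the main obstacle, is the positivity addendum, since the pieces $t^0,t^1$ produced by Lemma~\ref{lem:summing} are only tridiagonal and generally lose positivity in their off-diagonal entries $q_{E_i}tq_{E_{i\pm 1}}$. My plan is to replace $t^0,t^1$ by the manifestly positive compressions $s_i=\sum_n q_{E^i_n}tq_{E^i_n}$ and to correct the resulting ``doubled diagonal'' $\sum_i q_{E_i}tq_{E_i}$ by refining $E$ so that this extra diagonal can be absorbed positively by suitable small shifts of $s_0$ and $s_1$; composing then with $\Psi_{f,E^i}\circ\Phi_{f,E^i}$, which is positivity-preserving because $\phi_m,\psi_m$ are cpc, yields positive $x_0\in D_f[E^0]$ and $x_1\in G_f[E]$ with $t-x_0-x_1\in\cstar A$.
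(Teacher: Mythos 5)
The paper gives no proof of this proposition (it is asserted to ``follow from the definition and Lemma~\ref{lem:summing}''), so I can only assess your argument on its own terms.

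Your treatment of (1) is correct: on the cofinite set of indices where $f(\max E_n)\leq g(\max E_n)$ the $D_g$-condition is automatically weaker, and the finitely many exceptional blocks contribute an element of $\cstar A$ which can be discarded.

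For (2), the re-blocking-by-parity idea and the discard of a finite $\cstar A$-remainder are the right moves, but the way you close the argument is not. First, $g$ is \emph{given} in the statement, along with $f\leq^* g$; you are not free to ``choose $g$ sufficiently large.'' Second, the appeal to the pointwise convergence $\psi_m(\phi_m(a))\to a$ does not give what the definition of $D_g[F^i]$ requires: pointwise convergence yields a threshold depending on the particular element $a$, whereas $D_g[F^i]$ demands the bound $<2^{-m}$ for every $m'\geq g(\max F^i_m)$, uniformly over all $x\in D_f[E^0]+D_f[E^1]$. The correct route is to use linearity of $\psi_{m'}\circ\phi_{m'}$ to bound the error of the aggregated $F^i_m$-block by the sum $\sum_{n\in S_m}2^{-n}$ over the $E$-super-block indices absorbed into $F^i_m$, and then argue that $\min S_m$ grows fast enough that this sum is $<2^{-m}$ for all but finitely many $m$ (one has $\min S_m\geq m$ always, with strict inequality once the two partitions have diverged, and the finitely many exceptional blocks are again absorbed into $\cstar A$). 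Your proposal skips this counting entirely and substitutes a different, insufficient mechanism.

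For (3), the existence part is essentially right: your observation that the choice of $f(n)$ inside Lemma~\ref{lem:summing} must be strengthened from ``at $m=f(\max E^i_n)$'' to ``for all $m\geq f(\max E^i_n)$'' is a genuine point, and is harmless since $f$ is being constructed. The positivity addendum is where the proposal fails. Writing $s_i=\sum_n q_{E^i_n}tq_{E^i_n}$ (which is indeed positive) produces $s_0+s_1-t\equiv\sum_{n\geq 1}q_{E_n}tq_{E_n}$ modulo $\cstar A$, and you then propose to ``absorb'' this surplus ``positively by suitable small shifts of $s_0$ and $s_1$.'' That cannot work as described: for a positive $t$ and projections $q\leq p$ one does \emph{not} have $qtq\leq ptp$ (take $t=\bigl(\begin{smallmatrix}1&1\\1&1\end{smallmatrix}\bigr)$, $q=\bigl(\begin{smallmatrix}1&0\\0&0\end{smallmatrix}\bigr)$, $p=1$), so subtracting the surplus diagonal pieces $q_{E_n}tq_{E_n}$ from $s_0$ or $s_1$ need not leave a positive element, no matter how the partition $E$ is refined. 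Positivity of the building blocks $t^0,t^1$ is a real obstacle here and requires a different device; the proposal does not resolve it and it would need to be worked out before the claim can be considered proved.
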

Let $\Lambda$ be an automorphism of $\mathcal M(\cstar{A})/\cstar{A}$.
 By Proposition~\ref{prop:cpc-products-basic-properties}, for each $f$ and $E$, the map
\[
\Psi'_{f,E}\colon\prod F_{f(\max E_n)}/\bigoplus F_{f(\max E_n)}\to \mathcal M(\cstar{A})/\cstar{A},
\]
 defined by 
\[
\Psi'_{f,E}(\pi(x))=\pi(\Psi_{f,E}(x)),
\]
is a well-defined cpc map, and so is $\Lambda_{f,E}$ defined as 
\[
\Lambda_{f,E}=\Lambda\circ\Psi'_{f,E}\colon \prod F_{f(\max E_n)}/\bigoplus F_{f(\max E_n)}\to\mathcal M(\cstar{A})/\cstar{A}.
\]

\begin{lemma}\label{lem:liftforeverything.Borel}

Let $E\in\PP$, and $f\in\NN^\NN$ such that 
\[
\lim_n\norm{q_{E_n}- \psi_{f(\max E_n)}(\phi_{f(\max E_n)}(q_{E_n}))}\to 0.
\]
Suppose that $\alpha_{f,E}$ is an asymptotically additive map that is a lift of $\Lambda_{f,E}$ on a nonmeager ideal $\SI_{f,E}$. Then $\alpha_{f,E}$ is a lift of $\Lambda_{f,E}$ on 
\[
\{\Psi_{f,E}(x)\mid x\in D[E]\wedge\lim\norm{q_{E_n}xq_{E_n}-\psi_{f(\max E_n)}(\phi_{f(\max E_n)}(x))}\to 0\},
\]
 that is, if $x\in D[E]$ and 
\[
\lim\norm{q_{E_n}xq_{E_n}-\psi_{f(\max E_n)}(\phi_{f(\max E_n)}(x))}\to 0,
\] 
then 
\[
\pi(\alpha_{f,E}(\Phi_{f,E}(x)))-\Lambda(\pi(x))=0.
\]

\end{lemma}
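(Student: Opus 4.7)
Proof plan.

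First I would unwind the conclusion. The hypothesis $\lim_n\norm{q_{E_n}xq_{E_n}-\psi_{f(\max E_n)}(\phi_{f(\max E_n)}(x))}=0$ together with $x\in D[E]$ says that the block-diagonal element $x-\Psi_{f,E}(\Phi_{f,E}(x))$ has block norms tending to $0$ along mutually orthogonal corners $q_{E_n}\cstar{A}q_{E_n}$, and is therefore in $\cstar{A}$. Hence $\pi(x)=\pi(\Psi_{f,E}(\Phi_{f,E}(x)))$, and $\Lambda(\pi(x))=\Lambda(\pi(\Psi_{f,E}(y)))=\Lambda_{f,E}(\pi(y))$, where $y:=\Phi_{f,E}(x)\in\prod F_{f(\max E_n)}$. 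The task reduces to proving the single identity $\pi(\alpha_{f,E}(y))=\Lambda_{f,E}(\pi(y))$.

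Next I would set up a cut-off calculus. By Remark~\ref{remark:asymptorthogonal} we may assume the ranges of the summands $\alpha_n^{f,E}$ are pairwise orthogonal and, using the assumption $\norm{q_{E_n}-\psi_{f(\max E_n)}(\phi_{f(\max E_n)}(q_{E_n}))}\to 0$, that they are aligned with the block projections $q_{E_n}$. This gives, for every $A\subseteq\NN$, the identity $\pi(\alpha_{f,E}(yP_A))=\pi(q_A^E)\pi(\alpha_{f,E}(y))=\pi(\alpha_{f,E}(y))\pi(q_A^E)$ in $\mathcal M(\cstar{A})/\cstar{A}$. The analogous identity on the $\Psi$-side, combined with the multiplicativity of the automorphism $\Lambda$, yields $\Lambda_{f,E}(\pi(yP_A))=\Lambda(\pi(q_A^E))\,\Lambda_{f,E}(\pi(y))$. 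Writing $p:=\pi(\alpha_{f,E}(y))$ and $r:=\Lambda_{f,E}(\pi(y))$, the hypothesis that $\alpha_{f,E}$ lifts $\Lambda_{f,E}$ on $M[A]$ for each $A\in\SI_{f,E}$ rewrites as the family of equations
\[
\pi(q_A^E)\,p \;=\; \Lambda(\pi(q_A^E))\,r,\qquad A\in\SI_{f,E}.
\]

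Finally I would derive a contradiction assuming $p\neq r$. Rearranging the boxed identity gives $\pi(q_A^E)(p-r)=[\Lambda(\pi(q_A^E))-\pi(q_A^E)]\,r$, and multiplying on the left by $1-\pi(q_A^E)$ produces $(1-\pi(q_A^E))\Lambda(\pi(q_A^E))\,r=0$, with the symmetric statement on the other side. Together with the commutation of $p$ with each $\pi(q_A^E)$ and of $r$ with each $\Lambda(\pi(q_A^E))$, this forces $(p-r)$ to be annihilated (on the appropriate side) by each $\pi(q_A^E)$ with $A\in\SI_{f,E}$, i.e.\ the range projection of $(p-r)$ is dominated by $1-\pi(q_A^E)$ for every such $A$. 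Equivalently, the projection $q:=1-\mathrm{supp}(p-r)$ dominates $\pi(q_A^E)$ for every $A\in\SI_{f,E}$. Since $\SI_{f,E}$ is a nonmeager dense ideal, Proposition~\ref{prop:onedominates} forces $q=1$, whence $p-r=0$, contradicting our assumption.

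The main obstacle is Paragraph~3: pushing from the family of ``local'' equations $\pi(q_A^E)p=\Lambda(\pi(q_A^E))r$ indexed by $A\in\SI_{f,E}$ to the ``global'' equality $p=r$. The subtlety is that $\Lambda(\pi(q_A^E))$ need not equal $\pi(q_A^E)$, so the algebra on the two sides of the equation involves different projection families. Dealing with this requires tracking carefully the supports of $p-r$ and of $[\Lambda(\pi(q_A^E))-\pi(q_A^E)]r$ simultaneously, and this is exactly where Proposition~\ref{prop:onedominates} is indispensable: it converts nonmeagerness of $\SI_{f,E}$ into the statement that no nontrivial projection can be orthogonal to every $\pi(q_A^E)$ with $A\in\SI_{f,E}$, which is precisely what closes the argument.
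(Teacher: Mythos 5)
The reduction in your first paragraph is correct: indeed $\Psi_{f,E}(\Phi_{f,E}(x))-x\in\cstar{A}$ under the stated hypotheses, so the problem becomes $\pi(\alpha_{f,E}(y))=\Lambda_{f,E}(\pi(y))$ for $y=\Phi_{f,E}(x)$. The proof runs into trouble at the next two steps, though.

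Your ``cut-off calculus'' in paragraph 2 asserts $\pi(\alpha_{f,E}(yP_A))=\pi(q_A^E)\,\pi(\alpha_{f,E}(y))$ and that $p$ commutes with each $\pi(q_A^E)$. Both claims require the summands $\alpha_{f,E,n}$ to have ranges contained in the corners $q_{E_n}\cstar{A}q_{E_n}$, i.e.\ to be aligned with the block projections $q_{E_n}$. That alignment is not available: the asymptotically additive map supplied by Theorem~\ref{thm:FA.liftingtheorem} (via Lemma~\ref{lemma:C->asymptotic}) has $\ran(\alpha_{f,E,n})\subseteq(q_{k_{n+1}}-q_{k_n})\cstar{A}(q_{k_{n+1}}-q_{k_n})$ for some increasing sequence $(k_n)$ determined by the construction, which bears no relation to the sequence $(E_n)$. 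The hypothesis you invoke, $\norm{q_{E_n}-\psi_{f(\max E_n)}(\phi_{f(\max E_n)}(q_{E_n}))}\to 0$, concerns only the cpc maps $\phi,\psi$ and has nothing to do with where the $\alpha_{f,E,n}$ take their values; Remark~\ref{remark:asymptorthogonal} gives mutual orthogonality of the ranges, not alignment with $(q_{E_n})$. What the lifting hypothesis actually gives, for $A\in\SI_{f,E}$, is $\pi(\alpha_{f,E}(yP_A))=\Lambda(\pi(q_A^E))\,r$, with no direct relation to $\pi(q_A^E)\,p$.

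Even granting your displayed family of equations, the derivation in paragraph 3 does not close. From $\pi(q_A^E)(p-r)=(\Lambda(\pi(q_A^E))-\pi(q_A^E))r$ and $(1-\pi(q_A^E))\Lambda(\pi(q_A^E))r=0$ one gets $\pi(q_A^E)(p-r)=\pi(q_A^E)(\Lambda(\pi(q_A^E))r-r)$, but nothing forces this to vanish; your claim that ``$(p-r)$ is annihilated by each $\pi(q_A^E)$'' is exactly the content that would need a new argument. In short, the step from the local relations to the global equality $p=r$ is not there.

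The paper's proof avoids both problems. It first proves the single identity $1-\alpha_{f,E}(\Phi_{f,E}(1))\in\cstar{A}$: for $A\in\SI_{f,E}$ one has $\pi(\alpha_{f,E}(\Phi_{f,E}(1)))\geq\Lambda(\pi(q_A^E))$, so applying $\Lambda^{-1}$ produces a projection dominating all $\pi(q_A^E)$, $A\in\SI_{f,E}$, and Proposition~\ref{prop:onedominates} finishes. (Note the composition with $\Lambda^{-1}$ --- this is exactly the move that lets one invoke Proposition~\ref{prop:onedominates}, which speaks about $\pi(q_A^E)$ and not about $\Lambda(\pi(q_A^E))$.) It then defines $\SI_x=\set{A}{\alpha_{f,E}(\Phi_{f,E}(q_A^E))(\alpha_{f,E}(\Phi_{f,E}(x))-y)\in\cstar{A}}$, observes that this is a Borel ideal containing the nonmeager ideal $\SI_{f,E}$, and concludes $\SI_x=\mathcal P(\NN)$ because a dense, proper Borel ideal would have to be meager. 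Plugging $A=\NN$ into the definition of $\SI_x$ and using $\pi(\alpha_{f,E}(\Phi_{f,E}(1)))=1$ gives the result. The Borel-ideal trick is the missing ingredient in your attempt: it converts nonmeagerness of $\SI_{f,E}$ into the clean statement $\NN\in\SI_x$, sidestepping entirely the need to control $\alpha_{f,E}$ against the block projections $q_{E_n}$.
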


\begin{proof}

We will first show that $\alpha_{f,E}(\Phi_{f,E}(1))-1\in\cstar{A}$, and then prove that this is sufficient to obtain our thesis. 
Recall that if $E\in\PP$ and $A\subseteq\NN$, we have defined $q^E_A=\sum_{n\in A}q_{E_n}$. $q^E_A$ satisfies that 
\[
\norm{q_{E_n}q^E_Aq_{E_n}-\psi_{f(\max E_n)}(\phi_{f(\max E_n)}(q^E_A))}\to 0\text { as }n\to\infty.
\]
If, in addition, we have $A\in\SI_{f,E}$, we have that  
\[
\alpha_{f,E}(\Phi_{f,E}(q^E_A))-\Lambda(\pi(q^E_A))\in\cstar{A},
\]
 since $\Phi_{f,E}(q^E_A)$ has support contained in $\SI_{f,E}$ and $\Phi_{f,E}(\Psi_{f,E}(q^E_A))-q^E_A\in\cstar{A}$.  Since $\alpha_{f,E}$ is asymptotically positive, being the lift of a positive map $\Lambda_{f,E}$, we have that $p\leq q\Rightarrow \pi(\alpha_{f,E}(p))\leq\pi(\alpha_{f,E}(q))$. Therefore, since $\Phi_{f,E}(q^E_A)\leq 1$, as an element of $\prod F_{f(\max E_n)}$, we have that $\pi(\alpha(1))$ dominates $\pi(\alpha_{f,E}(\Phi_{f,E}(q^E_A)))=\Lambda(\pi(q_A^E))$. Since $\Lambda$ is an automorphism and $\pi(\alpha_{f,E}(\Phi_{f,E}(1)))$ is a projection, we can apply Proposition~\ref{prop:onedominates} to have that 
\[
1-\alpha_{f,E}(\Phi_{f,E}(1))\in\cstar{A}.
\]
Fix now $x$ such that $x\in D[E]$ and 
\[
\lim\norm{q_{E_n}xq_{E_n}-\psi_{f(\max E_n)}(\phi_{f(\max E_n)}(x))}\to 0.
\]
 Let $y$ such that $\pi(y)=\Lambda(\pi(x))$ and define 
\[
\SI_x=\{A\subseteq\NN\mid \alpha_{f,E}(\Phi_{f,E}(q^E_A))(\alpha_{f,E}(\Phi_{f,E}(x))-\pi(y))\in\cstar{A}\}.\
\]
This is an ideal containing $\SI_{f,E}$ and so is nonmeager. Moreover, since $\alpha_{f,E}$ is strictly-strictly continuous, $x$, $E$, $f$, $\Phi_{f,E}$ and $y$ are fixed, and $\cstar{A}$ is Borel in the strict topology of $\mathcal M(\cstar{A})$, $\SI_x$ is Borel. Since every Borel dense nontrivial ideal in $\mathcal P(\NN))$ is meager, we have $\SI_x=\mathcal P(\NN)$. Since $\NN\in\SI_x$ and $1-\alpha_{f,E}(\Phi_{f,E}(1))\in\cstar{A}$, we have 
\[\pi(\alpha_{f,E}(\Phi_{f,E}(x)))-\Lambda(\pi(x)))=0
\]
as required.
\end{proof}

Assume now $\OCA_\infty$ and $\MA_{\aleph_1}$. With in mind the definition of skeletal map from \ref{defin:FA.Skel}, define 
\[
X=\{(f,E,\alpha^0,\alpha^1)\mid\alpha^i\text{ is a skeletal lift of }\Lambda_{f,E^i} \text{ on }\Phi_{f,E^i}(D_f[E^i])\}.
\]
By Theorem \ref{thm:FA.liftingtheorem}, Lemma \ref{lem:liftforeverything.Borel}, and the fact that the asymptotically additive maps in Lemma~\ref{lemma:C->asymptotic} can be chosen to be skeletal\footnote{
The need of choosing skeletal maps instead of, simply, asymptotically additive ones, is in that the set of all skeletal maps has a natural separable topology, see Proposition~\ref{prop:SkeletalPolish}.}, for every $E\in\PP$ and $f\in\NN^{\NN}$ there are $\alpha^0$ and $\alpha^1$ such that $(f,E,\alpha^0,\alpha^1)\in X$.

\begin{lemma}\label{lem:afteragivenpoint}

Let $(f,E,\alpha^0,\alpha^1), (g,F,\beta^0,\beta^1)\in X$ and $\epsilon>0$. Then there is $M$ such that for all $n>M$ and $x=q_{[M,n]}xq_{[M,n]}$ with $\norm{x}\leq 1$, if $x\in D_f[E^i]\cap D_g[F^j]$ we have \[\norm{\alpha^i(\Phi_{f,E^i}(x))-\beta^j(\Phi_{g,F^j}(x))}\leq\epsilon.\]

\end{lemma}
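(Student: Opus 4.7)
My plan is a proof by contradiction, exploiting the fact that two putative lifts of $\Lambda\circ\pi$ must agree modulo $\cstar{A}$. First I would observe that both $\alpha^i\circ\Phi_{f,E^i}$ and $\beta^j\circ\Phi_{g,F^j}$ lift $\Lambda\circ\pi$ on $D_f[E^i]$ and $D_g[F^j]$ respectively: membership in $D_f[E]$ forces $\norm{q_{E_n}xq_{E_n}-\psi_{f(\max E_n)}(\phi_{f(\max E_n)}(q_{E_n}xq_{E_n}))}<2^{-n}\to 0$, which is precisely the hypothesis of Lemma~\ref{lem:liftforeverything.Borel}. Consequently, for every $x\in D_f[E^i]\cap D_g[F^j]$, the difference $\alpha^i(\Phi_{f,E^i}(x))-\beta^j(\Phi_{g,F^j}(x))$ lies in $\cstar{A}$.

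Next I would suppose the lemma fails for some $\epsilon>0$ and recursively choose $M_1<n_1<M_2<n_2<\cdots$ with $M_k\to\infty$, together with counterexamples $x_k=q_{[M_k,n_k]}x_kq_{[M_k,n_k]}\in D_f[E^i]\cap D_g[F^j]$ of norm at most $1$ satisfying
\[
\norm{\alpha^i(\Phi_{f,E^i}(x_k))-\beta^j(\Phi_{g,F^j}(x_k))}>\epsilon.
\]
At each step I would pick $M_{k+1}$ larger than the right-endpoints of the $E^i$- and $F^j$-blocks containing $n_k$; this guarantees that no block of either partition crosses the gap between $[M_k,n_k]$ and $[M_{k+1},n_{k+1}]$, so every block of $E^i$ (and of $F^j$) meets at most one interval $[M_k,n_k]$. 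Setting $x=\sum_k x_k$ (strict sum), the orthogonality of supports gives $\norm{x}\le 1$. For each $l$, either $E^i_l$ meets a unique $[M_{k_l},n_{k_l}]$, in which case $q_{E^i_l}xq_{E^i_l}=q_{E^i_l}x_{k_l}q_{E^i_l}$ inherits the $2^{-l}$-approximation bound from $x_{k_l}\in D_f[E^i]$, or else $q_{E^i_l}xq_{E^i_l}=0$; the same reasoning handles $F^j$. Hence $x\in D_f[E^i]\cap D_g[F^j]$, and by the first paragraph the difference $d:=\alpha^i(\Phi_{f,E^i}(x))-\beta^j(\Phi_{g,F^j}(x))$ lies in $\cstar{A}$.

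On the other hand, strict-to-strict continuity of asymptotically additive maps on bounded sets gives $d=\sum_k d_k$ where $d_k=\alpha^i(\Phi_{f,E^i}(x_k))-\beta^j(\Phi_{g,F^j}(x_k))$ satisfies $\norm{d_k}>\epsilon$. Invoking the defining property of asymptotic additivity together with Remark~\ref{remark:asymptorthogonal}, after passing to a further subsequence I may assume the $d_k$ are supported in pairwise orthogonal strips $(q_{b_k}-q_{a_k})\cstar{A}(q_{b_k}-q_{a_k})$ with $a_k\to\infty$. Then for any $N$, as soon as $a_k>N$ we obtain
\[
\norm{(1-q_N)d(1-q_N)}\ge\norm{d_k}>\epsilon,
\]
contradicting $d\in\cstar{A}$. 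The main technical subtlety is the gap choice in the second paragraph: ensuring that no partition block straddles two intervals $[M_k,n_k]$ is precisely what allows the summed element $x$ to honestly belong to $D_f[E^i]\cap D_g[F^j]$, so that Lemma~\ref{lem:liftforeverything.Borel} can be applied to $x$ itself rather than only to the individual pieces $x_k$.
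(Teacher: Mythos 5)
Your proposal is correct and follows essentially the same route as the paper: a contradiction argument that sums disjointly supported counterexamples into a single element of $D_f[E^i]\cap D_g[F^j]$, uses that both composites lift $\Lambda\circ\pi$ there, and then exploits the orthogonality of the ranges of the asymptotically additive pieces (after passing to a subsequence) to keep the difference bounded away from $\cstar{A}$. Your explicit gap-choice ensuring no partition block straddles two intervals is a welcome tightening of the paper's "modifying $E$ and $F$ if necessary," but the argument is the same.
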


\begin{proof}

We work by contradiction. Since for every $f$ and $E$ there are $\alpha^0,\alpha^1$ such that $(f,E,\alpha^0,\alpha^1)\in X$, modifying $E$ and $F$ if necessary, we can assume there exist $\epsilon>0$ and $(f,E,\alpha^0,\alpha^1), (g,F,\beta^0,\beta^1)$ such that there is an increasing sequence $m_1<m_2<\cdots$ and $x_i=q_{[m_i,m_{i+1})}x_iq_{[m_i,m_{i+1})}$ with $\norm{x_i}=1$, $x_i\in D_f[E^0]\cap D_g[F^0]$
and such that for every $i$ we have that 

\[
\norm{\alpha^0(\Phi_{f,E^0}(x_i))-\beta^0(\Phi_{g,F^0}(x_i))}>\epsilon.
\]
Let
$x=\sum x_i$.  Then $x\in D_f[E^0]\cap D_g[F^0]$. Since $(f,E,\alpha^0,\alpha^1),(g,F,\beta^0,\beta^1)\in X$, we have that for all $z\in D_f[E^0]\cap D_g[F^0]$, 
\[
\pi(\alpha^0(\Phi_{f,E^0}(z)))=\Lambda(\pi(z))=\pi(\beta^0(\Phi_{g,F^0}(z))).
\]
On the other hand, by the definition of asymptotically additive map, for every $n$, we have that there is $m>n$ such that
\[
\img(\alpha^0_n)\img(\alpha^0_m)=\img(\beta^0_n)\img(\beta^0_m)=\img(\alpha_n^0)\img(\beta^0_m)=\img(\beta^0_n)\img(\alpha^0_m)=0
\]
This is because, being $\alpha^0$ and $\beta^0$ asymptotically additive, the range of $\alpha^0_n$ is contained in a corner of the form $(q_{i}-q_j)\cstar{A}(q_i-q_j)$ (and the same holds for $\beta^0_n$).

We can therefore find an increasing sequence $n_k$ such that for every $l>k$ we have that 

\begin{eqnarray*}
&&\norm{\alpha^0(\Phi_{f,E^0}(x_{n_k}))\alpha^0(\Phi_{f,E^0}(x_{n_l}))},\norm{\alpha^0(\Phi_{f,E^0}(x_{n_k}))\beta^0(\Phi_{g,F^0}(x_{n_l}))},\\
&&\norm{\beta^0(\Phi_{g,F^0}(x_{n_k}))\alpha^0(\Phi_{f,E^0}(x_{n_l}))}, \norm{\beta^0(\Phi_{g,F^0}(x_{n_k}))\beta^0(\Phi_{g,F^0}(x_{n_l}))}=0
\end{eqnarray*}
Setting $Y=\bigcup [m_{n_k},m_{n_k+1})$ and $z=q_Yxq_Y$ we have that $z\in D_f[E^0]\cap D_g[F^0]$, $\norm{z}=1$ and 

\begin{eqnarray*}
\norm{\pi(\alpha^0(\Phi_{f,E^0}(z))-\beta^0(\Phi_{g,F^0}(z)))}&\geq&\\
\limsup\norm{\alpha^0(\Phi_{f,E^0}(x_{n_k}))-\beta^0(\Phi_{g,F^0}(x_{n_k}))}&\geq&\epsilon,
\end{eqnarray*}
a contradiction to the fact that $(f,E,\alpha^0,\alpha^1),(g,F,\beta^0,\beta^1)\in X$. 

\end{proof}

 For a fixed $\epsilon>0$, define a coloring $[X]^2=K_0^\epsilon\cup K_1^\epsilon$ with \[\{(f,E,\alpha^0,\alpha^1),(g,F,\beta^0,\beta^0)\}\in K_0^\epsilon\] if and only if there is $n\in\NN$ and $x=q_nxq_n$ with $\norm{x}=1$ and such that one of the following conditions applies:

\begin{enumerate}
\item $x\in D_f[E^0]\cap D_g[F^0]$ and $\norm{\alpha^0(\Phi_{f,E^0}(x))-\beta^0(\Phi_{g,F^0}(x))}>\epsilon$
\item $x\in G_f[E]\cap D_g[F^0]$ and $\norm{\alpha^1(\Phi_{f,E^1}(x))-\beta^0(\Phi_{g,F^0}(x))}>\epsilon$
\item $x\in D_f[E^0]\cap G_g[F]$ and $\norm{\alpha^0(\Phi_{f,E^0}(x))-\beta^1(\Phi_{g,F^1}(x))}>\epsilon$
\item $x\in G_f[E]\cap G_g[F]$ and $\norm{\alpha^1(\Phi_{f,E^1}(x))-\beta^1(\Phi_{g,F^1}(x))}>\epsilon$.
\end{enumerate}

\begin{proposition}\label{prop:SkeletalPolish}

For every $\epsilon>0$, $K_0^\epsilon$ is open in some Polish topology.

\end{proposition}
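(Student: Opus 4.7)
The strategy is to topologize $X$ as a subspace of a natural product of Polish spaces, then show that for every point of $K_0^\epsilon$ the witnessing data produce an open neighborhood still lying in $K_0^\epsilon$. Concretely, I would equip $X$ with the subspace topology inherited from
\[
  \NN^\NN \;\times\; \NN^\NN \;\times\; \Skel(\cstar{A}) \;\times\; \Skel(\cstar{A}),
\]
where $f\in\NN^\NN$ carries the product topology, a partition $E\in\PP$ is encoded by the increasing sequence $(\max E_n)_n$, and $\Skel(\cstar{A})$ carries the separable uniform topology inherited from $\prod_{n,k}\Map(X_{n,k},\cstar{A})$ recorded just after Definition~\ref{defin:FA.Skel}. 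Since the conditions cutting out $X$ (asymptotic additivity plus the requirement that $(\alpha^0,\alpha^1)$ lifts $\Lambda_{f,E^i}$ on $\Phi_{f,E^i}(D_f[E^i])$) are Borel, one may refine to a Polish topology on $X$ in the standard way, yielding an induced Polish topology on $[X]^2$.

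To prove openness of $K_0^\epsilon$, fix an arbitrary pair
\[
\{T_1,T_2\} = \{(f,E,\alpha^0,\alpha^1),(g,F,\beta^0,\beta^1)\}\in K_0^\epsilon,
\]
together with witnessing data $(n,x,i,j)$ so that $x = q_n x q_n$ has $\norm{x}=1$, the appropriate membership clauses hold, and
\[
  \norm{\alpha^i(\Phi_{f,E^i}(x)) - \beta^j(\Phi_{g,F^j}(x))} > \epsilon.
\]
The plan is to produce an open neighborhood of $\{T_1,T_2\}$ on which the very same witness $(n,x,i,j)$ continues to work. Since $x$ is supported in $[0,n]$, only the initial segment of the partition $E$ up to the interval containing $n$ contributes nonzero summands to $\Phi_{f,E^i}(x) = (\phi_{f(\max E^i_m)}(q_{E^i_m}xq_{E^i_m}))_m$; the analogous statement holds for $F$ and $g$. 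Hence on a basic product neighborhood that freezes the first few coordinates of $E$ and $F$, together with the values of $f$ and $g$ on those coordinates, the tuples $\Phi_{f,E^i}(x)$ and $\Phi_{g,F^j}(x)$ are \emph{locally constant} in $(f,E,g,F)$. The membership conditions $x\in D_f[E^i]$ (a finite conjunction of strict norm inequalities $<2^{-m}$) and the $G$-variants (finite conjunctions of equalities and inequalities depending only on the frozen coordinates) are then either everywhere true or everywhere false on such a neighborhood, and by assumption they are true at our point.

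With $\Phi_{f,E^i}(x)$ and $\Phi_{g,F^j}(x)$ held constant on the neighborhood, the evaluations
\[
  \alpha^i \mapsto \alpha^i(\Phi_{f,E^i}(x))
  \quad\text{and}\quad
  \beta^j \mapsto \beta^j(\Phi_{g,F^j}(x))
\]
reduce to fixed finite sums of applications of the component maps $\alpha^i_m$, $\beta^j_m$ to specific arguments; by the definition of the uniform topology on $\Skel(\cstar{A})$, these maps are norm-continuous in $\alpha^i$ and $\beta^j$. Therefore the strict inequality survives on a sufficiently small neighborhood in $\Skel(\cstar{A})^2$. Intersecting all of these open conditions yields an open neighborhood of $\{T_1,T_2\}$ contained in $K_0^\epsilon$.

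The subtle point, which I expect to be the main obstacle, is that skeletal maps are honestly only step functions on the unit ball, so the evaluation $\alpha\mapsto\alpha_n(z)$ is \emph{not} continuous in $z$ across cell boundaries. This is precisely why the argument fixes the witness $x$ (and thereby fixes the inputs fed into the $\alpha^i$ and $\beta^j$) throughout the neighborhood, rather than attempting to vary $x$ or to approximate by a countable dense set. Any perturbation of $(f,E,g,F)$ small enough to leave the finitely many relevant partition intervals unchanged preserves the skeletal inputs intact, at which point the remaining dependence becomes genuine continuity in $\alpha^i$ and $\beta^j$, and the openness follows.
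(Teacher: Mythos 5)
Your proof takes essentially the same approach as the paper: embed $X$ into the Polish space $(\NN^\NN)^2\times\Skel(\cstar{A})^2$ and verify that each of the four clauses defining $K_0^\epsilon$ is open there. The paper's proof is extremely terse (it simply asserts the openness of conditions 1--4); your write-up supplies the key details the paper suppresses — freezing the finitely many partition coordinates relevant to a witness $x$ of bounded support so that $\Phi_{f,E^i}(x)$ becomes locally constant, then using uniform continuity of evaluation on $\Skel(\cstar{A})$ — and correctly flags the pitfall that skeletal maps are step functions, so the witness must be held fixed; the only superfluous step is the digression about refining $X$ to a Polish topology, which OCA$_\infty$ does not require (separable metrizable suffices).
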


\begin{proof}

Fix $\epsilon>0$. Let $Z=(\NN^\NN)^2\times \Skel(\cstar{A})^2$ with the product topology $\Skel(\cstar{A})$ is the set of all skeletal maps with the uniform topology as in Definition~\ref{defin:FA.Skel}, and $\NN^\NN$ is endowed with the Cantor topology.

This topology is Polish. Moreover, $X\subseteq Z$ and conditions 1.-4. above are open in this topology hence so is $K_0^\epsilon$.

\end{proof}

\begin{lemma}
Assume $\mathfrak b> \omega_1$. Then for every $\epsilon>0$, there is no uncountable $K_0^\epsilon$-homogeneous set.

\end{lemma}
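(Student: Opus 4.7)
Suppose toward contradiction that $H = \{h_\alpha = (f_\alpha, E_\alpha, \alpha^0_\alpha, \alpha^1_\alpha)\}_{\alpha < \omega_1}$ is uncountable and $K_0^\epsilon$-homogeneous. The plan is to combine the domination hypothesis $\mathfrak b > \omega_1$ with the separability of $X$ provided by Proposition~\ref{prop:SkeletalPolish} and the ``eventual agreement'' supplied by Lemma~\ref{lem:afteragivenpoint} in order to produce two elements of $H$ between which no $K_0^\epsilon$-witness can exist, contradicting homogeneity.

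First I would reduce to an uncountable subfamily $H_1 \subseteq H$ whose combinatorial data is uniformly dominated. Encoding each pair $(f_\alpha, E_\alpha)$ by the function $n \mapsto \max(f_\alpha(n), \max E_{\alpha, n}) \in \NN^\NN$, the hypothesis $\mathfrak b > \omega_1$ yields a common $\leq^*$-dominator, so after refining I may assume $f_\alpha \leq^* f^*$ and $E_\alpha \leq_1 E^*$ for every $h_\alpha \in H_1$ and some fixed $(f^*, E^*)$. Next, since the space of skeletal maps is separable in the uniform topology and each $X_{k(n), n}$ is finite, by passing to an uncountable $H_2 \subseteq H_1$ (extracting a condensation point and applying Cantor--Bendixson-style refinement in the Polish topology of Proposition~\ref{prop:SkeletalPolish}) I can arrange that for every $N \in \NN$ there are uncountably many $\alpha \in H_2$ whose initial data $(f_\alpha(n), E_{\alpha, n}, \alpha^0_{\alpha, n}, \alpha^1_{\alpha, n})_{n \leq N}$ agree \emph{exactly}.

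The main step is a pigeonhole argument that breaks the circularity between ``agreement on initial segments'' and ``eventual agreement on tails''. For each pair $h_\alpha, h_\beta \in H_2$, Lemma~\ref{lem:afteragivenpoint} furnishes a threshold $M(\alpha, \beta) \in \NN$ such that the two lifts agree within $\epsilon / 3$ on all contractions $x = q_{[M(\alpha,\beta), n]} x q_{[M(\alpha,\beta), n]}$ in the relevant domains $D_{f_\alpha}[E_\alpha^i] \cap D_{f_\beta}[E_\beta^j]$. Since $M(\alpha,\beta) \in \NN$, an uncountable subfamily $H_3 \subseteq H_2$ shares a common threshold $M^*$. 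Within $H_3$ I pick $\alpha \neq \beta$ with exact agreement of initial data up to a block count $N^*$ large enough that $[0, M^*]$ lies inside the first $N^*$ blocks of $E^*$. Any hypothetical $K_0^\epsilon$-witness $x = q_n x q_n$ of norm one then splits into two regimes: for $n \leq M^*$, coincidence of the finitary initial data forces $\alpha^i_\alpha(\Phi_{f_\alpha, E_\alpha^i}(x))$ and $\alpha^j_\beta(\Phi_{f_\beta, E_\beta^j}(x))$ to coincide on the nose (for each of the four cases in the definition of $K_0^\epsilon$); for $n > M^*$, decomposing $x$ according to the blocks of $E^*$ separates a low part supported in $q_{M^*} \cstar{A} q_{M^*}$ (handled by the initial-segment agreement) from a tail in $q_{[M^*, n]} \cstar{A} q_{[M^*, n]}$ controlled by Lemma~\ref{lem:afteragivenpoint}, while the cross-terms between low and tail are absorbed because skeletal asymptotically additive maps are block-diagonal with respect to their partitions and therefore ignore off-diagonal entries. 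This rules out every $K_0^\epsilon$-witness for the pair $\{h_\alpha, h_\beta\}$, contradicting $K_0^\epsilon$-homogeneity of $H$.

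The hard part will be orchestrating the quantifier order in the last paragraph: the threshold $M(\alpha, \beta)$ from Lemma~\ref{lem:afteragivenpoint} is pair-dependent, whereas the separability reduction of step two needs $N^*$ fixed before the pair is chosen, and this circularity is broken precisely by the pigeonhole on $M(\alpha, \beta)$ within $H_2$. Verifying that the off-diagonal cross-terms in the decomposition $q_n = q_{M^*} + (q_n - q_{M^*})$ truly vanish asymptotically (so that they do not eat into the $\epsilon$-budget) is the remaining technical ingredient, and this should follow from the block-diagonal nature of skeletal maps together with Remark~\ref{remark:asymptorthogonal}.
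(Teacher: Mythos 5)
Your overall architecture is the right one — dominate with $\mathfrak b > \omega_1$, secure (approximate) agreement of the finitary initial data via separability of the skeletal maps, and control the tails via Lemma~\ref{lem:afteragivenpoint} — and this is indeed the architecture of the paper's proof. But your key quantifier-management step, the one you yourself flag as ``the hard part,'' has a genuine gap that is not repaired by the pigeonhole you invoke.

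You let Lemma~\ref{lem:afteragivenpoint} produce, for each \emph{pair} $\{h_\alpha, h_\beta\}$, a threshold $M(\alpha,\beta)\in\NN$, and then assert that ``since $M(\alpha,\beta)\in\NN$, an uncountable subfamily $H_3\subseteq H_2$ shares a common threshold $M^*$.'' This is a coloring of $[\omega_1]^2$ with countably many colors, and there is in $\ZFC$ no guarantee of an uncountable monochromatic set — Sierpi\'nski's coloring is the standard counterexample, so $\omega_1\not\to(\omega_1)^2_{\aleph_0}$. Your pigeonhole therefore does not break the circularity you describe; it is simply false in this form, and $\mathfrak b>\omega_1$ does not rescue it. The repair used in the paper is to introduce a single fixed reference element: the $(\le^*\times\le_1)$-dominator $(\hat f,\hat E)$ produced by $\mathfrak b>\omega_1$, together with asymptotically additive lifts $\hat\alpha^0,\hat\alpha^1$ such that $(\hat f,\hat E,\hat\alpha^0,\hat\alpha^1)\in X$ (these exist by Theorem~\ref{thm:FA.liftingtheorem} and are \emph{not} required to lie in the homogeneous set). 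Lemma~\ref{lem:afteragivenpoint} is then applied to each element of the homogeneous set paired against this one fixed reference, so that $M$ becomes a function of a single argument; now the pigeonhole is the ordinary $\omega_1\to(\omega_1)^1_{\aleph_0}$ and legitimately fixes $\overline M$. Two elements of the refined homogeneous set are each within $\epsilon/2$ of the reference on tail contractions, hence within $\epsilon$ of one another by the triangle inequality, which is what produces the contradiction for the tail case. A secondary, more easily fixed, issue: you claim \emph{exact} coincidence of the initial data $(\alpha^0_{\alpha,n},\alpha^1_{\alpha,n})_{n\le N}$ after passing to a condensation point, but these components take values in the separable $\Cstar$-algebra $\cstar{A}$, so there are continuum-many possibilities and exact agreement is unavailable; what separability gives, and what the argument actually needs, is agreement within $\epsilon/(2\overline M)$ on each of the finitely many relevant coordinates (conditions~\ref{OCAcond1}, \ref{OCAcond2} in the paper). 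Once you adopt the fixed-reference trick and replace exact by approximate initial agreement, the three-case analysis at the end — $n'<\overline K$, $n>\overline M$, and the straddling case $n\le\overline M\le\overline K<n'$ split by block-diagonality of the skeletal maps — goes through as you sketched it.
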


\begin{proof}

By contradiction, let $\epsilon>0$ and $Y$ be a $K_0^\epsilon$-homogeneous set of size $\aleph_1$. We will refine $Y$ to an uncountable subset of itself several times, but we will keep the name $Y$ to not confuse the reader.

As $\mathfrak b>\omega_1$, we can find $\hat f$ and $\hat E$ with the property that for all $(f,E)$ such that there are $\alpha^0$, $\alpha^1$ with $(f,E,\alpha^0,\alpha^1)\in Y$, then $f<^*\hat f$ and $E<_1 \hat E$. By the definitions of $<^*$ and $<_1$, we have therefore that if $(f,E,\alpha^0,\alpha^1)\in Y$ there are $n_f$ and $m_E$ with the property that for all $n\geq n_f$ and $m\geq m_E$ we have $f(n)<\hat f(n)$ and that there is $k$ such that $E_{m}\cup E_{m+1}\subseteq \hat E_k\cup\hat E_{k+1}$. By the pigeonhole principle, we can refine $Y$ so that $n_f=\overline n$ and $m_E=\overline m$, whenever $(f,E,\alpha^0,\alpha^1)\in Y$.

Fix now $(E,f,\alpha^0,\alpha^1)\in Y$ and $\hat\alpha^0,\hat\alpha^1$ such that $(\hat f,\hat E,\hat \alpha^0,\hat\alpha^1)\in X$. Thanks to Lemma \ref{lem:afteragivenpoint}, we can find $M\geq\overline m$ such for all $n>M$ and $x=q_{[M,n]}xq_{[M,n]}$ with $\norm{x}\leq 1$ then if $x\in D_f[E^i]$ we have 

\begin{equation}\label{OCAequation1}
\norm{\sum_{k}\alpha^i_k(\Phi_{f,E^i}(q_{E_k^i}xq_{E_k^i}))-\sum_l\hat\alpha^j_k(\Phi_{g,F^j}(q_{\hat E_l^j}xq_{\hat E_l^j}))}\leq\epsilon/2.
\end{equation}
By another counting argument we can suppose that the minimum $M$ such that this hold is equal to a given $\overline M$ for every element of $Y$. Again using pigeonhole, we can assure that for all $i\leq\overline n$ and $j\leq \overline M+1$ we have that if $(f,E,\alpha^0,\alpha^1),(g,F,\beta^0,\beta^1)\in Y$ then $f(i)=g(i)$ and $E_j=F_j$. Note that $\overline K=\max E_{\overline M}>\overline M$.

Note that, for every $i$ such that $2i\leq \overline M$ and $(f,E,\alpha^0,\alpha^1),(g,F,\beta^0,\beta^1)\in Y$, the domains of $\alpha^0_i$ and of $\beta^0_i$ are the same, as well as the domains of $\alpha^1_i$ and $\beta^1_i$, as there are only countably many finite-dimensional $\Cstar$-algebras. Therefore, for $x=q_{\overline K}x q_{\overline K}$, $x\in D_f[E^0]$ implies that $\Phi_{f,E^0}(x)=\Phi_{g,F^0}(x)$ and if $x\in D_g[F^1]$ then $\Phi_{f,E^1}(x)=\Phi_{g,F^1}(x)$. Since the space of all skeletal maps from $\sum_{i\mid 2i\leq \overline M}F_{f(i)}\to \cstar{A}$ is separable in uniform topology, we can refine $Y$ to an uncountable subset of it such that whenever $(f,E,\alpha^0,\alpha^1),(g,F,\beta^0,\beta^1)\in Y$ and $i$ is such $2i\leq \overline M$, then

\begin{enumerate}[label=(O\arabic*)]
\item\label{OCAcond1} $\norm{\alpha^0_i-\beta^0_i}<\epsilon/(2\overline M)$;
\item\label{OCAcond2} $\norm{\alpha^1_i-\beta^1_i}<\epsilon/(2\overline M)$.
\end{enumerate}
This is the final refinement we need. Pick $(f,E,\alpha^0,\alpha^1),(g,F,\beta^0,\beta^1)\in Y$ and $x$ witnessing that. Then $x=q_{[n,n']}xq_{[n,n']}$ for some $n,n'\in\NN$. If $n'<\overline K$, then, since $E_i=F_i$ for all $i$ such that $q_{E_i}xq_{E_i}\neq 0$, we have that either $x\in D_f[E^0]\cap D_g[F^0]$, or $x\in G[E]\cap G[F]$. If the first case applies, we have a contradiction thanks to condition \ref{OCAcond1}, while the second case is contradicted by condition \ref{OCAcond2}. If $n>\overline M$, then (\ref{OCAequation1}) leads to a contradiction. Finally, if $n\leq\overline M\leq\overline K<n'$, we can split $x=y+z$ where $y=q_{k}xq_k$ and $z=q_{(k,n']}xq_{(k,n']}$ for some $k<\overline K$, since $x\in D+f[E^0]$. We obtain a contradiction noting that $x\in D_f[E^0]$ implies $\alpha^0(\Phi_{f,E^0}(x))=\alpha^0(\Phi_{f,E^0}(y))+\alpha^0(\Phi_{f,E^0}(z))$ (the case of $x\in G_f[E]$ is treated similarly).

\end{proof}

Fix $\epsilon_k=2^{-k}$ and write $X=\bigcup_n \SX_{n,k}$ where each $\SX_{n,k}$ is $K_1^{\epsilon_k}$-homogeneous, thanks to $\OCA_\infty$. Since $<^*\times<_1$ is a $\sigma$-directed order, for every $k\in\NN$, we can find $D_k$ and $\SY_k$ such that 

\begin{itemize}
\item $D_k$ is a countable dense subset of $\SY_k$;
\item $\SY_k$ is $K^{\epsilon_k}_1$-homogeneous;
\item $\SY_k$ is $\leq^*\times\leq_1$-cofinal.
\end{itemize}

\begin{lemma}\label{lem:almostthere}

Suppose that $x_0,x_1$ are such that there are $n_l^0,n_l^1$ and $\langle(f_l,E_l,\alpha^0_l,\alpha^1)\rangle\subseteq D_k$, for $l\in\NN$ such that 

\begin{enumerate}[label=(\arabic*)]
\item\label{lem:cond:Borelgraph,c1} for every $l$ there is $i$ such that \[\max (E_l)_{2i}=n_l^0\text{ and }\max (E_l)_{2i+1}=n_l^1;\]
\item\label{lem:cond:Borelgraph,c2} if $l<l'$ then for every $i$ such that $\max (E_l)_i\leq\max\{n_l^0,n_l^1\}$ we have $(E_{l'})_i=(E_l)_i$ (the $E_l$'s extend themselves) and $f_l(i)=f_{l'}(i)$
\item\label{lem:cond:Borelgraph,c3} $q_{n_l^0}x_0q_{n_l^0}\in D_{f_l}[E_l^0]$, $q_{n_l^1}x_1q_{n_l^1}\in G_{f_l}[E_l]$
\end{enumerate}
Then 
\[
\norm{\pi(\lim_l\sum_{j<l}\alpha_l^0(\Phi_{f_l,E_l^0}(q_{(n_j^0,n_{j+1}^0]}x_0q_{(n_j^0,n_{j+1}^0]})))-\Lambda(\pi(x_0))}<10\epsilon_k
\] 
and
\[
\norm{\pi(\lim_l\sum_{j<l}\alpha_l^1(\Phi_{f_l,E_l^1}(q_{(n_j^1,n_{j+1}^1]}x_1q_{(n_j^1,n_{j+1}^1]})))-\Lambda(\pi(x_1))}<10\epsilon_k.
\]
\end{lemma}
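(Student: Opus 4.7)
The plan is to establish the estimate for $x_0$; the case of $x_1$ is entirely symmetric, using $G_{f_l}[E_l]$ and the odd-indexed blocks in place of $D_{f_l}[E_l^0]$ and the even-indexed ones. Without loss of generality, assume $\norm{x_0}\le 1$. The first observation is purely combinatorial: by conditions (1) and (2), for every $l>j$ each point $n_j^0$ is a right endpoint of an even-indexed block of $E_l$, hence a block endpoint of $E_l^0=\langle (E_l)_{2m}\cup (E_l)_{2m+1}\rangle$, so the interval $(n_j^0,n_{j+1}^0]$ is a union of complete $E_l^0$-blocks. Combined with condition (3), which forces $q_{n_l^0}x_0q_{n_l^0}\in D_{f_l}[E_l^0]$ to be $E_l^0$-block-diagonal, all off-block cross terms vanish and one obtains the \emph{exact} identity $q_{(n_0^0,n_l^0]}x_0q_{(n_0^0,n_l^0]}=\sum_{j<l}z_j$, where $z_j:=q_{(n_j^0,n_{j+1}^0]}x_0q_{(n_j^0,n_{j+1}^0]}$, and each $z_j$ lies in $D_{f_l}[E_l^0]$ for every $l>j$. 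Linearity of $\Phi_{f_l,E_l^0}$ together with asymptotic additivity of $\alpha_l^0$ on disjoint-support inputs then collapses the partial sum to $S_l:=\alpha_l^0(\Phi_{f_l,E_l^0}(q_{(n_0^0,n_l^0]}x_0q_{(n_0^0,n_l^0]}))$, and Lemma~\ref{lem:liftforeverything.Borel} applied to $(f_l,E_l,\alpha_l^0,\alpha_l^1)\in D_k\subseteq X$ yields $\pi(S_l)=\Lambda(\pi(q_{(n_0^0,n_l^0]}x_0q_{(n_0^0,n_l^0]}))$.

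Next I would establish the existence of the strict limit $S=\lim_l S_l$. For each fixed $j$ and all $l,l'>j$, the element $z_j$ satisfies $z_j=q_{n_{j+1}^0}z_jq_{n_{j+1}^0}$ with $\norm{z_j}\le 1$, so the $K_1^{\epsilon_k}$-homogeneity of $D_k$ (clause (1) in the definition of $K_0^{\epsilon_k}$) gives $\norm{\alpha_l^0(\Phi_{f_l,E_l^0}(z_j))-\alpha_{l'}^0(\Phi_{f_{l'},E_{l'}^0}(z_j))}\le\epsilon_k$. By Remark~\ref{remark:asymptorthogonal} we may arrange that the images of the various $(\alpha_l^0)_n$ lie in asymptotically orthogonal corners of $\mathcal M(\cstar{A})$, so the individual $\epsilon_k$-discrepancies for different $j$ do not accumulate in norm. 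Together with the uniform bound $\norm{S_l}\le 1$, this shows that $(S_l)_{l\in\NN}$ is strict Cauchy in $\mathcal M(\cstar{A})$, and we call its strict limit $S$.

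To compare $\pi(S)$ with $\Lambda(\pi(x_0))$, I would write $\pi(S)-\Lambda(\pi(x_0))=(\pi(S)-\pi(S_l))+(\pi(S_l)-\Lambda(\pi(x_0)))$ for $l$ large. The second difference equals $\Lambda(\pi(x_0-q_{(n_0^0,n_l^0]}x_0q_{(n_0^0,n_l^0]}))$, and by the exact block-diagonality from Step~1 this is $\Lambda$ applied to the strictly-convergent tail $\sum_{j\ge l}\pi(z_j)$. On the other hand, $\pi(S-S_l)$ can be analyzed piece by piece: replacing each $\alpha_l^0$ with $\alpha_{l'}^0$ (for $l'>l$) on the blocks corresponding to $j\ge l$ incurs an error of at most $\epsilon_k$ per block by $K_1^{\epsilon_k}$-homogeneity, while the asymptotic orthogonality of ranges guarantees that the resulting tail also represents $\Lambda$ applied to $\sum_{j\ge l}\pi(z_j)$, modulo an overall norm error that stays below $10\epsilon_k$. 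Matching the two descriptions of the tail yields the required estimate.

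The main obstacle will be precisely this last step. The naive strategy of taking the limit in $\pi(S_l)=\Lambda(\pi(q_{n_l^0}x_0q_{n_l^0}))$ fails because $q_{n_l^0}x_0q_{n_l^0}$ converges to $x_0$ only strictly in $\mathcal M(\cstar{A})$, not in the norm of $\mathcal M(\cstar{A})/\cstar{A}$, and $\pi$ is not strict-continuous. The entire mechanism above is designed to circumvent this: one expresses both $\pi(S)$ and $\Lambda(\pi(x_0))$ as coherent ``sums'' of the same tail pieces $\Lambda(\pi(z_j))$ for $j\ge l$, using $K_1^{\epsilon_k}$-homogeneity to align the different $\alpha_l^0$'s and the asymptotic orthogonality from Remark~\ref{remark:asymptorthogonal} to prevent the per-piece $\epsilon_k$-errors from blowing up when summed in the strict topology. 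Keeping track of the constants throughout these matched substitutions is what produces the bound $10\epsilon_k$ rather than an exact equality.
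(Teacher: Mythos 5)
Your preparatory steps are sound: the block decomposition $z_j=q_{(n_j^0,n_{j+1}^0]}x_0q_{(n_j^0,n_{j+1}^0]}$, the collapse of the partial sums to $S_l=\alpha_l^0(\Phi_{f_l,E_l^0}(q_{n_l^0}x_0q_{n_l^0}))$ with $\pi(S_l)=\Lambda(\pi(q_{n_l^0}x_0q_{n_l^0}))$, and strict-Cauchyness of $(S_l)$ via $K_1^{\epsilon_k}$-homogeneity of $D_k$ together with orthogonality of ranges. You also correctly name the obstacle: neither $\pi$ nor $\Lambda\circ\pi$ is strictly continuous, so exactness on every truncation says nothing about the limit. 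But your proposed resolution of that obstacle is circular. The assertion that ``the resulting tail also represents $\Lambda$ applied to $\sum_{j\ge l}\pi(z_j)$, modulo an overall norm error below $10\epsilon_k$'' is exactly the statement of the lemma applied to the tail $x_0-q_{n_l^0}x_0q_{n_l^0}$: that tail is again an infinite strictly convergent sum whose quotient norm does not tend to $0$, and comparing $\alpha_l^0$ with $\alpha_{l'}^0$ for $l'>l$ only ever controls finitely many blocks at a time. At no point does your argument relate the strict limit $S$ to $\Lambda$ evaluated at the \emph{untruncated} $x_0$; comparing successive members of the coherent $D_k$-sequence with one another cannot produce that link.

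The missing ingredient is the cofinality of the homogeneous set $\SY_k\supseteq D_k$ in $(\NN^\NN\times\PP,\leq^*\times\leq_1)$. The paper assembles the coherent data into a single limit partition $\hat E$ (well defined by condition (2)), chooses $f$ with $x_0\in D_f[\hat E^0]$, and then uses cofinality to fix one tuple $(g,F,\alpha^0,\alpha^1)\in\SY_k$ with $f\leq^* g$ and $\hat E\leq_1 F$. Writing $x_0=z_0+z_1$ with $z_0\in D_g[F^0]$ and $z_1\in G_g[F]$, membership of $(g,F,\alpha^0,\alpha^1)$ in $X$ gives the \emph{exact} identities $\pi(\alpha^0(\Phi_{g,F^0}(z_0)))=\Lambda(\pi(z_0))$ and $\pi(\alpha^1(\Phi_{g,F^1}(z_1)))=\Lambda(\pi(z_1))$ for the full, untruncated pieces. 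The blockwise $K_1^{\epsilon_k}$-comparison is then made between each $\alpha_l^0\circ\Phi_{f_l,E_l^0}$ and this single fixed $\alpha^0\circ\Phi_{g,F^0}$ (or $\alpha^1\circ\Phi_{g,F^1}$, according to where each block of $x_0$ lands), and orthogonality of ranges keeps the accumulated error at $O(\epsilon_k)$ in the quotient. Without some such single global comparison point carrying a genuine lift of the whole element, the passage from finite truncations to $x_0$ itself cannot be made, and your proof does not close.
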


\begin{proof}

We prove the statement for $x_0$, since the proof in the case of $x_1$ is equivalent. Given $\{n_l^0\}_{l\in\NN}$ and $\langle(f_l,E_l,\alpha^0_l,\alpha^1_l)\rangle$ as in the hypothesis, we can construct the partition $\hat E$ defining $\hat E_n=(E_l)_n$ if $\max (E_l)_n\leq n_l^0$. Note that by condition \ref{lem:cond:Borelgraph,c2}, $\hat E$ is well-defined. Define 
\[
x_{0,m}=q_{\hat E_{2m}\cup \hat E_{2m+1}}x_0q_{\hat E_{2m}\cup \hat E_{2m+1}}.
\]
 It is clear from the definition of $\hat E$ that $x_0=\sum_m x_{0,m}$. We can then pick $f$ big enough such that $x_0\in D_f[\hat E^0]$, since $x_0\in D[E^0]$. 

Since $\SY_k$ is $\leq^*\times\leq_1$-cofinal, there is $(g,F,\alpha^0,\alpha^1)\in \SY_k$ such that $f\leq^* g$ and $\hat E\leq_1 F$. By definition of $\leq_1$, we have that for every $n$ big enough there is a minimal $m=m(n)$ such that $\hat E_{2n}\cup\hat E_{2n+1}\subseteq F_m\cup F_{m+1}$, therefore we can write uniquely, according on whether $x_{0,n}\in D_f[E^0]$ or $x_{0,n}\in  G_f[E]$, $x_0=z_0+z_1$ with $z_0\in D_g[F^0]$, $z_1\in G_g[F]$. Note that $\pi(\alpha^0(\Phi_{g,F^0}(z_0)))=\Lambda(\pi(z_0))$, and similarly $\alpha^1(\Phi_{g,F^1}(z_1))=\Lambda(\pi(z_1))$. On the other hand, since for every $l$ we have 
\[
\{(f_l,E_l,\alpha^0_l,\alpha^1_l),(g,F,\alpha^0,\alpha^1)\}\in K_1^{\epsilon_k}
\]
by homogeneity of $\SY_k$, if $m\leq n_l^0$ we have that 
\[
\norm{\alpha^0_l(\Phi_{f_l,E_l^0}(x_{0,m}))-\alpha^0(\Phi_{g,F^0}(x_{0,m}))}\leq\epsilon_k\text{ if }x_{0,m}\in D_g[F^0]
\]
 and 
\[
\norm{\alpha^0_l(\phi_{f_l,E^0_l}(x_{0,m}))-\alpha^1(\Phi_{g,F^1}(x_{0,m})}\leq\epsilon_k\text{ if }x_{0,m}\in G_g[F].
\]
Since, modulo $\cstar{A}$, 
\[
z_0=\sum_{m\mid x_{0,m}\in D_g[F^0]}x_{0,m}\text{ and } z_1=\sum_{m\mid x_{0,m}\in G_g[F]}x_{0,m},
\]
passing to strict limits of partial sums we have the thesis.

\end{proof}

The following lemma provides the last step through the proof of Theorem \ref{thm:Borel}. Recall that $\Gamma_\Lambda$ is the graph of $\Lambda$.

\begin{lemma}

Assume $\OCA_{\infty}$ and $\MA_{\aleph_1}$. Let $\cstar{A}$, $q_n$ and $\Lambda$ as before and $a,b\in \mathcal M(\cstar{A})_{\leq 1}^+$. The following conditions are equivalent:

\begin{enumerate}[label=(\roman*)]
\item\label{thm:Borelgraph,c1} $(a,b)\in\Gamma_\Lambda$;
\item\label{thm:Borelgraph,c2} For every $k\in\NN$, there are $x_0,x_1, y_0, y_1\in\mathcal M(\cstar{A})_{\leq 1}^+$ such that $\pi(a)=\pi(x_0+x_1)$, $\pi(b)=\pi(y_0+y_1)$ with the property that there are two sequences $n_l^0,n_l^1$ and $\langle(f_l,E_l,\alpha^0_l,\alpha^1_l)\rangle$ a sequence of elements of $D_k$ satisfying conditions~\ref{lem:cond:Borelgraph,c1}--\ref{lem:cond:Borelgraph,c3} of Lemma~\ref{lem:almostthere}, 

\begin{enumerate}[label=(\arabic*)]\setcounter{enumii}{3}
\item\label{lem:cond:Borelgraph,c4}
\[
\norm{\lim_i\sum_{j<i}\alpha_i^0(\Phi_{f_i,E_i^0}(q_{(n_j^0,n_{j+1}^0]}x_0q_{(n_j^0,n_{j+1}^0]}))-y_0}<5\epsilon_k
\]
\item\label{lem:cond:Borelgraph,c5} and
\[ 
\norm{\lim_i\sum_{j<i}\alpha_i^1(\Phi_{f_i,E_i^1}(q_{(n_j^1,n_{j+1}^1]}x_1q_{(n_j^1,n_{j+1}^1]}))-y_1}<5\epsilon_k
\] 
where both limits are strict limits.
\end{enumerate}

\item\label{thm:Borelgraph,c3} For all $x^0$, $x^1$, $y^0$ and $y^1$ positive elements of norm $\leq 1$, if $\pi(x^0+x^1)=\pi(a)$ and for every $k\in\NN$ it is true that there are sequences $n_l^0,n_l^1$ and $(f_l,E_l,\alpha^0,\alpha^1)$ satisfying conditions \ref{lem:cond:Borelgraph,c1}--\ref{lem:cond:Borelgraph,c3} of Lemma \ref{lem:almostthere}, \ref{lem:cond:Borelgraph,c4} and \ref{lem:cond:Borelgraph,c5}, then $\pi(y^0+y^1)=\pi(b)$.
\end{enumerate}

\end{lemma}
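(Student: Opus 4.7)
The plan is to establish the chain $(i) \Rightarrow (ii) \Rightarrow (i)$ together with $(i) \Rightarrow (iii) \Rightarrow (i)$, giving the full equivalence. The point of the equivalence is that condition (ii) presents $\Gamma_\Lambda$ as an analytic set (a countable intersection over $k$ of existential statements, with quantifiers ranging over positive contractions in $\mathcal{M}(\cstar{A})_{\leq 1}$ under the strict topology together with sequences from the countable set $D_k$, subject to a Borel matrix), while (iii) presents $\Gamma_\Lambda$ as coanalytic. By Souslin's theorem the graph is then Borel, which together with the preceding work completes the proof of Theorem~\ref{thm:Borel}.

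For $(i) \Rightarrow (ii)$, fix $k$ and apply Proposition~\ref{prop:mainproperties}(3) to the positive element $a$ to obtain $(f, E)$ and positive $x_0 \in D_f[E^0]$, $x_1 \in G_f[E]$ with $a - x_0 - x_1 \in \cstar{A}$. Using that $\SY_k$ is $\leq^* \times \leq_1$-cofinal, choose $(g, F, \gamma^0, \gamma^1) \in \SY_k$ above $(f, E)$ so that $x_0 \in D_g[F^0]$ and $x_1 \in G_g[F]$. Density of $D_k$ in $\SY_k$, combined with a diagonal choice of truncation levels $n_l^0, n_l^1$ aligned with $\max(E_l)_{2i}$ and $\max(E_l)_{2i+1}$, produces a sequence $(f_l, E_l, \alpha^0_l, \alpha^1_l) \in D_k$ whose partitions stabilize on initial segments, verifying conditions (1)--(3) of Lemma~\ref{lem:almostthere}. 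The strict limits $Y_0, Y_1$ of the partial sums in (4), (5) exist because the images of the $\alpha^i_l$ lie in asymptotically orthogonal corners of $\cstar{A}$ (cf.~Remark~\ref{remark:asymptorthogonal}), and Lemma~\ref{lem:almostthere} gives $\pi(Y_i)$ within $10\epsilon_k$ of $\Lambda(\pi(x_i))$. Taking $y_0 := Y_0$ (or its positive part) and $y_1 := b - y_0 + e$ for a suitable $e \in \cstar{A}$ of small norm, whose existence is guaranteed by (i) together with approximate positivity of $Y_1$, produces positive $y_0, y_1$ of norm at most $1$ with $\pi(y_0 + y_1) = \pi(b)$ and $\norm{Y_i - y_i} < 5\epsilon_k$, verifying (ii).

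The implication $(ii) \Rightarrow (i)$ follows directly from Lemma~\ref{lem:almostthere}: for each $k$, the strict limits are within $10\epsilon_k$ in quotient norm of $\Lambda(\pi(x_0))$ and $\Lambda(\pi(x_1))$, so combining with the slack in (4), (5) gives $\norm{\pi(b) - \Lambda(\pi(a))} = \norm{\pi(y_0 + y_1) - \Lambda(\pi(x_0 + x_1))} \leq 30\epsilon_k$; since $\epsilon_k \to 0$, (i) follows. The same estimate proves $(i) \Rightarrow (iii)$: any quadruple $(x^0, x^1, y^0, y^1)$ satisfying the hypotheses of (iii) must have $\pi(y^0 + y^1)$ within $30\epsilon_k$ of $\Lambda(\pi(a)) = \pi(b)$ for every $k$, so $\pi(y^0 + y^1) = \pi(b)$. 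For $(iii) \Rightarrow (i)$, reapply the construction from $(i) \Rightarrow (ii)$ \emph{without} invoking (i): decomposing $a$ via Proposition~\ref{prop:mainproperties}(3) and forming strict limits $Y^0, Y^1$, then setting $y^0 := Y^0$, $y^1 := Y^1$ (adjusted to positive contractions) does not require $\pi(b) = \Lambda(\pi(a))$, and produces a quadruple meeting the hypotheses of (iii) with $\pi(y^0 + y^1)$ within $20\epsilon_k$ of $\Lambda(\pi(a))$. Hypothesis (iii) then forces $\pi(y^0 + y^1) = \pi(b)$, and letting $k \to \infty$ gives (i).

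The principal technical obstacle lies in the construction for $(i) \Rightarrow (ii)$: one must choose the approximating sequence $(f_l, E_l, \alpha^0_l, \alpha^1_l) \in D_k$ so that the partitions $E_l$ stabilize on initial segments aligned with the truncation levels $n_l^0, n_l^1$ (condition (2)) while simultaneously tracking a fixed $(g, F, \gamma^0, \gamma^1) \in \SY_k$ closely enough for the strict limits of partial sums to approximate $\Lambda(\pi(x_i))$ in norm, not only in the quotient. This relies essentially on the $K_1^{\epsilon_k}$-homogeneity of $\SY_k$, which bounds discrepancies between different approximations uniformly, together with the asymptotic orthogonality of the $\alpha^i_l$ images from Remark~\ref{remark:asymptorthogonal}. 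A secondary but necessary point is checking that the matrices defining conditions (1)--(5) are Borel in the product of the strict topology on $\mathcal{M}(\cstar{A})_{\leq 1}$ with the discrete topology on the countable set $D_k$; this follows from the strict continuity of asymptotically additive maps on bounded sets and from the fact that strict limits of partial sums of asymptotically orthogonal elements depend continuously on the indexing data.
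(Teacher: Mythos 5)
Your overall plan — present the graph as analytic via (ii), coanalytic via (iii), and conclude Borelness by Souslin's theorem — is exactly the paper's strategy, and your $(ii)\Rightarrow(i)$ and $(i)\Rightarrow(iii)$ estimates are fine. Two steps, however, diverge from the paper's argument in ways that do not close.

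First, in $(i)\Rightarrow(ii)$ the choice of $y_0,y_1$ is wrong. You take $y_0 := Y_0$ (the strict limit) and then define $y_1 := b - y_0 + e$ with $e\in\cstar{A}$ small, claiming this is a positive contraction satisfying $\norm{Y_1 - y_1} < 5\epsilon_k$. The estimate you actually have is a quotient-norm bound $\norm{\pi(Y_0+Y_1 - b)}\lesssim 20\epsilon_k$, which only gives an $e\in\cstar{A}$ with $\norm{Y_0+Y_1-b-e}\lesssim 20\epsilon_k$ — not $5\epsilon_k$ — and moreover $b-Y_0+e$ has no reason to be positive or a contraction (the $\alpha^i_l$ are only \emph{asymptotically} positive, so $Y_0$ is not positive, and its "positive part'' need not be a contraction). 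The paper instead sets $y_i := \alpha^i(\Phi_{f,E^i}(x_i))$ for the cofinal element $(f,E,\alpha^0,\alpha^1)\in\SY_k$: this is a positive contraction because $\Phi_{f,E^i}$ is cpc and $\alpha^i$ is a lift of the cpc map $\Lambda_{f,E^i}$, and $\pi(y_0+y_1)=\Lambda(\pi(x_0+x_1))=\pi(b)$ holds \emph{exactly} by the lift property and (i). The norm bounds in (4)–(5) then follow term-by-term from the $K_1^{\epsilon_k}$-homogeneity of $\SY_k$, with room to spare. You should adopt this choice.

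Second, in $(iii)\Rightarrow(i)$ you "produce a quadruple meeting the hypotheses of (iii)'' by taking $y^i := Y^i$, but $Y^0,Y^1$ depend on $k$ (they are strict limits along $k$-dependent sequences from $D_k$, approximating a $k$-dependent element of $\SY_k$). Condition (iii) quantifies over quadruples $(x^0,x^1,y^0,y^1)$ that are \emph{fixed}, with the existential over sequences inside the $\forall k$; a $k$-indexed family of quadruples does not instantiate (iii). The paper's argument avoids manufacturing a quadruple at all: it argues that \emph{whenever} $(x_0,x_1,y_0,y_1)$ is a fixed quadruple with $\pi(x_0+x_1)=\pi(a)$ for which the sequences exist for every $k$, the estimates force $\pi(y_0+y_1)=\Lambda(\pi(a))$, and then (iii) identifies this with $\pi(b)$. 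Your phrasing makes visible that a $k$-dependent construction does not supply the witness; you either need to show a single quadruple works for all $k$ (using the same $x_0,x_1$ for all $k$ and choosing $y_i$ as genuine lifts of $\Lambda(\pi(x_i))$, then checking the sequences can be chosen to meet the norm bounds), or restructure the argument as the paper does, applying (iii) to an arbitrary quadruple satisfying its hypothesis rather than to one you construct.
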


\begin{proof}

Suppose that \ref{thm:Borelgraph,c1} holds and fix $k\in\NN$. By cofinality of $\SY_k$ we may find $(f,E,\alpha^0,\alpha^1)\in \SY_k$ and $x_0,x_1$ positive with the property that $\pi(a)=\pi(x_0+x_1)$, $x_0\in D_f[E^0]$ and $x_1\in G_f[E]$, thanks to Lemma \ref{lem:summing} and Proposition \ref{prop:mainproperties}. Let $y_0=\alpha^0(\Phi_{f,E^0}(x_0)$ and $y_1=\alpha^1(\Phi_{f,E^1}(x_1))$. Since $\alpha^0$ and $\alpha^1$ are chosen so that $(f,E,\alpha^0,\alpha^1)\in X$, we have that $\pi(y_0+y_1)=\pi(b)$. Let $n_{-1}^0=n_{-1}^1=0$ and suppose that $n_l^0,n_l^1$ and $(f_l,E_l,\alpha^0_l,\alpha^1_l)\in D_k$ are constructed. By density of $D_k$ we can find $n_{l+1}^0>n_l^0$, $n_{l+1}^0>n_l^1$ and $(f_{l+1},E_l,\alpha^0_l,\alpha^1_l)\in D_k$ with the property that 
\begin{itemize}
\item$(E_{l+1})_i=E_i$ for all $i$ such that $\max E_i\leq\max{n_{l+1}^0,n_{l+1}^0}$,
\item there is $i$ such that $\max E_{2i}=n_{l+1}^0$ and $j$ such that $\max E_{2j+1}=n_{l+1}^1$
\item if $\max E_i\leq \max{n_{l+1}^0,n_{l+1}^0}$ then $f_{l+1}(i)=f(i)$.
\end{itemize}
In particular such a construction ensures that conditions \ref{lem:cond:Borelgraph,c1}-\ref{lem:cond:Borelgraph,c3} of Lemma~\ref{lem:almostthere} are satisfied. Moreover, since for each $l$ we have that $(f,E,\alpha^0,\alpha^1),(f_l,E_l,\alpha^0_l,\alpha^1_l)\in K_1^{\epsilon_k}$, we have that for all $j\in\NN$
\[
\norm{\alpha_i^0(\Phi_{f_i,E_i^0}(q_{(n_j^0,n_{j+1}^0]}x_0q_{(n_j^0,n_{j+1}^0]}))-\alpha^0(\Phi_{f,E^0}(q_{(n_j^0,n_{j+1}^0]}x_0q_{(n_j^0,n_{j+1}^0]}))}<\epsilon_k,
\]
so
\[\norm{\lim_i\sum_{j\leq i}\alpha^0(\Phi_{f,E^0}(q_{(n_j^0,n_{j+1}^0]}x_0q_{(n_j^0,n_{j+1}^0]}))-\alpha^0(\Phi_{f,E^0}x_0)}<2\e_k.
\]
Since 
\[
y_0=\alpha^0(\Phi_{f,E^0}x_0)=\lim_i\sum_{j\leq i}\alpha^0(\Phi_{f,E^0}(q_{(n_j^0,n_{j+1}^0]}x_0q_{(n_j^0,n_{j+1}^0]})),
\]
applying the triangular inequality we get \ref{lem:cond:Borelgraph,c4}. A similar calculation leads to \ref{lem:cond:Borelgraph,c5}, and so we get \ref{thm:Borelgraph,c2}.

Assume now \ref{thm:Borelgraph,c2}. We should note that conditions~\ref{lem:cond:Borelgraph,c1}--\ref{lem:cond:Borelgraph,c5} in particular are implying that $\norm{\Lambda(\pi(x_0))-\pi(y_0)}\leq \epsilon_k$ and $\norm{\Lambda(\pi(x_1))-\pi(y_1)}\leq\epsilon_k$, therefore \ref{thm:Borelgraph,c1} follows. For this reason, we also have that \ref{thm:Borelgraph,c1} implies \ref{thm:Borelgraph,c3}. Similarly pick $a,b\in\mathcal M(\cstar{A})_{\leq1}$ both positive. If there are $x_0,x_1,y_0,y_1$ satisfying that for every $k$ there are $n_l^0,n_l^1$ and $(f_l,E_l,\alpha^0_l,\alpha^1_l)\in D_k$ satisfying conditions \ref{lem:cond:Borelgraph,c1}--\ref{lem:cond:Borelgraph,c5}, and such that $\pi(x_0+x_1)=\pi(a)$, then we have that $\pi(y_0+y_1)=\Lambda(\pi(x_0+x_1))$. If \ref{thm:Borelgraph,c3} holds, the left hand side is equal to $\pi(b)$, hence $(a,b)\in\Gamma_\Lambda$, proving \ref{thm:Borelgraph,c1}.
\end{proof}

\begin{proof}[Proof of Theorem \ref{thm:Borel}]
Condition \ref{thm:Borelgraph,c2} gives that $\Gamma_\Lambda^{1,+}=\Gamma_\Lambda\restriction \mathcal M(\cstar{A})^+_{\leq 1}\times \mathcal M(\cstar{A})^+_{\leq 1}$ is analytic, while \ref{thm:Borelgraph,c3} ensures that the graph is coanalytic. Consequently $\Gamma_{\Lambda}^{1,+}$ is Borel. As $(a,b)\in\Gamma_\Lambda$ if and only if $(a+a^*,b+b^*),(a-a^*,b-b^*)\in\Gamma_{\Lambda}$ and that, if $a$ and $b$ are self-adjoints then $(a,b)\in\Gamma_\Lambda$ if and only if $(|a|+a,|b|+b),(|a|-a,|b|-b)\in\Gamma_\Lambda^{1,+}$ and since addition, *, and absolute value are strictly continuous operations we have that $\Gamma_\Lambda$ is Borel.
\end{proof}

\subsection{Consequences II: Nice liftings and non-embedding theorems}\label{s:FA.Emb}
In this section we explore more consequences of the lifting result Theorem~\ref{thm:FA.liftingtheorem}. Recall (\S\ref{ss:ST.idealsonomega}) that an ideal $\SI\subseteq\mathcal P(\NN)$ is said dense if $\Fin\subseteq\SI$ and for every infinite $X\subseteq\NN$ there is an infinite $Y\subseteq X$ such that $Y\in\SI$.

\begin{theorem}\label{thm:noinjection}

Assume $\OCA_\infty$ and $\MA_{\aleph_1}$. Let $\SI\subseteq\mathcal P(\NN)$ be a meager dense ideal and $\cstar{A}$ a separable $\Cstar$-algebra admitting an increasing approximate identity of projections. Then, for any choice of nonzero unital $\Cstar$-algebras, there is no unital embedding $\phi\colon\prod A_n/\bigoplus_{\SI}A_n\to\mathcal M(\cstar{A})/\cstar{A}$. 
\end{theorem}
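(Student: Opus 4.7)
The plan is to derive a contradiction by reducing to an abelian subproblem, invoking Theorems~\ref{thm:FA.liftingtheorem} and~\ref{thm:FA.ApproxStruct}, and then exploiting the rigidity of strict sums of orthogonal projections in $\cstar{A}$. Assume such a unital embedding $\phi$ exists. Since each $A_n$ is unital, the inclusion $\ce\hookrightarrow A_n$ as scalar multiples of the unit gives a unital embedding $\ell^\infty/c_{0,\SI}\hookrightarrow\prod A_n/\bigoplus_\SI A_n$, where $c_{0,\SI}=\bigoplus_\SI\ce$. Composing with the canonical surjection $\ell^\infty/c_0\twoheadrightarrow\ell^\infty/c_{0,\SI}$ and then with $\phi$ yields a unital $^*$-homomorphism
\[
\iso\colon\ell^\infty/c_0\to\mathcal M(\cstar{A})/\cstar{A}.
\]
Because $\phi$ is injective, $\ker\iso=c_{0,\SI}/c_0$, so for every $A\subseteq\NN$, $\iso(\pi(P_A))=0$ iff $A\in\SI$. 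Being a $^*$-homomorphism, $\iso$ satisfies the hypotheses of Theorem~\ref{thm:FA.liftingtheorem} with $k(n)=1$.

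Apply Theorem~\ref{thm:FA.liftingtheorem} to obtain a ccc/Fin ideal $\SJ$ and an asymptotically additive lift $\alpha$ of $\iso$ on $\SJ$. Since $\SJ$ is dense (any infinite subset of $\NN$ carries an uncountable almost-disjoint family, which $\SJ$ must meet), Theorem~\ref{thm:FA.ApproxStruct} applies and upgrades $\alpha$ to a $^*$-homomorphism $\gamma\colon\ell^\infty\to\mathcal M(\cstar{A})$ with $\gamma(x)-\alpha(x)\in\cstar{A}$ for every $x$. Setting $p_n:=\gamma(e_n)$ for $e_n$ the indicator of $\{n\}$, the identities $e_n e_m=\delta_{nm}e_n$ and $e_n^*=e_n$ force the $p_n$ to be pairwise orthogonal projections, and $p_n\in\cstar{A}$ because $\alpha_n(1)\in\cstar{A}$ and $\gamma(e_n)-\alpha(e_n)\in\cstar{A}$. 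Remark~\ref{remark:asymptorthogonal} and the asymptotic additivity of $\alpha$ place each $p_n$ in a corner $(q_{k_n}-q_{l_n})\cstar{A}(q_{k_n}-q_{l_n})$ with $l_n\to\infty$, so the strict sum $\sum_{n\in A}p_n$ is a projection in $\mathcal M(\cstar{A})$ for every $A\subseteq\NN$ and, by orthogonality, lies in $\cstar{A}$ iff only finitely many $p_n$ with $n\in A$ are nonzero. Combining this with the fact that $\sum_{n\in A}p_n$ lifts $\iso(\pi(P_A))$ for $A\in\SJ$, and setting $S:=\{n:p_n\neq 0\}$, we obtain the key equivalence: for every $A\in\SJ$, $A\in\SI \iff A\cap S\in\Fin$.

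Two cases complete the argument. If $S$ is finite, then $A\cap S\in\Fin$ for every $A\in\SJ$, so $\SJ\subseteq\SI$, contradicting that $\SJ$ is nonmeager (ccc/Fin) while $\SI$ is meager. If $S$ is infinite, use density of $\SJ$ to produce an infinite $T\subseteq S$ with $T\in\SJ$, and then density of $\SI$ to find an infinite $T'\subseteq T$ with $T'\in\SI$; heredity of $\SJ$ yields $T'\in\SJ$, whence the equivalence forces $T'\cap S=T'$ to be finite, contradicting that $T'$ is infinite. The principal technical hurdle I expect is arranging the corner support structure of the $p_n$ with enough precision to pass between strict and norm convergence of $\sum_{n\in A}p_n$, and it is for this reason that the upgrade from the raw lift $\alpha$ to the genuine $^*$-homomorphism $\gamma$ of Theorem~\ref{thm:FA.ApproxStruct} is essential.
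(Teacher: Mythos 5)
Your proposal is correct and follows essentially the same route as the paper: reduce to a unital $^*$-homomorphism $\ell^\infty/c_0\to\mathcal M(\cstar{A})/\cstar{A}$ via the scalars, apply Theorems~\ref{thm:FA.liftingtheorem} and~\ref{thm:FA.ApproxStruct} to get a $^*$-homomorphic asymptotically additive lift on a ccc/Fin (hence dense and nonmeager) ideal $\SJ$, and contradict the kernel computation by producing an infinite set in $\SI\cap\SJ$ meeting the support of the lift. Your explicit two-case analysis on $S=\{n:p_n\neq 0\}$ is a slightly more careful packaging of the paper's choice of $X\in\SJ\setminus\SI$ followed by an infinite $Y\subseteq X$ in $\SI\cap\SJ$, but the substance is identical.
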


\begin{proof}
For $X\subseteq\NN$ we denote by $p_X\in\ell_\infty$ the canonical projection onto $X$, and by $\tilde p_X$ its image in $\ell_\infty/c_0$.

We argue by contradiction. Since each $A_n$ is unital, we can find an embedding $\ell_\infty/c_{\SI}\to\prod A_n/\bigoplus_{\SI}A_n$. We will prove that such an embedding cannot exist. Since $\SI$ contains all finite sets, $c_0\subseteq c_{\SI}$ and we can consider $\pi\colon\ell_\infty/c_0\to\ell_\infty/c_{\SI}$ the canonical quotient map. Let 
\[
\psi=\phi\circ\pi\colon\ell_\infty/c_0\to\mathcal M(\cstar{A})/\cstar{A}.
\]
By Theorem~\ref{thm:FA.liftingtheorem}, there exists an asymptotically additive $\alpha$ and a ccc/Fin ideal $\SJ$ on which $\alpha$ is a lift of $\psi$. By Theorem~\ref{thm:FA.ApproxStruct} we can assume that $\alpha$ is a $^*$-homomorphism. Since $\SJ$ is ccc/Fin, and so nonmeager, we can find an infinite $X\in\SJ\setminus \SI$. 

Then $\alpha(p_X)$ is a projection, $\alpha(p_X)\notin\cstar{A}$. Since $\alpha$ is an asymptotically additive $^*$-homomorphism, $\alpha=\sum_{n\in X}\alpha_n(1)$, and so there is $n_0$ such that for all $n\geq n_0$ we have $\norm{\alpha_{n}(1)}=1$. Note that all $\alpha_n(1)$ are orthogonal to each other. In particular we have that for all infinite $Y\in\SJ$, $\pi_{\cstar{A}}(\sum_{n\in Y}\alpha_{n}(1))$ is a nonzero projection, and so $\norm{\pi_{\cstar{A}}\alpha(p_Y)}=1$. $\pi_{\cstar{A}}\colon\mathcal M(\cstar{A})\to\mathcal M(\cstar{A})/\cstar{A}$ being the canonical quotient map. Let $Y\subseteq X$ be infinite, $Y\in\SI\cap\SJ$, by density of $\SI$. Since $\alpha$ is a lift on $\SJ$ we have that
\[
0=\norm{\psi(\tilde p_Y)}=\norm{\pi_{\cstar{A}}(\alpha(p_Y))}=1,
\]
 a contradiction.
\end{proof}

Note that the role of Forcing Axioms in the hypothesis is crucial. In fact, for every given ideal $\SI$ containing $\Fin$ we have that $\Th(\ell_\infty/c_0)=\Th(\ell_\infty/c_{\SI})$. In particular, by countable saturation of $\ell_\infty/c_0$ (see \cite{Farah-Shelah.RCQ} or \cite{EV.Sat}) under $\CH$ we have that $\ell_\infty/c_\SI$ embeds into $\ell_\infty/c_0$, by Theorem~\ref{thm:CHandSat}.

Theorem~\ref{thm:noinjection} has many corollaries. The prototypical separable $\Cstar$-algebra with an increasing approximate identity of projections is $\mathcal K(H)$.

\begin{corollary}\label{cor.FA.EmbCalkin}
Assume $\OCA_{\infty}$ and $\MA_{\aleph_1}$ and let $\SI\subseteq\mathcal P(\NN)$ be a meager dense ideal. Let $\cstar{A}_n$ be unital nonzero $\Cstar$-algebras. Then $\prod \cstar{A}_n/\bigoplus_{\SI}\cstar{A}_n$ does not embed into the Calkin algebra or into $\prod M_{k(n)}/\bigoplus M_{k(n)}$.
\end{corollary}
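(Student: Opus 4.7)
The plan is to deduce the corollary from Theorem~\ref{thm:noinjection} by recognizing both target algebras as coronas of separable $\Cstar$-algebras with an increasing approximate identity of projections, and then reducing a general (possibly non-unital) embedding to a unital embedding into a corner. First, the Calkin algebra is $\mathcal M(\mathcal K(H))/\mathcal K(H)$, and $\mathcal K(H)$ is separable and admits the increasing approximate identity consisting of the finite-rank projections along an orthonormal basis of $H$. Similarly, by Proposition~\ref{prop:CStarPrel.examplesofmult}, $\prod M_{k(n)}/\bigoplus M_{k(n)}$ is the corona of the separable algebra $\bigoplus M_{k(n)}$, whose partial-sum units $q_N=\sum_{n<N} 1_{M_{k(n)}}$ form an increasing approximate identity of projections.

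Suppose, towards a contradiction, that $\phi\colon\prod \cstar{A}_n/\bigoplus_\SI \cstar{A}_n\to B$ is an embedding, where $B$ is either of the two target coronas. Since the domain is unital, $p=\phi(1)$ is a projection in $B$, and $\phi$ is a unital embedding into the corner $pBp$. It therefore suffices to verify that, in both cases, $pBp$ is again the corona of a separable $\Cstar$-algebra with an increasing approximate identity of projections; Theorem~\ref{thm:noinjection} will then supply the contradiction.

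I treat the two cases separately. For the Calkin algebra $B=\mathcal C(H)$, lift $p$ to a projection $P\in\mathcal B(H)$ using standard projection lifting in the presence of a real-rank-zero ideal (any self-adjoint lift is a compact perturbation of a projection, by functional calculus applied to its spectrum, which accumulates only at $0$ and $1$). Since $p\neq 0$, $P$ has infinite rank, and one checks directly that
\[
pBp \;\cong\; P\mathcal B(H)P\big/P\mathcal K(H)P \;\cong\; \mathcal B(PH)/\mathcal K(PH),
\]
which is the Calkin algebra on the separable Hilbert space $PH$, and hence of the desired form. For $B=\prod M_{k(n)}/\bigoplus M_{k(n)}$, lift $p$ coordinatewise to a projection $P=(P_n)\in\prod M_{k(n)}$ (each coordinate is lifted inside the finite-dimensional $M_{k(n)}$), set $S=\{n : P_n\neq 0\}$ and $r(n)=\operatorname{rank}(P_n)$. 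Then a direct computation gives
\[
pBp \;\cong\; \prod_{n\in S} P_n M_{k(n)} P_n\Big/\bigoplus_{n\in S} P_n M_{k(n)} P_n \;\cong\; \prod_{n\in S} M_{r(n)}\Big/\bigoplus_{n\in S} M_{r(n)},
\]
which is again the corona of a separable $\Cstar$-algebra with an increasing approximate identity of projections.

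In each case, $pBp$ has the required form and contains a unital copy of $\prod \cstar{A}_n/\bigoplus_\SI \cstar{A}_n$, contradicting Theorem~\ref{thm:noinjection}. The only mildly subtle point is the reduction to the unital case, which amounts to the two corner identifications above; the rest is just a bookkeeping translation.
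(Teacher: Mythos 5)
Your proof is correct and follows essentially the same route as the paper: reduce a general embedding to a unital one by cutting down to the corner $\phi(1)B\phi(1)$, identify that corner as a corona of a separable $\Cstar$-algebra with an increasing approximate identity of projections, and invoke Theorem~\ref{thm:noinjection}. The paper phrases the matrix-algebra case as ``every corner embeds unitally into $\prod M_n/\bigoplus M_n$'' rather than identifying the corner as $\prod_{n\in S}M_{r(n)}/\bigoplus_{n\in S}M_{r(n)}$, but this is only a cosmetic difference.
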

\begin{proof}
That there is no unital embedding of $\prod \cstar{A}_n/\bigoplus_{\SI}\cstar{A}_n$ in $\mathcal C(H)$ is Theorem~\ref{thm:noinjection}. As every cut down by a projection of $\mathcal C(H)$ is isomorphic to $\mathcal C(H)$, this concludes the proof. The second statement follows from that every cut down by a projection of $\prod M_{k(n)}/\bigoplus M_{k(n)}$ embeds unitally into $\prod M_n/\bigoplus M_n$.
\end{proof}

Corollary~\ref{cor.FA.EmbCalkin}, together with Theorem~\ref{thm:CH.EmbMFCH}, proves this generalization of Theorem~\ref{thmi:2}.
\begin{theorem}
Let $\SI\subseteq\mathcal P(\NN)$ be a meager dense ideal.
\item That $\prod A_n/\bigoplus _{\SI}A_n$ does not embed in the Calkin algebra for any choice of $A_n$ unital nonzero $\Cstar$-algebras is consistent with $\ZFC$;
\item That $\prod A_n/\bigoplus _{\SI}A_n$ embeds in the Calkin algebra for any choice of $A_n$ unital nonzero MF $\Cstar$-algebras is independent from $\ZFC$.
\end{theorem}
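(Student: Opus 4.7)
The plan is to prove the two clauses independently, using results established earlier in the paper to witness each direction of consistency or independence.

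For the first clause, the strategy is to apply Corollary~\ref{cor.FA.EmbCalkin} directly. Since $\OCA_\infty+\MA_{\aleph_1}$ is consistent with $\ZFC$ (it is implied by $\PFA$, whose consistency is established relative to the consistency of a supercompact cardinal, and $\OCA_\infty+\MA_{\aleph_1}$ itself is known to be equiconsistent with $\ZFC$, as noted in \S\ref{ss:FA}), Corollary~\ref{cor.FA.EmbCalkin} yields at once a model of $\ZFC$ in which, for \emph{every} choice of unital nonzero $\Cstar$-algebras $\{A_n\}$, the corona $\prod A_n/\bigoplus_{\SI}A_n$ fails to embed into $\mathcal C(H)$. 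Nothing more is required here.

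For the second clause, I would establish both sides of independence. For the consistency of embedding, work under $\CH$ and assume each $A_n$ is a unital MF separable $\Cstar$-algebra. By Theorem~\ref{thm:CH.EmbMFCH}, $\CH$ gives a unital embedding
\[
\prod A_n/\bigoplus_{\SI}A_n \hookrightarrow \prod M_n/\bigoplus M_n,
\]
and the algebra on the right embeds (unitally) into the Calkin algebra in $\ZFC$ (as recalled in the introduction, via the block-diagonal representation of $\prod M_n$ on $H=\bigoplus\ce^n$, which sends $\bigoplus M_n$ into $\mathcal K(H)$). Composing, we obtain the desired embedding into $\mathcal C(H)$ under $\CH$. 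For the consistency of \emph{non}-embedding, observe that MF algebras are in particular unital nonzero $\Cstar$-algebras, so Corollary~\ref{cor.FA.EmbCalkin} under $\OCA_\infty+\MA_{\aleph_1}$ produces a model in which no such embedding exists, even when restricted to the MF case.

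There is no real obstacle here, as the hard analytic and combinatorial work is already embodied in Theorem~\ref{thm:CH.EmbMFCH} (on the $\CH$ side, relying on saturation of reduced products and universal axiomatisability of the class of MF algebras) and in Theorem~\ref{thm:noinjection} together with its corollary (on the forcing-axiom side, via the lifting Theorem~\ref{thm:FA.liftingtheorem} combined with Theorem~\ref{thm:FA.ApproxStruct}). The only point that deserves a line of comment is that the embedding $\prod M_n/\bigoplus M_n\hookrightarrow \mathcal C(H)$ in $\ZFC$ is immediate from the fact that $\bigoplus M_n$ sits as an ideal inside $\prod M_n$ and the inclusion $\prod M_n\subseteq \mathcal B(H)$, $\bigoplus M_n\subseteq\mathcal K(H)$, induces a well-defined injective $^*$-homomorphism at the level of quotients by the standard isomorphism theorems.
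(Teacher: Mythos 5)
Your proposal is correct and follows exactly the route the paper intends: the paper itself states the theorem as an immediate consequence of Corollary~\ref{cor.FA.EmbCalkin} (for the forcing-axiom side, giving consistency of non-embedding) and Theorem~\ref{thm:CH.EmbMFCH} composed with the $\ZFC$-embedding $\prod M_n/\bigoplus M_n\hookrightarrow\mathcal C(H)$ (for the $\CH$ side). You have simply spelled out the bookkeeping---the relative consistency of $\OCA_\infty+\MA_{\aleph_1}$ and the block-diagonal realization of $\prod M_n/\bigoplus M_n$ inside the Calkin algebra---which the paper leaves implicit, and both of those points are handled correctly.
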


Instances of the following theorem where showed to be valid in a model of set theory (obtained via the use of forcing) by Ghasemi in \cite{Ghasemi.FDD}.

\begin{theorem}
Assume $\OCA_\infty$ and $\MA_{\aleph_1}$. Let $\cstar{A}_n$ be unital infinite-dimensional $\Cstar$-algebras.
Then $\prod \cstar{A}_n/\bigoplus \cstar{A}_n$ doesn't embed into $\prod M_{k(n)}/\bigoplus M_{k(n)}$, for any choice $k(n)$.
\end{theorem}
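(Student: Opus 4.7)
The plan is to argue by contradiction and reduce to Corollary~\ref{cor.FA.EmbCalkin}. Assume $\phi\colon \prod \cstar{A}_n/\bigoplus \cstar{A}_n \hookrightarrow \mathcal C := \prod M_{k(n)}/\bigoplus M_{k(n)}$ is a unital embedding. Let $\iota\colon \ell^\infty/c_0 \to \prod \cstar{A}_n/\bigoplus \cstar{A}_n$ be the canonical unital scalar embedding sending $(\lambda_n)$ to $(\lambda_n\cdot 1_{\cstar{A}_n})$, and set $\phi_0 = \phi\circ\iota$. Viewing $\ell^\infty/c_0$ as $\prod M_1/\bigoplus M_1$, I apply Theorem~\ref{thm:FA.liftingtheorem} to $\phi_0$ and then Theorem~\ref{thm:FA.ApproxStruct} to upgrade the lift to a unital $^*$-homomorphism $\alpha\colon \ell^\infty \to \mathcal M(\mathcal B)$ (with $\mathcal B = \bigoplus M_{k(n)}$) that lifts $\phi_0$ on a ccc/Fin ideal $\SJ$; by Remark~\ref{remark:asymptorthogonal} I may assume each $\alpha_n(1)$ is a projection sitting in a single finite corner $(q_{l_n}-q_{m_n})\mathcal B(q_{l_n}-q_{m_n})$, with these corners pairwise orthogonal.

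Next I exploit the infinite-dimensionality of each $\cstar{A}_n$. Pick any infinite $X\in\SJ$ (which exists because $\SJ$ is ccc/Fin and hence strictly larger than $\Fin$) and set $P = \alpha(\chi_X)\in\mathcal M(\mathcal B)$. Because the corners carrying the $\alpha_n(1)$ ($n\in X$) are pairwise orthogonal and each of finite dimension, the compression $P\mathcal M(\mathcal B) P$ has the form $\prod_{n\in X} G_n$ with $G_n$ finite-dimensional, and $P\mathcal B P = \bigoplus_{n\in X} G_n$; breaking each $G_n$ into matricial summands identifies $P\mathcal C P$ with a reduced product $\prod_j M_{l(j)}/\bigoplus_j M_{l(j)}$. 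Cutting down $\phi$ by $\iota(\chi_X)$ on the source and by $P$ on the target (and using that $\phi(\iota(\chi_X))$ coincides with the image of $P$ in $\mathcal C$) yields a unital embedding
\[
\phi_X\colon \prod_{n\in X}\cstar{A}_n\Big/\bigoplus_{n\in X}\cstar{A}_n \;\hookrightarrow\; \prod_j M_{l(j)}\Big/\bigoplus_j M_{l(j)},
\]
whose source is still a reduced product of unital infinite-dimensional $\Cstar$-algebras.

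The final step is to show that, under the infinite-dimensionality hypothesis, $\prod_{n\in X}\cstar{A}_n/\bigoplus_{n\in X}\cstar{A}_n$ contains a unital copy of $\prod_n B_n/\bigoplus_\SI B_n$ for some \emph{meager dense} ideal $\SI\subseteq\mathcal P(\mathbb N)$ and some unital nonzero $B_n$. Once this is done, composing the inclusion with $\phi_X$ and invoking Corollary~\ref{cor.FA.EmbCalkin} provides the contradiction. To construct this copy I plan to fix, via Proposition~\ref{prop:JT2}, a meager dense ideal $\SI$ witnessed by a partition $\mathbb N = \bigsqcup_n E_n$ of $X$ into finite intervals, and then choose, inside each $\cstar{A}_n$ for $n$ in a block, sequences of pairwise orthogonal positive contractions of norm $1$ that together assemble a unital $^*$-homomorphism encoding the coordinates of $\prod_n B_n$ into $\prod_{n\in X}\cstar{A}_n$ in such a way that the pullback of $\bigoplus_{n\in X}\cstar{A}_n$ is precisely $\bigoplus_\SI B_n$.

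The main obstacle is precisely this last construction. The naive attempt (taking $B_n = \mathbb C$ and encoding $\ell^\infty$ via scalars, or taking orthogonal positive contractions within each coordinate) produces only the $\Fin$-ideal version and never $c_\SI$ for a strictly larger meager dense $\SI$: orthogonality inside a single $\cstar{A}_n$ forces the coordinate-norm to be the maximum, so the induced ideal collapses to $\Fin$. The infinite-dimensionality must therefore be used across blocks rather than within a single coordinate, for instance by distributing coordinates of $B_n$ across the block $E_n$ and using the fact (from Proposition~\ref{prop:JT2}) that any infinite union of full blocks $\bigcup_{n\in L}E_n$ lies outside $\SI$; the orthogonal contractions picked inside $\cstar{A}_m$ (for $m\in E_n$) then provide enough room to make the block-indexed algebra $B_n$ embed unitally into $\prod_{m\in E_n}\cstar{A}_m$ while respecting the block boundaries. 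Turning this idea into a clean construction yielding an actual unital $^*$-homomorphism is the technical heart of the argument.
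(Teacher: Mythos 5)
Your reduction strategy has a fatal flaw in the final step, not merely a technical gap you haven't worked out. You want to show that $\prod_{n\in X}\cstar{A}_n/\bigoplus_{n\in X}\cstar{A}_n$ contains a unital copy of $\prod_n B_n/\bigoplus_\SI B_n$ for some meager dense ideal $\SI$ and unital nonzero $B_n$, working in a model of $\OCA_\infty + \MA_{\aleph_1}$. But when the $\cstar{A}_n$ are separable, $\bigoplus_{n\in X}\cstar{A}_n$ is a separable $\Cstar$-algebra admitting an increasing approximate identity of projections (the partial sums of the units $1_{\cstar{A}_n}$), and $\prod_{n\in X}\cstar{A}_n/\bigoplus_{n\in X}\cstar{A}_n$ is precisely its corona. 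Theorem~\ref{thm:noinjection} then forbids any unital embedding of $\prod B_n/\bigoplus_\SI B_n$ into this corona for meager dense $\SI$, \emph{under exactly the Forcing Axioms you are assuming}. The object you want to construct does not exist: the rigidity phenomenon you are trying to exploit on the matricial side equally prohibits the intermediate algebra on the source side. Your own observation that the ``naive'' orthogonal-elements encodings only produce the $\Fin$-ideal is not a nuisance to be engineered around but a manifestation of this obstruction.

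The paper's actual proof never introduces a meager ideal and never invokes Corollary~\ref{cor.FA.EmbCalkin}; it is a direct quantitative contradiction using the same lifting theorem but in a fundamentally different way. After lifting $\Lambda\restriction\ell_\infty/c_0$ to an asymptotically additive $^*$-homomorphism $\alpha=\sum\alpha_i$ with each $\alpha_i$ landing in a finite matricial block $\cstar{B}_i=\prod_{n_i\leq j\leq m_i}M_j$, one chooses (by finite-dimensionality of $\cstar{B}_i$) an integer $R_i$ so large that no family of $R_i$ elements of $\cstar{B}_i$ of norm in $[1/2,2]$ with pairwise products of norm $<1/4$ can exist. The infinite-dimensionality of each $\cstar{A}_i$ is then used to pack $2R_i-1$ pairwise orthogonal norm-one positives into $\cstar{A}_i$, which encode $M_{2R_i}$ via a unital cpc map $\gamma_i$. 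Composing $\Lambda$ with the induced $\Gamma'$ and lifting the resulting map $\Delta$ (which satisfies $(\land)$ and so falls under Theorem~\ref{thm:FA.liftingtheorem}) produces an asymptotically additive $\beta$ whose pieces $\beta_i$, evaluated on the diagonal matrix units of $M_{2R_i}$ and compressed to $\cstar{B}_i$, eventually yield $2R_i-1$ almost-orthogonal elements of $\cstar{B}_i$ of norm close to $1$ --- contradicting the choice of $R_i$. Thus the role of infinite-dimensionality in the intended argument is to \emph{overpack the finite matrix blocks of the target}, not to manufacture a meager-ideal quotient inside the source.
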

\begin{proof}
By contradiction, let $\cstar{A}_n$ be unital and infinite-dimensional and $\Lambda\colon\cstar{B}=\prod\cstar{A}_n/\bigoplus\cstar{A}_n\to\prod M_{k(n)}/\bigoplus M_{k(n)}$ be an embedding. Since $\prod M_{k(n)}/\bigoplus M_{k(n)}$ embeds unitally into $\prod M_n/\bigoplus M_n$ we will assume that $k(n)=n$. Also, as every corner of $\prod M_n/\bigoplus M_n$ is isomorphic to $\prod M_{k(n)}/\bigoplus M_{k(n)}$ for some sequence $k(n)$,  we can assume that $\Lambda$ is unital.

Let $\ell_\infty/c_0\subseteq Z(\cstar{B})$, be the canonical copy generated by the image of $p_A$, $A\subseteq\NN$, where $(p_A)_n=1$ if $n\in A$ and $0$ otherwise.
Let $\alpha=\sum\alpha_n\colon\ell_\infty\to\prod M_n$ be the asymptotically additive map which is a lifting of $\Lambda\restriction \ell_\infty/c_0$ on a nonmeager ideal $\SI$. By the definition of asymptotically additive there are increasing sequences $n_i,m_i$ with $n_i<m_i$ such that the range of $\alpha_i$ is contained in $\prod_{n_i\leq j\leq m_i}M_j$. Note that we are not requiring, at this stage, that $m_i\leq n_{i+1}$.

As $\Lambda\restriction\ell_\infty/c_0$ is a $^*$-homomorphism by Theorem~\ref{thm:FA.ApproxStruct} we can assume that each $\alpha_i$ is a $^*$-homomorphism whose range is included in $\prod_{n_i\leq j\leq m_i}M_j$ and $\alpha=\sum\alpha_i$ is a lift for $\Lambda\restriction\ell_\infty/c_0$ on $\SI$.

Fix $\cstar{B}_i=\prod_{n_i\leq j\leq m_i} M_j$ and let $R_i$ be a natural number so large that there is no set $\{x_k\}_{k\leq R_i}\subseteq (\cstar{B}_i)$ with $\norm{x_kx_l}<\frac{1}{4}$ and $\frac{1}{2}\leq\norm{x_k}\leq2$ for all $k,l\leq R_i$. The existence of such a number is possible since each $\cstar{B}_i$ is finite dimensional.

For every $i$ fix $2R_i-1$ pairwise orthogonal positive elements of norm $1$,  $y_1,\ldots,y_{2R_i-1}$ and  let $y_{2R_i}=1-\sum_{j<2R_i} y_j$. (Choose the $y_k$'s so that $\norm{y_{2R_i-1}}=1$). Let 
\[
\gamma_i\colon M_{2R_i}\to \cstar{A}_i
\]
be the map given by \[\gamma_i(x)=\sum_{j\leq 2R_i} e_{j,j}xe_{j,j}y_j,\]
where $e_{j,j}$ is the class matrix unit for $M_{2R_i}$. Note that $e_{j,j}xe_{j,j}$ is a complex number, and so each $\gamma_i$ is a linear $^*$-preserving positive map. Fix $\Gamma=\sum\gamma_i\colon\prod M_{2R_i}\to\prod\cstar{A}_i$. Note that $\Gamma(\bigoplus M_{2R_i})\subseteq \bigoplus \cstar{A}_i$, and so $\Gamma$ induces a map $\Gamma'\colon\prod M_{2R_i}/\bigoplus M_{2R_i}\to\prod\cstar{A}_i/\bigoplus\cstar{A}_i$. Let $\Delta=\Lambda\circ\Gamma'$. It is easy to see that $\Delta$ satisfies the hypothesis of Theorem~\ref{thm:FA.liftingtheorem}, in particular Equation~\eqref{eqn:restrictions} of \S\ref{s:FA.Lift1}, as each $\gamma_i$ is unital. Since we assumed $\OCA_\infty$ and $\MA_{\aleph_1}$ there is an asymptotically additive map 
\[
\beta=\sum\beta_n\colon\prod M_{2R_i}\to\prod M_n
\]
which is a lifting of $\Delta$ on a nonmeager $\SJ$. Since the intersection of two nonmeager ideals is still a nonmeager ideals, we can assume $\SI=\SJ$.
\begin{claim}
\begin{itemize}
\item $\lim_i\sup_{j,k<2R_i}\norm{\beta_i(e_{j,j})\beta_i(e_{k,k})}=0$
\item $\lim_i\sup_{j<2R_i}\norm{\beta_i(e_{j,j})}=\lim_i\inf_{j<2R_i}\norm{\beta_i(e_{j,j})}=1$
\item $\lim_i\sup_{j<2R_i}\norm{\beta_i(e_{j,j})-1_{\cstar{B}_i} \beta_i(e_{j,j}) 1_{\cstar{B}_i}}=0$
\end{itemize}
\end{claim}
\begin{proof}
 We will only prove the first statement, as the proof for the second and third is precisely the same, and we therefore leave it to the reader.

The proof is similar to the one of Proposition~\ref{prop:FA.ApproxStruct}. Suppose that there is $\epsilon>0$ and infinite sequences $i_l,j_l,k_l$ with the property that $j_l,k_l<2R_{i_l}$ and 
\[
\norm{\beta_{i_l}(e_{j_l,j_l})\beta_{i_l}(e_{k_l,k_l})}>\epsilon
\]
Fix $X\subseteq\{i_l\}$ with $X\in\SI=\SJ$. Let $a,b\in \prod M_{2R_i}$ defined as $a_n=e_{j_l,j_l}$ if $n=i_l\in X$ and $0$ otherwise and $b_n=e_{k_l,k_l}$ if $n=i_l\in X$ and $0$ otherwise. In particular it follows that 
\[
\norm{\pi_1(\beta(a))\pi_1(\beta(b))}>\epsilon,
\]
where $\pi_1\colon\prod M_n\to\prod M_n/\bigoplus M_n$ is the canonical quotient map.

 On the other hand, since $X\in\SI$, we have that $\beta(a)$ and $\beta(b)$ are liftings for $\Delta(\pi(a))=\Lambda(\Gamma'(\pi(a)))$ and $\Delta(\pi(b))=\Lambda(\Gamma'(\pi(b)))$, $\pi$ denoting the quotient map $\pi\colon \prod M_{2R_i}\to\prod M_{2R_i}/\bigoplus M_{2R_i}$, that is
\[
\pi_1(\beta(a))=\Lambda(\Gamma'(\pi(a)))\text{ and } \pi_1(\beta(b))=\Lambda(\Gamma'(\pi(b))).
\]

On the other hand, by the definition of $\Gamma'$ we have that, for $\pi_2\colon\prod\cstar{A}_n\to\prod\cstar{A}_n/\bigoplus\cstar{A}_n$,
\[
\Gamma'(\pi(a))=\pi_2(\Gamma(a))=\pi_2(x), \text { and } \Gamma'(\pi(b))=\pi_2(\Gamma(b))=\pi_2(z)
\]
where $x=\Gamma(a)$ and $z=\Gamma(b)$ are such that $x_n=\gamma_n(e_{j_l,j_l}=y_{j_l}\in\cstar{A}_{i_l}$ and $z_n=\gamma_n(e_{j_l,j_l}=y_{k_l}\in\cstar{A}_{i_l}$ if $n=i_l\in X$ and $0$ otherwise. Since $j_l\neq k_l<2R_i$ we have that $xz=0$, and so $\pi_2(x)\pi_2(z)=0$ meaning that $\Gamma'(\pi(a))\Gamma'(\pi(b))=0$ and so 
\[
0=\Lambda(\Gamma'(\pi(a)))\Lambda(\Gamma'(\pi(b)))=\pi_1(\beta(a))\pi_1(\beta(b))
\]
a contradiction.
\end{proof}

As $i$ is large enough, $c_{i,j}= 1_{\cstar{B}_i} \beta_i(e_{j,j}) 1_{\cstar{B}_i}$, for $j<2R_i$ are $2R_i-1$ elements of $\cstar{B}_i$ of norm greater than $\frac{1}{2}$, less than $2$, and such that $\norm{c_{i,j}c_{i,l}}<\frac{1}{4}$ whenever $j,l<2R_i$, a contradiction to our choice of $R_i$.

\end{proof}

Another interesting application occurs when one considers embeddings of the form 
\[
\phi\colon\prod M_{k(n)}/\bigoplus M_{k(n)}\to\mathcal M(\cstar{A})/\cstar{A}.
\]
\begin{theorem}\label{thm:.FA.starhomolift}
Assume $\OCA_\infty$ and $\MA_{\aleph_1}$. Let $k(n)$ be a sequence of natural numbers, $\cstar{A}$ be a separable $\Cstar$-algebra with an increasing approximate identity of projections and
\[
\phi\colon\prod M_{k(n)}/\bigoplus M_{k(n)}\to\mathcal M(\cstar{A})/\cstar{A}.
\]
be a unital embedding. Then there is a $^*$-homomorphism  
\[\Phi\colon\prod M_{k(n)}\to\mathcal M(\cstar{A})
\]
 and a ccc/Fin ideal $\SI$ such that $\Phi$ is a lifting of $\phi$ on $\SI$.
\end{theorem}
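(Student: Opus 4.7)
The proof will proceed by assembling the machinery that has already been developed in Sections~\ref{s:FA.Lift1}--\ref{s:FA.Lift2}, and then upgrading the output of Theorem~\ref{thm:FA.liftingtheorem} from a merely asymptotically additive lift to an actual $^*$-homomorphism via Theorem~\ref{thm:FA.ApproxStruct}. The first step is to verify that $\phi$ meets the standing hypotheses placed on $\iso$ in \S\ref{s:FA.Lift1}. Since $\phi$ is a unital $^*$-homomorphism between $\Cstar$-algebras, it is automatically linear, $^*$-preserving, and contractive with $\norm{\phi}=1$, and its multiplicativity immediately yields the restriction identity \eqref{eqn:restrictions}, because $\phi(\pi(xP_A)) = \phi(\pi(x))\phi(\pi(P_A))$ for every $A\subseteq\NN$ and every $x\in\prod M_{k(n)}$.

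Having verified this, the plan is to invoke Theorem~\ref{thm:FA.liftingtheorem} directly. Under $\OCA_\infty+\MA_{\aleph_1}$, this produces a ccc/Fin ideal $\SI\subseteq\mathcal P(\NN)$ together with an asymptotically additive map $\alpha\colon\prod M_{k(n)}\to\mathcal M(\cstar{A})$ which lifts $\phi$ on $\SI$. The ideal $\SI$ is automatically dense in the sense of \S\ref{ss:ST.idealsonomega}: given any infinite $X\subseteq\NN$, a standard tree construction produces an uncountable a.d. family inside $\mathcal P(X)$, and by the ccc/Fin property $\SI$ must meet it, which furnishes an infinite element of $\SI$ inside $X$.

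Since $\phi$ is a $^*$-homomorphism and $\alpha$ is an asymptotically additive lift of $\phi$ on the dense ideal $\SI$, the hypotheses of Theorem~\ref{thm:FA.ApproxStruct} are satisfied. Applying that theorem yields a $^*$-homomorphism $\Phi\colon\prod M_{k(n)}\to\mathcal M(\cstar{A})$ with the property that
\[
  \Phi(x)-\alpha(x)\in\cstar{A} \qquad\text{for every }x\in\prod M_{k(n)}.
\]
For $x\in\prod M_{k(n)}$ with $\supp(x)\in\SI$, the fact that $\alpha$ lifts $\phi$ on $\SI$ gives $\pi_{\cstar{A}}(\alpha(x))=\phi(\pi(x))$, and combining this with the inclusion above yields $\pi_{\cstar{A}}(\Phi(x))=\phi(\pi(x))$. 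Hence $\Phi$ is a $^*$-homomorphism lifting $\phi$ on the ccc/Fin ideal $\SI$, which is exactly the statement of the theorem.

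There is essentially no new obstacle: the genuine analytic and combinatorial work has already been absorbed into Theorem~\ref{thm:FA.liftingtheorem} (where $\OCA_\infty+\MA_{\aleph_1}$ are used to produce the ccc/Fin ideal and the asymptotically additive lift) and into Theorem~\ref{thm:FA.ApproxStruct} (where the Ulam stability results of Chapter~\ref{ch:Ulam}, in particular Theorem~\ref{thm:US.ulam-stability}, are used to perturb the asymptotically additive lift of a $^*$-homomorphism into a genuine $^*$-homomorphism). The only subtlety one might flag is the passage from ccc/Fin to dense for $\SI$, but as noted this is immediate from the treelike a.d.\ family construction inside $\mathcal P(X)$.
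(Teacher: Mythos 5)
Your proof is correct and follows exactly the route the paper takes: the paper's own proof is the one-line remark ``This is Theorem~\ref{thm:FA.liftingtheorem} together with Theorem~\ref{thm:FA.ApproxStruct},'' and your write-up simply supplies the routine verifications (that a unital $^*$-homomorphism satisfies the standing assumptions on $\iso$, and that a ccc/Fin ideal is dense and nonmeager) that the paper leaves implicit.
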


\begin{proof} 
This is Theorem~\ref{thm:FA.liftingtheorem} together with Theorem~\ref{thm:FA.ApproxStruct}.
\end{proof}

In case the map $\phi$ is an isomorphism, we are able to enlarge $\SI$. 

\begin{theorem}\label{thm:isoFDD}
Assume $\OCA_\infty$ and $\MA_{\aleph_1}$. Let $\cstar{A}$ be a separable $\Cstar$-algebra with an increasing approximate identity of projections and suppose that $\prod M_{k(n)}/\bigoplus M_{k(n)}$ and $\mathcal M(\cstar{A})/\cstar{A}$ are isomorphic. Then there is a projection $q\in\mathcal M(\cstar{A})$ and $k\in\NN$ such that 
\[
1-q\in\cstar{A},\,\,  q\cstar{A}q\cong\bigoplus_{n\geq k} M_{k(n)} \text{ and  }q\mathcal M(\cstar{A})q\cong\prod_{n\geq k} M_{k(n)}.\]

Moreover, if $\Lambda\colon\prod M_{k(n)}/\bigoplus M_{k(n)}\to \mathcal M(\cstar{A})/\cstar{A}$ is an isomorphism, there is a $^*$-homomorphism $\Phi$ which a lift of $\Lambda$, 
\[
\phi\colon \prod_n M_{k(n)}\to \mathcal M(\cstar{A})
\] 
and a $k\in\NN$ for which $\phi\restriction\prod_{n\geq k}M_{k(n)}$ is an isomorphism.
\end{theorem}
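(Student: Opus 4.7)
\medskip
\noindent\textit{Plan.} The plan is to apply Theorem~\ref{thm:.FA.starhomolift} to obtain a $^*$-homomorphism lift $\Phi = \sum_n \Phi_n \colon \prod M_{k(n)} \to \mathcal M(\cstar{A})$ of $\Lambda$ on some ccc/Fin ideal $\SI$, and then to exploit the fact that $\Lambda$ is an \emph{isomorphism}---not merely an embedding---to rigidify the structure of $\cstar{A}$ itself. Since every ccc/Fin ideal meets an uncountable almost disjoint family inside any given infinite set, $\SI$ is automatically dense. I would first show $\pi(\Phi(1))=1$: if $e = 1 - \pi(\Phi(1))$ were nonzero then by surjectivity of $\Lambda$ one could write $e = \Lambda(\pi(y))$ for a projection $y \in \prod M_{k(n)}$, and for any $A \in \SI$ the projection $yP_A$ has support in $\SI$ and satisfies $yP_A \le y$ and $yP_A \le 1$, so $\Lambda(\pi(yP_A)) = \pi(\Phi(yP_A))$ is dominated by both $e$ and $1-e$, forcing $\Lambda(\pi(yP_A)) = 0$; injectivity of $\Lambda$ together with density of $\SI$ then forces $\supp(y)$ to be finite, a contradiction. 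Hence $1 - \Phi(1) \in \cstar{A}$.

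Next I would promote $\Phi$ to a lift on all of $\prod M_{k(n)}$. For an arbitrary $x$, set $f = \pi(\Phi(x)) - \Lambda(\pi(x))$; for each $A \in \SI$, the agreement $\pi(\Phi(xP_A)) = \Lambda(\pi(xP_A))$ combined with multiplicativity of $\pi\circ\Phi$ and $\Lambda\circ\pi$ gives $f \cdot \pi(\Phi(P_A)) = 0$. With the approximate identity of projections $p_n = (1-\Phi(1)) + \sum_{j<n}\Phi_j(1)$---which satisfies $p_{n+1}-p_n = \Phi_n(1)$---Proposition~\ref{prop:onedominates} (applied with the partition $E_n = \{n\}$, so that $q^E_A = \Phi(P_A)$, to the support projection of $f$) forces $f = 0$. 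Consequently, if $\{n : \Phi_n = 0\}$ were infinite, its indicator would give a nonzero projection killed by $\Phi$ but not by $\Lambda\circ\pi$, contradicting injectivity of $\Lambda$; so only finitely many $\Phi_n$ vanish, and each surviving $\Phi_n$ is injective (since $M_{k(n)}$ is simple).

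The main obstacle is to force a direct-sum decomposition of a corner of $\cstar{A}$. Because $\Lambda$ is an isomorphism it preserves centers, so $\Lambda(\pi(P_X)) = \pi(\Phi(P_X))$ lies in $Z(\mathcal M(\cstar{A})/\cstar{A})$ for every $X \subseteq \NN$: this is exactly the hypothesis of Lemma~\ref{lem:apoxcommuting} for the approximate identity $(p_n)$, which yields $n_0$ such that $\Phi_n(1)$ is central in $\cstar{A}' := (1-p_{n_0})\cstar{A}(1-p_{n_0})$ for every $n \ge n_0$. Centrality combined with $\Phi_n(1)\Phi_m(1) = 0$ ($n\neq m$) kills every off-diagonal term: $\Phi_n(1)a\Phi_m(1) = a\Phi_n(1)\Phi_m(1) = 0$ for $a \in \cstar{A}$ and $n \neq m \ge n_0$. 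Thus $\cstar{A}' = \bigoplus_{n\ge n_0}\cstar{A}_n$ with $\cstar{A}_n := \Phi_n(1)\cstar{A}\Phi_n(1)$, each containing $\Phi_n(M_{k(n)}) \cong M_{k(n)}$ unitally. Under this decomposition, $\Lambda$ restricts to an isomorphism between the coronas of $\bigoplus_{n\ge n_0} M_{k(n)}$ and $\bigoplus_{n\ge n_0}\cstar{A}_n$ induced by the coordinatewise inclusion $\bigoplus\Phi_n$, and its surjectivity on these coronas means every bounded $(y_n)\in\prod\cstar{A}_n$ agrees with some $(\Phi_n(x_n))$ off a finite set; were $\Phi_n(M_{k(n)}) \subsetneq \cstar{A}_n$ for infinitely many $n$, a norm-one sequence with $y_n \in \cstar{A}_n\setminus\Phi_n(M_{k(n)})$ on that infinite set (and zero elsewhere) would contradict this. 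So some $k \ge n_0$ works beyond the finite exceptional set, and setting $q := \sum_{n\ge k}\Phi_n(1)$ gives $\pi(q) = \Lambda(\pi(P_{[k,\infty)})) = 1$ (hence $1-q \in \cstar{A}$), $q\cstar{A}q = \bigoplus_{n\ge k}\Phi_n(M_{k(n)}) \cong \bigoplus_{n\ge k} M_{k(n)}$, and by the multiplier correspondence $q\mathcal M(\cstar{A})q \cong \prod_{n\ge k} M_{k(n)}$, with the isomorphism implemented by $\Phi \restriction \prod_{n \ge k} M_{k(n)}$.
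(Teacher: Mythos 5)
Your overall architecture is the paper's: obtain an asymptotically additive $^*$-homomorphism lift $\Phi$ on a ccc/Fin ideal $\SI$, show $\pi(\Phi(1))=1$, upgrade the lift to all of $\prod M_{k(n)}$, use centrality of the $\pi(\Phi(P_X))$ together with Lemma~\ref{lem:apoxcommuting} to decompose a corner of $\cstar{A}$ as $\bigoplus_n \Phi_n(1)\cstar{A}\Phi_n(1)$, and then use surjectivity of $\Lambda$ to force the $\Phi_n$ to be eventually onto. Your argument for $\pi(\Phi(1))=1$ (a projection dominated by both $e$ and $1-e$ is zero, plus density of $\SI$) is a correct variant of the paper's. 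However, there are two genuine problems.

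First, the upgrade of the lift from $\SI$ to $\mathcal P(\NN)$ is not justified as written. You set $f=\pi(\Phi(x))-\Lambda(\pi(x))$, correctly derive $f\cdot\pi(\Phi(P_A))=0$ for all $A\in\SI$, and then invoke Proposition~\ref{prop:onedominates} ``applied to the support projection of $f$.'' But $f$ is a general element of the corona, which is not a von Neumann algebra: it has no support projection, and the relation ``$f$ annihilates $\pi(\Phi(P_A))$'' does not produce a projection dominating the $\pi(\Phi(P_A))$, which is what Proposition~\ref{prop:onedominates} needs. Note also that you cannot reach $A=\NN$ by any limiting process inside the corona, since $\pi(\Phi(P_A))=0$ for every finite $A$; the passage from ``all $A\in\SI$'' to ``$A=\NN$'' genuinely requires a definability/nonmeagerness argument. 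The paper does this by observing that $\SI_x=\{A\subseteq\NN\mid \Phi(P_A)(\Phi(x)-y)\in\cstar{A}\}$ is a \emph{Borel} ideal (using strict-strict continuity of $\Phi$ and Borelness of $\cstar{A}$ in the strict topology) containing the nonmeager ideal $\SI$, hence equal to $\mathcal P(\NN)$; taking $A=\NN$ and using $1-\Phi(1)\in\cstar{A}$ gives $f=0$. Your conclusion is true, but you need this (or an equivalent) argument in place of the support-projection step.

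Second, your eventual-surjectivity argument is too coarse. Surjectivity of $\Lambda$ only tells you that every bounded $(y_n)\in\prod\cstar{A}_n$ satisfies $\norm{y_n-\Phi_n(x_n)}\to 0$ for some $(x_n)$ (not agreement off a finite set), so choosing $y_n$ merely in $\cstar{A}_n\setminus\Phi_n(M_{k(n)})$ of norm one yields no contradiction: the $y_n$ could converge to the images. You must choose the $y_n$ uniformly bounded away from $\Phi_n[M_{k(n)}]$; since $\Phi_n[M_{k(n)}]$ is a proper finite-dimensional (hence proximinal) subspace of $\cstar{A}_n$, one can take $y_n=(b_n-v_n)/\norm{b_n-v_n}$ with $v_n$ a best approximation to some $b_n\notin\Phi_n[M_{k(n)}]$, giving $\norm{y_n}=1$ and $d(y_n,\Phi_n[M_{k(n)}])=1$; then the strictly convergent sum $\bar a=\sum y_n$ lies in $\mathcal M(\cstar{A})\setminus\cstar{A}$ and, using the (now global) lift, $\Phi(\bar b)-\bar a\in\cstar{A}$ for $\bar b$ with $\Lambda(\pi(\bar b))=\pi(\bar a)$, contradicting $d(y_n,\Phi_n[M_{k(n)}])=1$. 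With these two repairs your proof coincides with the paper's.
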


\begin{proof}
We will prove the second claim, from which the first follows easily. To simplify the notation, we will assume that $k(n)=n$, but the reader will see that the proof goes through for any choice of the sequence $k(n)$. By $\pi,\pi_1$ we denote the canonical quotient maps $\pi\colon\prod M_n\to\prod M_n/\bigoplus M_n$ and $\pi_1\colon\mathcal M(\cstar{A})\to\mathcal M(\cstar{A})/\cstar{A}$.

Let $\Lambda$ be an isomorphism
\[
\Lambda\colon\prod M_n/\bigoplus M_n\to\mathcal M(\cstar{A})/\cstar{A}
\]
and $\phi\colon \prod M_n\to\mathcal M(\cstar{A})$ be a strict-strict continuous $^*$-homomorphism which is  lift on a ccc/Fin ideal $\SI$, as ensured by Theorem~\ref{thm:.FA.starhomolift}.

We argue as in Lemma~\ref{lem:liftforeverything.Borel} to get that $\SI=\mathcal P(\NN)$. We first prove that $\pi_1(\phi(1))=1$. Recall that if $A\subseteq \NN$, $\tilde p_A$ denotes the projection in $\prod M_n$ such that $(\tilde p_A)_n=1$ if $n\in A$ and $0$ otherwise. Note that if $p\in \prod M_n$ is such that $p$ is a projection dominating each $\tilde p_A$, for $A\in\SI$, then $p=1$, as $\SI$ is dense (see Proposition~\ref{prop:onedominates}).
Let $r=\pi_1(\phi(1))$. Then for all $A\in\SI$
\[
\Lambda^{-1}(r)\geq\Lambda^{-1}(\pi(\phi(\tilde p_A)))=\Lambda^{-1}(\Lambda (\tilde p_A))=\tilde p_A
\]
 and so $\Lambda^{-1}(r)=1$, that is $r=1$.

We now prove that $\SI=\mathcal P(\NN)$. Fix $x\in\prod M_n$ and $y\in\mathcal M(A)$ such that $\Lambda(\pi(x))=\pi_1(y)$, and let 
\[
\SI_x=\{A\subseteq\NN\mid \phi(\tilde p_A)(\phi(x)-y)\in\cstar{A}\}.
\]
Since $\phi$ is strictly-strictly continuous, $x$ and $y$ are fixed, and $\cstar{A}$ is Borel, $\SI_x$ is Borel. On the other hand, $\SI\subseteq\SI_x$ and so $\SI_x=\mathcal P(\NN)$. As $\NN\in\SI_x$  we have
\[
\pi_1(\phi(x))=\Lambda(\pi(x)).
\]
Consider now $q=\phi(1)$. With $p_n=\phi(1_n)$, where $1_n=\tilde p_{\{n\}}\in M_n$, and $q_n=\sum_{i\leq n} p_n$, we have that $q_n$ is an increasing sequence of projections converging (strictly) to $q$, and so is an approximate identity in $q\cstar{A}q$. Also, whenever $X\subseteq\NN$ then the projection $p_X=\sum_{n\in X} p_n$ is such that if $a\in\mathcal M(\cstar{A})$ then 
\[
p_Xa-ap_X\in\cstar{A},
\]
 as $\pi(p_X)$ is central in $\mathcal M(\cstar{A})/\cstar{A}$.

By Lemma~\ref{lem:apoxcommuting}, there is $n_0$ such that $p_n$ is central in $(q-q_{n_0})\cstar{A}(q-q_{n_0})$ for all $n\geq n_0$, therefore we have that
\[
(q-q_{n_0})\cstar{A}(q-q_{n_0})\cong \bigoplus_{n>n_0}p_n\cstar{A}p_n.
\]
\begin{claim}
There is $k\geq n_0$ such that if $n>k$ then $\phi\restriction M_n\colon M_n\to p_n\cstar{A}p_n$ is an isomorphism.
\end{claim}
\begin{proof}
Note that there is $k$ such that if $n\geq k$ then $\phi\restriction M_n$ is nonzero. If not there is an infinite sequence $n_i$ such that $\phi\restriction M_{n_i}=0$, then, with $X=\{n_i\}$, we have that $\phi(\tilde p_X)\in \cstar{A}$, a contradiction to $1=\norm{\Lambda(\pi(\tilde p_X))}$ and that $\phi$ a lifting of $\Lambda$.
Since each $M_n$ is simple, every $\phi\restriction M_n$, for $n\geq k$, is injective. 

We now want to show that $\phi\restriction M_n$ is eventually surjective. If not, there is an infinite sequence $k_i$ such that $\phi\restriction M_{k_i}$ is not surjective. In this case, the vector space $\phi\restriction M_{k_i}$ is properly contained in the vector space $p_{k_i}\cstar{A}p_{k_i}$, and so there is $a_i\in p_{k_i}\cstar{A}p_{k_i}$ with $d(a_i,\phi\restriction M_{k_i})=1$ and $\norm{a_i}=1$. Fix $\bar a=\sum a_i\in\mathcal M(\cstar{A})\setminus \cstar{A}$, and let $\bar b\in\prod M_n$ with the property that
\[
\Lambda^{-1}(\pi_1(\bar a))=\pi(\bar b).
\]
Since $\phi$ is a lifting of $\Lambda$, we have that $\norm{q_n(\phi(\bar b)-\bar a)q_n}\to 0$, a contradiction.
\end{proof}
By setting $\tilde q=q-q_{n_0}$ we have the thesis.
\end{proof}

\chapter{Some open questions}\label{ch:QandA}

In this chapter we list few open questions and discuss the future prospective developments of the research carried over in this thesis.

Regarding the study of different layers of saturation studied in Chapter~\ref{ch:CH}, we recall Question~\ref{q:intro2}:
\begin{question}
Under $\CH$, is there an infinite-dimensional countably degree-1 saturated algebra of density character $\aleph_1$ with only $\mathfrak c$-many automorphisms?
\end{question}

As every corona of a $\sigma$-unital $\Cstar$-algebra is countably degree-1 saturated, by \cite{farah2011countable}, a negative answer to this question would solve Conjecture~\ref{conj:CH}.

Another problem concerning the different layers of saturation is to understand if, and when, they can differ. In \cite{Farah-Shelah.RCQ}, the authors exhibited  many examples of corona failing to be countably saturated, showing the existence of abelian ones (if $X=\{(x,y)\in\er^2\mid x=0\text{ or } y\in\NN\}$ then $\beta X\setminus X$ has only countably many clopen sets). All such examples fail to be quantifier-free saturated.

\begin{question}
Is every countably quantifier-free saturated $\Cstar$-algebra countably saturated?
\end{question}

Examples of algebras which are not known to be, or not to be, countably saturated are $M/\mathcal K_M$, where $M$ is a II$_\infty$-factor with separable predual and $\mathcal K_M$ is its unique ideal of finite trace elements, and $C(\beta X\setminus X)$, where $X$ is $\er^n$, for $n\geq 2$. An open question, related to saturation, asks whether these spaces can be universal across continua. If $C(\beta\er^n\setminus\er^n)$ is countably saturated, then we have a positive answer to the following.
\begin{question}
Assume $\CH$. Does $\beta\er^n\setminus\er^n$ surjects onto every compact connected space of density character $\mathfrak c$?
\end{question}

Turning to show that certain coronas have large automorphisms groups under $\CH$, we analyze the construction of $X$ as in \S\ref{ss:arigidspace}. In this case,  the obstructions preventing the currently known methods to construct an homeomorphism of $\beta X\setminus X$ different from the identity (such an homeomorphism would have to be necessarily nontrivial), can be generalized to construct higher dimensional versions of $X$ with the same properties. On the other hand, it seems that, if one were able to show that $\beta X\setminus X$ has nontrivial homeomorphisms under $\CH$, then one could generalize such proof to the higher dimensional analogues of $X$.
Similarly, in the nonabelian case, it is possible to identify a $\Cstar$-algebra $A$ for which the current methods provided by Farah and Coskey do not apply in the search for a nontrivial automorphism of $\mathcal M(A)/A$. Solving this test case should give an idea to the difficulties one has to overcome to solve the general conjecture.

Again in the noncommutative setting, we can turn to analyze the structure of corona of non necessarily $\sigma$-unital algebras, in the spirit of \S\ref{s:CH.instances}.
\begin{question}
Let $M$ be a II$_\infty$-factor with separable predual. Is it independent from $\ZFC$ whether $M/\mathcal K_M$ has only inner automorphisms?
\end{question}

Regarding the applications of forcing axioms to show all homeomorphisms of a \v{C}ech-Stone remainder are trivial, a partial result was provided by Farah and Shelah. Recall that if $b\subseteq X$ is a closed set, one can canonically associate to $b$ a closed set, namely $\beta b\setminus b$. Unless $b$ is compact, $\beta b\setminus b$ is nonempty.

\begin{defin}
Let $\phi\in\Homeo(\beta X\setminus X)$. We say that $\phi$ has a representation if for all $b\subseteq X$ closed there is a closed set $c\subseteq X$ such that $\phi[\beta b\setminus b]=\beta c\setminus c$.
\end{defin}
Having a representation is weaker than being trivial, in fact, every trivial homeomorphism has a representation. In case Forcing Axioms are assumed the two definitions coincide.
\begin{theorem}[{\cite[Theorem~5.3]{Farah-Shelah.RCQ}}]
Assume PFA, let $X$ be a locally compact Polish space and let $\phi\in \Homeo(\beta X\setminus X)$. If $\phi$ has a representation then $\phi$ is trivial.
\end{theorem}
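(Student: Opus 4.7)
The plan is to convert the set-level data supplied by the representation into a genuine pointwise homeomorphism between cocompact subsets of $X$, and then verify that this homeomorphism is the one inducing $\phi$. The first step is purely structural: for each closed $b\subseteq X$, choose a closed $c=c(b)\subseteq X$ with $\phi[\beta b\setminus b]=\beta c\setminus c$. Such $c$ is unique up to compact modifications, because $\beta c\setminus c=\beta c'\setminus c'$ forces $c\sd c'$ to be compact. Hence the assignment $[b]\mapsto[c(b)]$ is well-defined on the quotient lattice $\mathcal{F}(X)/\sim_K$, where $b\sim_K b'$ means $b\sd b'$ is relatively compact. Arguing in parallel with $\phi^{-1}$, which also has a representation, shows that $[b]\mapsto [c(b)]$ is an order isomorphism of $\mathcal{F}(X)/\sim_K$ preserving finite unions and intersections modulo compacta, and moreover it commutes with the ``$\bd$'' operation (since $\phi$ is a homeomorphism on the remainder).

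The second step is the heart of the proof: extract from this lattice automorphism a map $f\colon X\sm K_1\to X\sm K_2$ for some compact $K_1,K_2\subseteq X$. Given $x\in X\sm K_1$, the basic idea is to define $f(x)$ as the unique point such that whenever $x\in b$ and $b$ is closed and non-locally-compact at $x$, then $f(x)\in c(b)$ modulo a compact perturbation. The obvious candidate for $f(x)$ need not be single-valued or continuous from the bare lattice data, and this is where PFA is needed: using $\OCA$ together with $\MA_{\aleph_1}$, as in Shelah and Veli\v{c}kovi\'c's treatment of $\mathcal{P}(\NN)/\Fin$ and in the approach of \S\ref{s:FA.Lift1}--\S\ref{s:FA.Lift2}, one can rule out pathological, merely Baire-measurable choices of $f$ and force the selection to be continuous on a cofinite/cocompact portion of $X$. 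In this step one also uses that $X$ is Polish and locally compact so that ``compact exceptional set'' has a clear meaning and the topology on $X$ is determined by its closed sets in a definable way.

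Third, I would verify directly that the map $f$ obtained above is a homeomorphism $X\sm K_1\to X\sm K_2$. Bijectivity follows by performing the same construction with $\phi^{-1}$ and checking the resulting maps are mutual inverses (using the uniqueness up to compact modification). Continuity is a consequence of the lattice-level compatibility: the preimage under $f$ of a closed set differs from a closed set by a compact set, and working outside $K_1$ this suffices to give honest continuity. Openness of $f$ is dual. Once $f$ is a homeomorphism, $f^*\rs(\beta X\sm X)$ and $\phi$ are two homeomorphisms of $\beta X\sm X$ that, by construction, agree on every set of the form $\beta b\sm b$ with $b$ non-compact closed in $X$. Since such sets form a basis for the closed sets in the remainder, $\phi=f^*\rs(\beta X\sm X)$, which is exactly the content of Definition~\ref{defin:TrivialAbel}.

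The main obstacle, as always in this circle of results, is the second step: converting the abstract lattice automorphism into a pointwise continuous function. In the discrete case $X=\NN$ the analogous statement is that under $\PFA$ every automorphism of $\mathcal{P}(\NN)/\Fin$ is induced by an almost-permutation, and the proof requires the full combinatorial strength of $\OCA$ and $\MA_{\aleph_1}$ to exclude non-trivial Baire-measurable lifts. In the present setting the additional topological complexity of $X$ (as opposed to $\NN$) does not change the heart of the argument: one works with a suitable tree of closed approximations to points of $X$ and applies the open colouring dichotomy along that tree to force the point-assignment to be continuous off a compact set. The remaining bookkeeping---ensuring that the exceptional set is actually compact and not just meagre, and that $f$ extends coherently from a dense subset---is routine once the rigidity step has been carried out.
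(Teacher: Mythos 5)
The theorem you are asked to prove is stated in the paper only as a citation to \cite[Theorem~5.3]{Farah-Shelah.RCQ}; the thesis gives no proof of it, so there is no internal argument to compare against. Evaluating your proposal on its merits, there is a genuine gap at the heart of step~2, precisely the step you yourself flag as ``the heart of the proof.''

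The lattice $\mathcal{F}(X)/{\sim_K}$ of closed sets modulo relatively compact perturbations does not remember individual points of $X$. For any non-isolated $x$, every closed $b\ni x$ can be perturbed on a compact neighbourhood of $x$ without changing the class $[b]$, and the representing set $c(b)$ is also only determined up to $\sim_K$. So the prescription ``$f(x)$ is the unique point such that $x\in b \Rightarrow f(x)\in c(b)$ modulo a compact perturbation'' does not isolate a point; it only constrains $f(x)$ to lie in a filter of classes that in general has empty intersection with $X$. No amount of $\OCA$ or $\MA_{\aleph_1}$ applied to this lattice automorphism can recover pointwise data, because the data is literally not there. The actual role of $\PFA$ in Farah--Shelah is different: for each closed discrete $D\subseteq X$ one has $\beta D\setminus D\cong\beta\NN\setminus\NN$, the representation sends it to some $\beta D'\setminus D'$, and then $\OCA+\MA_{\aleph_1}$ trivialize the induced homeomorphism of $\beta\NN\setminus\NN$, producing an honest almost-bijection $D\to D'$. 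The pointwise map is born there, on discrete sets; the representation on arbitrary closed sets is then used to prove coherence of these partial bijections and to upgrade them to a continuous map off a compact set. Your outline puts the Forcing Axiom in the wrong place and, as a result, has no mechanism for producing points. A secondary issue: the assertion that the lattice map commutes with the boundary operation $\bd$ is unjustified, since $\bd_X(b)$ is not determined by the trace $\beta b\setminus b$ in the remainder.
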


Reading through their proof, it seems it would be possible to ask only for a local representation instead.
\begin{defin}
Let $\phi\in\Homeo(\beta X\setminus X)$. We say that $\phi$ has a local representation if for all $b\subseteq X$ closed and noncompact there are $c,d\subseteq X$ closed and noncompact, with $c\subseteq b$, and such that $\phi[\beta c\setminus c]=\beta d\setminus d$. 
\end{defin}
\begin{question}
Assume PFA. Let $X$ be Polish and locally compact and $\phi\in\Homeo(\beta X\setminus X)$. If $\phi$ has a local representation, is $\phi$ trivial?
\end{question}

A test case in analyzing which homeomorphisms of $\beta X\setminus X$ have a (local) representation, under the assumption of some Forcing Axioms, is given if $X$ is the disjoint union of countably many compact sets $X_n$. In this case, $C_0(X)=\bigoplus C(X_n)$ and $C(\beta X\setminus X)\cong \prod C(X_n)/\bigoplus C(X_n)$. Ulam stability type results of S\v{e}mrl (\cite{Semrl.USAbel}) show that every trivial homeomorphism of $\beta X\setminus X$ corresponds to a $^*$-homomorphisms $\prod C(X_n)\to \prod C(X_n)$ which is a $^*$-isomorphism up to finite indexes. The (topological) study of the possible copies of $\beta \omega\setminus\omega$ inside $\beta X\setminus X$, and of which ones come from sets of the form $\beta b\setminus b$, is key in obtaining results similar to Thoerem~\ref{thm:isoFDD}.
\begin{conjecture}
Assume Forcing Axioms. Let $X_n$, $Y_n$ be compact and Polish, and $X$ (resp. $Y$) be the disjoint union of the $X_n$'s (resp, the $Y_n$'s). Then for every isomorphism $\phi\colon C(\beta X\setminus X)\to C(\beta Y\setminus Y)$ there are $k_1,k_2$ and an isomorphism \[\prod_{n\geq k_1}C(X_n)\to \prod_{n\geq k_2} C(Y_n)\] which is a lift of $\phi$.

In particular there is an $f$ which is an almost permutation of $\NN$ such that $X_n$ is homeomorphic to $Y_{f(n)}$ for all $n$ on which $f$ is defined.
\end{conjecture}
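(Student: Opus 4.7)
The plan is to parallel Theorem~\ref{thm:isoFDD} almost verbatim, substituting S\v{e}mrl's abelian Ulam stability (Theorem~\ref{thm:SemrlAbel}) for the finite-dimensional stability Theorem~\ref{thm:US.USFinDim}, and substituting a clopen-subspace analysis for the simplicity-based analysis of matrix algebras. Setting $\cstar{A}=\bigoplus C(Y_n)$, the approximate identity of projections $q_N=\sum_{n\leq N}1_{Y_n}$ makes the target $C(\beta Y\setminus Y)\cong\mathcal{M}(\cstar{A})/\cstar{A}$ fit the setup of Theorem~\ref{thm:FA.liftingtheorem}. First I would establish an abelian analog of that theorem whose source is $\prod C(X_n)/\bigoplus C(X_n)$: the OCA colouring, uniformization, and asymptotic-additivity arguments of \S\ref{s:FA.Lift2} only use that each coordinate has a separable unit ball and that the coordinate projections $P_A$ are central and satisfy~(\ref{eqn:restrictions}), so replacing the finite $2^{-n}$-dense sets $X_{n,k}\subseteq(M_{k(n)})_{\leq1}$ by countable $2^{-n}$-dense subsets of $(C(X_n))_{\leq1}$, and the compact $\SX=\prod X_{n,k}$ by the corresponding Polish product, preserves every step. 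This produces an asymptotically additive lift $\alpha\colon\prod C(X_n)\to\prod C(Y_n)$ of $\phi$ on a ccc/Fin ideal $\SI$. By Proposition~\ref{prop:FA.ApproxStruct} the tails $\alpha_{[N,M]}$ are $\e$-$^*$-homomorphisms between abelian unital $\Cstar$-algebras; Theorem~\ref{thm:SemrlAbel} replaces each tail by an actual $^*$-homomorphism at controlled cost, and the gluing argument from the proof of Theorem~\ref{thm:FA.ApproxStruct} assembles these into a strict-strictly continuous $^*$-homomorphism $\Phi\colon\prod C(X_n)\to\prod C(Y_n)$ lifting $\phi$ on $\SI$. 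The $\SI_x$-argument from the opening of the proof of Theorem~\ref{thm:isoFDD} — each $\SI_x=\{A\subseteq\NN\mid \Phi(P_A)(\Phi(x)-y)\in\cstar{A}\}$ is Borel, dense, contains $\SI$, hence equals $\mathcal{P}(\NN)$ — then upgrades $\Phi$ to a global lift of $\phi$, and Proposition~\ref{prop:onedominates} gives $1-\Phi(1)\in\cstar{A}$.

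Running the same procedure on $\phi^{-1}$ yields a global $^*$-homomorphism $\Psi\colon\prod C(Y_n)\to\prod C(X_n)$ lifting $\phi^{-1}$, with $\Psi\Phi-\mathrm{id}$ and $\Phi\Psi-\mathrm{id}$ valued in the respective ideals. The projections $p_n=\Phi(1_{X_n})\in\prod C(Y_n)$ are pairwise orthogonal, and the $m$-th coordinate $p_n^{(m)}\in C(Y_m)$ is the characteristic function of a clopen set $U_n^m\subseteq Y_m$. The identity $\sum_n p_n=\Phi(1)\equiv 1\pmod{\cstar{A}}$ forces, for each fixed $m$, the family $\{U_n^m\}_n$ to partition $Y_m$ up to a finite error. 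After preprocessing the enumerations $(X_n),(Y_m)$ so that each $X_n$ and each $Y_m$ is clopen-indecomposable — harmless, since every compact metric space decomposes canonically into countably many clopen-indecomposable pieces — each $U_n^m$ is either $\emptyset$ or all of $Y_m$, hence $p_n=\sum_{m\in E_n}1_{Y_m}$ for a finite $E_n\subseteq\NN$, and $\Phi\rs C(X_n)$ is a unital $^*$-isomorphism $C(X_n)\to\bigoplus_{m\in E_n}C(Y_m)$. Symmetrically $\Psi(1_{Y_m})=\sum_{n\in F_m}1_{X_n}$; the relations $\Psi\Phi\equiv\mathrm{id}$, $\Phi\Psi\equiv\mathrm{id}$ force $\{E_n\},\{F_m\}$ to be mutually inverse bijections on a cofinite set, so $|E_n|=|F_m|=1$ almost everywhere, and the resulting almost permutation $f\colon n\mapsto m$ (where $E_n=\{m\}$) witnesses both the lifting isomorphism $\prod_{n\geq k_1}C(X_n)\to\prod_{n\geq k_2}C(Y_n)$ and the pointwise homeomorphisms $X_n\cong Y_{f(n)}$.

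The main obstacle will be the clopen-indecomposable reduction. For disjoint-union coronas the analog of Lemma~\ref{lem:apoxcommuting} is vacuous because $\cstar{A}$ is abelian, so nothing \emph{a priori} prevents $p_n$ from cutting some $Y_m$ properly into clopen pieces. Refining each $Y_m$ (and each $X_n$) into its clopen components is a combinatorial reindexing, but one then needs the refined $\Phi$ to continue to respect the refined enumeration — that is, to send refined source projections to \emph{whole} refined target components — and this cannot be extracted from the lifting theorem alone. I expect this to require a secondary OCA/$\MA_{\aleph_1}$ rigidity argument, in the spirit of the zero-dimensional work of Farah and McKenney \cite{Farah-McKenney.ZD}, applied to the induced isomorphism between the Boolean algebras of clopen subsets of $\bigsqcup X_n$ and $\bigsqcup Y_n$ modulo finitely supported sets; the known rigidity of automorphisms of $\mathcal{P}(\NN)/\Fin$ under Forcing Axioms should then close the argument, but actually carrying this out will require an honest engagement with how the topology of each indecomposable $Y_m$ interacts with the Boolean-algebraic skeleton one has just extracted.
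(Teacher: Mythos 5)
First, a point of order: the statement you are proving is stated in the paper as an open \emph{conjecture} (it appears in Chapter~\ref{ch:QandA} precisely because the author could not prove it), so there is no proof in the paper to compare against; your proposal has to stand on its own, and it does not yet.

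The central gap is your claim that the machinery of \S\ref{s:FA.Lift2} ``only uses that each coordinate has a separable unit ball'' and therefore survives replacing the finite $2^{-k}$-dense sets $X_{n,k}\subseteq (M_{k(n)})_{\leq 1}$ by countable dense subsets of $(C(X_n))_{\leq 1}$. It does not. The proofs of Lemmas~\ref{lemma:oca->treelike}, \ref{lemma:oca->alternative} and \ref{lemma:C->asymptotic} use the \emph{compactness} of the coordinate unit balls in an essential way: Theorem~\ref{thm:STPrel.talagrand} is a statement about products of \emph{finite} sets and its conclusion fails for Baire space; the space $\SX=\prod X_n$ being compact is invoked explicitly (e.g.\ in Claim~\ref{Claim:stabilizer1}, where an open cover argument on $\SX$ is run, and in the proof that $\Skel(\cstar{A})$ is separable, Definition~\ref{defin:FA.Skel}); and the ccc-ness of the poset in Lemma~\ref{lemma:oca->alternative} rests on counting arguments of the form ``the set of all such tuples $(s,t,\ldots)$ is finite,'' where $t$ ranges over $\SX[k]$. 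With infinite-dimensional coordinates all of these break. This is exactly why Theorem~\ref{thm:Borel} handles general nuclear coronas only by factoring through finite-dimensional algebras via the CPAP, and why even there the output is a Borel graph rather than a $^*$-homomorphic lift of the whole automorphism. You would need either a genuinely new lifting theorem with domain $\prod C(X_n)/\bigoplus C(X_n)$, or a way to recover a global $^*$-homomorphic lift from the finite-dimensional approximants $\Lambda_{f,E}$ --- and the latter is essentially the content of Conjecture~\ref{conj:UlamStable} in the non-abelian case, i.e.\ it is the hard part, not a corollary of what is already proved.

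The second gap is the ``clopen-indecomposable'' reduction in your endgame. The assertion that every compact metric space decomposes canonically into countably many clopen-indecomposable pieces is false: the Cantor set has no nonempty clopen connected subsets and cannot be written as a countable disjoint union of clopen-indecomposable pieces, and the same applies to any $Y_m$ with a perfect zero-dimensional part. So you cannot preprocess the enumerations to force each $U_n^m$ to be $\emptyset$ or all of $Y_m$; the projections $p_n=\Phi(1_{X_n})$ may genuinely shatter the $Y_m$'s, and (as you note) Lemma~\ref{lem:apoxcommuting} gives nothing in the abelian setting. To your credit you flag that a secondary rigidity argument in the style of \cite{Farah-McKenney.ZD} for the quotient Boolean algebra of clopen sets would be needed here, but that is not a footnote to the argument --- together with the lifting problem above, it is the substance of the conjecture, and the paper itself points to the analysis of which copies of $\beta\NN\setminus\NN$ inside $\beta Y\setminus Y$ arise from closed subsets of $Y$ as the missing ingredient.
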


Again analyzing reduced products, and turning to the non abelian case, here is what is known:
\begin{theorem}[{\cite{McKenney.UHF}}]\label{thm:MckUHF}
Assume Forcing Axioms. Let $A_n,B_n$ be unital separable UHF algebras. Then $\prod A_n/\bigoplus A_n\cong \prod B_n/\bigoplus B_n$ if and only if there is $f$, an almost permutation of $\NN$, such that $A_n\cong B_{f(n)}$ whenever $f(n)$ is defined.
\end{theorem}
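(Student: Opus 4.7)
The backward direction is immediate: given an almost permutation $f\colon\NN\to\NN$ with cofinite domain $D$ and range $R$, together with isomorphisms $\phi_n\colon A_n\to B_{f(n)}$ for $n\in D$, the coordinate-wise map $\prod A_n\to\prod B_n$ sending $(a_n)_n$ to $(b_m)_m$ with $b_m=\phi_{f^{-1}(m)}(a_{f^{-1}(m)})$ for $m\in R$ and $b_m=0$ otherwise descends to a $^*$-isomorphism of the reduced products, since the finite discrepancy on $R^c\cup D^c$ is absorbed in the ideals.

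For the forward direction, fix an isomorphism $\Lambda\colon\prod A_n/\bigoplus A_n\to\prod B_n/\bigoplus B_n$. Each $A_n$ and $B_n$ is simple with one-dimensional center, so a direct computation yields $Z(\prod A_n/\bigoplus A_n)=\ell^\infty/c_0$ and likewise on the $B$-side. Hence $\Lambda$ restricts to an automorphism of $\ell^\infty/c_0\cong C(\beta\NN\setminus\NN)$, which under $\OCA_\infty+\MA_{\aleph_1}$ is implemented by an almost permutation $g\colon\NN\to\NN$ by the Shelah-Stepr\={a}ns and Veli\v{c}kovi\'{c} theorems cited in \S\ref{ch:intro}. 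Composing $\Lambda$ with the coordinate-relabeling isomorphism induced by $g^{-1}$, I may assume $\Lambda$ fixes the center pointwise: $\Lambda(\pi_A(\sum_{n\in S}1_{A_n}))=\pi_B(\sum_{n\in S}1_{B_n})$ for every $S\subseteq\NN$.

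Now write each $A_n=\overline{\bigcup_k M_{r_n(k)}}$ with unital embeddings, and for each $k\in\NN^\NN$ consider the unital diagonal inclusion $\iota_k\colon\prod_n M_{r_n(k(n))}\hookrightarrow\prod_n A_n$. Since $\bigoplus B_n$ is separable, nuclear, and admits the increasing approximate identity of projections $(\sum_{j\le N}1_{B_j})_N$, Theorem~\ref{thm:.FA.starhomolift} applied to $\Lambda\circ\bar\iota_k$ yields an asymptotically additive $^*$-homomorphism lift $\Phi_k\colon\prod_n M_{r_n(k(n))}\to\prod_n B_n$ on a ccc/Fin ideal. The assumption that $\Lambda$ fixes the center, combined with strict-strict continuity of $\Phi_k$, forces $\sum_{n\in S}\Phi_k(1_{A_n})-\sum_{n\in S}1_{B_n}\in\bigoplus B_n$ for every $S\subseteq\NN$; running this over all $S$ shows, by an argument of the same flavor as the classification of trivial automorphisms of $\mathcal P(\NN)/\Fin$, that the support of the projection $\Phi_k(1_{A_n})$ is exactly $\{n\}$ for all but finitely many $n$. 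Consequently, for all but finitely many $n$, the $n$-th coordinate projection of $\Phi_k$ restricted to $M_{r_n(k(n))}$ is a unital $^*$-homomorphism $M_{r_n(k(n))}\to B_n$. Varying $k$ over all diagonals, and using the asymptotic uniqueness of lifts (a coherence statement of the sort in Lemmas~\ref{lem:liftforeverything.Borel} and~\ref{lem:afteragivenpoint}, after absorbing inner-automorphism ambiguities inductively), these compressions stabilize into a single unital $^*$-homomorphism $\phi_n\colon A_n\to B_n$. A symmetric application to $\Lambda^{-1}$ gives unital $^*$-homomorphisms $\psi_n\colon B_n\to A_n$. Since $A_n$ and $B_n$ are simple, $\phi_n$ and $\psi_n$ are injective, so the supernatural numbers of $A_n$ and $B_n$ divide each other and are therefore equal, exhibiting $A_n\cong B_n$ for all $n$ in a cofinite set.

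The main obstacle is the coherent patching step: Theorem~\ref{thm:FA.liftingtheorem} delivers $\Phi_k$ on a ccc/Fin ideal $\SI_k\subseteq\mathcal P(\NN)$ that could a priori depend on $k$, and one must show both that the ideal can be taken to be $\mathcal P(\NN)$ on the nose (as in the passage from Lemma~\ref{lem:liftforeverything.Borel} to Theorem~\ref{thm:isoFDD}) and that the resulting family, as $k(n)\to\infty$, assembles into a total lift $\Phi\colon\prod A_n\to\prod B_n$ compatible with the inductive limit structure of each $A_n$. The inner-automorphism ambiguity in comparing the compressions $\phi_n^{(k)}$ and $\phi_n^{(k')}$ for refining diagonals $k\le k'$ requires a unitary-correction argument reminiscent of intertwining-by-reparametrization in the Elliott classification programme, and the OCA homogeneity machinery of \S\ref{s:FA.Lift2} is invoked to ensure the corrections can be chosen coherently on cofinitely many indices.
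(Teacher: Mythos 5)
The paper does not actually prove this theorem; it quotes it from \cite{McKenney.UHF}, so your argument has to stand on its own. Your architecture --- pass to the centres to reduce, via triviality of automorphisms of $\ell^\infty/c_0$ under forcing axioms, to a centre-fixing isomorphism, then apply the $\OCA$ lifting theorem to the matrix sub-reduced-products $\prod_n M_{r_n(k(n))}$ and read off coordinatewise unital $^*$-homomorphisms into $B_n$ --- is the right one and is exactly what the machinery of \S\ref{s:FA.Emb} (compare Theorem~\ref{thm:isoFDD}) is built for. But as written there are two genuine gaps. First, a quantifier problem: for each fixed diagonal $k$ you get unital maps $M_{r_n(k(n))}\to B_n$ only for all $n$ outside a finite set \emph{depending on $k$}, and ranging over the uncountably many diagonals destroys the ``cofinitely many $n$'' conclusion you need. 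Second, the step you yourself flag as ``the main obstacle'' --- assembling the compressions $\phi_n^{(k)}$ into a single $^*$-homomorphism $\phi_n\colon A_n\to B_n$ compatible with the inductive limit structure --- is not carried out; invoking ``intertwining'' and ``OCA homogeneity'' does not discharge it, and the analogous coherence problem beyond the UHF case is precisely what Conjecture~\ref{conj:UlamStable} records as open.

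Both gaps are avoidable, and this is what makes the UHF case tractable. You never need a coherent $\phi_n\colon A_n\to B_n$: two UHF algebras are isomorphic iff their supernatural numbers coincide, and $M_p$ embeds unitally into a UHF algebra $B$ iff $p$ divides the supernatural number of $B$, so it suffices to show that for all but finitely many $n$ \emph{every} building block $M_{r_n(j)}$ of $A_n$ embeds unitally into $B_n$ (and symmetrically, using $\Lambda^{-1}$). Prove this by contradiction: if infinitely many $n$ admit a level $j_n$ with $r_n(j_n)$ not dividing the supernatural number of $B_n$, place these choices on one diagonal $k$. The lifting theorem gives an asymptotically additive lift on a ccc/Fin ideal $\SJ$; such ideals are dense, so $\SJ$ contains an infinite subset $Y$ of the bad set, and Theorem~\ref{thm:FA.ApproxStruct} upgrades the lift on $M[Y]$ to a $^*$-homomorphism. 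The centre-fixing hypothesis, via the argument of Lemma~\ref{lem:apoxcommuting} and Theorem~\ref{thm:isoFDD} (which you should spell out: the images of the coordinate units are eventually central in a tail corner, hence sums of units $1_{B_m}$, and the combinatorics force the support to be exactly $\{n\}$), makes the coordinate compressions eventually unital maps $M_{r_n(j_n)}\to B_n$ for $n\in Y$ --- a contradiction. This replaces the intertwining entirely and handles the quantifier issue at the same time.
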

Given an isomorphism of reduced products of unital separable UHF algebras, the difficulties in always obtaining a $^*$-homomorphism which is a lifting, rely in that it is not known whether an $\epsilon$-morphism between such algebras is uniformly close to an actual morphism. With in mind the definition of Ulam stability (see Definition~\ref{defin:US.ulamstab}), and denoting by $\mathcal{AF}_s$ the class of all unital simple separable AF algebras, we state the following conjecture:
\begin{conjecture}\label{conj:UlamStable}
The following are equivalent:
\begin{enumerate}
\item\label{thm.USAppl1} Under Forcing Axioms, for every choice of $\{A_n\}\subseteq\mathcal{AF}_s$ every automorphism of $\prod A_n/\bigoplus A_n$ has a lifting which is a $^*$-homomorphism;
\item\label{thm.USAppl2} $(\mathcal{AF}_s,\mathcal{AF}_s)$ is Ulam stable;
\item\label{thm.USAppl3} for every $\epsilon>0$ there is $\delta>0$ such that for every $A\in\mathcal{AF}_s$ and two representations $\pi_1,\pi_2\colon A\to\mathcal B(H)$ with $d_{KK}(\pi_1(A),\phi_2(A))<\delta$ there is an isomorphism $\phi\colon A\to A_2$ with $\norm{\pi_1(\phi(a))-\pi_2(a)}<\epsilon$ for all $a\in A_1$.
\end{enumerate}
\end{conjecture}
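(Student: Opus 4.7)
The plan is to prove $\ref{thm.USAppl2} \Leftrightarrow \ref{thm.USAppl3}$ directly in $\ZFC$ via standard perturbation theory, $\ref{thm.USAppl2} \Rightarrow \ref{thm.USAppl1}$ under Forcing Axioms by combining the lifting machinery of Theorem~\ref{thm:FA.liftingtheorem} with the gluing technique of Theorem~\ref{thm:FA.ApproxStruct}, and $\ref{thm.USAppl1} \Rightarrow \ref{thm.USAppl2}$ under Forcing Axioms by a contrapositive construction of a pathological automorphism. For $\ref{thm.USAppl2} \Leftrightarrow \ref{thm.USAppl3}$: given two faithful representations $\pi_1, \pi_2\colon A\to \mathcal B(H)$ of some $A \in \mathcal{AF}_s$ with Kadison--Kastler distance $d_{KK}(\pi_1(A), \pi_2(A)) < \delta$, one picks for each $a\in A$ some $b(a) \in \pi_2(A)$ with $\norm{\pi_1(a) - b(a)} < 2\delta$; the resulting map $\pi_2^{-1}\circ b\colon A\to A$ is a $\kappa\delta$-$^*$-homomorphism for an absolute constant $\kappa$, and Ulam stability provides an automorphism $\phi$ of $A$ with $\norm{\pi_1\circ\phi - \pi_2} < \epsilon$. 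Conversely, given an $\epsilon$-$^*$-homomorphism $\phi\colon A\to B$ between isomorphic elements of $\mathcal{AF}_s$, faithful embeddings into a common $\mathcal B(H)$ show that $\pi_1(A)$ and $\pi_2(\phi(A))$ are at Kadison--Kastler distance at most $\epsilon$, and \ref{thm.USAppl3} yields the isomorphism $\phi$ of \ref{thm.USAppl2}.

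For $\ref{thm.USAppl2} \Rightarrow \ref{thm.USAppl1}$, first extend Theorem~\ref{thm:FA.liftingtheorem} to reduced products of simple AF algebras, observing that $\bigoplus A_n$ is $\sigma$-unital with an approximate identity of projections and that the proof adapts by replacing the matrix blocks $M_{k(n)}$ by $A_n$, using the AF structure of each $A_n$ to approximate by finite-dimensional pieces within the combinatorial arguments (alternatively, embed the domain reduced product into a matrix reduced product via the MF structure and apply the existing theorem). Given $\Lambda\in\Aut(\prod A_n / \bigoplus A_n)$ this produces an asymptotically additive lift $\alpha = \sum \alpha_n$ on a ccc/Fin ideal, extended to all of $\mathcal P(\NN)$ by a Borel-ideal argument as in Lemma~\ref{lem:liftforeverything.Borel}. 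Proposition~\ref{prop:FA.ApproxStruct} then allows one to partition the index set into consecutive blocks $[n_i, n_{i+1})$ so that each $\alpha_{[n_i, n_{i+1})}$ is a $\delta_i$-$^*$-homomorphism into a finite direct sum of simple AF algebras with $\delta_i \to 0$ and with asymptotically orthogonal ranges. Apply \ref{thm.USAppl2} (extended to finite direct sums by componentwise approximation) to each block, obtaining $^*$-homomorphisms $\beta_{[n_i, n_{i+1})}$ within $\eta_i \to 0$, cut their ranges by appropriate projections to restore exact orthogonality, and sum strictly to obtain a genuine $^*$-homomorphism $\beta\colon \prod A_n\to \prod A_n$ lifting $\Lambda$.

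For $\ref{thm.USAppl1} \Rightarrow \ref{thm.USAppl2}$ I argue by contrapositive. Suppose \ref{thm.USAppl2} fails with parameter $\epsilon > 0$: there are $A_n \in \mathcal{AF}_s$ and $\epsilon_n$-$^*$-isomorphisms $\phi_n\colon A_n \to A_n$ with $\epsilon_n \to 0$ such that no $^*$-homomorphism of $A_n$ lies within $\epsilon$ of $\phi_n$. The coordinatewise map $\Phi = (\phi_n)\colon \prod A_n\to \prod A_n$ descends to a $^*$-automorphism $\bar{\Phi}$ of $\prod A_n / \bigoplus A_n$. Assuming \ref{thm.USAppl1} under Forcing Axioms, $\bar{\Phi}$ lifts to a $^*$-homomorphism $\Psi\colon \prod A_n \to \prod A_n$. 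Since each $A_n$ is simple, $Z(\prod A_n)=\ell^\infty$, and $\Psi$ must send each central projection $e_n = 1_{A_n}$ to a central projection $P_{X_n}$ with $X_n \sd \{n\}$ finite (since $\Psi(e_n) - \phi_n(1_{A_n}) \in \bigoplus A_k$ and $\phi_n(1_{A_n})$ is approximately $e_n$); moreover the pairwise orthogonality $\Psi(e_n)\Psi(e_m) = 0$ forces the $X_n$ to be disjoint, so $X_n = \{n\}$ for cofinitely many $n$. Hence $\Psi$ restricts to a $^*$-endomorphism $\psi_n\colon A_n \to A_n$ for large $n$, and the condition $\Psi(x) - \Phi(x) \in \bigoplus A_k$ applied to arbitrary bounded sequences $(a_n)$ with $\norm{a_n}\leq 1$ forces $\norm{\psi_n - \phi_n} \to 0$, contradicting the choice of the $\phi_n$.

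The main obstacle will be the uniformity required in $\ref{thm.USAppl2} \Rightarrow \ref{thm.USAppl1}$. In Theorem~\ref{thm:FA.ApproxStruct} one leans on the uniform Ulam constant of Theorem~\ref{thm:farahulam} for finite-dimensional algebras, where the perturbation constant $K$ does not depend on the dimension or the ambient algebra; the entire asymptotic gluing is powered by this fact. For simple AF algebras the conjectured Ulam constant in \ref{thm.USAppl2} must analogously be uniform across all of $\mathcal{AF}_s$, since in the block gluing the $A_n$ may vary wildly and the $\delta_i$ appearing in each block must be controlled by a single function of $\epsilon$. Verifying or refuting this uniformity is where the conjecture's resolution genuinely lies, and the interplay with Kadison--Kastler near-inclusions in \ref{thm.USAppl3} suggests that the decisive input will come from perturbation theory of simple AF algebras (extending results such as those of Christensen used in Theorem~\ref{thm:US.ulam-stability}) rather than from further set-theoretic analysis.
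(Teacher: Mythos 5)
Note first that the paper offers no proof of this statement: it appears in Chapter~\ref{ch:QandA} precisely because it is open, so your argument must stand entirely on its own, and it does not. The most serious gap is in $\ref{thm.USAppl2}\Rightarrow\ref{thm.USAppl1}$: Theorem~\ref{thm:FA.liftingtheorem} cannot be ``extended by replacing the matrix blocks $M_{k(n)}$ by $A_n$''. Its proof (and those of Lemmas~\ref{lemma:oca->treelike}--\ref{lemma:C->asymptotic}) lives inside the space $\SX=\prod X_n$ of branches through a \emph{finitely branching} tree, where the $X_n$ are finite $2^{-n}$-dense subsets of the unit balls of the blocks; this compactness is used throughout (Theorem~\ref{thm:STPrel.talagrand} applied to $\SX$, the finiteness of the sets $\SF_k$ in Claim~\ref{lem:notreelike.partition}, the separability of the space of skeletal maps). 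For infinite-dimensional $A_n$ the unit balls are not totally bounded and this apparatus collapses. The alternative route through the CPAP decomposition of \S\ref{ss:FA.BorelLift} only produces cpc maps $\Lambda_{f,E}$, not $^*$-homomorphisms, so Theorem~\ref{thm:FA.ApproxStruct} does not apply to them --- this is exactly why Theorem~\ref{thm:Borel} yields only Borel liftings and why the paper poses Question~\ref{ques:redprodnuc}. Without solving that, you never obtain the approximately multiplicative blocks that your Ulam-stability step is supposed to correct.

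The other directions also have holes. In $\ref{thm.USAppl1}\Rightarrow\ref{thm.USAppl2}$, a failure of Ulam stability is witnessed by $\delta$-$^*$-homomorphisms $\phi_n\colon A_n\to B_n$ between possibly distinct algebras that need not be approximately injective or surjective, so they do not assemble into an \emph{automorphism} of a reduced product; your reduction to approximate automorphisms of a single sequence is a strictly stronger hypothesis. Moreover the step about $\Psi(e_n)$ is vacuous: $e_n$ is supported on one coordinate, hence lies in $\bigoplus A_k$ and satisfies $\pi(e_n)=0$, so a lift of $\bar{\Phi}$ carries no information about it; establishing that a $^*$-homomorphism lift is block-diagonal up to a finite modification requires arguing on the projections $P_X$ for \emph{infinite} $X$, as in Theorem~\ref{thm:isoFDD}. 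Finally, in $\ref{thm.USAppl2}\Leftrightarrow\ref{thm.USAppl3}$ you pass from a one-sided $\epsilon$-$^*$-homomorphism to a two-sided Kadison--Kastler near-inclusion (false for non-surjective maps), and Ulam stability returns a $^*$-homomorphism where \ref{thm.USAppl3} demands an isomorphism close to the identity; bridging that gap is the content of the perturbation theory of \cite{christensen2012perturbations}, not a formality. Your closing paragraph correctly identifies uniformity of the Ulam constant across $\mathcal{AF}_s$ as the heart of the matter, but that observation is where the conjecture begins, not where it is resolved.
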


If one wants to state conjectures regarding isomorphisms of reduced products, while remaining in the setting in which Theorem~\ref{thm:FA.liftingtheorem} applies, one can go beyond the classes of UHF or simple AF algebras and consider the class of all unital nuclear simple separable algebras. To generalize McKenney's Theorem~\ref{thm:MckUHF} to this setting, one would have to state and prove results of stability nature for maps which are not necessarily almost $^*$-homomorphisms, but are only almost cpc maps. This is because nuclear $\Cstar$-algebras can be seen as limits in the category of finite-dimensional $\Cstar$-algebra where the maps considered are completely positive contractions.

\begin{question}\label{ques:redprodnuc}
Is every approximately cpc map from a finite-dimensional $A$ to a $\Cstar$-algebra $B$ close to a cpc map by a factor independent from $A$ and $B$?
\end{question}
A positive answer to Question~\ref{ques:redprodnuc} would open the door to the formulation of a strong rigidity result, again in the spirit of Theorem~\ref{thm:MckUHF}, but when considering $\mathcal {N}_s$, the class of unital separable  simple nuclear algebras.
\begin{conjecture}
Assume Forcing Axiom.  If $A_n,B_n\in\mathcal N_s$, then $\prod A_n/\bigoplus A_n\cong \prod B_n/\bigoplus B_n$ if and only if there is $f$, an almost permutation of $\NN$, such that $A_n\cong B_{f(n)}$ whenever $f(n)$ is defined.
\end{conjecture}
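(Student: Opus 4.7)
The \emph{if} direction is a direct verification: given an almost permutation $f\colon\NN\to\NN$ defined on a cofinite set $D$ together with isomorphisms $\phi_n\colon A_n\to B_{f(n)}$ for $n\in D$, the coordinatewise map sending $(a_n)$ to the sequence with $f(n)$-coordinate $\phi_n(a_n)$ for $n\in D$ and zero elsewhere descends to an isomorphism of the reduced products. So the content is the \emph{only if} direction.

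The plan has three phases. First, I would generalize Theorem~\ref{thm:FA.liftingtheorem} so as to produce, for a unital $^*$-homomorphism $\Lambda\colon\prod A_n/\bigoplus A_n\to\prod B_n/\bigoplus B_n$, an asymptotically additive lift on a ccc/Fin ideal. The arguments of~\S\ref{s:FA.Lift1}--\S\ref{s:FA.Lift2} use the matrix structure of the domain only through the CPAP-based decomposition of Lemma~\ref{lem:summing} and the separability of the space of skeletal maps; with a positive answer to Question~\ref{ques:redprodnuc} in hand, the same decomposition applies verbatim to nuclear $A_n$ after replacing matrix algebras by finite-dimensional controlling algebras $F_n$ obtained via the CPAP. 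Each of Lemmas~\ref{lemma:oca->treelike}--\ref{lemma:C->asymptotic} then admits a direct translation. In the second phase I would upgrade the asymptotically additive lift to a strict-strict continuous $^*$-homomorphism $\Phi\colon\prod A_n\to\prod B_n$ by the analog of Theorem~\ref{thm:FA.ApproxStruct}; the promotion uses an AF-type Ulam stability result for nuclear algebras, which is precisely Theorem~\ref{thm:US.limits} applied to a positive answer to Question~\ref{ques:redprodnuc}.

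The third phase is the rigidity analysis generalizing Theorem~\ref{thm:isoFDD}. Let $\Psi\colon\prod B_m\to\prod A_n$ be a similar $^*$-homomorphism lift of $\Lambda^{-1}$. The Borel absoluteness argument of Theorem~\ref{thm:isoFDD}, together with Proposition~\ref{prop:onedominates}, forces $\Phi$ and $\Psi$ to lift $\Lambda,\Lambda^{-1}$ on all of $\mathcal P(\NN)$, so $\Psi\circ\Phi=\mathrm{id}$ and $\Phi\circ\Psi=\mathrm{id}$ modulo $\bigoplus A_n$ and $\bigoplus B_m$ respectively. Setting $p_n=\Phi(1_{A_n})$ and $q_m=\Psi(1_{B_m})$, asymptotic additivity confines each $p_n$ to a finite interval $E_n$ of $B_m$-coordinates and each $q_m$ to a finite interval $F_m$ of $A_n$-coordinates. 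By Lemma~\ref{lem:apoxcommuting}, the $p_n$ are eventually central in the appropriate corners of $\bigoplus B_m$; combining this with the mutual-inverse relations, one argues that the pairwise orthogonal projections $\Theta_m(1_{A_n}):=\bigl(\Psi(p_n\cdot 1_{B_m})\bigr)_n$ sum to $1_{A_n}$ modulo compact, and simplicity of $A_n$ (applied together with the cross-constraint from $\Phi\circ\Psi=\mathrm{id}$) forces exactly one $m=f(n)$ to give a nonzero summand, yielding $p_n=1_{B_{f(n)}}$ for all large $n$. Then $\Phi\rs A_n\colon A_n\to B_{f(n)}$ is a unital $^*$-homomorphism between simple unital $\Cstar$-algebras, hence injective, and by symmetry with $\Psi\rs B_{f(n)}$ it is an isomorphism; the induced map $f$ is the required almost permutation.

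The main obstacle is the last step of the rigidity analysis: showing that $p_n$ lies entirely in a single summand $B_{f(n)}$. Simplicity alone does not prevent a simple unital $\Cstar$-algebra from embedding unitally into a finite direct sum of other simple algebras, or into a proper corner of itself (as for $\mathcal O_2$). The exclusion must exploit the asymmetric constraint $\Phi\circ\Psi=\Psi\circ\Phi=\mathrm{id}$ modulo compact at the level of individual coordinates, rather than simplicity in isolation. This is exactly the technical gap that currently prevents a straightforward extension of McKenney's theorem (Theorem~\ref{thm:MckUHF}) beyond inductive limits of matrix algebras, and is the deepest step of the proposed proof.
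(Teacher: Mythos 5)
The statement you are addressing is stated in the paper as a \emph{conjecture} in the closing chapter of open problems; the paper offers no proof of it, and in fact explicitly identifies the obstructions to proving it. Your proposal is therefore to be judged as a proof attempt of an open problem, and it has genuine gaps at every phase. In Phase 1 you claim the lifting machinery of \S\ref{s:FA.Lift1}--\S\ref{s:FA.Lift2} ``applies verbatim'' once matrix algebras are replaced by CPAP-controlling finite-dimensional algebras, but the domain of Theorem~\ref{thm:FA.liftingtheorem} is always $\prod M_{k(n)}/\bigoplus M_{k(n)}$: the proofs of Lemmas~\ref{lemma:oca->treelike} through \ref{lemma:C->asymptotic} depend on the compactness of the unit balls of the coordinate algebras (the finite $2^{-n}$-dense sets $X_n$, the space $\SX=\prod X_n$ being a compact metric space, the separability of the space of skeletal maps). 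For general nuclear $A_n$ these unit balls are not compact, and reducing to finite-dimensional approximants via the CPAP yields only approximately cpc maps, not approximate $^*$-homomorphisms --- which is exactly why the paper poses Question~\ref{ques:redprodnuc} as a prerequisite and why it remains open. In Phase 2 you invoke Theorem~\ref{thm:US.limits} to upgrade the lift to a $^*$-homomorphism, but that theorem requires the \emph{codomain} to be a von Neumann algebra; $\prod B_n$ is not one, so even granting Question~\ref{ques:redprodnuc} the promotion step does not follow from anything proved in the paper. The paper notes that the relevant Ulam stability question for $(\mathcal N_s,\mathcal N_s)$ has been open for decades even in the UHF case.

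Phase 3 contains the gap you yourself identify: simplicity of $A_n$ does not prevent a unital embedding into a proper corner or into a direct sum of several $B_m$'s, and the mutual-inverse constraint has to be exploited coordinatewise in a way that is not spelled out and is precisely what blocks the extension of Theorem~\ref{thm:MckUHF} beyond matrix-algebra limits. To be clear, your outline is a reasonable description of the \emph{intended strategy} --- it mirrors the route the paper takes for the cases it actually proves (Theorems~\ref{thm:noinjection} and \ref{thm:isoFDD}) --- but it is not a proof, and conditioning on two open problems (Question~\ref{ques:redprodnuc} and the rigidity step) while asserting the intermediate lemmas transfer ``verbatim'' overstates what is established.
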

Again as before, to obtain an actual $^*$-homomorphism which is a lift for an isomorphism between reduced products of objects in $\mathcal N_s$, it is conceivable to state conjectures relatively to whether or not $(\mathcal N_s,\mathcal N_s)$ is Ulam stable. The latter seems to be a very hard problem, and it has been open, even in the case of unital separable UHF algebras, for more than 4 decades, when the first discussion of these sort of phaenomena appeared in \cite{PR:pertAF} (see also \cite{Christensen.NI} or \cite{christensen2012perturbations}).

\bibliographystyle{amsplain}
\bibliography{dabib}

\providecommand{\bysame}{\leavevmode\hbox to3em{\hrulefill}\thinspace}
\providecommand{\MR}{\relax\ifhmode\unskip\space\fi MR }
\providecommand{\MRhref}[2]{%
  \href{http://www.ams.org/mathscinet-getitem?mr=#1}{#2}
}
\providecommand{\href}[2]{#2}
\begin{thebibliography}{10}

\bibitem{ARS}
U.~Abraham, M.~Rubin, and S.~Shelah, \emph{On the consistency of some partition
  theorems for continuous colorings, and the structure of {$\aleph\sb 1$}-dense
  real order types}, Ann. Pure Appl. Logic \textbf{29} (1985), no.~2, 123--206.

\bibitem{APT.Mult}
C.~A. Akemann, G.~K. Pedersen, and J.~Tomiyama, \emph{Multipliers of
  {$\Cstar$}-algebras}, J. Functional Analysis \textbf{13} (1973), 277--301.

\bibitem{AGG}
M.~A. Alekseev, L.~Yu. Glebski{\u\i}, and E.~I. Gordon, \emph{On approximations
  of groups, group actions and {H}opf algebras}, Zap. Nauchn. Sem.
  S.-Peterburg. Otdel. Mat. Inst. Steklov. (POMI) \textbf{256} (1999),
  no.~Teor. Predst. Din. Sist. Komb. i Algoritm. Metody. 3, 224--262, 268.

\bibitem{Arveson:Notes}
W.~Arveson, \emph{Notes on extensions of {$C^{*}$}-algebras}, Duke Math. J.
  \textbf{44} (1977), no.~2, 329--355.

\bibitem{Bankston}
P.~Bankston, \emph{Reduced coproducts of compact {H}ausdorff spaces}, J. Symb.
  Logic \textbf{52} (1987), 404--424.

\bibitem{BJ.SetTheory}
T.~Bartoszy\'nski and H.~Judah, \emph{Set theory}, A K Peters, Ltd., Wellesley,
  MA, 1995, On the structure of the real line.

\bibitem{BJS.Cichon}
T.~Bartoszy\'nski, H.~Judah, and S.~Shelah, \emph{The {C}icho\'n diagram}, J.
  Symbolic Logic \textbf{58} (1993), no.~2, 401--423.

\bibitem{BYBHU}
I.~Ben~Yaacov, A.~Berenstein, C.~W. Henson, and A.~Usvyatsov, \emph{Model
  theory for metric structures}, Model theory with applications to algebra and
  analysis. {V}ol. 2, London Math. Soc. Lecture Note Ser., vol. 350, Cambridge
  Univ. Press, Cambridge, 2008, pp.~315--427.

\bibitem{BenYaacovIovino}
I.~Ben~Yaacov and J.~Iovino, \emph{Model theoretic forcing in analysis}, Annals
  of Pure and Applied Logic \textbf{158} (2009), no.~3, 163--174.

\bibitem{Blackadar.OA}
B.~Blackadar, \emph{Operator algebras}, Encyclopaedia of Mathematical Sciences,
  vol. 122, Springer-Verlag, Berlin, 2006, Theory of {$\Cstar$}-algebras and
  von Neumann algebras, Operator Algebras and Non-commutative Geometry, III.

\bibitem{Breu168}
M.~Breuer, \emph{Fredholm theories in von {N}eumann algebras. {I}},
  Mathematische Annalen \textbf{178} (1968), 243--254.

\bibitem{Breu269}
\bysame, \emph{Fredholm theories in von {N}eumann algebras. {II}},
  Mathematische Annalen \textbf{180} (1969), 313--325.

\bibitem{BDF.Ext}
L.~G. Brown, R.~G. Douglas, and P.~A. Fillmore, \emph{Extensions of
  {$\Cstar$}-algebras and {$K$}-homology}, Annals of Mathematics. Second Series
  \textbf{105} (1977), no.~2, 265--324.

\bibitem{Busby}
R.~C. Busby, \emph{Double centralizers and extensions of {$\Cstar$}-algebras},
  Trans. Amer. Math. Soc. \textbf{132} (1968), 79--99.

\bibitem{ChangKeisler}
C.~C. Chang and H.~J. Keisler, \emph{Model theory}, 3 ed., North Holland, 1990.

\bibitem{ChoiEff.Nuc}
M.~D. Choi and E.~G. Effros, \emph{Nuclear {$\Cstar$}-algebras and injectivity:
  the general case}, Indiana Univ. Math. J. \textbf{26} (1977), no.~3,
  443--446.

\bibitem{ChoiEff.CPAP}
\bysame, \emph{Nuclear {$\Cstar$}-algebras and the approximation property},
  Amer. J. Math. \textbf{100} (1978), no.~1, 61--79.

\bibitem{Christ.PertOpAlg}
E.~Christensen, \emph{Perturbations of operator algebras}, Invent. Math.
  \textbf{43} (1977), no.~1, 1--13.

\bibitem{Christensen.NI}
\bysame, \emph{Near inclusions of {$\Cstar$}-algebras}, Acta Math. \textbf{144}
  (1980), no.~3-4, 249--265.

\bibitem{christensen2012perturbations}
E.~Christensen, A.~M.\ Sinclair, R.~R.\ Smith, S.~A.\ White, and W.~Winter,
  \emph{Perturbations of nuclear {$\Cstar$}-algebras}, Acta Math. \textbf{208}
  (2012), no.~1, 93--150.

\bibitem{Cohen.CH1}
P.~Cohen, \emph{The independence of the continuum hypothesis}, Proc. Nat. Acad.
  Sci. U.S.A. \textbf{50} (1963), 1143--1148.

\bibitem{Cohen.CH2}
\bysame, \emph{The independence of the continuum hypothesis. {II}}, Proc. Nat.
  Acad. Sci. U.S.A. \textbf{51} (1964), 105--110.

\bibitem{Cohn}
D.~L. Cohn, \emph{Measure theory}, second ed., Birkh\"auser Advanced Texts:
  Basler Lehrb\"ucher. [Birkh\"auser Advanced Texts: Basel Textbooks],
  Birkh\"auser/Springer, New York, 2013.

\bibitem{Coskey-Farah}
S.~Coskey and I.~Farah, \emph{Automorphisms of corona algebras, and group
  cohomology}, Trans. Amer. Math. Soc. \textbf{366} (2014), no.~7, 3611--3630.

\bibitem{David:CstarEx}
K.~Davidson, \emph{{$\Cstar$}-algebras by example}, Fields Institute
  Monographs, vol.~6, American Mathematical Society, Providence, RI, 1996.

\bibitem{Tall}
R.~R. Dias and F.~D. Tall, \emph{Indestructibility of compact spaces}, Topology
  Appl. \textbf{160} (2013), 2411--2426.

\bibitem{DowHart.Universal}
A.~Dow and K.~P. Hart, \emph{A universal continuum of weight {$\aleph$}},
  Trans. Amer. Math. Soc. \textbf{353} (2001), no.~5, 1819--1838.

\bibitem{Eagle2014}
C.~J. Eagle, \emph{Omitting types in infinitary $[0, 1]$-valued logic}, Annals
  of Pure and Applied Logic \textbf{165} (2014), 913--932.

\bibitem{EFKV.QE}
C.~J. Eagle, I.~Farah, E.~Kirchberg, and A.~Vignati, \emph{Quantifier
  elimination in {$\Cstar$}-algebras},  (2016), to appear in International
  mathematics Research Notices, https://doi.org/10.1093/imrn/rnw236.

\bibitem{EGV.Pseudo}
C.~J. Eagle, I.~Goldbring, and A.~Vignati, \emph{The pseudoarc is a
  co-existentially closed continuum}, Topology Appl. \textbf{207} (2016), 1--9.

\bibitem{EV.Sat}
C.~J. Eagle and A.~Vignati, \emph{Saturation and elementary equivalence of
  {$\Cstar$}-algebras}, J. Funct. Anal. \textbf{269} (2015), no.~8, 2631--2664.

\bibitem{Elliott.Der2}
G.~A. Elliott, \emph{Derivations of matroid {$\Cstar$}-algebras. {II}}, Ann. of
  Math. (2) \textbf{100} (1974), 407--422.

\bibitem{Farah.CNOC}
I.~Farah, \emph{Cauchy nets and open colorings}, Publ. Inst. Math. (Beograd)
  (N.S.) \textbf{64(78)} (1998), 146--152, 50th anniversary of the Mathematical
  Institute, Serbian Academy of Sciences and Arts (Belgrade, 1996).

\bibitem{Farah.AQ}
\bysame, \emph{Analytic quotients: theory of liftings for quotients over
  analytic ideals on the integers}, Mem. Amer. Math. Soc. \textbf{148} (2000),
  no.~702, xvi+177.

\bibitem{Farah.AC}
\bysame, \emph{All automorphisms of all {C}alkin algebras}, Math. Res. Lett.
  \textbf{18} (2011), no.~3, 489--503.

\bibitem{Farah.C}
\bysame, \emph{All automorphisms of the {C}alkin algebra are inner}, Ann. of
  Math. (2) \textbf{173} (2011), no.~2, 619--661.

\bibitem{farah2011countable}
I.~Farah and B.~Hart, \emph{Countable saturation of corona algebras}, C. R.
  Math. Rep. Acad. Sci., Canada \textbf{35} (2013), no.~2, 35--56.

\bibitem{bourbaki}
I.~Farah, B.~Hart, M.~Lupini, L.~Robert, A.~Tikuisis, A.~Vignati, and
  W.~Winter, \emph{Model theory of {$\Cstar$}-algebras},
  http://arxiv.org/abs/1602.08072.

\bibitem{FHS.I}
I.~Farah, B.~Hart, and D.~Sherman, \emph{Model theory of operator algebras {I}:
  Stability}, Bull. Lond. Math. Soc. \textbf{45} (2013), no.~4, 825--838.

\bibitem{FHS.II}
\bysame, \emph{Model theory of operator algebras {II}: Model theory}, Israel J.
  Math. \textbf{201} (2014), no.~1, 477--505.

\bibitem{FHS.III}
\bysame, \emph{Model theory of operator algebras {III}: Elementary equivalence
  and {II}$_1$ factors}, Bull. Lond. Math. Soc. \textbf{46} (2014), no.~3,
  609--628.

\bibitem{Farah-McKenney.ZD}
I.~Farah and P.~McKenney, \emph{Homeomorphisms of \v{C}ech-stone remainders:
  the zero-dimensional case},  (2012), arXiv:1211.4765 [math.LO], to appear in
  proceedings of the American Mathematical Society.

\bibitem{Farah-Shelah.RCQ}
I.~Farah and S.~Shelah, \emph{Rigidity of continuous quotients}, J. Inst. Math.
  Jussieu \textbf{15} (2016), no.~1, 1--28.

\bibitem{FMS.MM1}
M.~Foreman, M.~Magidor, and S.~Shelah, \emph{Martin's {M}aximum, saturated
  ideals, and nonregular ultrafilters. {I}}, Ann. of Math. (2) \textbf{127}
  (1988), no.~1, 1--47.

\bibitem{Ghasemi.FDD}
S.~Ghasemi, \emph{Isomorphisms of quotients of {FDD}-algebras}, Israel J. Math.
  \textbf{209} (2015), no.~2, 825--854.

\bibitem{Ghasemi.FFV}
\bysame, \emph{Reduced products of metric structures: a metric
  {F}eferman-{V}aught theorem}, J. Symb. Log. \textbf{81} (2016), no.~3,
  856--875.

\bibitem{GK.NoMult}
S.~Ghasemi and P.~Kosmider, \emph{An extension of compact operators by compact
  operators with no nontrivial multipliers}, arXiv preprint arXiv:1609.04766,
  2016a.

\bibitem{pedersensubstonean}
K.~Grove and G.~Pedersen, \emph{Sub-{S}tonean spaces and corona sets}, J.
  Funct. Anal. \textbf{56} (1984), no.~1, 124--143.

\bibitem{Gurevic}
R.~Gurevic, \emph{On ultracoproducts of compact {H}ausdorff spaces}, J. Symb.
  Logic \textbf{53} (1988), 294--300.

\bibitem{KP.STCC}
K.~P. Hart, \emph{The \v {C}ech-{S}tone compactification of the real line},
  Recent progress in general topology ({P}rague, 1991), North-Holland,
  Amsterdam, 1992, pp.~317--352.

\bibitem{Higson.Kasparov}
N.~Higson, \emph{On a technical theorem of {K}asparov}, J. Funct. Anal.
  \textbf{73} (1987), no.~1, 107--112.

\bibitem{Hodges.MT}
W.~Hodges, \emph{Model theory}, Encyclopedia of Mathematics and its
  Applications, vol.~42, Cambridge University Press, Cambridge, 1993.

\bibitem{jech:settheory}
T.~Jech, \emph{Set theory}, Springer-Verlag, Berlin, 2003.

\bibitem{Johnson.AMNM}
B.~E. Johnson, \emph{Approximately multiplicative maps between {B}anach
  algebras}, J. London Math. Soc. (2) \textbf{37} (1988), no.~2, 294--316.

\bibitem{Kazhdan}
D.~Kazhdan, \emph{On {$\epsilon $}-representations}, Israel J. Math.
  \textbf{43} (1982), no.~4, 315--323.

\bibitem{Kechris.CDST}
A.~S. Kechris, \emph{Classical descriptive set theory}, Graduate Texts in
  Mathematics, vol. 156, Springer-Verlag, New York, 1995.

\bibitem{Kirch.CPAP}
E.~Kirchberg, \emph{{$\Cstar$}-nuclearity implies {CPAP}}, Math. Nachr.
  \textbf{76} (1977), 203--212.

\bibitem{Kirch.CentralSeq}
\bysame, \emph{Central sequences in {$\Cstar$}-algebras and strongly purely
  infinite algebras}, Operator {A}lgebras: {T}he {A}bel {S}ymposium 2004, Abel
  Symp., vol.~1, Springer, Berlin, 2006, pp.~175--231.

\bibitem{KirchbergRordam}
E.~Kirchberg and M.~R{\o}rdam, \emph{Central sequence {$\Cstar$}-algebras and
  tensorial absorbtion of the {J}iang-{S}u algebra}, Journal f\"ur die reine
  und angewandte Mathematik (Crelles Journal), to appear.

\bibitem{KNgP.CoronaPIS}
D.~Kucerovsky, P.~W. Ng, and F.~Perera, \emph{Purely infinite corona algebras
  of simple {$\Cstar$}-algebras}, Math. Ann. \textbf{346} (2010), no.~1,
  23--40.

\bibitem{kunen:settheory}
K.~Kunen, \emph{Set theory: an introduction to independence proofs.},
  North-Holland Pub. Co. Amsterdam, 1980.

\bibitem{Lance.HM}
E.~C. Lance, \emph{Hilbert {$\Cstar$}-modules}, London Mathematical Society
  Lecture Note Series, vol. 210, Cambridge University Press, Cambridge, 1995, A
  toolkit for operator algebraists.

\bibitem{Larson.OCA}
P.~Larson, \emph{Showing {OCA} in {${\Bbb P}_{\rm max}$}-style extensions},
  Kobe J. Math. \textbf{18} (2001), no.~2, 115--126.

\bibitem{LinNG.CoronaZ}
H.~Lin and P.~W. Ng, \emph{The corona algebra of the stabilized {J}iang-{S}u
  algebra}, J. Funct. Anal. \textbf{270} (2016), no.~3, 1220--1267.

\bibitem{Masumoto}
S.~Masumoto, \emph{The countable chain condition for {$\Cstar$}-algebras.},
  Tokyo J. Math. \textbf{38} (2015), no.~2, 513--522.

\bibitem{McKenney.UHF}
P.~McKenney, \emph{Reduced products of {UHF} algebras}, arXiv:1303.5037
  [math.LO], March 2013.

\bibitem{MKAV.UC}
P.~McKenney and A.~Vignati, \emph{Ulam stability for some classes of
  {$\Cstar$}-algebras}, http://arxiv.org/abs/1601.05445 to appear in
  Proceedings of the Royal Society of Edinburgh Section A: Mathematics (2016).

\bibitem{Mija79}
{\v{Z}}.~Mijajlovi{\'c}, \emph{Saturated {B}oolean algebras with ultrafilters},
  Publ. Inst. Math. (Beograd) (N.S.) \textbf{26(40)} (1979), 175--197.

\bibitem{Monk.CI}
J.~D. Monk, \emph{Cardinal invariants on {B}oolean algebras}, revised ed.,
  Progress in Mathematics, vol. 142, Birkh\"auser/Springer, Basel, 2014.

\bibitem{Moore.OCA}
J.~Moore, \emph{Open colorings, the continuum and the second uncountable
  cardinal}, Proc. Amer. Math. Soc. \textbf{130} (2002), no.~9, 2753--2759.

\bibitem{Moore.PFA}
\bysame, \emph{The proper forcing axiom}, Proceedings of the {I}nternational
  {C}ongress of {M}athematicians. {V}olume {II} (New Delhi), Hindustan Book
  Agency, 2010, pp.~3--29.

\bibitem{MVN.I}
F.~J. Murray and J.~Von~Neumann, \emph{On rings of operators}, Ann. of Math.
  (2) \textbf{37} (1936), no.~1, 116--229.

\bibitem{MVN.II}
F.~J. Murray and J.~von Neumann, \emph{On rings of operators. {II}}, Trans.
  Amer. Math. Soc. \textbf{41} (1937), no.~2, 208--248.

\bibitem{MVN.IV}
\bysame, \emph{On rings of operators. {IV}}, Ann. of Math. (2) \textbf{44}
  (1943), 716--808.

\bibitem{NG.CFP}
P.~W. Ng, \emph{The corona factorization property}, Operator theory, operator
  algebras, and applications, Contemp. Math., vol. 414, Amer. Math. Soc.,
  Providence, RI, 2006, pp.~97--110.

\bibitem{Paro.Universal}
I.~I. Parovi{\v{c}}enko, \emph{A universal bicompact of weight {$\aleph$}},
  Soviet Mathematics, Russian original: Ob odnom universal′nom bikompakte
  vesa {$\aleph$}, Doklady Akademii Nauk SSSR 150 (1963) {36--39}. \textbf{4}
  (1963), 592--595.

\bibitem{Pedersen.CAAG}
G.~K. Pedersen, \emph{{$\Cstar$}-algebras and their automorphism groups},
  London Mathematical Society Monographs, vol.~14, Academic Press Inc.
  [Harcourt Brace Jovanovich Publishers], London, 1979.

\bibitem{pedersencorona}
\bysame, \emph{The corona construction}, Operator {T}heory: {P}roceedings of
  the 1988 {GPOTS}-{W}abash {C}onference ({I}ndianapolis, {IN}, 1988), Pitman
  Res. Notes Math. Ser., vol. 225, 1990, pp.~49--92.

\bibitem{Phil90}
J.~Phillips, \emph{{$K$}-theory relative to a semifinite factor}, Indiana Univ.
  Math. J. \textbf{39} (1990), no.~2, 339--354.

\bibitem{PR:pertAF}
John Phillips and Iain Raeburn, \emph{Perturbations of {AF}-algebras}, Canad.
  J. Math. \textbf{31} (1979), no.~5, 1012--1016.

\bibitem{Phillips-Weaver}
N.~C. Phillips and N.~Weaver, \emph{The {C}alkin algebra has outer
  automorphisms}, Duke Math. J. \textbf{139} (2007), no.~1, 185--202.

\bibitem{Rosendal.AC}
C.~Rosendal, \emph{Automatic continuity of group homomorphisms}, Bull. Symbolic
  Logic \textbf{15} (2009), no.~2, 184--214.

\bibitem{Rudin}
W.~Rudin, \emph{Homogeneity problems in the theory of \v {C}ech
  compactifications}, Duke Math. J. \textbf{23} (1956), 409--419.

\bibitem{Runde.LA}
V.~Runde, \emph{Lectures on amenability}, Lecture Notes in Mathematics, vol.
  1774, Springer-Verlag, Berlin, 2002.

\bibitem{Sakai.Der}
S.~Sakai, \emph{Derivations of simple {$C^{\ast} $}-algebras. {III}}, T\^ohoku
  Math. J. (2) \textbf{23} (1971), 559--564.

\bibitem{Shelah.PF}
S.~Shelah, \emph{Proper forcing}, Lecture Notes in Mathematics, vol. 940,
  Springer-Verlag, Berlin, 1982.

\bibitem{Shelah-Steprans.PFAA}
S.~Shelah and J.~Stepr{\=a}ns, \emph{P{FA} implies all automorphisms are
  trivial}, Proc. Amer. Math. Soc. \textbf{104} (1988), no.~4, 1220--1225.

\bibitem{Skou:NotesMult}
P.~Skoufranis, \emph{An introduction to multiplier algebras},
  http://pskoufra.info.yorku.ca/files/2016/07/Multiplier-Algebras.pdf, 2016.

\bibitem{Semrl.USAbel}
P.~S\v{e}mrl, \emph{Nonlinear perturbations of homomorphisms on {$C(X)$}},
  Quart. J. Math. Oxford Ser. (2) \textbf{50} (1999), no.~197, 87--109.

\bibitem{Tak.CrNuc}
M.~Takesaki, \emph{On the cross-norm of the direct product of
  {$\Cstar$}-algebras}, T\^ohoku Math. J. (2) \textbf{16} (1964), 111--122.

\bibitem{Todorcevic.PPIT}
S.~Todor{\v{c}}evi{\'c}, \emph{Partition problems in topology}, Contemporary
  Mathematics, vol.~84, American Mathematical Society, Providence, RI, 1989.

\bibitem{TW.SSA}
A.~S. Toms and W.~Winter, \emph{Strongly self-absorbing {$\Cstar$}-algebras},
  Trans. Amer. Math. Soc. \textbf{359} (2007), no.~8, 3999--4029.

\bibitem{Velickovic.OCAA}
B.~Veli{\v{c}}kovi{\'c}, \emph{{$\textrm{OCA}$} and automorphisms of
  {$\mathcal{P}(\omega)/\textrm{Fin}$}}, Topology Appl. \textbf{49} (1993),
  no.~1, 1--13.

\bibitem{V.Nontrivial}
A.~Vignati, \emph{Nontrivial homeomorphisms of \v {C}ech-{S}tone remainders},
  http://arxiv.org/abs/1609.02616, to appear in {M}\"unster {J}ournal of
  {M}athematics (2016).

\bibitem{Voiculescu.c1ds}
D.~Voiculescu, \emph{Countable degree-1 saturation of certain
  {$\Cstar$}-algebras which are coronas of {B}anach algebras}, Groups Geom.
  Dyn. \textbf{8} (2014), no.~3, 985--1006.

\bibitem{WinterAbel}
W.~Winter, \emph{{QDQ} vs. {UCT}}, Operator Algebras and Applications: The Abel
  Symposium 2015.

\bibitem{Zamora}
B.~Zamora-Aviles, \emph{Gaps in the poset of projections in the {C}alkin
  algebra}, Israel J. Math. \textbf{202} (2014), no.~1, 105--115.

\end{thebibliography}
\end{document}